\theoremstyle{plain}
\newtheorem{thm}{\protect\theoremname}
  \theoremstyle{remark}
  \newtheorem{rem}[thm]{\protect\remarkname}
  \theoremstyle{plain}
  \newtheorem{cor}[thm]{\protect\corollaryname}
  \theoremstyle{plain}
  \newtheorem{lem}[thm]{\protect\lemmaname}
  \theoremstyle{plain}
  \newtheorem{prop}[thm]{\protect\propositionname}
\newcounter{hypA}
\newenvironment{hyp}{\refstepcounter{hypA}\begin{itemize}
\item[({\bf A\arabic{hypA}})]}{\end{itemize}}
  \providecommand{\corollaryname}{Corollary}
  \providecommand{\lemmaname}{Lemma}
  \providecommand{\propositionname}{Proposition}
  \providecommand{\remarkname}{Remark}
\providecommand{\theoremname}{Theorem}
\begin{document}

\title{Sampling normalizing constants in high dimensions using inhomogeneous
diffusions}

\author{Christophe Andrieu\thanks{University of Bristol, School of Mathematics},
James Ridgway and Nick Whiteley\thanks{University of Bristol, School of Mathematics}}
\maketitle
\begin{abstract}
Motivated by the task of computing normalizing constants and importance
sampling in high dimensions, we study dimension dependence of fluctuations
for additive functionals of time-inhomogeneous overdamped Langevin
type diffusions on $\mathbb{R}^{d}$. The main results are non-asymptotic
variance and bias bounds, and a central limit theorem in the $d\to\infty$
regime. We demonstrate that a temporal discretization inherits the
fluctuation properties of the underlying diffusion, which are controlled
at a computational cost growing at most polynomially with $d$. The
key steps include establishing Poincaré inequalities for time-marginal
distributions of the diffusion and nonasymptotic bounds on deviation
from Gaussianity in a martingale central limit theorem.
\end{abstract}
\tableofcontents{}

\newpage{}

\section{Introduction\label{sec:Introduction}}

Consider $(X_{t}^{\epsilon})_{t\in[0,1]}$ the time-inhomogeneous
diffusion on $\mathbb{R}^{d}$ which solves 
\begin{equation}
X_{t}^{\epsilon}=X_{0}^{\epsilon}-\epsilon^{-1}\int_{0}^{t}\nabla U_{s}(X_{s}^{\epsilon})\mathrm{d}s+\sqrt{2\epsilon^{-1}}\int_{0}^{t}\mathrm{d}B_{s},\label{eq:SDE_intro}
\end{equation}
where $B_{t}$ is $d$-dimensional Brownian motion, $\epsilon>0$
is a parameter and $(U_{t})_{t\in[0,1]}$ is a family of $\mathbb{R}$-valued
potentials such that, with Lebesgue measure and the Borel $\sigma$-algebra
denoted by $\mathrm{d}x$ and $\mathcal{B}(\mathbb{R}^{d})$, $(\pi_{t})_{t\in[0,1]}$
given by
\begin{equation}
Z_{t}\coloneqq\int_{\mathbb{R}^{d}}\exp\{-U_{t}(x)\}\mathrm{d}x,\quad\quad\pi_{t}(A)\coloneqq Z_{t}^{-1}\int_{A}\exp\{-U_{t}(x)\}\mathrm{d}x,\quad A\in\mathcal{B}(\mathbb{R}^{d}),\label{eq:Zs_and_pis_front}
\end{equation}
are well-defined as probability measures. 

This work concerns dependence on the dimension, $d$, of fluctuations
associated with
\begin{equation}
S_{\epsilon}\coloneqq\int_{0}^{1}f_{t}(X_{t}^{\epsilon})\mathrm{d}t,\quad\quad S_{\epsilon,h}\coloneqq h\sum_{k=0}^{\left\lfloor 1/h\right\rfloor -1}f_{kh}(X_{kh}^{\epsilon}),\quad\quad\tilde{S}_{\epsilon,h}\coloneqq h\sum_{k=0}^{\left\lfloor 1/h\right\rfloor -1}f_{kh}(\tilde{X}_{kh}^{\epsilon,h}),\label{eq:S_intro}
\end{equation}
where $(f_{t})_{t\in[0,1]}$ is a family of $\mathbb{R}$-valued functions
such that each $f_{t}$ is centred with respect to $\pi_{t}$, and
$(\tilde{X}_{t}^{\epsilon,h})_{t\in[0,1]}$ is an approximation to
$(X_{t}^{\epsilon})_{t\in[0,1]}$ such that the skeleton variables
$\tilde{X}_{kh}^{\epsilon,h}$ can be simulated by a time-discretization
method, and $h\in(0,1]$ is a step-size parameter such that the cost
of the discretization scheme is proportional to $h^{-1}$. 

Amongst our key assumptions, which we state precisely later, will
be strong convexity in $x$ of $U_{t}(x)$, or equivalently strong
log-concavity of $\pi_{t}$. As accounted in \cite{bakry2013analysis},
thorough investigations have been made of the connections between
concentration of measure phenomena, Poincaré and other functional
inequalities for log-concave measures and the ergodic properties of
time-homogeneous Markov processes, such as the diffusion in (\ref{eq:SDE_intro})
in the case that $U_{t}$ does not depend on $t$. These connections
have been exploited to study the computational cost of approximate
sampling from log-concave measures using \emph{Markov chain Monte
Carlo} (MCMC) algorithms, via bounds on distance to equilbrium and
error estimates for ergodic averages which elicit dependence on dimension,
e.g. \cite{frieze1994sampling,frieze1999log,joulin2010curvature,dalalyan2016theoretical}. 

Our primary motivation for studying the time-inhomogeneous case is
connected with another Monte Carlo technique: \emph{importance sampling,}
which along with MCMC is one of the most popular simulation-based
methods for numerical integration, and is applied across scientific
disciplines such as statistical physics, signal processing and machine
learning. Although as we shall illustrate next, importance sampling
in its most basic form can perform exponentially badly in high dimensions,
one of the main insights which can be drawn from our results is that
a more sophisticated type of importance sampling technique using an
inhomogeneous Markov process can be practically reliable, in a sense
which we shall make precise, at a cost polynomial in $d$. 

\subsection{Motivation: importance sampling and thermodynamic integration}

As an elementary example, consider the task of numerically approximating
the ratio of normalizing constants $Z_{1}/Z_{0}$ and the expectation
$\pi_{1}(f):=\int_{\mathbb{R}^{d}}\varphi(x)\pi_{1}(\mathrm{d}x)$
for some test function $\varphi$, assuming that one is able to simulate
$(\zeta{}_{1},\ldots,\zeta_{m})\stackrel{\mathrm{i.i.d.}}{\sim}\pi_{0}$
and evaluate $U_{0}$, $U_{1}$ and $\varphi$ pointwise. With $W_{i}\coloneqq\exp[-\{U_{1}(\zeta_{i})-U_{0}(\zeta_{i})\}]$,
so
\[
\frac{Z_{1}}{Z_{0}}=\mathbb{E}[W_{i}],\quad\quad\pi_{1}(\varphi)=\frac{\mathbb{E}[\varphi(\zeta_{i})W_{i}]}{\mathbb{E}[W_{i}]},
\]
the basic importance sampling method reports the approximations:
\begin{equation}
\frac{Z_{1}}{Z_{0}}\approx\frac{1}{m}\sum_{i=1}^{m}W_{i},\quad\quad\pi_{1}(\varphi)\approx\frac{\sum_{i=1}^{m}\varphi(\zeta_{i})W_{i}}{\sum_{i=1}^{m}W_{i}}.\label{eq:IS_basic}
\end{equation}

If for sake of illustration the potentials are of the form:
\begin{equation}
U_{t}(x)=\sum_{j=1}^{d}u_{t}(x^{j}),\quad x=(x^{1},\ldots,x^{d}),\label{eq:potentials_iid}
\end{equation}
we have for any $i$,
\begin{equation}
\frac{\mathrm{var}[W_{i}]}{\mathbb{E}[W_{i}]^{2}}=c^{d}-1,\label{eq:var_W}
\end{equation}
where $c\coloneqq\mathbb{E}[\exp-2\{u_{1}(\zeta_{1}^{1})-u_{0}(\zeta_{1}^{1})\}]/\mathbb{E}[\exp-\{u_{1}(\zeta_{1}^{1})-u_{0}(\zeta_{1}^{1})\}]^{2}$
does not depend on $d$, and $\zeta_{1}^{1}$ is the first of the
$d$ co\textendash ordinates of $\zeta_{1}$. By Jensen's inequality
$c\geq1$ with equality if and only if $\pi_{1}=\pi_{0}$, so putting
aside that trivial case, (\ref{eq:var_W}) indicates that the cost
of the simulation, governed by $m$, must be increased exponentially
in $d$ in order to prevent growth of the relative errors associated
with (\ref{eq:IS_basic}). Also when $c>1$, the total variation distance
between $\pi_{0}$ and $\pi_{1}$ is monotonically increasing in $d$,
and indeed as $d$ reaches infinity, $\pi_{0}$ and $\pi_{1}$ become
singular in the sense of Kakutani's theorem on infinite product measures.
Intuitively the ``one-step'' importance sampling correction from
$\pi_{0}$ to $\pi_{1}$ in (\ref{eq:IS_basic}) is defeated by this
phenomenon.

An alternative approach is based around the representation formulae:
\begin{align}
\frac{Z_{1}}{Z_{0}} & =\exp\left\{ -\int_{0}^{1}\pi_{t}(\partial_{t}U_{t})\mathrm{d}t\right\} =\mathbb{E}\left[\exp\left\{ -\int_{0}^{1}\partial_{t}U_{t}(X_{t}^{\epsilon})\mathrm{d}t\right\} \right],\label{eq:Jarz}\\
\pi_{1}(\varphi) & =\frac{\mathbb{E}\left[\varphi(X_{1}^{\epsilon})\exp\left\{ -\int_{0}^{1}\partial_{t}U_{t}(X_{t}^{\epsilon})\mathrm{d}t\right\} \right]}{\mathbb{E}\left[\exp\left\{ -\int_{0}^{1}\partial_{t}U_{t}(X_{t}^{\epsilon})\mathrm{d}t\right\} \right]},\label{eq:FK}
\end{align}
where $(X_{t}^{\epsilon})_{t\in[0,1]}$ as in (\ref{eq:SDE_intro})
with any $\epsilon>0$ and $X_{0}^{\epsilon}\sim\pi_{0}$, and $\partial_{t}U_{t}$
is the partial derivative of $U_{t}$ w.r.t. $t$, and $\pi_{t}(\partial_{t}U_{t})$
is the integral with respect to $\pi_{t}$ (we shall later discuss
conditions under which validity of (\ref{eq:Jarz})\textendash (\ref{eq:FK})
can be rigorously established). The equalities in (\ref{eq:Jarz})
have roots in the statistical physics literature, the first being
known as the thermodynamic integration or path sampling identity,
see \cite{gelman1998simulating} for an account of its history, the
second as Jarzynki's equality \cite{jarzynski1997nonequilibrium,jarzynski1997equilibrium}.
The expectations in (\ref{eq:Jarz})\textendash (\ref{eq:FK}) have
an importance sampling interpretation: $\exp\left\{ -\int_{0}^{1}\partial_{t}U_{t}(X_{t}^{\epsilon})\mathrm{d}t\right\} \frac{Z_{0}}{Z_{1}}$
can be derived as the Radon-Nikodym derivative with respect to the
path measure of $(X_{t}^{\epsilon})_{t\in[0,1]}$ as per (\ref{eq:SDE_intro})
with $X_{0}^{\epsilon}\sim\pi_{0}$, of the law the process with drift
transformed such that distribution of $X_{1}^{\epsilon}$ is $\pi_{1}$,
see \cite[Section 3.2, p.62]{rousset_thesis} for a time-reversal
perspective and \cite[Ch. VIII, Sec. 3]{RevuzYor} for background
on this type of transformation. The discrete-time counterpart of (\ref{eq:FK})
is the basis for the Annealed Importance Sampling method of \cite{neal2001annealed}. 

In light of (\ref{eq:Jarz})\textendash (\ref{eq:FK}), an alternative
to the basic importance sampling method described above is obtained
by replacing each pair $W_{i},\varphi(\zeta_{i})$ in (\ref{eq:IS_basic})
with an independent copy of the pair $\exp\left\{ -\int_{0}^{1}\partial_{t}U_{t}(X_{t}^{\epsilon})dt\right\} ,\varphi(X_{1}^{\epsilon})$,
or in practice some approximation thereof involving time-discretization.
If in (\ref{eq:S_intro}) one takes $f_{t}(x)=\partial_{t}U_{t}(x)-\pi_{t}(\partial_{t}U_{t})$,
then from (\ref{eq:Jarz}),
\[
S_{\epsilon}=\int_{0}^{1}\partial_{t}U_{t}(X_{t}^{\epsilon})-\pi_{t}(\partial_{t}U_{t})\mathrm{d}t=\int_{0}^{1}\partial_{t}U_{t}(X_{t}^{\epsilon})\mathrm{d}t-\log\frac{Z_{0}}{Z_{1}},
\]
hence our interest in the dimension dependence of the fluctuations
associated with (\ref{eq:S_intro}). 

To see why there is hope that this scheme can perform well in high
dimensions, note that in the setting (\ref{eq:potentials_iid}) with
$X_{0}^{\epsilon}\sim\pi_{0}$, the co-ordinates $(X_{t}^{\epsilon,1},\ldots,X_{t}^{\epsilon,d})$
of $X_{t}^{\epsilon}$ are i.i.d., as are the summands in: 
\[
S_{\epsilon}=\sum_{j=1}^{d}\int_{0}^{1}\partial_{t}u_{t}(X_{t}^{\epsilon,j})-\pi_{t}(\partial_{t}u_{t})\mathrm{d}t,
\]
where $\pi_{t}(\partial_{t}u_{t})$ is the integral of $\partial_{t}u_{t}$
w.r.t. any of the $1$-dimensional marginals of $\pi_{t}$. So, if
the variance and mean of $\int_{0}^{1}\partial_{t}u_{t}(X_{t}^{\epsilon,j})-\pi_{t}(\partial_{t}u_{t})$
are order $O(\epsilon)$ as $\epsilon\to0$, and $\epsilon$ is chosen
to be $d^{-1},$ then using the independence, $\mathbb{E}[S_{\epsilon}^{2}]$
is of order $O(1)$ as $d\to\infty$. If also $\sum_{j=1}^{d}\int_{0}^{1}\partial_{t}u_{t}(X_{t}^{\epsilon,j})$
can be well-approximated by discretization at a cost proportional
to $h^{-1}$ and polynomial in $\epsilon^{-1}$, then overall one
obtains a method to approximate (\ref{eq:Jarz})\textendash (\ref{eq:FK})
which does not suffer from exponentially bad behaviour in high dimensions. 

Of course in situations of practical interest, each $\pi_{t}$ is
usually not a product measure, i.e. $U_{t}$ is not of the form in
(\ref{eq:potentials_iid}), and the dependence on $d$ of the fluctuations
of $S_{\epsilon}$ in such situations is a less simple matter. Discussion
of our approach and related literature is given after introducing
notation and assumptions.

\subsection{Notation\label{sec:Notation-definitions}}

Inner-product and Euclidean norm on $\mathbb{R}^{d}$ are denoted
by respectively $\left\langle \cdot,\cdot\right\rangle $ and $\left\Vert \cdot\right\Vert $.
The $d\times d$ zero and identity matrices are written $0_{d}$ and
$I_{d}$, and $e_{i}$ denotes the vector in $\mathbb{R}^{d}$ whose
$i$'th entry is $1$ and whose other entries are zeros. For a $q$-dimensional
array $A$ with real entries $A[i_{1},\cdots,i_{q}]=a_{i_{1},\cdots,i_{q}}$,
$(i_{1},\ldots,i_{q})\in\{1,\dots,d\}^{q}$, the Hilbert-Schmidt norm
is denoted $\|A\|_{\mathrm{H.S.}}:=\left(\sum_{(i_{1},\ldots,i_{q})\in\{1,\dots,d\}^{q}}a_{i_{1},\cdots,i_{q}}^{2}\right)^{1/2}$.
When such an array depends on an argument $x\in\mathbb{R}^{d}$, we
define for $p\geq1$,
\begin{equation}
\|A\|_{p}\coloneqq\sup_{x\in\mathbb{R}^{d}}\frac{\|A(x)\|_{\mathrm{H.S.}}}{1+\|x\|^{2p}}.\label{eq:function_norm_A}
\end{equation}

For a function $f:\mathbb{R}^{d}\to\mathbb{R}$, we write $\nabla^{(q)}f$
for the $q$-dimensional array of $q$-th order partial derivatives
of $f$, with entries $\nabla^{(q)}f[i_{1},\ldots,i_{q}]=\frac{\partial^{q}f}{\partial x_{i_{1}}\cdots\partial x_{i_{q}}}$,
where $(i_{1},\ldots,i_{q})\in\{1,\dots,d\}^{q}$. In particular the
usual gradient is $\nabla^{(1)}\equiv\nabla$ and by convention we
take $\nabla^{(0)}f\equiv f$. The Laplacian operator is denoted $\Delta$.
As instances of (\ref{eq:function_norm_A}) we have for example,

\begin{equation}
\|f\|_{p}=\sup_{x\in\mathbb{R}^{d}}\frac{|f(x)|}{1+\|x\|^{2p}},\qquad\|\nabla^{(q)}f\|_{p}=\sup_{x\in\mathbb{R}^{d}}\frac{\|\nabla^{(q)}f(x)\|_{\mathrm{H.S.}}}{1+\|x\|^{2p}}.\label{eq:function_norm_f}
\end{equation}

We follow the convention of terminology that a $0$-times continuously
differentiable function is continuous. For $q\geq0$ and $p\geq1$,
let $C_{q}^{p}(\mathbb{R}^{d})$ be the set of functions $f:\mathbb{R}^{d}\to\mathbb{R}$
which are $q$-times continuously differentiable and such that $\|\nabla^{(r)}f\|_{p}<+\infty$,
for $0\leq r\leq q$.

We shall frequently encounter $\mathbb{R}$-valued functions with
domain $[0,1]\times\mathbb{R}^{d}$ or some subset thereof. For such
a function, say $f:(t,x)\in[0,1]\times\mathbb{R}^{d}\mapsto f(t,x)\in\mathbb{R}$,
we shall write interchangeably $f_{t}(x)\equiv f(t,x)$ . With $t$
fixed, we write $\nabla^{(q)}f_{t}$ for the array of $q$th-order
derivatives of the function $f(t,\cdot):\mathbb{R}^{d}\to\mathbb{R}$,
and with $x$ fixed, we write $\partial_{t}^{q}f_{t}(x)$ for the
$q$-th partial derivative of $f(\cdot,x):[0,1]\mapsto\mathbb{R}$,
with $\partial_{t}^{1}\equiv\partial_{t}$. Then $\|\nabla^{(q)}f_{t}\|_{p}$
(resp. $\|\partial_{t}^{q}f_{t}\|$) is as in (\ref{eq:function_norm_f})
with $\nabla^{(q)}f$ there replaced by $\nabla^{(q)}f_{t}$ (resp.
$\partial_{t}^{q}f_{t}$).

For nonnegative integers $q_{t},q_{x}$, let $C_{q_{t},q_{x}}^{p}([0,1]\times\mathbb{\mathbb{R}}^{d})$
be the set of functions $f:[0,1]\times\mathbb{\mathbb{R}}^{d}\to\mathbb{R}$
such that $f(t,x)$ is $q_{t}$-times continuously differentiable
in $t$, $q_{x}$-times continously differentiable in $x$, 
\[
\sup_{t\in[0,1]}\|\partial_{t}^{r}f_{t}\|_{p}<+\infty,\quad0\leq r\leq q_{t},\quad\mathrm{and}\quad\sup_{t\in[0,1]}\|\nabla^{(r)}f_{t}\|_{p}<+\infty,\quad0\leq r\leq q_{x}.
\]
Define 
\[
V(x)\coloneqq\|x\|^{2},\quad\quad\bar{V}(x)\coloneqq1+V(x),\quad\quad\bar{V}^{(p)}(x)\coloneqq1+V^{p}(x),\quad p>0.
\]
Below we shall identify for each $t\in[0,1]$ a distinguished point
$x_{t}^{\star},$ then write $V_{t}(x)\coloneqq\|x-x_{t}^{\star}\|^{2}$,
$\bar{V}_{t}(x)\coloneqq1+V_{t}(x)$, $\bar{V}_{t}^{(p)}(x)\coloneqq1+V_{t}^{p}(x)$.

The total variation distance between two probability measures $\nu,\nu^{\prime}$
on a $\sigma$-algebra $\mathcal{G}$ is written $\|\nu-\nu^{\prime}\|_{\mathrm{tv}}=\sup_{A\in\mathcal{G}}|\nu(A)-\nu^{\prime}(A)|$.
The integral of a function $f$ w.r.t. a measure $\nu$ is written
$\nu f$ or $\nu(f)$. The Borel $\sigma$-algebra and Lebesgue measure
on $\mathbb{R}^{d}$ are denoted respectively $\mathcal{\mathcal{B}}(\mathbb{R}^{d})$
and $\mathrm{d}x$. The set of probability measures $\nu$ on $\mathcal{\mathcal{B}}(\mathbb{R}^{d})$
such that $\nu(V^{p})<+\infty$ is denoted $\mathcal{P}^{p}(\mathbb{R}^{d})$. 

Throughout the paper $(\Omega,\mathcal{F},(\mathcal{F}_{t})_{t\in\mathbb{R}_{+}},\mathbb{P})$
is a filtered probability space satisfying the usual conditions, on
which all the random variables we encounter are defined, and $(B_{t})_{t\in\mathbb{R}_{+}}$
is a $d$-dimensional $(\mathcal{F}_{t})_{t\in\mathbb{R}_{+}}$-Brownian
motion. Expectation with respect to $\mathbb{P}$ is denoted $\mathbb{E}$.

With $U_{t}$ and $Z_{t}$ as in (\ref{eq:Zs_and_pis_front}), we
denote:
\begin{equation}
\phi_{t}(x):=-\partial_{t}U_{t}(x)-\partial_{t}\log Z_{t}.\label{eq:phi_defn}
\end{equation}

\subsection{Assumptions}

Fix a function $U:(t,x)\in[0,1]\times\mathbb{R}^{d}\mapsto U(t,x)\in\mathbb{R}^{+}$.

\begin{hyp}\label{hyp:basic_hyp_on_U}

For some $p_{0}\geq1,$ $U\in C_{1,2}^{p_{0}}([0,1]\times\mathbb{\mathbb{R}}^{d})$.

\end{hyp}

\begin{hyp}(time-uniform Lipschitz gradient)\label{hyp:U_grad_lipschitz_x}
$\exists L<+\infty$ s.t.

\[
\sup_{t\in[0,1]}\|\nabla U_{t}(x)-\nabla U_{t}(y)\|\leq L\|x-y\|,\quad\forall x,y.
\]

\end{hyp}

\begin{hyp}(regularity in time)\label{hyp:U_time_reg}
\begin{equation}
\sup_{t\in[0,1]}\|\nabla U_{t}(x)\|\leq L(1+\|x\|),\quad\forall x,\label{eq:U_grad_at_zero_uniform bound}
\end{equation}
where $L$ is as in (A\ref{hyp:U_grad_lipschitz_x})

\end{hyp}

\begin{hyp}(time-uniform strong convexity)\label{hyp:U_strong_convex}
$\exists K>0$ s.t. $\forall v\in\mathbb{R}^{d}$

\[
\inf_{(t,x)\in[0,1]\times\mathbb{R}^{d}}\sum_{i,j}v_{i}\frac{\partial^{2}U_{t}(x)}{\partial x_{i}\partial x_{j}}v_{j}\geq K\|v\|^{2}.
\]
We shall write $x_{t}^{\star}$ for the unique minimizer of $U_{t}$
and without loss of generality we assume that $x_{0}^{\star}=0$.

\end{hyp}

\begin{hyp}(continuity in time)\label{hyp:U_time_cont}$\exists M<\infty$
such that
\[
\|\nabla U_{t}(x)-\nabla U_{s}(x)\|\leq M|t-s|\sqrt{1+\|x-x_{t\wedge s}^{\star}\|^{2}},\quad\forall x,t,s.
\]

\end{hyp}

\begin{hyp}(bounded 3rd derivatives)\label{hyp:third_order_deriv}
The third order derivatives respect to $x$ of $U_{t}(x)$ exist,
are continuous, and bounded uniformly in $t$ and $x$. 

\end{hyp}

\subsection{Discussion of the literature and our approach}

For a review of methods for sampling from a log-concave distribution
see \cite[Sec. 7]{dalalyan2016theoretical}. Notable recent contributions
include \cite{durmus2015non}, which gives bounds on the distance
to the target distribution in total variation for an Unadjusted Langevin
Algorithm (an Euler-type discretization of a Langevin diffusion),
under a variety of assumptions on discretization step size and the
target density, including bounded perturbation of a log-concave density
and strong log-concavity outside a ball. Under the latter assumption,
convergence rates for Wasserstein distances and mean square error
bounds for empirical averages of Lipschitz functions for the diffusion
are given in \cite{eberle2015reflection}. Under conditions which
allow for strong log concavity of the target distribution, exponential
deviation inequalities of empirical averages of Lipschitz test functions
are obtained in \cite{joulin2010curvature}, and in the strongly log-concave
case, bounds on total-variation and Wasserstein distances, bounds
on mean square error and exponential deviation inequalities for a
discretized diffusion, again for Lipschitz tests functions, are obtained
in the recent pre-print \cite{durmus2016non}. 

Compared to the assumptions in the aforementioned works, which consider
processes with a fixed invariant distribution, the time-uniform strong
log-concavity assumption (A\ref{hyp:U_strong_convex}) provides a
natural starting point from which to analyze the time-inhomogeneous
process $(X_{t}^{\epsilon})_{t\in[0,1]}$. It seems likely that some
of the techniques in the aforementioned works may be useful in helping
relax this condition, but investigating this matter would lead to
an even more lengthy and technical exposition. On the other hand,
it should be noted that one of our key intermediate results, namely
the commutation relation Lemma \ref{lem:com_relation}, cannot hold
under anything weaker than (A\ref{hyp:U_strong_convex}), see Remark
\ref{rem:convexity_necessary}, so one cannot expect results of precisely
the same form as ours to hold more generally.

Lemma \ref{lem:com_relation} allows us to establish Poincaré inequalities
for the time-inhomogeneous process in section \ref{sec:Ergodicity-and-Poincare},
which are among our main technical tools. A key reference for functional
inequalities for inhomogeneous processes is \cite{collet2008logarithmic},
and some of our developments are informed by their approach. However
we are not able to use their results directly since they do not accommodate
our assumptions. In particular we explicitly work with possibly unbounded
test functions $f_{t}(x)$ which may grow polynomially fast as $\|x\|\to\infty$,
and this requires us to rigorously derive the results in section \ref{sec:Ergodicity-and-Poincare}
from scratch. 

\todo{CA: I wonder how wise it is to cite the latter as this reduces significantly the pool of "good" reviewers?
In the former they do not consider resampling either
--if OK then mention that a CLT is established in the former to link to ours? NW: } In \cite{beskos2014stability}, the stability of a sequential Monte
Carlo algorithm in discrete time was studied in the high-dimensional
regime, by establishing a functional central limit theorem implying
convergence in distribution of the effective sample size as $d\to\infty$,
under the assumption that the target distributions are of product
form as in (\ref{eq:potentials_iid}), and that the Markov transition
kernels in their algorithm factorize across dimensions in the same
manner. One of our main motivations is to relax that kind of independence
assumption because it is unrealistic, although of course our setup
is somewhat different to that of \cite{beskos2014stability}, since
we start from a continuous time perspective. It should also be noted
that we do not consider any resampling operations, where as \cite{beskos2014stability}
consider algorithms with and without resampling. \todo{To address that paper:}
In \cite{brosse2018normalizing} the authors consider a classical
product identity closely related to a discretization of (\ref{eq:Jarz}),
for a specific family $\big(U_{t}\big)_{t\in[0,1]}$, and propose
to estimate each term in the product independently, using a collection
of time-homogeneous and discretized Langevin diffusions. This allows
them to avoid the study of the time inhomogeneous processes and associated
averages of the form considered here and they exploit their earlier
results \cite{durmus2016non} concerned with time-homogeneous Langevin
diffusions to deduce quantitative bounds on mean square error and
establish polynomial complexity for their estimator. They also do
not consider a central limit theorem.

The arXiv preprint \cite{narayanan2013efficient} studies an algorithm
for sampling from time-varying log-concave distributions. The process
they work with is a discrete time Markov chain and conductance techniques
are used in the analysis. Among their key assumptions are that the
target distributions are supported on a compact convex subset of $\mathbb{R}^{d}$
and that one can compute an associated self-concordant barrier. 

\subsection{Statement of main results\label{subsec:Statement-of-main}}

Throughout section \ref{subsec:Statement-of-main} and unless stated
otherwise, $\epsilon$ is fixed to an arbitrary positive value, $(X_{t}^{\epsilon})_{t\in[0,1]}$
is as in (\ref{eq:SDE_intro}) with $X_{0}^{\epsilon}$ an $\mathcal{F}_{0}$-measurable
random variable with distribution $\mu_{0}$, and for $t\in(0,1]$,
$\mu_{t}^{\epsilon}$ is the distribution of $X_{t}^{\epsilon}$.

\subsubsection{Non-asymptotic variance and bias bounds\label{subsec:Non-asymptotic-variance-and}}
\begin{thm}
\label{thm:intro_var_and_bias_bounds} Fix $p\geq1$, assume $\mu_{0}\in\mathcal{P}^{2p}(\mathbb{R}^{d})$
and that there exists a constant $K_{0}>0$ such that 
\begin{equation}
\mathrm{var}_{\mu_{0}}[f]\leq\frac{1}{K_{0}}\mu_{0}(\|\nabla f\|^{2}),\quad\forall f\in C_{2}^{p}(\mathbb{R}^{d}).\label{eq:mu_0_poincare_hyp}
\end{equation}
1) For each $t\in[0,1]$, the distribution $\mu_{t}^{\epsilon}$ satisfies
a Poincaré inequality: 
\[
\mathrm{var}_{\mu_{t}^{\epsilon}}[f]\leq\left[(1-e^{-Kt/\epsilon})\frac{1}{K}+e^{-Kt/\epsilon}\frac{1}{K_{0}}\right]\mu_{t}^{\epsilon}(\|\nabla f\|^{2}),\quad\forall f\in C_{2}^{p}(\mathbb{R}^{d}).
\]

\noindent 2) For any $f\in C_{0,2}^{p}([0,1]\times\mathbb{R}^{d})$
such that $\pi_{t}f_{t}=0$ for all $t\in[0,1]$, and any $h\in(0,1]$,
define
\begin{equation}
S_{\epsilon}\coloneqq\int_{0}^{1}f_{t}(X_{t}^{\epsilon})\mathrm{d}t,\qquad S_{\epsilon,h}\coloneqq h\sum_{k=0}^{\left\lfloor 1/h\right\rfloor -1}f_{kh}(X_{kh}^{\epsilon}).\label{eq:MSE_thm_S_defn}
\end{equation}
 Then
\begin{align*}
\mathrm{var}[S_{\epsilon}] & \leq\frac{2\epsilon}{K_{0}\wedge K}\sup_{t\in[0,1]}\mathrm{var}_{\mu_{t}^{\epsilon}}[f_{t}],\\
|\mathbb{E}[S_{\epsilon}]| & \leq\frac{\epsilon}{K}\sup_{t\in[0,1]}\mathrm{var}_{\pi_{t}}[\phi_{t}]^{1/2}\sup_{t\in[0,1]}\mathrm{var}_{\pi_{t}}[f_{t}]^{1/2}+\alpha_{p}W^{(p)}(\mu_{0},\pi_{0})\frac{\epsilon}{K}\sup_{t\in[0,1]}\|\nabla f_{t}\|_{p},
\end{align*}
\begin{align*}
\mathrm{var}[S_{\epsilon,h}] & \leq h\left(1+\frac{2}{1-e^{-(K_{0}\wedge K)h/\epsilon}}\right)\sup_{t\in[0,1]}\mathrm{var}_{\mu_{t}^{\epsilon}}[f_{t}],\\
|\mathbb{E}[S_{\epsilon,h}]| & \leq\frac{\epsilon}{K}\sup_{t\in[0,1]}\mathrm{var}_{\pi_{t}}[\phi_{t}]^{1/2}\sup_{t\in[0,1]}\mathrm{var}_{\pi_{t}}[f_{t}]^{1/2}+\frac{\alpha_{p}h}{1-e^{-Kh/\epsilon}}W^{(p)}(\mu_{0},\pi_{0})\sup_{t\in[0,1]}\|\nabla f_{t}\|_{p},
\end{align*}
where $\alpha_{p}$, given in Lemma \ref{lem:drift}, is a constant
depending only on $\epsilon$, $p$, $K$, $d$, $\sup_{t\in(0,1)}\|\partial_{t}x_{t}^{\star}\|$,
$\sup_{t\in[0,1]}\|x_{t}^{\star}\|$, and 
\[
W^{(p)}(\mu_{0},\pi_{0})\coloneqq\inf_{\gamma\in\Gamma(\mu_{0},\pi_{0})}\int_{\mathbb{R}^{2d}}\left(1+\|x\|^{2p}\vee\|y\|^{2p}\right)\|x-y\|\gamma(\mathrm{d}x,\mathrm{d}y),
\]
where $\Gamma(\mu_{0},\pi_{0})$ is the set of all couplings of $\mu_{0}$
and $\pi_{0}$. 
\end{thm}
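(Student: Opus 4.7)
The proof establishes (in order) the Poincar\'e inequality for $\mu_t^\epsilon$, then the variance and bias bounds for $S_\epsilon$ and $S_{\epsilon,h}$. The common thread is the two-parameter semigroup $P_{s,t}$ of (\ref{eq:SDE_intro}) combined with the commutation relation (Lemma \ref{lem:com_relation}), which under (A\ref{hyp:U_strong_convex}) yields exponential-in-time contraction of gradients of semigroup actions. For the Poincar\'e inequality, fix $t\in(0,1]$ and consider $\psi(s):=\mu_s^\epsilon\bigl((P_{s,t}f)^{2}\bigr)$ on $[0,t]$. Combining the Fokker--Planck equation $\partial_s\mu_s^\epsilon=L_s^{*}\mu_s^\epsilon$, the backward Kolmogorov equation $\partial_sP_{s,t}f=-L_sP_{s,t}f$, and the carr\'e-du-champ identity $L_sh^2=2hL_sh+2\epsilon^{-1}\|\nabla h\|^2$ produces $\psi'(s)=2\epsilon^{-1}\mu_s^\epsilon(\|\nabla P_{s,t}f\|^2)$. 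Bounding this via Lemma \ref{lem:com_relation} together with $\mu_s^\epsilon P_{s,t}=\mu_t^\epsilon$, integrating in $s$, and bounding $\mathrm{var}_{\mu_0}(P_{0,t}f)$ by (\ref{eq:mu_0_poincare_hyp}) applied to $P_{0,t}f$ followed by one more use of the commutation produces the stated convex combination. For the variance of $S_\epsilon$, write $\mathrm{var}[S_\epsilon]=2\int_0^1\int_s^1\mathrm{cov}_{\mu_s^\epsilon}(f_s,P_{s,t}f_t)\,dt\,ds$ and apply Cauchy--Schwarz; the key decay $\mathrm{var}_{\mu_s^\epsilon}(P_{s,t}f_t)\leq e^{-2(K_0\wedge K)(t-s)/\epsilon}\mathrm{var}_{\mu_t^\epsilon}(f_t)$ follows by noting that $u\mapsto\mathrm{var}_{\mu_u^\epsilon}(P_{u,t}f_t)$ has derivative $2\epsilon^{-1}\mu_u^\epsilon(\|\nabla P_{u,t}f_t\|^2)$, which by the Poincar\'e inequality just proved (whose constant is uniformly at most $1/(K_0\wedge K)$) exceeds $2(K_0\wedge K)/\epsilon$ times the variance, so that Gronwall gives the decay and the double integral evaluates to the prefactor $2\epsilon/(K_0\wedge K)$.

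The bias bound on $\mathbb{E}[S_\epsilon]$ is the main obstacle. Introduce the Poisson equation $-L_tg_t=f_t$ with $\pi_tg_t=0$; under (A\ref{hyp:U_strong_convex}) this has a smooth solution satisfying $\mathrm{var}_{\pi_t}(g_t)\leq(\epsilon/K)^2\mathrm{var}_{\pi_t}(f_t)$ (via the energy identity $\pi_t(g_tf_t)=\epsilon^{-1}\pi_t(\|\nabla g_t\|^2)$, Poincar\'e for $\pi_t$, and Cauchy--Schwarz). Apply It\^o to $g_t(X_t^\epsilon)$, take expectations, split $\mu_s^\epsilon(\partial_sg_s)=\pi_s(\partial_sg_s)+(\mu_s^\epsilon-\pi_s)(\partial_sg_s)$, and use the identity $\pi_s(\partial_sg_s)=-\mathrm{cov}_{\pi_s}(\phi_s,g_s)$ (obtained by differentiating $\pi_sg_s=0$ together with $\partial_s\pi_s=\phi_s\pi_s$ and $\pi_s\phi_s=0$), to obtain
\[
\mathbb{E}[S_\epsilon]=-\int_0^1\mathrm{cov}_{\pi_s}(\phi_s,g_s)\,ds+(\mu_0-\pi_0)g_0-(\mu_1^\epsilon-\pi_1)g_1+\int_0^1(\mu_s^\epsilon-\pi_s)(\partial_sg_s)\,ds.
\]
The first integral is bounded by $(\epsilon/K)\sup_s\mathrm{var}_{\pi_s}(\phi_s)^{1/2}\sup_s\mathrm{var}_{\pi_s}(f_s)^{1/2}$ via Cauchy--Schwarz and the variance bound on $g_s$, producing the first summand of the claim. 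The remaining ``discrepancy'' quantities are controlled by a synchronous coupling: under (A\ref{hyp:U_strong_convex}), two solutions driven by the same Brownian motion satisfy $\|X_t^\epsilon-Y_t^\epsilon\|\leq e^{-Kt/\epsilon}\|X_0^\epsilon-Y_0^\epsilon\|$, and combining this contraction with polynomial-weighted $C^1$ estimates on $g_s$ and $\partial_sg_s$ (derived by differentiating the Poisson equation in $x$ and $s$ and invoking the moment bounds from Lemma \ref{lem:drift}) yields a bound proportional to $W^{(p)}(\mu_0,\pi_0)\cdot(\epsilon/K)\cdot\sup_t\|\nabla f_t\|_p$ with the stated constant $\alpha_p$. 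The hardest technical step is the polynomial-weighted $C^1$-regularity of the Poisson solution and of $\partial_sg_s$ under only the stated assumptions on $U$ and $f$.

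The $S_{\epsilon,h}$ results follow the same template with time integrals replaced by sums on the mesh $\{kh\}$. For the variance, the same Cauchy--Schwarz and Gronwall argument gives $|\mathrm{cov}(f_{kh}(X_{kh}^\epsilon),f_{jh}(X_{jh}^\epsilon))|\leq e^{-(K_0\wedge K)|j-k|h/\epsilon}\sup_t\mathrm{var}_{\mu_t^\epsilon}(f_t)$, and summing the resulting geometric series over $k,j$ produces the factor $1+2/(1-e^{-(K_0\wedge K)h/\epsilon})$ multiplied by $h$. For the bias, the Poisson-equation decomposition evaluated at mesh points gives the same ``equilibrium'' contribution $-\int_0^1\mathrm{cov}_{\pi_s}(\phi_s,g_s)\,ds$ (the integration-by-parts cancellation in $s$ is still carried out on the underlying continuous process), while the time integral $\int_0^1e^{-Kt/\epsilon}\,dt$ that controls the discrepancy terms in the continuous case is replaced by the discrete sum $h\sum_k e^{-Kkh/\epsilon}=h/(1-e^{-Kh/\epsilon})$, producing the second summand.
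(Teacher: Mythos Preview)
Your treatment of part 1) and of the variance bounds in part 2) is essentially the paper's argument: the same forward/backward differentiation of $s\mapsto\mu_s^\epsilon\bigl((P_{s,t}f)^2\bigr)$ combined with the commutation relation (Lemma~\ref{lem:com_relation}) and Gronwall. These are packaged in the paper as Lemmas~\ref{lem:local_poincare}, \ref{lem:poincare_transfer} and~\ref{lem:L_2_convergence}.

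Your route to the bias bounds is genuinely different, and it has a gap. The paper does \emph{not} solve a Poisson equation. Instead, for each fixed $t$ it bounds
\[
|\mathbb{E}[f_t(X_t^\epsilon)]|=|\mu_0 P_{0,t}f_t|\le|\pi_0 P_{0,t}f_t-\pi_t f_t|+|(\mu_0-\pi_0)P_{0,t}f_t|.
\]
The second term is handled by synchronous coupling plus commutation (Lemma~\ref{lem:bias_nu_bar_nu}), giving $\alpha_p\|\nabla f_t\|_p e^{-Kt/\epsilon}W^{(p)}(\mu_0,\pi_0)$. The first is the key step: write $\pi_t f_t-\pi_0 P_{0,t}f_t=\int_0^t\partial_s(\pi_s P_{s,t}f_t)\,ds$, compute $\partial_s(\pi_s P_{s,t}f_t)=-\mathrm{cov}_{\pi_s}(\phi_s,P_{s,t}f_t)$, and apply Cauchy--Schwarz together with the $L^2(\pi_s)$-contraction of $P_{s,t}$ (Lemma~\ref{lem:bias_pi_0_pi_t}). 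Integrating in $t$ (respectively summing over the mesh) produces exactly the two displayed summands, with $\int_0^1 e^{-Kt/\epsilon}\,dt\le\epsilon/K$ replaced by $h\sum_k e^{-Kkh/\epsilon}\le h/(1-e^{-Kh/\epsilon})$ in the discrete case.

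Your Poisson-equation decomposition runs into two problems. First, the theorem assumes only $f\in C_{0,2}^{p}([0,1]\times\mathbb{R}^d)$, so $s\mapsto f_s$ is merely continuous; hence $\partial_s g_s$ (which involves $\partial_s f_s$) need not exist, and the term $\int_0^1(\mu_s^\epsilon-\pi_s)(\partial_s g_s)\,ds$ is not defined. The paper explicitly avoids Poisson solutions for the inhomogeneous process for this reason; see the discussion in Section~\ref{subsec:Quantitative-Martingale-approxim}. Second, even granting regularity, your ``discrepancy'' terms are not controlled by synchronous coupling alone: for instance
\[
(\mu_1^\epsilon-\pi_1)g_1=(\mu_0-\pi_0)P_{0,1}g_1+(\pi_0 P_{0,1}-\pi_1)g_1,
\]
and the second piece does not depend on $W^{(p)}(\mu_0,\pi_0)$ at all --- it is another instance of the quantity bounded in Lemma~\ref{lem:bias_pi_0_pi_t}. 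So once these gaps are addressed you are forced back onto the paper's direct argument, but now applied to $g_1$ and $\partial_s g_s$ rather than to $f_t$, which is strictly more demanding and does not yield the clean two-term bound.
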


\begin{proof}
See section \ref{sec:Proofs-for-intro}.
\end{proof}
\begin{rem}
See section \ref{subsec:Drift,-regularity-and} for discussion of
the assumption in Theorem \ref{thm:intro_var_and_bias_bounds} that
$f$ is twice continuously differentiable w.r.t. $x$. 
\end{rem}

So far in section \ref{subsec:Statement-of-main}, the dimension $d$
has been regarded as a constant. Our next task is to explicitly quantify
the dependence on $d$ of the variance and bias bounds in Theorem
\ref{thm:intro_var_and_bias_bounds}. We are particularly interested
in growth which is at most polynomial in $d$. Pursuant to this, in
the remainder of section \ref{subsec:Non-asymptotic-variance-and}
we adopt the perspective that $d$ is an independent parameter on
which various quantities may possibly depend, including $h$, $\epsilon$
and the quantities in hypothesis (A\ref{hyp:dimension_dependence_mse-1})
below, which we shall verify for an example in section \ref{subsec:Example:-Logistic-Regression}.
The phrasing of this hypothesis in terms of asymptotic behaviour as
$d\to\infty$ is chosen for convenience, to achieve a balance between
precision and ease of presentation in Corollary \ref{cor:dimension-dependence-MSE-1}
of Theorem \ref{thm:intro_var_and_bias_bounds} below, its proof and
application.

\begin{hyp}(Polynomial dependence on dimension)\label{hyp:dimension_dependence_mse-1}
For a given $p\geq1$, and for each $d\in\mathbb{N}$ a given $\mu_{0}\in\mathcal{P}^{2p}(\mathbb{R}^{d})$,
$K_{0}$ satisfying (\ref{eq:mu_0_poincare_hyp}), and $f\in C_{0,2}^{p}([0,1],\times\mathbb{R}^{d})$,
there exists a constant $q\geq0$ independent of $d$ such that, as
$d\to\infty$,
\[
W^{(p)}(\mu_{0},\pi_{0})\vee\sup_{t\in[0,1]}\|\nabla f_{t}\|_{p}\vee K^{-1}\vee K_{0}^{-1}\vee L^{4}\vee\sup_{t\in[0,1]}\|x_{t}^{\star}\|^{2}=O(d^{q}),
\]
and 
\[
\mu_{0}(V^{2p})=O(d^{q+1}).
\]

\end{hyp}
\begin{cor}
\label{cor:dimension-dependence-MSE-1}Assume that the $p$, $\mu_{0}$,
$K_{0}$ and $f$ in Theorem \ref{thm:intro_var_and_bias_bounds}
satisfy (A\ref{hyp:dimension_dependence_mse-1}), and let $q$ be
as in the latter. If
\[
\frac{\epsilon}{K}\sup_{t\in(0,1)}\|\partial_{t}x_{t}^{\star}\|=O(1),
\]
 as $d\to\infty$, then

\begin{align*}
 & \mathrm{var}[S_{\epsilon}]=O\left(\frac{\epsilon}{K\wedge K_{0}}r_{1}(d)\right),\qquad\qquad\qquad\qquad\;\;\;|\mathbb{E}[S_{\epsilon}]|=O\left(\frac{\epsilon}{K}r_{2}(d)+\frac{\epsilon}{K}r_{3}(d)\right),\\
 & \mathrm{var}[S_{\epsilon,h}]=O\left(h\left[1+\frac{2}{1-e^{-(K_{0}\wedge K)h/\epsilon}}\right]r_{1}(d)\right),\qquad|\mathbb{E}[S_{\epsilon,h}]|=O\left(\frac{\epsilon}{K}r_{2}(d)+\frac{h}{1-e^{-Kh/\epsilon}}r_{3}(d)\right),
\end{align*}
where

\[
r_{1}(d)\coloneqq d^{4q+2p(q+1)+1},\quad r_{2}(d)\coloneqq d^{7q/4+3pq+3p/2+1/2},\quad r_{3}(d)\coloneqq d^{2q+pq+p}.
\]
\end{cor}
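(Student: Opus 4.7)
The corollary is essentially a substitution/bookkeeping argument: each of the four bounds in Theorem \ref{thm:intro_var_and_bias_bounds} is taken and every factor is dominated by a polynomial in $d$ using (A\ref{hyp:dimension_dependence_mse-1}), supplemented by (i) time-uniform moment bounds $\sup_{t\in[0,1]}\mu_{t}^{\epsilon}(V^{2p})$ and $\sup_{t\in[0,1]}\pi_{t}(V^{2p})$ that are polynomial in $d$, propagated from $\mu_{0}(V^{2p})=O(d^{q+1})$ via Lemma \ref{lem:drift}, and (ii) the explicit polynomial $d$-dependence of the constant $\alpha_{p}$ appearing in the statement of Theorem \ref{thm:intro_var_and_bias_bounds}, which one reads off from Lemma \ref{lem:drift}. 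The prefactors $\tfrac{2\epsilon}{K\wedge K_{0}}$, $h\bigl(1+\tfrac{2}{1-e^{-(K_{0}\wedge K)h/\epsilon}}\bigr)$, $\tfrac{\epsilon}{K}$, and $\tfrac{h}{1-e^{-Kh/\epsilon}}$ simply carry over verbatim from Theorem \ref{thm:intro_var_and_bias_bounds} into the corollary.

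For the two variance bounds, I would start from part (1) of Theorem \ref{thm:intro_var_and_bias_bounds} to write
\[
\sup_{t\in[0,1]}\mathrm{var}_{\mu_{t}^{\epsilon}}[f_{t}]\leq\frac{1}{K\wedge K_{0}}\sup_{t\in[0,1]}\mu_{t}^{\epsilon}(\|\nabla f_{t}\|^{2}),
\]
and then use the definition of $\|\nabla f_{t}\|_{p}$ to get the pointwise bound $\|\nabla f_{t}(x)\|^{2}\leq 2\|\nabla f_{t}\|_{p}^{2}(1+\|x\|^{4p})$, so that $\sup_{t}\mu_{t}^{\epsilon}(\|\nabla f_{t}\|^{2})\leq 2\sup_{t}\|\nabla f_{t}\|_{p}^{2}\bigl(1+\sup_{t}\mu_{t}^{\epsilon}(V^{2p})\bigr)$. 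Collecting the $O(d^{q})$ factors from $K^{-1}$, $K_{0}^{-1}$ and $\|\nabla f_{t}\|_{p}^{2}$, together with the polynomial-in-$d$ moment bound supplied by Lemma \ref{lem:drift}, and substituting into the first two lines of part (2) of Theorem \ref{thm:intro_var_and_bias_bounds}, gives the $r_{1}(d)$ factor; the continuous- and discrete-time variance bounds differ only in their explicit prefactor.

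For the two bias bounds, the common first summand $\tfrac{\epsilon}{K}\sup_{t}\mathrm{var}_{\pi_{t}}[\phi_{t}]^{1/2}\sup_{t}\mathrm{var}_{\pi_{t}}[f_{t}]^{1/2}$ is handled by applying the Poincar\'e inequality for $\pi_{t}$ (the $t/\epsilon\to\infty$ limit of part (1) of Theorem \ref{thm:intro_var_and_bias_bounds}, which yields constant $1/K$), combined with (A\ref{hyp:U_time_cont}) to bound $\|\nabla\phi_{t}(x)\|=\|\nabla\partial_{t}U_{t}(x)\|\leq M(1+\|x-x_{t}^{\star}\|)$, the growth control on $\nabla f_{t}$ from (A\ref{hyp:dimension_dependence_mse-1}), and a drift bound on $\pi_{t}(V^{2p})$ that follows from strong log-concavity; this produces $r_{2}(d)$. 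The second summand, $\alpha_{p}W^{(p)}(\mu_{0},\pi_{0})\sup_{t}\|\nabla f_{t}\|_{p}$ (and its discretized counterpart), uses $W^{(p)}(\mu_{0},\pi_{0})=O(d^{q})$ and $\sup_{t}\|\nabla f_{t}\|_{p}=O(d^{q})$ directly from (A\ref{hyp:dimension_dependence_mse-1}), while the $d$-dependence of $\alpha_{p}$ is extracted from Lemma \ref{lem:drift} through its dependence on $K$, $d$, $\sup_{t}\|x_{t}^{\star}\|$, and $\sup_{t}\|\partial_{t}x_{t}^{\star}\|$ (the last tamed by the standing assumption $\tfrac{\epsilon}{K}\sup_{t}\|\partial_{t}x_{t}^{\star}\|=O(1)$). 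Collecting exponents yields $r_{3}(d)$.

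The only real obstacle is keeping the exponents straight; no new estimate is needed, since every ingredient is either in (A\ref{hyp:dimension_dependence_mse-1}) or in Lemma \ref{lem:drift}. The single nontrivial input is the precise polynomial order of $\alpha_{p}$ from Lemma \ref{lem:drift}, which packages the moment-propagation estimates under the inhomogeneous diffusion and therefore carries most of the $d$-dependence in the Wasserstein bias term. One must also translate carefully between $V(x)=\|x\|^{2}$, in which the definition of $\|\cdot\|_{p}$ is framed, and the re-centered $V_{t}(x)=\|x-x_{t}^{\star}\|^{2}$ under which the drift lemma is naturally phrased; each such translation costs a factor $\sup_{t}\|x_{t}^{\star}\|^{2p}=O(d^{pq})$, and these factors are what drive the exponents in $r_{1},r_{2},r_{3}$ to grow proportionally to $p$ rather than merely to $q$.
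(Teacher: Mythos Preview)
Your proposal is correct and follows the same route as the paper: bound each of $\sup_t\mathrm{var}_{\mu_t^\epsilon}[f_t]$, $\sup_t\mathrm{var}_{\pi_t}[\phi_t]$, $\sup_t\mathrm{var}_{\pi_t}[f_t]$ via the relevant Poincar\'e inequality, dominate the resulting gradient moments pointwise through $\|\cdot\|_p$ and then by moment propagation from Lemma~\ref{lem:drift}, and substitute into the four inequalities of Theorem~\ref{thm:intro_var_and_bias_bounds}. The paper isolates the polynomial $d$-dependence of $\alpha_p$ and of $\sup_t\pi_t(\bar V^{(p)})$ into the auxiliary Lemmas~\ref{lem:alpha_dimension_dependence} and~\ref{lem:pi_V_dim_dependence}, which you describe in substance without naming.

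The one step that does not close as you have written it is the bound on $\mathrm{var}_{\pi_t}[\phi_t]$: you control $\|\nabla\phi_t\|$ via (A\ref{hyp:U_time_cont}), which brings in the constant $M$, but $M$ is \emph{not} among the quantities assumed to be $O(d^q)$ in (A\ref{hyp:dimension_dependence_mse-1}), so from the stated hypotheses alone you cannot conclude polynomial growth of this term. The paper instead bounds it through (A\ref{hyp:U_time_reg}) in terms of $L$, which \emph{is} in (A\ref{hyp:dimension_dependence_mse-1}) via $L^4=O(d^q)$, obtaining $\sup_t\mathrm{var}_{\pi_t}[\phi_t]\le 3L^2K^{-1}\sup_t\pi_t(\bar V)=O(d^{5q/2+1})$; this choice is what fixes the exponent in $r_2(d)$.
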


\begin{proof}
See section \ref{sec:Proofs-for-intro}.
\end{proof}

\subsubsection{A central limit theorem in the high-dimensional regime\label{subsec:CLT_front}}

The expressions in Corollary \ref{cor:dimension-dependence-MSE-1}
suggest that the behaviour of $\mathrm{var}[S_{\epsilon,h}]$ and
$|\mathbb{E}[S_{\epsilon,h}]|$ as $\epsilon\to0$ depends on the
scaling relationship between $\epsilon$ and $h$. We now introduce
a parameter $\ell\geq0$ to delineate two cases.

\begin{hyp}($\ell$-dependent scaling of $h$ with $\epsilon$)\label{hyp:eps_and_h_scaling_intro}

1. In the case $\ell=0$, we assume $h(\epsilon)=O(\epsilon^{c})$
for an arbitrary $c>1$. 

2. In the case $\ell>0$, we set $h(\epsilon)\coloneqq\ell\epsilon$

\end{hyp}Throughout the remainder of section \ref{subsec:CLT_front},
the value of $\ell\geq0$ should be regarded as being chosen independently,
and (A\ref{hyp:eps_and_h_scaling_intro}) is assumed to hold.

To state our next main result we need to introduce some further notation.
For each $s\in[0,1]$ and $\epsilon>0$, let $(Y_{t}^{s,\epsilon})_{t\in\mathbb{R}^{+}}$
be the solution of: 
\[
Y_{t}^{s,\epsilon}=Y_{0}^{s,\epsilon}-\epsilon^{-1}\int_{0}^{t}\nabla U_{s}(Y_{u}^{s,\epsilon})\mathrm{d}u+\sqrt{2\epsilon^{-1}}\int_{0}^{t}\mathrm{d}B_{u},
\]
where $Y_{0}^{s,\epsilon}$ is an $\mathcal{F}_{0}$-measurable random
variable with distribution $\pi_{s}$. Then writing $L_{2}(\pi_{s})$
for the collection of all real-valued functions that are square-integrable
with respect to $\pi_{s}$, standard results for stationary reversible
Markov processes and Markov chains ensure that for any $s\in[0,1]$
and $f_{s}\in L_{2}(\pi_{s})$, the following limits exist:
\begin{align*}
 & \varsigma_{0}(s)\coloneqq\lim_{\epsilon\to0}\mathrm{var}\left[\epsilon^{-1/2}\int_{0}^{1}f_{s}(Y_{t}^{s,\epsilon})\mathrm{d}t\right],\\
 & \varsigma_{\ell}(s)\coloneqq\lim_{\epsilon\to0}\mathrm{var}\left[\epsilon^{-1/2}h(\epsilon)\sum_{k=0}^{\left\lfloor 1/h\right\rfloor -1}f_{s}(Y_{kh}^{s,\epsilon})\right],\quad\ell>0.
\end{align*}

With $Q_{t}^{s}(f)(y)\coloneqq\mathbb{E}[f(Y_{t}^{s,1})|Y_{0}^{s,1}=y]$
and $\mathcal{L}_{s}f\coloneqq-\left\langle \nabla U_{s},\nabla f\right\rangle +\Delta f$,
it is well known that the following bounds, in terms of $L_{2}(\pi_{s})$
spectral gaps and constant $K$ from (A\ref{hyp:U_strong_convex}),
hold:

\[
\varsigma_{\ell}(s)\leq2\mathrm{var}_{\pi_{S}}[f_{s}]\cdot\begin{cases}
\mathrm{Gap}(\mathcal{L}_{s})^{-1},\quad & \ell=0,\\
\ell\mathrm{Gap}(Q_{\ell}^{s})^{-1},\quad & \ell>0,
\end{cases}
\]
and
\[
\mathrm{Gap}(\mathcal{L}_{s})\geq K,\quad\quad\mathrm{Gap}(Q_{\ell}^{s})^{-1}\geq\frac{1-\exp(-K\ell)}{\ell}.
\]
 Indeed $\mathrm{Gap}(\mathcal{L}_{s})\geq K$ is a direct consequence
of the standard Poincaré inequality for the strongly log-concave distribution
$\pi_{s}$. These bounds suggest that under hypotheses such as (A\ref{hyp:dimension_dependence_mse-1}),
for each $s\in[0,1]$, fluctuations of the additive functionals $\int_{0}^{1}f_{s}(Y_{t}^{s,\epsilon})\mathrm{d}t$
and $h(\epsilon)\sum_{k=0}^{\left\lfloor 1/h(\epsilon)\right\rfloor -1}f_{s}(Y_{kh}^{s,\epsilon})$
associated with the time-homogeneous process $(Y_{t}^{s,\epsilon})_{t\in\mathbb{R}^{+}}$
could possibly be controlled by choosing $\epsilon^{-1}$ to be polynomial
in $d$. Our next main result, Theorem \ref{thm:intro_CLT}, establishes
that a similar phenomenon holds for additive functionals associated
with time-inhomogeneous process $(X_{t}^{\epsilon})_{t\in[0,1]}$.

Under our assumptions, for any $\ell\geq0$ , $s\mapsto\varsigma_{\ell}(s)$
can be shown to be integrable (see the proof of Lemma \ref{lem:riemannconvergence}),
and therefore 
\begin{equation}
\sigma_{\ell}^{2}\coloneqq\int_{0}^{1}\varsigma_{\ell}(s)\mathrm{d}s\label{eq:intro_sigma}
\end{equation}
 is well-defined. In the context of Theorem \ref{thm:intro_CLT} below,
it is important to note that $\varsigma_{\ell}$ and $\sigma_{\ell}^{2}$
depend on the dimension $d$, but this dependence is not shown in
the notation.
\begin{thm}
\label{thm:intro_CLT}Fix $p\geq1$ and for each $d\in\mathbb{N}$,
fix a function $f\in C_{1,2}^{p}([0,1]\times\mathbb{R}^{d})$ such
that for each $t\in[0,1]$ $\pi_{t}f_{t}=0$, and a probability measure
$\mu_{0}\in\mathcal{P}^{2p}(\mathbb{R}^{d})$ and a constant $K_{0}>0$
satisfying (\ref{eq:mu_0_poincare_hyp}). Assume that (A\ref{hyp:dimension_dependence_mse-1})
holds and assume additionally that for each $\ell\geq0$, $\sup_{t}1/\varsigma_{\ell}(t)$
and $\sup_{t}\|\partial_{t}f_{t}\|_{p}$ grow at most polynomially
fast as $d\to\infty.$ Then for any $\ell\geq0$ there exists $a>0$
such that with $\epsilon(d)=O(d^{-a})$ and $d\mapsto h(d)$ such
that (A\ref{hyp:eps_and_h_scaling_intro}) holds, 
\[
\lim_{d\to\infty}\left|\mathrm{var}\left[\epsilon(d)^{-1/2}S_{\epsilon(d),h(d)}-\sigma_{\ell}^{2}\right]\right|=0,
\]
and 
\[
\lim_{d\to\infty}\sup_{w\in\mathbb{R}}\left|\mathbb{P}\left[\epsilon(d)^{-1/2}S_{\epsilon(d),h(d)}/\sqrt{\sigma_{\ell}^{2}}\leq w\right]-\Phi(w)\right|=0,
\]
where $S_{\epsilon,h}$ is as in Theorem \ref{thm:intro_var_and_bias_bounds},
and $\Phi$ is the standard Gaussian c.d.f.
\end{thm}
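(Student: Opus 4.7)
The plan is to approximate $\epsilon^{-1/2} S_{\epsilon,h}$ by a martingale via a ``frozen-chain'' Poisson-equation (corrector) decomposition, show that the predictable quadratic variation of this martingale converges to $\sigma_\ell^2$, and then apply a quantitative martingale CLT. Because every error term must ultimately be beaten by taking $\epsilon(d) = O(d^{-a})$, the whole argument has to be instrumented with explicit polynomial-in-$d$ bounds, for which (A\ref{hyp:dimension_dependence_mse-1}), Theorem \ref{thm:intro_var_and_bias_bounds}, and the strong-convexity hypothesis (A\ref{hyp:U_strong_convex}) are the essential inputs.

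First, for each $s\in[0,1]$ I would introduce a corrector $g_s^h$ solving $(I - Q_h^s) g_s^h = h f_s$ with $\pi_s g_s^h = 0$ (in the case $\ell=0$ one works instead with the continuous Poisson equation $\mathcal{L}_s g_s = f_s$ and It\^o's formula applied to $g_s(X_s^\epsilon)$). Strong log-concavity gives $\mathrm{Gap}(Q_h^s)^{-1}\leq \ell/(1-e^{-K\ell})$, whence $\|g_s^h\|_{L_2(\pi_s)}^2 \leq C\,\varsigma_\ell(s)$; smoothness of $U$ via (A\ref{hyp:basic_hyp_on_U})--(A\ref{hyp:third_order_deriv}) and the polynomial growth from (A\ref{hyp:dimension_dependence_mse-1}) then upgrade this to pointwise polynomial-in-$d$ bounds on $g_s^h$, its spatial gradient, and the time-derivative $\partial_s g_s^h$. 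Telescoping,
\[
S_{\epsilon,h} \;=\; \sum_{k=0}^{\lfloor 1/h\rfloor -1} \bigl[g_{kh}^h(X_{kh}^\epsilon) - Q_h^{kh} g_{kh}^h(X_{kh}^\epsilon)\bigr] \;=\; M_\epsilon + R_\epsilon + B_\epsilon,
\]
where $M_\epsilon$ is the $(\mathcal{F}_{kh})$-martingale with increments $\Delta_k M_\epsilon = g_{kh}^h(X_{(k+1)h}^\epsilon) - \mathbb{E}[g_{kh}^h(X_{(k+1)h}^\epsilon)\mid\mathcal{F}_{kh}]$, $R_\epsilon$ collects (i) the discrepancy between the true one-step kernel of $X^\epsilon$ and the frozen $Q_h^{kh}$, which is of order $h^2$ by (A\ref{hyp:U_time_cont}), and (ii) the telescoping time-increments $g_{(k+1)h}^h - g_{kh}^h$ controlled by $\partial_s g_s^h$, and $B_\epsilon$ is a boundary term that is $O_{L^2}(1)$ and hence contributes $O(\sqrt\epsilon)$ after rescaling.

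The variance statement is handled by computing $\mathbb{E}[(\Delta_k M_\epsilon)^2 \mid \mathcal{F}_{kh}]$: under the frozen dynamics this conditional variance is $\epsilon h\,\varsigma_\ell(kh)$ plus a higher-order-in-$\epsilon$, polynomial-in-$d$ error, and the effect of replacing $\mu_{kh}^\epsilon$ by $\pi_{kh}$ in the outer expectation is controlled by the Poincar\'e inequality of Theorem \ref{thm:intro_var_and_bias_bounds}.1 applied to the (smooth, polynomially growing) function $x\mapsto \mathrm{var}_{Q_h^{kh}}[g_{kh}^h](x)$, combined with a Wasserstein estimate built from the bias portion of Theorem \ref{thm:intro_var_and_bias_bounds}. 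Summing over $k$ produces a Riemann sum $\epsilon\sum_k h\,\varsigma_\ell(kh)\to \epsilon\sigma_\ell^2$ (integrability and Riemann-integrability of $\varsigma_\ell$ are the content of the announced Lemma \ref{lem:riemannconvergence}), and $R_\epsilon$ is $o(\sqrt\epsilon)$ in $L^2$ with polynomial-in-$d$ constants, so $|\mathrm{var}[\epsilon^{-1/2} S_{\epsilon,h}] - \sigma_\ell^2|\to 0$.

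For the CLT I would apply a quantitative martingale central limit theorem (Heyde--Brown / McLeish, or a Stein's-method variant as indicated in the abstract), whose Berry--Esseen-type bound requires (a) an $L^1$ rate for $|\langle M_\epsilon\rangle/\epsilon - \sigma_\ell^2|$, obtained by applying a path-space Poincar\'e inequality to $\langle M_\epsilon\rangle$ viewed as a function of $(X_{kh}^\epsilon)_k$, and (b) a Lindeberg-type fourth-moment bound $\sum_k \mathbb{E}[(\Delta_k M_\epsilon)^4]$, which follows from the pointwise polynomial growth of $g_{kh}^h$ in Step 1 and the moment bounds on $X_{kh}^\epsilon$ supplied by (A\ref{hyp:dimension_dependence_mse-1}) together with the drift estimates of Lemma \ref{lem:drift}. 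All prefactors so assembled are polynomial in $d$ times a power of $\epsilon$, so $a>0$ can be chosen large enough that $\epsilon(d)=O(d^{-a})$ annihilates them and delivers the advertised convergence to $\Phi$. \emph{The main obstacle} is the quantitative version of the quadratic-variation step: matching the conditional variance of $\Delta_k M_\epsilon$ to $\epsilon h\,\varsigma_\ell(kh)$ while the chain is still out of equilibrium requires simultaneously using the ergodicity of the frozen chain over a single step of size $h=\ell\epsilon$ and propagating the bias $\mu_{kh}^\epsilon - \pi_{kh}$ through a nonlinear functional of the correctors, with all constants explicit and polynomial in $d$; this appears to be where the bulk of the technical work lies, together with the regularity theory for $g_s^h$ in $(s,x)$ on which it rests.
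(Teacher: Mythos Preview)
Your high-level plan (martingale approximation of $\epsilon^{-1/2}S_{\epsilon,h}$, identify the predictable quadratic variation with $\sigma_\ell^2$, then apply a quantitative martingale CLT) matches the paper's. The specific implementation is genuinely different, and the paper in fact discusses your route and explains why it takes the other one.

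You build the corrector from the \emph{frozen} homogeneous chain, $(I-Q_h^s)g_s^h=hf_s$ centred under $\pi_s$, and then carry an error term $R_\epsilon$ coming from $P_{kh,(k+1)h}^\epsilon$ versus $Q_h^{kh}$ and from $\partial_s g_s^h$. The paper instead follows the device of \cite{sethuraman2005martingale}: it centres by the \emph{actual} marginal $\mu_{kh}^\epsilon$ and defines $\gamma_{k,\epsilon}(x)=\sum_{i=k}^{n-1}P_{kh,ih}^\epsilon\bar f_{ih,\epsilon}(x)$ using the true inhomogeneous semigroup. This makes the martingale decomposition \emph{exact} up to a single boundary term $\gamma_{0,\epsilon}(X_0^\epsilon)$ (Lemma~\ref{lem:Poisson-and-martingale}), so there is no $R_\epsilon$ at this stage and no need to control $\partial_s g_s^h$. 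The comparison with the frozen chain is deferred entirely to the variance identification, where $\upsilon(\epsilon)-\sigma_\ell^2$ is split into eight terms $\Upsilon_{0,\epsilon},\ldots,\Upsilon_{7,\epsilon}$ (Proposition~\ref{prop:CLT-variance-convergence}); the passage $\gamma_{k,\epsilon}\to\eta_{k,\epsilon}\to g_{kh}$ you would have done in the martingale step appears there instead (Lemmas~\ref{lem:poisson-homogeneous-approximation} and~\ref{lem:sublemma-cv-poisson-homogeneous}). For the CLT itself the paper uses H\"ausler's bound \cite{haeusler1988rate}, which plays the same role as your Heyde--Brown/McLeish.

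Two points deserve caution. First, for $\ell=0$ you propose the continuous Poisson equation $\mathcal{L}_s g_s=f_s$; the paper explicitly avoids this (paragraph preceding Lemma~\ref{lem:Poisson-and-martingale}) because quantitative polynomial-in-$d$ bounds on $\nabla_x g_s$ and $\partial_t g_s$ are, per \cite{pardoux2001,pardoux2005,Veretennikov2011}, not available in the required generality; the paper handles $\ell=0$ uniformly with $\ell>0$ via the discrete sum and the scaling $h=O(\epsilon^c)$. Second, the regularity of $g_s^h$ in $s$ that you flag as ``the main obstacle'' is exactly what the paper's choice of corrector sidesteps: the trade-off is a longer variance decomposition versus needing those derivative bounds up front. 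If you can supply the $\partial_s g_s^h$ estimates with explicit constants your route would work and arguably be cleaner; the paper's route buys you not having to.
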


\begin{proof}
See section \ref{sec:Quantitative-CLT-bound}.
\end{proof}
\begin{rem}
It is in principle possible to calculate quantitative bounds on the
rates of convergence in Theorem \ref{thm:intro_CLT}, by agreggation
of various bounds found in our proof. We do not pursue this here due
to a lack of space and the limited interest of such bounds in practice.
\end{rem}

\begin{rem}
Note that compared to Theorem \ref{thm:intro_var_and_bias_bounds},
Theorem \ref{thm:intro_CLT} requires additional assumptions that
$s\mapsto f_{s}(x)$ is continuously differentiable for any $x\in\mathbb{R}^{d}$.
This condition is required in order to obtain explicit control on
the error in Riemann sums involved in our calculations, and could
be relaxed easily to H\"older continuity, at the expense of additional
notation.
\end{rem}

\begin{rem}
As an aside, it is natural to investigate the impact of $\ell$ on
the asymptotic variance $\sigma_{\ell}^{2}$. Theorem \ref{thm:variance-spectral-decomposition+Geyer}
establishes that $\sigma_{\ell}^{2}$ is a non-decreasing function
of $\ell$. This result can be understood as being a generalisation
of \cite[Theorem 3.3]{geyer1992practical}, an important fact in the
area of discrete time Markov chain Monte Carlo methods, concerned
with ``thinning'' in the context of ergodic averages.

\end{rem}

\begin{rem}
By inspecting the proofs in section \ref{sec:Quantitative-CLT-bound},
one can check that similar statements hold in the fixed dimension
case, that is with $d\in\mathbb{N}$ held constant and $h(\epsilon)$
as in (A\ref{hyp:eps_and_h_scaling_intro}),
\[
\lim_{\epsilon\to0}\left|\mathrm{var}\left[\epsilon^{-1/2}S_{\epsilon,h(\epsilon)}-\sigma_{\ell}^{2}\right]\right|=0,\qquad\lim_{\epsilon\to\infty}\sup_{w\in\mathbb{R}}\left|\mathbb{P}\left[\epsilon^{-1/2}S_{\epsilon,h(\epsilon)}/\sqrt{\sigma_{\ell}^{2}}\leq w\right]-\Phi(w)\right|=0.
\]
\end{rem}

\subsubsection{Discretization of the process\label{subsec:intro_Discretization}}

One typically resorts to simulating some approximation to the diffusion
$(X_{t}^{\epsilon})_{t\in[0,1]}$ involving discretization in order
to obtain a practical approximation to $S_{\epsilon}$ or $S_{\epsilon,h}$.
There are many possible approaches to discretization of diffusions
and it is not our objective to investigate or discuss their relative
merits. Instead, we consider a simple Euler-Maruyama discretization
scheme, since it is a generally applicable method whose practical
computational cost is easy to assess and whose approximation properties
can be quite directly analyzed.

We present next a general purpose lemma which allows control of moments
of functions on the path space of one diffusion in terms of those
of another, which we shall subsequently apply to the Euler-Maruyama
discretization scheme. 

Let $E$ be the Polish space of continuous functions $z:t\in[0,1]\mapsto z_{t}\in\mathbb{R}^{d}$
endowed with the metric $\rho(z,\tilde{z})=\sup_{t\in[0,1]}\|z_{t}-\tilde{z}_{t}\|$,
and let $\mathcal{B}(E)$ be its Borel $\sigma$-algebra. 
\begin{lem}
\label{lem:gamma_coupling}For any $(E,\mathcal{B}(E))$-valued random
elements $X,\widetilde{X}$, any measurable function $\varphi:(E,\mathcal{B}(E))\to(\mathbb{R},\mathcal{B}(\mathbb{R}))$,
and any $p,q,r\in[1,+\infty)$ such that $1/q+1/r=1$,
\begin{align*}
 & \sup_{c\in\mathbb{R}}\left|\mathbb{P}[\varphi(\widetilde{X})\leq c]-\mathbb{P}[\varphi(X)\leq c]\right|\leq\|\mu-\widetilde{\mu}\|_{\mathrm{tv}},\\
 & \mathbb{E}[|\varphi(\widetilde{X})|^{p}]^{1/p}\leq\mathbb{E}[|\varphi(X)|^{p}]^{1/p}+\|\mu-\widetilde{\mu}\|_{\mathrm{tv}}^{1/pq}\left\{ \mathbb{E}[|\varphi(X)|^{pr}]^{1/pr}+\mathbb{E}[|\varphi(\widetilde{X})|^{pr}]^{1/pr}\right\} .
\end{align*}
where
\[
\mu(A)\coloneqq\mathbb{P}[X\in A],\qquad\widetilde{\mu}(A)\coloneqq\mathbb{P}[\widetilde{X}\in A],\quad A\in\mathcal{B}(E).
\]
\end{lem}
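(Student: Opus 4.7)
\textbf{Proof plan for Lemma \ref{lem:gamma_coupling}.}

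The first inequality is almost by definition. Since $\varphi$ is Borel-measurable from $(E,\mathcal{B}(E))$ to $(\mathbb{R},\mathcal{B}(\mathbb{R}))$, the sub-level set $A_{c}\coloneqq\{z\in E:\varphi(z)\leq c\}$ lies in $\mathcal{B}(E)$. Consequently
\[
\bigl|\mathbb{P}[\varphi(\widetilde{X})\leq c]-\mathbb{P}[\varphi(X)\leq c]\bigr|=\bigl|\widetilde{\mu}(A_{c})-\mu(A_{c})\bigr|\leq\sup_{A\in\mathcal{B}(E)}|\widetilde{\mu}(A)-\mu(A)|=\|\mu-\widetilde{\mu}\|_{\mathrm{tv}},
\]
uniformly in $c$, which gives the first claim.

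For the second inequality my plan is to combine the Jordan--Hahn decomposition with H\"older's inequality. Write $\widetilde{\mu}-\mu=\nu_{+}-\nu_{-}$, where $\nu_{+},\nu_{-}$ are mutually singular nonnegative measures; since $\mu$ and $\widetilde{\mu}$ are probability measures and $\|\cdot\|_{\mathrm{tv}}$ is the supremum (not the $L^{1}$) form, one has $\nu_{+}(E)=\nu_{-}(E)=\|\mu-\widetilde{\mu}\|_{\mathrm{tv}}$. Setting $g\coloneqq|\varphi|^{p}$, for any nonnegative measurable $g$ we obtain
\[
\widetilde{\mu}(g)-\mu(g)=\nu_{+}(g)-\nu_{-}(g)\leq\nu_{+}(g).
\]
Because $\nu_{+}$ is the positive part of $\widetilde{\mu}-\mu$ one has $\nu_{+}\leq\widetilde{\mu}$, so the Radon--Nikodym density $\psi\coloneqq d\nu_{+}/d\widetilde{\mu}$ satisfies $0\leq\psi\leq 1$ $\widetilde{\mu}$-a.s. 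Applying H\"older's inequality with conjugate exponents $q,r$ gives
\[
\nu_{+}(g)=\int g\,\psi\,d\widetilde{\mu}\;\leq\;\widetilde{\mu}(g^{r})^{1/r}\Bigl(\int\psi^{q}\,d\widetilde{\mu}\Bigr)^{1/q},
\]
and the bound $\psi^{q}\leq\psi$ (valid since $\psi\leq1$ and $q\geq1$) yields $\int\psi^{q}d\widetilde{\mu}\leq\nu_{+}(E)=\|\mu-\widetilde{\mu}\|_{\mathrm{tv}}$. Combining,
\[
\widetilde{\mu}(g)\;\leq\;\mu(g)+\widetilde{\mu}(g^{r})^{1/r}\|\mu-\widetilde{\mu}\|_{\mathrm{tv}}^{1/q}.
\]

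To conclude, take $g=|\varphi|^{p}$, extract a $p$-th root via the elementary subadditivity inequality $(a+b)^{1/p}\leq a^{1/p}+b^{1/p}$ valid for $p\geq 1$ and $a,b\geq0$, and rename $\mathbb{E}[|\varphi(X)|^{p}]=\mu(g)$, $\mathbb{E}[|\varphi(\widetilde{X})|^{p}]=\widetilde{\mu}(g)$, $\mathbb{E}[|\varphi(\widetilde{X})|^{pr}]=\widetilde{\mu}(g^{r})$. This produces the bound
\[
\mathbb{E}[|\varphi(\widetilde{X})|^{p}]^{1/p}\leq\mathbb{E}[|\varphi(X)|^{p}]^{1/p}+\mathbb{E}[|\varphi(\widetilde{X})|^{pr}]^{1/pr}\|\mu-\widetilde{\mu}\|_{\mathrm{tv}}^{1/pq},
\]
which is slightly stronger than the stated bound because $\mathbb{E}[|\varphi(X)|^{pr}]^{1/pr}\geq 0$ may be added freely to the right-hand side, recovering the symmetric form that appears in the lemma.

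The argument is essentially routine and I do not anticipate any genuine obstacle; the only step worth being careful about is the reduction $\psi^{q}\leq\psi$, which relies on $\nu_{+}\leq\widetilde{\mu}$ as measures and thus on the Hahn decomposition being performed on the side of $\widetilde{\mu}$ (as opposed to working against a dominating measure such as $(\mu+\widetilde{\mu})/2$, which would introduce unwanted constants). Presenting both $\mu(g^{r})^{1/pr}$ and $\widetilde{\mu}(g^{r})^{1/pr}$ on the right-hand side is convenient in applications since either integral can then be controlled by whichever of $X$ or $\widetilde{X}$ is analytically more tractable, typically the original diffusion with its known moment bounds.
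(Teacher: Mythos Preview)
Your proof is correct and takes a genuinely different route from the paper. The paper constructs a maximal coupling $(Z,\widetilde{Z})$ of $\mu$ and $\widetilde{\mu}$, available because $E$ is Polish, and then argues via Minkowski's inequality and H\"older applied to the indicator $\mathbb{I}\{Z\neq\widetilde{Z}\}$: this naturally produces the symmetric term $\mathbb{E}[|\varphi(X)|^{pr}]^{1/pr}+\mathbb{E}[|\varphi(\widetilde{X})|^{pr}]^{1/pr}$ from bounding $|\varphi(\widetilde{Z})-\varphi(Z)|^{pr}$. Your argument instead works directly at the level of measures via the Jordan--Hahn decomposition and H\"older against the density $d\nu_{+}/d\widetilde{\mu}$, followed by the subadditivity of $x\mapsto x^{1/p}$. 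Your route is slightly more elementary in that it avoids the coupling construction and hence does not rely on $E$ being Polish, and as you note it even yields the marginally sharper one-sided bound with only $\mathbb{E}[|\varphi(\widetilde{X})|^{pr}]^{1/pr}$ on the right. The paper's coupling approach, on the other hand, makes the dependence on the event $\{Z\neq\widetilde{Z}\}$ transparent and is a natural companion to the path-space total variation estimates used later.
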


\begin{proof}
See section \ref{sec:Proofs-for-intro}.
\end{proof}
For $\epsilon>0$ and $h\in(0,1]$, let $\widetilde{X}^{\epsilon,h}=(\widetilde{X}_{t}^{\epsilon,h})_{t\in[0,1]}$
be the solution of 
\begin{equation}
\widetilde{X}_{t}^{\epsilon,h}=X_{0}^{\epsilon}-\epsilon^{-1}\int_{0}^{t}\widetilde{\nabla U}_{s}(\widetilde{X}_{s}^{\epsilon,h})\mathrm{d}s+\sqrt{2\epsilon^{-1}}\int_{0}^{t}\mathrm{d}B_{s},\label{eq:sde_disc-1}
\end{equation}
where $X_{0}^{\epsilon}$ is the same $\mathcal{F}_{0}$-measurable
random variable with distribution $\mu_{0}$ as in (\ref{eq:SDE_intro}),
and the following short-hand notation is used: 
\begin{equation}
\widetilde{\nabla U}_{t}(\widetilde{X}_{t}^{\epsilon,h})\coloneqq\sum_{k=0}^{\left\lfloor 1/h\right\rfloor -1}\nabla U_{kh}(\widetilde{X}_{kh}^{\epsilon,h})\mathbb{I}_{[kh,(k+1)h)}(t).\label{eq:tilde_grad_U}
\end{equation}

In practice, one does not simulate the entire trajectory $(\widetilde{X}_{t}^{\epsilon,h})_{t\in[0,1]}$
but rather the skeleton $(\widetilde{X}_{kh}^{\epsilon,h})_{k=0,\ldots,,\left\lfloor 1/h\right\rfloor -1}$.
The point of writing (\ref{eq:sde_disc-1})-(\ref{eq:tilde_grad_U})
is to highlight that the term $\sqrt{2\epsilon^{-1}}\int_{0}^{t}\mathrm{d}B_{s}$
is common to both (\ref{eq:sde_disc-1}) and (\ref{eq:SDE_intro})
so that the laws of $(X_{t}^{\epsilon})_{t\in[0,1]}$ and $(\widetilde{X}_{t}^{\epsilon,h})_{t\in[0,1]}$
are mutually absolutely continuous. Via Girsanov's theorem and Pinsker's
inequality, Dalalyan \cite{dalalyan2016theoretical} when studying
a time-homogeneous process used this fact to estimate the total variation
distance between the time-marginal distributions of a overdamped Langevin
diffusion and its discretization, analogous in the present context
to the distributions of say $X_{1}^{\epsilon}$ and $\widetilde{X}_{1}^{\epsilon,h}$.
However, this Girsanov/Pinsker technique allows one to estimate the
total variation distance not only between time-marginal distributions,
but also between the laws of $(X_{t}^{\epsilon})_{t\in[0,1]}$ and
$(\widetilde{X}_{t}^{\epsilon,h})_{t\in[0,1]}$, i.e. the probability
measures 
\[
\mu^{\epsilon}(A)\coloneqq\mathbb{P}[X^{\epsilon}\in A]\quad\quad\widetilde{\mu}^{\epsilon,h}(A)\coloneqq\mathbb{P}[\widetilde{X}^{\epsilon,h}\in A],\quad\quad A\in\mathcal{B}(E),
\]
and we shall exploit that fact in the application of Lemma \ref{lem:gamma_coupling}
in Section \ref{subsec:Example:-Logistic-Regression} to transfer
the distributional convergence in Theorem \ref{thm:intro_CLT} to
the discretized process. In particular, Proposition \ref{prop:tv_dim_dependence}
together with standard Foster-Lyapunov techniques will be applied
to control the terms in the bounds of Lemma \ref{lem:gamma_coupling}. 
\begin{prop}
\label{prop:tv_dim_dependence}For any $q\geq0$, if 
\begin{align}
 & M^{2}\vee L^{4}\vee K^{-1}\vee\sup_{t}\|\partial_{t}x_{t}^{\star}\|^{2}=O(d^{q}),\quad\mu_{0}(V)=O(d^{q+1}),\label{eq:tv_bound_constants_hyp}\\
 & h\vee\epsilon\vee\frac{h}{\epsilon}\frac{L^{2}}{K}=o(1),\quad\frac{h}{\epsilon}d=O(1),\nonumber 
\end{align}
as $d\to\infty$, then 
\[
\|\mu^{\epsilon}-\widetilde{\mu}^{\epsilon,h}\|_{\mathrm{tv}}=O\left(\sqrt{\frac{h}{\epsilon^{2}}d^{4q+1}}\right).
\]
\end{prop}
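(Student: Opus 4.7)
The plan is to apply the Girsanov/Pinsker route alluded to in the discussion preceding the statement. Because the two SDEs (\ref{eq:SDE_intro}) and (\ref{eq:sde_disc-1}) are driven by the same Brownian motion and share the diffusion coefficient $\sqrt{2/\epsilon}$, Girsanov's theorem---whose Novikov condition follows from the at-most-linear growth of $\nabla U_t$ in (A\ref{hyp:U_time_reg}) together with Gaussian moments of $\widetilde{X}^{\epsilon,h}$ derived below---yields
\[
\mathrm{KL}(\widetilde{\mu}^{\epsilon,h}\,\|\,\mu^\epsilon)=\frac{1}{4\epsilon}\,\mathbb{E}\!\int_0^1\bigl\|\nabla U_t(\widetilde{X}_t^{\epsilon,h})-\widetilde{\nabla U}_t(\widetilde{X}_t^{\epsilon,h})\bigr\|^2\mathrm{d}t.
\]
Pinsker's inequality $\|\mu^\epsilon-\widetilde{\mu}^{\epsilon,h}\|_{\mathrm{tv}}\leq\sqrt{\tfrac12\mathrm{KL}}$ then reduces the proof to an $L^2$ estimate of the drift mismatch.

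For $t\in[kh,(k+1)h)$ I would insert $\pm\nabla U_{kh}(\widetilde{X}_t^{\epsilon,h})$ and apply the triangle inequality to split the integrand into a \emph{time-regularity} piece, bounded via (A\ref{hyp:U_time_cont}) by $M^2h^2\bigl(1+\|\widetilde{X}_t^{\epsilon,h}-x_{kh}^\star\|^2\bigr)$, and a \emph{space-Lipschitz} piece, bounded via (A\ref{hyp:U_grad_lipschitz_x}) by $L^2\|\widetilde{X}_t^{\epsilon,h}-\widetilde{X}_{kh}^{\epsilon,h}\|^2$. Equation (\ref{eq:sde_disc-1}) gives $\widetilde{X}_t^{\epsilon,h}-\widetilde{X}_{kh}^{\epsilon,h}=-\tfrac{t-kh}{\epsilon}\nabla U_{kh}(\widetilde{X}_{kh}^{\epsilon,h})+\sqrt{2/\epsilon}(B_t-B_{kh})$, whence
\[
\mathbb{E}\|\widetilde{X}_t^{\epsilon,h}-\widetilde{X}_{kh}^{\epsilon,h}\|^2\leq \frac{2h^2}{\epsilon^2}\,\mathbb{E}\|\nabla U_{kh}(\widetilde{X}_{kh}^{\epsilon,h})\|^2+\frac{4hd}{\epsilon},
\]
and (A\ref{hyp:U_time_reg}) bounds the first summand by $\tfrac{4L^2h^2}{\epsilon^2}(1+\mathbb{E}\|\widetilde{X}_{kh}^{\epsilon,h}\|^2)$.

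The technical heart of the proof is thus a uniform-in-time second-moment bound $\sup_{t\in[0,1]}\mathbb{E}\|\widetilde{X}_t^{\epsilon,h}-x_t^\star\|^2<\infty$, which I would obtain by a one-step Foster-Lyapunov analysis of the Euler scheme with Lyapunov function $V_t(x)=\|x-x_t^\star\|^2$. Writing $\widetilde{X}_{(k+1)h}^{\epsilon,h}-x_{(k+1)h}^\star=(\widetilde{X}_{kh}^{\epsilon,h}-x_{kh}^\star)-\tfrac{h}{\epsilon}[\nabla U_{kh}(\widetilde{X}_{kh}^{\epsilon,h})-\nabla U_{kh}(x_{kh}^\star)]+(x_{kh}^\star-x_{(k+1)h}^\star)+\sqrt{2h/\epsilon}\,\xi_k$ and squaring, strong convexity (A\ref{hyp:U_strong_convex}) supplies the inner-product lower bound $\langle\widetilde{X}_{kh}^{\epsilon,h}-x_{kh}^\star,\nabla U_{kh}(\widetilde{X}_{kh}^{\epsilon,h})-\nabla U_{kh}(x_{kh}^\star)\rangle\geq K\|\widetilde{X}_{kh}^{\epsilon,h}-x_{kh}^\star\|^2$, (A\ref{hyp:U_grad_lipschitz_x}) controls the squared drift, and $\|x_{(k+1)h}^\star-x_{kh}^\star\|\leq h\sup_t\|\partial_t x_t^\star\|$ handles the minimizer displacement. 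This produces a one-step contraction $(1-2Kh/\epsilon+L^2h^2/\epsilon^2)\mathbb{E}V_{kh}+C(hd/\epsilon+h^2\sup_t\|\partial_t x_t^\star\|^2)$, and the hypothesis $hL^2/(K\epsilon)=o(1)$ guarantees a net contraction factor bounded away from $1$, so the recursion yields a uniform bound of order $\mu_0(V)+d/K+\sup_t\|x_t^\star\|^2+\sup_t\|\partial_t x_t^\star\|^2=O(d^{q+1})$ under (\ref{eq:tv_bound_constants_hyp}).

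Assembling the pieces, the KL integrand is of order $M^2h^2 d^{q+1}+L^4h^2 d^{q+1}/\epsilon^2+L^2hd/\epsilon$; dividing by $4\epsilon$, applying the scaling restrictions $h\vee\epsilon=o(1)$ and $hd/\epsilon=O(1)$ to identify the dominant contribution, substituting $M^2\vee L^4\vee K^{-1}=O(d^q)$, and taking the square root via Pinsker delivers the claimed rate $O\bigl(\sqrt{hd^{4q+1}/\epsilon^2}\bigr)$. The main obstacle I anticipate is the Foster-Lyapunov step: making the bound genuinely uniform on $[0,1]$ despite the $t$-dependence of $x_t^\star$, and tracking constants sharply enough to produce precisely the stated power of $d$, requires careful bookkeeping, but is made tractable by the contractivity assumption $hL^2/(K\epsilon)=o(1)$.
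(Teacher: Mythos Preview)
Your proposal is correct and follows essentially the same route as the paper: the paper packages the Girsanov/Pinsker step as Proposition~\ref{prop:tv_bound} and the one-step Foster--Lyapunov contraction for the Euler scheme as Lemma~\ref{lem:disc_drift_p=00003D2}, then substitutes the dimension hypotheses and simplifies exactly as you outline. The only cosmetic differences are that the paper inserts $\pm\nabla U_s(\widetilde{X}_{kh}^{\epsilon,h})$ rather than your $\pm\nabla U_{kh}(\widetilde{X}_t^{\epsilon,h})$ in the drift-mismatch split, and it uses the summed skeleton bound $\sum_k\mathbb{E}[1+\|\widetilde{X}_{kh}^{\epsilon,h}-x_{kh}^\star\|^2]$ from Lemma~\ref{lem:disc_drift_p=00003D2} directly rather than a $\sup_t$ moment bound; neither change affects the argument or the final rate.
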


\begin{proof}
See section \ref{subsec:Intermediate-results-concerning}.
\end{proof}

\subsection{Example: Marginal likelihood computation for logistic regression\label{subsec:Example:-Logistic-Regression}}

\subsubsection{Model specification and verification of assumptions}

Consider observations $Y_{1},\ldots,Y_{m}$ each valued in $\{0,1\}$,
covariate vectors $c_{1},\ldots,c_{m}$ each valued in $\mathbb{R}^{d}$,
and an unknown parameter vector $x\in\mathbb{R}^{d}$. The observations
are modelled as conditionally independent given the covariates and
$x$, with the conditional probability of $\{Y_{i}=1\}$ being $\varrho_{i}(x)\coloneqq1/(1+e^{-\left\langle x,c_{i}\right\rangle })$.
In a Bayesian approach to statistical inference we place an isotropic
Gaussian prior distribution over the unknown parameter $x$, with
covariance matrix $I_{d}/\tilde{\sigma}^{2}$. The posterior density
over $x$ has density on $\mathbb{R}^{d}$ proportional to:
\[
\exp\left\{ y^{T}Cx-\sum_{i=1}^{m}\log(1+e^{\left\langle x,c_{i}\right\rangle })-\frac{\|x\|^{2}}{2\tilde{\sigma}^{2}}\right\} ,
\]
with the vector $y\coloneqq(y_{1},\ldots,y_{m})^{T}$ and matrix $C$
whose $i$th row is $c_{i}$. 

Let the functions $(U_{t})_{t\in[0,1]}$ be given by
\begin{equation}
U_{t}(x)=-ty^{T}Cx+t\sum_{i=1}^{m}\log(1+e^{\left\langle x,c_{i}\right\rangle })+\frac{\|x\|^{2}}{2\tilde{\sigma}^{2}},\label{eq:logit_U}
\end{equation}
Then the distributions $\pi_{0}$ and $\pi_{1}$ specified by $U_{0}$
and $U_{1}$ are respectively the prior and posterior. Evaluating
the ``marginal likelihood'' $Z_{1}=\int_{\mathbb{R}^{d}}\exp\{-U_{1}(x)\}\mathrm{d}x$
allows one to assess the quality of model fit. 

We shall now verify assumptions (A\ref{hyp:basic_hyp_on_U})-(A\ref{hyp:third_order_deriv}).
We have
\begin{equation}
\nabla U_{t}(x)=-ty^{T}C+t\sum_{i=1}^{m}c_{i}\varrho_{i}(x)+\frac{x}{\tilde{\sigma}^{2}},\qquad\nabla^{(2)}U_{t}(x)=t\sum_{i=1}^{m}\varrho_{i}(x)\{1-\varrho_{i}(x)\}c_{i}c_{i}^{T}+\frac{I_{d}}{\tilde{\sigma}^{2}}.\label{eq:logit_grad_hess}
\end{equation}
\begin{equation}
\frac{\partial^{3}U_{t}(x)}{\partial x_{j}\partial x_{k}\partial x_{\ell}}=t\sum_{i=1}^{m}c_{ij}c_{ik}c_{i\ell}\varrho_{i}(x)\{1-\varrho_{i}(x)\}\{1-2\varrho_{i}(x)\}\label{eq:logit_3rd_Deriv}
\end{equation}
where $c_{ij}$ is the $j$th element of $c_{i}$.

By inspection of (\ref{eq:logit_U})-(\ref{eq:logit_grad_hess}),
(A\ref{hyp:basic_hyp_on_U}) holds with $p_{0}=1$. By considering
the spectral norm of $\nabla^{(2)}U_{t}$, one obtains 
\[
\sup_{t\in[0,1]}\|\nabla U_{t}(x)-\nabla U_{t}(y)\|\leq(0.25m\lambda_{\mathrm{max}}+\tilde{\sigma}^{-2})\|x-y\|,
\]
where $\lambda_{\mathrm{max}}$ is the largest eigenvalue of $m^{-1}\sum_{i=1}^{m}c_{i}c_{i}^{T}$,
and with 
\begin{equation}
\xi\coloneqq\|y^{T}C\|+\sum_{i=1}^{m}\|c_{i}\|,\label{eq:logit_xi}
\end{equation}
we have

\[
\|\nabla U_{t}(x)\|\leq(\xi\vee\tilde{\sigma}^{-2})(1+\|x\|),\qquad\|\nabla U_{t}(x)-\nabla U_{s}(y)\|\leq\xi|t-s|.
\]
So for the constants appearing in (A\ref{hyp:U_grad_lipschitz_x})-(A\ref{hyp:U_time_cont})
one make take 
\begin{equation}
K=\frac{1}{\tilde{\sigma}^{2}},\quad L=\left(0.25m\lambda_{\mathrm{max}}+\frac{1}{\tilde{\sigma}^{2}}\right)\vee\left(\xi\vee\frac{1}{\tilde{\sigma}^{2}}\right),\quad M=\xi.\label{eq:logit_KLM}
\end{equation}
 (A\ref{hyp:third_order_deriv}) is satisfied by inspection of (\ref{eq:logit_3rd_Deriv}). 

\subsubsection{Dimension dependence of the error}

Let us now discuss application of Theorems \ref{thm:intro_var_and_bias_bounds}
and \ref{thm:intro_CLT}. Observe from (\ref{eq:logit_U}) that we
have
\begin{equation}
\partial_{t}U_{t}(x)=-y^{T}Cx+\sum_{i=1}^{m}\log(1+e^{\left\langle x,c_{i}\right\rangle }),\label{eq:d_t_U_t_logit}
\end{equation}
and define
\[
\Delta_{\epsilon,h}\coloneqq-h\sum_{k=0}^{\left\lfloor 1/h\right\rfloor -1}\left.\partial_{t}U_{t}(\widetilde{X}_{kh}^{\epsilon,h})\right|_{t=kh}-\log\frac{Z_{1}}{Z_{0}},
\]
where $(\widetilde{X}_{t}^{\epsilon,h})_{t\in[0,1]}$ is as in (\ref{eq:sde_disc-1}). 

Consider the following condition:

\begin{hyp}(Polynomial dependence on dimension for logistic regression)\label{hyp:logit_dim_depend}
There exists $q\geq0$ such that: 
\[
\tilde{\sigma}^{2}\vee\left(0.25m\lambda_{\mathrm{max}}+\frac{1}{\tilde{\sigma}^{2}}\right)\vee\xi=O(d^{q/4})
\]
as $d\to\infty$. 

\end{hyp}

In the proof of the following proposition, (A\ref{hyp:logit_dim_depend})
allows us to verify (A\ref{hyp:dimension_dependence_mse-1}), apply
Corollary \ref{cor:dimension-dependence-MSE-1} and Theorem \ref{thm:intro_CLT}
with
\begin{equation}
f_{t}=-\partial_{t}U_{t}+\pi_{t}(\partial_{t}U_{t}),\label{eq:logit_f_t}
\end{equation}
 and Proposition \ref{prop:tv_dim_dependence} and Lemma \ref{lem:gamma_coupling}. 

\begin{prop}
\label{prop:logit_order} Assume that $\mu_{0}=\pi_{0}$ and that
(A\ref{hyp:logit_dim_depend}) holds for some given $q$. 

\noindent 1) If
\begin{equation}
h\vee\epsilon=o(1),\quad\quad\frac{h}{\epsilon^{2}}d^{3q/2+1}\vee\epsilon d^{7q+3}=O(1)\label{eq:logit_eps_h_hyp}
\end{equation}
 as $d\to\infty$, then
\[
\mathbb{E}[|\Delta_{\epsilon,h}|]=O\left(\sqrt{\epsilon d^{7q+3}}+\left[\frac{h}{\epsilon^{2}}\right]^{1/4}d^{9(q+1)/4}+hd^{5q+2}\right).
\]

\noindent 2) If 
\begin{equation}
\left[\inf_{t\in[0,1]}t^{2}\sum_{j=1}^{d}\left\{ \int_{\mathbb{R}^{d}}l(y;x)\left[\sum_{i=1}^{m}(y_{i}-\varrho_{i}(x))c_{ij}-\frac{x_{j}}{\tilde{\sigma}^{2}}\right]\pi_{t}(\mathrm{d}x)\right\} ^{2}\right]^{-1}\label{eq:logit_clt_hyp}
\end{equation}
grows at most polynomially fast as $d\to\infty$, where $l(y;x)$
is the log-likelihood:
\[
l(y;x)\coloneqq-y^{T}Cx+\sum_{i=1}^{m}\log(1+e^{\left\langle x,c\right\rangle }),
\]
then for any $c>2$, there exists $a>0$ such that with $\epsilon=O(d^{-a})$
and $h=\epsilon^{c}$, 
\[
\lim_{d\to\infty}\sup_{w\in\mathbb{R}}\left|\mathbb{P}\left[\epsilon(d)^{-1/2}\Delta_{\epsilon(d),h(d)}/\sqrt{\sigma_{0}^{2}}\leq w\right]-\Phi(w)\right|=0,
\]
where $\sigma_{0}^{2}$ is as in (\ref{eq:intro_sigma}) with $f_{t}$
as in (\ref{eq:logit_f_t}).
\end{prop}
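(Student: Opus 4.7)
The plan is to reduce the analysis of $\Delta_{\epsilon,h}$ to that of $S_{\epsilon,h}$---to which Corollary~\ref{cor:dimension-dependence-MSE-1} and Theorem~\ref{thm:intro_CLT} apply---and then transfer the conclusions to the Euler--Maruyama process $\widetilde{X}^{\epsilon,h}$ using the coupling Lemma~\ref{lem:gamma_coupling} together with the total-variation bound of Proposition~\ref{prop:tv_dim_dependence}. The thermodynamic identity (\ref{eq:Jarz}) gives $-\log(Z_{1}/Z_{0}) = \int_{0}^{1}\pi_{t}(\partial_{t}U_{t})\,\mathrm{d}t$, so with $f_{t}$ as in (\ref{eq:logit_f_t}),
\[
\Delta_{\epsilon,h} = \widetilde{S}_{\epsilon,h} + R_{h},\qquad R_{h} := \int_{0}^{1}\pi_{t}(\partial_{t}U_{t})\,\mathrm{d}t - h\sum_{k=0}^{\lfloor 1/h\rfloor -1}\pi_{kh}(\partial_{t}U_{kh}),
\]
where $\widetilde{S}_{\epsilon,h}$ is as in (\ref{eq:S_intro}). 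Since $\partial_{t}U_{t}\equiv l(y;\cdot)$ is independent of $t$ for the potential (\ref{eq:logit_U}), differentiating $\pi_{t}$ yields $\partial_{t}\pi_{t}(l) = -\mathrm{var}_{\pi_{t}}(l)$; combined with polynomial moment control of $\pi_{t}$ under strong log-concavity this provides the deterministic bound $|R_{h}| = O(h\,d^{5q+2})$ under (A\ref{hyp:logit_dim_depend}), one of the three contributions to Part 1.

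For Part 1, I would first verify that (A\ref{hyp:dimension_dependence_mse-1}) holds with $p=1$ and (up to a constant factor) the same $q$ as in (A\ref{hyp:logit_dim_depend}): $\mu_{0}=\pi_{0}$ is isotropic Gaussian with Poincar\'e constant $K_{0}=K=1/\tilde{\sigma}^{2}$ and $W^{(1)}(\mu_{0},\pi_{0})=0$, the constants in (\ref{eq:logit_KLM}) are polynomial in $d$, and $\sup_{t}\|\nabla f_{t}\|_{1}\le\xi$ by inspection of (\ref{eq:d_t_U_t_logit}). Applying $\mathbb{E}[|S_{\epsilon,h}|]\le|\mathbb{E}[S_{\epsilon,h}]|+\sqrt{\mathrm{var}[S_{\epsilon,h}]}$ together with Corollary~\ref{cor:dimension-dependence-MSE-1}---in which the $r_{3}(d)$-bias term vanishes because $\mu_{0}=\pi_{0}$---produces the $\sqrt{\epsilon\,d^{7q+3}}$ contribution. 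To pass from $S_{\epsilon,h}$ to $\widetilde{S}_{\epsilon,h}$ I would invoke the second inequality of Lemma~\ref{lem:gamma_coupling} with $\varphi(z):=h\sum_{k}f_{kh}(z_{kh})$ and exponents $p=1$, $q=r=2$: the factor $\|\mu^{\epsilon}-\widetilde{\mu}^{\epsilon,h}\|_{\mathrm{tv}}^{1/2}$ yields the $(h/\epsilon^{2})^{1/4}$ prefactor via Proposition~\ref{prop:tv_dim_dependence}, while the $L^{2}$ norms $\mathbb{E}[\varphi(X^{\epsilon})^{2}]^{1/2}$ and $\mathbb{E}[\varphi(\widetilde{X}^{\epsilon,h})^{2}]^{1/2}$ are controlled by Foster--Lyapunov polynomial moment estimates on both path laws (available under (A\ref{hyp:U_grad_lipschitz_x})--(A\ref{hyp:U_strong_convex})), producing the $(h/\epsilon^{2})^{1/4}\,d^{9(q+1)/4}$ contribution.

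For Part 2, the two additional hypotheses of Theorem~\ref{thm:intro_CLT} must be verified. The bound on $\sup_{t}\|\partial_{t}f_{t}\|_{1}$ is immediate because $\partial_{t}^{2}U_{t}\equiv 0$ for the linear-in-$t$ potential (\ref{eq:logit_U}), so $\partial_{t}f_{t}=-\mathrm{var}_{\pi_{t}}(l)$ is constant in $x$ and polynomially bounded. For the polynomial lower bound on $\varsigma_{0}(t)$ I would work from the variational dual $\varsigma_{0}(t)/2 = \sup_{h}\{2\langle f_{t},h\rangle_{\pi_{t}} - \pi_{t}(\|\nabla h\|^{2})\}$, testing with an ansatz $h(x) = \sum_{j}\alpha_{j}\phi_{j}(x)$ adapted to the log-concave structure of $\pi_{t}$ (coordinate functions or linear combinations of components of $\nabla U_{t}$), optimizing over $\alpha$, and simplifying via the Stein identities $\pi_{t}((\nabla U_{t})_{j})=0$ and $\pi_{t}(l(\nabla U_{t})_{j})=\pi_{t}(\partial_{j}l)$ together with $\nabla U_{t}=t\nabla l + x/\tilde{\sigma}^{2}$; this should recast the lower bound in the form of the reciprocal of (\ref{eq:logit_clt_hyp}), the $t^{2}$ weight arising from the $t$-dependence of the likelihood part of $U_{t}$. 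With Theorem~\ref{thm:intro_CLT} then applicable to $\epsilon^{-1/2}S_{\epsilon,h}$, the first inequality of Lemma~\ref{lem:gamma_coupling} transfers Kolmogorov--Smirnov convergence to $\epsilon^{-1/2}\widetilde{S}_{\epsilon,h}$: under $\epsilon=O(d^{-a})$ and $h=\epsilon^{c}$ with $c>2$, Proposition~\ref{prop:tv_dim_dependence} gives $\|\mu^{\epsilon}-\widetilde{\mu}^{\epsilon,h}\|_{\mathrm{tv}}=O(\sqrt{\epsilon^{c-2}d^{4q+1}})=o(1)$ for $a$ sufficiently large, while $\epsilon^{-1/2}|R_{h}|=O(\epsilon^{c-1/2}d^{5q+2})=o(1)$, so Slutsky's lemma transfers the CLT from $\widetilde{S}_{\epsilon,h}$ to $\Delta_{\epsilon,h}$.

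The main obstacle is the explicit identification of the variational lower bound on $\varsigma_{0}(t)$ with the precise form of the reciprocal of (\ref{eq:logit_clt_hyp}): the coordinate Cauchy--Schwarz step is routine, but producing the specific integrand and the $t^{2}$ weight in the hypothesis requires careful choice of test functions and manipulation of the Stein integration-by-parts identities tailored to $\pi_{t}\propto e^{-U_{t}}$ for the logistic regression potential. Secondary bookkeeping challenges consist in tracking polynomial exponents through the combined application of Corollary~\ref{cor:dimension-dependence-MSE-1}, Proposition~\ref{prop:tv_dim_dependence}, and the Foster--Lyapunov moment estimates required by Lemma~\ref{lem:gamma_coupling}.
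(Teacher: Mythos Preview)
Your overall plan---decompose $\Delta_{\epsilon,h}=\widetilde{S}_{\epsilon,h}+R_h$, verify (A\ref{hyp:dimension_dependence_mse-1}) with $p=1$, feed $S_{\epsilon,h}$ through Corollary~\ref{cor:dimension-dependence-MSE-1}, transfer to $\widetilde{S}_{\epsilon,h}$ via Lemma~\ref{lem:gamma_coupling} with $p=1,\,q=r=2$ and Proposition~\ref{prop:tv_dim_dependence}, then handle $R_h$ by a Riemann/Slutsky argument---is exactly the paper's structure. Two minor points: the paper controls $R_h$ via Lemma~\ref{lem:rieman_approx} rather than by directly differentiating $t\mapsto\pi_t(l)$, and it does \emph{not} exploit $W^{(p)}(\mu_0,\pi_0)=0$ (your observation is correct and would shorten the arithmetic, but the paper simply carries the $r_3(d)$ term and shows it is dominated).

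The one genuine difference is in Part~2, for the polynomial lower bound on $\varsigma_0(t)$. You propose a direct variational attack on the asymptotic variance via the Dirichlet-form dual $\varsigma_0(t)/2=\sup_h\{2\langle f_t,h\rangle_{\pi_t}-\pi_t(\|\nabla h\|^2)\}$, testing with $h=\sum_j\alpha_j\,\partial_{x_j}U_t$; combined with $\|\nabla^2U_t\|_{\mathrm{op}}\le L$ and optimisation over $\alpha$ this yields $\varsigma_0(t)\ge 2L^{-2}\sum_j\pi_t(f_t\,\partial_{x_j}U_t)^2$, which is indeed (up to $L$ vs.\ $L^2$) the reciprocal of (\ref{eq:logit_clt_hyp}). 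The paper instead proceeds in two stages: Lemma~\ref{lem:asymp_var_dim_depend} first reduces the question to a polynomial lower bound on the \emph{static} variance $\mathrm{var}_{\pi_t}[f_t]$ (using non-negativity of the autocorrelation $\rho_t(\cdot)$ and a short-time bound $|\rho_t(0)-\rho_t(u)|\le uC(d)$ driven by $\|\tilde{\mathcal{L}}_t f_t\|_{p+1/2}$), and then Lemma~\ref{lem:variance_lower_bound}---a Cram\'er--Rao inequality obtained by introducing an artificial location parameter in $\pi_t$---gives $\mathrm{var}_{\pi_t}[f_t]\ge L^{-1}\sum_j\pi_t(f_t\,\partial_{x_j}U_t)^2$ directly. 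Your route is arguably more direct, producing the lower bound on $\varsigma_0$ in one step without the intermediate reduction to $\mathrm{var}_{\pi_t}$; the paper's route trades that directness for not having to justify the variational formula for $\varsigma_0$ in the present unbounded-coefficient setting, and the Cram\'er--Rao argument makes the appearance of $\sum_j\pi_t(f_t\,\partial_{x_j}U_t)^2$ completely mechanical rather than depending on a clever choice of test function. Either route resolves what you flagged as the main obstacle.
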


\begin{proof}
See section \ref{sec:Proofs-for-intro}.
\end{proof}

\section{Poincaré inequalities, variance and bias decay for the inhomogeneous
Langevin diffusion\label{sec:Ergodicity-and-Poincare}}

Throughout section \ref{sec:Ergodicity-and-Poincare}, $\epsilon>0$
is a fixed constant. 

\subsection{Preliminaries about the process\label{subsec:Preliminaries-about-proces}}

\subsubsection{Existence and Lipschitz continuity with respect to initial conditions}

Let $(B_{t})_{t\in[0,1]}$ be $d$-dimensional Brownian motion. Under
(A\ref{hyp:U_grad_lipschitz_x}), (A\ref{hyp:U_time_reg}) and (A\ref{hyp:U_time_cont}),
for each $s\in[0,1]$ there exists a strong solution to:
\begin{equation}
X_{s,t}^{x}=x-\epsilon^{-1}\int_{s}^{t}\nabla U_{u}(X_{s,u}^{x})\mathrm{d}u+\sqrt{2\epsilon^{-1}}\int_{s}^{t}\mathrm{d}B_{u},\quad t\in[s,1].\label{eq:sde}
\end{equation}
pathwise uniqueness holds, see for example \cite[Thm. 2.9, p.190]{durrett1996stochastic},
\cite[Thm 3.4, p. 71]{khasminskii2011stochastic} or \cite[Thm. 4, p. 402]{gikhman1969introduction},
and the solution is non-explosive \cite[p. 75]{khasminskii2011stochastic}.
Moreover, as noted by \cite[Thm. 2.2, Ch. 2, p. 211]{kunita1984stochastic},
we can work with a version of $X_{s,t}^{x}$ which is continuous in
$s,t,x$ almost surely, and satisfies (\ref{eq:sde}) for all $s,t,x$,
almost surely. 

Throughout section \ref{sec:Ergodicity-and-Poincare}, we take:
\[
P_{s,t}f(x)\coloneqq\mathbb{E}[f(X_{s,t}^{x})],\qquad\mathcal{L}_{t}f\coloneqq-\epsilon^{-1}\left\langle \nabla U_{t},\nabla f\right\rangle +\epsilon^{-1}\Delta f,
\]
with the dependence on $\epsilon$ suppressed from the notation. 

We shall make extensive use of the following observation, noted in
the time-homogeneous case by \cite{cattiaux2014semi}. 
\begin{lem}
\label{lem:pathwise_sde_bound}Almost surely, the following holds
for all $x,y$ and $s\leq t$,
\[
\|X_{s,t}^{x}-X_{s,t}^{y}\|\leq e^{-K(t-s)/\epsilon}\|x-y\|.
\]
\end{lem}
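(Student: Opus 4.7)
The plan is to exploit the fact that the two SDE solutions driven by the \emph{same} Brownian motion differ by a process whose dynamics has no stochastic integral term, reducing the problem to a deterministic contraction estimate driven by strong convexity of $U_t$.

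Fix $s\in[0,1]$, $x,y\in\mathbb{R}^d$, and let $D_t \coloneqq X_{s,t}^{x}-X_{s,t}^{y}$ for $t\in[s,1]$. Subtracting the two copies of \eqref{eq:sde}, the $\sqrt{2\epsilon^{-1}}\int_s^t \mathrm{d}B_u$ terms cancel, so almost surely
\[
D_t = (x-y) - \epsilon^{-1}\int_s^t \bigl[\nabla U_u(X_{s,u}^{x}) - \nabla U_u(X_{s,u}^{y})\bigr]\mathrm{d}u,\qquad t\in[s,1].
\]
Under (A\ref{hyp:basic_hyp_on_U}) and (A\ref{hyp:U_grad_lipschitz_x}) the integrand is continuous in $u$, so $t\mapsto D_t$ is absolutely continuous with derivative $\dot D_t = -\epsilon^{-1}[\nabla U_t(X_{s,t}^{x})-\nabla U_t(X_{s,t}^{y})]$. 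Consequently $t\mapsto \|D_t\|^2$ is differentiable with
\[
\frac{\mathrm{d}}{\mathrm{d}t}\|D_t\|^2 = -\,2\epsilon^{-1}\bigl\langle X_{s,t}^{x}-X_{s,t}^{y},\,\nabla U_t(X_{s,t}^{x})-\nabla U_t(X_{s,t}^{y})\bigr\rangle.
\]
The strong convexity assumption (A\ref{hyp:U_strong_convex}) yields the pointwise monotonicity inequality $\langle a-b,\nabla U_t(a)-\nabla U_t(b)\rangle \geq K\|a-b\|^2$ (obtained from a Taylor expansion with integral remainder against the Hessian), so $\frac{\mathrm{d}}{\mathrm{d}t}\|D_t\|^2 \leq -2K\epsilon^{-1}\|D_t\|^2$. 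Gr\"onwall's lemma then gives $\|D_t\|^2 \leq e^{-2K(t-s)/\epsilon}\|x-y\|^2$, which is the desired inequality for this fixed $(s,t,x,y)$.

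The remaining point is that the statement claims the bound holds \emph{simultaneously} for all $s\leq t$ and all $x,y$ on a single almost-sure event. This is where the continuous version of $(s,t,x)\mapsto X_{s,t}^x$ provided by \cite[Thm.~2.2, Ch.~2]{kunita1984stochastic}, invoked in section \ref{subsec:Preliminaries-about-proces}, is essential: pick a countable dense set $\mathcal{D}\subset\{(s,t,x,y):s\leq t\}$ and a full-measure event on which the bound above holds for every quadruple in $\mathcal{D}$; joint continuity of $X_{s,t}^{x}$ in $(s,t,x)$ then extends the inequality to all $(s,t,x,y)$ by passing to the limit on both sides. This continuity/measurability step is the only mildly subtle part; the contraction itself is an immediate consequence of strong convexity and the cancellation of Brownian terms.
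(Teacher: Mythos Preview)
Your proof is correct and takes essentially the same approach as the paper: the paper applies It\^o's lemma to $e^{2K(t-s)/\epsilon}\|X_{s,t}^x-X_{s,t}^y\|^2$ (the stochastic integral term vanishes since the Brownian parts cancel) and invokes Lemma~\ref{lem:strong_convex_equiv} for the monotonicity inequality, which is exactly your differential inequality plus Gr\"onwall with the integrating factor absorbed. Your explicit remark on extending to all $(s,t,x,y)$ via the jointly continuous version from Kunita is a point the paper leaves implicit.
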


\begin{proof}
Ito's lemma gives 
\begin{eqnarray*}
 &  & e^{2K(t-s)/\epsilon}\|X_{s,t}^{x}-X_{s,t}^{y}\|^{2}\\
 &  & =\|x-y\|^{2}\\
 &  & +\frac{2}{\epsilon}\int_{s}^{t}\left(K\|X_{s,u}^{x}-X_{s,u}^{y}\|^{2}-\left\langle \nabla U_{u}(X_{s,u}^{x})-\nabla U_{u}(X_{s,u}^{y})\,,\,X_{s,u}^{x}-X_{s,u}^{y}\right\rangle \right)e^{2K(u-s)/\epsilon}\mathrm{d}u,
\end{eqnarray*}
and by Lemma \ref{lem:strong_convex_equiv}, (A\ref{hyp:U_strong_convex})
is equivalent to 
\[
\left\langle \nabla U_{t}(x)-\nabla U_{t}(y),x-y\right\rangle \geq K\|x-y\|^{2},\quad\forall x,y.
\]
\end{proof}

\subsubsection{Drift, regularity and validity of forward and backward equations\label{subsec:Drift,-regularity-and}}
\begin{lem}
\label{lem:drift}For any $p\geq1$ and $\kappa\in(0,Kp)$ define:
\begin{eqnarray*}
\delta & \coloneqq & \epsilon^{-1}(Kp-\kappa),\\
r & \coloneqq & \frac{p}{\kappa}\epsilon\sup_{t\in(0,1)}\|\partial_{t}x_{t}^{\star}\|+\sqrt{\frac{p^{2}}{\kappa^{2}}\epsilon^{2}\sup_{t\in(0,1)}\|\partial_{t}x_{t}^{\star}\|^{2}+2\frac{p}{\kappa}[2(p-1)+d]}\\
b & \coloneqq & 2pr^{2p-1}\left[\sup_{t\in(0,1)}\|\partial_{t}x_{t}^{\star}\|+\frac{2(p-1)+d}{\epsilon r}\right],\\
\alpha_{p} & \coloneqq & 2^{4p-2}\vee\left[1+2^{2p-1}\left(\frac{b}{\delta}+(1+2^{2p-1})\sup_{t\in[0,1]}\|x_{t}^{\star}\|^{2p}\right)\right].
\end{eqnarray*}
Then the following hold:
\begin{eqnarray}
 &  & \partial_{t}V_{t}^{p}(x)+\mathcal{L}_{t}V_{t}^{p}(x)\leq-\delta V_{t}^{p}(x)+b\,\mathbb{I}\{\|x-x_{t}^{\star}\|\leq r\},\label{eq:drift_bound_gen}\\
 &  & \mathbb{E}\left[\int_{s}^{t}V_{u}^{p}(X_{s,u}^{x})\mathrm{d}u\right]=\int_{s}^{t}P_{s,u}V_{u}^{p}(x)\mathrm{d}u<+\infty,\label{eq:drift_bound_int}\\
 &  & P_{s,t}V_{t}^{p}(x)\leq e^{-\delta(t-s)}V_{s}^{p}(x)+\frac{b}{\delta}(1-e^{-\delta(t-s)}),\label{eq:drift_bound_semi}\\
 &  & \sup_{s\leq t}\mathbb{E}\left[1+\|X_{s,t}^{x}\|^{2p}\right]\leq\alpha_{p}(1+\|x\|^{2p}).\label{eq:drift_bound_uniform}
\end{eqnarray}
\end{lem}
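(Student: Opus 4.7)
The plan is to establish the four assertions sequentially, with (\ref{eq:drift_bound_gen}) as the cornerstone and the others following by standard semimartingale techniques.

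For (\ref{eq:drift_bound_gen}), the key is a direct calculation of $\partial_t V_t^p$ and $\mathcal{L}_t V_t^p$. I would compute $\nabla V_t^p(x) = 2pV_t^{p-1}(x)(x-x_t^{\star})$ and $\Delta V_t^p(x) = 2p[2(p-1)+d]V_t^{p-1}(x)$ directly, use that $\nabla U_t(x_t^{\star}) = 0$ so that strong convexity (A\ref{hyp:U_strong_convex}), via the equivalence of Lemma \ref{lem:strong_convex_equiv}, yields $\langle \nabla U_t(x), x - x_t^{\star}\rangle \geq K\|x - x_t^{\star}\|^2$, and bound $|\partial_t V_t^p(x)| \leq 2pV_t^{p-1/2}(x)\|\partial_t x_t^{\star}\|$ by Cauchy--Schwarz. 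Writing $s = \|x - x_t^{\star}\|$, assembling these bounds shows that the desired inequality reduces to
\[
(pK+\kappa)s^2 \geq 2p\epsilon\,\|\partial_t x_t^{\star}\|\,s + 2p[2(p-1)+d],
\]
for $s > r$, which holds by construction since $r$ is precisely the positive root of the corresponding quadratic (with $pK$ discarded in favour of $\kappa$ alone on the left). For $s \leq r$, the positive (bad) terms are bounded using $s^{2p-2}(\cdots) \leq r^{2p-1}(\cdots)/r$, which yields the constant $b$ as defined.

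For (\ref{eq:drift_bound_int}) and (\ref{eq:drift_bound_semi}), I would apply It\^o's formula to $(t,x)\mapsto e^{\delta t}V_t^p(x)$ along $X_{s,t}^x$, obtaining
\[
e^{\delta t}V_t^p(X_{s,t}^x) = e^{\delta s}V_s^p(x) + \int_s^t e^{\delta u}(\delta + \partial_u + \mathcal{L}_u)V_u^p(X_{s,u}^x)\,\mathrm{d}u + M_t,
\]
with $M_t$ the stochastic integral against $\mathrm{d}B_u$. Using (\ref{eq:drift_bound_gen}), the drift integrand is bounded above by $be^{\delta u}\mathbb{I}\{\cdots\}$. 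The main technical obstacle, and the only nonroutine point, is justifying that expectations commute with the stochastic integral. I would resolve this by the standard localization trick: introduce the stopping times $\tau_n = \inf\{t \geq s : \|X_{s,t}^x\|\geq n\}$, on which $M_{t\wedge\tau_n}$ is a genuine (bounded) martingale, take expectations, apply Fatou's lemma as $n\to\infty$ to get a preliminary finite bound on $\mathbb{E}[V_t^p(X_{s,t}^x)]$, and then use the Lipschitz-in-initial-condition bound of Lemma \ref{lem:pathwise_sde_bound} together with $V_u^p(y) \leq 2^{2p-1}(\|y-x_u^{\star}\|^{2p})$ to argue that $u\mapsto P_{s,u}V_u^p(x)$ is locally bounded, hence integrable on $[s,t]$, giving (\ref{eq:drift_bound_int}). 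The Gronwall-type inequality $m'(u) \leq -\delta m(u) + b$ for $m(u)\coloneqq P_{s,u}V_u^p(x)$ then integrates to yield (\ref{eq:drift_bound_semi}).

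Finally, (\ref{eq:drift_bound_uniform}) follows from (\ref{eq:drift_bound_semi}) by noting $e^{-\delta(t-s)}\leq 1$ and $1 - e^{-\delta(t-s)} \leq 1$, so that $P_{s,t}V_t^p(x) \leq V_s^p(x) + b/\delta$. Converting between $V_t^p$ and $\|\cdot\|^{2p}$ via the triangle inequality $\|a+b\|^{2p} \leq 2^{2p-1}(\|a\|^{2p} + \|b\|^{2p})$ applied twice---first to bound $V_s^p(x) \leq 2^{2p-1}(\|x\|^{2p} + \sup_t\|x_t^{\star}\|^{2p})$, then to bound $\mathbb{E}[\|X_{s,t}^x\|^{2p}] \leq 2^{2p-1}(P_{s,t}V_t^p(x) + \sup_t\|x_t^{\star}\|^{2p})$---and collecting terms yields the bound $\alpha_p(1+\|x\|^{2p})$ with $\alpha_p$ exactly as defined (the maximum with $2^{4p-2}$ covering the $\|x\|^{2p}$ coefficient).
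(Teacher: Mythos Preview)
Your proposal is correct and essentially matches the paper's proof: direct computation of $(\partial_t+\mathcal{L}_t)V_t^p$ for (\ref{eq:drift_bound_gen}), localization by exit times together with Fatou/monotone convergence for (\ref{eq:drift_bound_int})--(\ref{eq:drift_bound_semi}), and triangle-inequality conversion between $V_t^p$ and $\|\cdot\|^{2p}$ for (\ref{eq:drift_bound_uniform}). The only minor variations are that the paper applies Dynkin's formula to $V_t^p$ and then solves the resulting integral inequality (rather than building in the integrating factor $e^{\delta t}$ from the start), and it obtains (\ref{eq:drift_bound_int}) directly from monotone convergence on the localized Dynkin bound rather than via Lemma~\ref{lem:pathwise_sde_bound}, which is not actually needed at that step.
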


\begin{proof}
See section \ref{subsec:Proofs-of-drift_lemmas}.
\end{proof}
Proposition \ref{prop:C_2^p_closed} establishes regularity properties
which are used in rigorously establishing the validity of the forward
and backward equations in Proposition \ref{prop:fwd_and_bck_eqs}
and various manipulations in section \ref{subsec:Variance-bounds-for}.
Although the topic of differentiability and other regularity properties
of $x\mapsto P_{s,t}f(x)$ as in (\ref{eq:C_closed_P}) is classical,
we were not able to find in the literature results which give us exactly
the conclusions we need under our assumptions, in particular allowing
for time-inhomogeneity of $P_{s,t}f(x)$, and for $f(x)$ and $\nabla U_{t}(x)$
to be unbounded in $x$. The proof of Proposition \ref{prop:C_2^p_closed}
which we provide in section \ref{subsec:Proofs_fwd_bwd_eqs} to make
the paper self-contained, does not exploit the elipticity of (\ref{eq:sde}),
which is why $f$ is taken to be $q$-times differentiable on the
left hand side of the implication in (\ref{eq:C_closed_P}). This
differentiability requirement propagates through our results, e.g.,
explaining why $f$ is assumed twice differentiable in $x$ in part
2) of Theorem \ref{thm:intro_var_and_bias_bounds}. This restriction
might be removed if existing results for elliptic diffusions, see
for instance \cite[Sec. 1.5, p.48]{cerrai2001second}, could be generalized
to our setup, but that seems to involve a large amount of extra work
which would further lengthen this paper.
\begin{prop}
\label{prop:C_2^p_closed}For any given $p\geq1$, 
\begin{eqnarray}
f\in C_{q}^{p}(\mathbb{R}^{d}) & \Rightarrow & x\mapsto P_{s,t}f(x)\;\in\;C_{q}^{p}(\mathbb{R}^{d}),\quad\forall s\leq t,\,q=1,2,\label{eq:C_closed_P}\\
f\in C_{1,2}^{p}([0,1]\times\mathbb{R}^{d}) & \Rightarrow & \begin{cases}
(t,x)\mapsto|\partial_{t}f_{t}(x)|+|\mathcal{L}_{t}f_{t}(x)|\;\in\;C_{0,0}^{p+1/2}([0,1]\times\mathbb{R}^{d}),\\
(s,x)\mapsto\mathcal{L}_{s}P_{s,t}f_{t}(x)\;\in\;C_{0,0}^{p+1/2}([0,1]\times\mathbb{R}^{d}),\quad\forall t.
\end{cases}\label{eq:C_transfer_L}
\end{eqnarray}
\end{prop}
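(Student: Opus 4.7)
The plan is to prove (\ref{eq:C_closed_P}) by exploiting pathwise differentiability of the stochastic flow $x\mapsto X_{s,t}^{x}$, and then to deduce (\ref{eq:C_transfer_L}) from elementary growth estimates together with part 1.

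For (\ref{eq:C_closed_P}) with $q=1$: Lemma \ref{lem:pathwise_sde_bound} asserts that $x\mapsto X_{s,t}^{x}$ is pathwise $e^{-K(t-s)/\epsilon}$-Lipschitz. Combined with (A\ref{hyp:U_grad_lipschitz_x}) and classical stochastic flow theory, this implies that $X_{s,t}^{x}$ is almost surely continuously differentiable in $x$ with Jacobian $J_{s,t}^{x}$ obeying the pathwise bound $\|J_{s,t}^{x}v\|\leq e^{-K(t-s)/\epsilon}\|v\|$, inherited from Lemma \ref{lem:pathwise_sde_bound} by passing to the limit in difference quotients. For $f\in C_{1}^{p}(\mathbb{R}^{d})$, the estimate $\|\nabla f(X_{s,t}^{x})\|\leq\|\nabla f\|_{p}(1+\|X_{s,t}^{x}\|^{2p})$ together with the uniform moment bound (\ref{eq:drift_bound_uniform}) dominates the difference quotients, so dominated convergence allows us to exchange differentiation and expectation:
\[
\nabla P_{s,t}f(x)=\mathbb{E}\bigl[(J_{s,t}^{x})^{\top}\nabla f(X_{s,t}^{x})\bigr].
\]
Applying (\ref{eq:drift_bound_uniform}) once more yields $\|\nabla P_{s,t}f\|_{p}\leq e^{-K(t-s)/\epsilon}\alpha_{p}\|\nabla f\|_{p}<+\infty$, and continuity of $\nabla P_{s,t}f$ in $x$ follows from a.s.\ continuity of $y\mapsto(J_{s,t}^{y},X_{s,t}^{y})$ plus dominated convergence. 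The $q=2$ case proceeds analogously, with the additional ingredient that (A\ref{hyp:third_order_deriv}) makes $\nabla^{(2)}U_{t}$ globally Lipschitz uniformly in $t$, so that the flow is a.s.\ twice continuously differentiable; a Gronwall argument applied to the variational SDE for the second derivative $H_{s,t}^{x}$, using the pathwise bound on $J_{s,t}^{x}$, gives the moment and continuity estimates needed to differentiate under the expectation a second time and produce the required representation and growth bound for $\nabla^{(2)}P_{s,t}f$.

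For (\ref{eq:C_transfer_L}): The bound on $\partial_{t}f_{t}$ is immediate from $f\in C_{1,2}^{p}$ and the elementary inequality $\|\cdot\|_{p+1/2}\leq 2\|\cdot\|_{p}$. For $\mathcal{L}_{t}f_{t}=-\epsilon^{-1}\langle\nabla U_{t},\nabla f_{t}\rangle+\epsilon^{-1}\Delta f_{t}$, hypothesis (A\ref{hyp:U_time_reg}) combined with $\|\nabla f_{t}(x)\|\leq\|\nabla f_{t}\|_{p}(1+\|x\|^{2p})$ and the product estimate $(1+\|x\|)(1+\|x\|^{2p})\leq 4(1+\|x\|^{2p+1})$ gives
\[
|\langle\nabla U_{t}(x),\nabla f_{t}(x)\rangle|\leq 4L\,\|\nabla f_{t}\|_{p}(1+\|x\|^{2(p+1/2)}),
\]
while Cauchy--Schwarz yields $|\Delta f_{t}(x)|\leq\sqrt{d}\,\|\nabla^{(2)}f_{t}\|_{p}(1+\|x\|^{2p})$; joint continuity in $(t,x)$ follows from (A\ref{hyp:basic_hyp_on_U}) and $f\in C_{1,2}^{p}$. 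For $\mathcal{L}_{s}P_{s,t}f_{t}$, part 1 applied to $f_{t}$ at fixed $t$ shows $x\mapsto P_{s,t}f_{t}(x)\in C_{2}^{p}$ with norms bounded uniformly in $s\leq t$; substituting $P_{s,t}f_{t}$ for $f_{t}$ in the computation just performed gives the required growth bound, and continuity in $s$ follows from a.s.\ continuity of $s\mapsto X_{s,t}^{x}$ together with dominated convergence in the expectation representations of $\nabla P_{s,t}f_{t}$ and $\nabla^{(2)}P_{s,t}f_{t}$.

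The principal obstacle is the $q=2$ case of (\ref{eq:C_closed_P}): rigorously establishing twice-differentiability of the flow in our time-inhomogeneous setting with unbounded coefficients, and in particular controlling the growth in $x$ of moments of $H_{s,t}^{x}$, as required to commute two derivatives with the expectation. As noted in the text preceding Proposition \ref{prop:C_2^p_closed}, off-the-shelf results for elliptic SDEs do not cover this configuration, so this bookkeeping must be carried out from scratch using the variational SDE, Gronwall's inequality, and the strong-convexity hypothesis (A\ref{hyp:U_strong_convex}) together with (A\ref{hyp:third_order_deriv}).
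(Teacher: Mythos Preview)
Your proposal is correct and follows essentially the same strategy as the paper: differentiate the flow $x\mapsto X_{s,t}^{x}$ via its variational equations, control the first- and second-order derivative processes (your $J_{s,t}^{x}$, $H_{s,t}^{x}$; the paper's $\zeta_{s,t}^{x}$, $\eta_{s,t}^{x}$) using Gronwall, and then use dominated convergence with the moment bound (\ref{eq:drift_bound_uniform}) to pass derivatives through the expectation; the treatment of (\ref{eq:C_transfer_L}) via (A\ref{hyp:U_time_reg}) and the product estimate is likewise the same. The only minor technical difference is that the paper establishes the derivative identities through mean-square convergence of difference quotients (citing Gikhman--Skorokhod) rather than invoking almost-sure pathwise differentiability of the flow, and uses a cruder Gronwall bound on $\zeta_{s,t}^{x}$ instead of your sharper contraction $\|J_{s,t}^{x}v\|\leq e^{-K(t-s)/\epsilon}\|v\|$; but these are equivalent routes to the same conclusion.
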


\begin{proof}
See section \ref{subsec:Proof-and-supporting_C_closed}. 
\end{proof}
\begin{prop}
\label{prop:fwd_and_bck_eqs}For any $p\geq1$, $f\in C_{1,2}^{p}([0,1]\times\mathbb{R}^{d})$
and $\nu\in\mathcal{P}^{p+1/2}(\mathbb{R}^{d})$ , the following equalities
hold: 
\begin{eqnarray}
 &  & \partial_{t}\nu P_{s,t}f_{t}=\nu P_{s,t}\left(\partial_{t}f_{t}+\mathcal{L}_{t}f_{t}\right),\label{eq:fwd_equation}\\
 &  & \partial_{s}P_{s,t}f_{t}(x)=-\mathcal{L}_{s}P_{s,t}f_{t}(x),\quad\forall x,\label{eq:bwd_equation}
\end{eqnarray}
and for any fixed $t$, the map $(s,x)\mapsto P_{s,t}f_{t}(x)$ is
a member of $C_{1,2}^{p+1/2}([0,1]\times\mathbb{R}^{d})$. 
\end{prop}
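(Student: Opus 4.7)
The plan is to derive both equations from a single Itô identity applied to $(u,y)\mapsto f_u(y)$ evaluated along the trajectory $X_{s,u}^x$, then exploit the regularity established in Proposition \ref{prop:C_2^p_closed} to justify Fubini swaps and termwise differentiations. The SDE \eqref{eq:sde} and the standard Itô formula for time-dependent functions yield, for $s\leq t$,
\begin{equation}
f_t(X_{s,t}^x)=f_s(x)+\int_s^t\bigl(\partial_u f_u+\mathcal{L}_u f_u\bigr)(X_{s,u}^x)\,\mathrm{d}u+\sqrt{2\epsilon^{-1}}\int_s^t\bigl\langle\nabla f_u(X_{s,u}^x),\mathrm{d}B_u\bigr\rangle.\label{eq:ito_plan}
\end{equation}
Since $f\in C_{1,2}^{p}$, the bound $\|\nabla f_u\|_p<+\infty$ together with the uniform moment estimate \eqref{eq:drift_bound_uniform} of Lemma \ref{lem:drift} shows that the integrand of the stochastic integral has $\mathbb{E}\int_s^t\|\nabla f_u(X_{s,u}^x)\|^2\mathrm{d}u<+\infty$, so the martingale has zero expectation. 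Taking expectations in \eqref{eq:ito_plan} gives
\begin{equation}
P_{s,t}f_t(x)=f_s(x)+\int_s^t P_{s,u}\bigl(\partial_u f_u+\mathcal{L}_u f_u\bigr)(x)\,\mathrm{d}u.\label{eq:plan_integral}
\end{equation}

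For the forward equation, I would integrate \eqref{eq:plan_integral} against $\nu$. By \eqref{eq:C_transfer_L} in Proposition \ref{prop:C_2^p_closed}, the map $(u,y)\mapsto\partial_u f_u(y)+\mathcal{L}_u f_u(y)$ lies in $C_{0,0}^{p+1/2}$; combined with \eqref{eq:drift_bound_semi} and the assumption $\nu\in\mathcal{P}^{p+1/2}(\mathbb{R}^d)$, this yields integrability of $\nu P_{s,u}|\partial_u f_u+\mathcal{L}_u f_u|$ uniformly in $u\in[s,t]$, which both justifies Fubini and shows that $u\mapsto\nu P_{s,u}(\partial_u f_u+\mathcal{L}_u f_u)$ is continuous (by dominated convergence using continuity of $(u,y)\mapsto\partial_u f_u(y)+\mathcal{L}_u f_u(y)$ and the uniform polynomial bound). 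Differentiating the resulting identity $\nu P_{s,t}f_t=\nu f_s+\int_s^t\nu P_{s,u}(\partial_u f_u+\mathcal{L}_u f_u)\mathrm{d}u$ in $t$ then produces \eqref{eq:fwd_equation}.

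For the backward equation, I would fix $t$ and set $g_r(x)\coloneqq P_{r,t}f_t(x)$. The Chapman--Kolmogorov identity $P_{s,t}f_t=P_{s,r}g_r$ for $s\leq r\leq t$ together with \eqref{eq:C_closed_P} (applied with $q=2$, treating $g_r$ as a time-independent function of $x$ in $C_2^p$) allows me to apply Itô's formula to $g_r$ along $X_{s,u}^x$ for $u\in[s,r]$, yielding
\begin{equation}
P_{s,r}g_r(x)-g_r(x)=\int_s^r P_{s,u}\mathcal{L}_u g_r(x)\,\mathrm{d}u,\label{eq:plan_bwd_integral}
\end{equation}
where the martingale vanishes again by the growth of $\nabla g_r$ (from $g_r\in C_2^p$) and \eqref{eq:drift_bound_uniform}. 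Specialising $r=s+h$, dividing by $h$, and letting $h\downarrow 0$ gives $\partial_s^{+}P_{s,t}f_t(x)=-\mathcal{L}_s P_{s,t}f_t(x)$, provided the right-hand side of \eqref{eq:plan_bwd_integral} is handled via continuity. Here the essential input is the second part of \eqref{eq:C_transfer_L}: $(u,y)\mapsto\mathcal{L}_u P_{u,t}f_t(y)\in C_{0,0}^{p+1/2}$, which combined with \eqref{eq:drift_bound_semi} yields uniform-in-$h$ polynomial bounds on $P_{s,u}\mathcal{L}_u g_{s+h}(x)$ and pointwise convergence to $\mathcal{L}_s g_s(x)$. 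A symmetric argument on the left gives the two-sided derivative \eqref{eq:bwd_equation}.

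The final regularity claim $(s,x)\mapsto P_{s,t}f_t(x)\in C_{1,2}^{p+1/2}([0,1]\times\mathbb{R}^d)$ is then immediate: spatial $C_2^p\subset C_2^{p+1/2}$ regularity, uniformly in $s$, follows from \eqref{eq:C_closed_P}, while continuity of $\partial_s P_{s,t}f_t=-\mathcal{L}_s P_{s,t}f_t$ and its polynomial growth of order $p+1/2$ in $x$, uniform in $s$, are exactly what the second implication of Proposition \ref{prop:C_2^p_closed} delivers. The main obstacle I anticipate is purely bookkeeping: at every step where Fubini, dominated convergence, or interchange of limit and integral is invoked, one must furnish an explicit polynomial majorant depending only on $\|x\|$ via the drift bounds of Lemma \ref{lem:drift}, with the $p+1/2$ exponent propagating from the linear growth of $\nabla U_t$ hidden inside $\mathcal{L}_t$; the non-trivial verifications are already isolated in Proposition \ref{prop:C_2^p_closed}, so the present proof reduces to carefully chaining those inputs.
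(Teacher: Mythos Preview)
Your proposal is correct and follows essentially the same route as the paper: Itô/Dynkin applied to $f$ for the forward equation, and Chapman--Kolmogorov plus Itô applied to $g_r=P_{r,t}f_t$ for the backward equation, with the regularity claim read off from Proposition~\ref{prop:C_2^p_closed}. The only cosmetic difference is that the paper localizes with stopping times $T_m$ and passes to the limit via dominated convergence, whereas you directly verify square-integrability of the stochastic integrand to kill the martingale term; both are standard and the continuity of the integrands needed to differentiate is supplied in the paper by Lemma~\ref{lem:continuity}, which is exactly the dominated-convergence step you sketch.
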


\begin{proof}
See section \ref{subsec:Proofs_fwd_bwd_eqs}.
\end{proof}
Before closing section \ref{subsec:Preliminaries-about-proces}, it
is opportunte to discuss the derivation of the expectation formulae
in (\ref{eq:Jarz})-(\ref{eq:FK}) (see also Lemma \ref{lem:the_TI_identity}
for the thermodynamic integration identity). Define 
\[
T_{s,t}f(x)\coloneqq\mathbb{E}\left[f(X_{s,t}^{x})\exp\left\{ -\int_{s}^{t}\partial_{u}U_{u}(X_{s,u}^{x})\mathrm{d}u\right\} \right].
\]
To rigorously derive the path-integral representations of $Z_{1}/Z_{0}$
and $\pi_{1}(f)$ in (\ref{eq:Jarz})-(\ref{eq:FK}) (note that we
have already proved the first equality in (\ref{eq:Jarz}) by Lemma
\ref{lem:the_TI_identity} ), it is sufficient to verify the hypotheses
on $T_{s,t}f$ of Lemma \ref{lem:FK_derive} below. Although we have
not found an explicit verification of these hypotheses in the literature
under exactly our assumptions (A\ref{hyp:basic_hyp_on_U})-(A\ref{hyp:U_time_cont}),
we believe they are approachable using techniques similar to those
in the proofs of Propositions \ref{prop:C_2^p_closed} and \ref{prop:fwd_and_bck_eqs}.
For example, a direct application of \cite[Thm 2, p. 415]{gikhman1969introduction}
would require boundedness $|\partial_{t}U_{t}(\cdot)|$, but this
condition seems not to be essential for the proof technique used there
to work. A comprehensive account of the details would be very lengthy
but not particularly interesting, and since we have already proved
Lemma \ref{lem:the_TI_identity} and none of our main results actually
rely on (\ref{eq:FK_semigroup}), we do not pursue this matter further.
\begin{lem}
\label{lem:FK_derive}Suppose that for any $p\geq1$ and $f\in C_{2}^{p}(\mathbb{R}^{d})$
there exists $q\geq0$ such that for any t, $(s,x)\mapsto T_{s,t}f(x)$
is a member of $C_{1,2}^{p+q}([0,1]\times\mathbb{R}^{d})$, and 
\[
\partial_{s}T_{s,t}f(x)=-\mathcal{L}_{s}T_{s,t}f(x)+T_{s,t}f(x)\cdot\partial_{s}U_{s}(x),\quad\forall x.
\]
Then 
\begin{equation}
\frac{Z_{1}}{Z_{0}}=\pi_{0}T_{0,1}1,\quad\quad\pi_{1}(f)=\frac{\pi_{0}T_{0,1}f}{\pi_{0}T_{0,1}1}.\label{eq:FK_semigroup}
\end{equation}
\end{lem}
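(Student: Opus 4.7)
The plan is to show that for any fixed $t\in[0,1]$ and $f\in C_{2}^{p}(\mathbb{R}^{d})$, the quantity
\[
g(s)\coloneqq\int_{\mathbb{R}^{d}}e^{-U_{s}(x)}T_{s,t}f(x)\,\mathrm{d}x,\qquad s\in[0,t],
\]
is independent of $s$. Granting this, evaluation at the two endpoints gives the identities: since $T_{t,t}f=f$, we have $g(t)=Z_{t}\pi_{t}(f)$, whereas $g(0)=Z_{0}\,\pi_{0}(T_{0,t}f)$. Taking $t=1$ and $f\equiv 1$ yields $\pi_{0}T_{0,1}1=Z_{1}/Z_{0}$, and then taking $t=1$ with general $f$ gives $\pi_{1}(f)=Z_{0}\pi_{0}(T_{0,1}f)/Z_{1}=\pi_{0}T_{0,1}f/\pi_{0}T_{0,1}1$, as required.

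To show $g'(s)=0$, I would first justify differentiating under the integral sign. The regularity hypothesis $T_{s,t}f\in C_{1,2}^{p+q}([0,1]\times\mathbb{R}^{d})$, combined with $U\in C_{1,2}^{p_{0}}$ from (A\ref{hyp:basic_hyp_on_U}) and the Gaussian-type decay of $e^{-U_{s}}$ implied by the time-uniform strong convexity (A\ref{hyp:U_strong_convex}) (which ensures that every polynomial is $e^{-U_{s}}\mathrm{d}x$-integrable, uniformly in $s$), provides the integrable domination needed to bring $\partial_{s}$ inside the integral. Doing so and invoking the assumed backward equation for $T_{s,t}f$,
\[
g'(s)=\int e^{-U_{s}(x)}\bigl[-\partial_{s}U_{s}(x)\,T_{s,t}f(x)+\partial_{s}T_{s,t}f(x)\bigr]\mathrm{d}x=-\int e^{-U_{s}(x)}\mathcal{L}_{s}T_{s,t}f(x)\,\mathrm{d}x,
\]
the $\partial_{s}U_{s}\cdot T_{s,t}f$ terms cancelling by construction of the hypothesized PDE.

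The next step is to verify that the right-hand side vanishes, i.e.\ that $\pi_{s}(\mathcal{L}_{s}\varphi)=0$ for $\varphi=T_{s,t}f$. This is standard integration by parts for the overdamped Langevin generator: writing $\mathcal{L}_{s}\varphi=\epsilon^{-1}(-\langle\nabla U_{s},\nabla\varphi\rangle+\Delta\varphi)$, one integrates the Laplacian term by parts to rewrite $\int e^{-U_{s}}\Delta\varphi\,\mathrm{d}x=\int e^{-U_{s}}\langle\nabla U_{s},\nabla\varphi\rangle\mathrm{d}x$, so the two contributions cancel. The boundary terms at infinity are controlled because $\nabla\varphi=\nabla T_{s,t}f$ has at most polynomial growth (from $T_{s,t}f\in C_{1,2}^{p+q}$), $\nabla U_{s}$ is at most linearly growing by (A\ref{hyp:U_grad_lipschitz_x})--(A\ref{hyp:U_time_reg}), and $e^{-U_{s}}$ decays faster than any polynomial thanks to (A\ref{hyp:U_strong_convex}); a routine truncation/approximation on balls $\{\|x\|\leq R\}$ followed by letting $R\to\infty$ makes this rigorous.

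The main technical obstacle is exactly the justification of these two analytic steps under our minimal regularity: differentiation under the integral in $s$ and integration by parts in $x$. Both hinge on the fact that the polynomial-growth class $C_{1,2}^{p+q}$ interacts favourably with the log-concave weight $e^{-U_{s}}$, which by strong convexity is dominated by a Gaussian whose parameters are uniform in $s\in[0,1]$; this gives uniform-in-$s$ integrable bounds on $\partial_{s}U_{s}\cdot T_{s,t}f\cdot e^{-U_{s}}$, on $\mathcal{L}_{s}T_{s,t}f\cdot e^{-U_{s}}$, and on the boundary integrands, closing the argument. Everything else in the lemma is algebraic once $s\mapsto g(s)$ has been shown constant.
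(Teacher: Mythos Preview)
Your proposal is correct and follows essentially the same route as the paper's own proof: both show that $s\mapsto \int e^{-U_{s}(x)}T_{s,t}f(x)\,\mathrm{d}x=Z_{s}\pi_{s}T_{s,t}f$ is constant by differentiating under the integral, invoking the assumed backward equation so that the $\partial_{s}U_{s}$ terms cancel, and then using $\pi_{s}\mathcal{L}_{s}T_{s,t}f=0$. The paper is slightly terser about the analytic justifications (it refers to the arguments in Lemma~\ref{lem:the_TI_identity} for the interchange of differentiation and integration), whereas you spell out the Gaussian domination and integration-by-parts steps, but there is no substantive difference in strategy.
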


\begin{proof}
We shall prove
\[
\frac{\partial}{\partial s}Z_{s}\pi_{s}T_{s,t}f=0,
\]
which implies
\[
\pi_{s}T_{s,t}f=\frac{Z_{t}}{Z_{s}}\pi_{t}f,\quad\forall s\leq t,
\]
and in turn (\ref{eq:FK_semigroup}). 

We have

\begin{align*}
\partial_{s}Z_{s}\pi_{s}T_{s,t}f & =\partial_{s}\int_{\mathbb{R}^{d}}\exp[-U_{s}(x)]T_{s,t}f(x)\mathrm{d}x\\
 & =-\int_{\mathbb{R}^{d}}\partial_{s}U_{s}(x)\exp[-U_{s}(x)]T_{s,t}f(x)\mathrm{d}x\\
 & \quad-\int_{\mathbb{R}^{d}}\exp[-U_{s}(x)][\mathcal{L}_{s}T_{s,t}f(x)-T_{s,t}f(x)\partial_{s}U_{s}(x)]\mathrm{d}x\\
 & =0,
\end{align*}
where the interchange of differentiation and integration is justified
by arguments similar to those in the proof of Lemma \ref{lem:the_TI_identity},
using (A\ref{hyp:basic_hyp_on_U}), (A\ref{hyp:U_grad_lipschitz_x}),
(A\ref{hyp:U_strong_convex}), the assumption of the lemma and Lemma
\ref{lem:drift}; and the final equality holds since $\pi_{s}\mathcal{L}_{s}T_{s,t}f=0$.
\end{proof}

\subsection{Poincaré inequalities, variance and bias bounds\label{subsec:Variance-bounds-for}}

\subsubsection{The commutation relation}
\begin{lem}
\label{lem:com_relation}For any $p\geq1$, $f\in C_{2}^{p}(\mathbb{R}^{d})$,
and $s\leq t$,
\begin{equation}
\|\nabla P_{s,t}f\|\leq e^{-K(t-s)/\epsilon}P_{s,t}\|\nabla f\|.\label{eq:commutation_relation}
\end{equation}
\end{lem}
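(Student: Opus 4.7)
The plan is to bound the gradient via a finite-difference argument in which the pathwise contraction from Lemma~\ref{lem:pathwise_sde_bound} supplies all the decay. Fix $x\in\mathbb{R}^d$ and a unit vector $v\in\mathbb{R}^d$. Since $f\in C_2^p(\mathbb{R}^d)$ is $C^1$, the fundamental theorem of calculus gives, for every $\eta>0$,
\begin{equation*}
\frac{P_{s,t}f(x+\eta v)-P_{s,t}f(x)}{\eta}=\mathbb{E}\left[\int_0^1\Big\langle\nabla f\big(Z_u^\eta\big),\,\frac{X_{s,t}^{x+\eta v}-X_{s,t}^x}{\eta}\Big\rangle\,\mathrm{d}u\right],
\end{equation*}
where $Z_u^\eta\coloneqq X_{s,t}^x+u(X_{s,t}^{x+\eta v}-X_{s,t}^x)$. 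By Lemma~\ref{lem:pathwise_sde_bound}, almost surely and uniformly in $\eta>0$,
\begin{equation*}
\|\eta^{-1}(X_{s,t}^{x+\eta v}-X_{s,t}^x)\|\leq e^{-K(t-s)/\epsilon},
\end{equation*}
so Cauchy--Schwarz applied to the inner product in the integrand bounds the left-hand side in absolute value by $e^{-K(t-s)/\epsilon}\,\mathbb{E}\big[\int_0^1\|\nabla f(Z_u^\eta)\|\,\mathrm{d}u\big]$.

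I then pass to the limit $\eta\to0^+$. Proposition~\ref{prop:C_2^p_closed} ensures $P_{s,t}f\in C_2^p(\mathbb{R}^d)$, so the left-hand side converges to $\langle v,\nabla P_{s,t}f(x)\rangle$. For the right-hand side, continuity of $\nabla f$ and of $y\mapsto X_{s,t}^y$ (the latter from Lemma~\ref{lem:pathwise_sde_bound}) give $\nabla f(Z_u^\eta)\to\nabla f(X_{s,t}^x)$ pointwise as $\eta\to0$. To apply dominated convergence, I restrict to $\eta\in(0,1]$: the contraction bound yields $\|Z_u^\eta\|\leq\|X_{s,t}^x\|+1$, and $f\in C_2^p(\mathbb{R}^d)$ gives polynomial growth $\|\nabla f(y)\|\leq C(1+\|y\|^{2p})$, so the integrand is dominated by a constant times $1+\|X_{s,t}^x\|^{2p}$, which is integrable by the moment bound~(\ref{eq:drift_bound_uniform}) of Lemma~\ref{lem:drift}. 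Taking the limit yields
\begin{equation*}
|\langle v,\nabla P_{s,t}f(x)\rangle|\leq e^{-K(t-s)/\epsilon}\,P_{s,t}\|\nabla f\|(x),
\end{equation*}
and choosing $v=\nabla P_{s,t}f(x)/\|\nabla P_{s,t}f(x)\|$ when the gradient is nonzero, the bound being trivial otherwise, produces~(\ref{eq:commutation_relation}).

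The only genuinely non-routine step is the dominated convergence argument, and it is handled cleanly by combining the pathwise contraction with Lemma~\ref{lem:drift}; everything else is a standard mean-value computation. A shorter but essentially equivalent presentation would introduce the Jacobian flow $J_{s,t}^x$ explicitly and deduce $\|J_{s,t}^x v\|\leq e^{-K(t-s)/\epsilon}\|v\|$ directly from Lemma~\ref{lem:pathwise_sde_bound}, then use $\nabla P_{s,t}f(x)=\mathbb{E}[(J_{s,t}^x)^\top\nabla f(X_{s,t}^x)]$; I would prefer the finite-difference route since it stays entirely within the toolbox already developed in section~\ref{subsec:Preliminaries-about-proces} and avoids a separate formalisation of the stochastic flow's derivative.
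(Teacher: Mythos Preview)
Your proof is correct and follows essentially the same route as the paper: a finite-difference argument that feeds the pathwise contraction of Lemma~\ref{lem:pathwise_sde_bound} through the mean-value/FTC representation of $f(X_{s,t}^{x+\eta v})-f(X_{s,t}^x)$, passes to the limit via dominated convergence using the polynomial growth of $\nabla f$ together with Lemma~\ref{lem:drift}, and invokes Proposition~\ref{prop:C_2^p_closed} to identify the limit of the difference quotient. The only differences are cosmetic (integral form of the mean value theorem rather than a single intermediate point, and a slightly cleaner domination bound).
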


\begin{proof}
 By the mean value theorem,
\[
f(X_{s,t}^{x})-f(X_{s,t}^{y})=\left\langle \nabla f(Z_{s,t}^{x,y})\,,\,X_{s,t}^{x}-X_{s,t}^{y}\right\rangle ,
\]
 for some $Z_{s,t}^{x,y}$ on the line segment between $X_{s,t}^{x}$
and $X_{s,t}^{y}$. By Cauchy-Schwarz and Lemma \ref{lem:pathwise_sde_bound},
\[
|f(X_{s,t}^{x})-f(X_{s,t}^{y})|\leq\|\nabla f(Z_{s,t}^{x,y})\|\|X_{s,t}^{x}-X_{s,t}^{y}\|\leq\|\nabla f(Z_{s,t}^{x,y})\|e^{-K(t-s)/\epsilon}\|x-y\|,
\]
hence
\begin{equation}
|P_{s,t}f(x)-P_{s,t}f(y)|\leq\mathbb{E}\left[|f(X_{s,t}^{x})-f(X_{s,t}^{y})|\right]\leq\mathbb{E}\left[\|\nabla f(Z_{s,t}^{x,y})\|\right]e^{-K(t-s)/\epsilon}\left\Vert x-y\right\Vert .\label{eq:grad_P_fd}
\end{equation}

Now pick any $v\in\mathbb{R}^{d}$ such that $\|v\|=1$ and set $y(n)\coloneqq x+\frac{1}{n}v$.
Our next step is to use dominated convergence to show:
\begin{equation}
\lim_{n\to\infty}\mathbb{E}\left[\|\nabla f(Z_{t}^{x,y(n)})\|\right]=\mathbb{E}\left[\|\nabla f(X_{s,t}^{x})\|\right].\label{eq:exp_grad}
\end{equation}
Using Lemma \ref{lem:pathwise_sde_bound}, $Z_{s,t}^{x,y(n)}\to X_{s,t}^{x}$
a.s., hence $\|\nabla f(Z_{s,t}^{x,y(n)})\|\to\|\nabla f(X_{s,t}^{x})\|$,
a.s. By the assumption $f\in C_{1}^{p}(\mathbb{R}^{d})$, there exists
a constant $c<\infty$ such that 
\[
\|\nabla f(Z_{s,t}^{x,y})\|\leq c(1+\|Z_{s,t}^{x,y}\|^{2p}),
\]
and using the convexity of $a\mapsto a^{2p}$, 
\begin{eqnarray*}
\|\nabla f(Z_{s,t}^{x,y(n)})\| & \leq & c\left[1+2^{2p-1}\left(\|Z_{s,t}^{x,y(n)}-X_{s,t}^{x}\|^{2p}+\|X_{s,t}^{x}\|^{2p}\right)\right]\\
 & \leq & c\left[1+2^{2p-1}\|X_{s,t}^{y(n)}-X_{s,t}^{x}\|^{2p}+2^{2p-1}\|X_{s,t}^{x}\|^{2p}\right]\\
 & \leq & c\left[1+2^{2p-1}\|x-y(n)\|^{2p}e^{-2pK(t-s)/\epsilon}+2^{2p-1}\|X_{s,t}^{x}\|^{2p}\right]\\
 & \leq & c\left[1+2^{2p-1}e^{-2pK(t-s)/\epsilon}+2^{2p-1}\|X_{s,t}^{x}\|^{2p}\right].
\end{eqnarray*}
Therefore 
\[
\mathbb{E}\left[\sup_{n}\|\nabla f(Z_{s,t}^{x,y(n)})\|\right]\leq c\left[1+2^{2p-1}e^{-2pK(t-s)/\epsilon}+2^{2p-1}\mathbb{E}\left[\|X_{s,t}^{x}\|^{2p}\right]\right]<+\infty,
\]
using Lemma \ref{lem:drift} for the final inequality. Thus we have
proved that indeed (\ref{eq:exp_grad}) holds. 

As $f\in C_{1}^{p}(\mathbb{R}^{d})$, (\ref{eq:C_closed_P}) implies
$\nabla P_{s,t}f(x)$ exists and is continuous in $x$. Since $y(n)-x=v/n$,
we have for some $z(n)$ between $y(n)$ and $x$, 
\[
P_{s,t}f(y(n))-P_{s,t}f(x)=\frac{1}{n}\left\langle \nabla P_{s,t}f(z(n)),v\right\rangle ,
\]
so by the continuity  of $\nabla P_{s,t}f$ we then obtain from (\ref{eq:grad_P_fd})
and (\ref{eq:exp_grad}):

\[
\left|\left\langle \nabla P_{s,t}f(x),v\right\rangle \right|=\lim_{n}\frac{|P_{s,t}f(x)-P_{s,t}f(y(n))|}{\|x-y(n)\|}\leq e^{-K(t-s)/\epsilon}P_{s,t}(\|\nabla f\|)(x).
\]
Taking $v=\nabla P_{s,t}f(x)/\|\nabla P_{s,t}f(x)\|$ completes the
proof.
\end{proof}
\begin{rem}
\label{rem:convexity_necessary}It can be shown that in fact the strong
log-concavity assumption (A\ref{hyp:U_strong_convex}) is necessary
for the statement of Lemma \ref{lem:com_relation} to hold. Indeed,
when that statement does hold, the same line of argument as \cite[Lem. 1.2 or 1.3]{ledoux2000geometry}
shows that the Bakry-Émery criterion holds for $U_{t}$ with constant
$K$, uniformly in $t$, i.e. for all $f\in C_{2}^{p}(\mathbb{R}^{d})$,
\[
\inf_{t\in[0,1]}\left\langle \nabla^{(2)}U_{t}\cdot\nabla f,\nabla f\right\rangle +\|\nabla^{(2)}f\|_{\mathrm{H.S.}}^{2}\geq K\|\nabla f\|^{2}.
\]
So for an arbitrary $v=(v_{1},\ldots,v_{d})\in\mathbb{R}^{d}$, choosing
$f(x)=\sum_{i=1}^{d}v_{i}x_{i}$ gives $\nabla f=v$ and $\|\nabla^{(2)}f\|_{\mathrm{H.S.}}^{2}=0$,
hence 
\[
\inf_{t\in[0,1]}\left\langle \nabla^{(2)}U_{t}\cdot v,v\right\rangle \geq K\|v\|^{2},
\]
which is exactly (A\ref{hyp:U_strong_convex}).
\end{rem}

\subsubsection{Poincaré inequalities }
\begin{lem}
\label{lem:local_poincare}For any $s\leq t$ and $f\in C_{2}^{p}(\mathbb{R}^{d})$,
\begin{equation}
P_{s,t}(f^{2})-(P_{s,t}f)^{2}\leq\frac{1}{K}(1-e^{-2K(t-s)/\epsilon})P_{s,t}(\|\nabla f\|^{2}).\label{eq:local_poincare}
\end{equation}
\end{lem}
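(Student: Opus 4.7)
The plan is to prove this local Poincar\'e inequality through the standard interpolation (or "semigroup de Bakry-\'Emery") argument, adapted to the time-inhomogeneous setting, leveraging the commutation relation already established in Lemma \ref{lem:com_relation}.

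First, fix $s \leq t$ and $f \in C_2^p(\mathbb{R}^d)$, and for $u \in [s,t]$ introduce the auxiliary function
\[
g_u(x) \coloneqq (P_{u,t}f)^2(x),
\]
so that $g_t = f^2$ and $P_{s,s} g_s = g_s = (P_{s,t}f)^2$, giving the interpolation identity
\[
P_{s,t}(f^2) - (P_{s,t}f)^2 \;=\; \int_s^t \partial_u \bigl[ P_{s,u} g_u \bigr] \mathrm{d}u.
\]
The first step is to justify the exchange of derivative and expectation so that the forward equation (\ref{eq:fwd_equation}) applies to $g_u$. By Proposition~\ref{prop:C_2^p_closed} the map $x \mapsto P_{u,t}f(x)$ is in $C_2^p(\mathbb{R}^d)$, and by Proposition~\ref{prop:fwd_and_bck_eqs} the map $(u,x) \mapsto P_{u,t}f(x)$ is in $C_{1,2}^{p+1/2}([0,1]\times \mathbb{R}^d)$; squaring preserves this regularity at the cost of doubling the growth exponent, so $g \in C_{1,2}^{2p+1}([0,1]\times\mathbb{R}^d)$, which together with (\ref{eq:C_transfer_L}) is enough to apply the forward equation to $g$ evaluated at the Dirac initial distribution $\delta_x$.

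The second step is to compute $\partial_u g_u + \mathcal{L}_u g_u$ pointwise. Using the backward equation $\partial_u P_{u,t}f = -\mathcal{L}_u P_{u,t}f$ from Proposition~\ref{prop:fwd_and_bck_eqs} gives
\[
\partial_u g_u \;=\; 2 (P_{u,t}f) \, \partial_u P_{u,t}f \;=\; -2 (P_{u,t}f)\, \mathcal{L}_u P_{u,t}f,
\]
while the elementary carr\'e du champ identity $\mathcal{L}_u(h^2) = 2 h\, \mathcal{L}_u h + 2 \epsilon^{-1} \|\nabla h\|^2$ applied with $h = P_{u,t}f$ yields
\[
\partial_u g_u + \mathcal{L}_u g_u \;=\; 2\epsilon^{-1}\, \|\nabla P_{u,t}f\|^2.
\]
Substituting into the interpolation identity,
\[
P_{s,t}(f^2) - (P_{s,t}f)^2 \;=\; 2\epsilon^{-1}\int_s^t P_{s,u}\!\left[\|\nabla P_{u,t}f\|^2\right]\mathrm{d}u.
\]

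The third step invokes Lemma~\ref{lem:com_relation} to get $\|\nabla P_{u,t}f\| \leq e^{-K(t-u)/\epsilon} P_{u,t}\|\nabla f\|$, then Jensen's inequality applied to $P_{u,t}$ to pass to $\|\nabla P_{u,t}f\|^2 \leq e^{-2K(t-u)/\epsilon} P_{u,t}(\|\nabla f\|^2)$, and finally the semigroup property $P_{s,u} P_{u,t} = P_{s,t}$ to bound the integrand uniformly in $x$ by $e^{-2K(t-u)/\epsilon} P_{s,t}(\|\nabla f\|^2)$. Performing the elementary integral
\[
\int_s^t e^{-2K(t-u)/\epsilon}\mathrm{d}u \;=\; \frac{\epsilon}{2K}\bigl(1 - e^{-2K(t-s)/\epsilon}\bigr)
\]
and multiplying by $2\epsilon^{-1}$ yields exactly (\ref{eq:local_poincare}).

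I anticipate that the main obstacle is not the algebraic manipulation, which is standard, but rather verifying that $g_u$ has enough regularity and integrability for the forward equation to apply; this is where the preparatory work in Propositions~\ref{prop:C_2^p_closed} and~\ref{prop:fwd_and_bck_eqs} and the moment bounds in Lemma~\ref{lem:drift} pay off, since squaring $P_{u,t}f$ worsens the polynomial growth and one must confirm that the composition remains within the $C_{1,2}^{p'}$ scale for which the forward equation has been rigorously established.
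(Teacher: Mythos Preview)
Your proof is correct and follows essentially the same approach as the paper: interpolate via $u\mapsto P_{s,u}[(P_{u,t}f)^2]$, differentiate using the forward and backward equations together with the carr\'e du champ identity to obtain $2\epsilon^{-1}P_{s,u}(\|\nabla P_{u,t}f\|^2)$, then apply Lemma~\ref{lem:com_relation} with Jensen and integrate. Your discussion of the regularity needed to justify the forward equation (squaring pushes $P_{u,t}f$ into $C_{1,2}^{2p+1}$) matches the paper's justification exactly.
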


\begin{proof}
Consider $t$ fixed and write $g(u,x)=(P_{u,t}f(x))^{2}$. By Proposition
\ref{prop:fwd_and_bck_eqs}, $(u,x)\mapsto P_{u,t}f_{t}(x)$ is a
member of $C_{1,2}^{p+1/2}([0,1]\times\mathbb{R}^{d})$, so $g\in C_{1,2}^{2p+1}([0,1]\times\mathbb{R}^{d})$.
We then may apply (\ref{eq:fwd_equation}) with $\nu=\delta_{x}$
to obtain:
\begin{eqnarray*}
\partial_{u}P_{s,u}\left[(P_{u,t}f)^{2}\right] & = & \partial_{u}P_{s,u}g_{u}\\
 & = & P_{s,u}\left[\frac{\partial g}{\partial u}+\mathcal{L}_{u}g_{u}\right]\\
 & = & -2P_{s,u}\left[(P_{u,t}f)(\mathcal{L}_{u}P_{u,t}f)\right]+P_{s,u}\left[\mathcal{L}_{u}(P_{u,t}f)^{2}\right]\\
 & = & 2\epsilon^{-1}P_{s,u}(\|\nabla P_{u,t}f\|^{2})\\
 & \leq & 2\epsilon^{-1}e^{-2K(t-u)/\epsilon}P_{s,t}(\|\nabla f\|^{2}),
\end{eqnarray*}
where the penultimate equality is an application of (\ref{eq:bwd_equation}),
the final equality holds due to the well known Carré du champ identity:
$\mathcal{L}_{u}(P_{u,t}f)^{2}-2(P_{u,t}f)(\mathcal{L}_{u}P_{u,t}f)=2\epsilon^{-1}\|\nabla P_{u,t}f\|^{2}$,
and the inequality is due to Lemma \ref{lem:com_relation} and Jensen's
inequality. Integrating w.r.t. to $u$ from $s$ to $t$ gives (\ref{eq:local_poincare}).
\end{proof}
\begin{rem}
\label{rem:Poincare_pi} It is well known that under (A\ref{hyp:U_strong_convex}),
each $\pi_{t}$ satisfies a Poincaré inequality with constant $K$,
that is 
\begin{equation}
\mathrm{var}_{\pi_{t}}[f]\leq\frac{1}{K}\pi_{t}(\|\nabla f\|^{2}),\label{eq:poincare_pi}
\end{equation}
for $f$ in some class of suitably smooth functions. We have particular
interest in the case $f\in C_{2}^{p}(\mathbb{R}^{d}),$ and one can
verify that indeed (\ref{eq:poincare_pi}) holds for that class of
functions using Lemma \ref{lem:local_poincare}; for example considering
$\pi_{0}$, assume that $U_{t}=U_{0}$ for all $t\in(0,1]$, so that
$P_{s,t}$ becomes time-homogeneous and $\pi_{0}P_{0,t}=\pi_{0}$.
Then with $s=0$, $t=1$, integrating (\ref{eq:local_poincare}) w.r.t.
$\pi_{0}$ gives
\[
\mathrm{var}_{\pi_{0}}[f]\leq\mathrm{var}_{\pi_{0}}[P_{0,1}f]+\frac{1}{K}(1-e^{-2K/\epsilon})\pi_{0}(\|\nabla f\|^{2}),
\]
and $\mathrm{var}_{\pi_{0}}[P_{0,1}f]\to0$ as $\epsilon\to0$ by
standard results for the time-homogeneous Langevin diffusion (a particular
rate of convergence for $\mathrm{var}_{\pi_{0}}[P_{0,1}f]\to0$ is
not need for this computation).
\end{rem}

\begin{lem}
\label{lem:poincare_transfer}Fix $p\geq1$. If for some given $\nu\in\mathcal{P}^{2p}(\mathbb{R}^{d})$
and constant $K_{\nu}>0$,
\begin{equation}
\mathrm{var}_{\nu}[f]\leq\frac{1}{K_{\nu}}\nu(\|\nabla f\|^{2}),\quad\forall f\in C_{2}^{p}(\mathbb{R}^{d}),\label{eq:poincare_assume}
\end{equation}
then for all $s\leq t$, 
\[
\mathrm{var}_{\nu P_{s,t}}[f]\leq\left[(1-e^{-2K(t-s)/\epsilon})\frac{1}{K}+e^{-2K(t-s)/\epsilon}\frac{1}{K_{\nu}}\right]\nu P_{s,t}(\|\nabla f\|^{2}),\quad\forall f\in C_{2}^{p}(\mathbb{R}^{d}).
\]
\end{lem}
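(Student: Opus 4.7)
The plan is to use the standard variance decomposition
\[
\mathrm{var}_{\nu P_{s,t}}[f] \;=\; \nu\!\left[P_{s,t}(f^{2})-(P_{s,t}f)^{2}\right] \;+\; \mathrm{var}_{\nu}[P_{s,t}f],
\]
which follows simply by adding and subtracting $\nu\big[(P_{s,t}f)^{2}\big]$ and using $\nu P_{s,t}(f^{2}) = \nu[P_{s,t}(f^{2})]$. The two terms are then handled by the two main ingredients already at hand: the ``local'' Poincar\'e inequality of Lemma \ref{lem:local_poincare} applied pointwise, and the Poincar\'e hypothesis (\ref{eq:poincare_assume}) on $\nu$ combined with the commutation relation of Lemma \ref{lem:com_relation}.

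For the first term, I would apply Lemma \ref{lem:local_poincare} pointwise in $x$, which requires $f \in C_{2}^{p}(\mathbb{R}^{d})$, and then integrate against $\nu$ to get
\[
\nu\!\left[P_{s,t}(f^{2})-(P_{s,t}f)^{2}\right] \;\leq\; \frac{1-e^{-2K(t-s)/\epsilon}}{K}\,\nu P_{s,t}\!\left(\|\nabla f\|^{2}\right).
\]
For the second term, I first note that by Proposition \ref{prop:C_2^p_closed}, $P_{s,t}f \in C_{2}^{p}(\mathbb{R}^{d})$, so the hypothesis (\ref{eq:poincare_assume}) on $\nu$ applies to $P_{s,t}f$:
\[
\mathrm{var}_{\nu}[P_{s,t}f] \;\leq\; \frac{1}{K_{\nu}}\,\nu\!\left(\|\nabla P_{s,t}f\|^{2}\right).
\]
Then Lemma \ref{lem:com_relation} gives $\|\nabla P_{s,t}f\| \leq e^{-K(t-s)/\epsilon}P_{s,t}(\|\nabla f\|)$, and squaring followed by Jensen's inequality applied to the probability kernel $P_{s,t}$ yields $\|\nabla P_{s,t}f\|^{2} \leq e^{-2K(t-s)/\epsilon}P_{s,t}(\|\nabla f\|^{2})$. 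Integrating against $\nu$ produces
\[
\mathrm{var}_{\nu}[P_{s,t}f] \;\leq\; \frac{e^{-2K(t-s)/\epsilon}}{K_{\nu}}\,\nu P_{s,t}\!\left(\|\nabla f\|^{2}\right).
\]
Summing the two bounds gives exactly the claimed inequality.

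The only delicate point is bookkeeping of the function class hypotheses: to invoke (\ref{eq:poincare_assume}) for $P_{s,t}f$ we need $P_{s,t}f \in C_{2}^{p}(\mathbb{R}^{d})$, which is precisely (\ref{eq:C_closed_P}) of Proposition \ref{prop:C_2^p_closed}; and finiteness of the integrals $\nu(f^{2})$, $\nu P_{s,t}(f^{2})$, $\nu P_{s,t}(\|\nabla f\|^{2})$ follows from $f,\nabla f$ having at most polynomial growth controlled by $\|\cdot\|^{2p}$, the assumption $\nu \in \mathcal{P}^{2p}(\mathbb{R}^{d})$, and the uniform moment bound (\ref{eq:drift_bound_uniform}) of Lemma \ref{lem:drift}. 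With these in place, the decomposition above is purely algebraic and the rest is an application of already-proved lemmas, so I do not anticipate any substantive obstacle beyond confirming these regularity and integrability prerequisites.
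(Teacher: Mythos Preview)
Your proposal is correct and essentially identical to the paper's proof: the same variance decomposition $\mathrm{var}_{\nu P_{s,t}}[f]=\nu[P_{s,t}(f^{2})-(P_{s,t}f)^{2}]+\mathrm{var}_{\nu}[P_{s,t}f]$, the same use of Lemma~\ref{lem:local_poincare} for the first term, and the same application of Proposition~\ref{prop:C_2^p_closed}, the Poincar\'e hypothesis~(\ref{eq:poincare_assume}), Lemma~\ref{lem:com_relation} and Jensen for the second. Your bookkeeping of regularity and integrability is also exactly what the paper invokes (Lemma~\ref{lem:drift} and the $\nu\in\mathcal{P}^{2p}$ assumption).
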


\begin{proof}
Since $\nu\in\mathcal{P}^{2p}(\mathbb{R}^{d})$ we are guaranteed
$\nu(\|\nabla f\|^{2})<+\infty$, and using Lemma \ref{lem:drift},
$\nu P_{s,t}(\|\nabla f\|^{2})<+\infty$. Integrating (\ref{eq:local_poincare})
w.r.t. $\nu$ gives
\[
\nu P_{s,t}(f^{2})-\nu[(P_{s,t}f)^{2}]\leq\frac{1}{K}(1-e^{-2K(t-s)/\epsilon})\nu P_{s,t}(\|\nabla f\|^{2}).
\]
By Proposition \ref{prop:C_2^p_closed}, if $f\in C_{2}^{p}(\mathbb{R}^{d})$
then $P_{s,t}f\in C_{2}^{p}(\mathbb{R}^{d})$, so under the hypotheses
of the lemma, the inequality (\ref{eq:poincare_assume}) holds with
$f$ replaced by $P_{s,t}f$. This observation, together with Lemma
\ref{lem:com_relation} and Jensen's inequality give: 
\begin{eqnarray*}
\mathrm{var}_{\nu P_{s,t}}[f] & \leq & \mathrm{var}_{\nu}[P_{s,t}f]+\frac{1}{K}(1-e^{-2K(t-s)/\epsilon})\nu P_{s,t}(\|\nabla f\|^{2})\\
 & \leq & \frac{1}{K_{\nu}}\nu(\|\nabla P_{s,t}f\|^{2})+\frac{1}{K}(1-e^{-2K(t-s)/\epsilon})\nu P_{s,t}(\|\nabla f\|^{2})\\
 & \leq & \frac{1}{K_{\nu}}\nu P_{s,t}(\|\nabla f\|^{2})e^{-2K(t-s)/\epsilon}+\frac{1}{K}(1-e^{-2K(t-s)/\epsilon})\nu P_{s,t}(\|\nabla f\|^{2}).
\end{eqnarray*}
\end{proof}

\subsubsection{Variance bounds}
\begin{lem}
\label{lem:L_2_convergence}Fix $p\geq1$ and $s\leq t$. If for some
given $\nu\in\mathcal{P}^{2p}(\mathbb{R}^{d})$ and a strictly positive,
continuous function $\kappa_{\nu}:u\in[s,t]\mapsto\kappa_{\nu}(u)\in\mathbb{R}^{+}$,
\[
\mathrm{var}_{\nu P_{s,u}}[f]\leq\frac{1}{\kappa_{\nu}(u)}\nu P_{s,u}(\left\Vert \nabla f\right\Vert ^{2}),\quad\forall f\in C_{2}^{p}(\mathbb{R}^{d}),\;u\in[s,t],
\]
then 
\[
\mathrm{var}_{\nu P_{s,u}}[P_{u,t}f]\leq\exp\left[-\frac{2}{\epsilon}\int_{u}^{t}\kappa_{\nu}(\tau)\mathrm{d}\tau\right]\mathrm{var}_{\nu P_{s,t}}[f],\quad\forall f\in C_{2}^{p}(\mathbb{R}^{d}),\;u\in[s,t].
\]
\end{lem}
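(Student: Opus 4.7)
The plan is to treat $g(u) := \mathrm{var}_{\nu P_{s,u}}[P_{u,t}f]$ as a function of $u\in[s,t]$ and derive a Gronwall-type differential inequality $g'(u)\geq (2\kappa_\nu(u)/\epsilon)\,g(u)$. Since $g(t)=\mathrm{var}_{\nu P_{s,t}}[f]$, integrating $(\log g)'\geq 2\kappa_\nu(u)/\epsilon$ from $u$ up to $t$ then yields the claimed bound (the degenerate case $g\equiv 0$ is trivial, and $\kappa_\nu$ being strictly positive and continuous ensures everything integrates).

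Set $h_u(x)\coloneqq P_{u,t}f(x)$. By Proposition \ref{prop:C_2^p_closed}, $h_u\in C_2^p(\mathbb{R}^d)$ for each fixed $u$, so the hypothesized Poincar\'e inequality applies to $h_u$ itself. By Proposition \ref{prop:fwd_and_bck_eqs}, $(u,x)\mapsto h_u(x)$ belongs to $C_{1,2}^{p+1/2}([0,1]\times\mathbb{R}^d)$ and satisfies the backward equation $\partial_u h_u=-\mathcal{L}_u h_u$. Hence $h_u^2$ is at least $C_{1,2}^{2p+1/2}$ as a function of $(u,x)$, which (together with $\nu\in\mathcal{P}^{2p}$ and the drift bound of Lemma \ref{lem:drift}) is what licenses application of the forward equation (\ref{eq:fwd_equation}) to $h_u^2$.

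Now compute $g'(u)$. Since $\nu P_{s,u}[h_u]=\nu P_{s,t}[f]$ is constant in $u$ by the tower property, $g(u)=\nu P_{s,u}[h_u^2]-(\nu P_{s,t}f)^2$. Plugging $h_u^2$ into the forward equation and combining $\partial_u h_u^2=2h_u\partial_u h_u=-2h_u\mathcal{L}_u h_u$ with the Carr\'e du champ identity $\mathcal{L}_u(h_u^2)-2h_u\mathcal{L}_u h_u=2\epsilon^{-1}\|\nabla h_u\|^2$ (already exploited in the proof of Lemma \ref{lem:local_poincare}) gives
\[
g'(u)=\nu P_{s,u}\!\left[\partial_u h_u^2+\mathcal{L}_u h_u^2\right]=\frac{2}{\epsilon}\,\nu P_{s,u}\bigl(\|\nabla h_u\|^2\bigr).
\]
The Poincar\'e assumption applied to $h_u\in C_2^p(\mathbb{R}^d)$ at time $u$ with measure $\nu P_{s,u}$ then yields
\[
g(u)=\mathrm{var}_{\nu P_{s,u}}[h_u]\;\leq\;\frac{1}{\kappa_\nu(u)}\,\nu P_{s,u}\bigl(\|\nabla h_u\|^2\bigr)\;=\;\frac{\epsilon}{2\kappa_\nu(u)}\,g'(u),
\]
which rearranges to the desired differential inequality; integrating completes the argument.

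The only delicate point is the justification of differentiation under the measure $\nu P_{s,u}$, i.e.\ the application of the forward equation to the time-dependent function $h_u^2$. This requires matching the polynomial growth of $h_u^2$, $\partial_u h_u^2$ and $\mathcal{L}_u h_u^2$ against the moment assumption $\nu\in\mathcal{P}^{2p}$, using Propositions \ref{prop:C_2^p_closed}--\ref{prop:fwd_and_bck_eqs} and Lemma \ref{lem:drift}. Everything else---Carr\'e du champ, the scalar Gronwall step, and continuity of $\kappa_\nu$---is routine.
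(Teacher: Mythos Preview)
Your proof is essentially the same as the paper's: both differentiate $\beta(u)=\mathrm{var}_{\nu P_{s,u}}[P_{u,t}f]$ in $u$ via the forward/backward equations and the Carr\'e du champ identity to obtain $\beta'(u)=\tfrac{2}{\epsilon}\nu P_{s,u}(\|\nabla P_{u,t}f\|^2)\geq \tfrac{2}{\epsilon}\kappa_\nu(u)\beta(u)$, then integrate. The only cosmetic differences are that you make the tower-property observation $\nu P_{s,u}[P_{u,t}f]=\nu P_{s,t}f$ explicit, and you write $h_u^2\in C_{1,2}^{2p+1/2}$ where the paper has $C_{1,2}^{2p+1}$ (the latter is the correct index after squaring).
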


\begin{proof}
Arguing similarly to the proof of Lemma \ref{lem:local_poincare},
the map $(u,x)\mapsto(P_{u,t}f(x))^{2}$ is a member of $C_{1,2}^{2p+1}([0,1]\times\mathbb{R}^{d})$
and $P_{u,t}f\in C_{2}^{p}(\mathbb{R}^{d})$. Applying (\ref{eq:fwd_equation})
and (\ref{eq:bwd_equation}), 
\begin{eqnarray*}
\partial_{u}\mathrm{var}_{\nu P_{s,u}}[P_{u,t}f] & = & \partial_{u}\nu P_{s,u}[(P_{u,t}f)^{2}]\\
 & = & \nu P_{s,u}\mathcal{L}_{u}[(P_{u,t}f)^{2}]-2\nu P_{s,u}[(P_{u,t}f)(\mathcal{L}_{u}P_{u,t}f)]\\
 & = & \frac{2}{\epsilon}\nu P_{s,u}(\|\nabla P_{u,t}f\|^{2})\\
 & \geq & \frac{2}{\epsilon}\kappa_{\nu}(u)\mathrm{var}_{\nu P_{s,u}}[P_{u,t}f],
\end{eqnarray*}
where the inequality holds by the hypothesis of the lemma. With $\beta(u)\coloneqq\mathrm{var}_{\nu P_{s,u}}[P_{u,t}f]$
we have shown
\[
\beta^{\prime}(u)\geq\frac{2}{\epsilon}\kappa_{\nu}(u)\beta(u),
\]
so
\[
u\mapsto\beta(u)\exp\left[-\frac{2}{\epsilon}\int_{s}^{u}\kappa_{\nu}(\tau)\mathrm{d}\tau\right]
\]
is a non-decreasing function on $[s,t]$, which implies
\[
\beta(u)\leq\beta(t)\exp\left[-\frac{2}{\epsilon}\int_{u}^{t}\kappa_{\nu}(\tau)\mathrm{d}\tau\right],
\]
as required.
\end{proof}

\subsubsection{Bias bounds}

Introduce
\[
W^{(p)}(\nu,\bar{\nu})\coloneqq\inf_{\gamma\in\Gamma(\nu,\bar{\nu})}\int_{\mathbb{R}^{2d}}\left(1+\|x\|^{2p}\vee\|y\|^{2p}\right)\|x-y\|\gamma(\mathrm{d}x,\mathrm{d}y),
\]
where $\Gamma(\nu,\bar{\nu})$ is the set of all couplings of two
probability measures $\nu,\bar{\nu}$ on $\mathcal{B}(\mathbb{R}^{d})$.
\begin{lem}
\label{lem:bias_nu_bar_nu}For any $p\geq1$, $f\in C_{2}^{p}(\mathbb{R}^{d})$
and $\nu,\bar{\nu}\in\mathcal{P}^{p}(\mathbb{R}^{d})$,

\[
|\nu P_{s,t}f-\bar{\nu}P_{s,t}f|\leq\alpha_{p}\|\nabla f\|_{p}e^{-K(t-s)/\epsilon}W^{(p)}(\nu,\bar{\nu})
\]
where $\alpha_{p}$ is the constant from Lemma \ref{lem:drift},
which depends on $\epsilon,K,p,d$, $\sup_{t}\|\partial_{t}x_{t}^{\star}\|$
and $\sup_{t}\|x_{t}^{\star}\|$. 
\end{lem}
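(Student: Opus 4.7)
The plan is to use a synchronous coupling argument: pick any coupling $\gamma \in \Gamma(\nu, \bar{\nu})$ and, for each pair $(x,y)$, run the two diffusions $X_{s,t}^x$ and $X_{s,t}^y$ driven by the same Brownian motion. Writing
\[
\nu P_{s,t}f - \bar{\nu} P_{s,t}f = \int_{\mathbb{R}^{2d}} \bigl[P_{s,t}f(x) - P_{s,t}f(y)\bigr]\,\gamma(\mathrm{d}x,\mathrm{d}y),
\]
it suffices to bound $|P_{s,t}f(x) - P_{s,t}f(y)|$ pointwise and then integrate.

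First, by the mean value theorem applied pathwise, there exists a random point $Z_{s,t}^{x,y}$ on the line segment between $X_{s,t}^x$ and $X_{s,t}^y$ such that
\[
f(X_{s,t}^x) - f(X_{s,t}^y) = \bigl\langle \nabla f(Z_{s,t}^{x,y}),\, X_{s,t}^x - X_{s,t}^y \bigr\rangle.
\]
Combining Cauchy--Schwarz with the pathwise contraction of Lemma \ref{lem:pathwise_sde_bound} yields, almost surely,
\[
|f(X_{s,t}^x) - f(X_{s,t}^y)| \leq \|\nabla f(Z_{s,t}^{x,y})\|\, e^{-K(t-s)/\epsilon}\|x-y\|,
\]
and taking expectations gives $|P_{s,t}f(x) - P_{s,t}f(y)| \leq e^{-K(t-s)/\epsilon}\|x-y\|\,\mathbb{E}\|\nabla f(Z_{s,t}^{x,y})\|$.

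Next I would control $\mathbb{E}\|\nabla f(Z_{s,t}^{x,y})\|$ using the definition of $\|\nabla f\|_p$, giving $\|\nabla f(Z_{s,t}^{x,y})\| \leq \|\nabla f\|_p(1+\|Z_{s,t}^{x,y}\|^{2p})$. Because $Z_{s,t}^{x,y}$ lies on the segment joining $X_{s,t}^x$ and $X_{s,t}^y$, convexity gives $\|Z_{s,t}^{x,y}\| \leq \|X_{s,t}^x\|\vee\|X_{s,t}^y\|$, so
\[
1 + \|Z_{s,t}^{x,y}\|^{2p} \leq (1 + \|X_{s,t}^x\|^{2p}) + (1 + \|X_{s,t}^y\|^{2p}).
\]
Applying the uniform moment bound (\ref{eq:drift_bound_uniform}) of Lemma \ref{lem:drift} to each term bounds $\mathbb{E}[1+\|Z_{s,t}^{x,y}\|^{2p}]$ by a constant (absorbed into $\alpha_p$) times $1 + \|x\|^{2p}\vee\|y\|^{2p}$.

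Putting these estimates together,
\[
|P_{s,t}f(x) - P_{s,t}f(y)| \leq \alpha_p\|\nabla f\|_p\, e^{-K(t-s)/\epsilon}\,(1+\|x\|^{2p}\vee\|y\|^{2p})\|x-y\|,
\]
and integrating against $\gamma$ followed by taking the infimum over couplings yields the claim. The only delicate point is the moment control on $Z_{s,t}^{x,y}$, which is a random intermediate point rather than an endpoint; here convexity of $\|\cdot\|$ together with monotonicity of $a\mapsto a^{2p}$ on $\mathbb{R}^+$ reduces matters to (\ref{eq:drift_bound_uniform}), so no significant obstacle arises.
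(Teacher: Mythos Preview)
Your argument is correct and very close in spirit to the paper's, but the paper applies the mean value theorem at a different level: it uses the MVT on the \emph{deterministic} function $P_{s,t}f$ to obtain a fixed point $z$ on the segment $[x,y]$, then invokes the commutation relation (Lemma~\ref{lem:com_relation}) to bound $\|\nabla P_{s,t}f(z)\|\le e^{-K(t-s)/\epsilon}P_{s,t}(\|\nabla f\|)(z)$, and finally applies the drift bound (\ref{eq:drift_bound_uniform}) at the single deterministic point $z$. Your version applies the MVT pathwise to $f$ and uses Lemma~\ref{lem:pathwise_sde_bound} directly, which is arguably more elementary since it bypasses Lemma~\ref{lem:com_relation}. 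The trade-off is in the constant: because your intermediate point $Z_{s,t}^{x,y}$ is random and lies between two random endpoints, your moment bound splits into contributions from both $X_{s,t}^x$ and $X_{s,t}^y$ and picks up a factor of~$2$, whereas the paper's deterministic $z$ satisfies $\|z\|\le\|x\|\vee\|y\|$ and yields exactly $\alpha_p$ as stated. If the precise constant matters to you, follow the paper's route; otherwise your approach is a perfectly valid alternative.
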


\begin{proof}
Pick any $x,y\in\mathbb{R}^{d}$ and $s\leq t$. Then by the mean
value theorem there exists a point $z$ on the line segment between
$x$ and $y$ such that, 
\begin{align*}
\left|P_{s,t}f(x)-P_{s,t}f(y)\right| & =\left|\left\langle \nabla P_{s,t}f(z),x-y\right\rangle \right|\\
 & \leq\|\nabla P_{s,t}f(z)\|\|x-y\|\\
 & \leq e^{-K(t-s)/\epsilon}P_{s,t}(\|\nabla f\|)(z)\|x-y\|\\
 & \leq\|\nabla f\|_{p}e^{-K(t-s)/\epsilon}(1+\mathbb{E}[\|X_{s,t}^{z}\|^{2p}])\|x-y\|\\
 & \leq\alpha_{p}\|\nabla f\|_{p}e^{-K(t-s)/\epsilon}\left[1+\|x\|^{2p}\vee\|y\|^{2p}\right]\|x-y\|,
\end{align*}
where the second inequality is due to Lemma \ref{lem:com_relation},
and the fourth inequality uses Lemma \ref{lem:drift} and the fact
$\|z\|\leq\|x\|\vee\|y\|$. The proof is completed by noting:
\[
|\nu P_{s,t}f-\bar{\nu}P_{s,t}f|\leq\int|P_{s,t}f(x)-P_{s,t}f(y)|\gamma(\mathrm{d}x,\mathrm{d}y),\quad\forall\gamma\in\Gamma(\nu,\bar{\nu}).
\]
\end{proof}
\begin{lem}
\label{lem:deriv_int_interchange}For any $p\geq1$,
\begin{equation}
\sup_{t}\int_{\mathbb{R}^{d}}\|x\|^{2p}\pi_{t}(\mathrm{d}x)<+\infty,\label{eq:pi_t_finite_polynomial_moments}
\end{equation}
 and for any $f\in C_{2}^{p}(\mathbb{R}^{d})$,
\begin{equation}
\int_{\mathbb{R}^{d}}\sup_{s}\left|\partial_{s}\left\{ \frac{\exp[-U_{s}(x)]}{Z_{s}}P_{s,t}f(x)\right\} \right|\mathrm{d}x<+\infty.\label{eq:integrability_sup_ds}
\end{equation}
\end{lem}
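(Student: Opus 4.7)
The proof splits naturally into two parts.

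For \eqref{eq:pi_t_finite_polynomial_moments} the plan is to sandwich the density of $\pi_t$ between Gaussian densities uniformly in $t$. Strong convexity (A\ref{hyp:U_strong_convex}) together with $\nabla U_t(x_t^{\star})=0$ gives the lower bound $U_t(x)\geq U_t(x_t^{\star})+\tfrac{K}{2}\|x-x_t^{\star}\|^{2}$, while the Lipschitz condition (A\ref{hyp:U_grad_lipschitz_x}) forces the eigenvalues of $\nabla^{(2)}U_t$ to lie in $[K,L]$ and hence yields the matching upper bound $U_t(x)\leq U_t(x_t^{\star})+\tfrac{L}{2}\|x-x_t^{\star}\|^{2}$. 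Exponentiating, the upper bound on $\exp[-U_t]$ is pointwise and the lower bound gives $Z_t\geq \exp[-U_t(x_t^{\star})](2\pi/L)^{d/2}$, so that
\[
\frac{\mathrm{d}\pi_t}{\mathrm{d}x}(x)\;\leq\;\left(\frac{L}{2\pi}\right)^{d/2}\exp\!\left(-\tfrac{K}{2}\|x-x_t^{\star}\|^{2}\right).
\]
A Gaussian moment computation, combined with $\sup_{t\in[0,1]}\|x_t^{\star}\|<\infty$ and the elementary bound $\|x\|^{2p}\leq 2^{2p-1}(\|x-x_t^{\star}\|^{2p}+\|x_t^{\star}\|^{2p})$, then delivers \eqref{eq:pi_t_finite_polynomial_moments}.

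For \eqref{eq:integrability_sup_ds} I would differentiate via the product rule. Using the definition of $\phi_s$ from \eqref{eq:phi_defn} and the backward equation \eqref{eq:bwd_equation} (valid by Proposition \ref{prop:fwd_and_bck_eqs}),
\[
\partial_{s}\!\left\{\frac{\exp[-U_{s}(x)]}{Z_{s}}P_{s,t}f(x)\right\} \;=\; \frac{\exp[-U_{s}(x)]}{Z_{s}}\Bigl[\phi_{s}(x)\,P_{s,t}f(x)\;-\;\mathcal{L}_{s}P_{s,t}f(x)\Bigr].
\]
The required integrability then follows by producing a uniform-in-$s$ bound of the form (polynomial in $\|x\|$)$\times$(Gaussian in $\|x\|$) on the modulus: Proposition \ref{prop:C_2^p_closed} gives $P_{s,t}f\in C_{2}^{p}(\mathbb{R}^{d})$ for every $s$, and \eqref{eq:C_transfer_L} together with Proposition \ref{prop:fwd_and_bck_eqs} supplies uniform-in-$s$ polynomial bounds on $|P_{s,t}f(x)|$ and $|\mathcal{L}_{s}P_{s,t}f(x)|$ through their $\|\cdot\|_{p+1/2}$-norms. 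By (A\ref{hyp:basic_hyp_on_U}), $\sup_{s}|\partial_{s}U_{s}(x)|$ grows polynomially, and since $\partial_{s}\log Z_{s}=-\pi_{s}(\partial_{s}U_{s})$ the first part of the lemma (applied with $p=p_{0}$) shows that $\sup_{s}|\partial_{s}\log Z_{s}|<\infty$, so $\sup_{s}|\phi_{s}(x)|$ grows at most polynomially in $\|x\|$. Finally, the Gaussian domination derived in the first part furnishes $\sup_{s}\exp[-U_{s}(x)]/Z_{s}\leq C\exp[-c\|x\|^{2}]$ for some $c,C>0$, after absorbing $\sup_{s}\|x_{s}^{\star}\|$ into the constants, and the product with the polynomial factor is integrable.

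The main obstacle lies precisely in this uniform Gaussian domination: one must use both halves of the Hessian sandwich, the convexity producing the exponential upper bound on $\exp[-U_{s}]$ and the Lipschitz condition producing the Gaussian lower bound on $Z_{s}$, and then carry uniformity in $s$ all the way through using boundedness of $s\mapsto x_{s}^{\star}$ guaranteed by (A\ref{hyp:U_strong_convex}). Everything else is bookkeeping involving the $\|\cdot\|_{p}$-norms introduced in Section~\ref{sec:Notation-definitions} and the regularity statements already obtained in Propositions \ref{prop:C_2^p_closed} and \ref{prop:fwd_and_bck_eqs}.
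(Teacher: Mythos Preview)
Your proposal is correct and follows essentially the same route as the paper: the Gaussian sandwich on $U_t$ from (A\ref{hyp:U_strong_convex}) and (A\ref{hyp:U_grad_lipschitz_x}) to get \eqref{eq:pi_t_finite_polynomial_moments} and the uniform bound $\sup_s \exp[-U_s(x)]/Z_s\le C\exp(-c\|x\|^2)$, then the product-rule/backward-equation identity with Propositions \ref{prop:C_2^p_closed} and \ref{prop:fwd_and_bck_eqs} to control $|\phi_s P_{s,t}f|$ and $|\mathcal{L}_s P_{s,t}f|$ polynomially in $\|x\|$ uniformly in $s$. The only cosmetic difference is that the paper packages the bound on $\phi_s$ by citing Lemma \ref{lem:the_TI_identity} and the membership $\phi\in C_{0,2}^{p_0}$, whereas you spell out $\partial_s\log Z_s=-\pi_s(\partial_s U_s)$ and invoke \eqref{eq:pi_t_finite_polynomial_moments} with $p=p_0$ directly.
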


\begin{proof}
We have
\begin{align*}
 & \left|\partial_{s}\left\{ \frac{\exp[-U_{s}(x)]}{Z_{s}}P_{s,t}f(x)\right\} \right|\\
 & =\frac{\exp[-U_{s}(x)]}{Z_{s}}\left|\phi_{s}(x)P_{s,t}f(x)-\mathcal{L}_{s}P_{s,t}f(x)\right|\\
 & \leq\frac{\exp[-U_{s}(x)]}{Z_{s}}\left[|\phi_{s}(x)||P_{s,t}f(x)|+|\mathcal{L}_{s}P_{s,t}f(x)|\right]
\end{align*}
Under (A\ref{hyp:U_grad_lipschitz_x}) and (A\ref{hyp:U_strong_convex}),
for all $s\in[0,1]$ and $x\in\mathbb{R}^{d}$,
\begin{equation}
\inf_{t}U_{t}(x_{t}^{\star})+\left(\|x\|-\inf_{t}\|x_{t}^{\star}\|\right)^{2}\frac{K}{2}\leq U_{s}(x)\leq\frac{L}{2}\left(\|x\|+\sup_{t}\|x_{t}^{\star}\|\right)^{2}+\sup_{t}U_{t}(x_{t}^{\star}),\label{eq:U_upper_and_lower_uniform_bounds}
\end{equation}
where the infima and suprema are finite, since by Lemma \ref{lem:minimizer},
$t\mapsto\|x_{t}^{\star}\|$ is continuous on $[0,1]$, and $U_{t}(x)$
is continous in $(t,x)$ by (A\ref{hyp:basic_hyp_on_U}). It follows
from (\ref{eq:U_upper_and_lower_uniform_bounds}) that $\inf_{t}Z_{t}>0$
and $\sup_{s}\exp[-U_{s}(x)]\leq\exp[-c_{1}\|x\|^{2}+c_{2}]$ for
some finite constants $c_{1},c_{2}>0$, which implies (\ref{eq:pi_t_finite_polynomial_moments}).
Also, since $U\in C_{1,2}^{p_{0}}([0,1]\times\mathbb{R}^{d})$ under
(A\ref{hyp:basic_hyp_on_U}), it follows from (\ref{eq:U_upper_and_lower_uniform_bounds})
and Lemma \ref{lem:the_TI_identity} that $(t,x)\mapsto\phi_{t}(x)$
is a member of $C_{0,2}^{p_{0}}([0,1]\times\mathbb{R}^{d})$. Since
$f\in C_{2}^{p}(\mathbb{R}^{d})$, it follows from Proposition \ref{prop:fwd_and_bck_eqs}
that $(s,x)\mapsto P_{s,t}f(x)$ is a member of $C_{1,2}^{p+1/2}([0,1]\times\mathbb{R}^{d})$
and from Proposition \ref{prop:C_2^p_closed} that $(s,x)\mapsto\mathcal{L}_{s}P_{s,t}f(x)$
is a member of $C_{0,0}^{p+1/2}([0,1]\times\mathbb{R}^{d})$. These
observations together imply (\ref{eq:integrability_sup_ds}).
\end{proof}
\begin{lem}
\label{lem:bias_pi_0_pi_t}For any $p\geq1$ and $f\in C_{2}^{p}(\mathbb{R}^{d})$,
\[
\left|\pi_{0}P_{0,t}f-\pi_{t}f\right|\leq\sup_{s\in[0,t]}\mathrm{var}_{\pi_{s}}[\phi_{s}]^{1/2}\mathrm{var}_{\pi_{t}}[f]^{1/2}\frac{\epsilon}{K}(1-e^{-Kt/\epsilon}).
\]
\end{lem}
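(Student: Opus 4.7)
The natural route is to write the discrepancy as a time integral over $[0,t]$, use the backward equation together with $\pi_s\phi_s=0$ to identify the integrand, and then combine Cauchy--Schwarz with the variance decay of Lemma \ref{lem:L_2_convergence}. Since $\pi_t P_{t,t}f=\pi_t f$, the fundamental theorem of calculus yields
\[
\pi_0 P_{0,t}f - \pi_t f = -\int_0^t \partial_s\bigl(\pi_s P_{s,t}f\bigr)\,\mathrm{d}s,
\]
the interchange of $\partial_s$ and the $\mathrm{d}x$-integral being legal thanks to Lemma \ref{lem:deriv_int_interchange}. Using $\partial_s(e^{-U_s(x)}/Z_s)=\phi_s(x)e^{-U_s(x)}/Z_s$, the backward equation $\partial_s P_{s,t}f = -\mathcal{L}_s P_{s,t}f$ from Proposition \ref{prop:fwd_and_bck_eqs}, and the $\mathcal{L}_s$-invariance $\pi_s(\mathcal{L}_s g)=0$, one obtains
\[
\partial_s\bigl(\pi_s P_{s,t}f\bigr)=\pi_s(\phi_s\,P_{s,t}f).
\]

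Next, because Lemma \ref{lem:the_TI_identity} gives $\partial_s\log Z_s=-\pi_s(\partial_s U_s)$ and therefore $\pi_s\phi_s=0$, the integrand can be recentred as $\pi_s(\phi_s P_{s,t}f)=\pi_s(\phi_s(P_{s,t}f-c))$ for any constant $c$, and Cauchy--Schwarz delivers
\[
\bigl|\pi_s(\phi_s P_{s,t}f)\bigr|\le \mathrm{var}_{\pi_s}[\phi_s]^{1/2}\,\mathrm{var}_{\pi_s}[P_{s,t}f]^{1/2}.
\]
To extract the exponential factor, I would apply Lemma \ref{lem:L_2_convergence} with $\nu=\pi_s$ and $\kappa_\nu(\cdot)\equiv K$, which is valid because Lemma \ref{lem:poincare_transfer} together with the standard Poincar\'e inequality for $\pi_s$ (Remark \ref{rem:Poincare_pi}) shows that each $\pi_s P_{s,u}$ satisfies Poincar\'e with constant $K$. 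Taking $u=s$ there yields the pointwise decay
\[
\mathrm{var}_{\pi_s}[P_{s,t}f]\le e^{-2K(t-s)/\epsilon}\,\mathrm{var}_{\pi_s P_{s,t}}[f].
\]

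The step I expect to be delicate is the replacement of $\mathrm{var}_{\pi_s P_{s,t}}[f]$ by $\mathrm{var}_{\pi_t}[f]$, since $\pi_s P_{s,t}\ne\pi_t$ when the drift is genuinely time-dependent. My plan is to take $c=\pi_t f$ in the Cauchy--Schwarz step above, so that the problem reduces to bounding $\pi_s\bigl[(P_{s,t}(f-\pi_t f))^2\bigr]$; this can be treated by introducing the $(\pi_s,\pi_t)$-adjoint $P_{s,t}^*$ of $P_{s,t}$, rewriting $\pi_s[(P_{s,t}\tilde f)^2]=\pi_t(P_{s,t}^*P_{s,t}\tilde f\cdot\tilde f)$ with $\tilde f=f-\pi_t f$, and propagating the contraction from Lemma \ref{lem:com_relation} through $P_{s,t}^*P_{s,t}$, which in the homogeneous reduction collapses to the classical self-adjoint spectral-gap bound $\|P_{t-s}\|_{L^2_0(\pi)}\le e^{-K(t-s)/\epsilon}$. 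Combining these inputs and integrating,
\[
|\pi_0P_{0,t}f-\pi_tf|\le \sup_{s\in[0,t]}\mathrm{var}_{\pi_s}[\phi_s]^{1/2}\,\mathrm{var}_{\pi_t}[f]^{1/2}\int_0^t e^{-K(t-s)/\epsilon}\,\mathrm{d}s,
\]
and the final integral equals $(\epsilon/K)(1-e^{-Kt/\epsilon})$, yielding the claim.
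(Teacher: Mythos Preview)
Your strategy coincides with the paper's: write $\pi_tf-\pi_0P_{0,t}f=\int_0^t\partial_s(\pi_sP_{s,t}f)\,\mathrm{d}s$, identify the integrand as $\pi_s(\phi_sP_{s,t}f)$ via the backward equation, $\pi_s\mathcal{L}_s=0$ and $\pi_s\phi_s=0$, then apply Cauchy--Schwarz and the variance decay of Lemma~\ref{lem:L_2_convergence} with $\nu=\pi_s$, $\kappa_\nu\equiv K$ (the latter justified through Lemma~\ref{lem:poincare_transfer} and Remark~\ref{rem:Poincare_pi}). Up to an immaterial overall sign in the integrand, this is exactly the paper's argument.

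Where you diverge is at the step you flag as ``delicate''. The paper does not introduce any adjoint $P_{s,t}^*$; it simply cites Lemmas~\ref{lem:poincare_transfer} and~\ref{lem:L_2_convergence} with $\nu=\pi_s$ and records $\mathrm{var}_{\pi_s}[P_{s,t}f]\le e^{-2K(t-s)/\epsilon}\,\mathrm{var}_{\pi_t}[f]$ directly. You are right that Lemma~\ref{lem:L_2_convergence} applied with $\nu=\pi_s$ literally produces $\mathrm{var}_{\pi_sP_{s,t}}[f]$ on the right, not $\mathrm{var}_{\pi_t}[f]$, and the paper does not spell out the passage between the two. However, your proposed fix does not close this gap either: rewriting $\pi_s[(P_{s,t}\tilde f)^2]=\pi_t[(P_{s,t}^*P_{s,t}\tilde f)\,\tilde f]$ and applying Cauchy--Schwarz gives $\pi_s[(P_{s,t}\tilde f)^2]\le\|P_{s,t}^*P_{s,t}\tilde f\|_{L^2(\pi_t)}\|\tilde f\|_{L^2(\pi_t)}$, and since $\|P_{s,t}^*\|=\|P_{s,t}\|$ this reduces to the very inequality $\|P_{s,t}\tilde f\|_{L^2(\pi_s)}\le e^{-K(t-s)/\epsilon}\|\tilde f\|_{L^2(\pi_t)}$ you are trying to establish---the argument is circular. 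Lemma~\ref{lem:com_relation} is a pointwise gradient bound $\|\nabla P_{s,t}f\|\le e^{-K(t-s)/\epsilon}P_{s,t}\|\nabla f\|$ and gives no handle on $P_{s,t}^*$, so ``propagating the contraction through $P_{s,t}^*P_{s,t}$'' has no content in the genuinely inhomogeneous setting.
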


\begin{proof}
Write
\begin{equation}
\pi_{t}f-\pi_{0}P_{0,t}f=\int_{0}^{t}\partial_{s}\pi_{s}P_{s,t}f\mathrm{d}s,\label{eq:bias_integral}
\end{equation}
and
\begin{align*}
\partial_{s}\pi_{s}P_{s,t}f & =\int_{\mathbb{R}^{d}}\partial_{s}\left[\dfrac{\exp[-U_{s}(x)]}{Z_{s}}P_{s,t}f(x)\right]\mathrm{d}x\\
 & =-\pi_{s}[\phi_{s}P_{s,t}f]-\pi_{s}\mathcal{L}_{s}P_{s,t}f\\
 & =-\pi_{s}[(\phi_{s}-\pi_{s}\phi_{s})(P_{s,t}f-\pi_{s}P_{s,t}f)],
\end{align*}
where the first equality is validated by Lemma \ref{lem:deriv_int_interchange};
the second equality holds by the definition of $\phi_{s}$, see (\ref{eq:phi_defn}),
and Proposition \ref{prop:fwd_and_bck_eqs}; and the third equality
holds because by Lemma \ref{lem:the_TI_identity} $\pi_{s}\phi_{s}=0$,
and $\mathcal{L}_{s}$ is the generator of a Langevin diffusion with
invariant distribution $\pi_{s}$. Therefore
\begin{align*}
\left|\partial_{s}\pi_{s}P_{s,t}f\right|^{2} & \leq\mathrm{var}_{\pi_{s}}[\phi_{s}]\mathrm{var}_{\pi_{s}}[P_{s,t}f]\\
 & \leq\mathrm{var}_{\pi_{s}}[\phi_{s}]\mathrm{var}_{\pi_{t}}[f]e^{-2K(t-s)/\epsilon}
\end{align*}
where Cauchy-Schwartz and Lemmas \ref{lem:poincare_transfer} and
\ref{lem:L_2_convergence} with $\nu=\pi_{s}$ have been applied,
noting Remark \ref{rem:Poincare_pi}. Plugging this bound into (\ref{eq:bias_integral})
and integrating completes the proof.
\end{proof}

\section{Quantitative CLT bound for the diffusion skeleton\label{sec:Quantitative-CLT-bound}}

\subsection{Set-up and main results}

As before we assume throughout section \ref{sec:Quantitative-CLT-bound}
that for $s\in[0,1]$ $\pi_{s}f_{s}=0$ and for $\epsilon>0$ we let$\bar{f}_{s,\epsilon}:=f_{s}-\mu_{s}^{\epsilon}f_{s}$.
Let$(B_{t})_{t\in\mathbb{R}_{+}}$ be a $d-$dimensional Brownian
motion. As earlier, for any $\epsilon>0$ we define $\big(X_{t}^{\epsilon}\big)_{t\in[0,1]}$
as the continuous solution for $t\in[0,1]$ of 
\begin{equation}
X_{t}^{\epsilon}=X_{0}^{\epsilon}-\epsilon^{-1}\int_{0}^{t}\nabla U_{u}(X_{u}^{\epsilon}){\rm d}u+\sqrt{2\epsilon^{-1}}\int_{0}^{t}{\rm d}B_{u},\label{eq:defXinCLT}
\end{equation}
with $X_{0}^{\epsilon}=:X_{0}$ being $\mathcal{F}_{0}-$measurable
and of distribution $\mu_{0}$. One may be interested in the distributional
limiting behaviour as $\epsilon\rightarrow0$ of
\[
\epsilon^{-1/2}S_{\epsilon}=\epsilon^{-1/2}\int_{0}^{1}f_{t}(X_{t}^{\epsilon}){\rm d}t,
\]
and it is expected that a central limit theorem (CLT) may hold. We
do not focus on this here, but rather investigate the following related
problem. Define, for any $h\in(0,1)$, quantities resulting from a
Riemann sum approximation of the integral above, 
\[
\epsilon^{-1/2}S_{\epsilon,h}:=\epsilon^{-1/2}h\sum_{i=0}^{n-1}f_{ih}(X_{ih}^{\epsilon}).
\]
where $n:=\lfloor1/h\rfloor$ (note that $n\geq1$ by assumption).
The aims of this section are to characterize $\lim_{\epsilon\rightarrow0}{\rm var}\left[\epsilon^{-1/2}S_{\epsilon,h(\epsilon)}\right]$
and the limiting distributional behaviour of $\epsilon^{-1/2}S_{\epsilon,h(\epsilon)}$
as $\epsilon\rightarrow0$, for various choices of $h(\cdot):\mathbb{R}_{+}\rightarrow(0,1)$.
Note that in order to alleviate notation below we may use $h$ for
$h(\epsilon)$ when no confusion is possible. 

In order to present the main result of this section we introduce quantities
related to the following family of time homogeneous and stationary
processes $\big(Y_{t}^{s,\epsilon}\big)_{(s,t)\in[0,1]\times\mathbb{R}_{+},\epsilon>0}$.
Let for any $s\in[0,1]$, $\epsilon>0$, $t\in\mathbb{R}_{+}$ ,
\[
Y_{t}^{s,\epsilon}=Y_{0}^{s,\epsilon}-\epsilon^{-1}\int_{0}^{t}\nabla U_{s}(Y_{u}^{s,\epsilon}){\rm d}u+\sqrt{2\epsilon^{-1}}\int_{0}^{t}{\rm d}B_{u}
\]
with $Y_{0}^{s,\epsilon}=:Y_{0}^{s}$ $\mathcal{F}_{0}-$measurable
of distribution $\pi_{s}$. We naturally use $\mathbb{P}\big[\cdot\big]$
and $\mathbb{E}\big[\cdot\big]$ for the laws and expectations of
both $\big(X_{t}^{\epsilon}\big)_{t\in[0,1],\epsilon>0}$ and $\big(Y_{t}^{s,\epsilon}\big)_{(s,t)\in[0,1]\times\mathbb{R}_{+},\epsilon>0}$.
For $s\in[0,1]$ we let $L_{2}(\pi_{s})$ be the set of real valued
and $\pi_{s}-$square integrable functions on $\mathbb{R}^{d}$. For
any $(s,t)\in[0,1]\times\mathbb{R}_{+}$, $f\in L^{2}(\pi_{s})$,
$\epsilon>0$ and $x\in\mathbb{R}^{d}$ we let $Q_{t}^{s,\epsilon}f(x):=\mathbb{E}\big[f(Y_{t}^{s,\epsilon})\mid Y_{0}^{s}=x\big]$,
$Q_{t}^{s}f(x):=Q_{t}^{s,1}f(x)$ and $P_{s,t}^{\epsilon}f(x):=\mathbb{E}\big[f(X_{t}^{\epsilon})\mid X_{s}=x\big]$.
Standard results on stationary reversible Markov processes and Markov
chains, together with our geometric ergodicity assumptions ensure
that the following limits exist and are finite for $f_{s}\in L^{2}(\pi_{s})$,
\[
\varsigma_{0}(s):=\lim_{\epsilon\rightarrow0}{\rm var}\left[\epsilon^{-1/2}\int_{0}^{1}f_{s}(Y_{t}^{s,\epsilon}){\rm d}t\right]\;\text{and}\;\varsigma_{\ell}(s):=\lim_{\epsilon\rightarrow0}{\rm var}\left[\epsilon^{-1/2}h(\epsilon)\sum_{i=0}^{n-1}f_{s}(Y_{ih(\epsilon)}^{s,\epsilon})\right]\;\text{whenever}\;\ell=h(\epsilon)\epsilon^{-1}>0,
\]
where ${\rm var}\big[\cdot\big]$ is the variance operator associated
with $\mathbb{E}\big[\cdot\big]$. Note the broad use we make throughout
of $\ell$ to refer to scenarios and not just a numerical value. It
is well known that the following upper bounds, in terms of either
spectral gap or $K$ in (A\ref{hyp:U_strong_convex}), hold
\[
\varsigma_{\ell}(s)\leq2{\rm var}_{\pi_{s}}\big(f_{s}\big)\cdot\begin{cases}
\ell{\rm Gap}_{R}\big(Q_{\ell}^{s}\big)^{-1}\leq\big[(1-\exp(-K\ell))/\ell\big]^{-1} & \text{for\;}\ell>0\\
{\rm Gap}\big(\mathcal{L}_{s}\big)^{-1}\leq K^{-1} & \text{for}\;\ell=0
\end{cases}.
\]
The last inequality follows from the fact that from Poincaré's inequality
${\rm var}_{\pi_{s}}\big[f_{s}\big]\leq K^{-1}\mathcal{E}_{\mathcal{L}_{s}}\big[f_{s}\big]$
(with $\mathcal{E}_{\mathcal{L}_{s}}\big[f_{s}\big]:=-\int f_{s}\mathcal{L}_{s}f_{s}){\rm d}\pi_{s}$)
and the variational representation of the spectral gap. These spectral
gap bounds are classic, and can, for example, be deduced from the
spectral representations in Theorem \ref{thm:variance-spectral-decomposition+Geyer}.
Under our assumptions, for any $\ell\geq0$, $s\mapsto\varsigma_{\ell}(\cdot),{\rm var}_{\pi_{s}}\big(f_{s}\big)$
can be shown to be continuous functions (see the proof of Lemma \ref{lem:riemannconvergence},
which exploits the results of Lemma \ref{lem:poisson_continuity_etc}
and the representation (\ref{eq:homogeneousAICwithPoisson}) of $\varsigma_{\ell}(\cdot)$),
and
\begin{equation}
\sigma_{\ell}^{2}:=\int_{0}^{1}\varsigma_{\ell}(s){\rm d}s\label{eq:nonhomogeneousvariance}
\end{equation}
is therefore well defined. The results of this section rely on the
following assumptions. We consider a sequence of processes as above,
indexed by the dimension of the problem $d$, for which we assume
the following.

\begin{hyp}(Polynomial dependence on dimension)\label{hyp:polynomialdependence}We
assume that (A\ref{hyp:dimension_dependence_mse-1}) holds and that
in addition we have
\begin{enumerate}
\item $\frac{\epsilon}{K}\sup_{t\in(0,1)}\|\partial_{t}x_{t}^{\star}\|=O(1),$
\item $\sup_{s\in[0,1]}\|\partial_{t}f_{s}\|_{p}$ and $\sup_{s\in[0,1]}1/\varsigma^{(d)}(s)$
grow at most polynomially in $d$ as $d\rightarrow\infty$.
\end{enumerate}
\end{hyp}

\noindent  We impose the following dependence of $h$ on $\epsilon$.

\begin{hyp}(Dependence between $\epsilon$ and $h$)\label{hyp:dependence-epsilon-h-on-d}
\begin{enumerate}
\item for any $\ell>0$ we set $h(\epsilon):=\ell\epsilon$,
\item for $\ell=0$ we set $h(\epsilon)=O\big(\epsilon^{c}\big)$ for some
$c>1$.
\end{enumerate}
\end{hyp}

We can now formulate our first result. Throughout $C$ is a constant,
not dependent on the quantities in assumptions (A\ref{hyp:basic_hyp_on_U}-\ref{hyp:U_time_cont}),
and whose value may change upon each appearance.
\begin{thm}
\label{thm:convergence-of-variance-with-d}Let $p\geq1$ and for any
$d\in\mathbb{N}$, let $(X_{t}^{\epsilon}(d))_{t\in[0,1]}$ be as
defined in (\ref{eq:defXinCLT}) and $f^{(d)}\in C_{1,2}^{p}([0,1]\times\mathbb{R}^{d})$.
Assume that for any $d\in\mathbb{N}$ (A\ref{hyp:basic_hyp_on_U}-\ref{hyp:U_time_cont})
and (A\ref{hyp:polynomialdependence}) hold. Then for any $\ell\geq0$
there exists $a>0$ such that with $\epsilon(d)=O(d^{-a})$ and $d\mapsto h(d)$
satisfying (A\ref{hyp:dependence-epsilon-h-on-d}), then
\[
\lim_{d\rightarrow\infty}\Bigl|{\rm var}\left[\epsilon(d)^{-1/2}S_{\epsilon(d),h(d)}\right]-\sigma_{\ell}^{2}(d)\Bigr|=0.
\]
\end{thm}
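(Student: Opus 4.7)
The plan is to reduce $\mathrm{var}[\epsilon^{-1/2}S_{\epsilon,h}]$ to a Riemann sum of the frozen-time asymptotic variances $\varsigma_\ell(s)$ and then invoke convergence of Riemann sums. Expanding the variance gives
\[
\mathrm{var}\bigl[\epsilon^{-1/2}S_{\epsilon,h}\bigr]
= \frac{h^{2}}{\epsilon}\sum_{k=0}^{n-1}\mathrm{var}_{\mu_{kh}^{\epsilon}}[f_{kh}]
+ \frac{2h^{2}}{\epsilon}\sum_{0\le k<j\le n-1}\mathbb{E}\bigl[\bar f_{kh,\epsilon}(X_{kh}^{\epsilon})\,P_{kh,jh}\bar f_{jh,\epsilon}(X_{kh}^{\epsilon})\bigr],
\]
so the target is a sum of covariances along the trajectory. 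The first thing I would do is control the off-diagonal part: Cauchy--Schwarz together with Lemma \ref{lem:L_2_convergence} applied to the frozen Poincaré constants (available through Lemma \ref{lem:poincare_transfer}) gives an exponentially decaying bound of the form $Ce^{-K(j-k)h/\epsilon}\sup_{t}\mathrm{var}_{\mu_t^\epsilon}[f_t]$, so only pairs with $(j-k)h\lesssim\epsilon\log(1/\epsilon)$ matter.

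On this diagonal band I replace the inhomogeneous semigroup $P_{kh,jh}$ by the frozen one $Q_{(j-k)h}^{kh,\epsilon}$ and the law $\mu_{kh}^{\epsilon}$ by $\pi_{kh}$. The first replacement is handled via a Duhamel-type identity,
\[
P_{kh,jh}g - Q_{(j-k)h}^{kh,\epsilon}g
= -\epsilon^{-1}\int_{kh}^{jh} Q_{u-kh}^{kh,\epsilon}\bigl\langle \nabla U_{u}-\nabla U_{kh},\nabla P_{u,jh}g\bigr\rangle\,\mathrm{d}u,
\]
with the inner bracket controlled by (A\ref{hyp:U_time_cont}) and $\nabla P_{u,jh}g$ by Lemma \ref{lem:com_relation}; the second is Lemma \ref{lem:bias_pi_0_pi_t} together with the moment bounds of Lemma \ref{lem:drift}. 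After this, the inner covariance sum over $j$ for fixed $k$ is approximately $h\,\varsigma_\ell(kh)$ for both regimes in (A\ref{hyp:dependence-epsilon-h-on-d}), via the representation of $\varsigma_\ell(s)$ announced as (\ref{eq:homogeneousAICwithPoisson}) in terms of Poisson equation solutions (or equivalently the stationary autocovariance sum), combined with the tail estimate from Step 1.

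Summing the local contributions gives an inhomogeneous Riemann sum $h\sum_{k=0}^{n-1}\varsigma_\ell(kh)$, which converges to $\sigma_\ell^{2}$ by Lemma \ref{lem:riemannconvergence}, relying on the continuity of $s\mapsto\varsigma_\ell(s)$ established there through Lemma \ref{lem:poisson_continuity_etc}. This handles the case of fixed $d$; to pass to the $d\to\infty$ regime, I collect the errors from the preceding steps as a polynomial in $d$ times a positive power of $\epsilon$ (and, if $\ell>0$, times $\ell$), using (A\ref{hyp:polynomialdependence}) to absorb the constants $K^{-1}$, $L$, $M$, $\|\nabla f_t\|_p$, $\|\partial_t f_t\|_p$, $\sup_t 1/\varsigma_\ell(t)$, and the moment bound $\mu_0(V^{2p})$. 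Choosing $a$ large enough so that $\epsilon(d)=O(d^{-a})$ kills the resulting $d^{\mathrm{poly}}\epsilon^{\mathrm{poly}}$ factor yields the claim.

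The main obstacle is bookkeeping the \emph{freezing error}: the difference $P_{kh,jh}-Q_{(j-k)h}^{kh,\epsilon}$ is small only because $(j-k)h = O(\epsilon)$ within the effective band, but this gain must survive against the $\epsilon^{-1}$ prefactor in the variance and against a number of summands proportional to $n\cdot\epsilon/h$. Showing that the surviving error is bounded by a manageable power of $\epsilon$ times a polynomial in $d$, uniformly over the diagonal band, is the delicate step and is where the commutation relation of Lemma \ref{lem:com_relation} and the regularity result Proposition \ref{prop:C_2^p_closed} do the heavy lifting; additional care is needed in the $\ell=0$ regime, where the extra factor $h/\epsilon\to 0$ in (A\ref{hyp:dependence-epsilon-h-on-d}).2 must be used to extract an $O(\epsilon^{c-1})$ gain over the $\ell>0$ analysis.
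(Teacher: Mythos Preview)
Your outline is correct and matches the paper's strategy at the level of the key estimates: exponential tail via the Poincar\'e/variance-decay machinery, freezing of $P_{kh,jh}$ to $Q_{(j-k)h}^{kh,\epsilon}$ by the Duhamel identity and (A\ref{hyp:U_time_cont}), replacement of $\mu_{kh}^{\epsilon}$ by $\pi_{kh}$, and passage to $\sigma_\ell^2$ via a Riemann sum. The paper organizes these same ingredients slightly differently. Rather than expanding $\mathrm{var}[\epsilon^{-1/2}S_{\epsilon,h}]$ as a raw covariance double sum, it first passes to the martingale variance $\upsilon(\epsilon)$ through the generalized Poisson solutions $\gamma_{k,\epsilon}(x)=\sum_{i\ge k}P_{kh,ih}^{\epsilon}\bar f_{ih,\epsilon}(x)$ (Lemma~\ref{lem:Poisson-and-martingale}), and then telescopes $\upsilon(\epsilon)-\sigma_\ell^2$ into eight explicit pieces $\Upsilon_{0,\epsilon},\dots,\Upsilon_{7,\epsilon}$ (Proposition~\ref{prop:CLT-variance-convergence}). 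Your ``diagonal-band'' truncation and freezing correspond to $\Upsilon_{1,\epsilon}$ (and the $T_{1},T_{4}$ split inside Lemma~\ref{lem:sublemma-cv-poisson-homogeneous}), your $\mu_{kh}^\epsilon\to\pi_{kh}$ step to $\Upsilon_{3,\epsilon},\Upsilon_{5,\epsilon}$, and your Riemann step to $\Upsilon_{4,\epsilon},\Upsilon_{6,\epsilon}$. The paper's packaging buys two things: it makes the boundary contributions ($\Upsilon_{0,\epsilon},\Upsilon_{7,\epsilon}$) explicit rather than hidden in edge effects of the double sum, and it produces $\upsilon(\epsilon)$ as a standalone object reused in the CLT (Theorem~\ref{thm:CLTforMartingale}). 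One step you leave implicit that the paper isolates is the replacement $\bar f_{jh,\epsilon}\to f_{kh}$ inside the band (their $T_{2},T_{3}$ and $\Upsilon_{0,\epsilon}$), which requires both the time-regularity of $f$ and control of the centering shift $\mu_{jh}^\epsilon f_{jh}$; this is straightforward under (A\ref{hyp:polynomialdependence}) but should be named.
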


This result is a consequence of Theorem \ref{thm:CLT-variance-convergence}.
As an aside, it is natural to investigate the impact of $\ell$ on
this asymptotic variance $\sigma_{\ell}^{2}$. The following result
confirms our intuition that the smaller $\ell$, the better; the result
below can be understood as being a generalisation of \cite[Theorem 3.3]{geyer1992practical},
an important fact in the area of discrete time Markov chain Monte
Carlo methods, concerned with thinning in the context of ergodic averages.
The proof can be found in Section \ref{subsec:Proof-of-Theorem-geyer}.
\begin{thm}
\label{thm:variance-spectral-decomposition+Geyer}For $s\in[0,1]$
and any $f_{s}\in L^{2}(\pi_{s})$ there exists a non-negative measure
$\nu_{s}$ on $\big([0,\infty),\mathcal{B}([0,\infty))\big)$ such
that for $\ell>0$
\[
\varsigma_{\ell}(s)=\ell\int_{0}^{\infty}\frac{1+\exp(-\ell\lambda)}{1-\exp(-\ell\lambda)}\nu_{s}({\rm d}\lambda),
\]
and
\[
\varsigma_{0}(s)=2\int_{0}^{\infty}\lambda^{-1}\nu_{s}({\rm d}\lambda).
\]
Further, for any $s\in[0,1]$, $\ell\mapsto\varsigma_{\ell}(s)$ is
a non-decreasing function on $[0,\infty)$.
\end{thm}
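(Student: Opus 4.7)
The plan is to reduce both $\varsigma_0(s)$ and $\varsigma_\ell(s)$ to spectral integrals against a single measure $\nu_s$ attached to $f_s$ and the self-adjoint operator $-\mathcal{L}_s$, and then compare the resulting integrands pointwise in $\lambda$ to obtain monotonicity.

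Step 1 (spectral measure). Since $(Y_t^{s,1})$ is a reversible strongly ergodic diffusion with stationary law $\pi_s$ under (A\ref{hyp:U_strong_convex}), the closure of $\mathcal{L}_s$ on $L^2(\pi_s)$ is self-adjoint and non-positive, with a spectral gap at least $K$ by the Poincar\'e inequality. The spectral theorem furnishes a projection-valued measure $E^s$ on $[0,\infty)$ such that $-\mathcal{L}_s=\int_0^\infty \lambda \, \mathrm{d} E^s_\lambda$. Define $\nu_s(A):=\langle E^s(A)f_s,f_s\rangle_{\pi_s}$ for $A\in\mathcal{B}([0,\infty))$; since $\pi_s f_s=0$, $\nu_s$ is supported in $[K,\infty)$ (in particular $\int\lambda^{-1}\nu_s(\mathrm{d}\lambda)<\infty$) and $\nu_s([0,\infty))=\mathrm{var}_{\pi_s}[f_s]$.

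Step 2 (time rescaling and limit identification for $\ell>0$). Under (A\ref{hyp:dependence-epsilon-h-on-d}.1), a standard time change of Brownian motion gives $(Y_{h(\epsilon)t}^{s,\epsilon})_{t\ge 0}\stackrel{d}{=}(Y_{\ell t}^{s,1})_{t\ge 0}$ when $h(\epsilon)=\ell\epsilon$, so
\[
\epsilon^{-1/2}h(\epsilon)\sum_{i=0}^{n-1}f_s\bigl(Y_{ih(\epsilon)}^{s,\epsilon}\bigr)\stackrel{d}{=}\ell\sqrt{\epsilon}\sum_{i=0}^{n-1}f_s\bigl(Y_{i\ell}^{s,1}\bigr),
\]
with $n=\lfloor 1/(\ell\epsilon)\rfloor$. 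Using $\ell^2\epsilon n\to\ell$, this reduces $\varsigma_\ell(s)$ to $\ell$ times the classical asymptotic variance of the reversible stationary Markov chain with kernel $Q_\ell^s=\exp(\ell\mathcal{L}_s)$. By the Kipnis--Varadhan spectral representation (as in \cite[Thm.\ 3.3]{geyer1992practical} for reversible chains), this asymptotic variance equals $\int_{(-1,1)}\frac{1+r}{1-r}\,\mu^{(\ell)}_{f_s}(\mathrm{d}r)$, where $\mu^{(\ell)}_{f_s}$ is the spectral measure of $f_s$ for $Q_\ell^s$. Functional calculus applied to $Q_\ell^s=\int_0^\infty e^{-\ell\lambda}\mathrm{d}E^s_\lambda$ shows $\mu^{(\ell)}_{f_s}$ is the pushforward of $\nu_s$ by $\lambda\mapsto e^{-\ell\lambda}$, whence
\[
\varsigma_\ell(s)=\ell\int_0^\infty\frac{1+e^{-\ell\lambda}}{1-e^{-\ell\lambda}}\,\nu_s(\mathrm{d}\lambda).
\]

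Step 3 (limit for $\ell=0$). For the continuous-time stationary reversible diffusion, the Kipnis--Varadhan formula gives $\varsigma_0(s)=2\langle f_s,(-\mathcal{L}_s)^{-1}f_s\rangle_{\pi_s}=2\int_0^\infty\lambda^{-1}\nu_s(\mathrm{d}\lambda)$; the existence of the limit follows because the resolvent $(-\mathcal{L}_s)^{-1}$ is bounded on $(\ker\mathcal{L}_s)^\perp$ by $K^{-1}$. Finiteness of the resulting integral is precisely the requirement used above to justify the Kipnis--Varadhan representation.

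Step 4 (monotonicity). Write $g_\lambda(\ell):=\ell(1+e^{-\ell\lambda})/(1-e^{-\ell\lambda})=(2/\lambda)\,u\coth u$ with $u=\ell\lambda/2$. One checks $(u\coth u)'=\coth u-u/\sinh^2 u\ge 0$ for $u>0$ using $\sinh u\cosh u\ge u$, and $\lim_{u\downarrow 0}u\coth u=1$, which reproduces the $\ell=0$ integrand $2/\lambda$. Since $g_\lambda$ is non-decreasing for every $\lambda>0$ and $\nu_s$ is a non-negative measure, dominated/monotone convergence yields both continuity of $\ell\mapsto\varsigma_\ell(s)$ at $0$ and monotonicity on $[0,\infty)$.

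The only delicate points are the interchange of the $\epsilon\to 0$ limit with the Kipnis--Varadhan spectral integral in Step 2 (handled by the time-change identity which eliminates $\epsilon$ from the chain law, leaving only the Riemann-sum/CLT-variance limit for a fixed reversible chain) and the verification that $\nu_s$ integrates $\lambda^{-1}$; the spectral gap bound $\nu_s((0,K))=0$ makes this automatic.
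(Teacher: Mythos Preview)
Your proposal is correct and follows the same overall architecture as the paper: use the spectral theorem for the self-adjoint operator $-\mathcal{L}_s$ to obtain a measure $\nu_s$, express both $\varsigma_\ell(s)$ and $\varsigma_0(s)$ as integrals against $\nu_s$, and then prove monotonicity by showing the integrand is pointwise non-decreasing in $\ell$ for each $\lambda$.

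The tactical differences are minor but worth noting. For the representation of $\varsigma_\ell(s)$, the paper writes out the variance as a Ces\`aro sum of covariances $\langle f_s,Q_{k\ell}^s f_s\rangle_{\pi_s}=\int e^{-k\ell\lambda}\nu_s(\mathrm{d}\lambda)$ and passes to the limit directly, whereas you first eliminate $\epsilon$ by the Brownian time-change and then invoke the Kipnis--Varadhan/Geyer spectral formula for the fixed reversible chain $Q_\ell^s$; both routes land on the same integral. For monotonicity, the paper differentiates $\varphi_\lambda(\ell)=\ell(1+e^{-\ell\lambda})/(1-e^{-\ell\lambda})$ in its exponential form and reduces to showing $D(a)=1-e^{-2a}-2ae^{-a}\geq 0$ via $D'(a)\geq 0$, while your substitution $u=\ell\lambda/2$ giving $\varphi_\lambda(\ell)=(2/\lambda)\,u\coth u$ and the one-line inequality $\sinh u\cosh u\geq u$ is arguably cleaner. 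Your explicit remark that the spectral gap forces $\nu_s$ to be supported in $[K,\infty)$, making $\int\lambda^{-1}\nu_s(\mathrm{d}\lambda)<\infty$ automatic, is a useful clarification that the paper leaves implicit.
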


Let $\Phi(\cdot)$ be the cumulative distribution function of the
standardized normal distribution. The main result of this section
is 
\begin{thm}
\label{thm:CLT}Let $p\geq1$ and for any $d\in\mathbb{N}$, let $(X_{t}^{\epsilon}(d))_{t\in[0,1]}$
be as defined in (\ref{eq:defXinCLT}) and $f^{(d)}\in C_{1,2}^{p}([0,1]\times\mathbb{R}^{d})$.
Assume that for any $d\in\mathbb{N}$ (A\ref{hyp:basic_hyp_on_U}-\ref{hyp:U_time_cont})
and (A\ref{hyp:polynomialdependence}) hold. Then for any $\ell\geq0$
there exists $a>0$ such that with $\epsilon(d)=O(d^{-a})$ and $d\mapsto h(d)$
satisfying (A\ref{hyp:dependence-epsilon-h-on-d}), then
\[
\lim_{d\rightarrow\infty}\sup_{w\in\mathbb{R}}\big|\mathbb{P}\Bigl[\epsilon(d)^{-1/2}S_{\epsilon(d),h(d)}/\sqrt{\sigma_{\ell}^{2}(d)}\leq w\Bigr]-\Phi(w)\big|=0.
\]
\end{thm}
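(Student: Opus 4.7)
The plan is a martingale approximation followed by a quantitative martingale central limit theorem (MCLT), with Theorem \ref{thm:convergence-of-variance-with-d} supplying the correct asymptotic variance. Concretely, I would show that $\epsilon^{-1/2} S_{\epsilon,h}$ can be written as a martingale $M_n$ plus an $L_2$-negligible remainder; apply a Berry-Esseen-type MCLT (such as that of Heyde--Brown or Haeusler) to $M_n$ to obtain a Kolmogorov distance bound to a Gaussian of variance $\mathrm{var}[M_n]$; and conclude by combining this with Theorem \ref{thm:convergence-of-variance-with-d}, which gives $\mathrm{var}[M_n]/\sigma_\ell^2(d) \to 1$ and so allows one to translate the rate into convergence to $\Phi$ in Kolmogorov distance, using only Lipschitz continuity of $\Phi$.

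The martingale decomposition is constructed via discrete Poisson equations for the frozen dynamics. For each $s \in [0,1]$, let $\psi_{s,h}$ solve $(I - Q_{h}^{s,\epsilon})\psi_{s,h} = h \epsilon^{-1/2} f_s$; existence and regularity of $\psi_{s,h}$ follow from (A\ref{hyp:U_strong_convex}) and the Poincar\'e inequality of Theorem \ref{thm:intro_var_and_bias_bounds} 1) via the Neumann series $\psi_{s,h} = h \epsilon^{-1/2} \sum_{j \ge 0}(Q_h^{s,\epsilon})^j f_s$, whose $L_2$-gradient bounds are inherited from the commutation relation Lemma \ref{lem:com_relation}. Setting $s_k := kh$ and writing $P_{s_k, s_{k+1}}^\epsilon$ for the true inhomogeneous kernel,
\[
h \epsilon^{-1/2} f_{s_k}(X_{s_k}^\epsilon) = D_{k+1} + R_k^{(1)} + R_k^{(2)},
\]
where $D_{k+1} := \psi_{s_k,h}(X_{s_{k+1}}^\epsilon) - P_{s_k, s_{k+1}}^\epsilon \psi_{s_k,h}(X_{s_k}^\epsilon)$ is an $(\mathcal{F}_{s_{k+1}})$-martingale increment, $R_k^{(1)} := (P_{s_k, s_{k+1}}^\epsilon - Q_h^{s_k,\epsilon})\psi_{s_k,h}(X_{s_k}^\epsilon)$ accounts for the discrepancy between the inhomogeneous and frozen kernels, and $R_k^{(2)}$ contains the telescoping contribution $\psi_{s_k,h}(X_{s_k}^\epsilon) - \psi_{s_{k-1},h}(X_{s_k}^\epsilon)$ arising from the $s$-dependence of $\psi$.

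I would then show the cumulative remainders $\sum_k R_k^{(j)}$ vanish in $L_2$ by exploiting (A\ref{hyp:U_time_cont}), which gives bounds of order $h^{3/2}\epsilon^{-1/2}$ on $\|(P_{s_k, s_{k+1}}^\epsilon - Q_h^{s_k,\epsilon})\varphi\|_{L_2(\mu_{s_k}^\epsilon)}$ in terms of $\|\nabla\varphi\|$ after a Girsanov calculation, and by using $\sup_t \|\partial_t f_t\|_p$ together with the moment bounds of Lemma \ref{lem:drift} for $R_k^{(2)}$. For the martingale $M_n := \sum_k D_{k+1}$, a quantitative MCLT controls $\sup_w |\mathbb{P}[M_n/\sqrt{\mathrm{var}[M_n]} \le w] - \Phi(w)|$ by a power of $\mathbb{E}[|\sum_k \mathbb{E}[D_{k+1}^2 \mid \mathcal{F}_{s_k}] - \mathrm{var}[M_n]|]$ plus a Lyapunov fourth-moment term $\sum_k \mathbb{E}[D_{k+1}^4]$. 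The former is handled by applying Theorem \ref{thm:intro_var_and_bias_bounds} 1) to the squared-gradient function of $\psi_{s,h}$, reducing the question to an $L_2$-contraction along the inhomogeneous process; the latter is controlled because $D_{k+1}$ inherits the bound $O(\sqrt{h/\epsilon})$ in terms of $\|\nabla\psi_{s,h}\|_\infty$, yielding a Lyapunov factor with a positive power of $\epsilon$.

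The principal obstacle is bookkeeping: every constant entering the bounds on $\psi_{s,h}$, its first and second derivatives, the remainders $R_k^{(j)}$, and the MCLT rate must be tracked as a polynomial in $d$ under (A\ref{hyp:polynomialdependence}). In particular, controlling $\|\nabla \psi_{s,h}\|$ uniformly in $s$ and $h$ relies on Lemma \ref{lem:com_relation} together with Remark \ref{rem:Poincare_pi}, while the higher-order smoothness needed for the Lyapunov bound requires (A\ref{hyp:third_order_deriv}) together with Proposition \ref{prop:C_2^p_closed}. Once all such polynomial rates are assembled, the resulting error bound has the form $O(\epsilon^{\delta} d^{P})$ for some finite $P$ and $\delta > 0$, so taking $\epsilon(d) = O(d^{-a})$ with $a > P/\delta$ forces the full bound to zero, giving the claimed uniform Kolmogorov convergence for both $\ell = 0$ and $\ell > 0$.
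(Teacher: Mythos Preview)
Your overall architecture is sound and close to the paper's in spirit---martingale approximation plus Haeusler's quantitative MCLT, with Theorem \ref{thm:convergence-of-variance-with-d} supplying $\upsilon(\epsilon)/\sigma_\ell^2\to 1$---but the martingale decomposition you choose is genuinely different from the paper's. You solve the \emph{frozen} discrete Poisson equation $(I-Q_h^{s,\epsilon})\psi_{s,h}=h\epsilon^{-1/2}f_s$ and build the increments $D_{k+1}$ from $\psi_{s_k,h}$, leaving remainders $R_k^{(1)}$ (inhomogeneous vs.\ frozen kernel) and $R_k^{(2)}$ ($s$-dependence of $\psi$). The paper instead follows Sethuraman: it centers by $\mu_t^\epsilon f_t$ rather than $\pi_t f_t$, sets $\gamma_{k,\epsilon}(x)=\sum_{i\ge k}P_{kh,ih}^\epsilon\bar f_{ih,\epsilon}(x)$ using the \emph{true inhomogeneous} semigroup, and obtains the exact identity $S_{\epsilon,h}-\mathbb{E}[S_{\epsilon,h}]=\gamma_{0,\epsilon}(X_0^\epsilon)+\sum_k\xi_{k,\epsilon}$ (Lemma \ref{lem:Poisson-and-martingale}). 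The homogeneous Poisson solutions $\eta_{k,\epsilon}$ and $g_s$ enter only later, to identify the asymptotic variance (Proposition \ref{prop:CLT-variance-convergence}, Lemmas \ref{lem:poisson-homogeneous-approximation}--\ref{lem:sublemma-cv-poisson-homogeneous}). The paper explicitly flags your route as the alternative (``An approach consists of using solutions to the set of time homogeneous Poisson equations\ldots but we here follow an approach inspired by [Sethuraman]''). The gain from the Sethuraman route is that no $s$-regularity of the Poisson solution is needed for the martingale itself: the only remainder is the single boundary term $h\gamma_{0,\epsilon}(X_0^\epsilon)$ and the deterministic bias $B_{\epsilon,h}$, both handled by Markov's inequality and the existing bias/variance bounds (Lemmas \ref{lem:controlbiasforCLT}, \ref{lem:controlremainderforCLT}, Proposition \ref{prop:decompositionCLT}).

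Two technical points in your sketch would need repair. First, the bound on $R_k^{(1)}=(P_{s_k,s_{k+1}}^\epsilon-Q_h^{s_k,\epsilon})\psi_{s_k,h}$ is not really ``a Girsanov calculation'': Girsanov/Pinsker gives total variation, which does not directly control $L_2$-differences for the unbounded $\psi_{s,h}$. The paper instead differentiates $t\mapsto Q_t^{s,\epsilon}P_{s+t,u}^\epsilon$ and uses (A\ref{hyp:U_time_cont}) with the commutation Lemma \ref{lem:com_relation} to get $|(Q-P)\varphi|\le C\epsilon^{-1}(u-s)^2\sup_t Q_t^{s,\epsilon}(\sqrt{\bar V_s}\,\|\nabla\varphi\|)$ (proof of Lemma \ref{lem:sublemma-cv-poisson-homogeneous}); you would need the analogous bound applied to $\psi_{s,h}$, hence gradient control of $\psi$, not of $f$. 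Second, your Lyapunov step invokes $\|\nabla\psi_{s,h}\|_\infty$, but under the paper's hypotheses $\nabla\psi_{s,h}$ is only in $C^p$, growing like $\bar V^{(p)}$; the correct bound is a moment estimate of the type in Lemma \ref{lem:C_epsilon-moment-D_epsilon}, combining Lemma \ref{lem:com_relation}, Lemma \ref{lem:drift} and Lemma \ref{lem:W(delta_x,nu)}. With these fixes your route should go through, at essentially the same polynomial-bookkeeping cost; the paper's route simply front-loads less regularity of the Poisson solution into the martingale step.
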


As seen in Proposition (\ref{prop:tv_dim_dependence}), the scenario
we are particularly interested in corresponds to the choice $h(d)=o\big(\epsilon(d)^{2}/d\big)$
or $h=h(\epsilon)=O\big(\epsilon(d)^{2}/d\big)$ as $d\rightarrow\infty$
(or even fixed $d$ and $\epsilon\rightarrow0$), in which case the
CLT is inherited by the discretized Langevin process, see Section
\ref{sec:Controlling-the-discretization}. The proof of the theorem
above relies on a martingale approximation and a quantitative bound
for the CLT for martingales.
\begin{proof}
First we consider the upper bound suggested by Proposition \ref{prop:decompositionCLT}.
Then we choose $\varepsilon_{1}(d)=Cd^{-c}$ with $c\in(0,1/2)$ as
in Lemma \ref{lem:controlbiasforCLT} and Lemma \ref{lem:controlremainderforCLT},
$\varepsilon_{2}(d)$ as in Corollary \ref{cor:CLT-approximation-replace-variance-with-asymp-variance}
with, say $r_{2}>1/2$, implying that $\lim_{d\rightarrow\infty}\varepsilon_{1}(d)\varepsilon_{2}^{-1}(d)=\infty$.
The result then follows from Theorem \ref{thm:CLTforMartingale}.
\end{proof}

\subsection{Quantitative Martingale approximation for the CLT\label{subsec:Quantitative-Martingale-approxim}}

The main result of this section is Proposition \ref{prop:decompositionCLT}
which establishes a bound on $\sup_{w\in\mathbb{R}}\big|\mathbb{P}\bigl[S_{\epsilon,h}/\sqrt{\epsilon\sigma_{\ell}^{2}}\leq w\bigr]-\Phi(w)\big|$
in terms of the sum of $\sup_{w\in\mathbb{R}}\big|\mathbb{P}\bigl[M_{\epsilon}\leq w\bigr]-\Phi(w)\big|$,
where $M_{\epsilon}$ is the last term of a Martingale sequence, and
additional negligible terms for which we derive quantitative bounds.
We find a quantitative upper bound on $\sup_{w\in\mathbb{R}}\big|\mathbb{P}\bigl[M_{\epsilon}\leq w\bigr]-\Phi(w)\big|$
in section \ref{subsec:Quantitative-bound-CLT-variance}. There are
essentially two routes to constructing such an approximation. An approach
consists of using solutions to the set of time homogeneous Poisson
equations $g_{s}-Q_{h\epsilon^{-1}}^{s}g_{s}=f_{s}$, but we here
follow an approach inspired by \cite{sethuraman2005martingale}, which
consists of treating bias and variance separately by centering $f_{t}$
around $\mu_{t}^{\epsilon}f_{t}$, and not $\pi_{t}f_{t}$. Note that
we have also avoided the use of the solutions of the Poisson equation
for the continous time processes involved (that is either $\mathcal{L}_{s}g_{s}=-f_{s}$
or its time inhomogeneous counterpart) as this would have required
quantitative bounds on their gradients with respect to $x$ and on
their time derivatives. Such bounds are currently not available with
sufficient generality \cite{pardoux2001,pardoux2005,Veretennikov2011}
to cover our scenario. We introduce $B_{\epsilon,h}:=\mathbb{E}\big[S_{\epsilon,h}\big],$
and construct our martingale approximation of $S_{\epsilon,h}/\sqrt{\epsilon\sigma_{\ell}^{2}}$.
Following \cite{sethuraman2005martingale} we introduce for $k\in\{0,\ldots,n-1\}$
and $x\in\mathbb{R}^{d}$
\begin{align*}
\gamma_{k,\epsilon}(x): & =\sum_{i=k}^{n-1}P_{kh,ih}^{\epsilon}\bar{f}_{ih,\epsilon}(x).
\end{align*}
Remark that for $0\leq k\leq n-2$, $\gamma_{k,\epsilon}$ satisfies
\begin{equation}
\bar{f}_{kh,\epsilon}(x)=\gamma_{k,\epsilon}(x)-P_{kh,(k+1)h}^{\epsilon}\gamma_{k+1,\epsilon}(x)\label{eq:generalizedPoisson}
\end{equation}
for any $x\in\mathbb{R}^{d}$\textendash this can be thought of as
a generalization of Poisson's equation. In order to formulate our
explicit bounds concisely and in a unified manner we introduce some
notation and establish useful identities in Proposition \ref{lem:intermediateresultsonV}.
Define for $q>0$ $V^{(q)}(x):=\|x\|^{2q}$, $\bar{V}^{(q)}(x):=1+\|x\|^{2q}$,
$\bar{V}_{t}^{(q)}(x):=1+V_{t}^{(q)}(x):=1+\|x-x_{t}^{\star}\|^{2q}$
(with notational simplifications $\bar{V}_{t}:=\bar{V}_{t}^{(1)}$
and $V_{t}:=V_{t}^{(1)}$ etc.). In addition to what is proposed in
Section \ref{sec:Notation-definitions}, for $f:[0,1]\times\mathbb{R}^{d}\rightarrow\mathbb{R}$
we let $\|\partial_{t}f\|_{p}:=\sup_{t\in[0,1]}\|\partial_{t}f_{t}\|_{p}$
and $\|\nabla^{(r)}f\|_{p}:=\sup_{t\in[0,1]}\|\nabla^{(r)}f_{t}\|_{p}$.
We let $\vvvert f\vvvert_{p}:=\|f\|_{\bar{V}^{(p)}}\vee\|\nabla f\|_{\bar{V}^{(p)}}\vee\|\Delta f\|_{\bar{V}^{(p)}}$.
The proofs not present in this subsection can be found in subsection
\ref{subsec:Quantitative-CLT-constants}.
\begin{lem}
\label{lem:Poisson-and-martingale}Let $p\geq1$ and $f\in C_{0,2}^{p}([0,1]\times\mathbb{R}^{d})$.
\begin{enumerate}
\item For any $\epsilon,h>0$ and $k\in\{0,\ldots,n-1\}$, $\gamma_{k,\epsilon}\in C_{2}^{p}([0,1]\times\mathbb{R}^{d})$
and we have the quantitative bound 
\[
\max_{k\in\{0,\ldots,n-1\}}\big\{\big|P_{kh,(k+1)h}^{\epsilon}\gamma_{k+1,\epsilon}(x)\big|\vee\big|\gamma_{k,\epsilon}(x)\big|\big\}\leq\alpha_{p}\frac{\|\nabla f\|_{p}}{1-\exp\big(-K\epsilon^{-1}h\big)}W^{(p)}(\delta_{x},\mu_{0}).
\]
\item $\mathbb{P}-$a.s. we have
\[
S_{h,\epsilon}-\mathbb{E}\big[S_{h,\epsilon}\big]=\sum_{k=0}^{n-1}\bar{f}_{kh,\epsilon}(X_{kh}^{\epsilon})=\gamma_{0,\epsilon}(X_{0}^{\epsilon})+\sum_{k=1}^{n-1}\gamma_{k,\epsilon}(X_{kh}^{\epsilon})-P_{(k-1)h,kh}^{\epsilon}\gamma_{k,\epsilon}(X_{(k-1)h}^{\epsilon}),
\]
\item For $1\leq k\leq n-1$ define $\xi_{k,\epsilon}:=\left(\gamma_{k,\epsilon}(X_{kh}^{\epsilon})-P_{(k-1)h,kh}^{\epsilon}\gamma_{k,\epsilon}(X_{(k-1)h}^{\epsilon})\right)$,
$\xi_{0,\epsilon}:=0$,
\[
\upsilon(\epsilon):=\epsilon^{-1}h^{2}{\rm var}\left[\sum_{i=0}^{n-1}\xi_{i,\epsilon}\right],
\]
and for $0\leq k\leq n-1$ and $\epsilon>0$ such that $\upsilon(\epsilon)>0$
we let 
\begin{align*}
M_{k,\epsilon} & :=\epsilon^{-1/2}h\sum_{i=0}^{k}\xi_{i,\epsilon}/\sqrt{\upsilon(\epsilon)}.
\end{align*}
Then $\big(M_{i,\epsilon},\mathcal{F}_{ih}\big)_{i\in\{0,\ldots,n-1\}}$
is a martingale.
\end{enumerate}
\end{lem}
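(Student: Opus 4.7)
My plan is to address the three parts in order, each time relying on the semigroup identity $P_{kh,ih}^\epsilon = P_{kh,(k+1)h}^\epsilon P_{(k+1)h,ih}^\epsilon$ and the contraction/regularity machinery of Section \ref{sec:Ergodicity-and-Poincare}.

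For Part 1, I would first observe that the regularity of $\gamma_{k,\epsilon}$ is inherited straight from $f$: since $\mu_{ih}^\epsilon(f_{ih})$ is a constant in $x$, we have $\bar f_{ih,\epsilon}\in C_2^p(\mathbb{R}^d)$ with the same gradient and Hessian as $f_{ih}$; Proposition \ref{prop:C_2^p_closed} then puts each $P_{kh,ih}^\epsilon\bar f_{ih,\epsilon}$ in $C_2^p(\mathbb{R}^d)$, and $\gamma_{k,\epsilon}$ is a finite sum of such functions. For the quantitative bound, the crucial remark is that $\mu_{ih}^\epsilon(\bar f_{ih,\epsilon})=0$ together with the semigroup identity $\mu_{kh}^\epsilon P_{kh,ih}^\epsilon=\mu_{ih}^\epsilon$ gives $\mu_{kh}^\epsilon P_{kh,ih}^\epsilon\bar f_{ih,\epsilon}=0$, so that
\[
P_{kh,ih}^\epsilon\bar f_{ih,\epsilon}(x)=\delta_x P_{kh,ih}^\epsilon\bar f_{ih,\epsilon}-\mu_{kh}^\epsilon P_{kh,ih}^\epsilon\bar f_{ih,\epsilon},
\]
and Lemma \ref{lem:bias_nu_bar_nu} applies with $\nu=\delta_x$, $\bar\nu=\mu_{kh}^\epsilon$, yielding a bound of the form $\alpha_p\|\nabla f\|_p e^{-K(i-k)h/\epsilon}W^{(p)}(\delta_x,\mu_{kh}^\epsilon)$. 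Summing over $i=k,\ldots,n-1$ is a geometric series in the ratio $e^{-Kh/\epsilon}$, producing the factor $(1-e^{-Kh/\epsilon})^{-1}$. The last step is to replace $W^{(p)}(\delta_x,\mu_{kh}^\epsilon)$ by $W^{(p)}(\delta_x,\mu_0)$ up to a constant absorbed into $\alpha_p$; this is achieved by synchronously coupling $X_0\sim\mu_0$ with the deterministic starting point $x$, using Lemma \ref{lem:pathwise_sde_bound} to propagate, and applying the polynomial moment bound (\ref{eq:drift_bound_uniform}) of Lemma \ref{lem:drift} to the term $\mathbb{E}[(1+\|x\|^{2p}\vee\|X_{kh}^\epsilon\|^{2p})\|x-X_{kh}^\epsilon\|]$. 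The same bound yields the claim for $P_{kh,(k+1)h}^\epsilon\gamma_{k+1,\epsilon}(x)$ because the telescoping identity in Part 2 shows it differs from $\gamma_{k,\epsilon}(x)$ only by the single term $\bar f_{kh,\epsilon}(x)$, which is directly controlled through $\|\nabla f\|_p$.

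Part 2 is then purely algebraic. By definition $S_{\epsilon,h}-\mathbb{E}[S_{\epsilon,h}]=h\sum_{k=0}^{n-1}\bar f_{kh,\epsilon}(X_{kh}^\epsilon)$, and noting that the sum defining $\gamma_{n-1,\epsilon}$ has exactly one term, (\ref{eq:generalizedPoisson}) extends to $k=n-1$ with the convention $\gamma_{n,\epsilon}\equiv 0$. Substituting and reindexing the subtracted terms by $k\mapsto k+1$ produces
\[
\sum_{k=0}^{n-1}\bar f_{kh,\epsilon}(X_{kh}^\epsilon)=\gamma_{0,\epsilon}(X_0^\epsilon)+\sum_{k=1}^{n-1}\bigl[\gamma_{k,\epsilon}(X_{kh}^\epsilon)-P_{(k-1)h,kh}^\epsilon\gamma_{k,\epsilon}(X_{(k-1)h}^\epsilon)\bigr],
\]
which (up to the factor $h$ that should appear in front) is the stated identity.

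For Part 3, integrability of $\xi_{k,\epsilon}$ follows from the polynomial-in-$\|X_{kh}^\epsilon\|$ bound of Part 1 combined with the uniform moment estimate (\ref{eq:drift_bound_uniform}). Adaptedness is immediate because both $\gamma_{k,\epsilon}(X_{kh}^\epsilon)$ and $P_{(k-1)h,kh}^\epsilon\gamma_{k,\epsilon}(X_{(k-1)h}^\epsilon)$ are $\mathcal{F}_{kh}$-measurable. Finally, the martingale-increment property $\mathbb{E}[\xi_{k,\epsilon}\mid\mathcal{F}_{(k-1)h}]=0$ is exactly the strong Markov property of the diffusion at time $(k-1)h$, namely
\[
\mathbb{E}\bigl[\gamma_{k,\epsilon}(X_{kh}^\epsilon)\mid\mathcal{F}_{(k-1)h}\bigr]=P_{(k-1)h,kh}^\epsilon\gamma_{k,\epsilon}(X_{(k-1)h}^\epsilon),
\]
so $M_{k,\epsilon}$ is the normalised partial sum of a martingale-difference sequence and itself a martingale. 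The main obstacle I expect is the quantitative bound of Part 1: specifically the step that converts $W^{(p)}(\delta_x,\mu_{kh}^\epsilon)$ into $W^{(p)}(\delta_x,\mu_0)$ cleanly while keeping the dependence of $\alpha_p$ consistent with the statement of Lemma \ref{lem:drift}; the remaining items reduce to telescoping and to the Markov property and should be routine.
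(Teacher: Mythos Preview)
Your strategy matches the paper's: Proposition~\ref{prop:C_2^p_closed} for regularity, Lemma~\ref{lem:bias_nu_bar_nu} applied termwise followed by a geometric sum for the quantitative bound, telescoping of~(\ref{eq:generalizedPoisson}) for Part~2, and the Markov property together with the moment bound~(\ref{eq:drift_bound_uniform}) for Part~3. The paper handles Part~2 by separating the interior indices $1\le k\le n-2$ from the two boundary terms rather than using your convention $\gamma_{n,\epsilon}\equiv 0$, but the two arguments are equivalent.

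The one point that deserves comment is precisely the one you flag. Applying Lemma~\ref{lem:bias_nu_bar_nu} with $\nu=\delta_x$, $\bar\nu=\mu_{kh}^\epsilon$ and $P_{kh,ih}^\epsilon$ gives the factor $W^{(p)}(\delta_x,\mu_{kh}^\epsilon)$, not $W^{(p)}(\delta_x,\mu_0)$; your proposed synchronous-coupling fix does not do what you want, since Lemma~\ref{lem:pathwise_sde_bound} controls $\|X_{0,kh}^x-X_{kh}^{X_0}\|$ rather than $\|x-X_{kh}^{X_0}\|$, and the latter is what enters $W^{(p)}(\delta_x,\mu_{kh}^\epsilon)$. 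The paper's own proof is loose at this very step (it writes the bound with $P_{0,t}^\epsilon$ and $W^{(p)}(\delta_x,\pi_0)$, then jumps to the conclusion with $\mu_0$). In practice the discrepancy is immaterial: every subsequent use of the lemma (e.g.\ Lemmas~\ref{lem:controlremainderforCLT}, \ref{prop:B_epsilon}, \ref{lem:C_epsilon-moment-D_epsilon}) immediately invokes Lemma~\ref{lem:W(delta_x,nu)} to replace $W^{(p)}(\delta_x,\mu_0)$ by $C\,\mu_0\bar V^{(p+1/2)}\cdot\bar V^{(p+1/2)}(x)$, and the same bound holds for $W^{(p)}(\delta_x,\mu_{kh}^\epsilon)$ after one application of~(\ref{eq:drift_bound_uniform}). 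So the clean resolution is simply to state the bound with $\mu_{kh}^\epsilon$ (or directly with $\bar V^{(p+1/2)}$), rather than to attempt the conversion.
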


\begin{proof}
For notational simplicity we drop $\epsilon$ from $P_{s,t}^{\epsilon}$
here. For the first statement we first apply Proposition \ref{prop:C_2^p_closed}
and then use Lemma \ref{lem:bias_nu_bar_nu} in order to obtain the
quantitative bound : for any $x\in\mathbb{R}^{d}$ 
\[
|\delta_{x}P_{0,t}^{\epsilon}f_{t}-\mu_{0,t}^{\epsilon}f_{t}|\leq\alpha_{p}\|\nabla f_{t}\|_{p}W^{(p)}(\delta_{x},\pi_{0})\exp\big(-K\epsilon^{-1}t\big)
\]
and therefore for $k\in\{0,\ldots,n-1\}$, 
\begin{align*}
\big|P_{kh,(k+1)h}^{\epsilon}\gamma_{k+1,\epsilon}(x)\big|\vee\big|\gamma_{k,\epsilon}(x)\big| & \leq\alpha_{p}\frac{\|\nabla f\|_{p}}{1-\exp\big(-K\epsilon^{-1}h\big)}W^{(p)}(\delta_{x},\mu_{0}).
\end{align*}
The second statement: from (\ref{eq:generalizedPoisson}) we have
for $1\leq k\leq n-2$
\[
\bar{f}_{kh,\epsilon}(X_{kh}^{\epsilon})=\gamma_{k,\epsilon}(X_{kh}^{\epsilon})-P_{(k-1)h,kh}\gamma_{k,\epsilon}(X_{(k-1)h}^{\epsilon})+P_{(k-1)h,kh}\gamma_{k,\epsilon}(X_{(k-1)h}^{\epsilon})-P_{kh,(k+1)h}\gamma_{k+1,\epsilon}(X_{kh}^{\epsilon})
\]
and therefore
\[
\sum_{k=1}^{n-2}\bar{f}_{kh,\epsilon}(X_{kh}^{\epsilon})=P_{0,h}\gamma_{1,\epsilon}(X_{0}^{\epsilon})-P_{(n-2)h,(n-1)h}\gamma_{n-1,\epsilon}(X_{(n-2)h}^{\epsilon})+\sum_{k=1}^{n-2}\gamma_{k,\epsilon}(X_{kh}^{\epsilon})-P_{(k-1)h,kh}\gamma_{k,\epsilon}(X_{(k-1)h}^{\epsilon})
\]
Now, since $\bar{f}_{(n-1)h,\epsilon}(X_{(n-1)h}^{\epsilon})=\gamma_{n-1,\epsilon}(X_{(n-1)h}^{\epsilon})$
and $\bar{f}_{0,\epsilon}(X_{0}^{\epsilon})=\gamma_{0,\epsilon}(X_{0}^{\epsilon})-P_{0,h}\gamma_{1,\epsilon}(X_{0}^{\epsilon}),$
we conclude. The third statement follows from $\mathbb{E}\Bigl[\gamma_{k,\epsilon}(X_{kh}^{\epsilon})-P_{(k-1)h,kh}\gamma_{k,\epsilon}(X_{(k-1)h}^{\epsilon})\mid\mathcal{F}_{(k-1)h}\Bigr]=0$
for $k\in\{1,\ldots,n-1\}$ and the first statement combined with
Lemma \ref{lem:drift} (for the lemma's $p$ sufficiently large) and
the fact that $\sup_{t\in[0,1]}\|x_{t}^{\star}\|<\infty$ from Lemma
\ref{lem:minimizer}, which establishes that for any $i\in\{0,\ldots,n-1\}$,
$\mathbb{E}(|M_{i,\epsilon}|)<\infty$.
\end{proof}
In what follows we let $M_{\epsilon}:=M_{n-1,\epsilon}$ where the
latter is defined in Lemma \ref{lem:Poisson-and-martingale}. The
following proposition will be used to establish that one can obtain
the desired quantitative CLT bounds by focusing on the martingale
approximation (Section \ref{subsec:Quantitative-bound-CLT-variance})
and the appropriate control of vanishing terms (Lemma \ref{lem:controlbiasforCLT}
and Lemma \ref{lem:controlremainderforCLT}). 
\begin{prop}
\label{prop:decompositionCLT}For any $\varepsilon_{1},\varepsilon_{2}>0$
and $\epsilon>0$ such that $\upsilon(\epsilon)>0$,
\begin{multline*}
\sup_{w\in\mathbb{R}}\big|\mathbb{P}\bigl[S_{\epsilon,h}/\sqrt{\epsilon\upsilon(\epsilon)}\leq w\bigr]-\Phi(w)\big|\leq\sup_{w\in\mathbb{R}}\big|\mathbb{P}\bigl[M_{\epsilon}\leq w\bigr]-\Phi(w)\big|+\mathbb{P}\bigl[|B_{\epsilon,h}|/\sqrt{\epsilon\upsilon(\epsilon)}>\varepsilon_{1}/2\bigr]\\
+\mathbb{P}\bigl[h|\gamma_{0,\epsilon}(X_{0}^{\epsilon})|/\sqrt{\epsilon\upsilon(\epsilon)}>\varepsilon_{1}/2\bigr]+(2\pi)^{-1/2}\varepsilon_{1},
\end{multline*}
and 
\begin{multline*}
\sup_{w\in\mathbb{R}}\big|\mathbb{P}\bigl[S_{\epsilon,h}/\sqrt{\epsilon\sigma_{\ell}^{2}}\leq w\bigr]-\Phi(w)\big|\leq2\sup_{w\in\mathbb{R}}\big|\mathbb{P}\bigl[S_{\epsilon,h}/\sqrt{\epsilon\upsilon(\epsilon)}\leq w\bigr]-\Phi(w)\big|+1-\Phi(\varepsilon_{1}\varepsilon_{2}^{-1})\\
+\mathbb{P}\bigl[\big|\upsilon^{1/2}(\epsilon)/\sigma_{\ell}-1\big|>\varepsilon_{2}\bigr]+(2\pi)^{-1/2}\varepsilon_{1}.
\end{multline*}
\end{prop}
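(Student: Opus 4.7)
The plan is to prove both inequalities by the standard Kolmogorov-distance perturbation argument, applied to an additive decomposition for the first inequality and a multiplicative one for the second, with the martingale approximation from Lemma~\ref{lem:Poisson-and-martingale} providing the starting point.

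\emph{First inequality.} Parts (2) and (3) of Lemma~\ref{lem:Poisson-and-martingale} give
\[\frac{S_{\epsilon,h}}{\sqrt{\epsilon\upsilon(\epsilon)}}=M_\epsilon+Z_\epsilon,\qquad Z_\epsilon:=\frac{B_{\epsilon,h}}{\sqrt{\epsilon\upsilon(\epsilon)}}+\frac{h\,\gamma_{0,\epsilon}(X_0^\epsilon)}{\sqrt{\epsilon\upsilon(\epsilon)}}.\]
For any $\delta>0$, the inclusion $\{M_\epsilon+Z_\epsilon\leq w\}\subseteq\{M_\epsilon\leq w+\delta\}\cup\{Z_\epsilon<-\delta\}$ and its symmetric counterpart, combined with the Lipschitz bound $|\Phi(w\pm\delta)-\Phi(w)|\leq\delta/\sqrt{2\pi}$ (since $\sup_x\phi(x)\leq(2\pi)^{-1/2}$), yield
\[\sup_w\bigl|\mathbb{P}[M_\epsilon+Z_\epsilon\leq w]-\Phi(w)\bigr|\leq\sup_v\bigl|\mathbb{P}[M_\epsilon\leq v]-\Phi(v)\bigr|+\delta/\sqrt{2\pi}+\mathbb{P}\bigl[|Z_\epsilon|>\delta\bigr].\]
Setting $\delta=\varepsilon_1$ and splitting $\mathbb{P}[|Z_\epsilon|>\varepsilon_1]$ by the union bound into contributions from $|B_{\epsilon,h}|/\sqrt{\epsilon\upsilon(\epsilon)}$ and $h|\gamma_{0,\epsilon}(X_0^\epsilon)|/\sqrt{\epsilon\upsilon(\epsilon)}$ (each at threshold $\varepsilon_1/2$) delivers the first inequality.

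\emph{Second inequality.} Since both $\upsilon(\epsilon)$ and $\sigma_\ell^2$ are non-random, $R:=\upsilon^{1/2}(\epsilon)/\sigma_\ell$ is a deterministic constant and the event $\{|R-1|>\varepsilon_2\}$ is either $\Omega$ or $\emptyset$. In the former case the bound is trivial since its right-hand side then exceeds $1$, so assume $|R-1|\leq\varepsilon_2$. Writing $\tilde M_\epsilon:=S_{\epsilon,h}/\sqrt{\epsilon\upsilon(\epsilon)}$, I decompose multiplicatively,
\[\frac{S_{\epsilon,h}}{\sqrt{\epsilon\sigma_\ell^2}}=\tilde M_\epsilon R=\tilde M_\epsilon+\tilde M_\epsilon(R-1),\]
and apply the argument of the first inequality with $\delta=\varepsilon_1$ and $Z_\epsilon$ replaced by $\tilde M_\epsilon(R-1)$. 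This produces terms $\mathbb{P}[\tilde M_\epsilon(R-1)<-\varepsilon_1]$ and $\mathbb{P}[\tilde M_\epsilon(R-1)>\varepsilon_1]$ in the upper and lower CDF perturbations respectively. Because $R-1$ is a deterministic scalar of fixed sign, each of these reduces to a one-sided tail of $\tilde M_\epsilon$ at threshold $\varepsilon_1/|R-1|\geq\varepsilon_1/\varepsilon_2$, bounded by $1-\Phi(\varepsilon_1/\varepsilon_2)+\sup_v|\mathbb{P}[\tilde M_\epsilon\leq v]-\Phi(v)|$ via comparison with the Gaussian CDF. Combining the upper and lower perturbation estimates yields the factor $2$ in front of $\sup_w|\cdot|$, the single $1-\Phi(\varepsilon_1/\varepsilon_2)$, and the $(2\pi)^{-1/2}\varepsilon_1$ appearing in the stated bound.

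\emph{Main obstacle.} The only subtle point lies in the second inequality: naively bounding the two-sided tail $\mathbb{P}[|\tilde M_\epsilon|>\varepsilon_1/\varepsilon_2]$ would yield a suboptimal factor of $2$ in front of $1-\Phi(\varepsilon_1/\varepsilon_2)$ and of $3$ in front of $\sup_w|\cdot|$. The sharper constants of the proposition require exploiting the \emph{fixed} sign of the deterministic multiplicative perturbation $R-1$, so that only one tail of $\tilde M_\epsilon$ enters each of the upper and lower CDF comparisons.
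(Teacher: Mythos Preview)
Your proof is correct and follows the same overall strategy as the paper: the standard Kolmogorov-distance perturbation lemma (using $|\Phi(w\pm\delta)-\Phi(w)|\le (2\pi)^{-1/2}\delta$) applied first to the additive decomposition $S_{\epsilon,h}/\sqrt{\epsilon\upsilon(\epsilon)}=M_\epsilon+Z_\epsilon$ from Lemma~\ref{lem:Poisson-and-martingale}, and then to the multiplicative one $S_{\epsilon,h}/\sqrt{\epsilon\sigma_\ell^2}=\tilde M_\epsilon\cdot R$. The first inequality is handled identically in both.

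For the second inequality there is a genuine, if minor, difference. The paper treats the perturbation as $\epsilon^{-1/2}S_{\epsilon,h}(\sigma_\ell^{-1}-\upsilon^{-1/2}(\epsilon))$, applies the generic product bound $\mathbb{P}[Z_1Z_2>\varepsilon_1]\le\mathbb{P}[Z_1>\varepsilon_1\varepsilon_2^{-1}]+\mathbb{P}[Z_2>\varepsilon_2]$ to $Z_1=|\tilde M_\epsilon|$ and $Z_2=|\upsilon^{1/2}(\epsilon)/\sigma_\ell-1|$, and then relates the two-sided tail $\mathbb{P}[|\tilde M_\epsilon|>\varepsilon_1\varepsilon_2^{-1}]$ back to $1-\Phi(\varepsilon_1\varepsilon_2^{-1})$ and the Kolmogorov distance via a terse ``from which we conclude''. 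You instead observe up front that $R-1$ is a deterministic scalar of fixed sign, so that only a single one-sided tail of $\tilde M_\epsilon$ enters each of the upper and lower CDF comparisons; this makes it transparent why the constants are exactly $2$ in front of the Kolmogorov term and $1$ in front of $1-\Phi(\varepsilon_1\varepsilon_2^{-1})$, rather than the $3$ and $2$ a naive two-sided tail bound would produce. Your route is cleaner on this point and removes the slight opacity in the paper's final step, but both arguments are the same standard perturbation device.
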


\begin{proof}
We have the general result that for $\varepsilon>0$ and two random
variables $Z_{1},Z_{2}$
\[
\mathbb{P}\bigl[Z_{1}\leq w-\varepsilon\bigr]-\mathbb{P}\big[|Z_{2}|>\varepsilon\big]\leq\mathbb{P}\big[Z_{1}+Z_{2}\leq w\big]\leq\mathbb{P}\big[Z_{1}\leq w+\varepsilon\big]+\mathbb{P}\big[|Z_{2}|>\varepsilon\big],
\]
and therefore
\begin{multline*}
\mathbb{P}\big[Z_{1}\leq w-\varepsilon\big]-\Phi(w-\varepsilon)+\Phi(w-\varepsilon)-\Phi(w)-\mathbb{P}\big[|Z_{2}|>\varepsilon\big]\leq\mathbb{P}\big[Z_{1}+Z_{2}\leq w\big]-\Phi(w)\\
\leq\mathbb{P}\big[Z_{1}\leq w+\varepsilon\big]-\Phi(w+\varepsilon)+\Phi(w+\varepsilon)-\Phi(w)+\mathbb{P}\big[|Z_{2}|>\varepsilon\big].
\end{multline*}
Now notice that $\max_{a\in\{\varepsilon,-\varepsilon\}}\big|\Phi(w+a)-\Phi(w)\big|\leq(2\pi)^{-1/2}\varepsilon$
and conclude that
\[
\sup_{w\in\mathbb{R}}\big|\mathbb{P}\big[Z_{1}+Z_{2}\leq w\big]-\Phi(w)\big|\leq\sup_{w'\in\mathbb{R}}\big|\mathbb{P}\big[Z_{1}\leq w'\big]-\Phi(w')\big|+\mathbb{P}\big[|Z_{2}|>\varepsilon\big]+(2\pi)^{-1/2}\varepsilon.
\]
We have
\[
S_{\epsilon,h}/\sqrt{\epsilon\upsilon(\epsilon)}=(h\gamma_{0,\epsilon}(X_{0}^{\epsilon})+B_{\epsilon,h})/\sqrt{\epsilon\upsilon(\epsilon)}+M_{\epsilon},
\]
and
\[
S_{\epsilon,h}/\sqrt{\epsilon\sigma_{\ell}^{2}}=S_{\epsilon,h}/\sqrt{\epsilon\upsilon(\epsilon)}+\epsilon^{-1/2}S_{\epsilon,h}\big(\sigma_{\ell}^{-1}-\upsilon^{-1/2}(\epsilon)).
\]
We can apply the above general inequality to these two identities
in turn. In the first case we also note the fact that $\mathbb{P}\big[|Z_{1}+Z_{2}|>\varepsilon\big]\leq\mathbb{P}\big[|Z_{1}|+|Z_{2}|>\varepsilon\big]\leq\mathbb{P}\big[|Z_{1}|>\varepsilon/2\big]+\mathbb{P}\big[|Z_{2}|>\varepsilon/2\big]$.
In the second case we have that, in general, for non-negative random
variables $Z_{1},Z_{2}$ and any $\varepsilon_{1},\varepsilon_{2}>0$
\[
\mathbb{P}\big[Z_{1}Z_{2}>\varepsilon_{1}\big]\leq\mathbb{P}\big[Z_{1}>\varepsilon_{1}\varepsilon_{2}^{-1}\big]+\mathbb{P}\big[Z_{2}>\varepsilon_{2}\big]
\]
and therefore 
\[
\mathbb{P}\big[\epsilon^{-1/2}\big|S_{\epsilon,h}\big|\big|\sigma_{\ell}^{-1}-\upsilon^{-1/2}(\epsilon)\big|>\varepsilon_{1}\big]\leq\mathbb{P}\big[\big|S_{\epsilon,h}\big|/\sqrt{\epsilon\upsilon(\epsilon)}>\varepsilon_{1}\varepsilon_{2}^{-1}\big]+\mathbb{P}\big[\big|\upsilon^{1/2}(\epsilon)/\sigma_{\ell}-1\big|>\varepsilon_{2}\big].
\]
Finally
\[
\mathbb{P}\big[\big|S_{\epsilon,h}\big|/\sqrt{\epsilon\upsilon(\epsilon)}>\varepsilon_{1}\varepsilon_{2}^{-1}\big]=1-\mathbb{P}\big[\big|S_{\epsilon,h}\big|/\sqrt{\epsilon\upsilon(\epsilon)}\leq\varepsilon_{1}\varepsilon_{2}^{-1}\big]+\Phi(\varepsilon_{1}\varepsilon_{2}^{-1})-\Phi(\varepsilon_{1}\varepsilon_{2}^{-1}),
\]
from which we conclude.
\end{proof}
The following lemmata, whose proofs can be found in Subsection \ref{subsec:proofs-Quantitative-Martingale-approxim},
establish quantitative bounds for some of the vanishing terms appearing
in one of the upper bounds in Proposition \ref{prop:decompositionCLT}.
A quantitative bound for $\mathbb{P}\big[\big|\upsilon^{1/2}(\epsilon)/\sigma_{\ell}-1\big|>\varepsilon_{2}\big]$
is established later in Corollary \ref{cor:CLT-approximation-replace-variance-with-asymp-variance}. 
\begin{lem}
\label{lem:controlbiasforCLT}Let $p\geq1$ and $f\in C_{0,2}^{p}([0,1]\times\mathbb{R}^{d})$,
and assume (A\ref{hyp:basic_hyp_on_U}-\ref{hyp:U_time_cont}) and
(A\ref{hyp:polynomialdependence}). Then 
\begin{enumerate}
\item for any $\varepsilon_{1}>0$, $\ell\geq0$, $\gimel>1$ and $\epsilon,h,K>0$
such that $\gimel^{-1}\leq1-Kh\epsilon^{-1}/2$, 
\[
\mathbb{P}\big[|B_{\epsilon,h}|/\sqrt{\epsilon\upsilon(\epsilon)}>\varepsilon_{1}/2\big]\leq\mathbb{I}\{F(d)>\upsilon(\epsilon)^{1/2}\epsilon^{-1/2}\varepsilon_{1}\},
\]
where, with the notation of Corollary \ref{cor:dimension-dependence-MSE-1},
\[
F(d):=C\frac{1}{K}\left[r_{2}(d)+\gimel r_{3}(d)\right],
\]
\item further assuming (A\ref{hyp:dependence-epsilon-h-on-d}), we deduce
that for any $c\in(0,1/2)$ and the choice $\varepsilon_{1}(d)=C\epsilon(d)^{c}$
there exists $a_{0}>0$ and $d_{0}\in\mathbb{N}$ such that with $\epsilon(d)=Cd^{-a}$,
for $a\geq a_{0}$ and $d\geq d_{0}$
\[
\mathbb{P}\big[|B_{\epsilon(d),h(d)}|/\sqrt{\epsilon(d)\upsilon_{d}\big(\epsilon(d)\big)}>\varepsilon_{1}(d)/2\big]=0.
\]
\end{enumerate}
\end{lem}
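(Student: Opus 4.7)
\textbf{Plan for Lemma \ref{lem:controlbiasforCLT}.}

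The driving observation is that $B_{\epsilon,h}=\mathbb{E}[S_{\epsilon,h}]$ is a deterministic scalar, and $\upsilon(\epsilon)$ is also deterministic, so
\[
\mathbb{P}\bigl[|B_{\epsilon,h}|/\sqrt{\epsilon\upsilon(\epsilon)}>\varepsilon_{1}/2\bigr]=\mathbb{I}\bigl\{|B_{\epsilon,h}|/\sqrt{\epsilon\upsilon(\epsilon)}>\varepsilon_{1}/2\bigr\}.
\]
The whole proof thus reduces to producing a deterministic upper bound on $|B_{\epsilon,h}|$ of the shape $\epsilon F(d)$, and then reading off the required set-inclusion.

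For part (i), I would feed the bias bound from Corollary \ref{cor:dimension-dependence-MSE-1} into this strategy:
\[
|B_{\epsilon,h}|\leq C\left[\frac{\epsilon}{K}r_{2}(d)+\frac{h}{1-e^{-Kh/\epsilon}}r_{3}(d)\right].
\]
The only non-trivial factor is $h/(1-e^{-Kh/\epsilon})$. Using the elementary inequality $1-e^{-x}\geq x(1-x/2)$ for $x\geq 0$ (immediate from Taylor's theorem with integral remainder) applied to $x=Kh/\epsilon$, one obtains $h/(1-e^{-Kh/\epsilon})\leq(\epsilon/K)/(1-Kh/(2\epsilon))$; the standing hypothesis $\gimel^{-1}\leq 1-Kh/(2\epsilon)$ then upgrades this to $\gimel\epsilon/K$. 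Collecting terms yields $|B_{\epsilon,h}|\leq\epsilon F(d)$ (after absorbing a factor $2$ into the generic constant $C$ in $F$), so $|B_{\epsilon,h}|/\sqrt{\epsilon\upsilon(\epsilon)}\leq F(d)\epsilon^{1/2}\upsilon(\epsilon)^{-1/2}$, and the asserted set-inclusion is immediate.

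For part (ii), substituting $\varepsilon_1(d)=C\epsilon(d)^c$ and $\epsilon(d)=Cd^{-a}$ into the threshold from (i) shows that the indicator vanishes as soon as $F(d)\leq C\,\upsilon(\epsilon(d))^{1/2}d^{a(1/2-c)}$. Under (A\ref{hyp:polynomialdependence}), $K^{-1}$, $r_{2}(d)$ and $r_{3}(d)$ grow at most polynomially in $d$, so $F(d)=O(d^{q_{F}})$ for some $q_{F}\geq 0$; likewise the hypothesis that $\sup_{s}1/\varsigma_{\ell}(s)$ is polynomially bounded in $d$ yields a polynomial lower bound $\sigma_{\ell}^{2}(d)\geq C d^{-q_{\sigma}}$, which, combined with the convergence $\upsilon(\epsilon)\to\sigma_{\ell}^{2}$ as $\epsilon\to 0$ (to be proved as Theorem \ref{thm:CLT-variance-convergence} / Corollary \ref{cor:CLT-approximation-replace-variance-with-asymp-variance}), produces $\upsilon(\epsilon(d))^{1/2}\geq Cd^{-q_{\sigma}/2}$ for $\epsilon(d)$ small enough. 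Since $1/2-c>0$, picking $a_{0}$ so that $a(1/2-c)>q_{F}+q_{\sigma}/2$ for all $a\geq a_{0}$ forces $d^{a(1/2-c)}$ to dominate every polynomial factor for $d\geq d_{0}$, making the indicator $0$ as required.

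The main obstacle is securing the polynomial lower bound on $\upsilon(\epsilon(d))$ in a way that is consistent with the rest of the machinery developed in this section; one must quantify how small $\epsilon$ must be, as a function of $d$, for $\upsilon(\epsilon)$ to lie within a factor of $\sigma_{\ell}^{2}$, and verify this is compatible with the schedule $\epsilon(d)=Cd^{-a}$. A secondary technical check is the hypothesis $\gimel^{-1}\leq 1-Kh(d)/(2\epsilon(d))$: in the $\ell=0$ case of (A\ref{hyp:dependence-epsilon-h-on-d}) one has $Kh/\epsilon=O(d^{q/4-a(c-1)})\to 0$ for $a$ large so any $\gimel>1$ works, whereas in the $\ell>0$ case $Kh/\epsilon=K\ell$ can be of order $d^{q/4}$ (since $K\leq L=O(d^{q/4})$), forcing $\gimel=\gimel(d)$ to grow polynomially and thereby enlarging $F(d)$ by a polynomial factor that must be reabsorbed into the choice of $a_{0}$.
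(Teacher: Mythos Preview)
Your proposal is correct and follows essentially the same route as the paper: reduce the probability to an indicator via determinism of $B_{\epsilon,h}$, bound $|B_{\epsilon,h}|$ by the bias estimate of Corollary~\ref{cor:dimension-dependence-MSE-1}, convert $h/(1-e^{-Kh/\epsilon})$ into $\gimel\epsilon/K$ via the inequality $z/(1-e^{-z})\le(1-z/2)^{-1}$ (this is exactly Lemma~\ref{lem:boundoneminusexponentialetc}), and for part~(ii) combine the polynomial growth of $F(d)$ with the polynomial lower bound on $\upsilon(\epsilon(d))$ obtained from $\sigma_\ell^2(d)\ge Cd^{-r}$ and Theorem~\ref{thm:CLT-variance-convergence}.

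One small correction to your final paragraph: in the $\ell>0$ case, if $K(d)\ell$ were genuinely of order $d^{q/4}$ then $1-K\ell/2$ would eventually be negative and no $\gimel>1$ could satisfy the constraint, so letting $\gimel$ grow would not rescue the argument. The paper sidesteps this by building the existence of a fixed $\gimel>1$ with $\gimel^{-1}\le 1-K(d)\ell/2$ for all large $d$ into the standing hypotheses (see the statement of Theorem~\ref{thm:CLT-variance-convergence}); in effect $K(d)\ell$ is assumed bounded away from $2$, and $\gimel$ stays fixed.
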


\begin{lem}
\label{lem:controlremainderforCLT}Assume (A\ref{hyp:basic_hyp_on_U}-\ref{hyp:U_time_cont})
and (A\ref{hyp:polynomialdependence}). Then
\begin{enumerate}
\item there exists $C>0$ such that for any $\epsilon,\varepsilon_{1},h>0$
such that $\upsilon(\epsilon)>0$ and for some $\gimel>1$ and $\gimel^{-1}\leq1-Kh\epsilon^{-1}/2$
\[
\mathbb{P}\big[h|\gamma_{0,\epsilon}(X_{0}^{\epsilon})|/\sqrt{\epsilon\upsilon(\epsilon)}>\varepsilon_{1}/2\big]\leq C\left(\frac{\alpha_{p}}{\epsilon^{-1/2}\varepsilon_{1}\sqrt{\upsilon(\epsilon)}}\frac{\gimel\|\nabla f\|_{p}}{K}\mu_{0}\bar{V}^{(p+1/2)}\mu_{0}\bar{V}^{(p+1/2)}\right),
\]
\item for any $c\in(0,1/2)$ and the choice $\varepsilon_{1}(d)=C\epsilon(d)^{c}$
there exists $a_{0}>0$ sufficiently large such that for any $a>a_{0}$
and $\epsilon(d)=Cd^{-a}$
\[
\lim_{d\rightarrow\infty}\mathbb{P}\big[h(d)|\gamma_{0,\epsilon(d)}(X_{0}^{\epsilon(d)})|/\sqrt{\epsilon(d)\upsilon_{d}\big(\epsilon(d)\big)}>\varepsilon_{1}(d)/2\big]=0
\]
\end{enumerate}
\end{lem}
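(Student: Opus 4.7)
The natural approach is Markov's inequality followed by the pointwise Wasserstein-type bound on $|\gamma_{0,\epsilon}|$ already established in Lemma \ref{lem:Poisson-and-martingale}(1), and then a routine polynomial-scaling analysis using (A\ref{hyp:polynomialdependence})--(A\ref{hyp:dependence-epsilon-h-on-d}) for part (2).

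For part (1), I would start from
\[
\mathbb{P}\bigl[h|\gamma_{0,\epsilon}(X_0^\epsilon)|/\sqrt{\epsilon\upsilon(\epsilon)}>\varepsilon_1/2\bigr]\leq \frac{2\,h\,\mathbb{E}[|\gamma_{0,\epsilon}(X_0^\epsilon)|]}{\varepsilon_1\sqrt{\epsilon\upsilon(\epsilon)}},
\]
then apply Lemma \ref{lem:Poisson-and-martingale}(1) to get
\[
\mathbb{E}[|\gamma_{0,\epsilon}(X_0^\epsilon)|]\leq \frac{\alpha_p\|\nabla f\|_p}{1-\exp(-Kh/\epsilon)}\,\mathbb{E}\bigl[W^{(p)}(\delta_{X_0^\epsilon},\mu_0)\bigr].
\]
The elementary inequality $1-e^{-x}\geq x(1-x/2)$ for $x\geq 0$ combined with the hypothesis $\gimel^{-1}\leq 1-Kh/(2\epsilon)$ gives $h/(1-\exp(-Kh/\epsilon))\leq \gimel\epsilon/K$. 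For the Wasserstein factor, since $X_0^\epsilon\sim\mu_0$, taking the product coupling of $\mu_0\otimes\mu_0$ inside the definition of $W^{(p)}$ and using the crude inequalities $\|x\|^{2p}\vee\|y\|^{2p}\leq\|x\|^{2p}+\|y\|^{2p}$ and $\|x-y\|\leq\|x\|+\|y\|$, each of the resulting product integrals is dominated by a power of $\|x\|$ of order at most $2p+1$ times the same in $y$, which yields
\[
\mathbb{E}\bigl[W^{(p)}(\delta_{X_0^\epsilon},\mu_0)\bigr]\leq C\,\mu_0\bar V^{(p+1/2)}\,\mu_0\bar V^{(p+1/2)}
\]
for a universal constant $C$. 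Assembling these three estimates and rewriting $\sqrt{\epsilon}/\varepsilon_1=1/(\epsilon^{-1/2}\varepsilon_1)$ gives the bound of part (1).

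For part (2), I would invoke (A\ref{hyp:polynomialdependence}) together with (A\ref{hyp:dependence-epsilon-h-on-d}) to track the dimension dependence of each factor. One may fix $\gimel$ to a constant (say $\gimel=2$), which is legitimate for $d$ large since $h(d)/\epsilon(d)\to 0$ under either clause of (A\ref{hyp:dependence-epsilon-h-on-d}). From (A\ref{hyp:dimension_dependence_mse-1}) one has $\mu_0(V^{2p})=O(d^{q+1})$, and since $2p+1<4p$ for $p\geq 1$, $\mu_0\bar V^{(p+1/2)}=O(d^{q+1})$; the quantities $\alpha_p$, $K^{-1}$ and $\|\nabla f\|_p$ are polynomial in $d$ by Lemma \ref{lem:drift} and (A\ref{hyp:polynomialdependence}). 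With the choice $\varepsilon_1(d)=C\epsilon(d)^c$ and $\epsilon(d)=Cd^{-a}$, the prefactor $1/(\epsilon^{-1/2}\varepsilon_1)=\epsilon^{1/2-c}=Cd^{-a(1/2-c)}$ is a strictly negative power of $d$, so for $a$ sufficiently large it dominates every other polynomially-growing factor, forcing the bound to vanish as $d\to\infty$.

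The main obstacle is controlling $\upsilon(\epsilon(d))^{-1/2}$, since this quantity is only implicitly defined through the martingale decomposition. To handle it one uses (A\ref{hyp:polynomialdependence}), which postulates that $\sup_s 1/\varsigma_\ell(s)$ is polynomial in $d$, combined with the quantitative convergence $\upsilon(\epsilon)\to\sigma_\ell^2$ established in Corollary \ref{cor:CLT-approximation-replace-variance-with-asymp-variance}: together these imply that $\upsilon(\epsilon(d))\geq\sigma_\ell^2(d)/2\geq C^{-1}d^{-q'}$ for some $q'\geq 0$ and all $d$ sufficiently large, so $\upsilon(\epsilon(d))^{-1/2}$ is also polynomial in $d$. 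All remaining steps are routine applications of Markov's inequality and bounds already established earlier in the paper.
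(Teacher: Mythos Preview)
Your proposal is correct and follows essentially the same route as the paper: Markov's inequality, the pointwise bound on $|\gamma_{0,\epsilon}|$ from Lemma~\ref{lem:Poisson-and-martingale}, the Wasserstein estimate $W^{(p)}(\delta_x,\mu_0)\leq C\,\mu_0\bar V^{(p+1/2)}\bar V^{(p+1/2)}(x)$ (which the paper quotes as Lemma~\ref{lem:W(delta_x,nu)} rather than rederiving), and then Lemma~\ref{lem:boundoneminusexponentialetc} for the $\gimel/K$ factor; part~(2) is handled exactly as you describe, via the lower bound $\upsilon(\epsilon(d))\geq\sigma_\ell^2(d)/2$ coming from Theorem~\ref{thm:CLT-variance-convergence}. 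One small slip: for $\ell>0$ one has $h(d)/\epsilon(d)=\ell$, which does \emph{not} tend to zero; what makes a fixed $\gimel$ admissible is rather that $K(d)h(d)\epsilon(d)^{-1}=K(d)\ell$ stays bounded away from $2$, which is the standing hypothesis $\gimel^{-1}\leq 1-K\ell/2$ already built into the statement.
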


\subsection{Quantitative bound in the CLT for the Martingale approximation\label{subsec:Quantitative-bound-CLT-variance}}

We now state an intermediate result which motivates subsequent developments
to prove the quantitative bounds in Theorem \ref{thm:CLT}. 
\begin{thm}
\label{thm:CLTforMartingale}Let $p\geq1$ and for any $d\in\mathbb{N}$,
let $(X_{t}^{\epsilon}(d))_{t\in[0,1]}$ be as defined in (\ref{eq:defXinCLT})
and $f^{(d)}\in C_{1,2}^{p}([0,1]\times\mathbb{R}^{d})$. Assume that
for any $d\in\mathbb{N}$ (A\ref{hyp:basic_hyp_on_U}-\ref{hyp:U_time_cont})
and (A\ref{hyp:polynomialdependence}) hold. Let $M_{\epsilon}:=M_{n-1,\epsilon}$
where the latter is defined in Lemma \ref{lem:Poisson-and-martingale}.
Then for any $\ell\geq0$ there exists $a>0$ such that with $\epsilon(d)=O(d^{-a})$
and $d\mapsto h(d)$ satisfying (A\ref{hyp:dependence-epsilon-h-on-d})
\[
\lim_{d\rightarrow\infty}\sup_{w\in\mathbb{R}}\big|\mathbb{P}\big[M_{\epsilon(d)}\leq w\big]-\Phi(w)\big|=0.
\]
\end{thm}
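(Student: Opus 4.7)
My plan is to reduce the theorem to a quantitative martingale central limit theorem applied to the triangular array $\big(M_{k,\epsilon},\mathcal{F}_{kh}\big)_{k=0}^{n-1}$ of Lemma~\ref{lem:Poisson-and-martingale}. Writing the martingale differences as $\Delta_{k,\epsilon}:=M_{k,\epsilon}-M_{k-1,\epsilon}=\epsilon^{-1/2}h\,\xi_{k,\epsilon}/\sqrt{\upsilon(\epsilon)}$, a Heyde--Brown type bound yields, for some $\delta>0$ and $\kappa(\delta)\in(0,1)$,
\begin{equation*}
\sup_{w\in\mathbb{R}}\bigl|\mathbb{P}[M_{\epsilon}\leq w]-\Phi(w)\bigr|\leq C\Bigl(\mathbb{E}\bigl|V_{n}-1\bigr|+\sum_{k=1}^{n-1}\mathbb{E}\bigl[|\Delta_{k,\epsilon}|^{2+\delta}\bigr]\Bigr)^{\kappa(\delta)},
\end{equation*}
where $V_{n}:=\sum_{k=1}^{n-1}\mathbb{E}[\Delta_{k,\epsilon}^{2}\mid\mathcal{F}_{(k-1)h}]$ automatically satisfies $\mathbb{E}[V_{n}]=1$ by orthogonality of the $\xi_{k,\epsilon}$. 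It thus suffices to show that both summands on the right are $o(1)$ as $d\to\infty$ when $\epsilon(d)=O(d^{-a})$ for some sufficiently large $a$ and $h=h(d)$ is chosen according to (A\ref{hyp:dependence-epsilon-h-on-d}).

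For the Lyapunov term, I would use Lemma~\ref{lem:Poisson-and-martingale}(1) to obtain
\begin{equation*}
|\xi_{k,\epsilon}|\leq C\,\alpha_{p}\,\frac{\|\nabla f\|_{p}}{1-e^{-Kh/\epsilon}}\,\bigl(W^{(p)}(\delta_{X_{kh}^{\epsilon}},\mu_{0})+W^{(p)}(\delta_{X_{(k-1)h}^{\epsilon}},\mu_{0})\bigr),
\end{equation*}
take $(2+\delta)$-th moments of both sides, and apply the drift estimate (\ref{eq:drift_bound_uniform}) of Lemma~\ref{lem:drift} to control $\mathbb{E}\bigl[\bar{V}^{(q)}(X_{kh}^{\epsilon})\bigr]$ in terms of $\mu_{0}(\bar{V}^{(q)})$ for an appropriate $q$. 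After summing over $k$ and normalising by $(\epsilon\upsilon(\epsilon))^{(2+\delta)/2}$, one obtains an overall factor $n\,h^{2+\delta}(\epsilon\upsilon(\epsilon))^{-(2+\delta)/2}(1-e^{-Kh/\epsilon})^{-(2+\delta)}$ multiplied by a polynomial in $d$. In regime $\ell>0$ the mixing factor is constant and $nh=O(1)$, so this produces a small power of $\epsilon$ times a polynomial in $d$; in regime $\ell=0$ one exploits $h=O(\epsilon^{c})$ with $c>1$ to extract a similar small power. Choosing $a$ large enough to beat the polynomial in $d$ then forces this contribution to $0$.

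For the conditional-variance term, set $g_{k,\epsilon}(x):=\mathbb{E}[\xi_{k,\epsilon}^{2}\mid X_{(k-1)h}^{\epsilon}=x]$. By the Markov property $g_{k,\epsilon}(x)=P_{(k-1)h,kh}^{\epsilon}\bigl(\gamma_{k,\epsilon}^{2}\bigr)(x)-(P_{(k-1)h,kh}^{\epsilon}\gamma_{k,\epsilon})^{2}(x)$, and Proposition~\ref{prop:C_2^p_closed} together with Lemma~\ref{lem:Poisson-and-martingale}(1) places $g_{k,\epsilon}$ in $C_{2}^{p'}(\mathbb{R}^{d})$ for some $p'$, with gradient norm $\|\nabla g_{k,\epsilon}\|_{p'}$ polynomially bounded in $d$. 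Writing
\begin{equation*}
V_{n}-1=\frac{h^{2}}{\epsilon\upsilon(\epsilon)}\sum_{k=1}^{n-1}\bigl(g_{k,\epsilon}(X_{(k-1)h}^{\epsilon})-\mathbb{E}[g_{k,\epsilon}(X_{(k-1)h}^{\epsilon})]\bigr),
\end{equation*}
I would bound $\mathbb{E}[(V_{n}-1)^{2}]$ by expanding the square, and for each pair $k<j$ combine the Poincar\'e inequality of Theorem~\ref{thm:intro_var_and_bias_bounds}(1) applied at time $(k-1)h$ with the exponential $L_{2}$-contraction of Lemma~\ref{lem:L_2_convergence} to control $\mathrm{cov}\bigl(g_{k,\epsilon}(X_{(k-1)h}^{\epsilon}),g_{j,\epsilon}(X_{(j-1)h}^{\epsilon})\bigr)$ by $e^{-K(j-k)h/\epsilon}$ times a polynomial in $d$. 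Summation over pairs produces a geometric series with ratio $e^{-Kh/\epsilon}$, yielding a further factor $(1-e^{-Kh/\epsilon})^{-1}$ which combines with the $h^{2}/(\epsilon\upsilon(\epsilon))$ prefactor; the same regime-by-regime scaling analysis as in the Lyapunov step then gives $o(1)$.

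The main obstacle I anticipate is the interplay between the mixing factor $(1-e^{-Kh/\epsilon})^{-1}$ inherited from the $\gamma_{k,\epsilon}$ bound and the normalisation by $\sqrt{\epsilon\upsilon(\epsilon)}$: in regime $\ell=0$ this factor diverges like $\epsilon/h$, and it is only the restriction $h=O(\epsilon^{c})$ with $c>1$ that lets the small powers of $\epsilon$ gathered elsewhere dominate. Correctly matching these powers in both regimes of (A\ref{hyp:dependence-epsilon-h-on-d}), and simultaneously verifying that $\upsilon(\epsilon)^{-1}$ stays polynomially bounded in $d$ (which follows from the variance convergence of Section~\ref{subsec:Quantitative-bound-CLT-variance} together with the lower bound on $\varsigma_{\ell}(s)$ supplied by (A\ref{hyp:polynomialdependence})), is the most delicate bookkeeping step. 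Once that is done, selecting $a$ large enough in $\epsilon(d)=O(d^{-a})$ makes each of the two summands in the Heyde--Brown bound vanish, concluding the proof.
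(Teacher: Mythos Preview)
Your high-level architecture coincides with the paper's: apply a quantitative martingale CLT (the paper uses Haeusler's bound, Proposition~\ref{prop:CLTforMartingale}, which has the same shape as your Heyde--Brown inequality) and drive both the Lyapunov sum and the conditional-variance discrepancy to zero. Your treatment of the Lyapunov term is essentially the paper's (Lemma~\ref{lem:C_epsilon-moment-D_epsilon} and Corollary~\ref{cor:C-epsilon-bound}); the paper likewise has to choose the moment exponent $\kappa>c-1$ to beat the $(\epsilon/h)$ blow-up in regime $\ell=0$.

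The gap is in your conditional-variance step. The assertion that $\|\nabla g_{k,\epsilon}\|_{p'}$ is ``polynomially bounded in $d$'' is false in regime $\ell=0$. Since $g_{k,\epsilon}=P_{(k-1)h,kh}\gamma_{k,\epsilon}^{2}-(P_{(k-1)h,kh}\gamma_{k,\epsilon})^{2}$ is \emph{quadratic} in $\gamma_{k,\epsilon}$, and both $|\gamma_{k,\epsilon}|$ and $\|\nabla\gamma_{k,\epsilon}\|$ carry a factor $(1-e^{-Kh/\epsilon})^{-1}$ from Lemma~\ref{lem:Poisson-and-martingale}, the commutation relation only gives $\|\nabla g_{k,\epsilon}\|\lesssim(1-e^{-Kh/\epsilon})^{-2}\bar V^{(2p+1/2)}$. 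Feeding this into Poincar\'e yields $\mathrm{var}_{\mu}[g_{k,\epsilon}]\lesssim(1-e^{-Kh/\epsilon})^{-4}$, and your covariance-sum then produces $\mathbb{E}[(V_{n}-1)^{2}]\lesssim h^{4}\epsilon^{-2}\,n\,(1-e^{-Kh/\epsilon})^{-5}\,\mathrm{poly}(d)=\epsilon^{3-2c}\,\mathrm{poly}(d)$, which diverges for every $c\geq 3/2$ in (A\ref{hyp:dependence-epsilon-h-on-d}). The ``small powers of $\epsilon$ gathered elsewhere'' you allude to are not enough: the only additional factor your argument produces is the single $(1-e^{-Kh/\epsilon})^{-1}$ from the geometric summation, whereas four such factors are already hidden in the Poincar\'e constant for $g_{k,\epsilon}$.

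The paper's remedy is an algebraic telescoping \emph{before} estimating the variance. Using $\bar f_{kh,\epsilon}=\gamma_{k,\epsilon}-P_{kh,(k+1)h}\gamma_{k+1,\epsilon}$ one rewrites $D_{\epsilon}$, up to two harmless boundary terms, as
\[
\tilde D_{\epsilon}=\epsilon^{-1}h^{2}\sum_{k=1}^{n-2}P_{(k-1)h,kh}\bigl[\bar f_{kh,\epsilon}\,(\gamma_{k,\epsilon}+P_{kh,(k+1)h}\gamma_{k+1,\epsilon})\bigr](X_{(k-1)h}^{\epsilon}),
\]
whose summands are \emph{linear} in $\gamma$ and therefore of size $\epsilon^{-1}h\,(1-e^{-Kh/\epsilon})^{-1}=O(1)$ uniformly in $\epsilon$ for every $c>1$. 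The variance of this ergodic average is then controlled directly by Theorem~\ref{thm:intro_var_and_bias_bounds} (this is Lemma~\ref{prop:B_epsilon}, combined with the $L_{2}$--$L_{1+\kappa}$ interpolation of Lemma~\ref{lem:rough-Lp-norm-for-averages}), giving $\|\tilde D_{\epsilon}-\mathbb{E}\tilde D_{\epsilon}\|_{L_{2}}^{2}=O(\epsilon\,\mathrm{poly}(d))$ for all $c>1$. The missing idea in your plan is precisely this quadratic-to-linear reduction; without it the power counting in the $\ell=0$ regime does not close for $c\geq 3/2$.
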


\begin{proof}
The proof relies on the upper bound established in Proposition \ref{prop:CLTforMartingale}
and bounds for $A_{\epsilon},B_{\epsilon}$ and $C_{\epsilon}$ which
can be deduced from Lemma \ref{prop:B_epsilon} and \ref{lem:C_epsilon-moment-D_epsilon},
and Theorem \ref{thm:CLT-variance-convergence}. More precisely, choose
$\kappa>c-1$, where $c$ is given in (A\ref{hyp:dependence-epsilon-h-on-d}).
For $A_{\epsilon}$: from (A\ref{hyp:polynomialdependence}) and Lemma
\ref{lem:alpha_dimension_dependence} one deduces that the bound on
$\mathbb{E}\big[\big|D_{\epsilon}\big|{}^{1+\kappa}\big]^{1/(1+\kappa)}$
in Lemma \ref{lem:C_epsilon-moment-D_epsilon} grows at most as a
polynomial of $d$, say of power $\delta$. (A\ref{hyp:polynomialdependence})
implies the existence of $r>0$ such that $\sigma_{\ell}^{2}(d)\geq Cd^{-r}$
and Theorem \ref{thm:CLT-variance-convergence} implies the existence
of $a_{0},d_{0}>0$ such that for any $a\geq a_{0}$ and $d\geq d_{0}$
\begin{equation}
\sigma_{\ell}^{2}(d)+\upsilon\big(\epsilon(d)\big)-\sigma_{\ell}^{2}(d)\geq\sigma_{\ell}^{2}(d)/2,\label{eq:lowerboundv_epsilon}
\end{equation}
providing us with an upper bound on $\upsilon^{-1}\big(\epsilon(d)\big)$.
Further, again from Theorem \ref{thm:CLT-variance-convergence} we
can choose $b$ sufficiently large (and hence $a$ sufficiently large)
such that the term
\[
\big|\upsilon\big(\epsilon(d)\big)-\sigma_{\ell}^{2}(d)\big|\mathbb{E}\big[\big|D_{\epsilon}\big|{}^{1+\kappa}\big]^{1/(1+\kappa)}\sigma_{\ell}^{-4}(d)\leq Cd^{-b}d^{\delta}d^{2r}
\]
vanishes. Therefore $\lim_{d\rightarrow0}A_{\epsilon(d)}=0$. For
$B_{\epsilon}$ we use Lemma \ref{prop:B_epsilon}, its Corollary,
the lower bound ((\ref{eq:lowerboundv_epsilon})) and Corollary \ref{cor:dimension-dependence-MSE-1}
of Theorem \ref{thm:intro_var_and_bias_bounds} to conclude that for
$a\geq a_{0}$ sufficiently large $\lim_{d\rightarrow0}B_{\epsilon(d)}=0$.
Finally $\lim_{d\rightarrow0}C_{\epsilon(d)}=0$ follows from Lemma
\ref{lem:C_epsilon-moment-D_epsilon} and its Corollary \ref{cor:C-epsilon-bound},
since we have assumed $\kappa>c-1$ in order to cover the scenario
$\ell=0$.
\end{proof}
Let 
\[
D_{\epsilon}:=\epsilon^{-1}h^{2}\sum_{k=0}^{n-1}\mathbb{E}\big[\xi_{k,\epsilon}^{2}|\mathcal{F}_{(k-1)h}\big],
\]
where $\xi_{k,\epsilon}$is as in Lemma (\ref{lem:Poisson-and-martingale}).
\begin{prop}
\label{prop:CLTforMartingale}For any $\kappa>0$ that there exists
a finite $\mathscr{C}{}_{\kappa}>0$, dependent on $\kappa$ only,
such that 
\[
\sup_{w\in\mathbb{R}}\big|\mathbb{P}\big[M_{\epsilon}\leq w\big]-\Phi(w)\big|\leq\mathscr{C}{}_{\kappa}\Bigl\{\Bigl(A_{\epsilon}+B_{\epsilon}\Bigr)^{1+\kappa}+C_{\epsilon}\Bigr\}^{1/(3+2\kappa)},
\]
where
\begin{align*}
A_{\epsilon} & :=\big|\upsilon(\epsilon)-\sigma_{\ell}^{2}\big|\Bigl[1+\mathbb{E}\big[\big|D_{\epsilon}\big|{}^{1+\kappa}\big]^{1/(1+\kappa)}/\upsilon(\epsilon)\Bigr]/\sigma_{\ell}^{2},\\
B_{\epsilon} & :=\mathbb{E}\Bigl[\big|D_{\epsilon}-\upsilon(\epsilon)\big|{}^{1+\kappa}\Bigl]^{1/(1+\kappa)}/\sigma_{\ell}^{2},\\
C_{\epsilon} & :=(\epsilon^{-1}h^{2}/\upsilon(\epsilon))^{(1+\kappa)}\sum_{i=0}^{n-1}\mathbb{E}\Bigl[\big|\xi_{i,\epsilon}\big|^{2(1+\kappa)}\Bigl].
\end{align*}
\end{prop}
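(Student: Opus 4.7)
\textbf{Proof proposal for Proposition \ref{prop:CLTforMartingale}.}

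The plan is to appeal directly to a quantitative central limit theorem for martingales with bounded $(1+\kappa)$-th moments, and then to perform an algebraic rearrangement so that the normalizing factor in the bound is $\sigma_\ell^2$ rather than $\upsilon(\epsilon)$. By Lemma \ref{lem:Poisson-and-martingale}, $(M_{k,\epsilon},\mathcal{F}_{kh})_{k=0,\ldots,n-1}$ is a martingale; the differences are $\Delta_{i,\epsilon}:=\epsilon^{-1/2}h\,\xi_{i,\epsilon}/\sqrt{\upsilon(\epsilon)}$, and by the very definition of $\upsilon(\epsilon)$ the martingale $M_\epsilon=M_{n-1,\epsilon}$ satisfies $\mathbb{E}[M_\epsilon^2]=1$. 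The conditional quadratic variation is $V_\epsilon^2:=\sum_{i=0}^{n-1}\mathbb{E}[\Delta_{i,\epsilon}^2\mid\mathcal{F}_{(i-1)h}]=D_\epsilon/\upsilon(\epsilon)$, with $\mathbb{E}[V_\epsilon^2]=1$, which puts us exactly in the set-up of Heyde--Brown-type bounds (see e.g.\ Hall and Heyde, \emph{Martingale Limit Theory and its Application}, Sec.\ 3.6).

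The first step is to invoke the quantitative martingale CLT: for any $\kappa>0$ there exists a universal $\mathscr{C}_\kappa<\infty$ such that for any zero-mean, unit-variance martingale with differences $(\Delta_i)$ and conditional variance sum $V^2$,
\[
\sup_{w\in\mathbb{R}}\big|\mathbb{P}[M\leq w]-\Phi(w)\big|\leq\mathscr{C}_\kappa\Big\{\mathbb{E}\big|V^2-1\big|^{1+\kappa}+\sum_{i}\mathbb{E}\big|\Delta_i\big|^{2(1+\kappa)}\Big\}^{1/(3+2\kappa)}.
\]
Applied to $M_\epsilon$, the second term inside the braces is exactly $C_\epsilon$ after substituting $\Delta_{i,\epsilon}=\epsilon^{-1/2}h\,\xi_{i,\epsilon}/\sqrt{\upsilon(\epsilon)}$. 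Thus it only remains to control $\|V_\epsilon^2-1\|_{1+\kappa}$ by $A_\epsilon+B_\epsilon$, because then $\mathbb{E}|V_\epsilon^2-1|^{1+\kappa}\leq(A_\epsilon+B_\epsilon)^{1+\kappa}$ and the claimed bound follows.

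The second step is this algebraic reduction. Writing $1/\upsilon(\epsilon)=1/\sigma_\ell^2+(\sigma_\ell^2-\upsilon(\epsilon))/(\upsilon(\epsilon)\sigma_\ell^2)$ and using $V_\epsilon^2-1=(D_\epsilon-\upsilon(\epsilon))/\upsilon(\epsilon)$, one obtains the decomposition
\[
V_\epsilon^2-1=\frac{D_\epsilon-\upsilon(\epsilon)}{\sigma_\ell^2}+\bigl(D_\epsilon-\upsilon(\epsilon)\bigr)\frac{\sigma_\ell^2-\upsilon(\epsilon)}{\upsilon(\epsilon)\sigma_\ell^2}.
\]
Minkowski's inequality in $L^{1+\kappa}$ bounds the $L^{1+\kappa}$ norm of the first term by $B_\epsilon$ directly. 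For the second, the triangle inequality $\|D_\epsilon-\upsilon(\epsilon)\|_{1+\kappa}\leq\|D_\epsilon\|_{1+\kappa}+\upsilon(\epsilon)$ yields
\[
\frac{|\sigma_\ell^2-\upsilon(\epsilon)|}{\upsilon(\epsilon)\sigma_\ell^2}\bigl\|D_\epsilon-\upsilon(\epsilon)\bigr\|_{1+\kappa}\leq\frac{|\sigma_\ell^2-\upsilon(\epsilon)|}{\sigma_\ell^2}\Bigl[1+\frac{\|D_\epsilon\|_{1+\kappa}}{\upsilon(\epsilon)}\Bigr]=A_\epsilon,
\]
giving $\|V_\epsilon^2-1\|_{1+\kappa}\leq A_\epsilon+B_\epsilon$ as desired. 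Combining with the Heyde--Brown bound concludes the proof.

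The main obstacle is really only the choice of the off-the-shelf martingale CLT; one must choose a version that is phrased in terms of $\mathbb{E}|V^2-1|^{1+\kappa}$ and $\sum_i\mathbb{E}|\Delta_i|^{2(1+\kappa)}$ with the exponent $1/(3+2\kappa)$, rather than one of the many variants employing Lyapunov-type or conditional moment hypotheses. Once the correct statement is located (or re-derived via the standard truncation/Lindeberg-swap argument tracked with $L^{1+\kappa}$ control), the remainder of the argument is the elementary manipulation above.
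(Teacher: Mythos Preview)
Your proposal is correct and follows essentially the same approach as the paper: invoke a quantitative martingale CLT (the paper cites H\"ausler's Theorem~1 rather than Hall--Heyde, but it is the same bound with exponent $1/(3+2\kappa)$), then reduce $\|D_\epsilon/\upsilon(\epsilon)-1\|_{1+\kappa}$ to $A_\epsilon+B_\epsilon$ via Minkowski. Your algebraic decomposition of $D_\epsilon/\upsilon(\epsilon)-1$ is organized slightly differently from the paper's (which first splits off $D_\epsilon/\sigma_\ell^2-1$ and then $D_\epsilon-\sigma_\ell^2=(D_\epsilon-\upsilon(\epsilon))+(\upsilon(\epsilon)-\sigma_\ell^2)$), but both routes are elementary rearrangements that land on the identical bound.
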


\begin{proof}
Let $\Delta_{\epsilon}:=\sup_{w\in\mathbb{R}}\big|\mathbb{P}\big[M_{\epsilon}\leq w\big]-\Phi(w)\big|$.
From \cite[Theorem 1]{haeusler1988rate} we have
\[
\Delta_{\epsilon}\leq\mathscr{C}{}_{\kappa}\Bigl\{\mathbb{E}\Bigl[\big|D_{\epsilon}/\upsilon(\epsilon)-1\big|{}^{1+\kappa}\Bigl]+(\epsilon^{-1}h^{2}/\upsilon(\epsilon))^{(1+\kappa)}\sum_{i=0}^{n-1}\mathbb{E}\Bigl[\big|\xi_{i,\epsilon}\big|^{2(1+\kappa)}\Bigl]\Bigr\}{}^{1/(3+2\kappa)}.
\]
We upper bound the first term between braces using Minkowski's inequality
\begin{align*}
\mathbb{E}\Bigl[\big|D_{\epsilon}/\upsilon(\epsilon)-1\big|{}^{1+\kappa}\Bigl]^{1/(1+\kappa)} & \leq\mathbb{E}\Bigl[\big|D_{\epsilon}/\sigma_{\ell}^{2}-1\big|{}^{1+\kappa}\Bigl]^{1/(1+\kappa)}+\mathbb{E}\Bigl[\big|D_{\epsilon}\left(\upsilon^{-1}(\epsilon)-\sigma_{\ell}^{-2}\right)\big|{}^{1+\kappa}\Bigl]^{1/(1+\kappa)}\\
 & \leq\mathbb{E}\Bigl[\big|D_{\epsilon}-\sigma_{\ell}^{2}\big|{}^{1+\kappa}\Bigl]^{1/(1+\kappa)}/\sigma_{\ell}^{2}+\big|\upsilon^{-1}(\epsilon)-\sigma_{\ell}^{-2}\big|\mathbb{E}\Bigl[\big|D_{\epsilon}\big|{}^{1+\kappa}\Bigl]^{1/(1+\kappa)},
\end{align*}
and further
\[
\mathbb{E}\Bigl[\big|D_{\epsilon}-\sigma_{\ell}^{2}\big|{}^{1+\kappa}\Bigl]^{1/(1+\kappa)}\leq\mathbb{E}\Bigl[\big|D_{\epsilon}-\upsilon(\epsilon)\big|{}^{1+\kappa}\Bigl]^{1/(1+\kappa)}+\big|\upsilon(\epsilon)-\sigma_{\ell}^{2}\big|,
\]
from which we conclude.
\end{proof}
We need to find explicit upper bounds for the three terms above. In
the next two propositions we will make use of the following alternative
expression for $D_{\epsilon}$
\begin{align*}
D_{\epsilon} & =\epsilon^{-1}h^{2}\sum_{k=1}^{n-1}\mathbb{E}\bigl[\gamma_{k,\epsilon}^{2}(X_{kh}^{\epsilon})-\bigl(P_{(k-1)h,kh}^{\epsilon}\gamma_{k,\epsilon}(X_{(k-1)h}^{\epsilon})\bigr)^{2}\mid\mathcal{F}_{(k-1)h}\bigr]\\
 & =\epsilon^{-1}h^{2}\left\{ P_{(n-2)h,(n-1)h}^{\epsilon}\gamma_{n-1,\epsilon}^{2}(X_{(n-2)h}^{\epsilon})-\big[P_{0,h}^{\epsilon}\gamma_{1,\epsilon}(X_{0}^{\epsilon})\big]^{2}\right.\\
 & \hspace{2cm}+\left.\sum_{k=1}^{n-2}\mathbb{E}\bigl[\gamma_{k,\epsilon}^{2}(X_{kh}^{\epsilon})-\bigl(P_{kh,(k+1)h}^{\epsilon}\gamma_{k+1,\epsilon}(X_{kh}^{\epsilon})\bigr)^{2}\mid\mathcal{F}_{(k-1)h}\bigr]\right\} \\
 & =\epsilon^{-1}h^{2}\left\{ P_{(n-2)h,(n-1)h}^{\epsilon}\bar{f}_{n-1,\epsilon}^{2}(X_{(n-2)h}^{\epsilon})-\big[P_{0,h}^{\epsilon}\gamma_{1,\epsilon}(X_{0}^{\epsilon})\big]^{2}\right\} +\tilde{D}_{\epsilon}.
\end{align*}
where
\[
\tilde{D}_{\epsilon}:=\epsilon^{-1}h^{2}\sum_{k=1}^{n-2}\mathbb{E}\bigl[\bar{f}_{kh,\epsilon}(X_{kh}^{\epsilon})\big(\gamma_{k,\epsilon}(X_{kh}^{\epsilon})+P_{kh,(k+1)h}^{\epsilon}\gamma_{k+1,\epsilon}(X_{kh}^{\epsilon})\big)\mid\mathcal{F}_{(k-1)h}\bigr].
\]

The proof of the following two lemmata can be found in Subsection
\ref{subsec:Proofs-Quantitative-bound-CLT-variance}.
\begin{lem}
\label{prop:B_epsilon}For any $\kappa>1$, $r>(1+\kappa)/2$, $\gimel>1$
and $K,h$ and $\epsilon^{-1}$ such that $\gimel^{-1}<1-Kh\epsilon^{-1}/2$,
then with $m:=\big((1+\kappa)r-2\big)/(r-1)$ we have
\begin{align*}
\|D_{\epsilon}-\upsilon(\epsilon)\|_{L_{1+\kappa}} & \leq C\big(\|\tilde{D}_{\epsilon}-\mathbb{E}\big(\tilde{D}_{\epsilon}\big)\|_{L_{2}}\big)^{1/[(1+\kappa)r]}\left(\alpha_{2pm}^{1/m}\big(\mu_{0}\bar{V}^{(2pm)}\big)^{1/m}+\alpha_{2p}\mu_{0}\bar{V}^{(2p)}\right)^{m/(1+\kappa)}\\
 & \hspace{3cm}\times\left(\alpha_{p}\alpha_{2p+1/2}\frac{\gimel[1+\alpha_{p}\mu_{0}\bar{V}^{(p)}]\|f\|_{p}\|\nabla f\|_{p}}{K}\pi_{0}\bar{V}^{(p+1/2)}\right)^{m/(1+\kappa)}\\
 & \hspace{1cm}+C\epsilon\left(\alpha_{p}\frac{\gimel\|\nabla f\|_{p}}{K}\pi_{0}\bar{V}^{(p+1/2)}\right)^{2}\cdot\left(\alpha_{(1+\kappa)(2p+1)}\mu_{0}\bar{V}^{([1+\kappa][2p+1])}\right)^{1/(1+\kappa)}\\
 & \hspace{1cm}+C\epsilon^{-1}h^{2}\alpha_{2p}[1+\alpha_{p}\mu_{0}\bar{V}^{(p)}]^{2}\|f\|_{p}^{2}\left(\alpha_{2p(1+\kappa)}\mu_{0}\bar{V}^{(2p[1+\kappa])}\right)^{1/(1+\kappa)}.
\end{align*}
\end{lem}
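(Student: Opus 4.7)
The plan is to decompose $D_\epsilon-\upsilon(\epsilon)$ and treat the pieces separately. The alternative expression for $D_\epsilon$ displayed just above the lemma, combined with the martingale-difference property of $\{\xi_{k,\epsilon}\}$ (which implies $\upsilon(\epsilon)=\mathbb{E}[D_\epsilon]$), gives
\[
D_\epsilon-\upsilon(\epsilon) \;=\; \bigl(\tilde{D}_\epsilon-\mathbb{E}\tilde{D}_\epsilon\bigr) + \epsilon^{-1}h^2\bigl\{\bar A_\epsilon - \mathbb{E}\bar A_\epsilon\bigr\} - \epsilon^{-1}h^2\bigl\{\bar B_\epsilon - \mathbb{E}\bar B_\epsilon\bigr\},
\]
where $\bar A_\epsilon := P_{(n-2)h,(n-1)h}^\epsilon\bar f_{n-1,\epsilon}^2(X_{(n-2)h}^\epsilon)$ and $\bar B_\epsilon := [P_{0,h}^\epsilon\gamma_{1,\epsilon}(X_0^\epsilon)]^2$. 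Applying the triangle inequality in $L_{1+\kappa}$ and using that centering is a contraction there reduces the task to three estimates, matching the three summands of the claimed bound.

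For the initial-boundary term I would apply Lemma \ref{lem:Poisson-and-martingale}(1) to get $|\gamma_{1,\epsilon}(X_0^\epsilon)|\leq \alpha_p\|\nabla f\|_p W^{(p)}(\delta_{X_0^\epsilon},\mu_0)/(1-e^{-K\epsilon^{-1}h})$; the hypothesis $\gimel^{-1}\leq 1-Kh\epsilon^{-1}/2$ turns the denominator into a multiple of $K\epsilon^{-1}h/\gimel$, so that the $\epsilon^{-1}h^2$ prefactor collapses into an overall factor of order $\epsilon\gimel^2/K^2$, while the $L_{1+\kappa}$ norm of $W^{(p)}(\delta_{X_0^\epsilon},\mu_0)^2$ is controlled by (\ref{eq:drift_bound_uniform}) of Lemma \ref{lem:drift}, producing the second summand. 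For the terminal-boundary term I would dominate $|\bar f_{n-1,\epsilon}|\leq \|f\|_p(1+\alpha_p\mu_0\bar V^{(p)})\bar V^{(p)}$ (using that $|\mu^\epsilon_{n-1}f_{n-1}|\leq \|f\|_p\alpha_p\mu_0\bar V^{(p)}$ by (\ref{eq:drift_bound_uniform})), square, push through $P_{(n-2)h,(n-1)h}^\epsilon$ by Jensen, and invoke Lemma \ref{lem:drift} once more to bound $\mathbb{E}[\bar V^{(2p(1+\kappa))}(X_{(n-2)h}^\epsilon)]$, giving the third summand.

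The principal obstacle is $\|\tilde{D}_\epsilon-\mathbb{E}\tilde{D}_\epsilon\|_{L_{1+\kappa}}$, which I would not attack directly. Instead, with $r>(1+\kappa)/2$ and $m=((1+\kappa)r-2)/(r-1)$, Hölder with conjugate exponents $(r,r/(r-1))$ applied to the factorisation $|Z|^{1+\kappa}=|Z|^{2/r}\cdot|Z|^{1+\kappa-2/r}$ gives $\mathbb{E}|Z|^{1+\kappa}\leq \mathbb{E}[Z^2]^{1/r}\,\mathbb{E}[|Z|^m]^{(r-1)/r}$, which interpolates between the $L_2$ norm of $\tilde D_\epsilon-\mathbb E\tilde D_\epsilon$ --- kept as an opaque input of the lemma, to be controlled downstream by a telescoping/quasi-martingale argument on the conditional expectations defining $\tilde D_\epsilon$ --- and its $L_m$ norm. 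For the $L_m$ piece I would use the rewriting
\[
\tilde{D}_\epsilon = \epsilon^{-1}h^2\sum_{k=1}^{n-2}P_{(k-1)h,kh}^\epsilon\bigl[\bar f_{kh,\epsilon}\bigl(\gamma_{k,\epsilon}+P_{kh,(k+1)h}^\epsilon\gamma_{k+1,\epsilon}\bigr)\bigr](X_{(k-1)h}^\epsilon),
\]
in which the polynomial-growth bound on $\bar f_{kh,\epsilon}$ contributes a $(1+\alpha_p\mu_0\bar V^{(p)})\|f\|_p$ factor, Lemma \ref{lem:Poisson-and-martingale}(1) on the two $\gamma$ terms contributes $\gimel\alpha_p\|\nabla f\|_p/K$ (after the $\gimel$-based conversion of $1-e^{-K\epsilon^{-1}h}$), and Lemma \ref{lem:drift} controls $\mathbb{E}[\bar V^{(2pm)}(X_{(k-1)h}^\epsilon)]$ uniformly in $k$ by $\alpha_{2pm}\mu_0\bar V^{(2pm)}$; aggregating these contributions yields the first summand.

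The genuine technical difficulty is not any single step but the bookkeeping of exponents: $r$ has to be chosen so that the resulting powers of $\mu_0\bar V^{(\cdot)}$ and $\alpha_{(\cdot)}$ emerge with the precise exponents $m/(1+\kappa)$ and $1/(1+\kappa)$ that appear in the statement, and so that the downstream use of the bound in Theorem \ref{thm:CLTforMartingale} causes every factor to vanish under the scaling $\epsilon(d)=O(d^{-a})$ dictated by (A\ref{hyp:polynomialdependence})--(A\ref{hyp:dependence-epsilon-h-on-d}).
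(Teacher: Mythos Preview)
Your proposal is correct and follows essentially the same route as the paper: the same three-term decomposition of $D_\epsilon-\upsilon(\epsilon)$ into $\tilde D_\epsilon-\mathbb E\tilde D_\epsilon$ plus two boundary pieces, the same pointwise bounds on $\bar f_{kh,\epsilon}$ and on $\gamma_{k,\epsilon},\,P^\epsilon_{kh,(k+1)h}\gamma_{k+1,\epsilon}$ via Lemma~\ref{lem:Poisson-and-martingale} and Lemma~\ref{lem:drift}, and the same H\"older interpolation $|Z|^{1+\kappa}=|Z|^{2/r}|Z|^{(1+\kappa)-2/r}$ between $L_2$ and $L_m$ for the main term. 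The only cosmetic difference is that the paper packages this interpolation step as a standalone result (Lemma~\ref{lem:rough-Lp-norm-for-averages}) and then applies it to the summands of $\tilde D_\epsilon$, whereas you carry it out inline.
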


\begin{cor}
From Theorem \ref{thm:intro_var_and_bias_bounds} we can conclude
that under (A\ref{hyp:polynomialdependence}) and (A\ref{hyp:dependence-epsilon-h-on-d}),
for any $\kappa>1$, there exist $r_{1},r_{2}>0$ such that $\|D_{\epsilon(d)}-\upsilon(\epsilon(d))\|_{L_{1+\kappa}}\leq Cd^{r_{1}}\epsilon^{r_{2}}(d)$.
\end{cor}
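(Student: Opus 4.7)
The plan is to reduce the corollary to a term-by-term polynomial bookkeeping exercise on the explicit three-summand upper bound supplied by Lemma \ref{prop:B_epsilon}. Fix $\kappa>1$ and any $r>(1+\kappa)/2$, so that the exponent $m=((1+\kappa)r-2)/(r-1)$ is a positive constant independent of $d$. Pick $\gimel>1$ so that $\gimel^{-1}<1-Kh/(2\epsilon)$; under (A\ref{hyp:dependence-epsilon-h-on-d}) and $\epsilon(d)=O(d^{-a})$ with $a$ large enough this constraint can be met while keeping $\gimel$ at worst polynomial in $d$. Having fixed these parameters, the corollary reduces to (i) a polynomial-in-$d$ estimate for every explicit factor in Lemma \ref{prop:B_epsilon}'s bound, and (ii) identifying a strictly positive power of $\epsilon$ in each of its three summands.

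For (i), (A\ref{hyp:polynomialdependence}) directly gives polynomial-in-$d$ growth for $K^{-1}$, $\|f\|_p$, $\|\nabla f\|_p$, $\sup_t\|x_t^{\star}\|$, and every moment $\mu_0\bar V^{(q)}$ appearing in the bound via (A\ref{hyp:dimension_dependence_mse-1}); the bound on $\alpha_q$ for every fixed $q$ then follows from Lemma \ref{lem:alpha_dimension_dependence}. The moment $\pi_0\bar V^{(p+1/2)}$ is polynomial in $d$ because strong log-concavity with constant $K$ and $x_0^{\star}=0$ yields the standard sub-Gaussian bound $\pi_0 V^{(q)}\leq C_q(d/K)^{q}$, and $K^{-1}$ is polynomial. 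This dispatches every $d$-dependent factor outside of the piece $\|\tilde D_\epsilon-\mathbb{E}\tilde D_\epsilon\|_{L_2}$.

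For (ii), terms two and three of Lemma \ref{prop:B_epsilon} immediately carry the factor $\epsilon$ and $\epsilon^{-1}h^{2}$ respectively; under (A\ref{hyp:dependence-epsilon-h-on-d}) the latter is $\ell^{2}\epsilon$ when $\ell>0$ and $O(\epsilon^{2c-1})$ with $2c-1>1$ when $\ell=0$, so both are at least linear in $\epsilon$. Term one carries $\epsilon$-dependence only through $\|\tilde D_\epsilon-\mathbb{E}\tilde D_\epsilon\|_{L_2}^{1/[(1+\kappa)r]}$, and here Theorem \ref{thm:intro_var_and_bias_bounds} is the essential tool. Setting
\[
g_k(x):=\mathbb{E}\bigl[\bar f_{kh,\epsilon}(X_{kh}^{\epsilon})\bigl(\gamma_{k,\epsilon}(X_{kh}^{\epsilon})+P_{kh,(k+1)h}^{\epsilon}\gamma_{k+1,\epsilon}(X_{kh}^{\epsilon})\bigr)\,\big|\,X_{(k-1)h}^{\epsilon}=x\bigr],
\]
I plan to decompose
\[
\tilde D_\epsilon-\mathbb{E}\tilde D_\epsilon=\epsilon^{-1}h\cdot h\sum_{k=1}^{n-2}\bigl(g_k(X_{(k-1)h}^{\epsilon})-\pi_{(k-1)h}(g_k)\bigr)+\epsilon^{-1}h^{2}\sum_{k=1}^{n-2}\bigl(\pi_{(k-1)h}(g_k)-\mathbb{E} g_k(X_{(k-1)h}^{\epsilon})\bigr).
\]
The first piece is $\epsilon^{-1}h$ times a $\pi$-centred Riemann sum of the type treated by Theorem \ref{thm:intro_var_and_bias_bounds}(2), whose variance is bounded by $h\sup_t\mathrm{var}_{\mu_t^{\epsilon}}[g_{\lceil t/h\rceil}-\pi_t(g_{\lceil t/h\rceil})]$ times a constant times $h/(1-e^{-Kh/\epsilon})$; this yields an $L_2$-contribution of order $\epsilon^{-1}h^{3/2}=O(\epsilon^{1/2})$ times a polynomial in $d$. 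The second piece is a deterministic bias handled by Lemma \ref{lem:bias_pi_0_pi_t} (combined with Lemma \ref{lem:bias_nu_bar_nu} to absorb the $\mu_0\neq\pi_0$ discrepancy), giving $O(\epsilon)$ per term and $O(\epsilon)$ after multiplication by $\epsilon^{-1}h^{2}\cdot n$.

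Aggregating, Lemma \ref{prop:B_epsilon} yields $\|D_\epsilon-\upsilon(\epsilon)\|_{L_{1+\kappa}}\leq Cd^{r_1}\epsilon^{r_2}$ with $r_2=\min\{1,\,1/[2(1+\kappa)r],\,2c-1\}>0$. The main obstacle is the Term-one argument: I must verify that each $g_k$ lies in a $C_{0,2}^{p'}$ class with seminorms bounded polynomially in $d$, so that Theorem \ref{thm:intro_var_and_bias_bounds}(2) is applicable. This uses Proposition \ref{prop:C_2^p_closed} (closure of $C_q^p$ under $P_{s,t}$) together with the explicit polynomial-in-$d$ bounds on $\gamma_{k,\epsilon}$ and $P\gamma_{k+1,\epsilon}$ from Lemma \ref{lem:Poisson-and-martingale}(1), combined with product and chain-rule estimates which may impose mild constraints on $p'$ relative to $p$ but preserve polynomial-in-$d$ growth throughout.
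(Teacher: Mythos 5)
Your proposal is correct and takes essentially the route the paper intends for this corollary: insert the polynomial-in-$d$ estimates coming from (A\ref{hyp:polynomialdependence}), Lemma \ref{lem:alpha_dimension_dependence} and Lemma \ref{lem:pi_V_dim_dependence} into the three-term bound of Lemma \ref{prop:B_epsilon}, control $\|\tilde{D}_{\epsilon}-\mathbb{E}\tilde{D}_{\epsilon}\|_{L_{2}}$ via the variance bound of Theorem \ref{thm:intro_var_and_bias_bounds} applied to the additive functional $\epsilon^{-1}h\cdot h\sum_{k}g_{k}(X_{(k-1)h}^{\epsilon})$, and use (A\ref{hyp:dependence-epsilon-h-on-d}) to turn the remaining $h$, $\epsilon^{-1}h^{2}$ and $h/\epsilon$ factors into positive powers of $\epsilon(d)$. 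Your minor inaccuracies (the exponent $\epsilon^{-1}h^{3/2}$ where the accounting gives $\epsilon^{-1/2}h^{3/2}$, the ``$O(\epsilon)$ per term'' bias claim which only holds after summing the geometric factor $e^{-K(k-1)h/\epsilon}$, the unnecessary detour through $\pi$-centering since the variance bound is centering-invariant, and the flagged ``obstacle'' about derivative estimates for $g_{k}$, where in fact only qualitative $C_{2}^{p'}$ membership from Proposition \ref{prop:C_2^p_closed} plus second-moment bounds are needed) do not affect the existence of $r_{1},r_{2}>0$.
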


\begin{lem}
\label{lem:C_epsilon-moment-D_epsilon}For any $\kappa>0$ there exist
$C$ dependent on $\kappa$ only, such that for any $\gimel>1$ and
$K,\epsilon,h>0$ such that $\gimel^{-1}\leq1-Kh\epsilon^{-1}/2$
and $\ell\geq0$, then 
\[
C_{\epsilon}\leq C\upsilon(\epsilon){}^{-(1+\kappa)}(\epsilon h^{-1+\kappa/(1+\kappa)})^{1+\kappa}\Bigl\{\alpha_{p}\frac{\gimel\|\nabla f\|_{p}}{K}\mu_{0}\bar{V}^{(p+1/2)}\Bigr\}^{2(1+\kappa)}\cdot\alpha_{2(1+\kappa)(p+1/2)}\mu_{0}\bar{V}^{(2[1+\kappa][p+1/2])},
\]
and
\begin{align*}
\mathbb{E}\Bigl[\big|D_{\epsilon}\big|{}^{1+\kappa}\Bigl]^{1/(1+\kappa)} & \leq C\alpha_{p}\alpha_{2p+1/2}\alpha_{(1+\kappa)(2p+1/2)}\frac{\gimel\|f\|_{p}\|\nabla f\|_{p}[1+\alpha_{p}\mu_{0}\bar{V}^{(p)}]}{K}\mu_{0}\bar{V}^{(p+1/2)}\cdot\bigl\{\mu_{0}\bar{V}^{([1+\kappa][2p+1/2])}\bigr\}{}^{1/(1+\kappa)}.\\
 & \hspace{1cm}+C\epsilon\left(\alpha_{p}\frac{\gimel\|\nabla f\|_{p}}{K}\mu_{0}\bar{V}^{(p+1/2)}\right)^{2}\cdot\left(\alpha_{(1+\kappa)(2p+1)}\mu_{0}\bar{V}^{([1+\kappa][2p+1])}\right)^{1/(1+\kappa)}\\
 & \hspace{1cm}+C\epsilon^{-1}h^{2}\alpha_{2p}\|f\|_{p}^{2}\left(\alpha_{2p(1+\kappa)}\mu_{0}\bar{V}^{(2p[1+\kappa])}\right)^{1/(1+\kappa)}.
\end{align*}
\end{lem}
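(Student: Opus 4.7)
The plan is to establish the two bounds separately by combining the pointwise estimates on $\gamma_{k,\epsilon}$ from Lemma \ref{lem:Poisson-and-martingale}(1) with the polynomial moment bounds of Lemma \ref{lem:drift}. A common preliminary step is to convert the factor $1/(1-\exp(-K\epsilon^{-1}h))$ into $\gimel\epsilon/(Kh)$: the hypothesis $\gimel^{-1}\leq 1-Kh\epsilon^{-1}/2$ together with the elementary inequality $1-e^{-x}\geq x(1-x/2)$ valid for $x\geq 0$ gives $1-e^{-Kh/\epsilon}\geq (Kh/\epsilon)/\gimel$, hence $1/(1-e^{-Kh/\epsilon})\leq\gimel\epsilon/(Kh)$. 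I will also use throughout the elementary estimate $W^{(p)}(\delta_{x},\mu_{0})\leq C\,\bar{V}^{(p+1/2)}(x)\cdot\mu_{0}\bar{V}^{(p+1/2)}$ obtained by expanding $(1+\|x\|^{2p}\vee\|y\|^{2p})\|x-y\|$ and splitting cross terms.

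For $C_{\epsilon}$: from the definition $\xi_{k,\epsilon}=\gamma_{k,\epsilon}(X_{kh}^{\epsilon})-P_{(k-1)h,kh}^{\epsilon}\gamma_{k,\epsilon}(X_{(k-1)h}^{\epsilon})$ and Lemma \ref{lem:Poisson-and-martingale}(1),
\[
|\xi_{k,\epsilon}|\leq 2\alpha_{p}\|\nabla f\|_{p}\frac{\gimel\epsilon}{Kh}\bigl[W^{(p)}(\delta_{X_{kh}^{\epsilon}},\mu_{0})+W^{(p)}(\delta_{X_{(k-1)h}^{\epsilon}},\mu_{0})\bigr].
\]
Raising to the power $2(1+\kappa)$, taking expectation, applying the polynomial estimate on $W^{(p)}$ and then the uniform drift bound (\ref{eq:drift_bound_uniform}) with $p$ replaced by $(1+\kappa)(2p+1/2)$ yields the factor $\alpha_{2(1+\kappa)(p+1/2)}\,\mu_{0}\bar{V}^{(2[1+\kappa][p+1/2])}$. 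Summing over $i\in\{0,\ldots,n-1\}$ contributes $n\leq 1/h$, and matching powers of $\epsilon$ and $h$ gives $(\epsilon^{-1}h^{2}/\upsilon)^{1+\kappa}\cdot h^{-1}\cdot(\gimel\epsilon/(Kh))^{2(1+\kappa)}=\upsilon^{-(1+\kappa)}(\epsilon h^{-1+\kappa/(1+\kappa)})^{1+\kappa}(\gimel/K)^{2(1+\kappa)}$, which is exactly the form stated.

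For $\|D_{\epsilon}\|_{L^{1+\kappa}}$: I start from the decomposition displayed just before the lemma,
\[
D_{\epsilon}=\epsilon^{-1}h^{2}\Bigl\{P_{(n-2)h,(n-1)h}^{\epsilon}\bar{f}_{n-1,\epsilon}^{2}(X_{(n-2)h}^{\epsilon})-\bigl[P_{0,h}^{\epsilon}\gamma_{1,\epsilon}(X_{0}^{\epsilon})\bigr]^{2}\Bigr\}+\tilde{D}_{\epsilon},
\]
and apply Minkowski's inequality to handle the three summands in turn. For the first, I use $|\bar{f}_{t,\epsilon}(x)|^{2}\leq 2\|f\|_{p}^{2}\bar{V}^{(2p)}(x)[1+\alpha_{p}\mu_{0}\bar{V}^{(p)}]^{2}$ (transporting $\mu_{t}^{\epsilon}\bar{V}^{(p)}$ back to $\mu_{0}$ via Lemma \ref{lem:drift}) and invoke the drift bound once more on $X_{(n-2)h}^{\epsilon}$ at exponent $2p(1+\kappa)$; this produces the third term in the stated bound. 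For the second, Lemma \ref{lem:Poisson-and-martingale}(1) gives $|P_{0,h}^{\epsilon}\gamma_{1,\epsilon}(X_{0}^{\epsilon})|\leq\alpha_{p}\|\nabla f\|_{p}(\gimel\epsilon/(Kh))\,W^{(p)}(\delta_{X_{0}^{\epsilon}},\mu_{0})$, and the prefactor $\epsilon^{-1}h^{2}\cdot(\gimel\epsilon/(Kh))^{2}=\epsilon(\gimel/K)^{2}$ produces the explicit $\epsilon$ factor; the $L^{1+\kappa}$ norm of $(W^{(p)})^{2}$ then supplies $(\alpha_{(1+\kappa)(2p+1)}\mu_{0}\bar{V}^{([1+\kappa][2p+1])})^{1/(1+\kappa)}$. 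For $\tilde{D}_{\epsilon}$, I apply conditional Jensen and then Minkowski across $k\in\{1,\ldots,n-2\}$, and inside each summand use Cauchy--Schwarz to split $\bar{f}_{kh,\epsilon}(X_{kh}^{\epsilon})$ from $\gamma_{k,\epsilon}(X_{kh}^{\epsilon})+P_{kh,(k+1)h}^{\epsilon}\gamma_{k+1,\epsilon}(X_{kh}^{\epsilon})$; the sum of $n$ terms scales as $n\epsilon^{-1}h^{2}\cdot\gimel\epsilon/(Kh)=\gimel/K$, producing the first term.

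The main obstacle is bookkeeping: carefully tracking how the polynomial exponents in $\bar{V}^{(\cdot)}$ propagate through products, conditional expectations and $L^{1+\kappa}$ norms, and verifying that the drift constants $\alpha_{\cdot}$ land at the subscripts appearing in the lemma. A minor but repeated subtlety is that $W^{(p)}(\delta_{x},\mu_{0})$ grows in $\|x\|$ at order $2p+1$ (half a power above $\bar{V}^{(p)}$), which is why $\bar{V}^{(p+1/2)}$ rather than $\bar{V}^{(p)}$ is the natural envelope throughout. Once this bookkeeping is done, all three terms in the stated bound on $\mathbb{E}[|D_{\epsilon}|^{1+\kappa}]^{1/(1+\kappa)}$, and the single explicit expression for $C_{\epsilon}$, fall out by matching exponents of $\epsilon$, $h$, $K$ and $\|\nabla f\|_{p}$.
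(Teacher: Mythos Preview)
Your proposal is correct and follows essentially the same route as the paper: for $C_{\epsilon}$ you bound $|\xi_{k,\epsilon}|$ pointwise via Lemma~\ref{lem:Poisson-and-martingale}(1), convert $1/(1-e^{-Kh/\epsilon})$ to $\gimel\epsilon/(Kh)$, take moments with Lemma~\ref{lem:drift}, and sum; for $D_{\epsilon}$ you use the three-term decomposition displayed before the lemma and treat each piece exactly as the paper does, with the same drift-constant bookkeeping. The only cosmetic difference is that for the $\tilde{D}_{\epsilon}$ summands the paper multiplies the pointwise bounds on $|\bar{f}_{kh,\epsilon}|$ and $|\gamma_{k,\epsilon}|+|P\gamma_{k+1,\epsilon}|$ directly and then applies $P_{(k-1)h,kh}^{\epsilon}$ (producing the factor $\alpha_{2p+1/2}$), whereas you invoke conditional Jensen and Cauchy--Schwarz; both yield the same scaling in $\epsilon,h,K$ and the same $\bar{V}$ exponents up to harmless constants.
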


\begin{cor}
\label{cor:C-epsilon-bound}With $h(\epsilon)=C\epsilon^{\iota}$
where $\iota\geq1$
\[
C_{\epsilon}\leq C\upsilon(\epsilon){}^{-(1+\kappa)}\epsilon^{1+\kappa-\iota}\Bigl\{\alpha_{p}\frac{\gimel\|\nabla f\|_{p}}{K}\mu_{0}\bar{V}^{(p+1/2)}\Bigr\}^{2(1+\kappa)}\cdot\alpha_{2(1+\kappa)(p+1/2)}\mu_{0}\bar{V}^{(2[1+\kappa][p+1/2])}.
\]
\end{cor}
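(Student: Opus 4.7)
The plan is to derive Corollary \ref{cor:C-epsilon-bound} as a direct substitution into the first inequality of Lemma \ref{lem:C_epsilon-moment-D_epsilon}. That lemma provides
\[
C_{\epsilon}\leq C\upsilon(\epsilon)^{-(1+\kappa)}\bigl(\epsilon\,h^{-1+\kappa/(1+\kappa)}\bigr)^{1+\kappa}\Bigl\{\alpha_{p}\frac{\gimel\|\nabla f\|_{p}}{K}\mu_{0}\bar{V}^{(p+1/2)}\Bigr\}^{2(1+\kappa)}\alpha_{2(1+\kappa)(p+1/2)}\mu_{0}\bar{V}^{(2[1+\kappa][p+1/2])},
\]
so all that remains is to collect the $\epsilon$-exponent once $h=C\epsilon^{\iota}$ is inserted.

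The key step is the algebraic simplification of the prefactor $\bigl(\epsilon\,h^{-1+\kappa/(1+\kappa)}\bigr)^{1+\kappa}$. Writing $-1+\kappa/(1+\kappa) = -1/(1+\kappa)$, this prefactor equals $\epsilon^{1+\kappa}\,h^{-1}$. Substituting $h=C\epsilon^{\iota}$ then produces $C\epsilon^{1+\kappa-\iota}$, which is exactly the $\epsilon$-factor claimed in the corollary. The remaining terms involving $\upsilon(\epsilon)$, the constants $\alpha$, and the moments of $\mu_{0}$ are unaffected by the substitution and appear verbatim on the right-hand side.

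No obstacle arises in this step: the corollary is a pure bookkeeping consequence of the lemma once the scaling $h\propto\epsilon^{\iota}$ is imposed. The hypothesis $\iota\geq1$ is not needed for the inequality itself but is recorded because it is the regime of interest in (A\ref{hyp:dependence-epsilon-h-on-d}) (case $\ell=0$), ensuring consistency with how the corollary will later be invoked, for instance in the proof of Theorem \ref{thm:CLTforMartingale} to verify $\lim_{d\to\infty}C_{\epsilon(d)}=0$ after choosing $\kappa>\iota-1$ and $\epsilon(d)=O(d^{-a})$ with $a$ sufficiently large.
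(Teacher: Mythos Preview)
Your proof is correct and matches the paper's approach: the corollary is stated immediately after Lemma~\ref{lem:C_epsilon-moment-D_epsilon} without a separate proof, being a direct substitution of $h=C\epsilon^{\iota}$ into that lemma's bound. Your algebraic simplification $(\epsilon h^{-1+\kappa/(1+\kappa)})^{1+\kappa}=\epsilon^{1+\kappa}h^{-1}$ is exactly the computation the paper records at the end of the proof of Lemma~\ref{lem:C_epsilon-moment-D_epsilon}, and your contextual remark on the role of $\iota\geq1$ is accurate.
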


\subsection{\label{subsec:Quantitative-CLT-constants}Quantitative bound on the
convergence of the CLT constants}

For $\epsilon>0$, and $x\in\mathbb{R}^{d}$ we define for $k\in\{0,\ldots,n-1\}$
\[
\eta_{k,\epsilon}(x):=\mathbb{E}\left[\sum_{i=0}^{n-1}f_{kh}(Y_{ih}^{s,\epsilon})\mid Y_{0}^{s}=x\right]=\sum_{i=0}^{n-1}Q_{ih\epsilon^{-1}}^{kh}f_{kh}(x)
\]
and for $s\in[0,1]$
\[
g_{s}(x):=\begin{cases}
\ell\sum_{k=0}^{\infty}Q_{k\ell}^{s}f_{s}(x) & \text{if}\;\ell=\epsilon^{-1}h>0\\
\int_{0}^{\infty}Q_{t}^{s}f_{s}(x){\rm d}t & \text{if}\;\ell=0
\end{cases}.
\]
Note that it is not difficult to show that with our assumptions, for
$\ell\geq0$ and $s\in[0,1]$,
\begin{equation}
\varsigma_{\ell}(s)=2\mathbb{E}\Big[f_{s}(Y_{0}^{s})g_{s}(Y_{0}^{s})\Big]-\ell{\rm var}\big(f_{s}(Y_{0}^{s})\big).\label{eq:homogeneousAICwithPoisson}
\end{equation}
Before presenting our results, we discuss a couple of presentational
points. The term $1/\big[1-\exp(-Kh\epsilon^{-1})\big]$ appears repeatedly
in a number of upper bounds. This term will not pose any problem whenever
$K(d)h(d)\epsilon^{-1}(d)\geq z$, for say $d\geq d_{0}$ and some
$z>0$. Our statements therefore focus on the more ``difficult''
scenario where $\limsup_{d\rightarrow\infty}K(d)h(d)\epsilon^{-1}(d)=0$,
but one should bear in mind that similar conclusions can be drawn
in the former ``easier'' scenario. We have moved the proofs of the
lemmata supporting Theorem \ref{thm:CLT-variance-convergence} to
Subsection \ref{subsec:Proofs-Quantitative-CLT-constants} in order
to focus on the main important steps of the proof.
\begin{thm}
\label{thm:CLT-variance-convergence}Assume (A\ref{hyp:basic_hyp_on_U}-\ref{hyp:U_time_cont})
and (A\ref{hyp:polynomialdependence}). Then, with the following choices
\begin{enumerate}
\item for $\gimel>1$, any $\ell>0$ and $d_{0}\in\mathbb{N}$ such that
$\gimel^{-1}\leq1-K(d)\ell/2$ for $d\geq d_{0}$ we set $h(d):=\ell\epsilon(d)$,
\item for $\ell=0$ we set $h(d)=C\epsilon^{c}(d)$ for some $c>1$,
\end{enumerate}
for any $b>0$ there exists $a_{0}>0$ such that for any $a\geq a_{0}$
and $\epsilon(d)=Cd^{-a}$ we have
\[
\limsup_{d\rightarrow\infty}d^{b}\big|\upsilon_{d}\big(\epsilon(d)\big)-\sigma_{\ell}^{2}(d)\big|<\infty.
\]

\end{thm}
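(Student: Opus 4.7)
The plan is to show that $\upsilon_d(\epsilon(d))$ is well-approximated by a Riemann sum $h\sum_k \varsigma_\ell(kh)$ for the integrand of $\sigma_\ell^2(d)$, and that this Riemann sum converges at a rate that, under (A\ref{hyp:polynomialdependence}), is controlled by powers of $\epsilon$ (or $h/\epsilon$) multiplied by polynomials in $d$. By orthogonality of martingale differences (Lemma \ref{lem:Poisson-and-martingale}),
\[
\upsilon(\epsilon) = \epsilon^{-1}h^{2}\sum_{k=1}^{n-1}\mathbb{E}\bigl[\xi_{k,\epsilon}^{2}\bigr],
\]
so the target is a sum of local second moments that should each approach the time-homogeneous asymptotic variance $\varsigma_\ell(kh)$ associated with $\pi_{kh}$.

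First I would \emph{freeze} the potential at each time $kh$: replace every inhomogeneous kernel $P_{kh,ih}^{\epsilon}$ appearing in $\gamma_{k,\epsilon}$ by the time-homogeneous analogue $Q_{(i-k)h\epsilon^{-1}}^{kh}$, and replace the centering $\mu_{ih}^{\epsilon}f_{ih}$ by $\pi_{kh}f_{kh}=0$. The first substitution is handled by a Duhamel identity comparing the backward equation of Proposition \ref{prop:fwd_and_bck_eqs} with its frozen counterpart; the defect's integrand factors as $\|\nabla U_{u}-\nabla U_{kh}\|$, which is pointwise $O(M(u-kh))$ by (A\ref{hyp:U_time_cont}), times $\|\nabla Q_{\cdot}^{kh,\epsilon}\|$, which decays geometrically by the commutation relation (Lemma \ref{lem:com_relation}). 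The second substitution uses Lemma \ref{lem:bias_pi_0_pi_t} together with (A\ref{hyp:U_time_cont}) to bound $|\mu_{ih}^{\epsilon}f_{ih}-\pi_{kh}f_{kh}|$, and Lemma \ref{lem:L_2_convergence} (applied with $\nu=\mu_{0}$) combined with the Poincar\'e inequality from Theorem \ref{thm:intro_var_and_bias_bounds}(1) to replace the outer law of $X_{kh}^{\epsilon}$ by $\pi_{kh}$. Summing over $i\ge k$ against the geometric factor $e^{-K(i-k)h/\epsilon}$ and truncating at $i=n-1$ are absorbed via the spectral-gap bounds $\mathrm{Gap}(Q_\ell^s)^{-1}\ge(1-e^{-K\ell})/\ell$ (for $\ell>0$) and $\mathrm{Gap}(\mathcal{L}_s)\ge K$ (for $\ell=0$). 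The combined output is
\[
h\epsilon^{-1}\mathbb{E}\bigl[\xi_{k,\epsilon}^{2}\bigr] = \varsigma_{\ell}(kh) + R_{k}(\epsilon,h,d),
\]
where each $R_{k}$ is a product of quantities controlled polynomially in $d$ by (A\ref{hyp:polynomialdependence}) and a positive power of $\epsilon$ (for $\ell>0$) or of $h/\epsilon$ (for $\ell=0$).

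Next I would control the Riemann-sum error $|h\sum_k\varsigma_\ell(kh)-\sigma_\ell^2|$. Continuity (in fact Lipschitzness, uniform on $[0,1]$) of $s\mapsto\varsigma_\ell(s)$ follows from the Poisson-equation representation \eqref{eq:homogeneousAICwithPoisson} combined with (A\ref{hyp:U_time_cont}) and (A\ref{hyp:polynomialdependence})(2); the associated modulus is bounded by a polynomial in $d$. Aggregating with Step 1 yields
\[
\bigl|\upsilon(\epsilon)-\sigma_\ell^2\bigr| \le C(d)\cdot\bigl(\epsilon^{\beta_1}+h^{\beta_2}+(h/\epsilon)^{\beta_3}\bigr),
\]
for constants $\beta_1,\beta_2,\beta_3>0$ independent of $d$ and $C(d)$ of polynomial growth. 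With $\epsilon(d)=d^{-a}$ and $h(d)$ as in (A\ref{hyp:dependence-epsilon-h-on-d}), choosing $a=a_0(b)$ sufficiently large makes the right-hand side $o(d^{-b})$, which is the required conclusion.

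The main obstacle is the freezing step: uniformly in $k\in\{1,\ldots,n-1\}$ and in $i\ge k$, the Duhamel defect must be controlled on timescales $(i-k)h$ that can reach $O(1)$, so the $O(M(u-kh))$ perturbation is only harmless when paired carefully with the geometric contraction $e^{-K(i-k)h/\epsilon}$ supplied by Lemma \ref{lem:com_relation}; only this pairing keeps the error summable and ultimately of the order that can be absorbed by a polynomial-in-$d$ choice of $\epsilon^{-1}$. The case $\ell=0$ is additionally delicate because the tail of the truncated Poisson sum $\eta_{k,\epsilon}$ approximating $g_{kh}$ must be quantified against the continuous-time gap $K$, while in the case $\ell>0$ the effective rate is the weaker $(1-e^{-K\ell})/\ell$, so the two regimes require slightly different accounting but are each handled by the same strategy.
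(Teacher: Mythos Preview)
Your approach matches the paper's in outline: the proof decomposes $\upsilon(\epsilon)-\sigma_\ell^2$ into eight terms $\Upsilon_{0,\epsilon},\ldots,\Upsilon_{7,\epsilon}$ (Proposition \ref{prop:CLT-variance-convergence}) corresponding to your chain of substitutions (inhomogeneous $\gamma_{k,\epsilon}\to$ frozen $\eta_{k,\epsilon}\to$ Poisson solution $g_{kh}$; ergodicity $\mu_{kh}^\epsilon\to\pi_{kh}$; Riemann sum $\to$ integral; boundary and centering corrections), and each $\Upsilon_{i,\epsilon}$ is bounded by a polynomial in $d$ times a positive power of $\epsilon$ via Lemmas \ref{lem:poisson-homogeneous-approximation}--\ref{lem:S0andS7}. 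A minor organizational difference is that the paper first telescopes $\sum_k\mathbb{E}[\xi_{k,\epsilon}^2]$ into the form $\sum_k\mathbb{E}[\bar{f}_{kh,\epsilon}(2\gamma_{k,\epsilon}-\bar{f}_{kh,\epsilon})]$ plus two boundary terms, which is what makes the subsequent approximations tractable.

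There is, however, a genuine gap in your freezing step. The Duhamel bound for $|P_{kh,ih}^\epsilon f_{ih}-Q_{(i-k)h\epsilon^{-1}}^{kh}f_{ih}|$ does \emph{not} carry an exponential factor in $(i-k)h/\epsilon$: the commutation relation supplies decay $e^{-K((i-k)h-t)/\epsilon}$ in the \emph{integration} variable $t$, not in $i-k$, and after integrating $\epsilon^{-1}Mt\cdot e^{-K((i-k)h-t)/\epsilon}$ over $t\in[0,(i-k)h]$ one is left with a contribution of order $M(i-k)h/K$ with no remaining geometric decay. Summed over $i$ and then over $k$ with the outer factor $\epsilon^{-1}h$, this gives a bound of order $M/(K\epsilon)$, which diverges. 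The paper's resolution (Lemma \ref{lem:sublemma-cv-poisson-homogeneous}) is to split the sum defining $\gamma_{k,\epsilon}-\eta_{k,\epsilon}$ at a threshold $\tau_{k,\epsilon}=kh+\daleth h^\iota$ with $\iota\in(0,1)$: on the short window $i\le\lfloor\tau_{k,\epsilon}/h\rfloor$ the crude Duhamel bound $C\epsilon^{-1}(ih-kh)^2$ integrates to $O(\epsilon^{-1}h^{3\iota})$ per $k$ and hence $O(\epsilon^{-2}h^{1+3\iota})$ overall, which vanishes precisely when $\iota>1/3$ (the constraint invoked in the proof of the theorem); on the tail $i>\lfloor\tau_{k,\epsilon}/h\rfloor$ the comparison is abandoned and both $P_{kh,ih}^\epsilon\bar{f}_{ih,\epsilon}$ and $Q_{(i-k)h\epsilon^{-1}}^{kh}f_{kh}$ are bounded individually by $O(e^{-K\daleth h^{\iota}/\epsilon})$ via the variance and bias bounds. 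Your ``careful pairing'' must be read as this near/far split, not as a direct product of the Duhamel defect with a geometric factor in $i-k$.
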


\begin{cor}
\label{cor:CLT-approximation-replace-variance-with-asymp-variance}With
Lemma \ref{prop:decompositionCLT} in mind, we have
\begin{align*}
\mathbb{P}\big[\big|\upsilon^{1/2}\big(\epsilon(d)\big)/\sigma_{\ell}(d)-1\big|>\varepsilon_{2}(d)\big]= & \mathbb{I}\big\{\big|\upsilon^{1/2}(\epsilon)-\sigma_{\ell}(d)\big|>\sigma_{\ell}(d)\varepsilon_{2}(d)\big\}\\
= & \mathbb{I}\big\{\big|\upsilon\big(\epsilon(d)\big)-\sigma_{\ell}^{2}(d)\big|>\sigma_{\ell}(d)(\upsilon^{1/2}(\epsilon)+\sigma_{\ell}(d))\varepsilon_{2}(d)\big\}\\
\leq & \mathbb{I}\big\{\big|\upsilon\big(\epsilon(d)\big)-\sigma_{\ell}^{2}(d)\big|>\sigma_{\ell}^{2}(d)\varepsilon_{2}(d)\big\}.
\end{align*}
Now say that from (A\ref{hyp:polynomialdependence}) we have $\sigma_{\ell}^{2}(d)\geq Cd^{-r_{1}}$
for some $r_{1}>0$ and choose $\varepsilon_{2}(d)=Cd^{-r_{2}}$ for
some arbitrary $r_{2}>0$. Then we can choose $b$ in Theorem \ref{thm:CLT-variance-convergence}
such that $b>r_{1}+r_{2}$ and conclude that for some $d_{0}\in\mathbb{N}$,
for $d\geq d_{0}$, $\mathbb{P}\big[\big|\upsilon^{1/2}\big(\epsilon(d)\big)/\sigma_{\ell}(d)-1\big|>\varepsilon_{2}(d)\big]=0$.
\[
\]
\end{cor}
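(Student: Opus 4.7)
The plan is to observe first that the probability in question is the probability of a deterministic event, hence simply an indicator, and then to unwind the definition algebraically and apply Theorem \ref{thm:CLT-variance-convergence}. Concretely, $\upsilon(\epsilon) = \epsilon^{-1}h^{2}\mathrm{var}\bigl[\sum_{i}\xi_{i,\epsilon}\bigr]$ is a nonnegative deterministic quantity (it is an expectation taken with respect to $\mathbb{P}$), and $\sigma_{\ell}^{2}(d)$ defined in (\ref{eq:intro_sigma}) is deterministic as well. Therefore the event $\{|\upsilon^{1/2}(\epsilon(d))/\sigma_{\ell}(d)-1|>\varepsilon_{2}(d)\}$ carries probability either $0$ or $1$, which justifies replacing the probability by an indicator as in the first line of the displayed chain.

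Next I would perform the elementary algebra shown in the statement: assuming $\sigma_{\ell}(d)>0$ (guaranteed by (A\ref{hyp:polynomialdependence}) yielding $\sigma_{\ell}^{2}(d)\geq Cd^{-r_{1}}$), multiplying $|\upsilon^{1/2}-\sigma_{\ell}|>\sigma_{\ell}\varepsilon_{2}$ through by the nonnegative factor $\upsilon^{1/2}+\sigma_{\ell}$ and using the identity $a^{2}-b^{2}=(a-b)(a+b)$ gives the second indicator. For the inequality step, since $\upsilon^{1/2}(\epsilon)\geq 0$, we have $\sigma_{\ell}(d)(\upsilon^{1/2}(\epsilon)+\sigma_{\ell}(d))\geq\sigma_{\ell}^{2}(d)$, so enlarging the event by relaxing the threshold from $\sigma_{\ell}(d)(\upsilon^{1/2}(\epsilon)+\sigma_{\ell}(d))\varepsilon_{2}(d)$ down to $\sigma_{\ell}^{2}(d)\varepsilon_{2}(d)$ yields the claimed upper bound.

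Finally, I would invoke Theorem \ref{thm:CLT-variance-convergence} to conclude. With $\varepsilon_{2}(d)=Cd^{-r_{2}}$ and $\sigma_{\ell}^{2}(d)\geq Cd^{-r_{1}}$ we have $\sigma_{\ell}^{2}(d)\varepsilon_{2}(d)\geq C'd^{-(r_{1}+r_{2})}$. Choosing $b>r_{1}+r_{2}$ in the theorem and then the corresponding $a\geq a_{0}(b)$ with $\epsilon(d)=Cd^{-a}$ ensures $|\upsilon(\epsilon(d))-\sigma_{\ell}^{2}(d)|=O(d^{-b})=o(d^{-(r_{1}+r_{2})})$, so for all $d$ sufficiently large the indicator on the right-hand side is identically zero, giving $\mathbb{P}[|\upsilon^{1/2}(\epsilon(d))/\sigma_{\ell}(d)-1|>\varepsilon_{2}(d)]=0$ for $d\geq d_{0}$, as claimed.

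There is no real obstacle here: the corollary is a short bookkeeping consequence of the preceding theorem. The only point requiring minor care is the observation that $\upsilon(\epsilon(d))$ is deterministic, which removes any genuine probabilistic content and reduces the argument to the quantitative deterministic bound furnished by Theorem \ref{thm:CLT-variance-convergence}.
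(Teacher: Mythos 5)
Your proposal is correct and follows essentially the same route as the paper, whose argument is embedded in the corollary's displayed chain: the probability is an indicator because $\upsilon(\epsilon(d))$ and $\sigma_{\ell}^{2}(d)$ are deterministic, the factorisation $a^{2}-b^{2}=(a-b)(a+b)$ plus $\upsilon^{1/2}\geq0$ gives the relaxed threshold $\sigma_{\ell}^{2}(d)\varepsilon_{2}(d)$, and Theorem \ref{thm:CLT-variance-convergence} with $b>r_{1}+r_{2}$ forces the indicator to vanish for $d\geq d_{0}$. No gaps.
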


\begin{proof}
The proof relies on the decomposition in Proposition \ref{prop:CLT-variance-convergence}
and bounding of the terms $\Upsilon_{i,\epsilon}$, $i\in\{0,\ldots,7\}$.
Bounds on $\Upsilon_{1,\epsilon}$ and $\Upsilon_{2,\epsilon}$ are
given in Lemma \ref{lem:poisson-homogeneous-approximation} and Lemma
\ref{lem:sublemma-cv-poisson-homogeneous}. Bounds on $\Upsilon_{3,\epsilon}$
and $\Upsilon_{5,\epsilon}$ are given in Lemma \ref{lem:variance-cv-ergodicity}.
Bounds on $\Upsilon_{4,\epsilon}$ and $\Upsilon_{6,\epsilon}$ are
given in Lemma \ref{lem:riemannconvergence}. Bounds on $\Upsilon_{0,\epsilon}$
and $\Upsilon_{7,\epsilon}$ are given in Lemma \ref{lem:S0andS7}.
By inspection we notice that under our assumptions, with $\iota>1/3$
in Lemma \ref{lem:sublemma-cv-poisson-homogeneous} and $\zeta\in(0,1)$
in Lemma \ref{lem:riemannconvergence}, each of this term is upperbounded
by the product of a polynomial in the quantities defined in (A\ref{hyp:polynomialdependence})
only, times a positive power of $\epsilon(d)$. Consequently there
exist $C,r_{1},r_{2}>0$, such that
\[
\max_{i\in\{0,\ldots,7\}}\big|\Upsilon_{i,\epsilon(d)}\big|\leq Cd^{r_{1}}\epsilon^{r_{2}}(d).
\]
Consequently, by choosing $a_{0}$ such that $a_{0}r_{2}>(r_{1}+b)$
we conclude that for $\epsilon(d)=Cd^{-a}$ and $a\geq a_{0}$
\[
\lim\sup_{d\rightarrow\infty}\max_{i\in\{0,\ldots,7\}}d^{b}\big|\Upsilon_{i,\epsilon(d)}\big|<\infty,
\]
and we conclude.
\end{proof}
\begin{prop}
\label{prop:CLT-variance-convergence}For any $\ell\geq0$ and $\epsilon>0$
such that $n\geq2$ one has $\upsilon(\epsilon)-\sigma_{\ell}^{2}=\sum_{i=0}^{7}\Upsilon_{i,\epsilon}$
with 
\begin{align*}
\Upsilon_{0,\epsilon}:= & -\epsilon^{-1}h^{2}\sum_{k=1}^{n-2}\pi_{kh}(\bar{f}_{kh,\epsilon})\mathbb{E}\left(2\gamma_{k,\epsilon}(X_{kh}^{\epsilon})-\bar{f}_{kh,\epsilon}(X_{kh}^{\epsilon})\right),\\
\Upsilon_{1,\epsilon}:= & 2\epsilon^{-1}h^{2}\sum_{k=1}^{n-2}\mathbb{E}\left(f_{kh}(X_{kh}^{\epsilon})\left\{ \gamma_{k,\epsilon}(X_{kh}^{\epsilon})-\eta_{k,\epsilon}(X_{kh}^{\epsilon})\right\} \right),\\
\Upsilon_{2,\epsilon}:= & 2h\sum_{k=1}^{n-2}\mathbb{E}\left(f_{kh}(X_{kh}^{\epsilon})\left\{ \epsilon^{-1}h\eta_{k,\epsilon}(X_{kh}^{\epsilon})-g_{kh}(X_{kh}^{\epsilon})\right\} \right),\\
\Upsilon_{3,\epsilon}:= & 2h\sum_{k=1}^{n-2}\mathbb{E}\left(f_{kh}(X_{kh}^{\epsilon})g_{kh}(X_{kh}^{\epsilon})\right)-\pi_{kh}\big(f_{kh}g_{kh}\big),\\
\Upsilon_{4,\epsilon}:= & 2h\Bigl\{\sum_{k=1}^{n-2}\pi_{kh}\big(f_{kh}g_{kh}\big)\Bigr\}-2\int_{0}^{1}\pi_{s}(f_{s}g_{s}){\rm d}s,\\
\Upsilon_{5,\epsilon}:= & -\epsilon^{-1}h^{2}\sum_{k=1}^{n-2}\mathbb{E}\big(f_{kh}(X_{kh}^{\epsilon})\bar{f}_{kh,\epsilon}(X_{kh}^{\epsilon})\big)-{\rm var}_{\pi_{kh}}\big(f_{kh}\big),\\
\Upsilon_{6,\epsilon}:= & -\epsilon^{-1}h^{2}\Bigl\{\sum_{k=1}^{n-2}{\rm var}_{\pi_{kh}}\big(f_{kh}\big)\Bigr\}+\ell\int_{0}^{1}{\rm var}_{\pi_{s}}\big(f_{s}\big){\rm d}s,\\
\Upsilon_{7,\epsilon}:= & \epsilon^{-1}h^{2}\mathbb{E}\left(\bar{f}_{(n-1)h,\epsilon}^{2}(X_{(n-1)h}^{\epsilon})-\big[P_{0,h}\gamma_{1,\epsilon}(X_{0}^{\epsilon})\big]^{2}\right).
\end{align*}
\end{prop}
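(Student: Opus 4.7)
My plan is to compute $\upsilon(\epsilon)$ using the martingale-difference structure of $(\xi_{k,\epsilon})$, use the generalized Poisson identity (\ref{eq:generalizedPoisson}) to telescope the resulting expression into boundary terms plus a single inner sum indexed by $k\in\{1,\ldots,n-2\}$, and then perform a chain of insert-and-subtract manipulations that peel off the seven $\Upsilon_{i,\epsilon}$ one at a time. The chain is designed so that the two surviving target quantities are precisely those combined by identity (\ref{eq:homogeneousAICwithPoisson}) to form $\sigma_{\ell}^{2}$, at which point subtracting closes the identity.

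By Lemma \ref{lem:Poisson-and-martingale}, $(\xi_{k,\epsilon})$ is a martingale-difference sequence with $\xi_{0,\epsilon}=0$, so the $\xi_{k,\epsilon}$ are $L_{2}$-orthogonal and
\[
\upsilon(\epsilon)=\epsilon^{-1}h^{2}\sum_{k=1}^{n-1}\mathbb{E}[\xi_{k,\epsilon}^{2}]=\epsilon^{-1}h^{2}\sum_{k=1}^{n-1}\Bigl\{\mathbb{E}\bigl[\gamma_{k,\epsilon}^{2}(X_{kh}^{\epsilon})\bigr]-\mathbb{E}\bigl[\bigl(P_{(k-1)h,kh}^{\epsilon}\gamma_{k,\epsilon}(X_{(k-1)h}^{\epsilon})\bigr)^{2}\bigr]\Bigr\}.
\]
Shifting the index of the subtracted sum and substituting $P_{kh,(k+1)h}^{\epsilon}\gamma_{k+1,\epsilon}=\gamma_{k,\epsilon}-\bar{f}_{kh,\epsilon}$ from (\ref{eq:generalizedPoisson}), the interior $\gamma_{k}^{2}$ contributions cancel; what remains are two boundary terms that, using $\gamma_{n-1,\epsilon}=\bar{f}_{(n-1)h,\epsilon}$, form exactly $\Upsilon_{7,\epsilon}$, together with a main sum of the form $\epsilon^{-1}h^{2}\sum_{k=1}^{n-2}\mathbb{E}\bigl[\bar{f}_{kh,\epsilon}(X_{kh}^{\epsilon})\bigl(2\gamma_{k,\epsilon}(X_{kh}^{\epsilon})-\bar{f}_{kh,\epsilon}(X_{kh}^{\epsilon})\bigr)\bigr]$. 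Writing $\bar{f}_{kh,\epsilon}=f_{kh}-\mu^{\epsilon}_{kh}f_{kh}$ inside this sum and using $\mathbb{E}[\bar{f}_{kh,\epsilon}(X_{kh}^{\epsilon})]=0$ together with $-\mu^{\epsilon}_{kh}f_{kh}=\pi_{kh}(\bar{f}_{kh,\epsilon})$ (from $\pi_{kh}f_{kh}=0$), the constant contribution collects into $\Upsilon_{0,\epsilon}$, while the $f_{kh}$-dependent part equals $2A_{\epsilon}-B_{\epsilon}$ with $A_{\epsilon}=\epsilon^{-1}h^{2}\sum\mathbb{E}[f_{kh}\gamma_{k,\epsilon}(X_{kh}^{\epsilon})]$ and $B_{\epsilon}=\epsilon^{-1}h^{2}\sum\mathbb{E}[f_{kh}\bar{f}_{kh,\epsilon}(X_{kh}^{\epsilon})]$.

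The remaining work is to reduce $2A_{\epsilon}$ to $2\int_{0}^{1}\pi_{s}(f_{s}g_{s})\,\mathrm{d}s$ and $-B_{\epsilon}$ to $-\ell\int_{0}^{1}\mathrm{var}_{\pi_{s}}(f_{s})\,\mathrm{d}s$ by four and two successive substitutions respectively. For $2A_{\epsilon}$: swap $\gamma_{k,\epsilon}$ for its time-homogeneous analogue $\eta_{k,\epsilon}$ (yielding $\Upsilon_{1,\epsilon}$); write $\epsilon^{-1}h^{2}=h\cdot\epsilon^{-1}h$ and swap $\epsilon^{-1}h\,\eta_{k,\epsilon}$ for $g_{kh}$ (yielding $\Upsilon_{2,\epsilon}$); swap the expectation under $\mu_{kh}^{\epsilon}$ for the integral against $\pi_{kh}$ (yielding $\Upsilon_{3,\epsilon}$); finally approximate the resulting Riemann sum $2h\sum\pi_{kh}(f_{kh}g_{kh})$ by $2\int_{0}^{1}\pi_{s}(f_{s}g_{s})\,\mathrm{d}s$ (yielding $\Upsilon_{4,\epsilon}$). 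For $-B_{\epsilon}$: swap $\mathbb{E}[f_{kh}\bar{f}_{kh,\epsilon}(X_{kh}^{\epsilon})]$ for $\mathrm{var}_{\pi_{kh}}(f_{kh})$ (yielding $\Upsilon_{5,\epsilon}$); then, under (A\ref{hyp:dependence-epsilon-h-on-d}), identify the prefactor $\epsilon^{-1}h^{2}$ with $\ell h$ (consistently in both the $\ell>0$ and $\ell=0$ regimes) and approximate the Riemann sum by its integral (yielding $\Upsilon_{6,\epsilon}$). Finally, identity (\ref{eq:homogeneousAICwithPoisson}) gives $\sigma_{\ell}^{2}=2\int_{0}^{1}\pi_{s}(f_{s}g_{s})\,\mathrm{d}s-\ell\int_{0}^{1}\mathrm{var}_{\pi_{s}}(f_{s})\,\mathrm{d}s$, which is exactly the pair of quantities accumulated, so that subtracting it from $\upsilon(\epsilon)$ leaves the claimed $\sum_{i=0}^{7}\Upsilon_{i,\epsilon}$.

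Nothing in this argument goes beyond martingale orthogonality, the generalized Poisson identity and routine algebra, and no quantitative estimate is invoked: the control of each $\Upsilon_{i,\epsilon}$ is deferred to the dedicated lemmata cited in the proof of Theorem \ref{thm:CLT-variance-convergence}. The main difficulty is therefore not analytic but \emph{combinatorial bookkeeping}: the sequence of substitutions, and the signs and prefactors at each step, must be arranged so that each residual matches its $\Upsilon_{i,\epsilon}$ term by term and so that the two targets produced at the end of the $A_{\epsilon}$- and $B_{\epsilon}$-reductions coincide exactly, and only exactly, with the decomposition of $\sigma_{\ell}^{2}$ supplied by (\ref{eq:homogeneousAICwithPoisson}).
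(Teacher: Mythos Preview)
Your approach is correct and essentially identical to the paper's: both start from the martingale orthogonality $\upsilon(\epsilon)=\epsilon^{-1}h^{2}\sum_{k=1}^{n-1}\mathbb{E}[\xi_{k,\epsilon}^{2}]$, telescope via the generalized Poisson identity to isolate $\Upsilon_{7,\epsilon}$ plus the main sum $\epsilon^{-1}h^{2}\sum_{k=1}^{n-2}\mathbb{E}\bigl[\bar f_{kh,\epsilon}(X_{kh}^{\epsilon})\bigl(2\gamma_{k,\epsilon}-\bar f_{kh,\epsilon}\bigr)(X_{kh}^{\epsilon})\bigr]$, and then peel off $\Upsilon_{0},\ldots,\Upsilon_{6}$ by successive insert-and-subtract steps. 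The paper carries out the telescoping via the auxiliary decomposition $W_{0,\epsilon}+W_{1,\epsilon}+W_{2,\epsilon}$ and then simply writes ``from which we conclude,'' leaving the chain of substitutions implicit; you spell that chain out explicitly, which is helpful but not a different argument.
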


\begin{proof}
For notational simplicity we drop $\epsilon$ from $P_{s,t}^{\epsilon}$
here. For $n\geq2$, noting that $\xi_{0,\epsilon}=0$,
\begin{align*}
\upsilon(\epsilon) & =\epsilon^{-1}h^{2}\sum_{k=1}^{n-1}\mathbb{E}\left[\gamma_{k,\epsilon}^{2}(X_{kh}^{\epsilon})-\big[P_{(k-1)h,kh}\gamma_{k,\epsilon}(X_{(k-1)h}^{\epsilon})\big]^{2}\right]\\
 & =\epsilon^{-1}h^{2}\mathbb{E}\left[\gamma_{n-1,\epsilon}^{2}(X_{(n-1)h}^{\epsilon})-\big[P_{0,h}\gamma_{1,\epsilon}(X_{0}^{\epsilon})\big]^{2}+\sum_{k=1}^{n-2}\bar{f}_{kh,\epsilon}(X_{kh}^{\epsilon})\big\{\gamma_{k,\epsilon}(X_{kh}^{\epsilon})+P_{kh,(k+1)h}\gamma_{k+1,\epsilon}(X_{kh}^{\epsilon})\big\}\right]\\
 & =\epsilon^{-1}h^{2}\mathbb{E}\left[\bar{f}_{(n-1)h,\epsilon}^{2}(X_{(n-1)h}^{\epsilon})-\big[P_{0,h}\gamma_{1,\epsilon}(X_{0}^{\epsilon})\big]^{2}\right]+\epsilon^{-1}h^{2}\sum_{k=1}^{n-2}\mathbb{E}\left[\bar{f}_{kh,\epsilon}(X_{kh}^{\epsilon})\big\{2\gamma_{k,\epsilon}(X_{kh}^{\epsilon})-\bar{f}_{kh,\epsilon}(X_{kh}^{\epsilon})\big\}\right],
\end{align*}
where the second line follows from the fact that with $W_{0,\epsilon}=\gamma_{n-1,\epsilon}^{2}(X_{(n-1)h}^{\epsilon})-\big[P_{(n-2)h,(n-1)h}\gamma_{n-1,\epsilon}(X_{(n-2)\epsilon}^{\epsilon})\big]^{2}$,
$W_{1,\epsilon}=\sum_{k=1}^{n-2}\gamma_{k,\epsilon}^{2}(X_{kh}^{\epsilon})-\big[P_{kh,(k+1)h}\gamma_{k,\epsilon}(X_{kh}^{\epsilon})\big]^{2}$
and 
\begin{align*}
W_{2,\epsilon} & =\sum_{k=1}^{n-2}\big[P_{kh,(k+1)h}\gamma_{k,\epsilon}(X_{kh}^{\epsilon})\big]^{2}-\big[P_{(k-1)h,kh}\gamma_{k,\epsilon}(X_{(k-1)h}^{\epsilon})\big]^{2}\\
 & =\big[P_{(n-2)h,(n-1)h}\gamma_{n-2,\epsilon}(X_{(n-2)h}^{\epsilon})\big]^{2}-\big[P_{0,h}\gamma_{1,\epsilon}(X_{0}^{\epsilon})\big]^{2},
\end{align*}
we have
\begin{align*}
\sum_{k=1}^{n-1}\gamma_{k,\epsilon}^{2}(X_{kh}^{\epsilon})-\big[P_{(k-1)h,kh}\gamma_{k,\epsilon}(X_{(k-1)h}^{\epsilon})\big]^{2} & =W_{0,\epsilon}+W_{1,\epsilon}+W_{2,\epsilon}\\
 & =\gamma_{n-1,\epsilon}^{2}(X_{(n-1)h}^{\epsilon})-\big[P_{0,h}\gamma_{1,\epsilon}(X_{0}^{\epsilon})\big]^{2}+W_{1,\epsilon},
\end{align*}
and the fact that by definition $\bar{f}_{kh,\epsilon}(x)=\gamma_{k,\epsilon}(x)-P_{kh,(k+1)h}\gamma_{k+1,\epsilon}(x)$,
which is also used on the third line.
\end{proof}
In order to control $\Upsilon_{1,\epsilon}$ and $\Upsilon_{2,\epsilon}$
we show that $\eta_{k,\epsilon}$ approximates $\gamma_{k,\epsilon}$
in Lemma \ref{lem:sublemma-cv-poisson-homogeneous} and that $\eta_{k,\epsilon}$
can be approximated by $g_{kh}$ in Lemma \ref{lem:poisson-homogeneous-approximation}.
\begin{lem}
\label{lem:poisson-homogeneous-approximation}Let $p\geq1$. Assume
that $\mu_{0}$ satisfies (\ref{eq:poincare_assume}) for some $K_{\mu_{0}}>0$
and that $h(\epsilon)\epsilon^{-1}=O(1)$. Then there exists $C>0$
such that for any $f\in C_{1,2}^{p}\big([0,1]\times\mathbb{R}^{d}\big)$
\begin{enumerate}
\item for $\ell=0$ and any $\gimel>1$, defining
\begin{align*}
A_{1}: & =C\alpha_{2p+1/2}\left\{ L\tilde{\alpha}_{p+1/2}+\tilde{\alpha}_{p}\right\} \cdot\vvvert f\vvvert_{p}^{2}\cdot\mu_{0}\big(\bar{V}^{(2p+1/2)}\big).\\
A_{2}: & =C\alpha_{2p}K^{-1}\bigl\{1+\gimel\bigr\}\bigl\{\tilde{\alpha}_{p}\alpha_{p+1/2}\sup_{s\in[0,1]}\pi_{s}\bar{V}^{(p+1/2)}+\bigl(\tilde{\alpha}_{2p}\alpha_{2p}\big[K^{-1}+K_{\mu_{0}}^{-1}\big]\bigr)^{1/2}\bigr\}\vvvert f\vvvert_{p}^{2}\cdot\mu_{0}\big(\bar{V}^{(2p)}\big)^{2},
\end{align*}
then for any $\epsilon>0$ satisfying $1/\gimel\leq1-Kh(\epsilon)\epsilon^{-1}/2$
\[
\big|\Upsilon_{2,\epsilon}\big|\leq[A_{2}+A_{1}\big(\lceil-\log(h(\epsilon)\epsilon^{-1})\rceil/K\big)^{2}]h(\epsilon)\epsilon^{-1},
\]
\item for $\ell>0$ and $\epsilon>0$
\[
\big|\Upsilon_{2,\epsilon}\big|\leq C\ell^{2}\mu_{0}\big(\bar{V}^{(p)}\big)^{2}\vvvert f\vvvert_{p}^{2}\Bigl\{\tilde{\alpha}_{p}\alpha_{p+1/2}\sup_{s\in[0,1]}\pi_{s}\bar{V}^{(p+1/2)}+\bigl(\tilde{\alpha}_{2p}\alpha_{2p}\big[K^{-1}+K_{\mu_{0}}^{-1}\big]\bigr)^{1/2}\Bigr\}\frac{\exp\Big(-Kn(\epsilon)\ell\Big)}{1-\exp\big(-K\ell\big)}.
\]
\end{enumerate}
\end{lem}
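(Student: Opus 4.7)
The plan is to analyze $R_{k,\epsilon}(x) := \epsilon^{-1} h \eta_{k,\epsilon}(x) - g_{kh}(x)$ and then bound
\[ \Upsilon_{2,\epsilon} = 2h \sum_{k=1}^{n-2} \mu_{kh}^{\epsilon}(f_{kh}\, R_{k,\epsilon}) \]
via Cauchy--Schwarz, with the polynomial moments of $\mu_{kh}^{\epsilon}$ controlled through Lemma \ref{lem:drift} and those of $\pi_{kh}$ through Lemma \ref{lem:deriv_int_interchange}. The two regimes are treated separately but follow the same pattern: produce a pointwise estimate on $|R_{k,\epsilon}(x)|$, then integrate against $|f_{kh}|\mu_{kh}^{\epsilon}$ and telescope over $k$ using $\sum_k h \leq 1$.

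For $\ell > 0$, the substitution $h = \ell\epsilon$ yields the exact identity $R_{k,\epsilon}(x) = -\ell \sum_{i=n}^{\infty} Q^{kh}_{i\ell} f_{kh}(x)$. Since $\pi_{kh} f_{kh} = 0$, a time-homogeneous specialisation of Lemma \ref{lem:bias_nu_bar_nu} (applied with $\epsilon = 1$ to the process $Y^{kh,1}$, so that the commutation Lemma \ref{lem:com_relation} gives geometric decay at rate $K$) bounds each summand pointwise by $\alpha_{p}\|\nabla f_{kh}\|_{p}\,e^{-Ki\ell}\,W^{(p)}(\delta_{x},\pi_{kh})$. Summing the resulting geometric series produces the factor $e^{-Kn\ell}/(1-e^{-K\ell})$. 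Expanding $W^{(p)}(\delta_{x},\pi_{kh})$ in polynomial moments of $\delta_{x}$ and $\pi_{kh}$, pairing with the bound $|f_{kh}| \leq \vvvert f\vvvert_{p}\bar V^{(p)}$, and applying Cauchy--Schwarz under $\mu_{kh}^{\epsilon}$ delivers the stated bound.

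For $\ell = 0$, the quantity $\epsilon^{-1}h \sum_{i=0}^{n-1} Q^{kh}_{ih\epsilon^{-1}} f_{kh}$ is a left-Riemann approximation of $\int_{0}^{nh\epsilon^{-1}} Q^{kh}_{t}f_{kh}\,\mathrm{d}t$, so I fix a truncation time $T > 0$ and split $R_{k,\epsilon}$ into a Riemann-sum error on $[0,T]$ and a tail contribution from $[T,\infty)$ (the remaining discrepancy $\int_{nh\epsilon^{-1}}^{\infty} Q^{kh}_{t}f_{kh}\,\mathrm{d}t$ is exponentially small in $\epsilon^{-1}$ and absorbed). For the Riemann part I use the backward equation $\partial_{t}Q^{kh}_{t}f_{kh} = \mathcal{L}_{kh}Q^{kh}_{t}f_{kh}$ from Proposition \ref{prop:fwd_and_bck_eqs}: the commutation Lemma \ref{lem:com_relation} combined with (A\ref{hyp:U_time_reg}) controls $\langle\nabla U_{kh},\nabla Q^{kh}_{t}f_{kh}\rangle$ by $L\,\vvvert f\vvvert_{p}\bar V^{(p+1/2)}$, and an analogous estimate on $\Delta Q^{kh}_{t}f_{kh}$ supplies the third component of $\vvvert\cdot\vvvert_{p}$; this pointwise estimate, integrated over $[0,T]$ and paired with $\mu_{kh}^{\epsilon}$-moments via Lemma \ref{lem:drift}, yields the $A_{1}$-type contribution with its $T$-dependent prefactor. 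For the tail I apply Cauchy--Schwarz together with the $L^{2}(\pi_{kh})$-contraction $\|Q^{kh}_{t}f_{kh}\|_{L^{2}(\pi_{kh})} \leq e^{-Kt}\|f_{kh}\|_{L^{2}(\pi_{kh})}$ (producing an $e^{-KT}/K$ factor), the Poincar\'e inequality for $\mu_{kh}^{\epsilon}$ from Lemma \ref{lem:poincare_transfer} (whose constant is bounded by $K^{-1}+K_{\mu_{0}}^{-1}$, accounting for the square-root factor in $A_{2}$), and Lemma \ref{lem:bias_pi_0_pi_t} to handle the remaining discrepancy between $\mu_{kh}^{\epsilon}$ and $\pi_{kh}$. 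The choice $T = \lceil -\log(h\epsilon^{-1})\rceil/K$ forces $e^{-KT} \leq h\epsilon^{-1}$ and balances the two pieces, producing the logarithmic correction in the stated bound.

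The main obstacle is the $\ell = 0$ case: obtaining a pointwise estimate on $|\partial_{t}Q^{kh}_{t}f_{kh}|$ that reflects both the linear growth of $\nabla U_{kh}$ and the simultaneous control of $f$, $\nabla f$, $\Delta f$ (which is precisely why the $\vvvert\cdot\vvvert_{p}$ norm incorporates all three components), while simultaneously transferring the $L^{2}(\pi_{kh})$ ergodic contraction to the ambient law $\mu_{kh}^{\epsilon}$ without losing the $K_{\mu_{0}}^{-1}$ dependence needed for the non-stationary initial distribution.
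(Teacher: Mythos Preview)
Your overall strategy---decompose $\epsilon^{-1}h\,\eta_{k,\epsilon} - g_{kh}$ into a Riemann-sum error on a truncated interval and an exponentially decaying tail, then balance the two via $T = \lceil -\log(h\epsilon^{-1})\rceil/K$---matches the paper's three-way split into $R_1$ (Riemann error on $[0, m(\epsilon)h\epsilon^{-1}]$), $R_2$ (discrete tail of the sum), $R_3$ (continuous tail of the integral). The $\ell>0$ case is also handled the same way.

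There is, however, a genuine gap in your treatment of the Riemann-error term. You propose to estimate $\partial_t Q^{kh}_t f_{kh} = \mathcal{L}_{kh} Q^{kh}_t f_{kh}$ by bounding $\langle\nabla U_{kh}, \nabla Q^{kh}_t f_{kh}\rangle$ via the commutation relation and then invoking ``an analogous estimate on $\Delta Q^{kh}_t f_{kh}$''. No such analogue exists: Lemma~\ref{lem:com_relation} controls only $\|\nabla Q_t f\|$, and there is no commutation bound for the Laplacian of the semigroup (getting one would require the second-derivative machinery behind Lemma~\ref{lem:deriv_2_id}, which yields no decay in $t$ and much worse constants). The paper sidesteps this entirely by writing the identity in its \emph{forward} form $\partial_t Q^{s}_t f_s = Q^{s}_t \mathcal{L}_s f_s$, so that $\mathcal{L}_s$ acts on $f_s$ first: then
\[
|\mathcal{L}_s f_s| \leq \|\nabla U_s\|\,\|\nabla f_s\| + |\Delta f_s| \leq L\,\|\nabla f\|_p\,\bar V^{(1/2)}\bar V^{(p)} + \|\Delta f\|_p\,\bar V^{(p)}
\]
is immediate, and the moment bound $Q_t^{s}\bar V^{(p+1/2)} \leq \tilde\alpha_{p+1/2}\bar V^{(p+1/2)}$ from Lemma~\ref{lem:drift_Y_process} finishes the pointwise estimate. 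This is precisely why $\vvvert f\vvvert_p$ packages $\|\Delta f\|_p$: it is needed for $|\mathcal{L}_s f_s|$, not for $|\Delta Q_t f_s|$.

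Your tail argument is close in spirit but cites the wrong bias lemma. The paper decomposes $\mathbb{E}[(Q^{s}_t f_{kh}(X_{kh}^\epsilon))^2]^{1/2}$ via $\mathbb{E}[Z^2]^{1/2}\leq \mathrm{var}[Z]^{1/2}+|\mathbb{E}[Z]|$ into a variance term (controlled by the time-homogeneous version of Lemma~\ref{lem:L_2_convergence}, with the Poincar\'e constant of $\mu_{kh}^\epsilon$ supplied by Lemma~\ref{lem:poincare_transfer}---this is where $[K^{-1}+K_{\mu_0}^{-1}]^{1/2}$ arises) and a bias term $|\mu_{kh}^\epsilon Q^s_t f_{kh}|$, bounded via the homogeneous analogue of Lemma~\ref{lem:bias_nu_bar_nu}, namely $|Q_t^s f_s(x) - \pi_s f_s| \leq \tilde\alpha_p \|\nabla f_s\|_p\, e^{-Kt}\, W^{(p)}(\delta_x,\pi_s)$. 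Lemma~\ref{lem:bias_pi_0_pi_t} concerns the inhomogeneous bias $|\pi_0 P_{0,t} f - \pi_t f|$ and plays no role here; nor is the $L^2(\pi_{kh})$-contraction used---the argument stays entirely under $\mu_{kh}^\epsilon$.
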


\begin{lem}
\label{lem:sublemma-cv-poisson-homogeneous}Let $p\geq1$, $f\in C_{1,2}^{p}([0,1]\times\mathbb{R}^{d})$,
$\iota\in\big(0,1\big)$, define for any $\epsilon>0$ and $k\in\{0,\ldots,n-1\}$
$\tau_{k,\epsilon}:=(kh+\daleth h^{\iota})\wedge1$ for some $\daleth>0$,
and define for $k\in\{0,\ldots,n-1\}$ and $x\in\mathbb{R}^{d}$
\begin{align*}
T_{1,k,\epsilon} & :=\sum_{i=k}^{\lfloor\tau_{k,\epsilon}h^{-1}\rfloor-1}P_{kh,ih}^{\epsilon}\bar{f}_{ih,\epsilon}\big(x\big)-Q_{(i-k)h}^{kh,\epsilon}\bar{f}_{ih,\epsilon}(x), & T_{2,k,\epsilon}:=\sum_{i=k}^{\lfloor\tau_{k,\epsilon}h^{-1}\rfloor-1}Q_{(i-k)h}^{kh,\epsilon}f_{ih}(x)-Q_{(i-k)h}^{kh,\epsilon}f_{kh}(x),\\
T_{3,k,\epsilon} & :=-\sum_{i=k}^{\lfloor\tau_{k,\epsilon}h^{-1}\rfloor-1}\mu_{ih}^{\epsilon}f_{ih,\epsilon} & T_{4,k,\epsilon}:=\sum_{i=\lfloor\tau_{k,\epsilon}h^{-1}\rfloor}^{n-1}P_{kh,ih}^{\epsilon}\bar{f}_{ih,\epsilon}\big(x\big)-Q_{(i-k)h}^{kh,\epsilon}f_{kh}\big(x\big),
\end{align*}
with the standard conventions that that $T_{1,k,\epsilon}=T_{2,k,\epsilon}=T_{3,k,\epsilon}=0$
when $\lfloor\tau_{k,\epsilon}h^{-1}\rfloor=k$ and $T_{4,k,\epsilon}=0$
when $\lfloor\tau_{k,\epsilon}h^{-1}\rfloor=n$. Then 
\[
\big|\Upsilon_{1,\epsilon}\big|\leq2\epsilon^{-1}h^{2}\sum_{k=1}^{n-1}\Big|\mathbb{E}\left[f_{kh}(X_{kh}^{\epsilon})T_{3,k,\epsilon}\right]\Big|+2\epsilon^{-1}h\sum_{i=1,i\neq3}^{4}\max_{k\in\{0,\ldots,n-1\}}\Bigl|\mathbb{E}\left[f_{kh}(X_{kh}^{\epsilon})T_{i,k,\epsilon}\right]\Bigl|,
\]
and there exists $C>0$ such that for any $\epsilon>0$ and $\ell\geq0$,
\begin{align*}
\max_{k\in\{0,\ldots,n-1\}}\Bigl|\mathbb{E}\left[f_{kh}(X_{kh}^{\epsilon})T_{1,k,\epsilon}\right]\Bigl| & \leq C\daleth^{3}\alpha_{p}\tilde{\alpha}_{p+1/2}\alpha_{2p+1/2}M\cdot\vvvert f\vvvert_{p}^{2}\cdot\sup_{s\in[0,1]}\bar{V}(x_{s}^{\star})^{1/2}\cdot\mu_{0}\left(\bar{V}^{(2p+1/2)}\right)\cdot\epsilon^{-1}h^{3\iota},\\
\max_{k\in\{0,\ldots,n-1\}}\Bigl|\mathbb{E}\left[f_{kh}(X_{kh}^{\epsilon})T_{2,k,\epsilon}\right]\Bigl| & \leq C\daleth^{2}\tilde{\alpha}_{p}\alpha_{2p}\vvvert f\vvvert_{p}^{2}\mu_{0}\Bigl(\bar{V}^{(2p)}\Bigr)h^{2\iota},\\
2\epsilon^{-1}h^{2}\sum_{k=0}^{n-1}\Bigl|\mathbb{E}\left[f_{kh}(X_{kh}^{\epsilon})T_{3,k,\epsilon}\right]\Bigl| & \leq C\daleth\left\{ \vvvert f\vvvert_{p}\sup_{s\in[0,1]}\pi_{s}\bar{V}^{(2[p\vee p_{0}]+1/2)}\Big[\frac{\|\nabla\phi\|_{p_{0}}}{K^{2}}+\alpha_{p}\mu_{0}\bar{V}^{(p+1/2)}\Big]\right\} ^{2}\\
 & \hspace{5cm}\times\Big\{-h\ln(\epsilon)/K+\epsilon^{-1}h^{2}+\epsilon h^{\iota}\Big\}.
\end{align*}
Define
\begin{align*}
A & :=\gimel\alpha_{2p}\vvvert f\vvvert_{p}^{2}\mu_{0}V^{(2p)}\left[K^{-1}+K_{\mu_{0}}^{-1}\right]{}^{1/2}\\
 & \hspace{2cm}+\frac{\gimel}{K}\alpha_{2p}\mu_{0}\big(\bar{V}^{(2p)}\big)^{2}\vvvert f\vvvert_{p}^{2}\Bigl\{\tilde{\alpha}_{p}\alpha_{p+1/2}\sup_{s\in[0,1]}\pi_{s}\bar{V}^{(p+1/2)}+\bigl(\tilde{\alpha}_{2p}\alpha_{2p}\big[K^{-1}+K_{\mu_{0}}^{-1}\big]\bigr)^{1/2}\Bigr\}
\end{align*}
then there exists $C>0$ such that for any $\gimel>1$ and $\gimel^{-1}<1-Kh\epsilon^{-1}/2$
\[
\max_{k\in\{0,\ldots,n-1\}}\Bigl|\mathbb{E}\left[f_{kh}(X_{kh}^{\epsilon})T_{4,k,\epsilon}\right]\Bigl|\leq C\cdot A\exp\Big(-K[\daleth h^{\iota-1}-1]h\epsilon^{-1}\Big)\cdot\big[(\epsilon h^{-1})\vee1\big].
\]
\end{lem}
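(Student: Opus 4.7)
The plan is first to establish the announced decomposition algebraically, and then to bound each of the four $T$-terms separately. For the decomposition, I would split the sum defining $\gamma_{k,\epsilon}(x)$ at the cutoff index $\lfloor\tau_{k,\epsilon}/h\rfloor$. On the near part $i\in\{k,\ldots,\lfloor\tau_{k,\epsilon}/h\rfloor-1\}$, successively (i) replace the inhomogeneous transition $P_{kh,ih}^{\epsilon}$ by its homogeneous counterpart $Q_{(i-k)h}^{kh,\epsilon}$, producing $T_{1,k,\epsilon}$, (ii) collect the centring constants $-\mu_{ih}^{\epsilon}f_{ih}$ into the deterministic term $T_{3,k,\epsilon}$, and (iii) replace $f_{ih}$ by $f_{kh}$ inside the homogeneous operator, producing $T_{2,k,\epsilon}$. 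The far part $i\geq\lfloor\tau_{k,\epsilon}/h\rfloor$, together with the matching tail of $\eta_{k,\epsilon}$, constitutes $T_{4,k,\epsilon}$. The upper bound on $|\Upsilon_{1,\epsilon}|$ then follows by the triangle inequality and by using $n\leq 1/h$, so that $\epsilon^{-1}h^{2}\cdot n\cdot\max_{k}\leq\epsilon^{-1}h\cdot\max_{k}$, which is done for $T_{1},T_{2},T_{4}$ but not for the deterministic $T_{3}$.

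For $T_{1,k,\epsilon}$, I would appeal to Duhamel's formula,
\[
\bigl(P_{kh,ih}^{\epsilon}-Q_{(i-k)h}^{kh,\epsilon}\bigr)g(x)=\epsilon^{-1}\int_{kh}^{ih}P_{kh,u}^{\epsilon}\bigl\langle\nabla U_{kh}-\nabla U_{u},\nabla Q_{ih-u}^{kh,\epsilon}g\bigr\rangle(x)\,\mathrm{d}u,
\]
applied with $g=\bar{f}_{ih,\epsilon}$. The inner gradient $\|\nabla Q_{t}^{kh,\epsilon}g\|$ is bounded by the commutation relation of Lemma \ref{lem:com_relation} applied to the homogeneous diffusion, (A\ref{hyp:U_time_cont}) supplies the factor $M(u-kh)\sqrt{\bar{V}_{kh}}$, and polynomial moments along $P_{kh,u}^{\epsilon}$ are handled by Lemma \ref{lem:drift}. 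Integration in $u\in[kh,ih]$ yields a term of order $\epsilon^{-1}M(ih-kh)^{2}$ per $i$, and summing over the at most $\daleth h^{\iota-1}$ relevant indices yields the announced $\daleth^{3}h^{3\iota}\epsilon^{-1}$ dependence.

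For $T_{2,k,\epsilon}$, I would use a first-order Taylor expansion in time, giving $|f_{ih}(x)-f_{kh}(x)|\leq\|\partial_{t}f\|_{p}(ih-kh)\bar{V}^{(p)}(x)$; applying $Q_{(i-k)h}^{kh,\epsilon}$ and Lemma \ref{lem:drift} and summing over $i$ yields the claimed $h^{2\iota}$ factor. For the deterministic $T_{3,k,\epsilon}$, I would use $\mathbb{E}[f_{kh}(X_{kh}^{\epsilon})T_{3,k,\epsilon}]=T_{3,k,\epsilon}\cdot\mu_{kh}^{\epsilon}f_{kh}$ and bound each $|\mu_{ih}^{\epsilon}f_{ih}|$ via the triangle inequality $\mu_{ih}^{\epsilon}f_{ih}=(\mu_{0}-\pi_{0})P_{0,ih}^{\epsilon}f_{ih}+(\pi_{0}P_{0,ih}^{\epsilon}-\pi_{ih})f_{ih}$, the first summand decaying as $e^{-Kih/\epsilon}$ by Lemma \ref{lem:bias_nu_bar_nu} and the second being of size $\epsilon/K$ by Lemma \ref{lem:bias_pi_0_pi_t}. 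Summing these bounds across $k$ and $i$ and distinguishing the regimes $Kh/\epsilon$ small versus of order one produces the three terms $-h\ln(\epsilon)/K$, $\epsilon^{-1}h^{2}$ and $\epsilon h^{\iota}$. Finally, for $T_{4,k,\epsilon}$, since $(i-k)h\geq\daleth h^{\iota}$, both $\|Q_{(i-k)h}^{kh,\epsilon}f_{kh}\|$ and the inhomogeneous correlation $|\mathbb{E}[f_{kh}(X_{kh}^{\epsilon})P_{kh,ih}^{\epsilon}\bar{f}_{ih,\epsilon}(X_{kh}^{\epsilon})]|$ are exponentially small via the commutation relation and the Poincaré inequality of Theorem \ref{thm:intro_var_and_bias_bounds}; summing the resulting geometric series in $(i-k)$ gives the factor $\exp(-K(\daleth h^{\iota-1}-1)h/\epsilon)\cdot[(\epsilon h^{-1})\vee 1]$.

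The main obstacle is the bound for $T_{1}$: one must simultaneously juggle the polynomial growth in $x$ of $\nabla U_{u}-\nabla U_{kh}$ from (A\ref{hyp:U_time_cont}), the commutation-induced control on $\|\nabla Q_{t}^{kh,\epsilon}g\|$, and the polynomial moments propagated along $P_{kh,u}^{\epsilon}$, while ensuring the constants $\alpha_{p},\tilde{\alpha}_{p}$ aggregate correctly and the power of $h$ comes out exactly cubic in $\iota$. A secondary difficulty lies in the bookkeeping for $T_{3}$: the two bias contributions must be combined in a regime-dependent way to recover the logarithmic factor, which arises from the harmonic-like sum of $e^{-Kjh/\epsilon}$ when $Kh/\epsilon$ is small.
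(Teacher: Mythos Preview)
Your proposal is correct and follows essentially the same route as the paper: the algebraic splitting of $\gamma_{k,\epsilon}-\eta_{k,\epsilon}$ at the cutoff $\lfloor\tau_{k,\epsilon}/h\rfloor$, a Duhamel comparison for $T_1$ combined with (A\ref{hyp:U_time_cont}) and the commutation relation, a time-Taylor bound for $T_2$, the bias-squared estimate for $T_3$ with a threshold $k^{*}\sim -\epsilon\ln(\epsilon)/(Kh)$, and Cauchy--Schwarz plus variance decay summed geometrically for $T_4$. The only cosmetic difference is the direction of the Duhamel interpolation for $T_1$: the paper uses $Q_{t}^{kh,\epsilon}P_{kh+t,ih}^{\epsilon}$ (so Lemma~\ref{lem:com_relation} is applied to $\nabla P$ and the moment control through $Q$ uses $\tilde{\alpha}_{p+1/2}$), whereas you use $P_{kh,u}^{\epsilon}Q_{ih-u}^{kh,\epsilon}$; both give the same $\epsilon^{-1}M(ih-kh)^{2}$ per term and the same $h^{3\iota}$ power after summation.
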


\begin{lem}
\label{lem:variance-cv-ergodicity}For any $\gimel>1$ and $\epsilon,h,K>0$
such that $\gimel^{-1}<1-K\epsilon^{-1}h/2$ we have for $\ell\geq0$,
\begin{align*}
\big|\Upsilon_{3,\epsilon}\big| & \leq C\frac{\gimel\tilde{\alpha}_{p}}{K}\vvvert f\vvvert_{p}^{2}\left[\sup_{s\in[0,1]}\pi_{s}\bar{V}^{(2[(2p+1/2)\vee p_{0}]+1/2)}\right]^{3}\\
 & \times\left\{ 1+K^{-2}\|\nabla\phi\|_{p_{0}}+\mu_{0}\bar{V}^{(2p+1)}\alpha_{2p+1/2}\left(1+\frac{\gimel}{K}\right)\right\} \epsilon,
\end{align*}
and
\begin{align*}
\big|\Upsilon_{5,\epsilon}\big|\leq C\vvvert f\vvvert_{2p}^{2}\sup_{s\in[0,1]}\pi_{s}\bar{V}^{(2[(2p)\vee p_{0}]+1/2)}\Big\{ K^{-2}\|\nabla\phi\|_{p_{0}}h+\frac{\gimel\alpha_{2p}}{K}\mu_{0}\bar{V}^{(2p+1/2)}\Big\} h.
\end{align*}
\end{lem}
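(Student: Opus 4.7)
\medskip

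\noindent\textbf{Proof plan.} The plan is to write both $\Upsilon_{3,\epsilon}$ and $\Upsilon_{5,\epsilon}$ as Riemann sums of biases between the non-stationary marginal $\mu_{kh}^{\epsilon}=\mu_{0}P_{0,kh}^{\epsilon}$ and its equilibrium $\pi_{kh}$, then split each bias into an ``initial-condition'' part and a ``time-inhomogeneity'' part via the telescoping identity
\[
\mu_{0}P_{0,kh}^{\epsilon}F-\pi_{kh}F
=\bigl(\mu_{0}-\pi_{0}\bigr)P_{0,kh}^{\epsilon}F+\bigl(\pi_{0}P_{0,kh}^{\epsilon}F-\pi_{kh}F\bigr),
\]
to which Lemma~\ref{lem:bias_nu_bar_nu} (with $W^{(p)}(\mu_{0},\pi_{0})$ and exponential factor $e^{-Kkh/\epsilon}$) and Lemma~\ref{lem:bias_pi_0_pi_t} (with factor $\tfrac{\epsilon}{K}(1-e^{-Kkh/\epsilon})$) respectively apply. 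Summation of the two resulting geometric series over $k\in\{1,\ldots,n-2\}$ produces, under $\gimel^{-1}\le1-Kh/(2\epsilon)$,
\[
h\sum_{k}e^{-Kkh/\epsilon}\le\frac{h}{1-e^{-Kh/\epsilon}}\le\frac{\gimel\,\epsilon}{K},\qquad h\sum_{k}\frac{\epsilon}{K}(1-e^{-Kkh/\epsilon})\le\frac{\epsilon}{K},
\]
which provides the overall factor of $\epsilon$.

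\medskip

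\noindent For $\Upsilon_{3,\epsilon}$ I take $F=f_{kh}g_{kh}$. To apply the two bias lemmas I first need that $g_{s}\in C_{2}^{p}(\mathbb{R}^{d})$ and satisfies quantitative bounds on $g_{s}$ and $\nabla g_{s}$. This is obtained from its defining series/integral by the commutation relation of Lemma~\ref{lem:com_relation} applied to the time-homogeneous process $(Y_{t}^{s,\epsilon})$ (which has the same constant $K$), giving an exponential decay $e^{-Kt/\epsilon}$ on $\|\nabla Q_{t}^{s}f_{s}\|$ and hence summability yielding $\vvvert g_{s}\vvvert_{p}\lesssim K^{-1}\vvvert f\vvvert_{p}$. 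The Leibniz rule then gives $\|\nabla(f_{kh}g_{kh})\|_{p}\lesssim\vvvert f\vvvert_{p}^{2}$ up to polynomial moments of $\pi_{kh}$ of order controlled by $\sup_{s}\pi_{s}\bar{V}^{(2[(2p+1/2)\vee p_{0}]+1/2)}$. The variance factor $\sup_{s}\mathrm{var}_{\pi_{s}}[\phi_{s}]^{1/2}$ appearing in Lemma~\ref{lem:bias_pi_0_pi_t} is bounded via the Poincar\'e inequality for $\pi_{s}$ (Remark~\ref{rem:Poincare_pi}) by $K^{-1/2}\|\nabla\phi\|_{p_{0}}(\sup_{s}\pi_{s}\bar{V}^{(2p_{0})})^{1/2}$, and $\mathrm{var}_{\pi_{kh}}[F]^{1/2}$ is similarly bounded using another Poincar\'e application, producing the $K^{-2}\|\nabla\phi\|_{p_{0}}$ contribution.

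\medskip

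\noindent For $\Upsilon_{5,\epsilon}$ I use $\bar{f}_{kh,\epsilon}=f_{kh}-\mu_{kh}^{\epsilon}f_{kh}$ together with $\pi_{kh}f_{kh}=0$ to write
\[
\mathbb{E}\bigl[f_{kh}(X_{kh}^{\epsilon})\bar{f}_{kh,\epsilon}(X_{kh}^{\epsilon})\bigr]-\mathrm{var}_{\pi_{kh}}[f_{kh}]
=\bigl[\mu_{kh}^{\epsilon}(f_{kh}^{2})-\pi_{kh}(f_{kh}^{2})\bigr]-\bigl[\mu_{kh}^{\epsilon}(f_{kh})-\pi_{kh}(f_{kh})\bigr]^{2},
\]
and apply the same two-step bias decomposition, now to $F=f_{kh}^{2}$ and $F=f_{kh}$, using $\|\nabla f_{kh}^{2}\|_{p}\lesssim\vvvert f\vvvert_{2p}^{2}$. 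Each individual bias is again of order $\epsilon/K$; after multiplication by the prefactor $\epsilon^{-1}h^{2}$ and summation over $k$ (using $nh\le1$), the first bracket contributes $O(h/K)$, exactly matching the stated bound, while the squared second bracket is of higher order and harmless. The principal obstacle is not conceptual but bookkeeping: tracking the polynomial growth exponents so that Lemma~\ref{lem:bias_nu_bar_nu} is applied with a $p$ large enough to dominate $fg$ or $f^{2}$, and simultaneously aggregating the moment bounds on $\mu_{0}$ (via Lemma~\ref{lem:drift} and the drift constant $\alpha_{\cdot}$) and on $\pi_{s}$ (via Lemma~\ref{lem:deriv_int_interchange}) into the single triple moment factor $[\sup_{s}\pi_{s}\bar{V}^{(2[(2p+1/2)\vee p_{0}]+1/2)}]^{3}$ that appears in the $\Upsilon_{3,\epsilon}$ estimate.
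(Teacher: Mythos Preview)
Your overall strategy is the same one the paper uses: both $\Upsilon_{3,\epsilon}$ and $\Upsilon_{5,\epsilon}$ are sums of biases $\mu_{kh}^{\epsilon}F-\pi_{kh}F$ that you split into an initial-condition piece (Lemma~\ref{lem:bias_nu_bar_nu}) and a time-inhomogeneity piece (Lemma~\ref{lem:bias_pi_0_pi_t}), and in the paper these two steps are already packaged together as part~3 of Lemma~\ref{lem:Vboundonvariances}. For $\Upsilon_{5,\epsilon}$ your argument is essentially identical to the paper's: the paper makes the same simplification to $\mu_{kh}^{\epsilon}(f_{kh}^{2})-\pi_{kh}(f_{kh}^{2})$ (discarding the squared-bias term, which you correctly flag as higher order) and then applies Lemma~\ref{lem:Vboundonvariances} to $f^{2}\in C_{0,2}^{2p}$.

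For $\Upsilon_{3,\epsilon}$ there is a genuine gap in your sketch. You assert that $g_{s}\in C_{2}^{p}(\mathbb{R}^{d})$ and justify this by the commutation relation, but Lemma~\ref{lem:com_relation} only gives exponentially decaying control of $\nabla Q_{t}^{s}f_{s}$; it says nothing about second derivatives, and the second-derivative bounds coming out of Proposition~\ref{prop:C_2^p_closed} (via Lemmas~\ref{lem:deriv_1_id}--\ref{lem:deriv_2_id}) carry constants $c_{1},c_{2}$ with no decay in $t$, so summing $\nabla^{(2)}Q_{k\ell}^{s}f_{s}$ over $k$ is not obviously finite. Yet both Lemma~\ref{lem:bias_nu_bar_nu} and Lemma~\ref{lem:bias_pi_0_pi_t} (and hence Lemma~\ref{lem:Vboundonvariances}) require the test function to lie in $C_{2}^{p}$, because their proofs go through Lemmas~\ref{lem:poincare_transfer} and~\ref{lem:L_2_convergence}, which in turn rely on the forward/backward equations. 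The paper sidesteps this by truncating: it fixes $r=r(\ell,\epsilon)\approx -\log(\epsilon)/K$, works with the finite partial sum $g_{s,r}$ (for which Lemma~\ref{lem:poisson_continuity_etc}(2a) does give $C_{2}^{p}$ membership), applies Lemma~\ref{lem:Vboundonvariances} to $f_{kh}g_{kh,r}$, and controls the remainder $\Delta_{s,r}=|g_{s}-g_{s,r}|$ directly via the crude pointwise bound from Lemma~\ref{lem:poisson_continuity_etc}(2c), which already carries a factor $e^{-Kr}\leq\epsilon$. Inserting this truncation step fixes your argument and recovers exactly the paper's proof.
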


\begin{lem}
\label{lem:riemannconvergence}For any $\gimel>1$ and $\epsilon,h,K>0$
such that $\gimel^{-1}<1-K\epsilon^{-1}h/2$ we have,
\[
\big|\Upsilon_{4,\epsilon}\big|\leq\Upsilon_{4,\epsilon}^{(1)}+\Upsilon_{4,\epsilon}^{(2)}
\]
where, with the convention $(\ell\vee1)/\ell=1$ for $\ell=0$, for
any $\zeta\in(0,1)$, with
\[
C_{fg}:=(1+\vvvert f\vvvert_{p})^{2}\tilde{\alpha}_{p}\sup_{s\in[0,1]}\pi_{s}\bar{V}^{(p+1/2)}\left\{ \frac{\gimel}{K}+\frac{C(\gimel,\zeta)(\ell\vee1)}{(1\wedge K)\ell}\Bigl(2+\tilde{\alpha}_{p}\frac{M}{K}\sup_{\tau\in[0,1]}\sqrt{\bar{V}(x_{\tau}^{\star})}\Bigr)\right\} ,
\]
\begin{align*}
\Upsilon_{4,\epsilon}^{(1)}:= & Ch^{\zeta}\tilde{\alpha}_{2p+1/2}\big[C_{fg}\vee\big(\gimel\frac{\tilde{\alpha}_{p}}{K}\vvvert f\vvvert_{p}^{2}\sup_{s\in[0,1]}\pi_{s}\bar{V}^{(p+1/2)}\big)\big]\left[1+\tilde{\alpha}_{2p+1/2}\frac{M}{K}\sup_{s\in[0,1]}\sqrt{\bar{V}(x_{s}^{\star})}\right]\\
\Upsilon_{4,\epsilon}^{(2)}:= & C\gimel\|f\|_{p}^{2}\frac{\tilde{\alpha}_{p}}{K}\sup_{s\in[0,1]}\pi_{s}\bar{V}^{(p+1/2)}\cdot\sup_{s\in[0,1]}\pi_{s}\bigl(\bar{V}^{(2p+1/2)}\bigr)\epsilon
\end{align*}
and for $\ell>0$
\begin{align*}
\big|\Upsilon_{6,\epsilon}\big| & \leq C\ell h\vvvert f\vvvert_{p}^{2}(\tilde{\alpha}_{2p}\vee\tilde{\alpha}_{p})\sup_{s\in[0,1]}\pi_{s}\bar{V}^{(p)}\left[1+(\tilde{\alpha}_{2p}\vee\tilde{\alpha}_{p})\frac{M}{K}\sup_{s\in[0,1]}\sqrt{\bar{V}(x_{s}^{\star})}\right]
\end{align*}
while for $\ell=0$ we have
\[
\big|\Upsilon_{6,\epsilon}\big|\leq\sup_{s\in[0,1]}{\rm var}_{\pi_{s}}\big(f_{s}\big)\cdot h\epsilon^{-1}.
\]
\end{lem}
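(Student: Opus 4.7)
The key idea is that both $\Upsilon_{4,\epsilon}$ and $\Upsilon_{6,\epsilon}$ are quadrature errors: each compares a truncated Riemann sum with step $h$ to the corresponding Lebesgue integral over $[0,1]$. I would therefore (a) establish suitable $\zeta$-H\"older regularity in $s$ of the integrands $s\mapsto\pi_s(f_s g_s)$ and $s\mapsto\mathrm{var}_{\pi_s}(f_s)$, with quantitative bounds on the H\"older seminorms, then (b) apply a standard Riemann-sum estimate together with explicit bookkeeping of the missing endpoint intervals $[0,h]$ and $[(n-1)h,1]$.

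For $\Upsilon_{4,\epsilon}$ I would write
\begin{align*}
\Upsilon_{4,\epsilon}/2 \;=\; \sum_{k=1}^{n-2}\int_{kh}^{(k+1)h}\bigl[\pi_{kh}(f_{kh}g_{kh})-\pi_s(f_s g_s)\bigr]\mathrm{d}s + R_{\mathrm{end}},
\end{align*}
where $R_{\mathrm{end}}$ collects $-\int_0^h\pi_s(f_s g_s)\mathrm{d}s$ and $-\int_{(n-1)h}^{1}\pi_s(f_s g_s)\mathrm{d}s$ (plus any missing boundary contribution from $nh<1$). For the bulk sum the modulus of continuity of $s\mapsto\pi_s(f_s g_s)$ gives a bound of the form $C\, h^{\zeta}$ times a product of the H\"older seminorm and suitable polynomial-moment bounds $\sup_s\pi_s\bar V^{(2p+1/2)}$. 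The H\"older regularity itself has three sources: H\"older continuity of $s\mapsto\pi_s$ against polynomially growing test functions, coming from (A\ref{hyp:U_time_cont}) through $M$ and $\sup_s\sqrt{\bar V(x_s^\star)}$ (via Lemma \ref{lem:poisson_continuity_etc}); regularity of $s\mapsto f_s$, controlled by $\sup_s\|\partial_t f_t\|_p$ through $\vvvert f\vvvert_p$; and regularity of $s\mapsto g_s$, where $g_s=\ell\sum_k Q_{k\ell}^s f_s$ or $\int_0^{\infty}Q_t^s f_s\mathrm{d}t$, whose sup-norm bound $\|g_s\|_p\leq C\gimel\|\nabla f\|_p/K$ comes from Lemma \ref{lem:Poisson-and-martingale} combined with the geometric contraction of Lemma \ref{lem:com_relation}. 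The endpoint remainder $R_{\mathrm{end}}$ is bounded crudely by $h\cdot\sup_s|\pi_s(f_s g_s)|$; since $h\leq(\ell\vee1)\epsilon$, this yields the $\Upsilon_{4,\epsilon}^{(2)}=O(\epsilon)$ contribution with the stated constant $C\gimel\|f\|_p^2\tilde\alpha_p/K$.

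For $\Upsilon_{6,\epsilon}$ the argument is entirely analogous but simpler. In the $\ell>0$ regime $\epsilon^{-1}h^2=\ell h$, so
\[
\Upsilon_{6,\epsilon} \;=\; -\ell\Bigl\{h\sum_{k=1}^{n-2}\mathrm{var}_{\pi_{kh}}(f_{kh}) - \int_0^1\mathrm{var}_{\pi_s}(f_s)\mathrm{d}s\Bigr\},
\]
and one applies the same H\"older--Riemann recipe to $s\mapsto\mathrm{var}_{\pi_s}(f_s)=\pi_s(f_s^2)-\pi_s(f_s)^2$; since $f_s^2$ has polynomial growth controlled by $\vvvert f\vvvert_p$, the H\"older modulus is integrable against $\pi_s$ under our drift assumptions, yielding the bound $C\ell h$ times the stated constant. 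In the $\ell=0$ regime the integral term vanishes, and we simply dominate $\epsilon^{-1}h^2\sum_{k=1}^{n-2}\mathrm{var}_{\pi_{kh}}(f_{kh})\leq\epsilon^{-1}h\,\sup_s\mathrm{var}_{\pi_s}(f_s)$ using $nh\leq1$, giving exactly $h\epsilon^{-1}\sup_s\mathrm{var}_{\pi_s}(f_s)$.

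The main technical obstacle is the quantitative H\"older continuity of $s\mapsto g_s$, because $g_s$ depends on $s$ through two channels: the function $f_s$ being transported and the semigroup $Q_t^s$ whose generator $\mathcal{L}_s$ depends on $s$ through $\nabla U_s$. Using the decomposition $g_s-g_\tau=\ell\sum_k(Q_{k\ell}^s-Q_{k\ell}^\tau)f_s+\ell\sum_k Q_{k\ell}^\tau(f_s-f_\tau)$ reduces the problem to quantifying $\|(Q_{k\ell}^s-Q_{k\ell}^\tau)\varphi\|_p$ for polynomially growing $\varphi$; controlling this via a synchronous coupling (using (A\ref{hyp:U_time_cont}) and the contraction from Lemma \ref{lem:com_relation}) makes the sum over $k$ converge geometrically and produces a bound that is Lipschitz in $|s-\tau|$ multiplied by $M/K$ and the relevant polynomial moments. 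This estimate, presumably the content of the referenced Lemma \ref{lem:poisson_continuity_etc}, is what explains the appearance of the factor $\tilde\alpha_p\,(M/K)\sup_\tau\sqrt{\bar V(x_\tau^\star)}$ in the constant $C_{fg}$ and in the bound on $\Upsilon_{6,\epsilon}$.
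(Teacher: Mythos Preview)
Your outline captures the correct mechanism for $\Upsilon_{6,\epsilon}$ (both cases) and the Riemann--H\"older structure of $\Upsilon_{4,\epsilon}$, but there are two points where you diverge from the paper, one of which is a genuine gap.

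First, a difference in bookkeeping: in the paper, $\Upsilon_{4,\epsilon}^{(2)}$ does \emph{not} arise from the endpoint intervals. Instead the paper truncates the Poisson solution, writing $g_s = g_{s,r} + \Delta_{s,r}$ with $r = r(\ell,\epsilon)$ chosen so that the tail bound of Lemma~\ref{lem:poisson_continuity_etc}\,(\ref{enu:poisson_continuity_etc:boundgsrminusgsinfty}) gives $\sup_s\|\Delta_{s,r}\|_{p+1/2} \leq C\epsilon$. The $O(\epsilon)$ term $\Upsilon_{4,\epsilon}^{(2)}$ is this truncation error, while $\Upsilon_{4,\epsilon}^{(1)}$ is the Riemann error for $s\mapsto\pi_s(f_s g_{s,r})$ via Lemma~\ref{lem:rieman_approx}. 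The truncation is not cosmetic: Lemma~\ref{lem:rieman_approx} requires the integrand to lie in $C_{0,2}^{p}$ (its proof feeds $\|\nabla f\|_p$ into Lemma~\ref{lem:poisson_continuity_etc}), and the paper only establishes $g_{s,r}\in C_2^p$ for finite $r$. Your route --- working directly with $g_{s,\infty}$ and attributing the $O(\epsilon)$ piece to the two missing boundary terms $k=0,\,n-1$ --- could be made to work, but it would require you first to verify $g_{s,\infty}\in C_1^{p+1/2}$ (easy from the commutation relation, but not stated), and then to note that the boundary terms are $O(h)\leq O(h^{\zeta})$ and hence more naturally absorbed into $\Upsilon_{4,\epsilon}^{(1)}$ than identified with $\Upsilon_{4,\epsilon}^{(2)}$. (Also, the sup-norm bound on $g_s$ is Lemma~\ref{lem:poisson_continuity_etc}\,(\ref{enu:poisson_continuity_etc:fboundgsr}), not Lemma~\ref{lem:Poisson-and-martingale}, which concerns $\gamma_{k,\epsilon}$.)

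Second, and more importantly, your last paragraph misdescribes the H\"older continuity of $s\mapsto g_s$. The per-term estimate $|(Q_{k\ell}^s - Q_{k\ell}^{\tau})\varphi(x)|\leq C|s-\tau|\,\bar V^{(p+1/2)}(x)$ (the $R_3$ term in the proof of Lemma~\ref{lem:poisson_continuity_etc}) is Lipschitz in $|s-\tau|$ but does \emph{not} decay in $k$: a synchronous coupling of $Y^{s}$ and $Y^{\tau}$ settles at distance of order $M|s-\tau|/K$, not zero, because the two processes have different invariant measures. Hence the sum over $k$ of the Lipschitz pieces diverges, and you cannot obtain a uniform Lipschitz bound on $g_s - g_\tau$. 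The paper's argument (Lemma~\ref{lem:poisson_continuity_etc}\,(\ref{enu:poisson_continuity_etc:gsrlipschitzcty})) splits the sum at a level $N$: for $k<N$ it uses the Lipschitz bound $|R_2|+|R_3|\leq C|s-\tau|^{\beta}$, accumulating a factor $N$; for $k\geq N$ it abandons the comparison and uses $|Q_{k\ell}^s \bar f_s|+|Q_{k\ell}^{\tau}\bar f_{\tau}|\leq Ce^{-Kk\ell}$ separately. Choosing $N\sim -\log|s-\tau|^{\beta}/(K\ell)$ balances the two pieces and yields $|g_{s,r}-g_{\tau,r}|\leq C_{\zeta}|s-\tau|^{\zeta}$ for any $\zeta<\beta$, uniformly in $r$. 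This loss of exponent is precisely why the lemma is stated for $\zeta\in(0,1)$; without the split-and-optimize step your argument does not close.
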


\begin{lem}
\label{lem:S0andS7}There exists $C>0$ such that for any $\gimel>1$
and $\epsilon,h>0$ and $K>0$ satisfying $\gimel^{-1}<1-Kh\epsilon^{-1}/2$
$\epsilon>0$ we have
\begin{align*}
\big|\Upsilon_{0,\epsilon}\big| & \leq C\alpha_{p}\alpha_{p+1/2}\frac{\gimel\|\nabla f\|_{p}^{2}}{K}.\left\{ \mu_{0}\bar{V}^{(p+1/2)}(x)\right\} ^{2}\sup_{s\in[0,1]}\pi_{s}\bar{V}^{(2[p\vee p_{0}]+1/2)}\\
 & \hspace{5cm}\times\Big\{ K^{-2}\|\nabla\phi\|_{p_{0}}+\alpha_{p}\mu_{0}\bar{V}^{(p+1/2)}\frac{\exp\big(-K\epsilon^{-1}h\big)}{1-\exp\big(-K\epsilon^{-1}h\big)}\epsilon^{-1}h\Big\}\epsilon\\
\big|\Upsilon_{7,\epsilon}\big| & \leq C\Big\{\epsilon^{-2}h^{2}+\Big(\frac{\gimel}{K}\Big)^{2}\Big\}\|\nabla f\|^{2}\alpha_{2p}\big[K^{-1}+K_{\mu_{0}}\big]\mu_{0}\bar{V}^{(2p)}\epsilon.
\end{align*}
\end{lem}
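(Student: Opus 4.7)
The plan is to prove the two estimates independently. Both rely on the same three ingredients: the pointwise $\gamma_{k,\epsilon}$--estimate of Lemma \ref{lem:Poisson-and-martingale}, the drift bound of Lemma \ref{lem:drift} (used to convert moments under $\mu_{kh}^{\epsilon}$ into moments under $\mu_{0}$), and the deterministic inequality $(1-e^{-Kh/\epsilon})^{-1}\leq\gimel\epsilon/(Kh)$, which follows from the standing assumption $\gimel^{-1}\leq 1-Kh/(2\epsilon)$ combined with $1-e^{-y}\geq y(1-y/2)$ for $y\in[0,2]$.

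For $\Upsilon_{0,\epsilon}$, the first step is to observe that $\pi_{kh}f_{kh}=0$ gives $\pi_{kh}(\bar f_{kh,\epsilon})=-\mu_{kh}^{\epsilon}f_{kh}$, while $X_{kh}^{\epsilon}\sim\mu_{kh}^{\epsilon}$ gives $\mathbb{E}[\bar f_{kh,\epsilon}(X_{kh}^{\epsilon})]=0$, so
\[
\Upsilon_{0,\epsilon}=2\epsilon^{-1}h^{2}\sum_{k=1}^{n-2}(\mu_{kh}^{\epsilon}f_{kh})\,\mathbb{E}[\gamma_{k,\epsilon}(X_{kh}^{\epsilon})].
\]
I would bound the bias factor $|\mu_{kh}^{\epsilon}f_{kh}|=|\mu_{0}P_{0,kh}f_{kh}-\pi_{kh}f_{kh}|$ by inserting $\pi_{0}P_{0,kh}f_{kh}$: Lemma \ref{lem:bias_nu_bar_nu} controls the $\mu_{0}\to\pi_{0}$ contribution by $\alpha_{p}\|\nabla f\|_{p}e^{-Kkh/\epsilon}W^{(p)}(\mu_{0},\pi_{0})$, and Lemma \ref{lem:bias_pi_0_pi_t} controls the $\pi_{0}\to\pi_{kh}$ contribution by $\sup_{s}\mathrm{var}_{\pi_{s}}(\phi_{s})^{1/2}\sup_{s}\mathrm{var}_{\pi_{s}}(f_{s})^{1/2}\,\epsilon/K$, each of which I would in turn convert to $K^{-1/2}\|\nabla\phi\|_{p_{0}}$- and $K^{-1/2}\|\nabla f\|_{p}$-shaped factors via Remark \ref{rem:Poincare_pi} together with the pointwise bound $\|\nabla\phi_{s}\|\leq\|\nabla\phi\|_{p_{0}}\bar V^{(p_{0})}$, and similarly for $f$. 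The factor $|\mathbb{E}\gamma_{k,\epsilon}(X_{kh}^{\epsilon})|$ is bounded uniformly in $k$ using Lemma \ref{lem:Poisson-and-martingale} and Lemma \ref{lem:drift} applied to $\mathbb{E}W^{(p)}(\delta_{X_{kh}^{\epsilon}},\mu_{0})$. Combining, the prefactor $\epsilon^{-1}h\cdot(1-e^{-Kh/\epsilon})^{-1}$ is controlled by $\gimel/K$, and the sum $h\sum_{k=1}^{n-2}|\mu_{kh}^{\epsilon}f_{kh}|$ splits into a geometric piece $h\sum_{k\geq 1}e^{-Kkh/\epsilon}\leq he^{-Kh/\epsilon}/(1-e^{-Kh/\epsilon})$ (producing the second bracketed term) and a uniform piece whose total mass is $O(nh\cdot\epsilon/K)=O(\epsilon/K)$ (producing the $K^{-2}\|\nabla\phi\|_{p_{0}}$ term).

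For $\Upsilon_{7,\epsilon}$ I would treat the two summands separately. Since $X_{(n-1)h}^{\epsilon}\sim\mu_{(n-1)h}^{\epsilon}$, the first summand equals $\epsilon^{-1}h^{2}\mathrm{var}_{\mu_{(n-1)h}^{\epsilon}}(f_{(n-1)h})$; applying Theorem \ref{thm:intro_var_and_bias_bounds}(1) gives a Poincar\'e constant bounded by $K^{-1}+K_{\mu_{0}}^{-1}$, and Lemma \ref{lem:drift} yields $\mu_{(n-1)h}^{\epsilon}\|\nabla f_{(n-1)h}\|^{2}\leq\|\nabla f\|^{2}\alpha_{2p}\mu_{0}\bar V^{(2p)}$; writing $\epsilon^{-1}h^{2}=(\epsilon^{-2}h^{2})\epsilon$ then gives the first summand of the stated bound. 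For the second summand, Lemma \ref{lem:Poisson-and-martingale} gives $|P_{0,h}\gamma_{1,\epsilon}(x)|\leq\alpha_{p}\|\nabla f\|_{p}(1-e^{-Kh/\epsilon})^{-1}W^{(p)}(\delta_{x},\mu_{0})$; squaring, using $(1-e^{-Kh/\epsilon})^{-1}\leq\gimel\epsilon/(Kh)$, taking expectation under $X_{0}^{\epsilon}\sim\mu_{0}$ and bounding $\mathbb{E}W^{(p)}(\delta_{X_{0}^{\epsilon}},\mu_{0})^{2}$ by a constant multiple of $\mu_{0}\bar V^{(2p)}$, the overall prefactor $\epsilon^{-1}h^{2}\cdot\gimel^{2}\epsilon^{2}/(Kh)^{2}=(\gimel/K)^{2}\epsilon$ delivers the second summand.

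The delicate point will be correctly converting the product $\sup_{s}\mathrm{var}_{\pi_{s}}(\phi_{s})^{1/2}\sup_{s}\mathrm{var}_{\pi_{s}}(f_{s})^{1/2}$ handed down by Lemma \ref{lem:bias_pi_0_pi_t} into the stated $K^{-2}\|\nabla\phi\|_{p_{0}}$-shape: each Poincar\'e application contributes a factor $K^{-1/2}$ which combines with the $\epsilon/K$ already present to produce the $\epsilon K^{-2}$ scaling, while the polynomial moments of $\|\nabla\phi_{s}\|$ and $\|\nabla f_{s}\|$ are absorbed into $\sup_{s}\pi_{s}\bar V^{(2(p\vee p_{0})+1/2)}$. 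The second subtlety is the simultaneous presence of two geometric weights, namely the $(1-e^{-Kh/\epsilon})^{-1}$ inherited from $\gamma_{k,\epsilon}$ and the summed exponential $e^{-Kkh/\epsilon}$ coming from the $\mu_{0}$-bias, which only combine into a finite $\epsilon$-scaling via the quantitative inequality $(1-e^{-Kh/\epsilon})^{-1}\leq\gimel\epsilon/(Kh)$; this is precisely why the stated bound carries the $\gimel$ factor and why the constraint $\gimel^{-1}\leq 1-Kh/(2\epsilon)$ is imposed.
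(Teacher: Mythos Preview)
Your treatment of $\Upsilon_{0,\epsilon}$ and of the first summand in $\Upsilon_{7,\epsilon}$ is essentially the paper's argument: the paper packages your bias decomposition (Lemmas~\ref{lem:bias_nu_bar_nu} and~\ref{lem:bias_pi_0_pi_t} plus the Poincar\'e inequality) into the ready-made bound of Lemma~\ref{lem:Vboundonvariances}, but the content is identical, including the simplification $\pi_{kh}\bar f_{kh,\epsilon}=-\mu_{kh}^{\epsilon}f_{kh}$, the uniform-in-$k$ control of $\mathbb{E}[\gamma_{k,\epsilon}(X_{kh}^{\epsilon})]$, and the geometric summation that converts the exponential bias term into the $\tfrac{e^{-Kh/\epsilon}}{1-e^{-Kh/\epsilon}}\epsilon^{-1}h$ shape.

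For the second summand $\epsilon^{-1}h^{2}\,\mathbb{E}\bigl[(P_{0,h}\gamma_{1,\epsilon}(X_{0}^{\epsilon}))^{2}\bigr]$ you take a different route. The paper writes $P_{0,h}\gamma_{1,\epsilon}=\sum_{i=1}^{n-1}P_{0,ih}\bar f_{ih,\epsilon}$, applies Minkowski in $L_{2}(\mu_{0})$, and uses the $L_{2}$ contraction of Lemma~\ref{lem:L_2_convergence} to bound each $\mathrm{var}_{\mu_{0}}(P_{0,ih}f_{ih})^{1/2}$ by $e^{-Kih/\epsilon}\sup_{s}\mathrm{var}_{\mu_{s}^{\epsilon}}(f_{s})^{1/2}$; summing geometrically and then invoking the Poincar\'e estimate of Lemma~\ref{lem:Vboundonvariances} yields precisely the stated constants, in particular the factor $\alpha_{2p}[K^{-1}+K_{\mu_{0}}]\mu_{0}\bar V^{(2p)}$. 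Your approach instead squares the pointwise $W^{(p)}$-bound from Lemma~\ref{lem:Poisson-and-martingale}. This is correct and delivers the same $(\gimel/K)^{2}\epsilon$ scaling, but it produces constants of the shape $\alpha_{p}^{2}\bigl(\mu_{0}\bar V^{(p+1/2)}\bigr)^{2}\mu_{0}\bar V^{(2p+1)}$ rather than the Poincar\'e-based factor in the statement (note $W^{(p)}(\delta_{x},\mu_{0})^{2}\lesssim \bar V^{(2p+1)}(x)$, not $\bar V^{(2p)}(x)$). For the downstream application in Theorem~\ref{thm:CLT-variance-convergence} either form suffices, since only polynomial dependence on $d$ and a positive power of $\epsilon$ are needed; but your argument does not reproduce the lemma exactly as stated.
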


\appendix

\section{Proofs for section \ref{sec:Introduction}\label{sec:Proofs-for-intro}}
\begin{proof}
[Proof of Theorem \ref{thm:intro_var_and_bias_bounds}] Write $P_{0,t}f(x)=\mathbb{E}[f(X_{t}^{\epsilon})|X_{0}=x]$
so that $\mu_{t}^{\epsilon}=\mu_{0}P_{0,t}$. For part 1) note that
by Lemma \ref{lem:poincare_transfer} applied with $\nu=\mu_{0}$,
\begin{align*}
\mathrm{var}_{\mu_{t}^{\epsilon}}[f] & \leq\left[(1-e^{-2Kt/\epsilon})\frac{1}{K}+e^{-2Kt/\epsilon}\frac{1}{K_{0}}\right]\mu_{t}^{\epsilon}(\|\nabla f\|^{2})\\
 & \leq\frac{1}{K_{0}\wedge K}\mu_{t}^{\epsilon}(\|\nabla f\|^{2}),\qquad\forall f\in C_{2}^{p}(\mathbb{R}^{d}),
\end{align*}
and then by Cauchy-Schwartz and Lemma \ref{lem:L_2_convergence} applied
with $\kappa_{\nu}(u)=K_{0}\wedge K$, 
\begin{align*}
|\mathbb{E}[(f_{s}(X_{s}^{\epsilon})-\mu_{s}^{\epsilon}f_{s})(f_{t}(X_{t}^{\epsilon})-\mu_{t}^{\epsilon}f_{t})]| & \leq\mathrm{var}_{\mu_{s}^{\epsilon}}[f_{s}]^{1/2}\mathrm{var}_{\mu_{s}^{\epsilon}}[P_{s,t}f_{t}]^{1/2}\\
 & \leq\mathrm{var}_{\mu_{s}^{\epsilon}}[f_{s}]^{1/2}\mathrm{var}_{\mu_{t}^{\epsilon}}[f_{t}]^{1/2}e^{-(K_{0}\wedge K)(t-s)/\epsilon}.
\end{align*}
Therefore, for part 2),
\begin{align*}
\mathrm{var}[S_{\epsilon}] & =\mathbb{E}\left[\left(\int_{0}^{1}f_{t}(X_{t}^{\epsilon})-\mu_{t}^{\epsilon}f_{t}\mathrm{d}t\right)^{2}\right]\\
 & =2\mathbb{E}\left[\int_{0}^{1}\int_{s}^{1}(f_{s}(X_{s}^{\epsilon})-\mu_{s}^{\epsilon}f_{s})(f_{t}(X_{t}^{\epsilon})-\mu_{t}^{\epsilon}f_{t})\mathrm{d}t\mathrm{d}s\right]\\
 & \leq2\sup_{t}\mathrm{var}_{\mu_{t}^{\epsilon}}[f_{t}]\int_{0}^{1}\int_{s}^{1}e^{-(K_{0}\wedge K)(t-s)/\epsilon}\mathrm{d}t\mathrm{d}s\\
 & \leq2\frac{\epsilon}{K_{0}\wedge K}\sup_{t}\mathrm{var}_{\mu_{t}^{\epsilon}}[f_{t}].
\end{align*}
Similarly,
\begin{align*}
\mathrm{var}[S_{\epsilon,h}] & =\mathbb{E}\left[\left(h\sum_{k=0}^{\left\lfloor 1/h\right\rfloor -1}f_{kh}(X_{kh}^{\epsilon})-\mu_{kh}^{\epsilon}f_{kh}\right)^{2}\right]\\
 & \leq h^{2}\sum_{k=0}^{\left\lfloor 1/h\right\rfloor -1}\mathrm{var}_{\mu_{kh}^{\epsilon}}[f_{kh}]+2h^{2}\sum_{k=0}^{\left\lfloor 1/h\right\rfloor -1}\sum_{j>k}\mathrm{var}_{\mu_{kh}^{\epsilon}}[f_{kh}]^{1/2}\mathrm{var}_{\mu_{kh}^{\epsilon}}[P_{kh,jh}f_{jh}]^{1/2}\\
 & \leq\left(h+2h^{2}\sum_{k=0}^{\left\lfloor 1/h\right\rfloor -1}\sum_{j>k}e^{-(K_{0}\wedge K)(j-k)h/\epsilon}\right)\sup_{t}\mathrm{var}_{\mu_{t}^{\epsilon}}[f_{t}]\\
 & \leq h\left(1+\frac{2}{1-e^{-(K_{0}\wedge K)h/\epsilon}}\right)\sup_{t}\mathrm{var}_{\mu_{t}^{\epsilon}}[f_{t}].
\end{align*}
For the bias bounds, we have by Lemmas \ref{lem:bias_pi_0_pi_t} and
\ref{lem:bias_nu_bar_nu},
\begin{align*}
|\mathbb{E}[f_{t}(X_{t}^{\epsilon})]| & =|\mu_{0}P_{0,t}f_{t}|\\
 & \leq|\pi_{0}P_{0,t}f_{t}-\pi_{t}f_{t}|+|(\mu_{0}-\pi_{0})P_{0,t}f_{t}|\\
 & \leq\sup_{s\in[0,t]}\mathrm{var}_{\pi_{s}}[\phi_{s}]^{1/2}\mathrm{var}_{\pi_{t}}[f_{t}]^{1/2}\frac{\epsilon}{K}(1-e^{-Kt/\epsilon})\\
 & +\alpha_{p}\|\nabla f_{t}\|_{p}W^{(p)}(\mu_{0},\pi_{0})e^{-Kt/\epsilon}.
\end{align*}
Therefore
\begin{align*}
|\mathbb{E}[S_{\epsilon}]| & \leq\sup_{t}\mathrm{var}_{\pi_{t}}[\phi_{t}]^{1/2}\sup_{t}\mathrm{var}_{\pi_{t}}[f_{t}]^{1/2}\frac{\epsilon}{K}+\alpha_{p}W^{(p)}(\mu_{0},\pi_{0})\int_{0}^{t}e^{-Kt/\epsilon}\mathrm{d}t\sup_{t}\|\nabla f_{t}\|_{p}\\
 & =\sup_{t}\mathrm{var}_{\pi_{t}}[\phi_{t}]^{1/2}\mathrm{var}_{\pi_{t}}[f_{t}]^{1/2}\frac{\epsilon}{K}+\alpha_{p}W^{(p)}(\mu_{0},\pi_{0})\frac{\epsilon}{K}\sup_{t}\|\nabla f_{t}\|_{p},
\end{align*}
 and 
\begin{align*}
|\mathbb{E}[S_{\epsilon,h}]| & \leq h\sum_{k=0}^{\left\lfloor 1/h\right\rfloor -1}|\mathbb{E}[f_{kh}(X_{kh}^{\epsilon})]|\\
 & \leq\sup_{t}\mathrm{var}_{\pi_{t}}[\phi_{t}]^{1/2}\sup_{t}\mathrm{var}_{\pi_{t}}[f_{t}]^{1/2}\frac{\epsilon}{K}h\sum_{k=0}^{\left\lfloor 1/h\right\rfloor -1}\left[1-e^{-khK/\epsilon}\right]\\
 & +\alpha_{p}\sup_{t}\|\nabla f_{t}\|_{p}hW^{(p)}(\mu_{0},\pi_{0})\sum_{k=0}^{\left\lfloor 1/h\right\rfloor -1}e^{-Kkh/\epsilon}\\
 & \leq\sup_{t}\mathrm{var}_{\pi_{t}}[\phi_{t}]^{1/2}\sup_{t}\mathrm{var}_{\pi_{t}}[f_{t}]^{1/2}\frac{\epsilon}{K}+\sup_{t}\|\nabla f_{t}\|_{p}\frac{\alpha_{p}h}{1-e^{-hK/\epsilon}}W^{(p)}(\mu_{0},\pi_{0}).
\end{align*}
\end{proof}
\begin{proof}
[Proof of Corollary \ref{cor:dimension-dependence-MSE-1}] Let us
first obtain upper bounds on:
\[
\sup_{t}\mathrm{var}_{\mu_{t}^{\epsilon}}[f_{t}],\quad\sup_{t}\mathrm{var}_{\pi_{t}}[\phi_{t}]\quad\sup_{t}\mathrm{var}_{\pi_{t}}[f_{t}],
\]

By part 1) of Theorem \ref{thm:intro_var_and_bias_bounds}, Lemma
\ref{lem:drift}, Lemma \ref{lem:alpha_dimension_dependence} and
(A\ref{hyp:dimension_dependence_mse-1}),
\begin{align*}
\sup_{t}\mathrm{var}_{\mu_{t}^{\epsilon}}[f_{t}] & \leq\sup_{t}\frac{\mu_{t}^{\epsilon}(\|\nabla f_{t}\|^{2})}{K\wedge K_{0}}\\
 & \leq\frac{3}{K\wedge K_{0}}\sup_{t}\mu_{t}^{\epsilon}(\bar{V}^{(2p)})\sup_{t\in[0,1]}\|\nabla f_{t}\|_{p}^{2}\\
 & \leq\frac{3\alpha_{2p}}{K\wedge K_{0}}[1+\mu_{0}(V^{2p})]\sup_{t}\|\nabla f_{t}\|_{p}^{2}\\
 & =O(d^{q}d^{2p(q+1)}d^{q+1}d^{2q})\\
 & =O(d^{4q+2p(q+1)+1}).
\end{align*}
By Remark \ref{rem:Poincare_pi}, (A\ref{hyp:U_time_reg}), Lemma
\ref{lem:pi_V_dim_dependence} with there $p=1$, and (A\ref{hyp:dimension_dependence_mse-1}),
\[
\sup_{t}\mathrm{var}_{\pi_{t}}[\phi_{t}]\leq\frac{1}{K}\sup_{t}\pi_{t}(\|\nabla U_{t}\|^{2})\leq\frac{3L^{2}}{K}\sup_{t}\pi_{t}(\bar{V})=O(d^{3q/2}d^{q+1})=O(d^{5q/2+1}).
\]
Lastly, $\sup_{t}\mathrm{var}_{\pi_{t}}[f_{t}]$ can be similarly
controlled using Remark \ref{rem:Poincare_pi}, (A\ref{hyp:dimension_dependence_mse-1})
and Lemma \ref{lem:pi_V_dim_dependence}, to give
\[
\sup_{t\in[0,1]}\mathrm{var}_{\pi_{t}}[f_{t}]\leq\frac{1}{K}\sup_{t\in[0,1]}\pi_{t}(\bar{V}^{2p})\sup_{t\in[0,1]}\|\nabla f_{t}\|_{p}^{2}=O(d^{q}d^{p}d^{2pq+2p})=O(d^{q+p(3+2q)}).
\]

Using the above estimates, we have from the expressions in Theorem
\ref{thm:intro_var_and_bias_bounds} and Lemma \ref{lem:alpha_dimension_dependence},
\begin{align*}
\mathrm{var}[S_{\epsilon}] & =O\left(\frac{\epsilon}{K_{0}\wedge K}d^{4q+2p(q+1)+1}\right),\\
|\mathbb{E}[S_{\epsilon}]| & =O\left(\frac{\epsilon}{K}d^{5q/4+1/2}d^{q/2+p(3+2q)/2}+d^{p(q+1)}d^{q}\frac{\epsilon}{K}d^{q}\right)\\
 & =O\left(\frac{\epsilon}{K}d^{7q/4+3pq+3p/2+1/2}+\frac{\epsilon}{K}d^{2q+pq+p}\right).
\end{align*}
Similarly,
\begin{align*}
\mathrm{var}[S_{\epsilon,h}] & =O\left(h\left(1+\frac{2}{1-e^{-(K_{0}\wedge K)h/\epsilon}}\right)d^{4q+2p(q+1)+1}\right)\\
|\mathbb{E}[S_{\epsilon,h}]| & =O\left(\frac{\epsilon}{K}d^{7q/4+3pq+3p/2+1/2}+\frac{h}{1-e^{-Kh/\epsilon}}d^{2q+pq+p}\right).
\end{align*}
\end{proof}
\begin{proof}
[Proof of Lemma \ref{lem:gamma_coupling}]The first inequality is
an immediate consequence of the definition of the total variation
distance. For the second inequality, since $E$ is Polish there exists
a maximal coupling of $X,\widetilde{X}$, \cite[Ch. I, Sec. 5, p. 18]{lindvall2002lectures},
that is a probability space $(\bar{\Omega},\bar{\mathcal{F}},\mathbf{P})$
on which are defined two $(E,\mathcal{B}(E))$-valued random elements
$Z,\widetilde{Z}$ such that 
\[
\mathbf{P}[Z\in A]=\mu(A),\quad\mathbf{P}[\widetilde{Z}\in A]=\widetilde{\mu}(A),\quad A\in\mathcal{B}(E),
\]
\[
\mathbf{P}[Z\neq\widetilde{Z}]=\|\mu-\widetilde{\mu}\|_{\mathrm{tv}}.
\]
With expectation w.r.t. $\mathbf{P}$ denoted by $\mathbf{E}$, we
then have, using Holder's inequality,
\begin{eqnarray*}
\mathbb{\mathbb{E}}[|\varphi(\widetilde{X})|^{p}]^{1/p} & = & \mathbf{E}[|\varphi(\widetilde{Z})|^{p}]^{1/p}\\
 & \leq & \mathbf{E}[|\varphi(Z)|^{p}]^{1/p}+\mathbf{E}[|\varphi(\widetilde{Z})-\varphi(Z)|^{p}]^{1/p}\\
 & = & \mathbb{E}[|\varphi(X)|^{p}]^{1/p}+\mathbf{E}[\mathbb{I}\{Z\neq\widetilde{Z}\}|\varphi(\widetilde{Z})-\varphi(Z)|^{p}]^{1/p}\\
 & \leq & \mathbb{E}[|\varphi(X)|^{p}]^{1/p}+\mathbf{P}[Z\neq\widetilde{Z}]^{1/pq}\mathbf{E}[|\varphi(\widetilde{Z})-\varphi(Z)|^{pr}]^{1/pr}\\
 & \leq & \mathbb{E}[|\varphi(X)|^{p}]^{1/p}+\|\mu-\widetilde{\mu}\|_{\mathrm{tv}}^{1/pq}\left\{ \mathbb{E}[|\varphi(X)|^{pr}]^{1/pr}+\mathbb{E}[|\varphi(\widetilde{X})|^{pr}]^{1/pr}\right\} .
\end{eqnarray*}
\end{proof}
\begin{lem}
\label{lem:logit_dim_hyp_check}If (A\ref{hyp:logit_dim_depend})
holds for some given $q$, then $f_{t}$ taken to be
\begin{equation}
f_{t}(x)=-\partial_{t}U_{t}(x)+\pi_{t}(\partial_{t}U_{t}),\label{eq:logit_f}
\end{equation}
and $K,L,M$ as in (\ref{eq:logit_KLM}) satisfy
\[
\sup_{t\in[0,1]}\|\nabla f_{t}\|_{1}\vee K^{-1}\vee L^{4}\vee M^{2}\vee\sup_{t}\|x_{t}^{\star}\|^{2}\vee\sup_{t}\|\partial_{t}x_{t}^{\star}\|^{2}=O(d^{q}),
\]
and $\pi_{0}$ as in (\ref{eq:Zs_and_pis_front}) with $U_{0}$ as
in (\ref{eq:logit_U}) satisfies
\[
\pi_{0}(V)=O(d^{q+1}),
\]
as $d\to\infty$.
\end{lem}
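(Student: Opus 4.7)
The plan is to verify each of the seven required bounds separately, using only the explicit formulae (\ref{eq:logit_U})\textendash (\ref{eq:logit_KLM}) for the logistic regression model together with the single scaling hypothesis (A\ref{hyp:logit_dim_depend}). Four of the seven bounds are essentially immediate: $K^{-1} = \tilde\sigma^2$, and the definitions of $L$ and $M$ in (\ref{eq:logit_KLM}) show that $K^{-1}$, $L^4$, $M^2$ are all polynomial in the three quantities controlled by (A\ref{hyp:logit_dim_depend}), hence each is $O(d^q)$ after taking the appropriate power of $d^{q/4}$.

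For $\|\nabla f_t\|_1$, I would use (\ref{eq:d_t_U_t_logit}) and (\ref{eq:logit_f}) to compute $\nabla f_t(x) = y^T C - \sum_{i=1}^m c_i\varrho_i(x)$, which is bounded in norm by $\|y^T C\| + \sum_i \|c_i\| = \xi$ uniformly in $x$ and $t$, since $\varrho_i(x)\in(0,1)$; dividing by $1+\|x\|^2$ and applying (A\ref{hyp:logit_dim_depend}) gives $\sup_t\|\nabla f_t\|_1\leq\xi = O(d^{q/4}) = O(d^q)$.

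The bounds on $x_t^\star$ and $\partial_t x_t^\star$ come from the first-order condition $\nabla U_t(x_t^\star) = 0$, which by (\ref{eq:logit_grad_hess}) reads $x_t^\star/\tilde\sigma^2 = t\bigl(y^T C - \sum_i c_i\varrho_i(x_t^\star)\bigr)$, so $\|x_t^\star\|\leq\tilde\sigma^2\xi$ uniformly in $t\in[0,1]$, giving the required $O(d^q)$ after squaring. For $\partial_t x_t^\star$, I would differentiate the optimality condition implicitly in $t$ to obtain
\[
\Bigl(t\sum_{i=1}^m\varrho_i(x_t^\star)\{1-\varrho_i(x_t^\star)\}c_ic_i^T + \tfrac{1}{\tilde\sigma^2}I_d\Bigr)\partial_t x_t^\star = y^T C - \sum_{i=1}^m c_i\varrho_i(x_t^\star),
\]
and then exploit the fact that the matrix on the left is bounded below in the positive-definite order by $I_d/\tilde\sigma^2$ to conclude $\|\partial_t x_t^\star\|\leq\tilde\sigma^2\xi$ as well. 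Differentiability of $t\mapsto x_t^\star$ is not quite obvious but follows from the implicit function theorem applied to $\nabla U_t(x) = 0$, since $\nabla^{(2)}U_t$ is uniformly positive-definite.

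Finally, $\pi_0$ is the isotropic Gaussian prior $\mathcal{N}(0,\tilde\sigma^2 I_d)$, so $\pi_0(V) = \tilde\sigma^2 d = O(d^{q/4+1}) = O(d^{q+1})$ as required. The only step of substance is the implicit-differentiation argument for $\partial_t x_t^\star$; everything else is direct computation from the closed-form expressions for the logistic gradient and Hessian.
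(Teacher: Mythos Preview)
Your proof is correct and follows essentially the same approach as the paper. The only cosmetic difference is that the paper packages the bounds on $x_t^\star$ and $\partial_t x_t^\star$ by citing the general Lemma~\ref{lem:minimizer} (which gives $\sup_t\|\partial_t x_t^\star\|\vee\sup_t\|x_t^\star\|\le M/K$ via the implicit function theorem, exactly as you do by hand) and obtains $\pi_0(V)=O(d^{q+1})$ from the general Lemma~\ref{lem:pi_V_dim_dependence}, whereas your direct Gaussian computation $\pi_0(V)=d\tilde\sigma^2$ is a cleaner shortcut that the specific form of $U_0$ permits.
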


\begin{proof}
By Lemma \ref{lem:minimizer}, (\ref{eq:logit_KLM}) and (A\ref{hyp:logit_dim_depend}),
\[
\sup_{t}\|\partial_{t}x_{t}^{\star}\|\vee\sup_{t}\|x_{t}^{\star}\|\leq\frac{M}{K}=\xi\tilde{\sigma}^{2}=O(d^{q/2}).
\]
This fact together with $K^{-1}=\tilde{\sigma}^{2}=O(d^{q/4})$ by
(A\ref{hyp:logit_dim_depend}) validates an application of Lemma \ref{lem:pi_V_dim_dependence}
with there $p=1$ to give
\[
\pi_{0}(V)=O(d^{q+1}).
\]
Once more using (A\ref{hyp:logit_dim_depend}),
\[
\sup_{t}\|\nabla f_{t}\|_{1}\leq\|y^{T}C\|+\sum_{i=1}^{m}\|c_{i}\|=\xi=O(d^{q/4}).
\]
The proof is complete since (A\ref{hyp:logit_dim_depend}) directly
implies that $L^{4}=(0.25m\lambda_{\mathrm{max}}+\tilde{\sigma}^{-2})^{4}\vee(\xi\vee\tilde{\sigma}^{-2})^{4}=O(d^{q})$
and $M=\xi=O(d^{q/4})$.
\end{proof}
\begin{proof}
[Proof of Proposition \ref{prop:logit_order}]For part 1), using Lemma
\ref{lem:the_TI_identity} and Lemma \ref{lem:gamma_coupling}, we
have
\[
\mathbb{E}[|\Delta_{\epsilon,h}|]\leq T_{1}(\epsilon,h)+T_{2}(h),
\]
 where
\begin{align}
T_{1}(\epsilon,h) & \coloneqq\mathbb{E}\left[|S_{\epsilon,h}|\right]+\|\mu^{\epsilon}-\widetilde{\mu}^{\epsilon,h}\|_{\mathrm{tv}}^{1/2}\left\{ \mathbb{E}[|S_{\epsilon,h}|^{2}]^{1/2}+\mathbb{E}[|\widetilde{S}_{\epsilon,h}|^{2}]^{1/2}\right\} ,\label{eq:logit_T1_defn}\\
T_{2}(h) & \coloneqq\left|h\sum_{k=0}^{\left\lfloor 1/h\right\rfloor -1}\pi_{kh}(\left.\partial_{t}U_{t}\right|_{t=kh})-\int_{0}^{1}\pi_{t}(\partial_{t}U_{t})\mathrm{d}t\right|,\label{eq:logit_T2_defn}
\end{align}
$S_{\epsilon,h}$ is as in (\ref{eq:MSE_thm_S_defn}) with (\ref{eq:logit_f}),
and $\widetilde{S}_{\epsilon,h}$ is defined by replacing $X_{kh}^{\epsilon}$
in $S_{\epsilon,h}$ with $\widetilde{X}_{kh}^{\epsilon}$. 

We shall estimate $T_{1}(\epsilon,h)$ using Corollary \ref{cor:dimension-dependence-MSE-1}.
To this end, note that Lemma \ref{lem:logit_dim_hyp_check} implies
that (A\ref{hyp:dimension_dependence_mse-1}) is satisfied with there
$p=1$; $\mu_{0}=\pi_{0}$ hence $K_{0}=K$, see Remark \ref{rem:Poincare_pi};
and $f$ as in (\ref{eq:logit_f}). Also by Lemma \ref{lem:logit_dim_hyp_check},
$K^{-1}=O(d^{q})$ and $\sup_{t}\|\partial_{t}x_{t}^{\star}\|=O(d^{q/2})$,
so the hypothesis of the proposition $\epsilon d^{7q+3}=O(1)$ implies
$\epsilon\sup_{t}\|\partial_{t}x_{t}^{\star}\|/K=O(1)$. Therefore
the hypotheses of Corollary \ref{cor:dimension-dependence-MSE-1}
are satisfied, giving:
\begin{align}
\mathbb{E}\left[|S_{\epsilon,h}|\right]^{2} & \leq\mathbb{E}\left[|S_{\epsilon,h}|^{2}\right]=\mathrm{var}[S_{\epsilon,h}]+\mathbb{E}[S_{\epsilon,h}]^{2}\nonumber \\
 & =O\left(h\left[1+\frac{2}{1-e^{-Kh/\epsilon}}\right]r_{1}(d)+\left[\frac{\epsilon}{K}r_{2}(d)+\frac{h}{1-e^{-Kh/\epsilon}}r_{3}(d)\right]^{2}\right),\label{eq:logit_T1_S}
\end{align}
where 
\begin{equation}
r_{1}(d)=d^{6q+3},\quad r_{2}(d)=d^{19q/4+2},\quad r_{3}(d)=d^{3q+1}.\label{eq:logit_rd}
\end{equation}

Now (A\ref{hyp:logit_dim_depend}) implies that $K=\tilde{\sigma}^{-2}=O(d^{q/4})$,
which combined with the hypotheses of the proposition $\epsilon=o(1)$
and $\frac{h}{\epsilon^{2}}d^{3q/2+1}=O(1)$ implies $Kh/\epsilon=o(1)$.
Using this and the facts that by Lemma \ref{lem:logit_dim_hyp_check},
$K^{-1}=O(d^{q})$, and that the hypothesis of the proposition $\epsilon d^{7q+3}=O(1)$
implies $\epsilon d^{9q/2+1}=O(1)$, it follows from (\ref{eq:logit_T1_S})
and (\ref{eq:logit_rd}) that
\begin{align*}
\mathbb{E}\left[|S_{\epsilon,h}|\right]\leq\mathbb{E}[|S_{\epsilon,h}|^{2}]^{1/2} & =O\left(\sqrt{\left[h+\frac{\epsilon}{K}\right]r_{1}(d)+\left[\frac{\epsilon}{K}\{r_{2}(d)\vee r_{3}(d)\}\right]^{2}}\right)\\
 & =O\left(\sqrt{\frac{\epsilon}{K}r_{1}(d)+\left[\frac{\epsilon}{K}r_{2}(d)\right]^{2}}\right)\\
 & =O\left(\sqrt{\epsilon d^{7q+3}+\epsilon^{2}d^{23q/2+4}}\right)\\
 & =O\left(\sqrt{\epsilon d^{7q+3}(1+\epsilon d^{9q/2+1})}\right)\\
 & =O\left(\sqrt{\epsilon d^{7q+3}}\right).
\end{align*}

For the second term in $T_{1}(\epsilon,h)$, first note that by Lemma
\ref{lem:logit_dim_hyp_check}, $L^{2}/K=O(d^{3q/2})$, which combined
with the hypotheses of the proposition $\epsilon=o(1)$ and $\frac{h}{\epsilon^{2}}d^{3q/2+1}=O(1)$
implies $\frac{hL^{2}}{\epsilon K}=o(1)$ and $hd/\epsilon=O(1)$.
These facts combined with Lemma \ref{lem:logit_dim_hyp_check} validate
an application of Proposition \ref{prop:tv_dim_dependence} to give
\begin{equation}
\|\mu^{\epsilon}-\widetilde{\mu}^{\epsilon,h}\|_{\mathrm{tv}}^{1/2}=O\left(\left[\frac{h}{\epsilon^{2}}d^{4q+1}\right]^{1/4}\right).\label{eq:logit_T1_tv}
\end{equation}
Lemma \ref{lem:logit_dim_hyp_check} and (\ref{eq:logit_eps_h_hyp})
also validate an application of Lemma \ref{lem:disc_drift_p=00003D2}
to give 
\begin{equation}
\mathbb{E}[|\widetilde{S}_{\epsilon,h}|^{2}]^{1/2}\leq\sup_{t}\|f_{t}^{2}\|_{1}^{1/2}h\sum_{k=0}^{\left\lfloor 1/h\right\rfloor -1}(1+\mathbb{E}[\|\widetilde{X}_{kh}^{\epsilon,h}\|^{2}])=O\left(\sup_{t\in[0,1]}\|f_{t}^{2}\|_{1}^{1/2}\{\epsilon d^{2q+1}+hd^{q+1}+d^{q}\}\right),\label{eq:logit_T1_S_tilde}
\end{equation}
where
\begin{align}
\|f_{t}^{2}\|_{1}^{1/2} & \leq\sqrt{3\sup_{x}\frac{f_{t}(x)^{2}}{(1+\|x\|)^{2}}}=\sqrt{3}\|f_{t}\|_{1/2}\nonumber \\
 & \leq\sqrt{3}\left\{ \sup_{x}\frac{|\partial_{t}U_{t}(x)|}{1+\|x\|}+\pi_{t}(\bar{V})\|\partial_{t}U_{t}\|_{1}\right\} ,\label{eq:logit_T1_f_norm}
\end{align}
\begin{align}
\|\partial_{t}U_{t}\|_{1} & \leq3\sup_{x}\frac{|\partial_{t}U_{t}(x)|}{1+\|x\|}\nonumber \\
 & \leq3\|y^{T}C\|+3\sum_{i=1}^{d}\sup_{x}\frac{\log(1+e^{\|x\|\|c_{i}\|})}{1+\|x\|}\nonumber \\
 & \leq3\|y^{T}C\|+3\sum_{i=1}^{d}\sup_{x}\frac{\log2+\|x\|\|c_{i}\|}{1+\|x\|}\nonumber \\
 & \leq3\|y^{T}C\|+3d\log2+3\sum_{i=1}^{d}\|c_{i}\|\nonumber \\
 & =O\left(d+\xi\right),\label{eq:logit_||d_t_U_t||}
\end{align}
 and by Lemma \ref{lem:pi_V_dim_dependence},
\begin{equation}
\sup_{t\in[0,1]}\pi_{t}(\bar{V})=O(d^{q+1}).\label{eq:logit_T1_piV}
\end{equation}
Combining (\ref{eq:logit_T1_S})-(\ref{eq:logit_T1_piV}) and using
the hypotheses of the proposition $\epsilon=o(1)$ and $h=o(1)$,
we find 
\begin{align*}
\mathbb{E}[|\widetilde{S}_{\epsilon,h}|^{2}]^{1/2} & =O\left(\{(d+\xi)d^{q+1}\}\{\epsilon d^{2q+1}+hd^{q+1}+d^{q}\}\right)\\
 & =O\left(d^{q}\{d(d+\xi)+\epsilon d^{q+1}+hd+1\}\right)\\
 & =O\left(d^{q+2}+d^{q+1}\xi\right).
\end{align*}
Collecting the above estimates for $\mathbb{E}\left[|S_{\epsilon,h}|\right]$,
$\|\mu^{\epsilon}-\widetilde{\mu}^{\epsilon,h}\|_{\mathrm{tv}}^{1/2}$,
$\mathbb{E}[|S_{\epsilon,h}|^{2}]^{1/2}$, $\mathbb{E}[|\widetilde{S}_{\epsilon,h}|^{2}]^{1/2}$,
returning to (\ref{eq:logit_T1_defn}) and using that $\xi=O(d^{q/4})$
by (A\ref{hyp:logit_dim_depend}) and the hypothesis of the proposition
$\epsilon d^{7q+3}=O(1)$, we have established
\begin{align*}
T_{1}(\epsilon,h) & =O\left(\sqrt{\epsilon d^{7q+3}}+\left[\frac{h}{\epsilon^{2}}d^{4q+1}\right]^{1/4}\left[\sqrt{\epsilon d^{7q+3}}+d^{q+2}+d^{q+1}\xi\right]\right)\\
 & =O\left(\sqrt{\epsilon d^{7q+3}}+\left[\frac{h}{\epsilon^{2}}d^{4q+1}\right]^{1/4}\left[d^{q+2}+d^{5q/4+1}\right]\right)\\
 & =O\left(\sqrt{\epsilon d^{7q+3}}+\left[\frac{h}{\epsilon^{2}}\right]^{1/4}d^{9(q+1)/4}\right).
\end{align*}

To estimate $T_{2}(h)$, an application of Lemma \ref{lem:rieman_approx}
with there $p=1$, $f_{t}=-\partial_{t}U_{t}$, $\beta=1$, $R_{f}=1$,
$C_{f}=M=\xi$ as in (\ref{eq:logit_xi}) and $K=\tilde{\sigma}^{-2}$
as in (\ref{eq:logit_KLM}), followed by Lemma \ref{lem:logit_dim_hyp_check}
and Lemma \ref{lem:rieman_approx}, gives:

\begin{align}
T_{2}(h)=\left|h\sum_{k=0}^{\left\lfloor 1/h\right\rfloor -1}\pi_{kh}(\left.\partial_{t}U_{t}\right|_{t=kh})-\int_{0}^{1}\pi_{t}(\partial_{t}U_{t})\mathrm{d}t\right| & \leq h^{\beta}\tilde{\alpha}_{1}\big(M\vee\sup_{t}\|\nabla\partial_{t}U_{t}\|_{1}\big)\left[1+\tilde{\alpha}_{1}\frac{M}{K}\sup_{t\in[0,1]}\sqrt{\bar{V}(x_{t}^{\star})}\right]\nonumber \\
 & =O\left(hd^{2q+1}\left[1+d^{5q/2+1}\sqrt{1+d^{q}}\right]\right)\nonumber \\
 & =O\left(hd^{5q+2}\right).\label{eq:logit_T2_bound}
\end{align}

For part 2), first regard $\epsilon$ and $h$ as fixed. Noting
\[
\Delta_{\epsilon,h}=\widetilde{S}_{\epsilon,h}-\left[h\sum_{k=0}^{\left\lfloor 1/h\right\rfloor -1}\pi_{kh}(\left.\partial_{t}U_{t}\right|_{t=kh})-\int_{0}^{1}\pi_{t}(\partial_{t}U_{t})\mathrm{d}t\right],
\]
and using the fact, established in the proof of Proposition \ref{prop:decompositionCLT},
that for any two random variables $Z_{1}$ and $Z_{2}$ and any $\delta>0$,

\[
\sup_{w\in\mathbb{R}}\left|\mathbb{P}[Z_{1}+Z_{2}\leq w]-\Phi(w)\right|\leq\sup_{w\in\mathbb{R}}\left|\mathbb{P}[Z_{1}\leq w]-\Phi(w)\right|+\mathbb{P}[|Z_{2}|>\delta]+(2\pi)^{-1/2}\delta,
\]
we have 

\begin{align}
\sup_{w\in\mathbb{R}}\left|\mathbb{P}\left[\epsilon^{-1/2}\Delta_{\epsilon,h}/\sqrt{\sigma_{0}^{2}}\leq w\right]-\Phi(w)\right| & \leq\sup_{w\in\mathbb{R}}\left|\mathbb{P}\left[\epsilon^{-1/2}S_{\epsilon,h}/\sqrt{\sigma_{0}^{2}}\leq w\right]-\Phi(w)\right|\label{eq:logit_clt_decomp1}\\
 & +\sup_{w\in\mathbb{R}}\left|\mathbb{P}\left[\epsilon^{-1/2}S_{\epsilon,h}/\sqrt{\sigma_{0}^{2}}\leq w\right]-\mathbb{P}\left[\epsilon^{1/2}\widetilde{S}_{\epsilon,h}/\sqrt{\sigma_{0}^{2}}\leq w\right]\right|\label{eq:logit_clt_decomp2}\\
 & +\mathbb{I}[\epsilon^{-1/2}|T_{2}(h)|/\sqrt{\sigma_{0}^{2}}>\delta]+(2\pi)^{-1/2}\delta.\label{eq:logit_clt_decomp3}
\end{align}
Now let $\epsilon(d)$ and $h(d)$ be dependent on $d$ as in the
statement of part 2) of the proposition. Note that this places us
in the case $\ell=0$ in (A\ref{hyp:eps_and_h_scaling_intro}). 

To show that the term on the right of the inequality in (\ref{eq:logit_clt_decomp1})
converges to zero as $d\to\infty$, let us check the hypotheses of
Theorem \ref{thm:intro_CLT} in the case $\ell=0$. We have already
established that (A\ref{hyp:dimension_dependence_mse-1}) is satisfied
with there $p=1$, so it remains to check that $\sup_{t}\|\partial_{t}f_{t}\|_{1}$
and $\sup_{t}1/\varsigma_{0}(t)$ grow at most polynomially fast as
$d\to\infty$, where $f_{t}$ is as in (\ref{eq:d_t_U_t_logit}). 

For $\sup_{t}\|\partial_{t}f_{t}\|_{1}$, note that $f_{t}$ as in
(\ref{eq:d_t_U_t_logit}) does not depend on $t$ and it is straightforward
to check that $\partial_{t}f_{t}(x)=-\mathrm{var}_{\pi_{t}}[\partial_{t}U_{t}]$
for all $x$, so $\sup_{t}\|\partial_{t}f_{t}\|_{1}\leq\sup_{t}\pi_{t}[(\partial_{t}U_{t})^{2}]\leq\sup_{t}\pi_{t}(\bar{V}^{2})\|\partial_{t}U_{t}\|_{1}^{2}$,
which grows at most polynormially fast as $d\to\infty$ by Lemma \ref{lem:pi_V_dim_dependence}
and (\ref{eq:logit_||d_t_U_t||}). 

For $\sup_{t}1/\varsigma_{0}(t)$, let us verify the hypotheses of
Lemma \ref{lem:asymp_var_dim_depend} hold, i.e. that $\sup_{s}\|\tilde{\mathcal{L}}_{s}f_{s}\|_{p+1/2}$
and $\sup_{t\in[0,1]}1/\mathrm{var}_{\pi_{t}}[f_{t}]$ grow at most
polynomially fast as $d\to\infty$. For the former, we have $|\mathcal{\tilde{L}}_{s}f_{s}|\leq\|\nabla U_{s}\|\|\nabla f_{s}\|+|\Delta f_{s}|$,
and by (A\ref{hyp:U_time_reg}) and Lemma \ref{lem:logit_dim_hyp_check},
$\|\nabla U_{s}\|_{1/2}\leq L=O(d^{q/4})$; also by Lemma \ref{lem:logit_dim_hyp_check},
$\sup_{s}\|\nabla f_{s}\|_{1}=O(d^{q})$, and $\frac{\partial^{2}f_{t}}{\partial x_{j}^{2}}=-\sum_{i=1}^{m}c_{ij}^{2}\varrho_{i}(x)[1-\varrho_{i}(x)]$,
hence $|\Delta f_{t}|\leq\sum_{i=1}^{m}\|c_{i}\|^{2}\leq\xi=O(d^{q/4})$
by (A\ref{hyp:logit_dim_depend}). Therefore indeed $\sup_{s}\|\tilde{\mathcal{L}}_{s}f_{s}\|_{p+1/2}$
grows at most polynomially fast as $d\to\infty$. By Lemma \ref{lem:variance_lower_bound},
$\mathrm{var}_{\pi_{t}}[f_{t}]\geq L^{-1}\sum_{i=1}^{d}\pi_{t}\left(\partial_{t}U_{t}\frac{\partial U_{t}}{\partial x_{i}}\right)^{2}$,
and 
\begin{align*}
-\pi_{t}\left(\partial_{t}U_{t}\frac{\partial U_{t}}{\partial x_{j}}\right) & =t\int_{\mathbb{R}^{d}}l(y;x)\left(\sum_{i=1}^{m}c_{ij}\left(y_{i}-\varrho_{i}(x)\right)-\frac{x_{j}}{\tilde{\sigma}^{2}}\right)\mathrm{d}x,
\end{align*}
so that under the hypothesis of the proposition that (\ref{eq:logit_clt_hyp})
grows no faster than polynomially, we have by Lemma \ref{lem:asymp_var_dim_depend}
that $\sup_{t}1/\varsigma_{0}(t)$ grows no faster than polynomially.
Hence the term on the right of the inequality in (\ref{eq:logit_clt_decomp1})
indeed converges to zero as $d\to\infty$.

By Lemma \ref{lem:gamma_coupling}, Lemma \ref{lem:logit_dim_hyp_check}
and Proposition \ref{prop:tv_dim_dependence}, the term in (\ref{eq:logit_clt_decomp2})
converges to zero as $d\to\infty$ thanks to the assumed scaling $h=\epsilon^{c}$
for some $c>2$ and $\epsilon=O(d^{-a})$ for $a>0$ large enough. 

By (\ref{eq:logit_T2_bound}), $\epsilon^{-1/2}|T_{2}(h)|=O(\epsilon^{-1/2}hd^{5q+2})$
and we have already established that $\sup_{t}1/\varsigma_{0}(t)$
grows at most polynomially fast with $d$, hence the same is true
of $1/\sqrt{\sigma_{0}^{2}}$. Therefore increasing $a$ in $\epsilon=O(d^{-a})$
if necessary, and then choosing $\delta$ in (\ref{eq:logit_clt_decomp3})
to go to zero suitably slowly as $d\to\infty$, the two terms in (\ref{eq:logit_clt_decomp3})
tend to zero as $d\to\infty$. 

We have shown that all the terms on the right of the inequality in
(\ref{eq:logit_clt_decomp1})-(\ref{eq:logit_clt_decomp3}) converge
to zero as $d\to\infty$, and that completes the proof of the proposition. 

\noindent 
\end{proof}

\section{Proofs and supporting results for section \ref{sec:Ergodicity-and-Poincare}}

\subsection{Proof of Lemma \ref{lem:drift} \label{subsec:Proofs-of-drift_lemmas}}
\begin{proof}
[Proof of Lemma \ref{lem:drift}]We have
\begin{eqnarray*}
\frac{\partial}{\partial x_{i}}\|x-x_{t}^{\star}\|^{2p} & = & \frac{\partial}{\partial x_{i}}\left(\sum_{j=1}^{d}(x_{j}-x_{t,j}^{\star})^{2}\right)^{p}=2p\|x-x_{t}^{\star}\|^{2(p-1)}(x_{i}-x_{t,i}^{\star})\\
\frac{\partial^{2}}{\partial x_{i}^{2}}\|x-x_{t}^{\star}\|^{2p} & = & 4p(p-1)\|x-x_{t}^{\star}\|^{2(p-2)}(x_{i}-x_{t,i}^{\star})^{2}+2p\|x-x_{t}^{\star}\|^{2(p-1)}\\
\partial_{t}\|x-x_{t}^{\star}\|^{2p} & = & p\|x-x_{t}^{\star}\|^{2(p-1)}2\sum_{j=1}^{d}(x_{j}-x_{t,j}^{\star})\left(-\partial_{t}x_{t,j}^{\star}\right)\\
 & = & -2p\|x-x_{t}^{\star}\|^{2(p-1)}\left\langle x-x_{t}^{\star},\partial_{t}x_{t}^{\star}\right\rangle 
\end{eqnarray*}
and via Lemma \ref{lem:strong_convex_equiv}, (A\ref{hyp:U_strong_convex})
implies
\[
\left\langle \nabla U_{t}(x)\,,\,x-x_{t}^{\star}\right\rangle \geq\frac{K}{2}\|x-x_{t}^{\star}\|^{2}.
\]
Therefore
\begin{eqnarray*}
-\left\langle \nabla U_{t}(x)\,,\,\nabla V_{t}^{p}(x)\right\rangle  & = & -2p\|x-x_{t}^{\star}\|^{2(p-1)}\left\langle \nabla U_{t}(x)\,,\,x-x_{t}^{\star}\right\rangle \\
 & \leq & -Kp\|x-x_{t}^{\star}\|^{2p},\\
\Delta V_{t}^{p}(x) & = & 4p(p-1)\|x-x_{t}^{\star}\|^{2(p-2)}\sum_{i=1}^{d}(x_{i}-x_{t,i}^{\star})^{2}+2dp\|x-x_{t}^{\star}\|^{2(p-1)}\\
 & = & 2p\left(2(p-1)+d\right)\|x-x_{t}^{\star}\|^{2(p-1)},\\
|\partial_{t}V_{t}^{p}(x)| & \leq & 2p\|x-x_{t}^{\star}\|^{2p-1}\|\partial_{t}x_{t}^{\star}\|\\
 & \leq & 2p\|x-x_{t}^{\star}\|^{2p-1}c,
\end{eqnarray*}
where in the final inequality, $c\coloneqq\sup_{t\in(0,1)}\|\partial_{t}x_{t}^{\star}\|$
is finite by Lemma \ref{lem:minimizer}. Combining the above we have
\begin{eqnarray*}
 &  & \epsilon\partial_{t}V_{t}^{p}(x)(x)+\epsilon\mathcal{L}_{t}V_{t}^{p}(x)\\
 &  & \leq-Kp\|x-x_{t}^{\star}\|^{2p}+2p\|x-x_{t}^{\star}\|^{2p-1}\left[\epsilon c+\left(2(p-1)+d\right)\|x-x_{t}^{\star}\|^{-1}\right]\\
 &  & =-(Kp-\kappa)\|x-x_{t}^{\star}\|^{2p}-\kappa\|x-x_{t}^{\star}\|^{2p}+2p\|x-x_{t}^{\star}\|^{2p-1}\left[\epsilon c+\left(2(p-1)+d\right)\|x-x_{t}^{\star}\|^{-1}\right]\\
 &  & =-(Kp-\kappa)\|x-x_{t}^{\star}\|^{2p}-\|x-x_{t}^{\star}\|^{2p}\left(\kappa-2p\left[\frac{\epsilon c}{\|x-x_{t}^{\star}\|}+\frac{2(p-1)+d}{\|x-x_{t}^{\star}\|^{2}}\right]\right).
\end{eqnarray*}
Hence
\[
\partial_{t}V_{t}^{p}(x)+\mathcal{L}_{t}V_{t}^{p}(x)\leq-\delta\|x-x_{t}^{\star}\|^{2p}+b\mathbb{I}\{\|x-x_{t}^{\star}\|\leq r\},
\]
where
\begin{eqnarray*}
\delta & \coloneqq & \epsilon^{-1}(Kp-\kappa),\\
r & \coloneqq & \sup\left\{ a>0\,:\,\frac{\epsilon c}{a}+\frac{2(p-1)+d}{a^{2}}\geq\frac{\kappa}{2p}\right\} ,\\
b & \coloneqq & 2pr^{2p-1}\left[c+\frac{2(p-1)+d}{\epsilon r}\right].
\end{eqnarray*}
Solving the quadratic inequality in the expression for $r$ completes
the proof of (\ref{eq:drift_bound_gen}).

In the remainder of the proof of the lemma, we write 
\begin{eqnarray*}
V^{p}(t,x) & \equiv & V_{t}^{p}(x)=\|x-x_{t}^{\star}\|^{2p},\\
\mathcal{L}V^{p}(t,x) & \equiv & \partial_{t}V_{t}^{p}(x)+\mathcal{L}_{t}V_{t}^{p}(x).
\end{eqnarray*}
 Fix $s\in[0,1]$ and $x\in\mathbb{R}^{d}$. Define $T_{m}\coloneqq\inf\{t\geq s:\|X_{s,t}^{x}\|>m\}$,
the dependence of $T_{m}$ on $x$ and $s$ is not shown in the notation.
By non-explosivity of the process, $T_{m}\to\infty$, a.s. 

By Dynkin's formula \cite[Lem. 3.2, p.72]{khasminskii2011stochastic}
and (\ref{eq:drift_bound_gen}), for any $m$ such that $\|x\|\leq m$,
\begin{eqnarray*}
 &  & \mathbb{E}[V^{p}(T_{m}\wedge t,X_{s,T_{m}\wedge t}^{x})]+\delta\mathbb{E}\left[\int_{s}^{T_{m}\wedge t}V^{p}(u,X_{s,u}^{x})\mathrm{d}u\right]\\
 &  & =V^{p}(s,x)+\mathbb{E}\left[\int_{s}^{T_{m}\wedge t}\mathcal{L}V^{p}(u,X_{s,u}^{x})\mathrm{d}u\right]+\delta\mathbb{E}\left[\int_{s}^{T_{m}\wedge t}V^{p}(u,X_{s,u}^{x})\mathrm{d}u\right]\\
 &  & \leq V^{p}(s,x)+b(t-s)<+\infty,
\end{eqnarray*}
hence $\mathbb{E}\left[\int_{s}^{t}V^{p}(u,X_{s,u}^{x})du\right]=\lim_{m}\mathbb{E}\left[\int_{s}^{T_{m}\wedge t}V^{p}(u,X_{s,u}^{x})du\right]<+\infty$,
where the limit exists by monotone convergence. Also, by Tonelli's
theorem $\mathbb{E}\left[\int_{s}^{t}V^{p}(u,X_{s,u}^{x})\mathrm{d}u\right]=\int_{s}^{t}P_{s,u}V_{u}^{p}(x)\mathrm{d}u$.
This completes the proof of (\ref{eq:drift_bound_int}).

Applying Fatou, (\ref{eq:drift_bound_gen}) and (\ref{eq:drift_bound_int})
we have
\begin{eqnarray*}
\mathbb{E}[V^{p}(t,X_{s,t}^{x})] & = & \mathbb{E}[\liminf_{m}V^{p}(T_{m}\wedge t,X_{s,T_{m}\wedge t}^{x})]\leq\liminf_{m}\mathbb{E}[V^{p}(T_{m}\wedge t,X_{s,T_{m}\wedge t}^{x})]\\
 & \leq & \liminf_{m}\left\{ V^{p}(s,x)-\delta\mathbb{E}\left[\int_{s}^{T_{m}\wedge t}V^{p}(u,X_{s,u}^{x})\mathrm{d}u\right]+\mathbb{E}\left[\int_{s}^{T_{m}\wedge t}b\mathbb{I}[\|X_{s,u}^{x}\|\leq r]\mathrm{d}u\right]\right\} \\
 & = & V^{p}(s,x)-\delta\mathbb{E}\left[\int_{s}^{t}V^{p}(u,X_{s,u}^{x})\mathrm{d}u\right]+\mathbb{E}\left[\int_{s}^{t}b\mathbb{I}[\|X_{s,u}^{x}\|\leq r]\mathrm{d}u\right],
\end{eqnarray*}
hence
\[
P_{s,t}V_{t}^{p}(x)\leq V_{s}^{p}(x)-\delta\int_{s}^{t}P_{s,u}V_{u}^{p}(x)\mathrm{d}u+b(t-s).
\]
This inequality is solved  to give (\ref{eq:drift_bound_semi}).

To establish (\ref{eq:drift_bound_uniform}), we have by (\ref{eq:drift_bound_semi}),
\begin{eqnarray*}
1+\mathbb{E}\left[\|X_{s,t}^{x}\|^{2p}\right] & \leq & 1+2^{2p-1}\mathbb{E}\left[V_{t}^{p}(X_{s,t}^{x})\right]+2^{2p-1}\|x_{t}^{\star}\|^{2p}\\
 & \leq & 1+2^{2p-1}V_{s}^{p}(x)+2^{2p-1}\frac{b}{\delta}+2^{2p-1}\|x_{t}^{\star}\|^{2p}\\
 & \leq & 2^{4p-2}\|x\|^{2p}+1+2^{2p-1}\frac{b}{\delta}+2^{2p-1}(1+2^{2p-1})\sup_{u\in[0,1]}\|x_{u}^{\star}\|^{2p}\\
 & \leq & \alpha_{p}(1+\|x\|^{2p}),
\end{eqnarray*}
where $\sup_{u\in[0,1]}\|x_{u}^{\star}\|^{2p}$ is finite since by
Lemma \ref{lem:minimizer} $t\mapsto x_{t}^{\star}$ is continuous
on $[0,1]$, and $\alpha_{p}$ is as in the statement of the Lemma.
The proof is complete. 
\end{proof}

\subsection{Proof and supporting results for Proposition \ref{prop:C_2^p_closed}\label{subsec:Proof-and-supporting_C_closed}}
\begin{lem}
\label{lem:continuity}For any $p\geq1$, and $\nu\in\mathcal{P}^{p}(\mathbb{R}^{d})$,
the following condition holds: 
\begin{equation}
\int_{\mathbb{R}^{d}}\mathbb{E}\left[\sup_{t\in[s,1]}\|X_{s,t}^{x}\|^{2p}\right]\nu(\mathrm{d}x)<+\infty,\label{eq:expected_sup_X}
\end{equation}
 and for any $f\in C_{0,0}^{p}([0,1]\times\mathbb{R}^{d})$, $\int_{\mathbb{R}^{d}}\mathbb{E}[f(t,X_{s,t}^{x})]\nu(\mathrm{d}x)$
is continuous in $s$ and $t$. 
\end{lem}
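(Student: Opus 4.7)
The plan is to establish a Gronwall/BDG-type pathwise moment bound uniform in the starting time $s$, then combine almost-sure joint continuity of $(s,t,x)\mapsto X_{s,t}^x$ (already noted from \cite{kunita1984stochastic}) with uniform integrability to conclude the continuity claim by Vitali plus dominated convergence.

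First I would derive the pathwise estimate. Using (A\ref{hyp:U_time_reg}) in the form $\|\nabla U_u(y)\|\leq L(1+\|y\|)$, the SDE (\ref{eq:sde}) yields
\[
\sup_{u\in[s,t]}\|X_{s,u}^x\|\leq\|x\|+L\epsilon^{-1}+L\epsilon^{-1}\int_s^t\sup_{v\in[s,u]}\|X_{s,v}^x\|\,\mathrm{d}u+\sqrt{2\epsilon^{-1}}\sup_{u\in[s,t]}\|B_u-B_s\|.
\]
Raising to the $2p'$-th power for an arbitrary $p'\geq 1$, using Jensen on the time integral and Doob/BDG on the Brownian supremum, then applying Gronwall's lemma, I expect to obtain
\[
\sup_{s\in[0,1]}\mathbb{E}\Bigl[\sup_{t\in[s,1]}\|X_{s,t}^x\|^{2p'}\Bigr]\leq C_{p'}(1+\|x\|^{2p'}),
\]
with $C_{p'}$ independent of $s$ and $x$ (depending on $\epsilon$, $L$, $p'$). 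Integrating the case $p'=p$ against $\nu\in\mathcal{P}^p(\mathbb{R}^d)$ gives (\ref{eq:expected_sup_X}).

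For the continuity assertion, I would take any convergent sequence $(s_n,t_n)\to(s,t)$ with $s_n\leq t_n$. By the almost-sure joint continuity of $(s',t',x)\mapsto X_{s',t'}^x$ and continuity of $f$, the random variables $f(t_n,X_{s_n,t_n}^x)$ converge a.s.\ to $f(t,X_{s,t}^x)$ for each $x$. Since $f\in C_{0,0}^p$,
\[
\bigl|f(t_n,X_{s_n,t_n}^x)\bigr|^{1+\delta}\leq\|f\|_p^{1+\delta}\,2^{2p(1+\delta)-1}\bigl(1+\|X_{s_n,t_n}^x\|^{2p(1+\delta)}\bigr),
\]
and the previous display with $p'=p(1+\delta)$ and any fixed $\delta>0$ bounds the right-hand side's expectation uniformly in $n$. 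Vitali's convergence theorem then yields $\mathbb{E}[f(t_n,X_{s_n,t_n}^x)]\to\mathbb{E}[f(t,X_{s,t}^x)]$ pointwise in $x$. Finally, the same path bound with $p'=p$ gives
\[
\bigl|\mathbb{E}[f(t_n,X_{s_n,t_n}^x)]\bigr|\leq\|f\|_p\bigl(1+C_p(1+\|x\|^{2p})\bigr),
\]
which is $\nu$-integrable, so a second application of dominated convergence transfers the pointwise convergence to the integral against $\nu$, yielding joint continuity in $(s,t)$.

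The only step that is not bookkeeping is the Gronwall/BDG moment estimate, where some care is needed to keep the constant independent of $s\in[0,1]$; everything else follows from a.s.\ continuity of the stochastic flow and routine uniform-integrability arguments.
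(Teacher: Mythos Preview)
Your proof is correct, but it diverges from the paper's in two places. For the supremum moment bound, the paper starts from the same pathwise inequality but, rather than closing it via Gronwall, it invokes the already-proved drift estimates of Lemma~\ref{lem:drift} (specifically (\ref{eq:drift_bound_int}) and (\ref{eq:drift_bound_uniform})) to control $\mathbb{E}\bigl[(\int_s^1\|\nabla U_u(X_{s,u}^x)\|\,\mathrm{d}u)^{2p}\bigr]$ directly by $L^{2p}2^{2p-1}\alpha_p(1+\|x\|^{2p})$. This is shorter and avoids the localisation/a-priori-finiteness issue that lurks behind your Gronwall step (which you correctly flag as needing care). For the continuity claim, the paper argues more simply: with $s$ fixed, $|f(t,X_{s,t}^x)|\leq c\bigl(1+\sup_{u\in[s,1]}\|X_{s,u}^x\|^{2p}\bigr)$, and since (\ref{eq:expected_sup_X}) makes the right-hand side integrable, ordinary dominated convergence suffices---no Vitali/higher moments needed. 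Your route via uniform integrability and $p'=p(1+\delta)$ is perfectly valid and has the minor bonus of yielding joint continuity in $(s,t)$, whereas the paper establishes separate continuity (which is all that is used downstream).
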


\begin{proof}
By assumption $\sup_{t}|f(t,x)|\leq c(1+\|x\|^{2p})$, so the assumption
$\nu\in\mathcal{P}^{p}(\mathbb{R}^{d})$ combined with equation (\ref{eq:drift_bound_int})
of Lemma \ref{lem:drift} guarantees that $\mathbb{E}[f(t,X_{s,t}^{x})]$
is integrable w.r.t. $\nu$. As noted in section \ref{subsec:Preliminaries-about-proces},
$X_{s,t}^{x}$ is continuous in $t$, a.s., and $f$ is continuous
by assumption, so to establish the continuity in $t$ of $\int_{\mathbb{R}^{d}}\mathbb{E}[f(t,X_{s,t}^{x})]\nu(\mathrm{d}x)$
by an application of dominated convergence, it suffices to show (\ref{eq:expected_sup_X}).
From (\ref{eq:sde}), 
\[
\sup_{t\in[s,1]}\|X_{s,t}^{x}\|\leq\|x\|+\epsilon^{-1}\int_{s}^{1}\|\nabla U_{u}(X_{s,u}^{x})\|\mathrm{d}u+\sqrt{2\epsilon^{-1}}\sup_{t\in[s,1]}\|B_{t}\|.
\]

Using (A\ref{hyp:U_time_reg}), the fact that $s\in[0,1]$, Jensen's
inequality, the convexity of $a\mapsto a^{2p}$, and equation (\ref{eq:drift_bound_int})
of Lemma \ref{lem:drift},
\begin{eqnarray}
\mathbb{E}\left[\left(\int_{s}^{1}\|\nabla U_{u}(X_{s,u}^{x})\|\mathrm{d}u\right)^{2p}\right] & \leq & L^{2p}2^{2p-1}\mathbb{E}\left[\int_{s}^{t}1+\|X_{s,u}^{x}\|^{2p}\mathrm{d}u\right],\nonumber \\
 & \leq & L^{2p}2^{2p-1}\alpha_{p}(1+\|x\|^{2p}).\label{eq:integrated_U_bound}
\end{eqnarray}
The integral of (\ref{eq:integrated_U_bound}) with respect to $\nu$
is finite due to the assumption $\nu\in\mathcal{P}^{p}(\mathbb{R}^{d})$.
The expected value of $\sup_{t\in[s,1]}\left\Vert \int_{s}^{t}\mathrm{d}B_{u}\right\Vert ^{2p}$
is finite by standard results for Brownian motion, e.g. \cite[Prob. 3.29 and Rem. 3.30, Ch. 3, p. 166]{karatzas1991brownian},
and does not depend on $x$. Therefore (\ref{eq:expected_sup_X})
holds as required so $\mathbb{E}[f(t,X_{s,t}^{x})]$ is continous
in $t$. The proof of continuity in $s$ is very similar so the details
are omitted.
\end{proof}
The following notations are in force throughout the remainder of section
\ref{subsec:Proof-and-supporting_C_closed}. For a matrix $A$ and
vector $b$ of appropriate sizes we write $A\circ b$ for the usual
matrix vector product. We introduce the shorthands:
\[
F_{s,t}^{x}[i]\coloneqq-\frac{1}{\epsilon}\frac{\partial U_{t}}{\partial x_{i}}(X_{s,t}^{x}),\quad\quad DF_{s,t}^{x}[i,j]\coloneqq-\frac{1}{\epsilon}\frac{\partial^{2}U_{t}}{\partial x_{i}\partial x_{j}}(X_{s,t}^{x}),\quad D^{2}F_{s,t}^{x}[i,j,k]\coloneqq-\frac{1}{\epsilon}\frac{\partial^{3}U_{t}}{\partial x_{i}\partial x_{j}\partial x_{k}}(X_{s,t}^{x}).
\]
Thus $F_{s,t}^{x}$ is a random vector of length $d$, and $DF_{s,t}^{x}$
is a random $d\times d$ matrix. 
\begin{prop}
\label{prop:Gikhman}Write (\ref{eq:sde}) component-wise as 
\begin{equation}
X_{s,t}^{x}[i]=x[i]+\int_{s}^{t}F_{s,u}^{x}[i]\mathrm{d}u+\sqrt{2\epsilon^{-1}}\int_{s}^{t}\mathrm{d}B_{u}[i],\quad t\in[s,1],\;i\in\{1,\ldots,d\}.\label{eq:gikhman_sde}
\end{equation}
Then for $(i,j,k)\in\{1,\ldots,d\}^{3}$ and $t\in[s,1]$, the solutions
of:
\begin{eqnarray}
\zeta_{s,t}^{x}[i,j] & = & \mathbb{I}[i=j]+\int_{s}^{t}\left\langle DF_{s,u}^{x}[\cdot,i]\,,\,\zeta_{s,u}^{x}[\cdot,j]\right\rangle \mathrm{d}u,\label{eq:deriv_sde_zeta}\\
\eta_{s,t}^{x}[i,j,k] & = & \int_{s}^{t}\left\langle D^{2}F_{s,u}^{x}[\cdot,\cdot,i]\circ\zeta_{s,u}^{x}[\cdot,k]\,,\,\zeta_{s,u}^{x}[\cdot,j]\right\rangle +\left\langle DF_{s,u}^{x}[\cdot,i]\,,\,\eta_{s,u}^{x}[\cdot,j,k]\right\rangle \mathrm{d}u,\label{eq:deriv_sde_eta}
\end{eqnarray}
satisfy 
\begin{equation}
\lim_{n\to\infty}\mathbb{E}\left[\left(\zeta_{s,t}^{x}[i,j]-n\{X_{s,t}^{x}[i]-X_{s,t}^{y(n)}[i]\}\right)^{2}\right]=0,\quad\mathrm{with}\quad y(n)\coloneqq x+n^{-1}e_{j}\label{eq:zeta_ms_conv}
\end{equation}
and
\begin{equation}
\lim_{n\to\infty}\mathbb{E}\left[\left(\eta_{s,t}^{x}[i,j,k]-n\{\zeta_{s,t}^{x}[i,j]-\zeta_{s,t}^{y(n)}[i,j]\}\right)^{2}\right]=0,\quad\mathrm{with}\quad y(n)\coloneqq x+n^{-1}e_{k}.\label{eq:eta_ms_conv}
\end{equation}
Moreover $\zeta_{s,t}^{x}[i,j]$ and $\eta_{s,t}^{x}[i,j,k]$ are
mean-square continuous in $x$. 
\end{prop}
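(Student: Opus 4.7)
The plan is to follow the classical approach for differentiability of SDE solutions with respect to initial conditions \cite{gikhman1969introduction,kunita1984stochastic}, exploiting the fact that under (A\ref{hyp:U_grad_lipschitz_x}) and (A\ref{hyp:third_order_deriv}) the entries of $DF_{s,t}^{x}$ and $D^{2}F_{s,t}^{x}$ are uniformly bounded (by $L/\epsilon$ and some constant $L'/\epsilon$ respectively), \emph{independently of} $x$ and $\omega$. This reduces the analysis of (\ref{eq:deriv_sde_zeta})--(\ref{eq:deriv_sde_eta}) to linear equations with bounded coefficients, for which Gronwall-type estimates handle all quantitative bounds, while continuity of $\nabla^{(2)}U$ and $\nabla^{(3)}U$ together with Lemma \ref{lem:pathwise_sde_bound} provide the qualitative convergences.

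First I would establish existence, pathwise uniqueness, and a uniform $L^{2}$ bound for (\ref{eq:deriv_sde_zeta}) and (\ref{eq:deriv_sde_eta}). These are linear inhomogeneous ODE-in-$t$ (no noise term) whose coefficients are continuous in $t$ and uniformly bounded, so Picard iteration yields a unique adapted continuous solution and Gronwall gives $\sup_{t\in[s,1],x}\mathbb{E}[|\zeta_{s,t}^{x}[i,j]|^{2}]\leq C(L,\epsilon,d)$, and analogously $\sup_{t,x}\mathbb{E}[|\eta_{s,t}^{x}[i,j,k]|^{2}]\leq C'$, using the $L^{2}$ bound on $\zeta$ as input.

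Next, for (\ref{eq:zeta_ms_conv}), set $\Delta_{n}^{i}(t):=n\{X_{s,t}^{x}[i]-X_{s,t}^{y(n)}[i]\}$ and apply the fundamental theorem of calculus to $\nabla U_{u}$:
\[
n\{F_{s,u}^{x}[i]-F_{s,u}^{y(n)}[i]\}=\Bigl\langle A_{s,u}^{n}[\cdot,i],\,\Delta_{n}(u)\Bigr\rangle,\qquad A_{s,u}^{n}[k,i]:=-\tfrac{1}{\epsilon}\int_{0}^{1}\tfrac{\partial^{2}U_{u}}{\partial x_{k}\partial x_{i}}\bigl(\theta X_{s,u}^{x}+(1-\theta)X_{s,u}^{y(n)}\bigr)\mathrm{d}\theta,
\]
so that $\Delta_{n}^{i}(t)$ solves a linear equation initialised at $-\mathbb{I}[i=j]$ with bounded coefficient $A_{s,u}^{n}$; after fixing the natural sign convention (the stated limit is read as $\pm\zeta_{s,t}^{x}[i,j]$), subtracting (\ref{eq:deriv_sde_zeta}) yields a linear equation for the error with forcing $\int_{s}^{t}\langle(A_{s,u}^{n}-DF_{s,u}^{x})[\cdot,i],\zeta_{s,u}^{x}[\cdot,j]\rangle\mathrm{d}u$. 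Lemma \ref{lem:pathwise_sde_bound} ensures $X_{s,u}^{y(n)}\to X_{s,u}^{x}$ a.s., continuity of $\nabla^{(2)}U$ gives pointwise convergence of $A_{s,u}^{n}\to DF_{s,u}^{x}$, the uniform bound on $\nabla^{(2)}U$ supplies a dominating function, and combined with the $L^{2}$ bound on $\zeta$ one deduces the forcing term vanishes in $L^{2}$; a final Gronwall closes the argument.

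The argument for (\ref{eq:eta_ms_conv}) is structurally identical: form the finite difference $n\{\zeta_{s,t}^{x}[i,j]-\zeta_{s,t}^{y(n)}[i,j]\}$ (with $y(n)=x+n^{-1}e_{k}$), subtract $\eta_{s,t}^{x}[i,j,k]$, and obtain a linear Gronwall equation whose source terms involve (i) finite-difference quotients of $DF_{s,u}^{\cdot}[\cdot,i]$, controlled by an integral representation using $D^{2}F$ and bounded via (A\ref{hyp:third_order_deriv}) and continuity of $\nabla^{(3)}U$, and (ii) finite-difference quotients of $\zeta_{s,u}^{\cdot}[\cdot,j]$, whose $L^{2}$ convergence to $\eta_{s,u}^{x}[\cdot,j,k]$ is precisely the statement being proved (bootstrapped using (\ref{eq:zeta_ms_conv})). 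Mean-square continuity of $x\mapsto\zeta_{s,t}^{x}$ and $x\mapsto\eta_{s,t}^{x}$ is then obtained by writing the differences $\zeta^{x}-\zeta^{x'}$ and $\eta^{x}-\eta^{x'}$ as solutions to linear equations forced by $DF^{x}-DF^{x'}$ and $D^{2}F^{x}-D^{2}F^{x'}$, and invoking continuity of $x\mapsto X_{s,t}^{x}$ (Section \ref{subsec:Preliminaries-about-proces}) together with one more Gronwall. The only mild obstacle is bookkeeping the tensor indices in (\ref{eq:deriv_sde_eta}); all analytic content is the repeated use of Gronwall's inequality and dominated convergence enabled by the uniform bounds on $\nabla^{(2)}U$ and $\nabla^{(3)}U$.
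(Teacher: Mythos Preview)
Your proposal is correct, and in fact more detailed than the paper's own proof, which consists entirely of invoking \cite[Thm.~2, p.~410]{gikhman1969introduction} for the mean-square differentiability and continuity statements, and \cite[Thm.~5.10]{khasminskii2011stochastic} and \cite[Thm.~3.1]{kunita1984stochastic} for the identification of the derivative processes with the solutions of the formally differentiated equations (\ref{eq:deriv_sde_zeta})--(\ref{eq:deriv_sde_eta}). What you have sketched is essentially the content of those cited theorems, specialized to the additive-noise case where (A\ref{hyp:U_grad_lipschitz_x}) and (A\ref{hyp:third_order_deriv}) make all coefficient bounds uniform in $x$ and $\omega$; your observation that (\ref{eq:deriv_sde_zeta}) and (\ref{eq:deriv_sde_eta}) are pathwise linear ODEs (no stochastic integral survives, since the diffusion coefficient is constant) is exactly what makes the Gronwall bookkeeping clean here. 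Your remark about the sign convention is well taken---the paper's statement of (\ref{eq:zeta_ms_conv}) and its use in the proof of Lemma~\ref{lem:deriv_1_id} carry the same harmless sign slip, so you are right not to let it derail the argument.
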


\begin{proof}
Under (A\ref{hyp:U_grad_lipschitz_x}), (A\ref{hyp:U_time_reg}) and
(A\ref{hyp:third_order_deriv}), the existence of random functions
$\zeta_{s,t}^{x}[i,j]$ and $\eta_{s,t}^{x}[i,j,k]$ which satisfy
(\ref{eq:zeta_ms_conv})-(\ref{eq:eta_ms_conv}) and are mean-square
continuous in $x$ is a direct application of \cite[Thm. 2, p. 410]{gikhman1969introduction}.
The fact that $\zeta_{s,t}^{x}[i,j]$ and $\eta_{s,t}^{x}[i,j,k]$
satisfy (\ref{eq:deriv_sde_zeta})-(\ref{eq:deriv_sde_eta}), i.e.
the equations obtained by formally differentiating in (\ref{eq:grad_decomp1}),
is a classical fact noted for example by \cite[Thm. 5.10, p.166]{khasminskii2011stochastic},
see also \cite[Thm. 3.1, p. 218]{kunita1984stochastic}.
\end{proof}

\begin{lem}
\label{lem:deriv_1_id}$\;$

1) there exists a finite constant $c_{1}$ such that $\sup_{x}\sup_{0\leq s\leq t\leq1}\|\zeta_{s,t}^{x}\|_{\mathrm{H.S.}}\leq c_{1}$,
a.s.,

2) for any $s\leq t$ and $f\in C_{1}^{p}(\mathbb{R}^{d})$, $P_{s,t}f(x)$
is differentiable in $x$, the following identity holds: 
\begin{eqnarray}
 &  & \frac{\partial P_{s,t}f}{\partial x_{i}}(x)=\mathbb{E}\left[\left\langle \nabla f(X_{s,t}^{x})\,,\,\zeta_{s,t}^{x}[\cdot,i]\right\rangle \right],\label{eq:deriv_1_id}
\end{eqnarray}

and $\nabla P_{s,t}f(x)$ is continuous in $x$, $s$ and $t$.
\end{lem}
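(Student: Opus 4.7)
The plan splits along the two parts of the statement. For Part 1, I would view \eqref{eq:deriv_sde_zeta} pathwise as a linear matrix-valued ODE. Setting $Z(u) := \zeta_{s,u}^x$ and $H(u) := -\epsilon^{-1}\nabla^{(2)} U_u(X_{s,u}^x)$, the symmetry of the Hessian gives $\langle DF_{s,u}^x[\cdot,i], Z(u)[\cdot,j]\rangle = (H(u) Z(u))[i,j]$, so $Z$ solves $Z(t) = I_d + \int_s^t H(u) Z(u)\,\mathrm{d}u$. By (A\ref{hyp:U_strong_convex}), $H(u) \preceq -(K/\epsilon) I_d$ uniformly in $u$ and $\omega$. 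Differentiating the squared Hilbert--Schmidt norm yields
\[
\frac{d}{dt}\|Z(t)\|_{\mathrm{H.S.}}^2 \;=\; 2\operatorname{tr}\bigl(Z(t)^\top H(t) Z(t)\bigr) \;\leq\; -\frac{2K}{\epsilon}\|Z(t)\|_{\mathrm{H.S.}}^2,
\]
since each column $v_j = Z(t)[\cdot,j]$ obeys $v_j^\top H(t) v_j \leq -(K/\epsilon)\|v_j\|^2$. Gronwall then gives $\|\zeta_{s,t}^x\|_{\mathrm{H.S.}}^2 \leq d\,e^{-2K(t-s)/\epsilon} \leq d$ almost surely, so one may take $c_1 = \sqrt{d}$.

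For Part 2, I would compute $\partial_i P_{s,t}f(x)$ directly and identify the limit via the mean-square convergence \eqref{eq:zeta_ms_conv}. Fix $i$ and let $y(n) := x + e_i/n$. By the fundamental theorem of calculus,
\[
n\bigl(f(X_{s,t}^{y(n)}) - f(X_{s,t}^x)\bigr) \;=\; \bigl\langle R_n,\, n(X_{s,t}^{y(n)} - X_{s,t}^x)\bigr\rangle,
\]
where $R_n := \int_0^1 \nabla f\bigl(X_{s,t}^x + u(X_{s,t}^{y(n)} - X_{s,t}^x)\bigr)\,\mathrm{d}u$. Proposition~\ref{prop:Gikhman} gives $n(X_{s,t}^{y(n)} - X_{s,t}^x) \to \zeta_{s,t}^x[\cdot,i]$ in $L^2$, while Lemma~\ref{lem:pathwise_sde_bound} and the continuity of $\nabla f$ yield $R_n \to \nabla f(X_{s,t}^x)$ a.s. I would upgrade the latter to $L^2$ convergence by dominating $\|R_n\|$ using the polynomial growth $\|\nabla f(y)\| \leq c(1+\|y\|^{2p})$, convexity of $a \mapsto a^{2p}$, and the moment bound \eqref{eq:drift_bound_uniform} applied to $X_{s,t}^x$ and $X_{s,t}^{y(n)}$ (together with $\|y(n)\| \leq \|x\|+1$). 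Cauchy--Schwarz then lets me pass to the limit inside the expectation, establishing existence of $\partial_i P_{s,t}f(x)$ and formula \eqref{eq:deriv_1_id}.

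For continuity of $\nabla P_{s,t}f$ in $(s,t,x)$, I would apply dominated convergence to the right-hand side of \eqref{eq:deriv_1_id}. Joint a.s. continuity of $X_{s,t}^x$ in $(s,t,x)$, noted in \S\ref{subsec:Preliminaries-about-proces}, combined with mean-square continuity of $\zeta_{s,t}^x$ in $x$ from Proposition~\ref{prop:Gikhman} and continuity of $\zeta$ in $(s,t)$ (read off from its integral equation together with a Gronwall stability argument using the uniform bound from Part~1), yields convergence in probability of the integrand as $(s',t',x') \to (s,t,x)$. The deterministic bound $\|\zeta_{s,t}^x\|_{\mathrm{H.S.}} \leq \sqrt{d}$ from Part~1, combined with the polynomial moment bounds of Lemma~\ref{lem:drift} applied to $\|\nabla f(X_{s',t'}^{x'})\|$ uniformly on a compact neighborhood of $(s,t,x)$, furnishes the required uniformly integrable dominating function.

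The main obstacle I foresee is the integrability bookkeeping in Part 2: pairing the $L^2$-convergence of the normalized increment with the a.s. but only polynomially-growth-controlled convergence of $R_n$, since $\nabla f$ is merely in $C_0^p$ and can grow like $\|x\|^{2p}$. Handling this cleanly requires uniform-in-$n$ bounds on higher moments of $X_{s,t}^{y(n)}$, which reduce to Lemma~\ref{lem:drift} applied with a sufficiently large exponent $p' > p$.
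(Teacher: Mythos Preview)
Your proposal is correct and, for Part~2, follows essentially the paper's route: decompose the error between the difference quotient and the target, control one factor by the mean-square convergence of Proposition~\ref{prop:Gikhman} and the other by dominated convergence via the polynomial growth of $\nabla f$ and Lemma~\ref{lem:drift} (your integral-form FTC versus the paper's pointwise mean-value theorem is cosmetic, and your worry about needing moments of order $p'>p$ is dissolved by Lemma~\ref{lem:pathwise_sde_bound}, which gives $\|X^{y(n)}_{s,t}\|\leq\|X^x_{s,t}\|+1$ pathwise, exactly as the paper uses in~\eqref{eq:grad_minus_grad_bound}). The genuine difference is in Part~1: the paper uses only the Lipschitz bound (A\ref{hyp:U_grad_lipschitz_x}) on $\|DF_{s,u}^x\|_{\mathrm{H.S.}}$ and a crude Gronwall argument to obtain $\|\zeta^x_{s,t}[\cdot,j]\|^2\leq 2\exp[c(t-s)^2]$, whereas you exploit strong convexity (A\ref{hyp:U_strong_convex}), i.e.\ the negative-definiteness of $H(u)$, to get the contracting bound $\|\zeta^x_{s,t}\|_{\mathrm{H.S.}}^2\leq d\,e^{-2K(t-s)/\epsilon}$. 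Your estimate is strictly sharper---it decays rather than grows in $t-s$---and mirrors Lemma~\ref{lem:pathwise_sde_bound} at the Jacobian level; both suffice for the lemma as stated, but your version would be preferable if one wanted explicit $\epsilon$- or dimension-dependence of the constants in Proposition~\ref{prop:C_2^p_closed}.
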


\begin{lem}
\label{lem:deriv_2_id}$\;$

1) there exists a finite constant $c_{2}$ such that $\sup_{x}\sup_{0\leq s\leq t\leq1}\|\eta_{s,t}^{x}\|_{\mathrm{H.S.}}\leq c_{2}$,
a.s.

2) for any $s\leq t$ and $f\in C_{2}^{p}(\mathbb{R}^{d})$, $P_{s,t}f(x)$
is twice differentiable in $x$, the following identity holds: 
\begin{equation}
\frac{\partial^{2}P_{s,t}f}{\partial x_{i}\partial x_{j}}(x)=\mathbb{E}\left[\left\langle \nabla^{(2)}f(X_{s,t}^{x})\circ\zeta_{s,t}^{x}[\cdot,j]\,,\,\zeta_{s,t}^{x}[\cdot,i]\right\rangle \right]+\mathbb{E}\left[\left\langle \nabla f(X_{s,t}^{x})\,,\,\eta_{s,t}^{x}[\cdot,i,j]\right\rangle \right],\label{eq:deriv_2_id}
\end{equation}

and $\nabla^{(2)}P_{s,t}f(x)$ is continuous in $x$, s and $t$. 
\end{lem}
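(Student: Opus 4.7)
The plan is to mirror the structure of Lemma \ref{lem:deriv_1_id}: first bound $\eta_{s,t}^x$ almost surely by treating (\ref{eq:deriv_sde_eta}) as a linear ODE with contractive homogeneous part and uniformly bounded forcing, then derive the second-derivative identity by differentiating the first-derivative representation (\ref{eq:deriv_1_id}) and invoking the mean-square differentiability (\ref{eq:eta_ms_conv}) from Proposition \ref{prop:Gikhman}, and finally obtain continuity of $\nabla^{(2)}P_{s,t}f$ from the mean-square continuity of $\zeta$ and $\eta$. The main technical obstacle throughout will be transferring mean-square statements into convergence under the expectation, which I will handle by dominated convergence with an envelope assembled from Lemma \ref{lem:drift}, the $C_2^p$ growth of $f$, and the a.s.\ bounds on $\zeta$ and $\eta$.

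For Part 1, fix $(j,k)$ and regard $w(t):=\eta_{s,t}^x[\cdot,j,k]\in\mathbb{R}^d$. Because $DF_{s,u}^x=-\epsilon^{-1}\nabla^{(2)}U_u(X_{s,u}^x)$ is symmetric and, under (A\ref{hyp:U_strong_convex}), has all eigenvalues bounded above by $-K/\epsilon$, the homogeneous system $\dot v(t)=DF_{s,t}^x v(t)$ satisfies $\tfrac{d}{dt}\|v(t)\|^2\leq -2K\epsilon^{-1}\|v(t)\|^2$, so its fundamental solution contracts at rate $e^{-K(t-s)/\epsilon}$. Rewriting (\ref{eq:deriv_sde_eta}) as
\[
\dot w(t)=DF_{s,t}^x\,w(t)+g(t),\quad w(s)=0,\quad g_i(t):=\bigl\langle D^2 F_{s,t}^x[\cdot,\cdot,i]\circ\zeta_{s,t}^x[\cdot,k],\,\zeta_{s,t}^x[\cdot,j]\bigr\rangle,
\]
assumption (A\ref{hyp:third_order_deriv}) together with Lemma \ref{lem:deriv_1_id}(1) furnishes an a.s.\ bound on $\|g(t)\|$ uniform in $s,t,x$; variation of constants and summation over $(j,k)$ produce the claimed Hilbert--Schmidt bound.

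For Part 2, starting from (\ref{eq:deriv_1_id}) I set $y(n):=x+n^{-1}e_j$ and decompose the difference quotient as
\begin{align*}
n\bigl\{\langle\nabla f(X_{s,t}^{y(n)}),\zeta_{s,t}^{y(n)}[\cdot,i]\rangle-\langle\nabla f(X_{s,t}^x),\zeta_{s,t}^x[\cdot,i]\rangle\bigr\}
&=\bigl\langle n[\nabla f(X_{s,t}^{y(n)})-\nabla f(X_{s,t}^x)],\,\zeta_{s,t}^{y(n)}[\cdot,i]\bigr\rangle\\
&\quad+\bigl\langle\nabla f(X_{s,t}^x),\,n[\zeta_{s,t}^{y(n)}[\cdot,i]-\zeta_{s,t}^x[\cdot,i]]\bigr\rangle.
\end{align*}
The integral representation $\nabla f(X_{s,t}^{y(n)})-\nabla f(X_{s,t}^x)=\bigl(\int_0^1\nabla^{(2)}f(X_{s,t}^x+\tau[X_{s,t}^{y(n)}-X_{s,t}^x])\,\mathrm{d}\tau\bigr)\,(X_{s,t}^{y(n)}-X_{s,t}^x)$, combined with (\ref{eq:zeta_ms_conv}) applied to $n(X_{s,t}^{y(n)}-X_{s,t}^x)$, identifies the mean-square limit of the first bracket as $\nabla^{(2)}f(X_{s,t}^x)\,\zeta_{s,t}^x[\cdot,j]$; (\ref{eq:eta_ms_conv}) identifies the mean-square limit of the second as $\eta_{s,t}^x[\cdot,i,j]$. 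Taking expectations yields (\ref{eq:deriv_2_id}) once the interchange of limit and expectation is justified, which is the delicate step. I will discharge it by upgrading mean-square convergence of each factor to $L^1$ convergence of the product via a dominated envelope built from (i) the polynomial growth of $\nabla f,\nabla^{(2)}f$ inherited from $f\in C_2^p(\mathbb{R}^d)$, (ii) the moment bound (\ref{eq:drift_bound_uniform}) of Lemma \ref{lem:drift}, uniform in $n$ since $\|y(n)\|$ is bounded, and (iii) the a.s.\ bounds on $\zeta$ and $\eta$ from Lemma \ref{lem:deriv_1_id}(1) and Part 1.

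Continuity of $\nabla^{(2)}P_{s,t}f$ in $(s,t,x)$ then follows from another dominated convergence argument with the same envelope: the joint continuity of $(s,t,x)\mapsto X_{s,t}^x$ noted in Section \ref{subsec:Preliminaries-about-proces}, together with the mean-square continuity in $x$ of $\zeta_{s,t}^x,\eta_{s,t}^x$ from Proposition \ref{prop:Gikhman} and the continuity of $\nabla f,\nabla^{(2)}f$, supplies pointwise convergence of the integrand in (\ref{eq:deriv_2_id}); analogous continuity in $(s,t)$ is extracted from the defining ODEs for $\zeta$ and $\eta$.
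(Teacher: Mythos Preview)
Your proposal is correct and follows essentially the same route as the paper. The only notable difference is in Part~1: the paper bounds $\|\eta_{s,t}^x[\cdot,j,k]\|^2$ by a direct Gronwall argument using only (A\ref{hyp:U_grad_lipschitz_x}), (A\ref{hyp:third_order_deriv}) and Lemma~\ref{lem:deriv_1_id}(1), obtaining a constant of the form $\beta_1\exp[\beta_2(t-s)^2]$, whereas you additionally invoke (A\ref{hyp:U_strong_convex}) to make the homogeneous part of the ODE contractive and then apply variation of constants. Your argument yields a sharper, time-uniform constant, but on the bounded interval $[0,1]$ both work and nothing downstream needs the improvement. For Part~2 and the continuity claims, your decomposition of the difference quotient and the dominated-convergence justifications match the paper's proof up to cosmetic choices (which of $x$ or $y(n)$ is slotted into $\zeta$ versus $\nabla f$ in each summand).
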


\begin{proof}
[Proof of Lemma \ref{lem:deriv_1_id}]Throughout the proof, $c$ is
a finite constant whose value may change on each appearance. 

For part 1), it follows from (\ref{eq:deriv_sde_zeta}) that
\begin{eqnarray*}
\|\zeta_{s,t}^{x}[\cdot,j]\|^{2} & \leq & 2+2\sum_{i=1}^{d}\left(\int_{s}^{t}\left\langle DF_{s,u}^{x}[\cdot,i]\,,\,\zeta_{s,u}^{x}[\cdot,j]\right\rangle \mathrm{d}u\right)^{2}\\
 & \leq & 2+2\sum_{i=1}^{d}\left(\int_{s}^{t}\|DF_{s,u}^{x}[\cdot,i]\|\|\zeta_{s,u}^{x}[\cdot,j]\|\mathrm{d}u\right)^{2}\\
 & \leq & 2+2(t-s)\sum_{i=1}^{d}\int_{s}^{t}\|DF_{s,u}^{x}[\cdot,i]\|^{2}\|\zeta_{s,u}^{x}[\cdot,j]\|^{2}\mathrm{d}u\\
 & = & 2+2(t-s)\int_{s}^{t}\|DF_{s,u}^{x}\|_{\mathrm{H.S.}}^{2}\|\zeta_{s,u}^{x}[\cdot,j]\|^{2}\mathrm{d}u\\
 & \leq & 2+c(t-s)\int_{s}^{t}\|\zeta_{s,u}^{x}[\cdot,j]\|^{2}\mathrm{d}u.
\end{eqnarray*}
where the first inequality uses the fact that for any $a,b\in\mathbb{R}^{d}$
$\|a+b\|^{2}\leq2(\|a\|^{2}+\|b\|^{2})$;  the second inequality
uses Cauchy-Schwartz; the third inequality uses Jensen's inequality;
the final inequality uses (A\ref{hyp:U_grad_lipschitz_x}), and there
$c$ is a finite constant depending on $L$ and $\epsilon$ but independent
of $j,x$. It then follows from Gronwall's lemma that 
\[
\|\zeta_{s,t}^{x}[\cdot,j]\|^{2}\leq2\exp[c(t-s)^{2}],
\]
the r.h.s. of which is a finite constant independent of $x$ and $j$.
The claim of part 1) then holds.

Considering now that $s,t$ are fixed, we de-clutter the notation
by writing
\[
X^{x}\equiv X_{s,t}^{x},\quad\quad\zeta^{x}\equiv\zeta_{s,t}^{x}.
\]
Fix any $f\in C_{2}^{p}(\mathbb{R}^{d})$, $x\in\mathbb{R}^{d}$ and
set $y(n)\coloneqq x+n^{-1}e_{i}$. To establish the identity in part
2) we shall show that 
\[
\lim_{n\to\infty}\frac{P_{s,t}f(x)-P_{s,t}f(y(n))}{n^{-1}}=\mathbb{E}\left[\left\langle \nabla f(X^{x})\,,\,\zeta^{x}[\cdot,i]\right\rangle \right].
\]
By the mean value theorem, let us introduce a random variable $Z^{x,y(n)}$,
valued on the line segment bewteen $X^{x}$ and $X^{y(n)}$ such that:
\[
f(X^{x})-f(X^{y(n)})=\left\langle \nabla f(Z^{x,y(n)})\,,\,X^{x}-X^{y(n)}\right\rangle ,\quad a.s.
\]
Then using Cauchy-Schwartz we have 
\begin{eqnarray}
 &  & \left|\frac{P_{s,t}(x)-P_{s,t}(y(n))}{1/n}-\mathbb{E}\left[\left\langle \nabla f(X^{x})\,,\,\zeta^{x}[\cdot,i]\right\rangle \right]\right|\nonumber \\
 & = & \left|\mathbb{E}\left[\frac{f(X^{x})-f(X^{y(n)})}{1/n}-\left\langle \nabla f(X^{x})\,,\,\zeta^{x}[\cdot,i]\right\rangle \right]\right|\nonumber \\
 & = & \left|\mathbb{E}\left[\left\langle \nabla f(Z^{x,y(n)})-\nabla f(X^{x})\,,\,n(X^{x}-X^{y(n)})\right\rangle +\left\langle \nabla f(X^{x})\,,\,n(X^{x}-X^{y(n)})-\zeta^{x}[\cdot,i]\right\rangle \right]\right|\nonumber \\
 & \leq & \mathbb{E}\left[\|\nabla f(Z^{x,y(n)})-\nabla f(X^{x})\|^{2}\right]^{1/2}\mathbb{E}\left[n^{2}\|X^{x}-X^{y(n)}\|^{2}\right]^{1/2}\label{eq:grad_decomp1}\\
 &  & +\mathbb{E}\left[\|\nabla f(X^{x})\|^{2}\right]^{1/2}\mathbb{E}\left[\|n(X^{x}-X^{y(n)})-\zeta^{x}[\cdot,i]\|^{2}\right]^{1/2}.\label{eq:grad_decomp2}
\end{eqnarray}
Consider the first expectation in (\ref{eq:grad_decomp1}). We have
\begin{eqnarray}
\sup_{n}\|\nabla f(Z^{x,y(n)})-\nabla f(X^{x})\| & \leq & \sup_{n}\|\nabla f(Z^{x,y(n)})\|+\|\nabla f(X^{x})\|\nonumber \\
 & \leq & \sup_{n}c(1+\|Z^{x,y(n)}\|^{2p})+c(1+\|X^{x}\|^{2p})\nonumber \\
 & \leq & c\sup_{n}\left(1+2^{2p-1}\|X^{y(n)}-X^{x}\|^{2p}+2^{2p-1}\|X^{x}\|^{2p}\right)+c(1+\|X^{x}\|^{2p})\nonumber \\
 & \leq & c\left(1+2^{2p-1}e^{-2pK(t-s)}+2^{2p-1}\|X^{x}\|^{2p}\right)+c(1+\|X^{x}\|^{2p}),\label{eq:grad_minus_grad_bound}
\end{eqnarray}
where the second inequality uses $\|\nabla f(x)\|\leq c(1+\|x\|^{2p})$,
the third uses $\|Z^{x,y(n)}-X^{x}\|\leq\|X^{y(n)}-X^{x}\|$ and the
fourth uses Lemma \ref{lem:pathwise_sde_bound}. The quantity on the
right of the inequality in (\ref{eq:grad_minus_grad_bound}) has finite
expectation by Lemma \ref{lem:drift}. This observation combined with
the facts that $Z^{x,y(n)}\to X^{x}$ a.s. by Lemma \ref{lem:pathwise_sde_bound}
and $\nabla f$ is continuous, yield via the dominated convergence
theorem that 
\begin{equation}
\lim_{n\to\infty}\mathbb{E}\left[\|\nabla f(Z^{x,y(n)})-\nabla f(X^{x})\|^{2}\right]^{1/2}=0.\label{eq:deriv1_term1}
\end{equation}
For the second expectation in (\ref{eq:grad_decomp1}), by Lemma \ref{lem:pathwise_sde_bound},
\[
\sup_{n}n^{2}\|X^{x}-X^{y(n)}\|^{2}\leq\sup_{n}e^{-2K(t-s)}n^{2}\|x-y(n)\|^{2}=e^{-2K(t-s)},
\]
 hence
\begin{equation}
\sup_{n}\mathbb{E}\left[n^{2}\|X^{x}-X^{y(n)}\|^{2}\right]^{1/2}<+\infty.\label{eq:deriv1_term2}
\end{equation}
For the first expectation in (\ref{eq:grad_decomp2}), again using
$\|\nabla f(x)\|\leq c(1+\|x\|^{2p})$ and Lemma \ref{lem:drift}
gives
\begin{equation}
\mathbb{E}\left[\|\nabla f(X^{x})\|^{2}\right]^{1/2}<+\infty.\label{eq:deriv1_term3}
\end{equation}
For the second expectation in (\ref{eq:grad_decomp2}), Proposition
\ref{prop:Gikhman} implies
\begin{equation}
\lim_{n}\mathbb{E}\left[\|n(X^{x}-X^{y(n)})-\zeta^{x}[\cdot,i]\|^{2}\right]^{1/2}=0.\label{eq:deriv1_term4}
\end{equation}
 Combining (\ref{eq:deriv1_term1})-(\ref{eq:deriv1_term4}) and (\ref{eq:grad_decomp1})-(\ref{eq:grad_decomp2})
establishes (\ref{eq:deriv_1_id}).

To complete the proof of part 2), it remains to establish the continuity
properties. Firstly for the continuity in $x$, (\ref{eq:deriv_1_id})
and Cauchy-Schwartz give for any $x,y\in\mathbb{R}^{d}$, 
\begin{eqnarray*}
 &  & \left|\frac{\partial P_{s,t}f}{\partial x_{i}}(x)-\frac{\partial P_{s,t}f}{\partial x_{i}}(y)\right|\\
 &  & \leq\mathbb{E}\left[\|\nabla f(X^{x})-\nabla f(X^{y})\|^{2}\right]^{1/2}\mathbb{E}\left[\|\zeta^{x}\|^{2}\right]^{1/2}+\mathbb{E}\left[\|\nabla f(X^{y})\|^{2}\right]^{1/2}\mathbb{E}\left[\|\zeta^{x}-\zeta^{y}\|^{2}\right]^{1/2}.
\end{eqnarray*}
The first expectation converges to zero as $x\to y$ by very similar
arguments used above to show (\ref{eq:deriv1_term1}). The second
expectation is finite by (\ref{eq:deriv1_term2}) and (\ref{eq:zeta_ms_conv}).
The third expectation converges to $\mathbb{E}\left[\|\nabla f(X^{y})\|^{2}\right]^{1/2}$
using a dominated convergence argument similar to that above and the
limit is finite by (\ref{eq:deriv_2_part1}). The fourth expectation
converges to zero as $y\to x$ because $\zeta^{x}$ is mean-square
continuous in $x$ according to Proposition \ref{prop:Gikhman}. 

Let us next check the continuity in $t$ of $\frac{\partial P_{s,t}f}{\partial x_{i}}$.
Consider (\ref{eq:deriv_1_id}) and note that $X_{s,t}^{x}$ and $\zeta_{s,t}^{x}$
are continuous in $t$, almost surely. Then due to the almost sure
and uniform in $t$ bound on $\|\zeta_{s,t}^{x}\|_{\mathrm{H.S.}}$
from part 1), the assumption $f\in C_{2}^{p}(\mathbb{R}^{d})$ and
(\ref{eq:expected_sup_X}), the descired continuity follows by dominated
convergence. The continuity in $s$ follows very similar arguments.
This completes the proof of part 2).
\end{proof}

\begin{proof}
[Proof of Lemma \ref{lem:deriv_2_id}]Throughout the proof, $c$ is
a finite constant whose value may change on each appearance.

For part 1), 
\begin{eqnarray*}
\|\eta_{s,t}^{x}[\cdot,j,k]\|^{2} & \leq & \frac{2}{\epsilon^{2}}\sum_{i=1}^{d}\left(\int_{s}^{t}\left\langle D^{2}F_{s,u}^{x}[\cdot,\cdot,i]\circ\zeta_{s,u}^{x}[\cdot,k]\,,\,\zeta_{s,u}^{x}[\cdot,j]\right\rangle \mathrm{d}u\right)^{2}\\
 &  & +\frac{2}{\epsilon^{2}}\sum_{i=1}^{d}\left(\int_{s}^{t}\left\langle DF_{s,u}^{x}[\cdot,i]\,,\,\eta_{s,u}^{x}[\cdot,j,k]\right\rangle \mathrm{d}u\right)^{2}\\
 & \leq & \frac{2}{\epsilon^{2}}\sum_{i=1}^{d}\left(\int_{s}^{t}\|D^{2}F_{s,u}^{x}[\cdot,\cdot,i]\|_{\mathrm{H.S.}}\|\zeta_{s,u}^{x}[\cdot,k]\|\|\zeta_{s,u}^{x}[\cdot,j]\|\mathrm{d}u\right)^{2}\\
 &  & +\frac{2}{\epsilon^{2}}\sum_{i=1}^{d}\left(\int_{s}^{t}\|DF_{s,u}^{x}[\cdot,i]\|\|\eta_{s,u}^{x}[\cdot,j,k]\|\mathrm{d}u\right)^{2}\\
 & \leq & \frac{2}{\epsilon^{2}}(t-s)\int_{s}^{t}\|D^{2}F_{s,u}^{x}\|_{\mathrm{H.S.}}^{2}\|\zeta_{s,u}^{x}[\cdot,k]\|^{2}\|\zeta_{s,u}^{x}[\cdot,j]\|^{2}\mathrm{d}u\\
 & + & \frac{2}{\epsilon^{2}}(t-s)\int_{s}^{t}\|DF_{s,u}^{x}\|_{\mathrm{H.S.}}^{2}\|\eta_{s,u}^{x}[\cdot,j,k]\|^{2}\mathrm{d}u\\
 & \leq & \beta_{1}+(t-s)\beta_{2}\int_{s}^{t}\|\eta_{s,u}^{x}[\cdot,j,k]\|^{2}\mathrm{d}u,
\end{eqnarray*}
where the first inequality uses the fact that $\|a+b\|^{2}\leq2(\|a\|^{2}+\|b\|^{2})$;
the second inequality uses Cauchy-Schwartz and the fact for a matrix
$A$ and vector $b$, $\|A\circ b\|\leq\sup_{v\neq0}\frac{\|Av\|}{\|v\|}\|b\|\leq\|A\|_{\mathrm{H.S.}}\|b\|$;
the third inequality uses Jensen's inequality; the final inequality
uses (A\ref{hyp:third_order_deriv}), (A\ref{hyp:U_grad_lipschitz_x})
and part 1) of Lemma \ref{lem:deriv_1_id}, and here \textbf{$\beta_{1},\beta_{2}$
}are finite constants independent of $x,j,k,s,t$. Gronwall's lemma
then gives
\[
\|\eta_{s,t}^{x}[\cdot,j,k]\|^{2}\leq\beta_{1}\exp[\beta_{2}(t-s)^{2}],
\]
which completes the proof of part 1) of the lemma. 

For part 2), we de-clutter notation as in the proof of Lemma \ref{lem:deriv_1_id}
and write
\[
X^{x}\equiv X_{s,t}^{x},\quad\quad\zeta^{x}\equiv\zeta_{s,t}^{x},\quad\quad\eta^{x}\equiv\eta_{s,t}^{x}.
\]
Using (\ref{eq:deriv_1_id}), we have:
\begin{eqnarray*}
 &  & \frac{\partial}{\partial x_{i}}P_{s,t}f(x)-\frac{\partial}{\partial x_{i}}P_{s,t}f(y)\\
 &  & =\mathbb{E}\left[\left\langle \nabla f(X^{x})\,,\,\zeta^{x}[\cdot,i]\right\rangle \right]-\mathbb{E}\left[\left\langle \nabla f(X^{y})\,,\,\zeta^{y}[\cdot,i]\right\rangle \right]\\
 &  & =\mathbb{E}\left[\left\langle \nabla f(X^{x})-\nabla f(X^{y})\,,\,\zeta^{x}[\cdot,i]\right\rangle \right]+\mathbb{E}\left[\left\langle \nabla f(X^{y})\,,\,\zeta^{x}[\cdot,i]-\zeta^{y}[\cdot,i]\right\rangle \right].
\end{eqnarray*}
Therefore to prove the identity in part 2), with $y(n)\coloneqq x+n^{-1}e_{j}$
it is sufficient to establish
\begin{equation}
\lim_{n\to\infty}n\mathbb{E}\left[\left\langle \nabla f(X^{x})-\nabla f(X^{y(n)})\,,\,\zeta^{x}[\cdot,i]\right\rangle \right]=\mathbb{E}\left[\left\langle \nabla^{(2)}f(X^{x})\circ\zeta^{x}[\cdot,j]\,,\,\zeta^{x}[\cdot,i]\right\rangle \right]\label{eq:deriv_2_part1}
\end{equation}
and 
\begin{equation}
\lim_{n\to\infty}n\mathbb{E}\left[\left\langle \nabla f(X^{y(n)})\,,\,\zeta^{x}[\cdot,i]-\zeta^{y(n)}[\cdot,i]\right\rangle \right]=\mathbb{E}\left[\left\langle \nabla f(X^{x})\,,\,\eta^{x}[\cdot,i,j]\right\rangle \right].\label{eq:deriv_2_part2}
\end{equation}

Using the mean value theorem for vector-valued functions we have:
\[
\nabla f(X^{x})-\nabla f(X^{y(n)})=\left(\int_{0}^{1}\nabla^{(2)}f(X^{y(n)}+u(X^{x}-X^{y(n)}))du\right)\circ(X^{x}-X^{y(n)}),
\]
where the integral is element-wise. Therefore in terms of the matrices
\begin{eqnarray*}
A^{x,y(n)} & \coloneqq & \nabla^{(2)}f(X^{x})-\int_{0}^{1}\nabla^{(2)}f(X^{y(n)}+u(X^{x}-X^{y(n)}))\mathrm{d}u\\
B^{x,y(n)} & \coloneqq & \int_{0}^{1}\nabla^{(2)}f(X^{y(n)}+u(X^{x}-X^{y(n)}))\mathrm{d}u,
\end{eqnarray*}
where the integrals are elemnent-wise, we have

\begin{eqnarray*}
 &  & \left\langle \nabla^{(2)}f(X^{x})\circ\zeta^{x}[\cdot,j]\,,\,\zeta^{x}[\cdot,i]\right\rangle -n\left\langle \nabla f(X^{x})-\nabla f(X^{y(n)})\,,\,\zeta^{x}[\cdot,i]\right\rangle \\
 &  & =\left\langle \nabla^{(2)}f(X^{x})\circ\zeta^{x}[\cdot,j]-n\left(\nabla f(X^{x})-\nabla f(X^{y(n)})\right)\,,\,\zeta^{x}[\cdot,i]\right\rangle \\
 &  & =\left\langle \left\{ \nabla^{(2)}f(X^{x})-\int_{0}^{1}\nabla^{(2)}f(X^{y(n)}+u(X^{x}-X^{y(n)}))du\right\} \circ\zeta^{x}[\cdot,j]\,,\,\zeta^{x}[\cdot,i]\right\rangle \\
 &  & +\left\langle \left(\int_{0}^{1}\nabla^{(2)}f(X^{y(n)}+u(X^{x}-X^{y(n)}))du\right)\circ\left\{ \zeta^{x}[\cdot,j]-n(X^{x}-X^{y(n)})\right\} \,,\,\zeta^{x}[\cdot,i]\right\rangle \\
 &  & \equiv\left\langle A^{x,y(n)}\circ\zeta^{x}[\cdot,j]\,,\,\zeta^{x}[\cdot,i]\right\rangle +\left\langle B^{x,y(n)}\circ\left\{ \zeta^{x}[\cdot,j]-n(X^{x}-X^{y(n)})\right\} \,,\,\zeta^{x}[\cdot,i]\right\rangle .
\end{eqnarray*}
Let us apply dominated convergence to show that 
\begin{equation}
\lim_{n}\mathbb{E}\left[\left|\left\langle A^{x,y(n)}\circ\zeta^{x}[\cdot,j]\,,\,\zeta^{x}[\cdot,i]\right\rangle \right|\right]=0.\label{eq:A_term_to_zero}
\end{equation}
To this end, first note that
\begin{eqnarray*}
\left|\left\langle A^{x,y(n)}\circ\zeta_{s,t}^{x}[\cdot,j]\,,\,\zeta^{x}[\cdot,i]\right\rangle \right| & \leq & \|A^{x,y(n)}\|_{\mathrm{H.S.}}\|\zeta^{x}[\cdot,j]\|\|\zeta^{x}[\cdot,i]\|\leq c\|A^{x,y(n)}\|_{\mathrm{H.S.}},
\end{eqnarray*}
where $c$ is a finite constant given by part 1) of Lemma \ref{lem:deriv_1_id}.
Also, by Lemma \ref{lem:pathwise_sde_bound}, $X^{y(n)}\to X^{x}$
a.s., and $\nabla^{(2)}f$ is continuous, hence $\|A^{x,y(n)}\|_{\mathrm{H.S.}}\to0$
a.s. Also, again using Lemma \ref{lem:pathwise_sde_bound}, 
\begin{eqnarray*}
\left|A^{x,y(n)}[i,j]\right| & = & \left|\frac{\partial^{2}f}{\partial x_{i}\partial x_{j}}(X^{x})+\int_{0}^{1}\frac{\partial^{2}f}{\partial x_{i}\partial x_{j}}(X^{y(n)}+u(X^{x}-X^{y(n)}))\mathrm{d}u\right|\\
 & \leq & \left|\frac{\partial^{2}f}{\partial x_{i}\partial x_{j}}(X^{x})\right|+\int_{0}^{1}\left|\frac{\partial^{2}f}{\partial x_{i}\partial x_{j}}(X^{y(n)}+u(X^{x}-X^{y(n)}))\right|\mathrm{d}u\\
 & \leq & c(1+\|X^{x}\|^{2p})+c\int_{0}^{1}1+\|X^{y(n)}+u(X^{x}-X^{y(n)})\|^{2p}\mathrm{d}u\\
 & \leq & c(1+\|X^{x}\|^{2p})+c\int_{0}^{1}1+2^{2p-1}\|X^{x}\|^{2p}+2^{2p-1}\|X^{x}-X^{y(n)}\|^{2p}\mathrm{d}u\\
 & \leq & c(2+(1+2^{2p-1})\|X^{x}\|^{2p}+2^{2p-1}).
\end{eqnarray*}
Therefore using Lemma \ref{lem:drift}, $\mathbb{E}[\sup_{n\geq1}\|A^{x,y(n)}\|_{\mathrm{H.S.}}]<+\infty$,
so indeed (\ref{eq:A_term_to_zero}) holds. 

Similarly let us now show that
\begin{equation}
\lim_{n}\mathbb{E}\left[\left|\left\langle B^{x,y(n)}\circ\left\{ \zeta^{x}[\cdot,j]-n(X^{x}-X^{y(n)})\right\} \,,\,\zeta^{x}[\cdot,i]\right\rangle \right|\right]=0.\label{eq:B_term_to_zero}
\end{equation}
We have for a finite constant $c$ given by part 1) of Lemma \ref{lem:deriv_1_id},
\begin{eqnarray*}
 &  & \left|\left\langle B^{x,y(n)}\cdot\left\{ \zeta_{s,t}^{x}[\cdot,j]-n(X^{x}-X^{y(n)})\right\} \,,\,\zeta^{x}[\cdot,i]\right\rangle \right|\\
 &  & \leq\left\Vert B^{x,y(n)}\circ\left\{ \zeta_{s,t}^{x}[\cdot,j]-n(X^{x}-X^{y(n)})\right\} \right\Vert \|\zeta^{x}[\cdot,i]\|\\
 &  & \leq\|B^{x,y(n)}\|_{\mathrm{H.S.}}\|\zeta_{s,t}^{x}[\cdot,j]-n(X^{x}-X^{y(n)})\|c.
\end{eqnarray*}
By very similar arguments used to those used above in bounding $\left|A^{x,y(n)}[i,j]\right|$,
\begin{eqnarray*}
\left|B^{x,y(n)}[i,j]\right| & \leq & c(1+2^{2p-1}\|X^{x}\|^{2p}+2^{2p-1}),
\end{eqnarray*}
and therefore by Cauchy-Schwartz,
\begin{eqnarray*}
 &  & \mathbb{E}\left[\left|\left\langle B^{x,y(n)}\cdot\left\{ \zeta_{s,t}^{x}[\cdot,j]-n(X^{x}-X^{y(n)})\right\} \,,\,\zeta^{x}[\cdot,i]\right\rangle \right|\right]\\
 &  & \leq c\mathbb{E}\left[(1+2^{2p-1}\|X^{x}\|^{2p}+2^{2p-1})^{2}\right]^{1/2}\mathbb{E}\left[\|\zeta_{s,t}^{x}[\cdot,j]-n(X^{x}-X^{y(n)})\|^{2}\right]^{1/2},
\end{eqnarray*}
the first expectation is finite by Lemma \ref{lem:drift} and the
second converges to zero by Proposition \ref{prop:Gikhman}. Therefore
indeed (\ref{eq:B_term_to_zero}) holds which together with (\ref{eq:A_term_to_zero})
establishes (\ref{eq:deriv_2_part1}). 

Our next task is to prove (\ref{eq:deriv_2_part2}). Using Cauchy-Schwartz,

\begin{eqnarray*}
 &  & \left|\mathbb{E}\left[\left\langle \nabla f(X^{x})\,,\,\eta^{x}[\cdot,i,j]\right\rangle -\left\langle \nabla f(X^{y(n)})\,,\,n(\zeta^{x}[\cdot,i]-\zeta^{y(n)}[\cdot,i])\right\rangle \right]\right|\\
 &  & =\mathbb{E}\left[\left|\left\langle \nabla f(X^{x})-\nabla f(X^{y(n)})\,,\,\eta^{x}[\cdot,i,j]\right\rangle \right|\right]+\mathbb{E}\left[\left|\left\langle \nabla f(X^{y(n)})\,,\,\eta^{x}[\cdot,i,j]-n(\zeta^{x}[\cdot,i]-\zeta^{y(n)}[\cdot,i])\right\rangle \right|\right]\\
 &  & \leq\mathbb{E}\left[\|\nabla f(X^{x})-\nabla f(X^{y(n)})\|^{2}\right]^{1/2}\mathbb{E}\left[\|\eta^{x}[\cdot,i,j]\|^{2}\right]^{1/2}\\
 &  & +\mathbb{E}\left[\|\nabla f(X^{y(n)})\|^{2}\right]^{1/2}\mathbb{E}\left[\|\eta^{x}[\cdot,i,j]-n(\zeta^{x}[\cdot,i]-\zeta^{y(n)}[\cdot,i])\|^{2}\right]^{1/2}.
\end{eqnarray*}
The first expectation converges to zero as $n\to\infty$ by arguments
very similar to those used to prove (\ref{eq:deriv1_term1}). The
second expectation is finite, since we have already established that
$\|\eta^{x}[\cdot,i,j]\|$ is bounded by a finite constant, a.s. By
yet another dominated convergence argument, the third expectation
converges to $\mathbb{E}\left[\|\nabla f(X^{y(n)})\|^{2}\right]^{1/2}$,
which is finite by (\ref{eq:deriv1_term3}). The fourth expectation
converges to zero by Proposition \ref{prop:Gikhman}. The proof of
(\ref{eq:deriv_2_id}) is complete.

To complete the proof of the Lemma it remains to verify that $\nabla^{(2)}P_{s,t}f(x)$
is continuous in $x$, $s$ and $t$. From (\ref{eq:deriv_2_id})
we consider:
\begin{eqnarray*}
 &  & \mathbb{E}\left[\left\langle \nabla^{(2)}f(X^{x})\circ\zeta^{x}[\cdot,j]\,,\,\zeta^{x}[\cdot,i]\right\rangle \right]-\mathbb{E}\left[\left\langle \nabla^{(2)}f(X^{y})\circ\zeta^{y}[\cdot,j]\,,\,\zeta^{y}[\cdot,i]\right\rangle \right]\\
 &  & =\mathbb{E}\left[\left\langle \left\{ \nabla^{(2)}f(X^{x})-\nabla^{(2)}f(X^{y})\right\} \circ\zeta^{x}[\cdot,j]\,,\,\zeta^{x}[\cdot,i]\right\rangle \right]\\
 &  & +\mathbb{E}\left[\left\langle \nabla^{(2)}f(X^{y})\circ\left\{ \zeta^{x}[\cdot,j]-\zeta^{y}[\cdot,j]\right\} \,,\,\zeta^{x}[\cdot,i]\right\rangle \right]\\
 &  & +\mathbb{E}\left[\left\langle \nabla^{(2)}f(X^{y})\circ\zeta^{y}[\cdot,j]\,,\,\zeta^{x}[\cdot,i]-\zeta^{y}[\cdot,i]\right\rangle \right].
\end{eqnarray*}
All three of these expectations converge to zero as $y\to x$, by
arguments involving dominated convergence and the mean-square continuity
of $\zeta_{s,t}^{x}$ asserted in Proposition \ref{prop:Gikhman}.
The details are omitted. Similary 
\begin{eqnarray*}
 &  & \mathbb{E}\left[\left\langle \nabla f(X^{x})\,,\,\eta^{x}[\cdot,i,j]\right\rangle \right]-\mathbb{E}\left[\left\langle \nabla f(X^{y})\,,\,\eta^{y}[\cdot,i,j]\right\rangle \right]\\
 &  & =\mathbb{E}\left[\left\langle \nabla f(X^{x})-\nabla f(X^{y})\,,\,\eta^{x}[\cdot,i,j]\right\rangle \right]+\mathbb{E}\left[\left\langle \nabla f(X^{y})\,,\,\eta^{x}[\cdot,i,j]-\eta^{y}[\cdot,i,j]\right\rangle \right]
\end{eqnarray*}
converges to zero as $y\to x$ again using dominated convergence,
and the mean-square continuity in $x$ of $\eta^{x}$ asserted in
Proposition \ref{prop:Gikhman}. The continuity of $\frac{\partial^{2}P_{s,t}f}{\partial x_{i}\partial x_{j}}$
in $s$ and $t$ follows from very similar arguments to those used
to prove the continuity of $\frac{\partial P_{s,t}}{\partial x_{i}}$
in Lemma \ref{lem:deriv_1_id}.
\end{proof}

\begin{proof}
[Proof of Proposition \ref{prop:C_2^p_closed}] Lemmas \ref{lem:deriv_1_id}
and \ref{lem:deriv_2_id} together establish that for $q=1,2$, if
$f\in C_{q}^{p}(\mathbb{R}^{d})$ then $P_{s,t}f$ is $q$-times continuously
differentiable in $x$, and by (\ref{eq:drift_bound_uniform}), $P_{s,t}f\in C_{0}^{p}(\mathbb{R}^{d})$
. To complete the proof of (\ref{eq:C_closed_P}), it remains to obtain
suitable bounds on $\|\nabla P_{s,t}f\|$ and $\|\nabla^{(2)}P_{s,t}f\|_{\mathrm{H.S.}}^{2}$.
Using (\ref{eq:deriv_1_id}), (\ref{eq:deriv_2_id}), the almost sure
bounds on $\|\zeta_{s,t}^{x}\|_{\mathrm{H.S}}$, $\|\eta_{s,t}^{x}\|_{\mathrm{H.S}}$,
and Lemma \ref{lem:drift}, we have for some finite constant $c$
depending only on $f$, 
\begin{eqnarray}
\|\nabla P_{s,t}f(x)\|^{2} & = & \sum_{i=1}^{d}\mathbb{E}\left[\left\langle \nabla f(X_{s,t}^{x})\,,\,\zeta_{s,t}^{x}[\cdot,i]\right\rangle \right]^{2}\nonumber \\
 & \leq & \sum_{i=1}^{d}\mathbb{E}\left[\|\nabla f(X_{s,t}^{x})\|\|\zeta_{s,t}^{x}[\cdot,i]\|\right]^{2}\nonumber \\
 & \leq & dc^{2}c_{1}^{2}\left(1+\mathbb{E}\left[\|X_{s,t}^{x}\|^{2p}\right]\right)^{2}\nonumber \\
 & \leq & dc^{2}c_{1}^{2}\alpha_{p}^{2}(1+\|x\|^{2p})^{2}\label{eq:norm_of_1d}
\end{eqnarray}
and similarly
\begin{eqnarray}
\|\nabla^{(2)}P_{s,t}f(x)\|_{\mathrm{H.S.}}^{2} & = & \sum_{i,j=1}^{d}\left\{ \mathbb{E}\left[\left\langle \nabla^{(2)}f(X_{s,t}^{x})\circ\zeta_{s,t}^{x}[\cdot,j]\,,\,\zeta_{s,t}^{x}[\cdot,i]\right\rangle \right]+\mathbb{E}\left[\left\langle \nabla f(X_{s,t}^{x})\,,\,\eta_{s,t}^{x}[\cdot,i,j]\right\rangle \right]\right\} ^{2}\nonumber \\
 & \leq & \sum_{i,j=1}^{d}2\mathbb{E}\left[\|\nabla^{(2)}f(X_{s,t}^{x})\|_{\mathrm{H.S}}\|\zeta_{s,t}^{x}[\cdot,j]\|\|\zeta_{s,t}^{x}[\cdot,j]\|\right]^{2}+2\mathbb{E}\left[\|\nabla f(X_{s,t}^{x})\|\|\eta_{s,t}^{x}[\cdot,i,j]\|\right]^{2}\nonumber \\
 & \leq & 2d^{2}c_{1}^{4}c^{2}\left(1+\mathbb{E}\left[\|X_{s,t}^{x}\|^{2p}\right]\right)^{2}+2d^{2}c_{2}^{2}c^{2}\left(1+\mathbb{E}\left[\|X_{s,t}^{x}\|^{2p}\right]\right)^{2}\nonumber \\
 & \leq & 2d^{2}(c_{1}^{4}+c_{2}^{2})c^{2}\alpha_{p}^{2}(1+\|x\|^{2p})^{2}.\label{eq:norm_of_2d}
\end{eqnarray}
The proof of (\ref{eq:C_closed_P}) is then complete.

Now consider the first inclusion in (\ref{eq:C_transfer_L}). Observe
that since $f\in C_{1,2}^{p}([0,1]\times\mathbb{R}^{d})$ and (A\ref{hyp:U_time_cont})
holds, $|\partial_{t}f_{t}(x)|+|\mathcal{L}_{t}f_{t}(x)|$ is continuous
in $t$ and $x$, and there exists a finite constant $c$ such that
\begin{eqnarray}
|\partial_{t}f_{t}(x)|+|\mathcal{L}_{t}f_{t}(x)| & \leq & |\partial_{t}f_{t}(x)|+\epsilon^{-1}\|\nabla U_{t}(x)\|\|\nabla f_{t}(x)\|+\epsilon^{-1}|\Delta f_{t}(x)|\label{eq:inclusion_intermed}\\
 & \leq & c(1+\|x\|^{2p})\left[1+\epsilon^{-1}\|\nabla U_{t}(x)\|+d\epsilon^{-1}\right].\nonumber 
\end{eqnarray}
The proof of (\ref{eq:C_transfer_L}) is then completed by noting
(A\ref{hyp:U_time_reg}). 

For the remaining inclusion of (\ref{eq:C_transfer_L}), note that
$\mathcal{L}_{s}P_{s,t}f_{t}(x)$ is continuous in $s$ and $x$ by
(A\ref{hyp:U_time_cont}) and the second parts of Lemmas \ref{lem:deriv_1_id}
and \ref{lem:deriv_2_id}. Also 
\[
|\mathcal{L}_{s}P_{s,t}f_{t}(x)|\leq\epsilon^{-1}\|\nabla U_{s}(x)\|\|\nabla P_{s,t}f_{t}(x)\|+\epsilon^{-1}|\Delta P_{s,t}f_{t}(x)|,
\]
so the proof is complete upon again noting (A\ref{hyp:U_time_reg})
and the fact that the constants in (\ref{eq:norm_of_1d}), (\ref{eq:norm_of_2d})
are independent of $s$.
\end{proof}

\subsection{Proof and supporting results for Proposition \ref{prop:fwd_and_bck_eqs}\label{subsec:Proofs_fwd_bwd_eqs}}
\begin{proof}
[Proof of Proposition \ref{prop:fwd_and_bck_eqs}]Fix $s\in[0,1]$
and $x\in\mathbb{R}^{d}$. Define $T_{m}\coloneqq\inf\{t\geq s:\|X_{s,t}^{x}\|>m\}$,
the dependence of $T_{m}$ on $x$ and $s$ is not shown in the notation.
By non-explosivity of the process, $T_{m}\to\infty$, a.s. Write $\mathcal{L}f(t,x)\equiv\partial_{t}f(x)+\mathcal{L}_{t}f_{t}(x).$

By Dynkin's formula \cite[Lem. 3.2, p.73]{khasminskii2011stochastic},
\begin{equation}
\mathbb{E}\left[f(T_{m}\wedge t,X_{s,T_{m}\wedge t}^{x})\right]=f(s,x)+\mathbb{E}\left[\int_{s}^{T_{m}\wedge t}\mathcal{L}f(u,X_{x,u}^{x})\mathrm{d}u\right],\label{eq:dynkin_f}
\end{equation}
and therefore using equation (\ref{eq:C_transfer_L}) of Proposition
\ref{prop:C_2^p_closed},
\begin{eqnarray}
\sup_{m}|f(T_{m}\wedge t,X_{s,T_{m}\wedge t}^{x})| & \leq & |f(s,x)|+\sup_{m}\int_{s}^{T_{m}\wedge t}|\mathcal{L}f(u,X_{x,u}^{x})|\mathrm{d}u\nonumber \\
 & \leq & |f(s,x)|+\int_{s}^{t}c(1+\|X_{s,u}^{x}\|^{2p+1})\mathrm{d}u.\label{eq:f_sup}
\end{eqnarray}
The expected value of (\ref{eq:f_sup}) is finite due to equation
(\ref{eq:drift_bound_uniform}) of Lemma \ref{lem:drift} and Fubini,
so combined with the fact that $f(T_{m}\wedge t,X_{s,T_{m}\wedge t}^{x})\to f(t,X_{s,t}^{x})$,
a.s., dominated convergence may be applied to (\ref{eq:dynkin_f})
and Fubini applied once more to give:
\[
\mathbb{E}[f(t,X_{s,t}^{x})]=f(s,x)+\int_{s}^{t}\mathbb{E}\left[\mathcal{L}f(u,X_{s,u}^{x})\right]\mathrm{d}u.
\]
Integrating with respect to $\nu$ and using (\ref{eq:C_transfer_L}),
(\ref{eq:drift_bound_uniform}) and the assumption $\nu\in\mathcal{P}^{p+1/2}(\mathbb{R}^{d})$
to validate changing the order of integration we obtain
\begin{equation}
\int_{\mathbb{R}^{d}}\mathbb{E}[f(t,X_{s,t}^{x})]\nu(\mathrm{d}x)=\int_{\mathbb{R}^{d}}f(s,x)\nu(\mathrm{d}x)+\int_{s}^{t}\int_{\mathbb{R}^{d}}\mathbb{E}\left[\mathcal{L}f(u,X_{s,u}^{x})\right]\nu(\mathrm{d}x)\mathrm{d}u.\label{eq:fwd_eqn_integrated}
\end{equation}
By Lemma \ref{lem:continuity}, $\int_{\mathbb{R}^{d}}\mathbb{E}\left[\mathcal{L}f(u,X_{s,u}^{x})\right]\nu(\mathrm{d}x)$
is continuous in $u$, and so (\ref{eq:fwd_eqn_integrated}) is differentiable
in $t$ and (\ref{eq:fwd_equation}) holds.

Fix $t$ and write $g_{s}(x)\coloneqq P_{s,t}f(x)=\mathbb{E}[f(X_{s,t}^{x})]$,
and note that $g_{s}(x)=P_{s,s+\delta}P_{s+\delta,t}f(x)=\mathbb{E}[g_{s+\delta}(X_{s,s+\delta}^{x})]$.
Observe that by (\ref{eq:C_closed_P}) for any $s$, $x\mapsto g_{s}(x)\in C_{2}^{p}(\mathbb{R}^{d})$,
and also using (A\ref{hyp:U_time_reg}) and noting that the constants
in (\ref{eq:norm_of_1d}) and (\ref{eq:norm_of_2d}) do not depend
on $s$. there exists a finite constant $c$ such that
\begin{equation}
\sup_{\tau}|\Delta g_{\tau}(x)|\vee\sup_{\tau}\|\nabla g_{\tau}(x)\|\vee\sup_{\tau}\|\nabla U_{\tau}(x)\|\leq c(1+\|x\|^{2p}),\quad\forall x.\label{eq:g_bounds}
\end{equation}
Therefore by an application of Ito's formula, (\ref{eq:drift_bound_uniform})
and Fubini, for any $\delta>0$,
\begin{align}
g_{s}(x)-g_{s+\delta}(x) & =\mathbb{E}[g_{s+\delta}(X_{s,s+\delta}^{x})]-g_{s+\delta}(x)\nonumber \\
 & =\int_{s}^{s+\delta}\mathbb{E}\left[-\epsilon^{-1}\left\langle \nabla g_{s+\delta}(X_{s,u}^{x}),\nabla U_{u}(X_{s,u}^{x})\right\rangle +\epsilon^{-1}\Delta g_{s+\delta}(X_{s,u}^{x})\right]\mathrm{d}u\label{eq:g_inter}\\
 & =\mathbb{E}\left[-\epsilon^{-1}\left\langle \nabla g_{s+\delta}(X_{s,\tau}^{x}),\nabla U_{\tau}(X_{s,\tau}^{x})\right\rangle +\epsilon^{-1}\Delta g_{s+\delta}(X_{s,\tau}^{x})\right]\delta,\nonumber 
\end{align}
where the final equality is valid for some $\tau$ in the interval
$(s,s+\delta)$ since the expectation in (\ref{eq:g_inter}), which
is equal to $P_{s,u}\mathcal{L}_{u}g_{s+\delta}(x)$, depends continuously
on $u$ due to (\ref{eq:C_transfer_L}) and the continuity part of
Lemma \ref{lem:continuity}. Then using (\ref{eq:g_bounds}), (\ref{eq:drift_bound_uniform}),
Lemma \ref{lem:continuity} and dominated convergence in order to
interchange limits and expectation, 
\[
\lim_{\delta\to0}\frac{g_{s}(x)-g_{s+\delta}(x)}{\delta}=\mathcal{L}_{s}g_{s}(x).
\]
A similar argument applied to $[g_{s-\delta}(x)-g_{s}(x)]\delta^{-1}$
gives the same limit, which establishes (\ref{eq:bwd_equation}). 

It remains to check that the map $(s,x)\mapsto P_{s,t}f_{t}(x)$ is
a member of $C_{1,2}^{p+1/2}([0,1]\times\mathbb{R}^{d})$. By (\ref{eq:drift_bound_uniform}),
$\sup_{s,x}|P_{s,t}f_{t}(x)|/(1+\|x\|^{2p})<+\infty$; we have already
proved $P_{s,t}f_{t}(x)$ is differentiable in $s$ and its derivative
is $-\mathcal{L}_{s}P_{s,t}f_{t}(x)$; by Proposition \ref{prop:C_2^p_closed}
$\mathcal{L}_{s}P_{s,t}f_{t}(x)$ is continous in $s$ and $\sup_{s,x}|\mathcal{L}_{s}P_{s,t}f_{t}(x)|/(1+\|x\|^{2p+1})<+\infty$;
by (\ref{eq:C_closed_P}), for any $s$, $P_{s,t}f_{t}\in C_{2}^{p}(\mathbb{R}^{d})$,
and the proof is completed upon noting that the constants in (\ref{eq:norm_of_1d})
and (\ref{eq:norm_of_2d}) do not depend on $s$. 
\end{proof}

\section{Proofs for section \ref{sec:Quantitative-CLT-bound}}

\subsection{\label{subsec:Proof-of-Theorem-geyer}Proof of Theorem \ref{thm:variance-spectral-decomposition+Geyer}}
\begin{proof}[Proof of Theorem \ref{thm:variance-spectral-decomposition+Geyer}]
Let for any $s\in[0,1]$ and $f,g\in L_{2}(\pi_{s})$,$\bigl\langle f,g\bigr\rangle_{\pi_{s}}:=\int fg{\rm d}\pi_{s}$.
For $\ell>0$ the first statement follows from the fact that $-\mathcal{L}_{s}$
is a positive self-adjoint operator, implying that one can apply the
spectral decomposition theorem and establish that (\cite[Section 1.7.2 \& Appendix A4]{bakry2013analysis})
\begin{align*}
\bigl\langle f_{s},Q_{t}^{s}f_{s}\bigr\rangle_{\pi_{s}} & =\int_{0}^{\infty}\exp\big(-t\lambda\big)\nu_{s}({\rm d}\lambda),
\end{align*}
from which one can conclude by noting that, with ${\rm cov}[\cdot,\cdot]$
the covariance operator associated with $\mathbb{E}[\cdot]$, for
any $\epsilon>0$
\begin{align*}
{\rm var}\left[\epsilon^{-1/2}h\sum_{i=0}^{n-1}f_{s}(Y_{ih}^{s,\epsilon})\right] & =\epsilon^{-1}h^{2}\left(n{\rm var}_{\pi_{s}}[f_{s}]+2\sum_{k=1}^{n-1}(n-k){\rm cov}\big[f_{s}(Y_{0}^{s,\epsilon}),f_{s}(Y_{kh}^{s,\epsilon})\big]\right)\\
 & =\epsilon^{-1}h(nh)\left({\rm var}_{\pi_{s}}[f_{s}]+2\sum_{k=1}^{n-1}(1-k/n)\bigl\langle f_{s},Q_{kh\epsilon^{-1}}^{s}f_{s}\bigr\rangle_{\pi_{s}}\right),
\end{align*}
and using standard convergence arguments. The case $\ell=0$ is naturally
standard. For $\lambda\in(0,\infty)$ (we have a positive spectral
gap, so all cases are covered) consider the function
\[
\varphi_{\lambda}(\ell):=\ell\frac{1+\exp(-\ell\lambda)}{1-\exp(-\ell\lambda)}=\ell\left(\frac{2}{1-\exp(-\ell\lambda)}-1\right).
\]
We show that it is non-decreasing on $(0,\infty)$, as a function
of $\ell$. We have
\begin{align*}
\varphi'_{\lambda}(\ell) & =\left(\frac{2}{1-\exp(-\ell\lambda)}-1\right)-\ell\frac{2\lambda\exp(-\ell\lambda)}{(1-\exp(-\ell\lambda))^{2}}\\
 & =\frac{(1+\exp(-\ell\lambda))(1-\exp(-\ell\lambda))-2\ell\lambda\exp(-\ell\lambda)}{(1-\exp(-\ell\lambda))^{2}}.\\
 & =\frac{1-\exp(-2\ell\lambda)-2\ell\lambda\exp(-\ell\lambda)}{(1-\exp(-\ell\lambda))^{2}}.
\end{align*}
Consider the function $D(a):=1-\exp(-2a)-2a\exp(-a)$ and note that
its derivative is $D'(a)=2\exp(-2a)+2(a-1)\exp(-a)=2\exp(-a)[a-1+\exp(-a)]$.
Therefore $D'(a)\geq0$ and since $D(0)=0$ we deduce $D(a)\geq0$
for $a\geq0$. We therefore conclude that $\varphi'_{\lambda}(\ell)\geq0$
for $\ell>0$. Finally we notice that for $\lambda>0$
\[
\lim_{\ell\rightarrow0}\ell\frac{1+\exp(-\ell\lambda)}{1-\exp(-\ell\lambda)}=2/\lambda
\]
and therefore for $\ell>0$, $\varphi_{\lambda}(\ell)>2/\lambda$,
from which we conclude. 
\end{proof}

\subsection{\label{subsec:proofs-Quantitative-Martingale-approxim}Proofs for
subsection \ref{subsec:Quantitative-Martingale-approxim}}
\begin{proof}[Proof of Lemma \ref{lem:controlbiasforCLT}]
From Corollary \ref{cor:dimension-dependence-MSE-1} and Lemma \ref{lem:boundoneminusexponentialetc}
\begin{align*}
|B_{\epsilon,h}| & \leq C\frac{\epsilon}{K}\left[r_{2}(d)+\frac{Kh/\epsilon}{1-e^{-Kh/\epsilon}}r_{3}(d)\right]\\
 & \leq C\frac{\epsilon}{K}\left[r_{2}(d)+\gimel r_{3}(d)\right],
\end{align*}
and therefore
\begin{align*}
\mathbb{P}\big[|B_{\epsilon,h}|/\sqrt{\epsilon\upsilon(\epsilon)}>\varepsilon_{1}/2\big] & =\mathbb{I}\{2|B_{\epsilon,h}|\epsilon^{-1}>\sqrt{\epsilon\upsilon(\epsilon)}\varepsilon_{1}\epsilon^{-1}\},\\
 & \leq\mathbb{I}\{F>\sqrt{\upsilon(\epsilon)}\epsilon^{-1/2}\varepsilon_{1}\}.
\end{align*}
For the second part, from Corollary \ref{cor:dimension-dependence-MSE-1}
$F(d)$ grows at most polynomially in $d$, say $F(d)\leq Cd^{f}$.
Then
\[
\upsilon_{d}\big(\epsilon(d)\big)^{1/2}\epsilon(d)^{-1/2}\varepsilon_{1}(d)=\big[\sigma_{\ell}^{2}(d)+\upsilon_{d}\big(\epsilon(d)\big)-\sigma_{\ell}^{2}(d)\big]^{1/2}\epsilon(d)^{-1/2}\varepsilon_{1}(d).
\]
From (A\ref{hyp:polynomialdependence}) $\sigma_{\ell}^{2}(d)\geq Cd^{-r}$
for some $r>0$ and from Theorem \ref{thm:CLT-variance-convergence}
there exists $a_{0}>0$ such that for any $a>a_{0}$ one can make
$\upsilon_{d}\big(\epsilon(d)\big)-\sigma_{\ell}^{2}(d)$ vanish faster
than $d^{-r}$. Let $a_{1}\geq a_{0}$, then for $d$ sufficiently
large,
\[
\upsilon_{d}\big(\epsilon(d)\big)^{1/2}\epsilon(d)^{-1/2}\varepsilon_{1}(d)\geq\big[\sigma_{\ell}^{2}(d)/2\big]^{1/2}\epsilon(d)^{-1/2}\varepsilon_{1}(d).
\]
Now choose $\varepsilon_{1}(d)=\epsilon(d)^{c}$ with $c<1/2$, $a>a_{1}\vee\big[(r/2+f)/(1/2-c)\big]$
and $\epsilon(d)=Cd^{-a}$, then $\sigma_{\ell}(d)\epsilon(d)^{-1/2+c}F(d)^{-1}$
diverges and we conclude. 
\end{proof}
\begin{proof}[Proof of Lemma \ref{lem:controlremainderforCLT}]
From Markov's inequality, Lemma \ref{lem:Poisson-and-martingale}
and Lemma \ref{lem:intermediateresultsonV} 
\begin{align*}
\mathbb{P}\big[h|\gamma_{0,\epsilon}(X_{0}^{\epsilon})|/\sqrt{\epsilon\upsilon(\epsilon)}>\varepsilon_{1}/2\big] & \leq2\frac{h}{\varepsilon_{1}\sqrt{\epsilon\upsilon(\epsilon)}}\mu_{0}\big(|\gamma_{0,\epsilon}|\big)\\
\leq C\frac{\alpha_{p}\epsilon^{1/2}}{\varepsilon_{1}\sqrt{\upsilon(\epsilon)}}\frac{\|\nabla f\|_{p}h\epsilon^{-1}}{1-\exp\big(-K\epsilon^{-1}h\big)} & \left(\mu_{0}\bar{V}^{(p+1/2)}\right)^{2}\\
\leq C\frac{\alpha_{p}\epsilon^{1/2}}{\varepsilon_{1}\sqrt{\upsilon(\epsilon)}}\frac{\gimel\|\nabla f\|_{p}}{K} & \left(\mu_{0}\bar{V}^{(p+1/2)}\right)^{2}.
\end{align*}
The proof is now similar to that of the second part of Lemma \ref{lem:controlbiasforCLT}.
\end{proof}

\subsection{\label{subsec:Proofs-Quantitative-bound-CLT-variance}Proofs for
subsection \ref{subsec:Quantitative-bound-CLT-variance}}
\begin{proof}[Proof of Lemma \ref{prop:B_epsilon}]
From Lemma \ref{lem:Poisson-and-martingale} we know that for $k\in\{0,\ldots,n-1\}$
$\big(\gamma_{k,\epsilon},P_{kh,(k+1)h}^{\epsilon}\gamma_{k+1,\epsilon}\big)\in C_{2}^{p}(\mathbb{R}^{d})$,
and as a result, using Lemma \ref{lem:W(delta_x,nu)}, $\Big(\gamma_{k,\epsilon}^{2},\big(P_{kh,(k+1)h}^{\epsilon}\gamma_{k+1,\epsilon}\big)^{2}\Big)\in C_{2}^{2p}(\mathbb{R}^{d})$
and from Proposition \ref{prop:C_2^p_closed} we have that $P_{(k-1)h,kh}^{\epsilon}\Big(\gamma_{k,\epsilon}^{2}\Big),P_{(k-1)h,kh}^{\epsilon}\Big(\big(P_{kh,(k+1)h}^{\epsilon}\gamma_{k+1,\epsilon}\big)^{2}\Big)\in C_{2}^{2p}(\mathbb{R}^{d})$.
Further, from Lemma \ref{lem:Poisson-and-martingale}, we have for
$\gimel>1$ and $\gimel^{-1}<1-Kh\epsilon^{-1}/2$
\begin{align}
\big|P_{kh,(k+1)h}^{\epsilon}\gamma_{k+1,\epsilon}(x)\big|\vee\big|\gamma_{k,\epsilon}(x)\big| & \leq C\epsilon h^{-1}\alpha_{p}\frac{\gimel\|\nabla f\|_{p}}{K}\mu_{0}\bar{V}^{(p+1/2)}\cdot\bar{V}^{(p+1/2)}(x)\label{eq:term2inDepsilon}
\end{align}
and therefore from Lemma \ref{lem:W(delta_x,nu)} and Lemma \ref{lem:drift}
\begin{align*}
P_{(k-1)h,kh}^{\epsilon}\Big(\big|\bar{f}_{kh,\epsilon}\gamma_{k,\epsilon}\big|\Big)(x)\vee & P_{(k-1)h,kh}^{\epsilon}\Big(\big|\bar{f}_{kh,\epsilon}\cdot P_{kh,(k+1)h}^{\epsilon}\gamma_{k+1,\epsilon}\big|\Big)(x)\\
 & \leq C\epsilon h^{-1}\alpha_{p}\frac{\gimel[1+\alpha_{p}\mu_{0}\bar{V}^{(p)}]\|f\|_{p}\|\nabla f\|_{p}}{K}\mu_{0}\bar{V}^{(p+1/2)}\alpha_{2p+1/2}\bar{V}^{(2p+1/2)}(x),
\end{align*}
since
\begin{align*}
\big|\bar{f}_{kh,\epsilon}(x)\big|/\bar{V}^{(p)}(x) & \leq\|f\|_{p}+\|f\|_{p}\sup_{s\in[0,1]}\mu_{s}(\bar{V}^{(p)})/\bar{V}^{(p)}(x)\\
 & \leq\|f\|_{p}\big[1+\alpha_{p}\mu_{0}\bar{V}^{(p)}\big]
\end{align*}
We deduce that for $q>1$
\begin{align*}
\epsilon^{-1}h^{2}\| & \big[P_{0,h}^{\epsilon}\gamma_{1,\epsilon}(X_{0}^{\epsilon})\big]^{2}-\mathbb{E}\left(\big[P_{0,h}^{\epsilon}\gamma_{1,\epsilon}(X_{0}^{\epsilon})\big]^{2}\right)\|_{L_{q}}\\
\leq & C\epsilon\left(\alpha_{p}\frac{\gimel[1+\alpha_{p}\mu_{0}\bar{V}^{(p)}]\|f\|_{p}\|\nabla f\|_{p}}{K}\mu_{0}\bar{V}^{(p+1/2)}\right)^{2}\cdot\left(\alpha_{q(2p+1)}\mu_{0}\bar{V}^{(q[2p+1])}\right)^{1/q}
\end{align*}
Further
\begin{equation}
P_{(n-2)h,(n-1)h}^{\epsilon}\bar{f}_{n-1,\epsilon}^{2}(x)\leq[1+\alpha_{p}\mu_{0}\bar{V}^{(p)}]^{2}\|f\|_{p}^{2}\alpha_{2p}\bar{V}^{(2p)}(x)\label{eq:term1inDepsilon}
\end{equation}
and therefore, for $q>1$
\begin{align*}
\epsilon^{-1}h^{2}\|P_{(n-2)h,(n-1)h}^{\epsilon}\bar{f}_{n-1,\epsilon}^{2}(X_{(n-2)h}^{\epsilon}) & -\mathbb{E}\left(P_{(n-2)h,(n-1)h}^{\epsilon}\bar{f}_{n-1,\epsilon}^{2}(X_{(n-2)h}^{\epsilon})\right)\|_{L_{q}}\\
\leq & C\epsilon^{-1}h^{2}\alpha_{2p}[1+\alpha_{p}\mu_{0}\bar{V}^{(p)}]^{2}\|f\|_{p}^{2}\left(\alpha_{2pq}\mu_{0}\bar{V}^{(2pq)}\right)^{1/q}.
\end{align*}
Now
\begin{align*}
\|D_{\epsilon}-\upsilon(\epsilon)\|_{L_{1+\kappa}} & \leq\|\tilde{D}_{\epsilon}-\mathbb{E}\big(\tilde{D}_{\epsilon}\big)\|_{L_{1+\kappa}}+\epsilon^{-1}h^{2}\|\big[P_{0,h}^{\epsilon}\gamma_{1,\epsilon}(X_{0}^{\epsilon})\big]^{2}-\mathbb{E}\left(\big[P_{0,h}^{\epsilon}\gamma_{1,\epsilon}(X_{0}^{\epsilon})\big]^{2}\right)\|_{L_{1+\kappa}}\\
 & \hspace{1cm}+\epsilon^{-1}h^{2}\|P_{(n-2)h,(n-1)h}^{\epsilon}\bar{f}_{n-1,\epsilon}^{2}(X_{(n-2)h}^{\epsilon})-\mathbb{E}\left(P_{(n-2)h,(n-1)h}^{\epsilon}\bar{f}_{n-1,\epsilon}^{2}(X_{(n-2)h}^{\epsilon})\right)\|_{L_{1+\kappa}}.
\end{align*}
Now we apply Lemma \ref{lem:rough-Lp-norm-for-averages} for the sum
of terms $h\epsilon^{-1}\mathbb{E}\bigl[f_{kh}(X_{kh}^{\epsilon})\big(\gamma_{k,\epsilon}(X_{kh}^{\epsilon})+P_{kh,(k+1)h}^{\epsilon}\gamma_{k+1,\epsilon}(X_{kh}^{\epsilon})\big)\mid\mathcal{F}_{(k-1)h}\bigr]$,
$q=1+\kappa$, $r,m>0$ such that $r>q/2>1$ and $m=(qr-2)/(r-1)$
\begin{align*}
\|\tilde{D}_{\epsilon}-\mathbb{E}\big(\tilde{D}_{\epsilon}\big)\|_{L_{q}} & \leq C\big(\|\tilde{D}_{\epsilon}-\mathbb{E}\big(\tilde{D}_{\epsilon}\big)\|_{L_{2}}\big)^{2/(qr)}\left(\alpha_{2pm}^{1/m}\big(\mu_{0}\bar{V}^{(2pm)}\big)^{1/m}+\alpha_{2p}\mu_{0}\bar{V}^{(2p)}\right)^{1-2/(qr)}\\
 & \hspace{2cm}\times\left(\alpha_{p}\alpha_{2p+1/2}\frac{\gimel[1+\alpha_{p}\mu_{0}\bar{V}^{(p)}]\|f\|_{p}\|\nabla f\|_{p}}{K}\mu_{0}\bar{V}^{(p+1/2)}\right)^{1-2/(qr)}.
\end{align*}
We conclude.
\end{proof}
\begin{proof}[Proof of Lemma \ref{lem:C_epsilon-moment-D_epsilon}]
For $C_{\epsilon}$ we first apply Minkowski's inequality followed
with Lemma \ref{lem:Poisson-and-martingale}, Jensen's inequality
and Lemma \ref{lem:drift}
\begin{align*}
\mathbb{E}\Bigl[\xi_{k,\epsilon}^{2(1+\kappa)}\Bigr]^{1/(2+2\kappa)} & \leq\mathbb{E}\Bigl[\big|\gamma_{k,\epsilon}(X_{kh}^{\epsilon})\big|{}^{2(1+\kappa)}\Bigr]^{1/(2+2\kappa)}+\mathbb{E}\Bigl[\big|P_{(k-1)h,kh}^{\epsilon}\gamma_{k,\epsilon}(X_{(k-1)h}^{\epsilon})\big|^{2(1+\kappa)}\Bigr]^{1/(2+2\kappa)}\\
 & \leq C\alpha_{p}\frac{\|\nabla f\|_{p}}{1-\exp\big(-K\epsilon^{-1}h\big)}\mu_{0}\bar{V}^{(p+1/2)}\cdot\mathbb{E}\Bigl[\bar{V}^{(p+1/2)}(X_{(k-1)h}^{\epsilon}){}^{2(1+\kappa)}\Bigr]^{1/(2+2\kappa)}\\
 & \leq C\alpha_{p}\frac{\|\nabla f\|_{p}}{1-\exp\big(-K\epsilon^{-1}h\big)}\mu_{0}\bar{V}^{(p+1/2)}\cdot\Big[\alpha_{2(1+\kappa)(p+1/2)}\mu_{0}\bar{V}^{(2[1+\kappa][p+1/2])}\Bigr]^{1/(2+2\kappa)}.
\end{align*}
Therefore
\[
C_{\epsilon}\leq C\upsilon(\epsilon){}^{-(1+\kappa)}h^{\kappa}\Bigl\{\alpha_{p}\frac{\|\nabla f\|_{p}(\epsilon^{-1}h)^{1/2}}{1-\exp\big(-K\epsilon^{-1}h\big)}\mu_{0}\bar{V}^{(p+1/2)}\Bigr\}^{2(1+\kappa)}\cdot\alpha_{2(1+\kappa)(p+1/2)}\mu_{0}\bar{V}^{(2[1+\kappa][p+1/2])}
\]
Now from Lemma \ref{lem:boundoneminusexponentialetc}, for $1/\gimel\leq1-Kh\epsilon^{-1}/2$
\[
\frac{(\epsilon^{-1}h)^{1/2}}{1-\exp\big(-K\epsilon^{-1}h\big)}\leq\frac{\gimel}{K}(\epsilon h^{-1})^{1/2}
\]
and the term dependent on $\epsilon$ and $h$ in the upper bound
is indeed of the form $h^{\kappa}(\epsilon h^{-1})^{1+\kappa}=(\epsilon h^{-1+\kappa/(1+\kappa)})^{1+\kappa}$.
For the second statement, from Lemma \ref{lem:Poisson-and-martingale}
\begin{multline*}
\Big|\mathbb{E}\bigl[\bar{f}_{kh,\epsilon}(X_{kh}^{\epsilon})\big(\gamma_{k,\epsilon}(X_{kh}^{\epsilon})+P_{kh,(k+1)h}^{\epsilon}\gamma_{k+1,\epsilon}(X_{kh}^{\epsilon})\big)\mid\mathcal{F}_{(k-1)h}\bigr]\Big|\\
\leq C\alpha_{p}\frac{\|\nabla f\|_{p}[1+\alpha_{p}\mu_{0}\bar{V}^{(p)}]\|f\|_{p}}{1-\exp\big(-K\epsilon^{-1}h\big)}\mu_{0}\bar{V}^{(p+1/2)}\cdot P_{(k-1)h,kh}\bigl(\bar{V}^{(p+1/2)}\bar{V}^{(p)}\bigr)(X_{(k-1)h}^{\epsilon})\\
\leq C\alpha_{p}\alpha_{2p+1/2}\frac{\|\nabla f\|_{p}[1+\alpha_{p}\mu_{0}\bar{V}^{(p)}]\|f\|_{p}}{1-\exp\big(-K\epsilon^{-1}h\big)}\mu_{0}\bar{V}^{(p+1/2)}\cdot\bar{V}^{(2p+1/2)}(X_{(k-1)h}^{\epsilon}),
\end{multline*}
where we have used Lemmas \ref{lem:W(delta_x,nu)} and \ref{lem:drift}.
Consequently
\begin{align*}
\mathbb{E}\big[\big|\tilde{D}_{\epsilon}\big|{}^{1+\kappa}\big]^{1/(1+\kappa)} & \leq C\alpha_{p}\alpha_{2p+1/2}\frac{\|\nabla f\|_{p}[1+\alpha_{p}\mu_{0}\bar{V}^{(p)}]\|f\|_{p}\epsilon^{-1}h}{1-\exp\big(-K\epsilon^{-1}h\big)}\mu_{0}\bar{V}^{(p+1/2)}h^{-1}\sum_{k=1}^{n-2}\mathbb{E}\big[\big|\bar{V}^{(2p+1/2)}(X_{(k-1)h}^{\epsilon})\big|{}^{1+\kappa}\big]^{1/(1+\kappa)}\\
 & \leq C\alpha_{p}\alpha_{2p+1/2}\alpha_{(1+\kappa)(2p+1/2)}\frac{\|\nabla f\|_{p}[1+\alpha_{p}\mu_{0}\bar{V}^{(p)}]\|f\|_{p}\epsilon^{-1}h}{1-\exp\big(-K\epsilon^{-1}h\big)}\mu_{0}\bar{V}^{(p+1/2)}\cdot\bigl\{\mu_{0}\bar{V}^{([1+\kappa][2p+1/2])}\bigr\}{}^{1/(1+\kappa)}
\end{align*}
and from (\ref{eq:term2inDepsilon}) and (\ref{eq:term1inDepsilon})
in the proof of Lemma \ref{prop:B_epsilon} we can conclude.
\end{proof}
In Lemma \ref{prop:B_epsilon} it is required to control the $L_{q}$
convergence of the term $D_{\epsilon}$ defined above Proposition
\ref{prop:CLTforMartingale}, which is an ergodic average. It is possible
to get estimates of this quantity by using a Martingale approximation,
followed by the use of Burkholder's inequality. We however use here
a more direct route since no precise estimates are needed.
\begin{lem}
\label{lem:rough-Lp-norm-for-averages}Let $p\geq1$, $f\in C_{0,2}^{p}([0,1]\times\mathbb{R}^{d})$
and $q\geq1$. Then for any $r>1\vee(2/q)$ and with $m=(qr-2)/(r-1)$
\[
\Vert S_{\epsilon,h}-\mathbb{E}\big[S_{\epsilon,h}\big]\Vert_{L_{q}}\leq C\left(\Vert S_{\epsilon,h}-\mathbb{E}\big[S_{\epsilon,h}\big]\Vert_{L_{2}}\right)^{\frac{2}{qr}}\|f\|_{p}^{1-\frac{2}{qr}}\left(\alpha_{pm}^{1/m}\left(\mu_{0}\bar{V}^{(pm)}\right)^{1/m}+\alpha_{p}\mu_{0}\bar{V}^{(p)}\right)^{1-\frac{2}{qr}}.
\]
\end{lem}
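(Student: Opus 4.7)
The strategy is to combine Hölder's interpolation inequality between $L_{2}$ and $L_{m}$ with a crude $L_{m}$ bound on the ergodic sum $S_{\epsilon,h}$ obtained via Minkowski's inequality and the moment bound (\ref{eq:drift_bound_uniform}) of Lemma \ref{lem:drift}. Throughout I write $Z := S_{\epsilon,h} - \mathbb{E}[S_{\epsilon,h}]$.

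First I establish the interpolation. The hypothesis $r > 1 \vee (2/q)$ ensures $qr > 2$, so $m = (qr-2)/(r-1)$ is strictly positive, and moreover $m \geq q$ whenever $q \geq 2$. In this range, Hölder's inequality applied to the factorisation $|Z|^{q} = |Z|^{2a}\cdot|Z|^{q-2a}$ with a suitable pair of conjugate exponents gives the classical interpolation
\[
\|Z\|_{L_{q}} \leq \|Z\|_{L_{2}}^{\theta}\|Z\|_{L_{m}}^{1-\theta}, \qquad \tfrac{1}{q} = \tfrac{\theta}{2} + \tfrac{1-\theta}{m}.
\]
Solving for $\theta$ and using the algebraic identity $m(r-1) = qr - 2$ collapses the expression to $\theta = 2/(qr)$, which is exactly the exponent appearing on $\|Z\|_{L_{2}}$ in the statement.

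Next I bound $\|Z\|_{L_{m}}$. The triangle inequality gives $\|Z\|_{L_{m}} \leq \|S_{\epsilon,h}\|_{L_{m}} + |\mathbb{E}[S_{\epsilon,h}]|$, and Minkowski's inequality applied term by term to the defining sum of $S_{\epsilon,h}$ reduces the problem to bounding $\|f_{kh}(X_{kh}^{\epsilon})\|_{L_{m}}$ and $\mathbb{E}|f_{kh}(X_{kh}^{\epsilon})|$ uniformly in $k$. The pointwise estimate $|f_{t}(x)| \leq \|f\|_{p}\bar{V}^{(p)}(x)$ and the convexity bound $(1 + a)^{m} \leq C_{m}(1 + a^{m})$ reduce these in turn to time-marginal moments of the form $\mu_{kh}^{\epsilon}\bar{V}^{(pm)}$ and $\mu_{kh}^{\epsilon}\bar{V}^{(p)}$, both of which are controlled by the drift bound (\ref{eq:drift_bound_uniform}) of Lemma \ref{lem:drift} in terms of the corresponding initial moments under $\mu_{0}$ and the constants $\alpha_{pm}$ and $\alpha_{p}$. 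Since $hn \leq 1$, the sum over $k$ is absorbed and one obtains the required bound on $\|Z\|_{L_{m}}$ in precisely the form appearing within the square brackets of the statement.

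Combining the two steps immediately yields the lemma for $q \geq 2$. The case $1 \leq q < 2$ is not covered by the interpolation of the first step (since then $m < q$), but follows at once from Jensen's inequality $\|Z\|_{L_{q}} \leq \|Z\|_{L_{2}}$ together with the Minkowski/drift bound of the second step applied at exponent $2$. There is no genuine obstacle: the only point requiring attention is the convexity constant relating $(1 + a)^{m}$ to $1 + a^{m}$ (whose direction depends on whether $m \geq 1$ or $m < 1$) and the verification that Lemma \ref{lem:drift} is applicable at the moment orders used; both issues are routine and absorbed into the generic constant $C$.
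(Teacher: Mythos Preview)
Your approach is the paper's: H\"older interpolation between $L_{2}$ and $L_{m}$ with weight $\theta=2/(qr)$, then the triangle inequality and the drift bound (\ref{eq:drift_bound_uniform}). One correction: your case split at $q=2$ is unnecessary and your claim that the interpolation fails for $q<2$ is wrong. The paper writes the H\"older step directly as
\[
\mathbb{E}|Z|^{q}=\mathbb{E}\bigl[|Z|^{2/r}|Z|^{q-2/r}\bigr]\leq\bigl(\mathbb{E}|Z|^{2}\bigr)^{1/r}\bigl(\mathbb{E}|Z|^{m}\bigr)^{(r-1)/r}
\]
with conjugate exponents $(r,r/(r-1))$; unwinding this is exactly your interpolation inequality, and the H\"older exponents are $r>1$ and $r/(r-1)>1$ for every $q\geq1$ with $r>1\vee(2/q)$, so no separate treatment of $1\leq q<2$ is needed. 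Your Jensen detour there would not in any case reproduce the stated exponents, but the point is moot since the lemma is only invoked in Lemma~\ref{prop:B_epsilon} with $q=1+\kappa>2$.
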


\begin{proof}
Let $l:=m/(q-\frac{2}{r}),$ then $r^{-1}+l^{-1}=1$ and we apply
Hölder's inequality, 

\begin{align*}
\mathbb{E}\left[\left(S_{\epsilon,h}-\mathbb{E}\big[S_{\epsilon,h}\big]\right)^{q}\right] & =\mathbb{E}\left[\left(S_{\epsilon,h}-\mathbb{E}\big[S_{\epsilon,h}\big]\right)^{\frac{2}{r}}\left(S_{\epsilon,h}-\mathbb{E}\big[S_{\epsilon,h}\big]\right)^{q-\frac{2}{r}}\right]\\
 & \leq\mathbb{E}\left[\left(S_{\epsilon,h}-\mathbb{E}\big[S_{\epsilon,h}\big]\right)^{2}\right]^{1/r}\mathbb{E}\left[\left(S_{\epsilon,h}-\mathbb{E}\big[S_{\epsilon,h}\big]\right)^{\left(q-\frac{2}{r}\right)l}\right]^{1/l}.
\end{align*}
Using the triangle inequality we get
\begin{align*}
\Vert S_{\epsilon,h}-\mathbb{E}\big[S_{\epsilon,h}\big]\Vert_{L_{q}} & \leq\left(\Vert S_{\epsilon,h}-\mathbb{E}\big[S_{\epsilon,h}\big]\Vert_{L_{2}}\right)^{\frac{2}{qr}}\left(\left\Vert S_{\epsilon,h}\right\Vert _{L_{m}}+\|f\|_{p}\sup_{t\in[0,1]}\mu_{t}\bar{V}^{(p)}\right)^{1-\frac{2}{qr}}.
\end{align*}
Now, noting that $\mathbb{E}\big[S_{\epsilon,h}\big]=h\sum_{i=0}^{n-1}\mu_{ih}^{\epsilon}f_{ih}$,
by the triangle inequality and from Lemma \ref{lem:W(delta_x,nu)}
and Lemma \ref{lem:drift}
\begin{align*}
\left\Vert S_{\epsilon,h}\right\Vert _{L_{m}} & \leq h\sum_{i=0}^{n-1}\|f_{ih}\|_{p}\mathbb{\mathbb{E}}\left[\bar{V}^{(p)}\big(X_{ih}\big)^{m}\right]^{1/m}\\
 & \leq\|f\|_{p}2^{m-1}h\sum_{i=0}^{n-1}\mathbb{\mathbb{E}}\left[\bar{V}^{(pm)}\big(X_{ih}\big)^{m}\right]^{1/m}.\\
 & \leq\|f\|_{p}2^{m-1}\alpha_{pm}^{1/m}\left(\mu_{0}\bar{V}^{(pm)}\right)^{1/m}.
\end{align*}
\end{proof}

\subsection{\label{subsec:Proofs-Quantitative-CLT-constants}Proofs for subsection
\ref{subsec:Quantitative-CLT-constants}}
\begin{proof}[Proof of Lemma \ref{lem:poisson-homogeneous-approximation}]
Consider first the case $\ell=0$. Let $m(\cdot):\mathbb{R}_{+}\rightarrow\mathbb{N}$
be such that $\lim_{\epsilon\rightarrow0}m(\epsilon)h(\epsilon)\epsilon^{-1}=\infty$
and for $s\in[0,1]$
\[
I_{s}(\epsilon,x):=\int_{0}^{m(\epsilon)h(\epsilon)\epsilon^{-1}}Q_{t}^{s}f_{s}(x){\rm d}t,
\]
with the convention that $I_{s}(0,x):=\lim_{\epsilon\rightarrow0}I_{s}(\epsilon,x)$
(which exists, by absolute summability). Then for $k\in\{0,\ldots,n-1\}$
\begin{align*}
\mathbb{E}\left[f_{kh}(X_{kh}^{\epsilon})\big(\epsilon^{-1}h(\epsilon)\eta_{k,\epsilon}(X_{kh}^{\epsilon})-g_{kh}(X_{kh}^{\epsilon})\big)\right]=\mathbb{E}\left[f_{kh}(X_{kh}^{\epsilon})\big(R_{1}(\epsilon,X_{kh}^{\epsilon})+R_{2}(\epsilon,X_{kh}^{\epsilon})+R_{3}(\epsilon,X_{kh}^{\epsilon})\big)\right]
\end{align*}
where
\begin{align*}
R_{1}(\epsilon,x):= & h(\epsilon)\epsilon^{-1}\left(\sum_{i=0}^{m(\epsilon)-1}Q_{ih\epsilon^{-1}}^{kh}f_{kh}(x)\right)-I_{kh}\big(\epsilon,x\big),\\
R_{2}(\epsilon,x):= & h(\epsilon)\epsilon^{-1}\sum_{i=m(\epsilon)}^{n(\epsilon)-1}Q_{ih\epsilon^{-1}}^{kh}f_{kh}(x),\\
R_{3}(\epsilon,x):= & I_{kh}(\epsilon,x)-I_{kh}(0,x).
\end{align*}
For the term involving $R_{1}(\epsilon,x)$ first notice that by the
classical homogeneous equivalent of Kolmogorov's equation in Proposition
\ref{prop:fwd_and_bck_eqs}, Lemma \ref{lem:drift_Y_process}, (A\ref{hyp:U_time_reg})
and Lemma \ref{lem:intermediateresultsonV} for any $s\in[0,1]$ and
$t\in\mathbb{R}_{+}$, 
\begin{align*}
\Big|\partial_{t}Q_{t}^{s}f_{s}(x)\Big| & =\big|Q_{t}^{s}\mathcal{L}_{s}f_{s}(x)\big|\\
 & \leq Q_{t}^{s}\Big(\big|\bigl\langle\nabla U_{s},\nabla f_{s}\bigr\rangle\big|+\|\Delta f_{s}\|\big)(x),\\
 & \leq Q_{t}^{s}\Big(\|\nabla U_{s}\|\cdot\|\nabla f_{s}\|+\|\Delta f_{s}\|\big)(x),\\
 & \leq L\cdot\|\nabla f\|_{p}Q_{t}^{s}\Big(\bar{V}^{(1/2)}\bar{V}^{(p)}\big)(x)+\|\Delta f\|_{p}Q_{t}^{s}\Big(\bar{V}^{(p)}\Big)(x),\\
 & \leq C\tilde{\alpha}_{p+1/2}L\cdot\|\nabla f\|_{p}\bar{V}^{(p+1/2)}(x)+C\tilde{\alpha}_{p}\|\Delta f\|_{p}\bar{V}^{(p)}(x),\\
 & \leq C\left\{ L\tilde{\alpha}_{p+1/2}+\tilde{\alpha}_{p}\right\} \vvvert f\vvvert_{p}\bar{V}^{(p+1/2)}(x).
\end{align*}
Let $M(x):=\sup_{(s,t)\in[0,1]\times\mathbb{R}_{+}}\Big|\partial_{t}Q_{t}^{s}f_{s}(x)\Big|$
(which can be upper bounded with the above), then we know that the
difference between the Riemann sum with step-size $h(\epsilon)\epsilon^{-1}$
and its integral on the interval $[0,m(\epsilon)h(\epsilon)\epsilon^{-1}]$
yields
\[
\big|R_{1}(\epsilon,x)\big|\leq M(x)h(\epsilon)\epsilon^{-1}\big(m(\epsilon)h(\epsilon)\epsilon^{-1}\big)^{2},
\]
leading to
\begin{multline*}
\left|\mathbb{E}\left[f_{kh}(X_{kh}^{\epsilon})R_{1}(\epsilon,X_{kh}^{\epsilon})\right]\right|\\
\leq C\left\{ L\tilde{\alpha}_{p+1/2}+\tilde{\alpha}_{p}\right\} \vvvert f\vvvert_{p}\cdot\sup_{s\in[0,1]}\mu_{s}\big(|f_{s}|\bar{V}^{(p+1/2)}\big)\cdot h(\epsilon)\epsilon^{-1}\big(m(\epsilon)h(\epsilon)\epsilon^{-1}\big)^{2},\\
\leq A_{1}\cdot h(\epsilon)\epsilon^{-1}\big(m(\epsilon)h(\epsilon)\epsilon^{-1}\big)^{2}.
\end{multline*}
where
\[
A_{1}:=C\alpha_{2p+1/2}\left\{ L\tilde{\alpha}_{p+1/2}+\tilde{\alpha}_{p}\right\} \cdot\vvvert f\vvvert_{p}^{2}\cdot\mu_{0}\big(\bar{V}^{(2p+1/2)}\big).
\]
We define and upper bound the following quantities,
\begin{align*}
R_{2,1}:= & h(\epsilon)\epsilon^{-1}\sum_{i=m(\epsilon)}^{n(\epsilon)-1}{\rm var}_{\mu_{kh}^{\epsilon}}\left[Q_{ih\epsilon^{-1}}^{s}f_{kh}\right]^{1/2}\\
 & \leq\frac{1}{K}\frac{\exp\Big(-Km(\epsilon)h(\epsilon)\epsilon^{-1}\Big)}{\big[1-\exp\big(-Kh(\epsilon)\epsilon^{-1}\big)\big]/(Kh(\epsilon)\epsilon^{-1})}\sup_{(s,t)\in[0,1]\times\mathbb{R}_{+}}{\rm var}_{\mu_{s}^{\epsilon}Q_{t}^{s}}\big[f_{s}\big]^{1/2},\\
R_{2,2}:= & h(\epsilon)\epsilon^{-1}\sum_{i=m(\epsilon)}^{n(\epsilon)-1}\big|\mathbb{E}\left[Q_{ih\epsilon^{-1}}^{s}f_{kh}(X_{kh}^{\epsilon})\right]\big|\\
 & \leq\frac{\tilde{\alpha}_{p}}{K}\frac{\exp\Big(-Km(\epsilon)h(\epsilon)\epsilon^{-1}\Big)}{\big[1-\exp\big(-Kh(\epsilon)\epsilon^{-1}\big)\big]/(Kh(\epsilon)\epsilon^{-1})}\|f\|_{p}\sup_{(s,t)\in[0,1]\times\mathbb{R}_{+}}\mu_{t}^{\epsilon}\big[W^{(p)}(\delta_{\cdot},\pi_{s})\big],\\
R_{3,1}:= & \int_{m(\epsilon)h\epsilon^{-1}}^{\infty}{\rm var}_{\mu_{kh}^{\epsilon}}\big[Q_{t}^{kh}f_{kh}\big]^{1/2}{\rm d}t\leq\frac{1}{K}\exp\Big(-Km(\epsilon)h(\epsilon)\epsilon^{-1}\Big)\sup_{(s,t)\in[0,1]\times\mathbb{R}_{+}}{\rm var}_{\mu_{s}^{\epsilon}Q_{t}^{s}}\left[f_{s}\right]^{1/2},\\
R_{3,2}:= & \int_{m(\epsilon)h\epsilon^{-1}}^{\infty}\big|\mathbb{E}\big[Q_{t}^{s}f_{s}(X_{s}^{\epsilon})\big]\big|{\rm d}t\leq\frac{\tilde{\alpha}_{p}}{K}\exp\Big(-Km(\epsilon)h(\epsilon)\epsilon^{-1}\Big)\cdot\|f\|_{p}\sup_{(s,t)\in[0,1]\times\mathbb{R}_{+}}\mu_{t}^{\epsilon}\big[W^{(p)}(\delta_{\cdot},\pi_{s})\big],
\end{align*}
where the upper bounds follow from the homogeneous equivalent of Lemma
\ref{lem:L_2_convergence}, (\ref{eq:Q_minus_pi}) and Jensen's inequality.
We now apply successively the Cauchy-Schwarz and Minkowski inequalities
(the latter in its sum and integral form), and note the standard inequality
$\mathbb{E}\big[Z^{2}\big]^{1/2}\leq{\rm var}\big[Z\big]^{1/2}+\big|\mathbb{E}\big[Z\big]\big|$
for any random variable $Z$ 
\begin{multline}
\left|\mathbb{E}\left[f_{kh}(X_{kh}^{\epsilon})[R_{2}(\epsilon,X_{kh}^{\epsilon})+R_{3}(\epsilon,X_{kh}^{\epsilon})]\right]\right|\leq\mathbb{E}\left[f_{kh}(X_{kh}^{\epsilon})^{2}\right]^{1/2}\mathbb{E}\left[\mathbb{E}[R_{2}(\epsilon,X_{kh}^{\epsilon})+R_{3}(\epsilon,X_{kh}^{\epsilon})\mid\mathcal{F}_{kh}]^{2}\right]^{1/2}\\
\leq\mathbb{E}\left[f_{kh}(X_{kh}^{\epsilon})^{2}\right]^{1/2}\left\{ \mathbb{E}\left[\mathbb{E}[R_{2}(\epsilon,X_{kh}^{\epsilon})\mid\mathcal{F}_{kh}]^{2}\right]^{1/2}+\mathbb{E}\left[\mathbb{E}[R_{3}(\epsilon,X_{kh}^{\epsilon})\mid\mathcal{F}_{kh}]^{2}\right]^{1/2}\right\} ,\label{eq:CS-R1-R2-R3}
\end{multline}
\begin{align*}
\mathbb{E}\left[\mathbb{E}[R_{2}(\epsilon,X_{kh}^{\epsilon})\mid\mathcal{F}_{kh}]^{2}\right]^{1/2} & \leq h(\epsilon)\epsilon^{-1}\sum_{i=m(\epsilon)}^{n(\epsilon)-1}\mathbb{E}\left[\big(Q_{ih\epsilon^{-1}}^{kh}f_{kh}(X_{kh}^{\epsilon})\big)^{2}\right]^{1/2}\\
 & \leq R_{2,1}+R_{2,2},
\end{align*}
and similarly
\begin{align*}
\mathbb{E}\left[\mathbb{E}[R_{3}(\epsilon,X_{kh}^{\epsilon})\mid\mathcal{F}_{kh}]^{2}\right]^{1/2} & \leq\int_{m(\epsilon)h(\epsilon)\epsilon^{-1}}^{\infty}\mathbb{E}\left[\big(Q_{t}^{kh}f_{kh}(X_{kh}^{\epsilon})\big)^{2}\right]^{1/2}{\rm d}t\\
 & \leq R_{3,1}+R_{3,2}.
\end{align*}
Note that from Lemmas \ref{lem:W(delta_x,nu)}, \ref{lem:drift} and
\ref{lem:Vboundonvariances},
\begin{align*}
\mu_{t}^{\epsilon}\big[W^{(p)}(\delta_{\cdot},\pi_{s})\big] & \leq C\mu_{t}^{\epsilon}\bar{V}^{(p+1/2)}\cdot\pi_{s}\bar{V}^{(p+1/2)}\\
 & \leq C\alpha_{p+1/2}\mu_{0}\bar{V}^{(p+1/2)}\cdot\sup_{s\in[0,1]}\pi_{s}\bar{V}^{(p+1/2)},
\end{align*}
and together with Lemma \ref{lem:Vboundonvariances} we deduce that
\begin{multline*}
\sum_{i=1}^{2}\big|R_{2,i}\big|+\big|R_{3,i}\big|\leq C\left[1+\frac{Kh(\epsilon)\epsilon^{-1}}{1-\exp\big(-Kh(\epsilon)\epsilon^{-1}\big)}\right]\Bigl\{\|\nabla f\|_{p}\big[K^{-1}+K_{\mu_{0}}^{-1}\big]^{1/2}\big(\tilde{\alpha}_{2p}\alpha_{2p}\mu_{0}\bar{V}^{(2p)}\big)^{1/2}\\
+\tilde{\alpha}_{p}\alpha_{p+1/2}\|f\|_{p}\mu_{0}\bar{V}^{(p+1/2)}\cdot\sup_{s\in[0,1]}\pi_{s}\bar{V}^{(p+1/2)}\Bigr\} K^{-1}\exp\Big(-Km(\epsilon)h(\epsilon)\epsilon^{-1}\Big)\\
\leq\bar{A}_{2}\exp\Big(-Km(\epsilon)h(\epsilon)\epsilon^{-1}\Big)
\end{multline*}
where the last inequality holds for $1/\gimel<1-Kh(\epsilon)\epsilon^{-1}/2$,
thanks to Lemma \ref{lem:boundoneminusexponentialetc}, and
\[
\bar{A}_{2}:=CK^{-1}\bigl\{1+\gimel\bigr\}\bigl\{\tilde{\alpha}_{p}\alpha_{p+1/2}\sup_{s\in[0,1]}\pi_{s}\bar{V}^{(p+1/2)}+\bigl(\tilde{\alpha}_{2p}\alpha_{2p}\big[K^{-1}+K_{\mu_{0}}^{-1}\big]\bigr)^{1/2}\bigr\}\vvvert f\vvvert\mu_{0}\bar{V}^{(2p)}.
\]
Together with (\ref{eq:CS-R1-R2-R3}) we deduce that for $1/\gimel<1-Kh(\epsilon)\epsilon^{-1}/2$
\begin{multline*}
\left|\mathbb{E}\left[f_{kh}(X_{kh}^{\epsilon})\big(\epsilon^{-1}h(\epsilon)\eta_{k,\epsilon}(X_{kh}^{\epsilon})-g_{kh}(X_{kh}^{\epsilon})\big)\right]\right|\leq A_{1}h(\epsilon)\epsilon^{-1}\big(m(\epsilon)h(\epsilon)\epsilon^{-1}\big)^{2}+A_{2}\exp\Big(-Km(\epsilon)h(\epsilon)\epsilon^{-1}\Big)
\end{multline*}
with $A_{2}:=C\bar{A}_{2}\cdot\|f\|_{p}\alpha_{2p}\mu_{0}\bar{V}^{(2p)}$
and by taking $Km(\epsilon)h(\epsilon)\epsilon^{-1}=\lceil-\log(h(\epsilon)\epsilon^{-1})\rceil$
we obtain
\begin{align*}
h(\epsilon)\epsilon^{-1}\left|\mathbb{E}\left[f_{kh}(X_{kh}^{\epsilon})\big(\eta_{k,n}(X_{kh}^{\epsilon})-g_{kh}(X_{kh}^{\epsilon})\big)\right]\right| & \leq h(\epsilon)\epsilon^{-1}[A_{2}+A_{1}\big(\lceil-\log(h(\epsilon)\epsilon^{-1})\rceil/K\big)^{2}].
\end{align*}
The scenario $\ell>0$ is more direct and can be bounded in a similar
way to the term dependent on $R_{2}$ above\textendash as a result
for $k\in\{0,\ldots,n-1\}$ 
\begin{multline*}
\Bigl|\mathbb{E}\left[f_{kh}(X_{kh}^{\epsilon})\big(\ell\eta_{k,\epsilon}(X_{kh}^{\epsilon})-g_{kh}(X_{kh}^{\epsilon})\big)\right]\Bigr|=\ell\Big|\mathbb{E}\left[f_{kh}(X_{kh}^{\epsilon})\sum_{i=n(\epsilon)}^{\infty}Q_{i\ell}^{kh}f_{kh}(X_{kh}^{\epsilon})\right]\Big|\\
\leq C\ell^{2}\mu_{0}\big(\bar{V}^{(p)}\big)^{2}\vvvert f\vvvert_{p}^{2}\Bigl\{\tilde{\alpha}_{p}\alpha_{p+1/2}\sup_{s\in[0,1]}\pi_{s}\bar{V}^{(p+1/2)}+\bigl(\tilde{\alpha}_{2p}\alpha_{2p}\big[K^{-1}+K_{\mu_{0}}^{-1}\big]\bigr)^{1/2}\Bigr\}\frac{\exp\Big(-Kn(\epsilon)\ell\Big)}{1-\exp\big(-K\ell\big)}.
\end{multline*}
\end{proof}
\begin{proof}[Proof of Lemma \ref{lem:sublemma-cv-poisson-homogeneous}]
For the first statement, simply notice that for any $k\in\{0,\ldots,n-1\}$
\[
\gamma_{k,\epsilon}(x)-\eta_{k,\epsilon}(x)=T_{1,k,\epsilon}+T_{2,k,\epsilon}+T_{3,k,\epsilon}+T_{4,k,\epsilon}.
\]
From Proposition \ref{prop:fwd_and_bck_eqs} (and its time-homogeneous
version) we deduce that for $0\leq s<u\leq1$
\begin{align*}
Q_{u-s}^{s,\epsilon}f_{u}(x)-P_{s,u}^{\epsilon}f_{u}(x) & =\int_{0}^{u-s}\frac{\partial}{\partial t}Q_{t}^{s,\epsilon}P_{s+t,u}^{\epsilon}f_{u}(x){\rm d}t\\
 & =\int_{0}^{u-s}Q_{t}^{s,\epsilon}\left(\frac{\partial}{\partial t}P_{s+t,u}^{\epsilon}f_{u}+\mathcal{L}_{s}P_{s+t,u}^{\epsilon}f_{u}\right)(x){\rm d}t\\
 & =\int_{0}^{u-s}Q_{t}^{s,\epsilon}\left(\mathcal{L}_{s}-\mathcal{L}_{s+t}\right)P_{s+t,u}^{\epsilon}f_{u}(x){\rm d}t\\
 & =-\epsilon^{-1}\int_{0}^{u-s}Q_{t}^{s,\epsilon}\left(\bigl\langle\nabla U_{s}-\nabla U_{s+t},\nabla P_{s+t,u}^{\epsilon}f_{u})\bigr\rangle\right)(x){\rm d}t.
\end{align*}
Now by application of the Cauchy-Schwarz inequality, Lemma \ref{lem:com_relation}
and (A\ref{hyp:U_time_cont}), we deduce that
\begin{align*}
\big|Q_{u-s}^{s,\epsilon}f_{u}(x)-P_{s,u}^{\epsilon}f_{u}(x)\big| & \leq\epsilon^{-1}M\cdot\int_{0}^{u-s}Q_{t}^{s,\epsilon}\left(\sqrt{\bar{V}_{s}}\cdot P_{s+t,u}^{\epsilon}\|\nabla f_{u}\|\right)(x)\cdot t\exp\big(-K\epsilon^{-1}(u-s-t)\big){\rm d}t\\
 & \leq\epsilon^{-1}M\sup_{t\in[0,u-s]}Q_{t}^{s,\epsilon}\left(\sqrt{\bar{V}_{s}}\cdot P_{s+t,u}^{\epsilon}\|\nabla f_{u}\|\right)(x)\cdot\frac{1}{2}(u-s)^{2}\\
 & \leq C\epsilon^{-1}M(u-s)^{2}\sup_{t\in[0,u-s]}Q_{t}^{s,\epsilon}\left(\sqrt{\bar{V}_{s}}\cdot P_{s+t,u}^{\epsilon}\|\nabla f_{u}\|\right)(x).
\end{align*}
Further by assumption $\|\nabla f\|_{p}<\infty$ and from Lemma \ref{lem:drift}
\begin{align*}
\sup_{t\in[0,u-s]}Q_{t}^{s,\epsilon}\left(\sqrt{\bar{V}_{s}}\cdot P_{s+t,u}^{\epsilon}\|\nabla f_{u}\|\right)(x) & \leq\|\nabla f\|_{p}\cdot\sup_{t\in[0,u-s]}Q_{t}^{s,\epsilon}\left(\sqrt{\bar{V}_{s}}\cdot P_{s+t,u}^{\epsilon}\bar{V}^{(p)}\right)(x)\\
 & \leq\alpha_{p}\|\nabla f\|_{p}\cdot\sup_{t\in[0,u-s]}Q_{t}^{s,\epsilon}\left(\sqrt{\bar{V}_{s}}\cdot\bar{V}^{(p)}\right)(x).
\end{align*}
Now from Proposition \ref{lem:intermediateresultsonV} and from Lemma
\ref{lem:drift_Y_process}, for $s,t\in[0,1]$ and $\epsilon>0$
\begin{align*}
Q_{t}^{s,\epsilon}\left(\sqrt{\bar{V}_{s}}\cdot\bar{V}^{(p)}\right)(x) & \leq C\tilde{\alpha}_{p+1/2}\sqrt{\bar{V}(x_{s}^{\star})}\cdot\bar{V}^{(p+1/2)}(x).
\end{align*}
We also know that 
\begin{align*}
\mu_{s}\left(\big|f_{s}\big|\bar{V}^{(p+1/2)}\right) & \leq\|f\|_{p}\cdot\mu_{s}\left(\bar{V}^{(p)}\bar{V}^{(p+1/2)}\right)\\
 & \leq C\|f\|_{p}\cdot\mu_{s}\left(\bar{V}^{(2p+1/2)}\right)\\
 & \leq C\alpha_{2p+1/2}\|f\|_{p}\cdot\mu_{0}\left(\bar{V}^{(2p+1/2)}\right),
\end{align*}
where we have used Lemma \ref{lem:intermediateresultsonV} and Lemma
\ref{lem:drift}. Since $u\mapsto u-kh$ is non-decreasing, non-negative
for $u\geq kh$ and $\lfloor\tau_{k,\epsilon}h^{-1}\rfloor h\leq\tau_{k,\epsilon}$
\begin{align*}
\big|T_{1,k,\epsilon}\big| & \leq C\alpha_{p}\epsilon^{-1}M\|\nabla f\|_{p}\cdot\sup_{s,t\in[0,1]}Q_{t}^{s,\epsilon}\left(\sqrt{\bar{V}_{s}}\cdot\bar{V}^{(p)}\right)(x)\int_{kh}^{\tau_{k,\epsilon}}(u-kh)^{2}{\rm d}u\\
 & =C\alpha_{p}\daleth^{3}M\|\nabla f\|_{p}\cdot\sup_{s,t\in[0,1]}Q_{t}^{s,\epsilon}\left(\sqrt{\bar{V}_{s}}\cdot\bar{V}^{(p)}\right)(x)\cdot\epsilon^{-1}h^{3\iota},
\end{align*}
and with the bounds on $\sup_{s,t\in[0,1]}Q_{t}^{s,\epsilon}\left(\sqrt{\bar{V}_{s}}\cdot\bar{V}^{(p)}\right)(x)$
and $\mu_{s}\left(\big|f_{s}\big|\bar{V}^{(p+1/2)}\right)$ we obtain
\begin{multline*}
\max_{k\in\{0,\ldots,n-1\}}\mathbb{E}\left[\Bigl|f_{kh}(X_{kh}^{\epsilon})T_{1,k,\epsilon}\Bigr|\right]\leq\\
C\daleth^{3}\alpha_{p}\tilde{\alpha}_{p+1/2}\alpha_{2p+1/2}M\cdot\vvvert f\vvvert_{p}^{2}\cdot\sup_{s\in[0,1]}\bar{V}(x_{s}^{\star})^{1/2}\cdot\mu_{0}\left(\bar{V}^{(2p+1/2)}\right)\cdot\epsilon^{-1}h^{3\iota}.
\end{multline*}
\[
\]
 For the term $T_{2,k,\epsilon}$ we use the smoothness $s\mapsto f_{s}(x)$
and its derivative, the fact that $i\mapsto i-k$ is non-decreasing
and non-negative for $i\geq k$ and again the fact that $\lfloor\tau_{k,\epsilon}h^{-1}\rfloor h\leq\tau_{k,\epsilon}$
\begin{align*}
\big|T_{2,k,\epsilon}\big| & \leq\sum_{i=k}^{\lfloor\tau_{k,\epsilon}h^{-1}\rfloor-1}Q_{(i-k)h}^{kh,\epsilon}\big(\big|f_{ih}-f_{kh}\big|\big)(x)\\
 & \leq\sup_{s,t\in[0,1]}Q_{t}^{s,\epsilon}\left(\sup_{u\in[0,1]}\big|\partial_{t}f_{u}(\cdot)\big|\right)(x)\cdot\sum_{i=k}^{\lfloor\tau_{k,\epsilon}h^{-1}\rfloor-1}ih-kh\\
 & \leq\sup_{s,t\in[0,1]}Q_{t}^{s,\epsilon}\left(\sup_{u\in[0,1]}\big|\partial_{t}f_{u}(\cdot)\big|\right)(x)\cdot\int_{kh}^{\tau_{k,\epsilon}}(u-kh){\rm d}u\\
 & =\frac{\daleth^{2}}{2}\sup_{s,t\in[0,1]}Q_{t}^{s,\epsilon}\left(\sup_{u\in[0,1]}\big|\partial_{t}f_{u}(\cdot)\big|\right)(x)\cdot h^{2\iota}.
\end{align*}
Now by assumption $\|\partial_{t}f\|_{p}<\infty$ and from Lemma \ref{lem:drift_Y_process},
for $s,t\in[0,1]$ and $\epsilon>0$
\begin{align*}
Q_{t}^{s,\epsilon}\left(\sup_{u\in[0,1]}\big|\partial_{t}f_{u}(\cdot)\big|\right)(x) & \leq\|\partial_{t}f\|_{p}Q_{t}^{s,\epsilon}\bar{V}^{(p)}(x)\\
 & \leq\tilde{\alpha}_{p}\|\partial_{t}f\|_{p}\bar{V}^{(p)}(x).
\end{align*}
Therefore 
\begin{align*}
\max_{k\in\{0,\ldots,n-1\}}\mathbb{E}\left[\Bigl|f_{kh}(X_{kh}^{\epsilon})T_{2,k,\epsilon}\Bigr|\right] & \leq C\daleth^{2}\tilde{\alpha}_{p}h^{2\iota}\|\partial_{t}f\|_{p}\sup_{s\in[0,1]}\mu_{s}\left(\big|f_{s}\big|\bar{V}^{(p)}\right).\\
 & \leq C\daleth^{2}\tilde{\alpha}_{p}\alpha_{2p}\|\partial_{t}f\|_{p}\|f\|_{p}\mu_{0}\Bigl(\bar{V}^{(2p)}\Bigr)h^{2\iota}.\\
 & \leq C\daleth^{2}\tilde{\alpha}_{p}\alpha_{2p}\vvvert f\vvvert_{p}^{2}\mu_{0}\Bigl(\bar{V}^{(2p)}\Bigr)h^{2\iota}.
\end{align*}
For $T_{3,k,\epsilon}$ we note that
\[
\big|\mathbb{E}\left[f_{kh}(X_{kh}^{\epsilon})T_{3,k,\epsilon}\right]\big|=\big|\mu_{kh}^{\epsilon}f_{kh}\sum_{i=k}^{\lfloor\tau_{k,\epsilon}h^{-1}\rfloor-1}\mu_{ih}^{\epsilon}f_{ih}\big|
\]
and therefore from Lemma \ref{lem:Vboundonvariances} and the fact
that $\lfloor\tau_{k,\epsilon}h^{-1}\rfloor-k\leq\daleth h^{\iota-1}$we
deduce that for $k\in\{0,\ldots,n-1\}$
\begin{multline*}
\Big|\mathbb{E}\left[f_{kh}(X_{kh}^{\epsilon})T_{3,k,\epsilon}\right]\Big|\\
\leq C\daleth\left\{ \vvvert f\vvvert_{p}\sup_{s\in[0,1]}\pi_{s}\bar{V}^{(2[p\vee p_{0}]+1/2)}\Big[\frac{\|\nabla\phi\|_{p_{0}}}{K^{2}}\epsilon+\alpha_{p}\mu_{0}\bar{V}^{(p+1/2)}\exp\big(-K\epsilon^{-1}hk\big)\Big]\right\} ^{2}h^{\iota-1}.
\end{multline*}
and in particular for $k\geq\lceil-\ln(\epsilon)/(K\epsilon^{-1}h)\rceil$
and letting
\[
B:=\daleth\left\{ \vvvert f\vvvert_{p}\sup_{s\in[0,1]}\pi_{s}\bar{V}^{(2[p\vee p_{0}]+1/2)}\Big[\frac{\|\nabla\phi\|_{p_{0}}}{K^{2}}+\alpha_{p}\mu_{0}\bar{V}^{(p+1/2)}\Big]\right\} ^{2}
\]
we have
\[
\Big|\mathbb{E}\left[f_{kh}(X_{kh}^{\epsilon})T_{3,k,\epsilon}\right]\Big|\leq CB\epsilon^{2}h^{\iota-1}
\]
As a result
\begin{align*}
2\epsilon^{-1}h^{2}\sum_{k=1}^{n-1}\Big|\mathbb{E}\left[f_{kh}(X_{kh}^{\epsilon})T_{3,k,\epsilon}\right]\Big| & \leq C\cdot B\epsilon^{-1}h\Big\{ h\lceil-\ln(\epsilon)/(K\epsilon^{-1}h)\rceil+\epsilon^{2}h^{\iota-1}\Big\}\\
 & \leq C\cdot B\Big\{-h\ln(\epsilon)/K+\epsilon^{-1}h^{2}+\epsilon h^{\iota}\Big\}.
\end{align*}
Finally, defining
\begin{align*}
\mathcal{T}_{4,1}:= & \sum_{i=\lfloor\tau_{k,\epsilon}h^{-1}\rfloor}^{n-1}\Bigl|\mathbb{E}\Big[f_{kh}(X_{kh}^{\epsilon})P_{kh,ih}^{\epsilon}f_{ih,\epsilon}\big(X_{kh}^{\epsilon}\big)\Big]\Bigr|\\
\mathcal{T}_{4,2}:= & \sum_{i=\lfloor\tau_{k,\epsilon}h^{-1}\rfloor}^{n-1}\Bigl|\mathbb{E}\Big[f_{kh}(X_{kh}^{\epsilon})Q_{(i-k)h}^{kh,\epsilon}f_{kh}\big(X_{kh}^{\epsilon}\big)\Big]\Bigr|
\end{align*}
we have

\begin{align*}
\Big|\mathbb{E}\left[f_{kh}(X_{kh}^{\epsilon})T_{4,k,\epsilon}\right]\Big| & \leq\mathcal{T}_{4,1}+\mathcal{T}_{4,2}.
\end{align*}
The term $\mathcal{T}_{4,2}$ is bounded in the same way the $R_{2}$
dependent term in the proof of Lemma \ref{lem:poisson-homogeneous-approximation},
yielding
\begin{align*}
\mathcal{T}_{4,2} & \leq C\alpha_{2p}\mu_{0}\big(\bar{V}^{(2p)}\big)^{2}\vvvert f\vvvert_{p}^{2}\Bigl\{\tilde{\alpha}_{p}\alpha_{p+1/2}\sup_{s\in[0,1]}\pi_{s}\bar{V}^{(p+1/2)}+\bigl(\tilde{\alpha}_{2p}\alpha_{2p}\big[K^{-1}+K_{\mu_{0}}^{-1}\big]\bigr)^{1/2}\Bigr\}\tfrac{\exp\Big(-K[\lfloor\tau_{k,\epsilon}h^{-1}\rfloor-k]h\epsilon^{-1}\Big)}{[1-\exp\big(-Kh\epsilon^{-1}\big)]/h\epsilon^{-1}}\\
 & \leq C\gimel\alpha_{2p}\mu_{0}\big(\bar{V}^{(2p)}\big)^{2}\vvvert f\vvvert_{p}^{2}\Bigl\{\tilde{\alpha}_{p}\alpha_{p+1/2}\sup_{s\in[0,1]}\pi_{s}\bar{V}^{(p+1/2)}+\bigl(\tilde{\alpha}_{2p}\alpha_{2p}\big[K^{-1}+K_{\mu_{0}}^{-1}\big]\bigr)^{1/2}\Bigr\}\tfrac{\exp\Big(-K[\daleth h^{\iota-1}-1]h\epsilon^{-1}\Big)}{K}.
\end{align*}
Now we note that by the Cauchy-Schwarz inequality, Lemma \ref{lem:L_2_convergence}
and Lemma \ref{lem:Vboundonvariances},
\begin{align*}
\mathcal{T}_{4,1} & \leq\Big|\mathbb{E}\left[f_{kh}^{2}(X_{kh}^{\epsilon})\right]^{1/2}\sum_{i=\lfloor\tau_{k,\epsilon}h^{-1}\rfloor}^{n-1}{\rm var}_{\mu_{kh}^{\epsilon}}\big[P_{kh,ih}^{\epsilon}f_{ih}\big]^{1/2}\\
 & \leq C\alpha_{2p}^{1/2}\|f\|_{p}\mu_{0}\big(\bar{V}^{(2p)}\big)^{1/2}\|\nabla f\|_{p}\bigl(\alpha_{2p}\cdot\left[K^{-1}+K_{\mu_{0}}^{-1}\right]\mu_{0}\bar{V}^{(2p)}\bigr){}^{1/2}\frac{\exp\Big(-K\epsilon^{-1}\big(\lfloor\tau_{k,\epsilon}h^{-1}\rfloor-k\big)h\Big)}{1-\exp\Big(-K\epsilon^{-1}h\Big)}.\\
 & \leq C\gimel\vvvert f\vvvert_{p}^{2}\mu_{0}\bar{V}^{(2p)}\alpha_{2p}\left[K^{-1}+K_{\mu_{0}}^{-1}\right]{}^{1/2}\frac{\exp\Big(-K[\daleth h^{\iota-1}-1]h\epsilon^{-1}\Big)}{K}\epsilon h^{-1}.
\end{align*}
because $\lfloor\tau_{k,\epsilon}h^{-1}\rfloor h\geq\tau_{k,\epsilon}-h$. 
\end{proof}
\begin{proof}[Proof of Lemma \ref{lem:variance-cv-ergodicity}]
First we establish some intermediate results. Choose $r(0,\epsilon):=\lceil-\ln(\epsilon)/K\rceil$
and $r(\ell,\epsilon):=\lceil-\ln(\epsilon)/(K\ell)\rceil$ for $\ell>0$,
then
\[
\begin{cases}
e^{-K\ell r(\ell,\epsilon)}, & \ell>0\\
e^{-Kr(\ell,\epsilon)}, & \ell=0
\end{cases}\leq\epsilon.
\]
From Lemma \ref{lem:poisson_continuity_etc} this implies that for
any $\ell\geq0$ such that $\gimel^{-1}\leq1-K\ell/2$
\[
\Delta_{s,r(\ell,\epsilon)}(x)\leq C\gimel\frac{\tilde{\alpha}_{p}}{K}\|f\|_{p}\sup_{s\in[0,1]}\pi_{s}\bar{V}^{(p+1/2)}\cdot\bar{V}^{(p+1/2)}(x)\epsilon,
\]
and
\[
\sup_{s,r\in[0,1]\times\mathbb{N}\cup\{\infty\}}\|g_{s,r}\|_{p+1/2}\leq C\gimel\frac{\tilde{\alpha}_{p}}{K}\|f\|_{p}\sup_{s\in[0,1]}\pi_{s}\bar{V}^{(p+1/2)}.
\]
From the homogeneous version of Lemma \ref{lem:com_relation} and
Lemma \ref{lem:drift} we have that for any $x\in\mathbb{R}^{d}$
\[
\|\nabla g_{s,r}(x)\|\leq\frac{\tilde{\alpha}_{p}}{K}\|\nabla f\|_{p}\bar{V}^{(p)}(x)\begin{cases}
\frac{K\ell}{1-e^{-K\ell}}, & \ell>0,\\
1, & \ell=0
\end{cases}.
\]
and since $\bar{V}^{(p)}(x)\leq C\bar{V}^{(p+1/2)}(x)$ we deduce
that for any $\ell\geq0$ such that $\gimel^{-1}\leq1-K\ell/2$
\[
\sup_{(s,r)\in[0,1]\times\mathbb{N}}\|g_{s,r}\|_{p}\vee\|\nabla g_{s,r}\|_{p+1/2}\leq C\gimel\frac{\tilde{\alpha}_{p}}{K}\|\nabla f\|_{p}\sup_{s\in[0,1]}\pi_{s}\bar{V}^{(p+1/2)}.
\]
Now for $r\in\mathbb{N}$
\[
\big|\Upsilon_{3,\epsilon}\big|\leq\Upsilon_{3,\epsilon,r}^{(1)}+\Upsilon_{3,\epsilon,r}^{(2)},
\]
with
\begin{align*}
\Upsilon_{3,\epsilon,r}^{(1)}:= & 2h\Big|\sum_{k=1}^{n-2}\mathbb{E}\left(f_{kh}(X_{kh}^{\epsilon})g_{kh,r}(X_{kh}^{\epsilon})\right)-\pi_{kh,r}\big(f_{kh}g_{kh,r}\big)\Big|,\\
\Upsilon_{3,\epsilon,r}^{(2)}:= & 2h\sum_{k=1}^{n-2}\mathbb{E}\left(\big|f_{kh}(X_{kh}^{\epsilon})\big|\Delta_{kh,r}(X_{kh}^{\epsilon})\right)+\pi_{kh}\big(\big|f_{kh}\big|\Delta_{kh,r}\big).
\end{align*}
Note that from above and Lemma \ref{lem:intermediateresultsonV} we
have $\|\nabla(fg_{s,r})\|_{2p+1/2}\leq4\|\nabla f\|_{p}\|g_{s,r}\|_{p+1/2}+4\|f\|_{p}\|\nabla g_{s,r}\|_{p+1/2}$,
and we deduce that for any $\ell\geq0$ and $\gimel^{-1}\leq1-Kh\epsilon^{-1}/2$
\[
\sup_{r\in\mathbb{N}}\|\nabla(fg_{r})\|_{2p+1/2}\leq C\gimel\frac{\tilde{\alpha}_{p}}{K}\vvvert f\vvvert_{p}^{2}\sup_{s\in[0,1]}\pi_{s}\bar{V}^{(p+1/2)}.
\]
From Lemma \ref{lem:Vboundonvariances} 
\begin{multline*}
\Upsilon_{3,\epsilon,r}^{(1)}\leq C\sup_{(s,r)\in[0,1]\times\mathbb{N}}\|\nabla(fg_{s,r})\|_{2p+1/2}\times\\
\times\sup_{s\in[0,1]}\pi_{s}\bar{V}^{(2[(2p+1/2)\vee p_{0}]+1/2)}\Big\{ K^{-2}\|\nabla\phi\|_{p_{0}}+\frac{\alpha_{2p+1/2}}{K}\mu_{0}\bar{V}^{(2p+1)}\frac{1}{\big[1-\exp\big(-K\epsilon^{-1}h\big)\big]/(Kh\epsilon^{-1})}\Big\}\epsilon\\
\leq C\sup_{(s,r)\in[0,1]\times\mathbb{N}}\|\nabla(fg_{s,r})\|_{2p+1/2}\sup_{s\in[0,1]}\pi_{s}\bar{V}^{(2[(2p+1/2)\vee p_{0}]+1/2)}\Big\{ K^{-2}\|\nabla\phi\|_{p_{0}}+\frac{\alpha_{2p+1/2}}{K}\mu_{0}\bar{V}^{(2p+1)}\gimel\Big\}\epsilon.
\end{multline*}
Further from the bound on $\Delta_{s,r}(\cdot)$ above
\[
\Upsilon_{3,\epsilon,r}^{(2)}\leq C\gimel\frac{\tilde{\alpha}_{p}}{K}\|f\|_{p}^{2}\sup_{s\in[0,1]}\pi_{s}\bar{V}^{(p+1/2)}\cdot\big(\alpha_{2p+1/2}\mu_{0}\bar{V}^{(2p+1/2)}+\sup_{s\in[0,1]}\pi_{s}\bar{V}^{(2p+1/2)}\big)\epsilon.
\]
As a result
\begin{align*}
\big|\Upsilon_{3,\epsilon}\big| & \leq C\gimel\frac{\tilde{\alpha}_{p}}{K}\vvvert f\vvvert_{p}^{2}\sup_{s\in[0,1]}\pi_{s}\bar{V}^{(p+1/2)}\sup_{s\in[0,1]}\pi_{s}\bar{V}^{(2[(2p+1/2)\vee p_{0}]+1/2)}\\
 & \left\{ \sup_{s\in[0,1]}\pi_{s}\bar{V}^{(2p+1/2)}+K^{-2}\|\nabla\phi\|_{p_{0}}+\mu_{0}\bar{V}^{(2p+1)}\alpha_{2p+1/2}\left(1+\frac{\gimel}{K}\right)\right\} \epsilon
\end{align*}
from which we conclude. We turn to the second statement. Note that
we have the slight simplification $\Upsilon_{5,\epsilon}=-\epsilon^{-1}h^{2}\sum_{k=1}^{n-2}\mathbb{E}\big[f_{kh}^{2}(X_{kh}^{\epsilon})\big]-{\rm var}_{\pi_{kh}}\big[f_{kh}\big],$
that
\begin{align*}
\big|\mathbb{E}\Big[f_{kh}^{2}(X_{kh}^{\epsilon})\Big]-{\rm var}_{\pi_{kh}}\big[f_{kh}\big]\big| & =\big|\mathbb{E}\Big[f_{kh}^{2}(X_{kh}^{\epsilon})\Big]-\pi_{kh}f_{kh}^{2}\big|
\end{align*}
and $f\in C_{0,2}^{p}\big(\mathbb{R}^{d}\big)$ implies that $f^{2}\in C_{0,2}^{2p}\big([0,1]\times\mathbb{R}^{d}\big)$
from Lemma \ref{lem:intermediateresultsonV}. Now from Lemma \ref{lem:Vboundonvariances},
\begin{multline*}
\big|\Upsilon_{5,\epsilon}\big|\leq C\|f\|_{2p}\|\nabla f\|_{2p}\sup_{s\in[0,1]}\pi_{s}\bar{V}^{(2[(2p)\vee p_{0}]+1/2)}\Big\{ K^{-2}\|\nabla\phi\|_{p_{0}}h+\alpha_{2p}\mu_{0}\bar{V}^{(2p+1/2)}\frac{\exp\big(-K\epsilon^{-1}h\big)}{1-\exp\big(-K\epsilon^{-1}h\big)}\epsilon^{-1}h\Big\} h.
\end{multline*}
Therefore
\[
\big|\Upsilon_{5,\epsilon}\big|\leq C\vvvert f\vvvert_{2p}^{2}\sup_{s\in[0,1]}\pi_{s}\bar{V}^{(2[(2p)\vee p_{0}]+1/2)}\Big\{ K^{-2}\|\nabla\phi\|_{p_{0}}h+\frac{\gimel\alpha_{2p}}{K}\mu_{0}\bar{V}^{(2p+1/2)}\Big\} h.
\]
\[
\]

\[
\]
\end{proof}
\begin{proof}[Proof of Lemma \ref{lem:riemannconvergence}]
For $\Upsilon_{4,\epsilon}$, with the notation of Lemma \ref{lem:poisson_continuity_etc},
we introduce for $r\in\mathbb{N}$
\begin{align*}
\Upsilon_{4,\epsilon,r}^{(1)}:= & 2\left|h\Bigl\{\sum_{k=1}^{n-2}\pi_{kh}\big(f_{kh}g_{kh,r}\big)\Bigr\}-\int_{0}^{1}\pi_{s}(f_{s}g_{s,r}){\rm d}s\right|,\\
\Upsilon_{4,\epsilon,r}^{(2)}:= & 2\left|h\Bigl\{\sum_{k=1}^{n-2}\pi_{kh}\big(f_{kh}\Delta_{kh,r}\big)\Bigr\}-\int_{0}^{1}\pi_{s}(f_{s}\Delta_{kh,r}){\rm d}s\right|.
\end{align*}
From the rough upper bound on $\Delta_{s,r}$ in Lemma \ref{lem:poisson_continuity_etc}
and with $r(0,\epsilon):=\lfloor-\ln(\epsilon)/K\rfloor$ or $r(\ell,\epsilon):=\lfloor-\ln(\epsilon)/(\ell K)\rfloor$
for $\ell>0$, we have
\begin{align*}
\Upsilon_{4,\epsilon,r(\ell,\epsilon)}^{(2)} & \leq C\gimel\|f\|_{p}\frac{\tilde{\alpha}_{p}}{K}\sup_{s\in[0,1]}\pi_{s}\bar{V}^{(p+1/2)}\cdot\sup_{s\in[0,1]}\pi_{s}\bigl(f_{s}\bar{V}^{(p+1/2)}\bigr)\epsilon\\
 & \leq C\gimel\|f\|_{p}^{2}\frac{\tilde{\alpha}_{p}}{K}\sup_{s\in[0,1]}\pi_{s}\bar{V}^{(p+1/2)}\cdot\sup_{s\in[0,1]}\pi_{s}\bigl(\bar{V}^{(2p+1/2)}\bigr)\epsilon.
\end{align*}
For the other terms we note that from Lemma \ref{lem:poisson_continuity_etc}
for $|s-t|\leq R_{f}=1$ and any $\zeta\in(0,1)$
\begin{align*}
\bigl|f_{s}(x)g_{s,r}(x) & -f_{t}(x)g_{t,r}(x)\bigr|\leq\bigl|f_{s}(x)-f_{t}(x)\bigr|\cdot\bigr|g_{s,r}(x)\bigr|+\bigl|f_{t}(x)\bigr|\bigl|g_{s,r}(x)-g_{t,r}(x)\bigr|\\
 & \leq C\frac{\gimel\tilde{\alpha}_{p}}{K}\|f\|_{p}\bar{V}^{(p+1/2)}(x)\sup_{s\in[0,1]}\pi_{s}\bar{V}^{(p+1/2)}\bigl|f_{s}(x)-f_{t}(x)\bigr|+\|f\|_{p}\bar{V}^{(p)}(x)\bigl|g_{s,r}(x)-g_{t,r}(x)\bigr|
\end{align*}
Now, since $f\in C_{1,2}^{p}\big([0,1]\times\mathbb{R}^{d}\big)$,
\[
\bigl|f_{s}(x)-f_{t}(x)\bigr|\leq\|\partial f\|_{p}\bar{V}^{(p)}(x)|s-t|
\]
and from Lemma \ref{lem:poisson_continuity_etc}
\begin{multline*}
|g_{s,r}(x)-g_{u,r}(x)|\leq C(\gimel,\zeta)|s-u|^{\zeta}\frac{\tilde{\alpha}_{p}(\ell\vee1)}{(1\wedge K)\ell}\big(1\vee\vvvert f\vvvert_{p}\big)\\
\times\Bigl(1+\tilde{\alpha}_{p}\frac{M}{K}\sup_{\tau\in[0,1]}\sqrt{\bar{V}(x_{\tau}^{\star})}+\sup_{s\in[0,1]}\pi_{s}\bar{V}^{p+1/2}\Bigr).
\end{multline*}
Hence, using Lemma \ref{lem:W(delta_x,nu)}
\begin{align*}
\bigl|f_{s}(x) & g_{s,r}(x)-f_{t}(x)g_{t,r}(x)\bigr|\\
 & \leq C(1+\vvvert f\vvvert_{p})^{2}\bar{V}^{(2p+1/2)}(x)\tilde{\alpha}_{p}\sup_{s\in[0,1]}\pi_{s}\bar{V}^{(p+1/2)}\left\{ \frac{\gimel}{K}+\frac{C(\gimel,\zeta)(\ell\vee1)}{(1\wedge K)\ell}\Bigl(2+\tilde{\alpha}_{p}\frac{M}{K}\sup_{\tau\in[0,1]}\sqrt{\bar{V}(x_{\tau}^{\star})}\Bigr)\right\} |s-t|^{\zeta}
\end{align*}
where $C(\gimel,\xi)$ depends on the arguments shown only. Now, defining
\[
C_{fg}:=(1+\vvvert f\vvvert_{p})^{2}\tilde{\alpha}_{p}\sup_{s\in[0,1]}\pi_{s}\bar{V}^{(p+1/2)}\left\{ \frac{\gimel}{K}+\frac{C(\gimel,\zeta)(\ell\vee1)}{(1\wedge K)\ell}\Bigl(2+\tilde{\alpha}_{p}\frac{M}{K}\sup_{\tau\in[0,1]}\sqrt{\bar{V}(x_{\tau}^{\star})}\Bigr)\right\} 
\]
from Lemma \ref{lem:rieman_approx}
\[
\Upsilon_{4,\epsilon,r}^{(1)}\leq Ch^{\zeta}\tilde{\alpha}_{2p+1/2}\big(C_{fg}\vee\|\nabla fg\|_{2p+1/2}\big)\left[1+\tilde{\alpha}_{2p+1/2}\frac{M}{K}\sup_{s\in[0,1]}\sqrt{\bar{V}(x_{s}^{\star})}\right]
\]
and we have found in the proof of Lemma \ref{lem:variance-cv-ergodicity}
that
\[
\sup_{r\in\mathbb{N}}\|\nabla(fg_{r})\|_{2p+1/2}\leq C\gimel\frac{\tilde{\alpha}_{p}}{K}\vvvert f\vvvert_{p}^{2}\sup_{s\in[0,1]}\pi_{s}\bar{V}^{(p+1/2)},
\]
from which the first bound follows. For $\Upsilon_{6,\epsilon}$,
first consider $\ell=0$. In this case
\[
\big|\Upsilon_{6,\epsilon}\big|\leq h\epsilon^{-1}\sup_{s\in[0,1]}{\rm var}_{\pi_{s}}\big(f_{s}\big)
\]
 Now consider $\ell>0$, we apply Lemma \ref{lem:rieman_approx} with
the function $f^{2}$ to obtain the result. By assumptions $f\in C_{1,2}^{p}\big([0,1]\times\mathbb{R}^{d}\big)$
implies that
\[
\bigl|f_{s}(x)-f_{t}(x)\bigr|\leq\vvvert f\vvvert_{p}\bar{V}_{p}(x)|s-t|
\]
and consequently
\begin{align*}
\bigl|f_{s}^{2}(x)-f_{t}^{2}(x)\bigr| & \leq2\vvvert f\vvvert_{p}^{2}\big[\bar{V}^{(p)}(x)\big]^{2}|s-t|\\
 & \leq C\vvvert f\vvvert_{p}^{2}\bar{V}^{(2p)}(x)|s-t|
\end{align*}
and by application of Lemma \ref{lem:rieman_approx} we deduce
\[
\big|\Upsilon_{6,\epsilon}\big|\leq C\ell h\vvvert f\vvvert_{p}^{2}\tilde{\alpha}_{2p}\sup_{s\in[0,1]}\pi_{s}\bar{V}^{(p)}\left[1+(\tilde{\alpha}_{2p}\frac{M}{K}\sup_{s\in[0,1]}\sqrt{\bar{V}(x_{s}^{\star})}\right].
\]
\end{proof}
\begin{proof}[Proof of Lemma \ref{lem:S0andS7}]
First we have the simplification
\begin{align*}
\Upsilon_{0,\epsilon} & =\epsilon^{-1}h^{2}\sum_{k=1}^{n-2}\pi_{kh}\bar{f}_{kh,\epsilon}\mathbb{E}\big[2\gamma_{k,\epsilon}(X_{kh}^{\epsilon})-\bar{f}_{kh,\epsilon}(X_{kh}^{\epsilon})\big]\\
 & =2\epsilon^{-1}h^{2}\sum_{k=1}^{n-2}\pi_{kh}\bar{f}_{kh,\epsilon}\mathbb{E}\big[\gamma_{k,\epsilon}(X_{kh}^{\epsilon})\big],
\end{align*}
and from Lemma \ref{lem:Poisson-and-martingale}, 
\begin{align*}
\big|\mathbb{E}\left(\gamma_{k,\epsilon}(X_{kh}^{\epsilon})\right)\big| & \leq C\alpha_{p}\frac{\|\nabla f\|_{p}}{1-\exp\big(-K\epsilon^{-1}h\big)}\mu_{0}\bar{V}^{(p+1/2)}\cdot\sup_{s\in[0,1]}\mu_{s}\bar{V}^{(p+1/2)}(x)\\
 & \leq C\alpha_{p}\alpha_{p+1/2}\frac{\|\nabla f\|_{p}}{1-\exp\big(-K\epsilon^{-1}h\big)}\mu_{0}\bar{V}^{(p+1/2)}\cdot\mu_{0}\bar{V}^{(p+1/2)}
\end{align*}
where we have used Lemma \ref{lem:drift} on the last line. Further
from Lemma \ref{lem:Vboundonvariances} 
\[
\big|\pi_{t}\bar{f}_{t,\epsilon}\big|=|\mathbb{E}\big[f_{t}(X_{t}^{\epsilon})\big]|\leq C\|\nabla f\|_{p}\cdot\sup_{s\in[0,1]}\pi_{s}\bar{V}^{(2[p\vee p_{0}]+1/2)}\Big\{ K^{-2}\|\nabla\phi\|_{p_{0}}\epsilon+\alpha_{p}\mu_{0}\bar{V}^{(p+1/2)}\exp\big(-K\epsilon^{-1}t\big)\Big\}
\]
and therefore
\[
2\epsilon^{-1}h^{2}\sum_{k=1}^{n-2}\big|\pi_{kh}\bar{f}_{kh,\epsilon}\big|\leq C\|\nabla f\|_{p}\cdot\sup_{s\in[0,1]}\pi_{s}\bar{V}^{(2[p\vee p_{0}]+1/2)}\Big\{ K^{-2}\|\nabla\phi\|_{p_{0}}h+\alpha_{p}\mu_{0}\bar{V}^{(p+1/2)}\frac{\exp\big(-K\epsilon^{-1}h\big)}{1-\exp\big(-K\epsilon^{-1}h\big)}\epsilon^{-1}h^{2}\Big\}
\]
We have
\[
\Upsilon_{7,\epsilon}:=\epsilon^{-1}h^{2}\mathbb{E}\left[\bar{f}_{(n-1)h,\epsilon}^{2}(X_{(n-1)h}^{\epsilon})-\big[P_{0,h}\gamma_{1,\epsilon}(X_{0}^{\epsilon})\big]^{2}\right].
\]
Notice that
\begin{align*}
\epsilon^{-1}h\mathbb{E}\left[\big[P_{0,h}\gamma_{1,\epsilon}(X_{0}^{\epsilon})\big]^{2}\right]^{1/2} & =\epsilon^{-1}h\mathbb{E}\left[\big(\sum_{i=1}^{n-1}P_{0,ih}f_{ih}(X_{0}^{\epsilon})\big)^{2}\right]^{1/2}\\
 & \leq\epsilon^{-1}h\sum_{i=1}^{n-1}{\rm var}_{\mu_{0}}\big(P_{0,ih}f_{ih}\big)^{1/2}\\
 & \leq\frac{\epsilon^{-1}h}{1-\exp(-Kh\epsilon^{-1})}\sup_{s\in[0,1]}{\rm var}_{\mu_{s}}\big(f_{s}\big)^{1/2}\\
 & \leq C\frac{\gimel}{K}\|\nabla f\|\big\{\alpha_{2p}\big[K^{-1}+K_{\mu_{0}}\big]\mu_{0}\bar{V}^{(2p)}\big\}^{1/2}.
\end{align*}
We conclude by using that 
\[
\mathbb{E}\left[\bar{f}_{(n-1)h,\epsilon}^{2}(X_{(n-1)h}^{\epsilon})\right]\leq\sup_{s\in[0,1]}{\rm var}_{\mu_{s}^{\epsilon}}\big(f_{s}\big)\leq C\|\nabla f\|^{2}\big\{\alpha_{2p}\big[K^{-1}+K_{\mu_{0}}\big]\mu_{0}\bar{V}^{(2p)}\big\}.
\]
\end{proof}

\subsection{Some tractable bounds}

We gather here intermediate technical results which lead to tractable
bounds and allow us to conclude about the complexity of the procedure.
For the reader's convenience we recall that for $q>0$ and $x\in\mathbb{R}^{d}$,
$V(x):=\|x\|^{2}$, $V^{(q)}:=V^{q}$, $\bar{V}^{(q)}:=1+V^{(q)}$,
with $t\in[0,1]$ $V_{t}(x):=\|x-x_{t}^{\star}\|^{2}$, $V_{t}^{(q)}:=V_{t}^{q}$,
$\bar{V}_{t}^{(q)}:=1+V_{t}^{(q)}$ (with notational simplifications
$\bar{V}_{t}:=\bar{V}_{t}^{(1)}$ and $V_{t}:=V_{t}^{(1)}$ etc.)
and for $\nu\in\mathcal{P}^{q+1/2}(\mathbb{R}^{d})$
\begin{align*}
W^{(q)}(\delta_{x},\nu):= & \int_{\mathbb{R}^{d}}\left(1+\|x\|^{2q}\vee\|y\|^{2q}\right)\|x-y\|\nu(\mathrm{d}y).
\end{align*}
\begin{lem}
\label{lem:W(delta_x,nu)}For any $p\geq1$ and $\nu\in\mathcal{P}^{p+1/2}(\mathbb{R}^{d})$,
\[
W^{(p)}(\delta_{x},\nu)\leq V^{p+1/2}(x)+V^{p}(x)\nu(V^{1/2})+V^{1/2}(x)[1+\nu(V^{p})]+\nu(V^{p+1/2}),\quad x\in\mathbb{R}^{d},
\]
and as a result
\[
\sup_{x\in\mathbb{R}^{d}}\frac{W^{(p)}(\delta_{x},\nu)}{1+\|x\|^{2p+1}}<+\infty.
\]
Further there exists $C>0$ such that for any $x\in\mathbb{R}^{d}$
and $\nu\in\mathcal{P}^{p+1/2}(\mathbb{R}^{d})$ 
\begin{equation}
W^{(p)}(\delta_{x},\nu)\leq C\nu\bar{V}^{(p+1/2)}\cdot\bar{V}^{(p+1/2)}(x).\label{eq:bounds_on_W_in_terms_of_V}
\end{equation}
\end{lem}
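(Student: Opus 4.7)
The plan is to exploit the fact that $\Gamma(\delta_x,\nu)$ contains a single element, namely $\delta_x\otimes\nu$, so that
\[
W^{(p)}(\delta_x,\nu)=\int_{\mathbb{R}^d}(1+\|x\|^{2p}\vee\|y\|^{2p})\|x-y\|\,\nu(\mathrm{d}y),
\]
and the entire argument reduces to bounding this one-variable integral by elementary inequalities.

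First, I would use the elementary bounds $a\vee b\leq a+b$ and the triangle inequality $\|x-y\|\leq\|x\|+\|y\|=V^{1/2}(x)+V^{1/2}(y)$ to obtain, pointwise in $y$,
\[
(1+V^p(x)\vee V^p(y))\|x-y\|\leq (1+V^p(x)+V^p(y))(V^{1/2}(x)+V^{1/2}(y)).
\]
Expanding the right-hand side gives six monomials in $V^{1/2}(x),V^{1/2}(y),V^p(x),V^p(y)$ of total degree at most $p+1/2$, and integrating against $\nu$ produces
\[
V^{p+1/2}(x)+V^p(x)\nu(V^{1/2})+V^{1/2}(x)[1+\nu(V^p)]+V^{1/2}(x)+\nu(V^{1/2})+\nu(V^{p+1/2}).
\]
To match the stated bound I would absorb the trailing $V^{1/2}(x)+\nu(V^{1/2})$ into the other terms via the trivial estimate $V^{1/2}\leq 1+V^{p+1/2}$, which is immediate by considering separately $V\leq 1$ and $V>1$. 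All moments of $\nu$ appearing are finite by the assumption $\nu\in\mathcal{P}^{p+1/2}(\mathbb{R}^d)$.

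For the supremum statement, each term in the explicit bound is a monomial in $V(x)$ of degree at most $p+1/2=1/2\cdot(2p+1)$, multiplied by a finite moment of $\nu$, and the dominant one as $\|x\|\to\infty$ is $V^{p+1/2}(x)=\|x\|^{2p+1}$. The map $x\mapsto W^{(p)}(\delta_x,\nu)/(1+\|x\|^{2p+1})$ is then continuous and bounded at infinity, hence has finite supremum. For the last inequality, I would apply the same separation $V\leq 1$ versus $V>1$ to get $V^a\leq\bar{V}^{(p+1/2)}$ for every $a\in[0,p+1/2]$, and then bound each of the four terms of the explicit estimate by $C\,\bar{V}^{(p+1/2)}(x)\cdot\nu(\bar{V}^{(p+1/2)})$ for a universal $C>0$, which yields \eqref{eq:bounds_on_W_in_terms_of_V}.

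The argument is almost entirely mechanical; the only mild nuisance is bookkeeping when re-grouping the six monomials produced by the expansion so as to reproduce the precise form stated in the lemma, and tracking the constant in the passage from $V^a$ to $\bar{V}^{(p+1/2)}$ uniformly in $a\in[0,p+1/2]$.
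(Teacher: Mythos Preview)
Your approach is correct and essentially the same as the paper's: the paper also writes out the integral against $\delta_x\otimes\nu$, bounds $\|x-y\|\leq\|x\|+\|y\|$, splits on $\|x\|\lessgtr\|y\|$ (which is equivalent to your $a\vee b\leq a+b$), and then uses $V^{a}\leq\bar V^{(p+1/2)}$ for $a\in[0,p+1/2]$ to get the final product form. One small slip: in your six-term expansion the standalone $V^{1/2}(x)$ is already absorbed into $V^{1/2}(x)[1+\nu(V^p)]$, so only the extra $\nu(V^{1/2})$ remains to be handled (the paper's proof has the same leftover and the same harmless inaccuracy in matching the displayed four-term bound exactly).
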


\begin{proof}
By considering the scenarios $\|x\|\leq\|y\|$ and $\|x\|>\|y\|$
separately we have
\begin{align*}
W^{(p)}(\delta_{x},\nu) & \leq\|x\|+\nu(V^{1/2})+\|x\|^{2p+1}+\|x\|^{2p}\nu(V^{1/2})+\|x\|\nu(V^{p})+\nu(V^{p+1/2}),\\
 & =\|x\|^{2p+1}+\|x\|^{2p}\nu(V^{1/2})+\|x\|[1+\nu(V^{p})]+\nu(V^{p+1/2}),
\end{align*}
and the first statement follows from the assumption on $\nu$. Finally
by considering the scenarios $V(x)\geq1$ and $V(x)<1$ separately
twice one shows that 
\begin{align*}
W^{(p)}(\delta_{x},\nu) & \leq2\big[1+V^{p+1/2}(x)\big]\big[1+\nu\big(V^{1/2}+V^{p}+V^{p+1/2}\big)\big],\\
 & \leq8\nu\bar{V}^{(p+1/2)}\cdot\bar{V}^{(p+1/2)}(x).
\end{align*}
\end{proof}
\begin{lem}
\label{lem:intermediateresultsonV}For any $p\geq0$, 
\begin{enumerate}
\item for any $q\geq0$ and $x\in\mathbb{R}^{d}$
\begin{align*}
\bar{V}^{(p)}(x)\bar{V}^{(q)}(x) & \leq4\cdot\bar{V}^{(p+q)}(x),
\end{align*}
\[
V^{(p)}(x)\vee V^{(q)}(x)\leq2\cdot V^{(p\vee q)}(x),
\]
for any $q\geq1$
\[
\big[\bar{V}^{(p)}(x)\big]^{q}\leq2^{q-1}\bar{V}^{(qp)}(x),
\]
and for $\varphi,\psi\in C^{p}\big(\mathbb{R}^{d}\big)\times C^{q}\big(\mathbb{R}^{d}\big)$
for $p,q\geq1$
\[
\|\varphi\psi\|_{p+q}\leq4\|\varphi\|_{p}\|\psi\|_{q}
\]
\item for any $s\in[0,1]$ and $x\in\mathbb{R}^{d}$,
\[
\sqrt{\bar{V}_{s}(x)}\bar{V}^{(p)}(x)\leq\sqrt{12}\bar{V}(x_{s}^{\star})^{1/2}\bar{V}^{(p+1/2)}(x)
\]
\end{enumerate}
\end{lem}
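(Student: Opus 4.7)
The statement is a collection of purely algebraic polynomial inequalities on $\mathbb{R}^d$, none of which involves any analytic machinery beyond the triangle/parallelogram inequality and convexity of $a \mapsto a^q$. My plan is to reduce everything to the scalar variable $t := \|x\|$ (and $t_s := \|x_s^\star\|$ in Part 2) and in each case to separate the two regimes $t \leq 1$ and $t \geq 1$, in which the relative sizes of the monomials $t^{2p}$ and $t^{2q}$ are transparent.

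For the first item of Part 1, expand $\bar V^{(p)}(x)\bar V^{(q)}(x) = 1 + t^{2p} + t^{2q} + t^{2(p+q)}$ and bound each of the middle terms by $1 + t^{2(p+q)}$: in the regime $t\leq 1$ we have $t^{2p}\leq 1$, and in the regime $t\geq 1$ we have $t^{2p}\leq t^{2(p+q)}$, and symmetrically for $t^{2q}$. Summing these yields the constant~$4$ on the right. The second item is handled by the same case split applied to $\bar V^{(p)}(x)\vee \bar V^{(q)}(x)$ (with $\bar V$ in place of $V$, as otherwise the stated inequality fails when $\|x\|<1$; the proof is unchanged). The third item is immediate from the convexity bound $(1+a)^q\leq 2^{q-1}(1+a^q)$ applied to $a = \|x\|^{2p}$. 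For the fourth item, the definition of the norm gives $|\varphi(x)\psi(x)|\leq \|\varphi\|_p\|\psi\|_q\,\bar V^{(p)}(x)\bar V^{(q)}(x)$, so that dividing by $\bar V^{(p+q)}(x)$ and invoking the first item converts the constant to $4\|\varphi\|_p\|\psi\|_q$.

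For Part 2 the strategy is to separate the role of the offset $x_s^\star$ from that of $x$. First, the parallelogram inequality gives $\bar V_s(x) = 1 + \|x - x_s^\star\|^2 \leq 1 + 2\|x\|^2 + 2\|x_s^\star\|^2$, and comparing this with the product $3\bar V(x)\bar V(x_s^\star) = 3 + 3\|x\|^2 + 3\|x_s^\star\|^2 + 3\|x\|^2\|x_s^\star\|^2$ coefficient by coefficient yields $\bar V_s(x)\leq 3\,\bar V(x)\bar V(x_s^\star)$, and taking square roots gives a factor of $\sqrt{3}\,\sqrt{\bar V(x_s^\star)}$. It then remains to prove the sharper one-dimensional inequality $\sqrt{\bar V(x)}\,\bar V^{(p)}(x)\leq 2\,\bar V^{(p+1/2)}(x)$; using $\sqrt{1+t^2}\leq 1+t$ this reduces to $1 + t + t^{2p} + t^{2p+1}\leq 2 + 2t^{2p+1}$, i.e.\ $(t-1)(1 - t^{2p})\leq 0$, which is visibly true by the same $t\leq 1$ vs.\ $t\geq 1$ case split. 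Multiplying the two estimates produces the claimed constant $\sqrt{12} = 2\sqrt{3}$.

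I do not anticipate any real obstacle here; all bounds are elementary. The only point requiring a little care is threading constants tightly enough in Part 2 to land exactly at $\sqrt{12}$ rather than the looser $\sqrt{32}$ that one gets by using the non-sharp constant $4$ of the first item instead of the sharper constant $2$ established by the one-dimensional calculation above. I will therefore avoid routing Part 2 through the generic first item and instead prove the refined scalar inequality directly.
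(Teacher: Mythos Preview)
Your argument is correct. For Part~1 you do exactly what the paper does: expand, split on $\|x\|\lessgtr 1$, and invoke convexity of $a\mapsto a^q$ for the third item; your observation that the second displayed inequality is mis-stated (it fails for $\|x\|<1$ as written with $V$, and should involve $\bar V$) is accurate and the paper's one-line proof of it does not clarify the intended statement.

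For Part~2 your route differs from the paper's. The paper squares everything and works with auxiliary polynomials $A(z)=(C+z)(1+z^{p})^{2}$ and $B(z)=(1+z^{p+1/2})^{2}$ in $z=\|x\|^{2}$, $C=\tfrac12+\|x_s^\star\|^2$, showing $A(z)\le 4(1+C)B(z)$ by the same $z\lessgtr 1$ case split and then taking square roots. You instead factor the estimate into two cleaner pieces: first peel off the offset via $\bar V_s(x)\le 3\,\bar V(x)\bar V(x_s^\star)$, then prove the offset-free bound $\sqrt{\bar V(x)}\,\bar V^{(p)}(x)\le 2\,\bar V^{(p+1/2)}(x)$ by reducing it to the neat factorization $(t-1)(1-t^{2p})\le 0$. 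Both arrive at exactly $\sqrt{12}$; your decomposition is a bit more transparent because it separates the role of $x_s^\star$ from that of $x$ at the outset, whereas the paper's approach keeps them entangled in the constant $C$ and only disentangles at the last step. Neither approach has any real advantage in generality.
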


\begin{proof}
First we have $\bar{V}^{(p)}(x)\bar{V}^{(q)}(x)\leq4\bar{V}^{(p+q)}(x)$
because $\bar{V}^{(p)}(x)\bar{V}^{(q)}(x)=1+\|x\|^{2(p+q)}+\|x\|^{2q}+\|x\|^{2p}$
and one can consider the scenarios $\|x\|\geq1$ and $\|x\|<1$ separately.
For the second statement one can again consider the scenarios $\|x\|\geq1$
and $\|x\|<1$. For the third statement, the result follows from Jensen's
inequality,
\[
\big[1+\|x\|^{2p}\big]^{q}\leq2^{q}\frac{1+\|x\|^{2pq}}{2}.
\]
The next statement follows from
\[
\frac{\varphi(x)\psi(x)}{\bar{V}^{(p+q)}(x)}=\frac{\varphi(x)\psi(x)}{\bar{V}^{(p)}(x)\bar{V}^{(q)}(x)}\frac{\bar{V}^{(p)}(x)\bar{V}^{(q)}(x)}{\bar{V}^{(p+q)}(x)}
\]
and our first result above. Now we note that for $z\geq0$ and $C>0$
\begin{align*}
A(z):=(C+z)(1+z^{p})^{2} & =z^{2p+1}+Cz^{2p}+2[z^{p+1}+Cz^{p}]+z+C\\
B(z):=(1+z^{p+1/2})^{2} & =z^{2p+1}+2z^{p+1/2}+1
\end{align*}
are such that for $z\geq1$ $A(z)\leq z^{2p+1}[1+C+2(1+C)+1+C]$ and
for $z\leq1$ $A(z)\leq[1+C+2(1+C)+1+C]$, and therefore for $z\geq0$
\begin{align*}
A(z) & \leq4(1+z^{2p+1})[1+C]\\
 & \leq4(1+C)B(z)
\end{align*}
as a consequence with $C=1/2+\|x_{s}^{\star}\|^{2}$ and $z=\|x\|^{2}$
we deduce that (with $\|x-x_{s}^{\star}\|^{2}\leq2[\|x\|^{2}+\|x_{t}^{\star}\|^{2})]$)
\[
\sqrt{2}\sqrt{1/2+1/2\|x-x_{s}^{\star}\|^{2}}(1+\|x\|^{2p})\leq\sqrt{8(1+1/2+\|x_{s}^{\star}\|^{2})}\big(1+\|x\|^{2p+1}\big)
\]
that is
\[
\sqrt{\bar{V}_{s}(x)}\cdot\bar{V}^{(p+1/2)}(x)\leq\sqrt{12\bar{V}(x_{s}^{\star})}\cdot\bar{V}^{(p+1/2)}(x)
\]
\end{proof}
\begin{lem}
\label{lem:Vboundonvariances}$\;$
\begin{enumerate}
\item There exists $C>0$ such that for any $p\geq1$, $\nu\in\mathcal{P}^{2p}(\mathbb{R}^{d})$
such that there exists a constant $K_{\nu}>0$ such that for all $f\in C_{2}^{p}(\mathbb{R}^{d})$
\[
{\rm var}_{\nu}[f]\leq K_{\nu}^{-1}\nu\big(\|\nabla f\|^{2}\big),
\]
then for any $f\in C_{2}^{p}(\mathbb{R}^{d})$ and $\epsilon>0$
\begin{align*}
\sup_{0\leq s\leq t\leq1}{\rm var}_{\nu P_{s,t}^{\epsilon}}\big[f\big] & \leq C\alpha_{2p}\cdot\|\nabla f\|_{p}^{2}\cdot\left[K^{-1}+K_{\nu}^{-1}\right]\nu(\bar{V}^{(2p)})\\
\sup_{(s,t)\in[0,1]\times\mathbb{R}_{+}}{\rm var}_{\nu Q_{t}^{s,\epsilon}}\big[f\big] & \leq C\tilde{\alpha}_{2p}\cdot\|\nabla f\|_{p}^{2}\cdot\left[K^{-1}+K_{\nu}^{-1}\right]\nu(\bar{V}^{(2p)})
\end{align*}
where $\alpha_{2p}$ and $\tilde{\alpha}_{2p}$ are given in Lemma
\ref{lem:drift} and \ref{lem:drift_Y_process} respectively.
\item There exists $C>0$ such that for any $\phi_{t}$ as in (\ref{eq:phi_defn}),
\[
\sup_{t\in[0,1]}{\rm var}_{\pi_{t}}[\phi_{t}]\leq CK^{-1}\|\nabla\phi\|_{p_{0}}^{2}\cdot\sup_{t\in[0,1]}\pi_{t}\big(\bar{V}^{(2p_{0})}\big).
\]
\item Let $p\geq1$, then for any $f\in C_{2}^{p}(\mathbb{R}^{d})$ 
\begin{multline*}
|\mathbb{E}\big[f_{t}(X_{t}^{\epsilon})\big]|\leq\sup_{s\in[0,1]}\mathrm{var}_{\pi_{s}}[\phi_{s}]^{1/2}\sup_{s\in[0,1]}\mathrm{var}_{\pi_{s}}[f_{s}]^{1/2}\frac{\epsilon}{K}\left[1-\exp(-K\epsilon^{-1}t)\right]\\
+\alpha_{p}\|\nabla f_{t}\|_{p}W^{(p)}(\mu_{0},\pi_{0})\exp\big(-K\epsilon^{-1}t\big)
\end{multline*}
and a rough bound is
\[
|\mathbb{E}\big[f_{t}(X_{t}^{\epsilon})\big]|\leq C\|\nabla f\|_{p}\cdot\sup_{s\in[0,1]}\pi_{s}\bar{V}^{(2[p\vee p_{0}]+1/2)}\Big\{ K^{-2}\|\nabla\phi\|_{p_{0}}\epsilon+\alpha_{p}\mu_{0}\bar{V}^{(p+1/2)}\exp\big(-K\epsilon^{-1}t\big)\Big\}.
\]
\end{enumerate}
\end{lem}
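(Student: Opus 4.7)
Part 1 is essentially the Poincar\'e transfer Lemma \ref{lem:poincare_transfer} followed by a pointwise size bound on $\|\nabla f\|^2$ and the drift inequality of Lemma \ref{lem:drift}. Concretely, for the time-inhomogeneous semigroup I would apply Lemma \ref{lem:poincare_transfer} with $\nu$ the hypothesised distribution and $K_\nu$ its Poincar\'e constant, yielding
\[
\mathrm{var}_{\nu P_{s,t}^\epsilon}[f]\leq[K^{-1}+K_\nu^{-1}]\,\nu P_{s,t}^\epsilon(\|\nabla f\|^2).
\]
Then $\|\nabla f(x)\|\leq\|\nabla f\|_p\,\bar V^{(p)}(x)$ by definition and $(\bar V^{(p)})^2\leq 2\bar V^{(2p)}$ by Lemma \ref{lem:intermediateresultsonV}, so pushing $\bar V^{(2p)}$ through $P_{s,t}^\epsilon$ via (\ref{eq:drift_bound_uniform}) of Lemma \ref{lem:drift} produces the constant $\alpha_{2p}\,\nu\bar V^{(2p)}$. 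The $Q_t^{s,\epsilon}$ statement is proved identically, replacing Lemma \ref{lem:drift} by its time-homogeneous analogue Lemma \ref{lem:drift_Y_process} and consequently $\alpha_{2p}$ by $\tilde\alpha_{2p}$.

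Part 2 is immediate once one recalls from Remark \ref{rem:Poincare_pi} that $\pi_t$ itself satisfies a Poincar\'e inequality with constant $K$ on $C_2^{p_0}(\mathbb{R}^d)$, so that
\[
\mathrm{var}_{\pi_t}[\phi_t]\leq K^{-1}\pi_t(\|\nabla\phi_t\|^2)\leq 2K^{-1}\|\nabla\phi\|_{p_0}^2\,\pi_t(\bar V^{(2p_0)}),
\]
where the second inequality again uses the definition of $\|\cdot\|_{p_0}$ and $(\bar V^{(p_0)})^2\leq 2\bar V^{(2p_0)}$. Taking the supremum in $t$ and absorbing the constant $2$ gives the claim.

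For Part 3 the plan is to write $\mathbb{E}[f_t(X_t^\epsilon)]=\mu_0P_{0,t}^\epsilon f_t$ and, using that $\pi_tf_t=0$ (the standing assumption in section \ref{sec:Quantitative-CLT-bound}), split
\[
\mu_0P_{0,t}^\epsilon f_t-\pi_tf_t=[\mu_0P_{0,t}^\epsilon f_t-\pi_0P_{0,t}^\epsilon f_t]+[\pi_0P_{0,t}^\epsilon f_t-\pi_tf_t].
\]
Lemma \ref{lem:bias_nu_bar_nu} applied with $\nu=\mu_0$, $\bar\nu=\pi_0$, $s=0$ controls the first bracket by $\alpha_p\|\nabla f_t\|_p W^{(p)}(\mu_0,\pi_0)e^{-Kt/\epsilon}$, while Lemma \ref{lem:bias_pi_0_pi_t} controls the second by $\sup_{s\in[0,t]}\mathrm{var}_{\pi_s}[\phi_s]^{1/2}\mathrm{var}_{\pi_t}[f_t]^{1/2}(\epsilon/K)(1-e^{-Kt/\epsilon})$. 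This yields the first displayed bound directly.

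The rough second bound is then obtained by inserting the quantitative estimates from Parts 1 and 2: apply the $\pi_s$-Poincar\'e inequality (Remark \ref{rem:Poincare_pi}, which gives the $\pi_t$-variance bounds in the style of Part 1 with $\nu=\pi_t$, $K_\nu=K$) to dominate both $\mathrm{var}_{\pi_s}[\phi_s]^{1/2}$ and $\mathrm{var}_{\pi_t}[f_t]^{1/2}$ by $K^{-1/2}\|\nabla\phi\|_{p_0}\,\pi_s(\bar V^{(2p_0)})^{1/2}$ and $K^{-1/2}\|\nabla f\|_p\,\pi_t(\bar V^{(2p)})^{1/2}$ respectively, and replace $W^{(p)}(\mu_0,\pi_0)$ by the product bound in (\ref{eq:bounds_on_W_in_terms_of_V}) of Lemma \ref{lem:W(delta_x,nu)}, giving $C\mu_0\bar V^{(p+1/2)}\cdot\pi_0\bar V^{(p+1/2)}$. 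The only bookkeeping subtlety \textemdash{} which I expect to be the only mildly delicate step \textemdash{} is the consolidation of the several moment factors $\pi_s\bar V^{(2p)}$, $\pi_s\bar V^{(2p_0)}$ and $\pi_0\bar V^{(p+1/2)}$ into the single expression $\sup_{s\in[0,1]}\pi_s\bar V^{(2[p\vee p_0]+1/2)}$ of the claimed bound, which follows from the monotonicity $\bar V^{(q)}\leq\bar V^{(q')}$ for $q\leq q'$ together with Cauchy--Schwarz $\pi_s\bar V^{(q)}\pi_s\bar V^{(q')}\leq\pi_s\bar V^{(q+q')}$ type manipulations using Lemma \ref{lem:intermediateresultsonV}. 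The $\|\nabla f\|_p$ factor in front consolidates with $\|\nabla\phi\|_{p_0}$ in the variance contribution and stands alone in the $W^{(p)}$ contribution, and the resulting overall dependence on $\epsilon$ is the announced $K^{-2}\|\nabla\phi\|_{p_0}\epsilon$ on the variance term and $\alpha_p\mu_0\bar V^{(p+1/2)}e^{-K\epsilon^{-1}t}$ on the bias term.
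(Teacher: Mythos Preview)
Your proposal is correct and follows essentially the same route as the paper: Lemma \ref{lem:poincare_transfer} plus the drift Lemmas \ref{lem:drift}/\ref{lem:drift_Y_process} for Part 1, Remark \ref{rem:Poincare_pi} for Part 2, and the decomposition via Lemmas \ref{lem:bias_nu_bar_nu} and \ref{lem:bias_pi_0_pi_t} followed by Lemma \ref{lem:W(delta_x,nu)} for Part 3. One small remark on the bookkeeping in Part 3: the consolidation of the $\pi_s\bar V^{(\cdot)}$ factors needs only the pointwise monotonicity $\bar V^{(q)}\leq 2\bar V^{(q')}$ for $q\leq q'$ from Lemma \ref{lem:intermediateresultsonV} applied to each factor separately (so that $(\pi_s\bar V^{(2p_0)}\cdot\pi_s\bar V^{(2p)})^{1/2}\leq C\sup_s\pi_s\bar V^{(2[p\vee p_0])}$); the Cauchy--Schwarz you allude to is not needed and would in fact go the wrong way.
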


\begin{cor}
As a consequence for $t\in[0,1]$
\begin{align*}
{\rm var}_{\mu_{t}^{\epsilon}}\big[f\big] & \leq C\alpha_{2p}\cdot\|\nabla f\|_{p}^{2}\left[K^{-1}+K_{\mu_{0}}^{-1}\right]\mu_{0}(\bar{V}^{(2p)})
\end{align*}
and using Lemmas \ref{lem:poincare_transfer} and \ref{lem:drift}
for any $(s,t)\in[0,1]\times\mathbb{R}_{+}$ 
\begin{align*}
{\rm var}_{\mu_{s}Q_{t}^{s,\epsilon}}\big[f\big] & \leq C\tilde{\alpha}_{2p}\alpha_{2p}\cdot\|\nabla f\|_{p}^{2}\left[K^{-1}+K_{\mu_{0}}^{-1}\right]\mu_{s}(\bar{V}^{(2p)})
\end{align*}
and
\[
\]
\end{cor}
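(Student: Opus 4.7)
The corollary is nothing more than a specialization of Lemma \ref{lem:Vboundonvariances}(1), so the plan is to chain two already-established tools rather than redo any hard analysis. Concretely, since $\mu_t^{\epsilon} = \mu_0 P_{0,t}^{\epsilon}$ by definition, the first inequality is immediate from the $P$-version of Lemma \ref{lem:Vboundonvariances}(1) taken with $\nu = \mu_0$ (whose Poincar\'e constant is $K_{\mu_0}$ by hypothesis) and $(s,t)$ replaced by $(0,t)$. The factor $\alpha_{2p}$ and the term $[K^{-1} + K_{\mu_0}^{-1}]\mu_0(\bar V^{(2p)})$ appear directly from the statement of that lemma, and there is nothing else to check.

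For the second inequality the issue is that in order to apply the $Q$-version of Lemma \ref{lem:Vboundonvariances}(1) with $\nu = \mu_s$ I must first know that $\mu_s$ itself satisfies a Poincar\'e inequality on $C_2^p(\mathbb{R}^d)$, with a constant that can be expressed in terms of $K_{\mu_0}$ and $K$. This is exactly what Lemma \ref{lem:poincare_transfer} delivers: applied with $\nu = \mu_0$ and with its $(s,t)$ replaced by $(0,s)$, it gives
\[
\mathrm{var}_{\mu_s}[f] \leq \Bigl[(1-e^{-2Ks/\epsilon})\tfrac{1}{K} + e^{-2Ks/\epsilon}\tfrac{1}{K_{\mu_0}}\Bigr] \mu_s(\|\nabla f\|^2) \leq \bigl[K^{-1} + K_{\mu_0}^{-1}\bigr]\mu_s(\|\nabla f\|^2),
\]
valid for every $f \in C_2^p(\mathbb{R}^d)$. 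In particular $\mu_s$ satisfies the hypothesis of Lemma \ref{lem:Vboundonvariances}(1) with an effective Poincar\'e constant $K_{\mu_s}$ such that $K_{\mu_s}^{-1} \leq K^{-1} + K_{\mu_0}^{-1}$.

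Next, I apply the $Q$-version of Lemma \ref{lem:Vboundonvariances}(1) with $\nu = \mu_s$ and this Poincar\'e constant to obtain
\[
\mathrm{var}_{\mu_s Q_t^{s,\epsilon}}[f] \leq C\tilde\alpha_{2p}\|\nabla f\|_p^2 \bigl[K^{-1} + K_{\mu_s}^{-1}\bigr]\mu_s(\bar V^{(2p)}) \leq C\tilde\alpha_{2p}\|\nabla f\|_p^2 \bigl[K^{-1} + K_{\mu_0}^{-1}\bigr]\mu_s(\bar V^{(2p)}),
\]
absorbing the constant factor arising from $K^{-1}+K_{\mu_s}^{-1} \leq 2K^{-1} + K_{\mu_0}^{-1}$ into $C$. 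The extra $\alpha_{2p}$ factor that appears in the stated corollary is supplied by Lemma \ref{lem:drift}, specifically by (\ref{eq:drift_bound_uniform}), which ensures $\mu_s(\bar V^{(2p)}) = \mu_0 P_{0,s}^{\epsilon}\bar V^{(2p)} \leq \alpha_{2p}\mu_0(\bar V^{(2p)})$; this is the role attributed to Lemma \ref{lem:drift} in the statement, and it also confirms that the moment $\mu_s(\bar V^{(2p)})$ appearing in the bound is finite under the hypothesis $\mu_0 \in \mathcal{P}^{2p}(\mathbb{R}^d)$.

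There is no real obstacle here: the proof is essentially bookkeeping of constants, the two nontrivial ingredients (transfer of the Poincar\'e inequality along $P_{0,s}^{\epsilon}$, and uniform drift control of polynomial moments) having already been isolated in Lemmas \ref{lem:poincare_transfer} and \ref{lem:drift}. The only mildly delicate point is making sure that the Poincar\'e inequality established for $\mu_s$ is on the correct class $C_2^p(\mathbb{R}^d)$, which it is, since Lemma \ref{lem:poincare_transfer} is stated precisely for that class and $C_2^p(\mathbb{R}^d)$ is preserved by $P_{0,s}^{\epsilon}$ (Proposition \ref{prop:C_2^p_closed}).
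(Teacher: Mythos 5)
Your proof is correct and follows essentially the same route the paper intends: the first bound is Lemma \ref{lem:Vboundonvariances}(1) with $\nu=\mu_{0}$ (since $\mu_{t}^{\epsilon}=\mu_{0}P_{0,t}$), and the second combines Lemma \ref{lem:poincare_transfer} (to give $\mu_{s}$ a Poincar\'e constant bounded by $K^{-1}+K_{\mu_{0}}^{-1}$) with the $Q$-version of that lemma and the drift bound (\ref{eq:drift_bound_uniform}) of Lemma \ref{lem:drift} to control $\mu_{s}(\bar{V}^{(2p)})$ by $\alpha_{2p}\mu_{0}(\bar{V}^{(2p)})$, which is exactly the role the corollary's statement assigns to those two lemmas. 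Nothing is missing.
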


\begin{proof}
We first apply Lemma \ref{lem:poincare_transfer}, yielding for $0\leq s\leq t\leq1$
\begin{align*}
{\rm var}_{\nu P_{s,t}^{\epsilon}}\big[f\big] & \leq\left[K^{-1}+K_{\nu}^{-1}\right]\cdot\nu P_{s,t}\big(\|\nabla f\|^{2}\big)\\
 & \leq\left[K^{-1}+K_{\nu}^{-1}\right]\|\nabla f\|_{p}^{2}\cdot\nu P_{s,t}\big([\bar{V}^{(p)}]^{2}\big).
\end{align*}
Now we apply (\ref{eq:drift_bound_uniform}) in Lemma \ref{lem:drift}
and Lemma \ref{lem:intermediateresultsonV} to conclude. We proceed
similarly for the time homogeneous scenario and Lemma \ref{lem:drift_Y_process}.
We use Remark \ref{rem:Poincare_pi} noting the fact, established
in the proof of Lemma \ref{lem:deriv_int_interchange}, that $\phi_{t}\in C_{0,2}^{p_{0}}([0,1]\times\mathbb{R}^{d})$.
As a result for $t\in[0,1]$ we have
\begin{align*}
{\rm var}_{\pi_{t}}[\phi_{t}] & \leq K^{-1}\pi_{t}\big(\|\nabla\phi_{t}\|^{2}\big)\\
 & \leq K^{-1}\|\nabla\phi_{t}\|_{p_{0}}^{2}\pi_{t}\big([\bar{V}^{(p_{0})}]^{2}\big),
\end{align*}
and we conclude with Lemma \ref{lem:intermediateresultsonV}. For
the bias, we note that for $t\in[0,1]$
\[
\mathbb{E}[f_{t}(X_{t})]=\mu_{0}P_{0,t}f_{t}=\pi_{0}P_{0,t}f_{t}-\pi_{t}f_{t}+(\mu_{0}-\pi_{0})P_{0,t}f_{t},
\]
and by Lemmas \ref{lem:bias_nu_bar_nu} and \ref{lem:bias_pi_0_pi_t},
we deduce
\begin{align*}
| & \mathbb{E}\big[f_{t}(X_{t}^{\epsilon})\big]|\leq\sup_{s\in[0,1]}\mathrm{var}_{\pi_{s}}[\phi_{s}]^{1/2}\sup_{s\in[0,1]}\mathrm{var}_{\pi_{s}}[f_{s}]^{1/2}\frac{\epsilon}{K}\left[1-\exp(-K\epsilon^{-1}t)\right]+\alpha_{p}\|\nabla f_{t}\|_{p}W^{(p)}(\mu_{0},\pi_{0})\exp\big(-K\epsilon^{-1}t\big).
\end{align*}
We can now apply our earlier result and Remark \ref{rem:Poincare_pi}
to show, 
\begin{multline*}
\sup_{s\in[0,1]}\mathrm{var}_{\pi_{s}}[\phi_{s}]^{1/2}\sup_{s\in[0,1]}\mathrm{var}_{\pi_{s}}[f_{s}]^{1/2}\frac{\epsilon}{K}\left[1-\exp(-K\epsilon^{-1}t)\right]\\
\leq CK^{-2}\|\nabla\phi\|_{p_{0}}\|\nabla f\|_{p}\cdot\Big\{\sup_{s\in[0,1]}\pi_{s}\bar{V}^{(2p_{0})}\cdot\sup_{s\in[0,1]}\pi_{s}\bar{V}^{(2p)}\Big\}^{1/2}\epsilon\\
\leq CK^{-2}\|\nabla\phi\|_{p_{0}}\|\nabla f\|_{p}\cdot\sup_{s\in[0,1]}\pi_{s}\bar{V}^{(2[p_{0}\vee p])}\epsilon
\end{multline*}
and from Lemma \ref{lem:W(delta_x,nu)}
\begin{align*}
\alpha_{p}\|\nabla f_{t}\|_{p}W^{(p)}(\mu_{0},\pi_{0})\exp\big(-K\epsilon^{-1}t\big) & \leq C\alpha_{p}\|\nabla f\|_{p}\mu_{0}\bar{V}^{(p+1/2)}\cdot\pi_{0}\bar{V}^{(p+1/2)}\exp\big(-K\epsilon^{-1}t\big)
\end{align*}
from which we deduce 
\[
|\mathbb{E}\big[f_{t}(X_{t}^{\epsilon})\big]|\leq C\|\nabla f\|_{p}\sup_{s\in[0,1]}\pi_{s}\bar{V}^{(2[p\vee p_{0}]+1/2)}\Big\{ K^{-2}\|\nabla\phi\|_{p_{0}}\epsilon+\alpha_{p}\mu_{0}\bar{V}^{(p+1/2)}\exp\big(-K\epsilon^{-1}t\big)\Big\}.
\]
\end{proof}
\begin{lem}
\label{lem:boundoneminusexponentialetc} For $0\leq z<2$
\[
\frac{z}{1-\exp(-z)}\leq\frac{1}{1-z/2}
\]
\end{lem}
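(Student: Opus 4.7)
The plan is to reduce the claimed inequality to a standard polynomial majorant for $e^{-z}$ and then verify that majorant by elementary calculus. Observe that on the interval $(0,2)$ both $1-z/2$ and $1-\exp(-z)$ are strictly positive, so cross-multiplying is legitimate and the inequality is equivalent to
\[
z\bigl(1-z/2\bigr)\leq 1-\exp(-z),
\]
that is, $\exp(-z)\leq 1-z+z^2/2$. The boundary value $z=0$ is handled by interpreting the left-hand side of the original inequality as the limit $\lim_{z\downarrow 0}z/(1-\exp(-z))=1$, which equals the right-hand side at $z=0$.

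To establish $\exp(-z)\leq 1-z+z^2/2$, I would set $h(z):=1-z+z^2/2-\exp(-z)$ and compute $h(0)=0$, $h'(z)=-1+z+\exp(-z)$ with $h'(0)=0$, and $h''(z)=1-\exp(-z)\geq 0$ for all $z\geq 0$. Thus $h'$ is non-decreasing on $[0,\infty)$ and starts at $0$, so $h'\geq 0$ there; hence $h$ itself is non-decreasing on $[0,\infty)$ and $h(0)=0$, giving $h(z)\geq 0$ for every $z\geq 0$. This is actually stronger than needed, since it covers all $z\geq 0$ rather than just $z\in[0,2)$; the restriction $z<2$ in the statement is only there to keep the right-hand side finite and positive after cross-multiplication.

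There is no real obstacle here: the proof is two lines once the reformulation $\exp(-z)\leq 1-z+z^2/2$ is written down, and that reformulation is a textbook consequence of $h''\geq 0$. The only thing worth a brief sentence in the write-up is the positivity check needed to justify cross-multiplication (both denominators are positive on $(0,2)$), which makes the equivalence rigorous rather than formal.
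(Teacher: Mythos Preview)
Your proof is correct and follows essentially the same route as the paper: reduce the inequality to $\exp(-z)\leq 1-z+z^{2}/2$ for $z\geq 0$ and conclude. The only difference is that the paper simply asserts this standard bound, while you supply the short calculus verification via $h''\geq 0$; your added remarks on the positivity of the denominators and the $z=0$ limit are fine points of rigour that the paper omits.
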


\begin{proof}
We have that for $z\geq0$ $\exp(-z)\leq1-z+\frac{z^{2}}{2}$, which
implies $[1-\exp(-z)]/z\geq1-z/2$ and therefore the result.
\end{proof}

\section{Drift and solution of Poisson's equation for the time-homogeneous
diffusions\label{sec:Solution-of-Poisson's}}

Throughout section \ref{sec:Solution-of-Poisson's} the notational
conventions of section \ref{sec:Quantitative-CLT-bound} are in force,
except that $f_{t}$ is not assumed centred with respect to $\pi_{t}$,
and we write $\bar{f}_{t}:=f_{t}-\pi_{t}f_{t}$ (which should not
be confused with $\bar{f}_{t,\epsilon}$).
\begin{lem}
\label{lem:drift_Y_process}For any $\epsilon>0$, $p\geq1$ and $\kappa\in(0,Kp)$,
define

\begin{align*}
\delta & \coloneqq\epsilon^{-1}(Kp-\kappa),\\
\tilde{r} & \coloneqq\sqrt{\frac{4p(p-1)+2pd}{\kappa}}\\
\tilde{b} & \coloneqq2p\tilde{r}^{2(p-1)}\frac{2(p-1)+d}{\epsilon}\\
\tilde{\alpha}_{p} & \coloneqq2^{4p-2}\vee\left[1+2^{2p-1}\left(\frac{2p\tilde{r}^{2(p-1)}}{(Kp-\kappa)}\left[2(p-1)+d\right]+(1+2^{2p-1})\sup_{t\in[0,1]}\|x_{t}^{\star}\|^{2p}\right)\right]
\end{align*}
Then

\begin{align}
Q_{t}^{s,\epsilon}(V_{s}^{p})(x) & \leq e^{-\delta t}V_{s}^{p}(x)+\frac{\tilde{b}}{\delta}(1-e^{-\delta t}),\quad\forall(s,t)\in[0,1]\times\mathbb{R}_{+},\label{eq:drift_homogeneous}\\
\sup_{(s,t)\in[0,1]\times\mathbb{R}_{+}}Q_{t}^{s,\epsilon}\bar{V}^{(p)}(x) & \leq\tilde{\alpha}_{p}\bar{V}^{(p)}(x).\nonumber 
\end{align}
\end{lem}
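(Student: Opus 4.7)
The plan is to mirror the argument of Lemma \ref{lem:drift}, but exploit the crucial simplification that here $(Y_t^{s,\epsilon})_{t\geq 0}$ is \emph{time-homogeneous} with a frozen potential $U_s$, so the minimizer $x_s^\star$ is a constant in $t$ and the awkward $\partial_t V_t^p$ term disappears.

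First, I would compute $\mathcal{L}_s V_s^p(x)$ exactly as in the proof of Lemma \ref{lem:drift}. The strong convexity assumption (A\ref{hyp:U_strong_convex}) together with Lemma \ref{lem:strong_convex_equiv} gives $\langle \nabla U_s(x),x-x_s^\star\rangle \geq (K/2)\|x-x_s^\star\|^2$, so
\[
-\langle \nabla U_s(x),\nabla V_s^p(x)\rangle = -2p\|x-x_s^\star\|^{2(p-1)}\langle \nabla U_s(x),x-x_s^\star\rangle \leq -Kp V_s^p(x),
\]
and a direct computation yields $\Delta V_s^p(x) = 2p(2(p-1)+d)\|x-x_s^\star\|^{2(p-1)}$. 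Combining,
\[
\epsilon\mathcal{L}_s V_s^p(x) \leq -(Kp-\kappa)V_s^p(x) - \|x-x_s^\star\|^{2p}\Bigl(\kappa - \frac{2p[2(p-1)+d]}{\|x-x_s^\star\|^2}\Bigr).
\]
The bracketed term is non-negative precisely when $\|x-x_s^\star\| \geq \tilde r$, and inside the ball of radius $\tilde r$ the remaining positive contribution is bounded by $2p\tilde r^{2(p-1)}[2(p-1)+d]$. Hence
\[
\mathcal{L}_s V_s^p(x) \leq -\delta V_s^p(x) + \tilde b\,\mathbb{I}\{\|x-x_s^\star\|\leq \tilde r\},
\]
which is the time-homogeneous analogue of equation (\ref{eq:drift_bound_gen}).

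Next, I would transfer this pointwise drift inequality into the semigroup estimate (\ref{eq:drift_homogeneous}) by the same Dynkin/stopping-time/Fatou/Gronwall chain used in Lemma \ref{lem:drift}: introduce $T_m := \inf\{t\geq 0 : \|Y_t^{s,\epsilon}\|>m\}$, apply Dynkin to $V_s^p(Y_{T_m\wedge t}^{s,\epsilon})$, use non-explosivity to send $m\to\infty$ with Fatou, then solve the resulting integral inequality
\[
Q_t^{s,\epsilon} V_s^p(x) \leq V_s^p(x) - \delta\int_0^t Q_u^{s,\epsilon}V_s^p(x)\,\mathrm{d}u + \tilde b\, t
\]
to obtain $Q_t^{s,\epsilon} V_s^p(x) \leq e^{-\delta t}V_s^p(x) + (\tilde b/\delta)(1-e^{-\delta t})$. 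Because the drift coefficient here does not depend on time, every step goes through verbatim and yields the claim uniformly in $(s,t)\in[0,1]\times\mathbb{R}_+$.

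Finally, to pass from $V_s^p$ to $\bar V^{(p)}$ I would use the elementary convexity bounds $\|x\|^{2p}\leq 2^{2p-1}(V_s^p(x)+\|x_s^\star\|^{2p})$ and $V_s^p(x)\leq 2^{2p-1}(\|x\|^{2p}+\|x_s^\star\|^{2p})$ together with $\sup_{s\in[0,1]}\|x_s^\star\|<\infty$ (guaranteed by Lemma \ref{lem:minimizer}, which is what replaces the role of $\sup_t\|\partial_t x_t^\star\|$ that was used in Lemma \ref{lem:drift}). Substituting the semigroup bound and collecting constants reproduces exactly the expression for $\tilde\alpha_p$ given in the statement, yielding the uniform bound on $Q_t^{s,\epsilon}\bar V^{(p)}$. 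The only ``obstacle'' is bookkeeping: one has to be careful to track that the constants $\tilde r$, $\tilde b$, $\tilde \alpha_p$ are independent of $s$, so that taking $\sup_{s\in[0,1]}$ at the end changes nothing, and that the choice $\kappa\in(0,Kp)$ is what keeps $\delta>0$.
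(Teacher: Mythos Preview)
Your proposal is correct and is exactly the approach the paper takes: its proof simply says that the result follows by almost identical arguments to those in the proof of Lemma~\ref{lem:drift}, with elementary simplifications afforded by the time-homogeneity of $Y_t^{s,\epsilon}$. You have in fact written out those arguments in more detail than the paper does, correctly identifying that the vanishing of the $\partial_t V_t^p$ term is the key simplification (eliminating the $\sup_t\|\partial_t x_t^\star\|$ dependence and yielding the cleaner constants $\tilde r$, $\tilde b$, $\tilde\alpha_p$).
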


\begin{proof}
The result follows by almost identical arguments to those in the proof
of Lemma \ref{lem:drift}, with some elementary simplifications afforded
by the time-homogeneity of the process $Y_{t}^{s,\epsilon}$.
\end{proof}
\begin{lem}
\label{lem:poisson_continuity_etc}Let $p\geq1$ and $f\in C_{0,2}^{p}([0,1]\times\mathbb{R}^{d})$
such that for constants $C_{f}<+\infty$, $R_{f}\in(0,1]$ and $\beta\in(0,1]$
\begin{equation}
|s-u|\leq R_{f}\quad\Rightarrow\quad|f_{s}(x)-f_{u}(x)|\leq C_{f}|s-u|^{\beta}\bar{V}^{(p)}(x),\quad\forall x\in\mathbb{R}^{d},\label{eq:f_lipschitz_hyp}
\end{equation}
and define for any $s\in[0,1]$ and $r\in\mathbb{N}\cup\{\infty\}$,
\[
g_{s,r}(x)\coloneqq\begin{cases}
\sum_{k=0}^{r}\ell Q_{k\ell}^{s}\bar{f}_{s}(x), & \quad\mathrm{if}\quad\ell>0,\\
\int_{0}^{r}Q_{t}^{s}\bar{f}_{s}(x)\mathrm{d}t, & \quad\mathrm{if}\quad\ell=0.
\end{cases}
\]
Then, with $\tilde{\alpha}_{p,1}$ as in Lemma \ref{lem:drift_Y_process}
with there $\epsilon=1$, 
\begin{enumerate}
\item we have
\begin{equation}
|s-u|\leq R_{f}\quad\Rightarrow\quad|\pi_{s}f_{s}-\pi_{u}f_{u}|\leq C|s-u|^{\beta}\tilde{\alpha}_{p}\big(C_{f}\vee\|\nabla f\|_{p}\big)\left[1+\tilde{\alpha}_{p,1}\frac{M}{K}\sup_{\tau\in[0,1]}\sqrt{\bar{V}(x_{\tau}^{\star})}\right]\quad\forall x\in\mathbb{R}^{d}.\label{eq:pi_s_f_pi_t_f}
\end{equation}
\item $g_{s,r}(\cdot)$ has the following properties:
\begin{enumerate}
\item \label{enu:enu:poisson_continuity_etc:classgsr}for any $\ell\geq0$,
$s\in[0,1]$ and $r<\infty$, the map $x\mapsto g_{s,r}(x)$ is a
member of $C_{2}^{p}(\mathbb{R}^{d})$,
\item \label{enu:poisson_continuity_etc:fboundgsr}for any $\ell\geq0$,
$s\in[0,1]$ and $r\in\mathbb{N}\cup\{\infty\}$,
\[
|g_{s,r}(x)|\leq\begin{cases}
h\epsilon^{-1}\|f\|_{p}\tilde{\alpha}_{p}W^{(p)}(\delta_{x},\pi_{s})\frac{1}{1-e^{-Kh\epsilon^{-1}}}, & \ell>0,\\
\|f\|_{p}\tilde{\alpha}_{p}W^{(p)}(\delta_{x},\pi_{s})\frac{1}{K}, & \ell=0
\end{cases}
\]
and further for any $\gimel>1$ and $\gimel^{-1}\leq1-Kh\epsilon^{-1}/2$
we have the simplified upper bound
\[
\sup_{(r,s)\in\mathbb{N}\cup\{\infty\}\times[0,1]}\|g_{s,r}\|_{p+1/2}\leq C\gimel\frac{\tilde{\alpha}_{p,1}}{K}\|f\|_{p}\sup_{s\in[0,1]}\pi_{s}\bar{V}^{(p+1/2)}.
\]
\item \label{enu:poisson_continuity_etc:boundgsrminusgsinfty}for any $s\in[0,1]$
, $r\in\mathbb{N}\cup\{\infty\}$ and $x\in\mathbb{R}^{d}$, 
\[
\Delta_{s,r}(x):=|g_{s,\infty}(x)-g_{s,r}(x)|\leq\begin{cases}
\ell\|f\|_{p}\tilde{\alpha}_{p}W^{(p)}(\delta_{x},\pi_{s})\frac{e^{-Kh\epsilon^{-1}r}}{1-e^{-Kh\epsilon^{-1}}}, & \ell>0,\\
\|f\|_{p}\tilde{\alpha}_{p}W^{(p)}(\delta_{x},\pi_{s})\frac{e^{-Kr}}{K}, & \ell=0.
\end{cases}
\]
and further for any $\gimel>1$ and $\gimel^{-1}\leq1-Kh\epsilon^{-1}/2$
we have the simplified upper bound
\[
\sup_{(r,s)\in\mathbb{N}\cup\{\infty\}\times[0,1]}\|\Delta_{s,r}\|_{p+1/2}\leq C\gimel\frac{\tilde{\alpha}_{p,1}}{K}\|f\|_{p}\sup_{s\in[0,1]}\pi_{s}\bar{V}^{(p+1/2)}\begin{cases}
e^{-Kh\epsilon^{-1}r}, & \ell>0,\\
e^{-Kr}, & \ell=0.
\end{cases}
\]
\item \label{enu:poisson_continuity_etc:gsrlipschitzcty}for any $\zeta\in(0,\beta)$
there exists $C>0$  such that for any $\gimel>1$, $\gimel^{-1}\leq1-K\ell/2$
if $\ell>0$, $r\in\mathbb{N}\cup\{\infty\}$ and $x\in\mathbb{R}^{d}$,$|s-u|\leq R_{f}$
\begin{multline*}
|g_{s,r}(x)-g_{u,r}(x)|\leq C(\beta,\gimel,R_{f},\zeta)|s-u|^{\zeta}\frac{\tilde{\alpha}_{p,1}(\ell\vee1)}{(1\wedge K)\ell}\big(C_{f}\vee\vvvert f\vvvert_{p}\big)\\
\times\Bigl(1+\tilde{\alpha}_{p,1}\frac{M}{K}\sup_{\tau\in[0,1]}\sqrt{\bar{V}(x_{\tau}^{\star})}+\sup_{s\in[0,1]}\pi_{s}\bar{V}^{p+1/2}\Bigr).
\end{multline*}
where $C(\beta,\gimel,R_{f},\zeta)$ depends only on the arguments
shown and the convention that $(\ell\vee1)/\ell=1$ for $\ell=0$.
\end{enumerate}
\end{enumerate}
\end{lem}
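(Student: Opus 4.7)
My plan is to handle the regularity and algebraic items first and reserve the main work for the H\"older-in-$s$ estimate in part 2(d).

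For part 1 I write
$$\pi_{s}f_{s}-\pi_{u}f_{u}=\pi_{s}(f_{s}-f_{u})+(\pi_{s}-\pi_{u})f_{u}.$$
The first summand is bounded by $C_{f}|s-u|^{\beta}\pi_{s}\bar{V}^{(p)}$ via (\ref{eq:f_lipschitz_hyp}), with $\pi_{s}\bar{V}^{(p)}$ uniformly finite from Lemma \ref{lem:deriv_int_interchange}. For the second, I couple $\pi_{s}$ and $\pi_{u}$ by starting an $\mathcal{L}_{u}$-diffusion from $X\sim\pi_{s}$ and running it to stationarity: the pathwise contraction of Lemma \ref{lem:pathwise_sde_bound}, together with the Lipschitz-in-$t$ estimate $\|x_{s}^{\star}-x_{u}^{\star}\|\leq(M/K)|s-u|$ from Lemma \ref{lem:minimizer} and the mean-value theorem applied to $f_{u}$ against $\|\nabla f\|_{p}\bar{V}^{(p)}$, yields the advertised factor $1+\tilde{\alpha}_{p,1}(M/K)\sup_{\tau}\sqrt{\bar{V}(x_{\tau}^{\star})}$.

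Item 2(a) is immediate: for finite $r$, $g_{s,r}$ is a finite linear combination of $Q_{k\ell}^{s}\bar{f}_{s}$ (or a Riemann integral on $[0,r]$ of $Q_{t}^{s}\bar{f}_{s}$), each in $C_{2}^{p}(\mathbb{R}^{d})$ by the time-homogeneous specialisation of Proposition \ref{prop:C_2^p_closed}, and the class is closed under finite linear combinations with uniform polynomial growth. For items 2(b) and 2(c), the key estimate is the homogeneous analogue of Lemma \ref{lem:bias_nu_bar_nu}: since $\pi_{s}Q_{t}^{s}=\pi_{s}$,
$$|Q_{t}^{s}\bar{f}_{s}(x)|=|Q_{t}^{s}f_{s}(x)-\pi_{s}Q_{t}^{s}f_{s}|\leq\tilde{\alpha}_{p}\|\nabla f\|_{p}\,e^{-Kt}W^{(p)}(\delta_{x},\pi_{s}).$$
Summing $\sum_{k=0}^{r}\ell e^{-Kk\ell}=\ell(1-e^{-K(r+1)\ell})/(1-e^{-K\ell})$ for $\ell>0$ or $\int_{0}^{r}e^{-Kt}\,\mathrm{d}t=(1-e^{-Kr})/K$ for $\ell=0$ gives 2(b); restricting the sum to $k>r$ (or the integral to $t>r$) gives 2(c). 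The simplified $\|\cdot\|_{p+1/2}$ bounds follow by inserting (\ref{eq:bounds_on_W_in_terms_of_V}) of Lemma \ref{lem:W(delta_x,nu)} and absorbing $1/(1-e^{-Kh\epsilon^{-1}})$ into $\gimel/K$ via Lemma \ref{lem:boundoneminusexponentialetc}.

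Part 2(d) is the principal obstacle, and I would attack it by a two-scale argument. Write
$$g_{s,r}(x)-g_{u,r}(x)=\sum_{k=0}^{r}\ell\bigl[Q_{k\ell}^{s}(\bar{f}_{s}-\bar{f}_{u})\bigr](x)+\sum_{k=0}^{r}\ell\bigl[(Q_{k\ell}^{s}-Q_{k\ell}^{u})\bar{f}_{u}\bigr](x),$$
with the analogous integral expression when $\ell=0$. Each term of the first sum is bounded by $Q_{k\ell}^{s}|f_{s}-f_{u}|(x)+|\pi_{s}f_{s}-\pi_{u}f_{u}|\leq C_{f}|s-u|^{\beta}\tilde{\alpha}_{p}\bar{V}^{(p)}(x)$ plus the output of part 1. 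For the semigroup difference, the Duhamel identity
$$(Q_{t}^{s}-Q_{t}^{u})\bar{f}_{u}(x)=-\epsilon^{-1}\int_{0}^{t}Q_{v}^{s}\bigl\langle\nabla U_{s}-\nabla U_{u},\nabla Q_{t-v}^{u}\bar{f}_{u}\bigr\rangle(x)\,\mathrm{d}v,$$
together with (A\ref{hyp:U_time_cont}), the gradient contraction Lemma \ref{lem:com_relation}, the $\sqrt{\bar{V}_{s\wedge u}}\cdot\bar{V}^{(p)}$ estimate from Lemma \ref{lem:intermediateresultsonV} and the drift Lemma \ref{lem:drift_Y_process}, yields a bound of the form $C\epsilon^{-1}M|s-u|\tilde{\alpha}_{p}\tilde{\alpha}_{p+1/2}K^{-1}\sqrt{\bar{V}(x_{s\wedge u}^{\star})}\,\bar{V}^{(p+1/2)}(x)$, essentially uniform in $t$. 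Summing this termwise diverges as $r\to\infty$, so the saving comes from splitting the sum at a threshold $k_{0}$ with $T_{0}:=k_{0}\ell$: use the Duhamel/H\"older estimate for $k\leq k_{0}$ and the uniform bound from 2(b) for $k>k_{0}$. This produces a total of order $C\bigl(T_{0}C_{f}|s-u|^{\beta}+T_{0}^{2}|s-u|/K+K^{-1}e^{-KT_{0}}\bigr)$ times polynomial $\bar{V}^{(p+1/2)}(x)$ factors; optimising $T_{0}\sim K^{-1}\log|s-u|^{-1}$ delivers H\"older continuity with any exponent $\zeta<\beta$, the strict inequality arising from the logarithm that multiplies the $|s-u|^{\beta}$ term. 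The factor $(\ell\vee1)/\ell$ in the stated bound emerges from the $\ell$-spacing of the discrete sum and is absorbed into the constant $C(\beta,\gimel,R_{f},\zeta)$; the $\ell=0$ scenario is handled by the same two-scale split with the continuous integral in place of the sum.
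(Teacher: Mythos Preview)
Your treatment of parts 2(a)--(c) and of the two-scale split in 2(d) matches the paper's proof: bound $|Q_{k\ell}^{s}\bar f_{s}-Q_{k\ell}^{u}\bar f_{u}|$ by H\"older/Duhamel estimates for $k$ below a threshold and by the ergodic decay (the bound behind 2(b)/(c)) above it, then choose the threshold $T_{0}\sim K^{-1}\log|s-u|^{-1}$. Two minor slips: the semigroups $Q_{t}^{s}$ are by definition those with $\epsilon=1$, so no $\epsilon^{-1}$ appears in the Duhamel identity; and since $\int_{0}^{t}e^{-K(t-v)}\,\mathrm{d}v\leq K^{-1}$ uniformly in $t$, the $(Q_{k\ell}^{s}-Q_{k\ell}^{u})$ contribution over $k\leq k_{0}$ is $O(T_{0})$, not $O(T_{0}^{2})$. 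Neither affects the conclusion.

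The only substantive difference is in part 1. The paper does \emph{not} use a separate coupling argument; it writes, for arbitrary $t>0$ and $x\in\mathbb{R}^{d}$,
\[
\pi_{s}f_{s}-\pi_{u}f_{u}=R_{1}(t,x)+R_{2}(t,x)+R_{3}(t,x),
\]
with $R_{1}=(\pi_{s}f_{s}-Q_{t}^{s}f_{s}(x))+(Q_{t}^{u}f_{u}(x)-\pi_{u}f_{u})$, $R_{2}=Q_{t}^{s}(f_{s}-f_{u})(x)$ and $R_{3}=(Q_{t}^{s}-Q_{t}^{u})f_{u}(x)$, bounds each (ergodic decay, the H\"older hypothesis, Duhamel plus (A\ref{hyp:U_time_cont})), then sets $x=0$ and sends $t\to\infty$ so that $R_{1}\to0$. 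The payoff is that exactly these three bounds are recycled verbatim in 2(d), since $Q_{k\ell}^{s}\bar f_{s}-Q_{k\ell}^{u}\bar f_{u}=-R_{1}(k\ell,x)=-(\pi_{s}f_{s}-\pi_{u}f_{u})+R_{2}(k\ell,x)+R_{3}(k\ell,x)$. Your coupling sketch (run an $\mathcal{L}_{u}$-diffusion from $X\sim\pi_{s}$ to stationarity) does not by itself produce an $|s-u|$ factor: pathwise contraction of the $\mathcal{L}_{u}$-flow compares two trajectories driven by the \emph{same} drift, so it bounds $|\pi_{s}Q_{t}^{u}f_{u}-\pi_{u}f_{u}|$ in terms of an initial coupling of $(\pi_{s},\pi_{u})$ but does not bound that coupling distance. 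To close the gap you would need either a synchronous coupling of the $\mathcal{L}_{s}$- and $\mathcal{L}_{u}$-diffusions using (A\ref{hyp:U_time_cont}) on the drift difference --- which is precisely the probabilistic twin of the paper's $R_{3}$ computation --- or an independent Wasserstein estimate for $(\pi_{s},\pi_{u})$. The paper's route is more economical because a single calculation serves both parts.
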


\begin{proof}
Consider for arbitrary $s,u\in[0,1]$, $x\in\mathbb{R}^{d}$, and
$t>0$, the decomposition $\pi_{s}f_{s}-\pi_{u}f_{u}=R_{1}(t,x)+R_{2}(t,x)+R_{3}(t,x)$,
where
\begin{align*}
R_{1}(t,x) & \coloneqq\pi_{s}f_{s}-Q_{t}^{s}f_{s}(x)+Q_{t}^{u}f_{u}(x)-\pi_{u}f_{u},\\
R_{2}(t,x) & \coloneqq Q_{t}^{s}(f_{s}-f_{u})(x),\\
R_{3}(t,x) & \coloneqq(Q_{t}^{s}-Q_{t}^{u})(f_{u})(x).
\end{align*}
For $R_{1}$, it can be shown by arguments which are almost identical
to those used to prove Lemma \ref{lem:bias_nu_bar_nu} that
\begin{equation}
|Q_{t}^{s}f_{s}(x)-\pi_{s}f_{s}|\leq\|f_{s}\|_{p}\tilde{\alpha}_{p,1}e^{-Kt}W^{(p)}(\delta_{x},\pi_{s}).\label{eq:Q_minus_pi}
\end{equation}
Hence
\begin{align*}
|R_{1}(t,x)| & \leq\|f\|_{p}\tilde{\alpha}_{p,1}e^{-Kt}\left[W^{(p)}(\delta_{x},\pi_{s})+W^{(p)}(\delta_{x},\pi_{u})\right],\\
 & \leq C\|f\|_{p}\tilde{\alpha}_{p,1}\sup_{s\in[0,1]}\pi_{s}\bar{V}^{(p+1/2)}\bar{V}^{(p+1/2)}(x)e^{-Kt},
\end{align*}
where we have used the estimates of Lemma \ref{lem:W(delta_x,nu)}.
For $R_{2}$, using (\ref{eq:f_lipschitz_hyp}) and Lemma \ref{lem:drift_Y_process},
\begin{align*}
\sup_{t\in\in\mathbb{R}_{+}}|R_{2}(t,x)| & \leq C_{f}|s-u|^{\beta}\sup_{t\in\mathbb{R}_{+}}Q_{t}^{s}\bar{V}^{(p)}(x)\\
 & \leq C_{f}\tilde{\alpha}_{p,1}|s-u|^{\beta}\bar{V}^{(p)}(x).
\end{align*}
For $R_{3}$, assuming w.l.o.g. that $u\leq s$,
\begin{align}
|Q_{t}^{s}f_{u}-Q_{t}^{u}f_{u}| & =\left|\int_{0}^{t}\partial_{\tau}Q_{\tau}^{u}Q_{t-\tau}^{s}f_{u}\mathrm{d}\tau\right|\nonumber \\
 & =\left|\int_{0}^{t}Q_{\tau}^{u}\left\langle \nabla U_{s}-\nabla U_{u},\nabla Q_{t-\tau}^{s}f_{u}\right\rangle \mathrm{d}\tau\right|\nonumber \\
 & \leq\int_{0}^{t}Q_{\tau}^{u}(\|\nabla U_{s}-\nabla U_{u}\|\|\nabla Q_{t-\tau}^{s}f_{u}\|)\mathrm{d}\tau\nonumber \\
 & \leq M|s-u|\int_{0}^{t}Q_{\tau}^{u}\left(\sqrt{\bar{V}_{u}}\cdot Q_{t-\tau}^{s}\|\nabla f_{u}\|\right)e^{-K(t-\tau)}\mathrm{d}\tau\nonumber \\
 & \leq\|\nabla f_{u}\|_{p}\tilde{\alpha}_{p,1}M|s-u|\int_{0}^{t}Q_{\tau}^{u}\left(\sqrt{\bar{V}_{u}}\cdot\bar{V}^{(p)}\right)e^{-K(t-\tau)}\mathrm{d}\tau.\label{eq:Q_Lipschitz}
\end{align}
We now use Lemma \ref{lem:W(delta_x,nu)} and Lemma \ref{lem:drift_Y_process},
\begin{align*}
\sup_{\tau\in[0,1]}Q_{\tau}^{u}\left(\sqrt{\bar{V}_{u}}\cdot\bar{V}^{(p)}\right)(x) & \leq C\tilde{\alpha}_{p,1}\sqrt{\bar{V}(x_{u}^{\star})}\cdot\bar{V}^{(p+1/2)}(x)
\end{align*}
and combining this observation with (\ref{eq:Q_Lipschitz}) gives
\begin{align*}
\sup_{t\in\mathbb{R}_{+}}|R_{3}(t,x)| & \leq C\tilde{\alpha}_{p,1}^{2}\frac{M}{K}|s-u|\cdot\|\nabla f\|_{p}\sqrt{\bar{V}(x_{u}^{\star})}\cdot\bar{V}^{(p+1/2)}(x).
\end{align*}
Since $x$ was arbitrary we may now choose $x=0$, and noting also
that $t$ was arbitrary and $|s-u|\leq1$, combining the above bounds
on $|R_{1}|,|R_{2}|,|R_{3}|$ then gives
\begin{align*}
|\pi_{s}f_{s}-\pi_{u}f_{u}| & \leq\|f\|_{p}\tilde{\alpha}_{p,1}\left[W^{(p)}(\delta_{0},\pi_{s})+W^{(p)}(\delta_{0},\pi_{u})\right]\inf_{t\in\mathbb{R}_{+}}e^{-Kt}+C_{f}\tilde{\alpha}_{p,1}|s-u|^{\beta}\\
 & \hspace{2cm}+C\tilde{\alpha}_{p,1}^{2}\frac{M}{K}|s-u|\cdot\|\nabla f\|_{p}\sup_{\tau\in[0,1]}\sqrt{\bar{V}(x_{\tau}^{\star})}\\
 & \leq C|s-u|^{\beta}\tilde{\alpha}_{p}\left[C_{f}+\tilde{\alpha}_{p,1}\frac{M}{K}\|\nabla f\|_{p}\sup_{\tau\in[0,1]}\sqrt{\bar{V}(x_{\tau}^{\star})}\right].
\end{align*}
This completes the proof of (\ref{eq:pi_s_f_pi_t_f}). For property
\ref{enu:enu:poisson_continuity_etc:classgsr} in the statement, by
the Proposition \ref{prop:C_2^p_closed} in the time-homogeneous case,
for any given $s$, $f_{s}\in C_{2}^{p}(\mathbb{R}^{d})\Rightarrow Q_{k\ell}^{s}f\in C_{2}^{p}(\mathbb{R}^{d}),$
hence for any $r<+\infty$ and any $\ell\geq0$, $x\mapsto g_{s,r}(x)$
is a member of $C_{2}^{p}(\mathbb{R}^{d})$. For property \ref{enu:poisson_continuity_etc:fboundgsr}
in the statement, using (\ref{eq:Q_minus_pi}),
\[
|g_{s,\infty}(x)|\leq\begin{cases}
\ell\|f\|_{p}\tilde{\alpha}_{p}W^{(p)}(\delta_{x},\pi_{s})\frac{1}{1-e^{-K\ell}}, & \ell>0,\\
\|f\|_{p}\tilde{\alpha}_{p}W^{(p)}(\delta_{x},\pi_{s})\frac{1}{K}, & \ell=0,
\end{cases}
\]
which together with Lemma \ref{lem:W(delta_x,nu)} and (\ref{eq:pi_t_finite_polynomial_moments})
imply that for any $\ell\geq0$ and $r\in\mathbb{N}_{0}\cup\{\infty\}$,
$\sup_{s,x}|g_{s,\infty}(x)|/(1+\|x\|^{2p+1})<+\infty$. For property
\ref{enu:poisson_continuity_etc:boundgsrminusgsinfty}, by similar
manipulations, 
\[
|g_{s,\infty}(x)-g_{s,r}(x)|\leq\begin{cases}
\ell\|f\|_{p}\tilde{\alpha}_{p}W^{(p)}(\delta_{x},\pi_{s})\frac{e^{-K\ell r}}{1-e^{-K\ell}}, & \ell>0,\\
\|f\|_{p}\tilde{\alpha}_{p}W^{(p)}(\delta_{x},\pi_{s})\frac{e^{-Kr}}{K}, & \ell=0.
\end{cases}
\]
For property \ref{enu:poisson_continuity_etc:gsrlipschitzcty}, in
the setting $\ell>0$, with $R_{1},R_{2}$ and $R_{3}$ as above we
have
\[
g_{u,r}(x)-g_{s,r}(x)=\ell\sum_{k=0}^{r}R_{1}(k\ell,x)=(r+1)\ell\big(\pi_{s}f_{s}-\pi_{u}f_{u}\big)-\ell\sum_{k=0}^{r}R_{2}(k\ell,x)+R_{3}(k\ell,x)
\]
and therefore for any $N-1\geq r$ for $r\in\mathbb{N}$ and any $N\in\mathbb{N}$
for $r=\infty$
\begin{align*}
|g_{s,r}(x)-g_{u,r}(x)|\leq & N\ell|\pi_{s}f_{s}-\pi_{u}f_{u}|+\ell\sum_{k=0}^{N-1}|R_{2}(k\ell,x)|+|R_{3}(k\ell,x)|+\ell\sum_{k=N}^{\infty}|R_{1}(k\ell,x)|\\
\leq & C\ell(C_{1}\vee C_{2})\left(N|s-u|^{\beta}+\frac{e^{-KN\ell}}{1-e^{-K\ell}}\right)\bar{V}^{(p+1/2)}(x),
\end{align*}
with
\begin{align*}
C_{1} & =\tilde{\alpha}_{p,1}\left[C_{f}+\tilde{\alpha}_{p,1}\frac{M}{K}\|\nabla f\|_{p}\sup_{\tau\in[0,1]}\sqrt{\bar{V}(x_{\tau}^{\star})}\right],\\
C_{2} & =\|f\|_{p}\tilde{\alpha}_{p,1}.\sup_{s\in[0,1]}\pi_{s}\bar{V}^{p+1/2}
\end{align*}
Clearly
\begin{align*}
C_{1}\vee C_{2} & \leq C\tilde{\alpha}_{p,1}(\ell\vee1)\big(C_{f}\vee\vvvert f\vvvert_{p}\big)\Bigl(1+\tilde{\alpha}_{p,1}\frac{M}{K}\sup_{\tau\in[0,1]}\sqrt{\bar{V}(x_{\tau}^{\star})}+\sup_{s\in[0,1]}\pi_{s}\bar{V}^{p+1/2}\Bigr).
\end{align*}
Now when $|s-u|^{\beta}\geq\frac{e^{-K\ell}}{1-e^{-K\ell}},$ one
can choose $N=1$ and conclude. Otherwise we take $N=\lceil-(K\ell)^{-1}\log\big(|s-u|^{\beta}\big)\rceil$
which with $\gimel^{-1}\leq1-K\ell/2$ leads, on the one hand, to
\[
\frac{e^{-KN\ell}}{1-e^{-K\ell}}\leq\frac{\gimel}{K\ell}|s-u|^{\beta}
\]
and on the other hand to
\[
N|s-u|^{\beta}\leq\big[1-(K\ell)^{-1}\log\big(|s-u|^{\beta}\big)\big]|s-u|^{\beta}
\]
So we study $\varphi(x)=x^{a}\log x$ for $x\geq0.$ $\varphi'(x)=x^{a-1}\big[a\log(x)+1\big]$
so $\varphi(x)$ reaches its minimum at $\exp(-a^{-1})$, and therefore
since $\varphi(x)\leq0$ for $0\leq x\leq1$, for any $b\geq0$ 
\[
\sup_{x\in[0,b]}\big|\varphi(x)\big|\leq\big|\varphi(a)\big|\vee\big|\varphi(b)\big|.
\]
Therefore for $|s-u|\leq R_{f}$ and $\zeta\in(0,\beta)$ we have
\begin{align*}
N|s-u|^{\beta-\zeta} & \leq R_{f}{}^{\beta-\zeta}+\frac{\beta}{K\ell}\big[e^{-1}/(\beta-\zeta)\big]\vee\big(R_{f}^{\beta-\zeta}\big|\log R_{f}\big|\big)
\end{align*}
and in total we have the bound
\[
N|s-u|^{\beta}+\frac{e^{-KN\ell}}{1-e^{-K\ell}}\leq\frac{1}{K\ell}\left[(2\vee\gimel)R_{f}{}^{\beta-\zeta}+\beta\big[e^{-1}/(\beta-\zeta)\big]\vee\big(R_{f}^{\beta-\zeta}\big|\log R_{f}\big|\big)\right]|s-u|^{\zeta}.
\]
For the case $\ell=0$ a reasoning similar as that above leads to
\[
|g_{s,r}(x)-g_{u,r}(x)|\leq C\ell(C_{1}\vee C_{2})\left(N|s-u|^{\beta}+\frac{e^{-KN}}{K}\right)\bar{V}^{(p+1/2)}(x),
\]
\end{proof}
and for $|s-u|^{\beta}\geq e^{-K}/K,$ set $N=1$, and otherwise set
$N=\left\lceil -K^{-1}\log\big(|s-u|^{\beta}\big)\right\rceil $ and
deduce from above that
\begin{align*}
N|s-u|^{\beta}+\frac{e^{-KN}}{K} & \leq\big[1-K^{-1}\log\big(|s-u|^{\beta}\big)+K^{-1}\big]|s-u|^{\beta}\\
 & \leq\frac{1}{K}\left[KR_{f}{}^{\beta-\zeta}+\beta\big[e^{-1}/(\beta-\zeta)\big]\vee\big(R_{f}^{\beta-\zeta}\big|\log R_{f}\big|\big)\right]|s-u|^{\zeta}
\end{align*}
and we conclude by combining all the cases.
\begin{lem}
\label{lem:rieman_approx} Assume that for some $p\geq1$ and $f\in C_{0,2}^{p}([0,1]\times\mathbb{R}^{d})$
there exist constants $C_{f}<+\infty$, $R_{f}>0$ and $\beta\in(0,1]$
such that
\[
|s-t|\leq R_{f}\quad\Rightarrow\quad|f_{s}(x)-f_{t}(x)|\leq C_{f}|s-t|^{\beta}\bar{V}^{(p)}(x),\quad\forall x\in\mathbb{R}^{d}.
\]
Then for any $h\in(0,R_{f}]$
\[
\left|h\sum_{k=0}^{\left\lfloor 1/h\right\rfloor -1}\pi_{kh}f_{kh}-\int_{0}^{1}\pi_{t}f_{t}\mathrm{d}t\right|\leq h^{\beta}\tilde{\alpha}_{p}\big(C_{f}\vee\|\nabla f\|_{p}\big)\left[1+\tilde{\alpha}_{p}\frac{M}{K}\sup_{t\in[0,1]}\sqrt{\bar{V}(x_{t}^{\star})}\right].
\]
\end{lem}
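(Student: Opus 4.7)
The plan is to reduce the result to the H\"older continuity of $s\mapsto\pi_s f_s$ established in Lemma \ref{lem:poisson_continuity_etc}, part 1. That lemma shows, under the hypotheses of the present statement, that for $|s-u|\leq R_f$,
\[
|\pi_s f_s - \pi_u f_u| \;\leq\; C|s-u|^\beta\,\tilde{\alpha}_p\bigl(C_f\vee\|\nabla f\|_p\bigr)\Bigl[1+\tilde{\alpha}_{p,1}\tfrac{M}{K}\sup_{\tau\in[0,1]}\sqrt{\bar V(x_\tau^\star)}\Bigr],
\]
and the right-hand side is exactly the constant appearing in the target bound, multiplied by $|s-u|^\beta$.

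Setting $n:=\lfloor 1/h\rfloor$, the first step is the decomposition
\[
h\sum_{k=0}^{n-1}\pi_{kh}f_{kh} \;-\; \int_0^1 \pi_t f_t\,dt \;=\; \sum_{k=0}^{n-1}\int_{kh}^{(k+1)h}\!\bigl[\pi_{kh}f_{kh}-\pi_t f_t\bigr]\,dt \;-\; \int_{nh}^1 \pi_t f_t\,dt.
\]
For each $t\in[kh,(k+1)h]$ we have $|t-kh|\leq h\leq R_f$, so the H\"older estimate above applied with $(s,u)=(kh,t)$ dominates the bracketed integrand by $Ch^\beta$ times the stated constant. Summing over $k$ introduces a factor $\sum_{k=0}^{n-1}h=nh\leq 1$, and the main contribution is therefore of the claimed size $h^\beta$ times the stated constant.

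The second step is to dispose of the terminal remainder $\int_{nh}^1\pi_t f_t\,dt$, whose length satisfies $1-nh\in[0,h)$. Since $\beta\in(0,1]$ and $h\leq R_f\leq 1$ imply $h\leq h^\beta$, this remainder is at most $h^\beta\sup_{t\in[0,1]}|\pi_t f_t|$, so it suffices to control $\sup_t|\pi_t f_t|$ by (a multiple of) the stated constant. I would do this by triangle inequality, writing $\pi_t f_t=\pi_0 f_0+(\pi_t f_t-\pi_0 f_0)$: the difference is controlled by a chaining argument that applies Lemma \ref{lem:poisson_continuity_etc}, part 1, at most $\lceil 1/R_f\rceil$ times across $[0,1]$, while $|\pi_0 f_0|\leq\|f\|_p\,\pi_0\bar V^{(p)}$ bounds the starting value using $f\in C_{0,2}^p$ and the finite moments of $\pi_0$ from Lemma \ref{lem:deriv_int_interchange}.

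The only delicate point is this terminal remainder: the stated right-hand side involves $(C_f\vee\|\nabla f\|_p)$ rather than an explicit $\|f\|_p$, so one must absorb the bound on $|\pi_0 f_0|$ into the advertised constant (in typical applications, such as $f\mapsto fg_{s,r}$ in Lemma \ref{lem:variance-cv-ergodicity} or $f\mapsto f^2$ in $\Upsilon_{6,\epsilon}$, this absorption is automatic once one tracks the product structure). Apart from that bookkeeping, the rest of the argument is a direct Riemann-sum estimate whose sole nontrivial input is Lemma \ref{lem:poisson_continuity_etc}, part 1.
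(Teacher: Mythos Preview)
Your approach is essentially the same as the paper's: both reduce the Riemann-sum error to the H\"older continuity of $s\mapsto\pi_s f_s$ from Lemma \ref{lem:poisson_continuity_etc}, part 1, via the decomposition $\sum_{k}\int_{kh}^{(k+1)h}[\pi_{kh}f_{kh}-\pi_t f_t]\,dt$. The paper's proof is in fact shorter than yours because it simply writes
\[
\Bigl|h\sum_{k=0}^{\lfloor 1/h\rfloor-1}\pi_{kh}f_{kh}-\int_0^1\pi_t f_t\,dt\Bigr|\leq\sum_{k=0}^{\lfloor 1/h\rfloor-1}\int_{kh}^{(k+1)h}|\pi_{kh}f_{kh}-\pi_t f_t|\,dt
\]
and proceeds from there, tacitly treating $\lfloor 1/h\rfloor h$ as if it were $1$ and thereby ignoring the terminal remainder $\int_{nh}^1\pi_t f_t\,dt$ that you worry about. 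So you are being more careful than the paper on this point; your concern about absorbing $\|f\|_p$ into the stated constant is legitimate, but the paper simply does not address it.
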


\begin{proof}
Using Lemma \ref{lem:poisson_continuity_etc},
\begin{align*}
\Bigl|h\sum_{k=0}^{\left\lfloor 1/h\right\rfloor -1}\pi_{kh}f_{kh} & -\int_{0}^{1}\pi_{t}f_{t}\mathrm{d}t\Bigr|\\
 & \leq\sum_{k=0}^{\left\lfloor 1/h\right\rfloor -1}\int_{kh}^{(k+1)h}|\pi_{kh}f_{kh}-\pi_{t}f_{t}|\mathrm{d}t\\
 & \leq h^{\beta}\tilde{\alpha}_{p}\left[C_{f}+\tilde{\alpha}_{p}\frac{M}{K}\|\nabla f\|_{p}\cdot\sup_{t\in[0,1]}\sqrt{\bar{V}(x_{t}^{\star})}\right]\sum_{k=0}^{\left\lfloor 1/h\right\rfloor -1}\int_{kh}^{(k+1)h}\mathrm{d}t\\
 & \leq h^{\beta}\tilde{\alpha}_{p}\left[C_{f}+\tilde{\alpha}_{p}\frac{M}{K}\|\nabla f\|_{p}\cdot\sup_{t\in[0,1]}\sqrt{\bar{V}(x_{t}^{\star})}\right].
\end{align*}
\end{proof}

\section{Controlling the discretization error\label{sec:Controlling-the-discretization}}

Throughout section \ref{sec:Controlling-the-discretization}, $(\widetilde{X}_{t}^{\epsilon,h})_{t\in[0,1]}$,
$\mu^{\epsilon}$ , and $\widetilde{\mu}^{\epsilon,h}$ are as defined
in section \ref{subsec:intro_Discretization}. 

\subsection{Bounding the total variation distance}
\begin{prop}
\label{prop:tv_bound}If $h/\epsilon\in(0,2K/L^{2})$, then for any
$\delta\in(0,1)$
\[
\|\mu^{\epsilon}-\widetilde{\mu}^{\epsilon,h}\|_{\mathrm{tv}}\leq\frac{1}{2}\left[L^{2}d\frac{h}{\epsilon^{2}}+\frac{h^{3}}{3\epsilon}\left(M^{2}+\frac{L^{4}}{\epsilon^{2}}\right)\left(\frac{1}{h}+\frac{1}{1-\lambda}\left[\mu_{0}(V_{0})+\frac{b}{h}\right]\right)\right]^{1/2},
\]
where
\begin{eqnarray*}
\lambda & \coloneqq & 1-\left(\frac{2hK}{\epsilon}-\left(\frac{h}{\epsilon}\right)^{2}L^{2}\right)(1-\delta),\\
b & \coloneqq & \sup_{t\in(0,1)}\|\partial_{t}x_{t}^{\star}\|^{2}\left[\frac{4h^{2}}{\delta\left(\frac{2hK}{\epsilon}-\left(\frac{h}{\epsilon}\right)^{2}L^{2}\right)}+h^{2}\right]+2d\frac{h}{\epsilon}.
\end{eqnarray*}
\end{prop}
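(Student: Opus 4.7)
The plan is to combine Pinsker's inequality with Girsanov's theorem, then control the resulting integrated drift-difference by a one-step Foster--Lyapunov analysis of the discretized chain.

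\smallskip

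\noindent\textbf{Step 1: Pinsker and Girsanov.} Since $\mu^{\epsilon}$ and $\widetilde{\mu}^{\epsilon,h}$ are laws of strong solutions driven by the same Brownian motion $B$, started from the same initial condition, they are mutually absolutely continuous on $\mathcal{B}(E)$. Pinsker's inequality gives
\[
\|\mu^{\epsilon}-\widetilde{\mu}^{\epsilon,h}\|_{\mathrm{tv}}\leq\sqrt{\tfrac{1}{2}\mathrm{KL}(\widetilde{\mu}^{\epsilon,h}\|\mu^{\epsilon})},
\]
and Girsanov (applied under $\widetilde{\mu}^{\epsilon,h}$, using that the diffusion coefficient is the constant $\sqrt{2\epsilon^{-1}}I_d$) yields
\[
\mathrm{KL}(\widetilde{\mu}^{\epsilon,h}\|\mu^{\epsilon})=\frac{1}{4\epsilon}\,\mathbb{E}\!\left[\int_{0}^{1}\bigl\|\nabla U_{t}(\widetilde{X}_{t}^{\epsilon,h})-\widetilde{\nabla U}_{t}(\widetilde{X}_{t}^{\epsilon,h})\bigr\|^{2}\mathrm{d}t\right].
\]

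\smallskip

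\noindent\textbf{Step 2: Split the drift error.} For $t\in[kh,(k+1)h)$, $\widetilde{\nabla U}_t(\widetilde{X}_{t}^{\epsilon,h})=\nabla U_{kh}(\widetilde{X}_{kh}^{\epsilon,h})$. I will write
\[
\nabla U_t(\widetilde{X}_{t}^{\epsilon,h})-\nabla U_{kh}(\widetilde{X}_{kh}^{\epsilon,h})=\underbrace{\nabla U_t(\widetilde{X}_{t}^{\epsilon,h})-\nabla U_t(\widetilde{X}_{kh}^{\epsilon,h})}_{\text{space}}+\underbrace{\nabla U_t(\widetilde{X}_{kh}^{\epsilon,h})-\nabla U_{kh}(\widetilde{X}_{kh}^{\epsilon,h})}_{\text{time}}
\]
and apply $\|a+b\|^2\leq 2\|a\|^2+2\|b\|^2$, (A\ref{hyp:U_grad_lipschitz_x}) to the first piece and (A\ref{hyp:U_time_cont}) to the second, giving integrand bounded by
\[
2L^{2}\bigl\|\widetilde{X}_{t}^{\epsilon,h}-\widetilde{X}_{kh}^{\epsilon,h}\bigr\|^{2}+2M^{2}(t-kh)^{2}\,\bar{V}_{kh}(\widetilde{X}_{kh}^{\epsilon,h}).
\]

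\smallskip

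\noindent\textbf{Step 3: One-step increment moments.} Using (\ref{eq:sde_disc-1}) and the fact $\|\nabla U_{kh}(x)\|\leq L\|x-x_{kh}^{\star}\|$ (from (A\ref{hyp:U_strong_convex})--(A\ref{hyp:U_grad_lipschitz_x}) applied at $x_{kh}^{\star}$), together with independence of the Brownian increment from $\mathcal{F}_{kh}$, I obtain
\[
\mathbb{E}\bigl[\|\widetilde{X}_{t}^{\epsilon,h}-\widetilde{X}_{kh}^{\epsilon,h}\|^{2}\bigr]\leq\epsilon^{-2}(t-kh)^{2}L^{2}\,\mathbb{E}[V_{kh}(\widetilde{X}_{kh}^{\epsilon,h})]+2d\epsilon^{-1}(t-kh).
\]
Integrating over $[kh,(k+1)h)$ and summing in $k$, the KL integrand reduces to quantities involving $h^{3}\epsilon^{-1}(M^2+L^4/\epsilon^2)$, $L^2 dh/\epsilon$, and $\sup_k\mathbb{E}[V_{kh}(\widetilde{X}_{kh}^{\epsilon,h})]$.

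\smallskip

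\noindent\textbf{Step 4: Discrete Foster--Lyapunov drift.} The main step is to prove the geometric drift
\[
\mathbb{E}\bigl[V_{(k+1)h}(\widetilde{X}_{(k+1)h}^{\epsilon,h})\bigm|\mathcal{F}_{kh}\bigr]\leq\lambda\,V_{kh}(\widetilde{X}_{kh}^{\epsilon,h})+b,
\]
with the stated $\lambda$, $b$. Expand $\|\widetilde{X}_{(k+1)h}^{\epsilon,h}-x_{(k+1)h}^{\star}\|^{2}$ after inserting $\pm x_{kh}^{\star}$; the Brownian term contributes $2dh/\epsilon$; the convexity-plus-Lipschitz identity $\langle \nabla U_{kh}(x),x-x_{kh}^\star\rangle\ge K\|x-x_{kh}^\star\|^2$ and $\|\nabla U_{kh}(x)\|^{2}\leq L^{2}\|x-x_{kh}^{\star}\|^{2}$ give the contraction factor $1-2hK/\epsilon+(h/\epsilon)^{2}L^{2}$; and the minimizer displacement $\|x_{kh}^\star-x_{(k+1)h}^\star\|\leq h\sup_{t}\|\partial_{t}x_{t}^{\star}\|$ (from Lemma~\ref{lem:minimizer}) together with a Young-inequality splitting (with parameter $\delta$) of the cross term absorbs the remainder into $\lambda$ and $b$. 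Iterating gives
\[
\sup_{k}\mathbb{E}[V_{kh}(\widetilde{X}_{kh}^{\epsilon,h})]\leq\mu_{0}(V_{0})+\frac{b}{1-\lambda}\quad\text{and}\quad \tfrac{1}{h}\!\sum_{k=0}^{\lfloor1/h\rfloor-1}\!\mathbb{E}[V_{kh}(\widetilde{X}_{kh}^{\epsilon,h})]\le\tfrac{1}{h}+\tfrac{1}{1-\lambda}\bigl[\mu_{0}(V_{0})+\tfrac{b}{h}\bigr],
\]
which is exactly the expression appearing inside the square root.

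\smallskip

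\noindent\textbf{Step 5: Assembly.} Plugging the Step~3 moment bound and the uniform $V_{kh}$ moment bound of Step~4 into Step~2, then Step~1, yields the claimed inequality. The condition $h/\epsilon<2K/L^{2}$ is precisely what guarantees $\lambda\in(0,1)$, so it enters as a structural requirement rather than as an added hypothesis. The principal obstacle I anticipate is Step~4: the Young splitting controlled by $\delta$ must be done carefully so that the final $\lambda$ and $b$ match the stated form, in particular so that the $\sup_{t}\|\partial_{t}x_{t}^{\star}\|^{2}$ term in $b$ inherits the denominator $\delta(2hK/\epsilon-(h/\epsilon)^{2}L^{2})$ and the residual $2dh/\epsilon$ appears cleanly; everything else is a bookkeeping exercise.
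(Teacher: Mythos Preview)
Your proposal is correct and follows essentially the same route as the paper: Pinsker plus Girsanov to reduce to the integrated squared drift difference, the same space/time splitting via (A\ref{hyp:U_grad_lipschitz_x}) and (A\ref{hyp:U_time_cont}), the same one-step increment bound, and the same discrete Foster--Lyapunov argument (which is the paper's Lemma~\ref{lem:disc_drift_p=00003D2}). The only slip is bookkeeping in Step~4: the displayed sum bound should read $\sum_{k}\mathbb{E}[1+V_{kh}(\widetilde{X}_{kh}^{\epsilon,h})]\le \tfrac{1}{h}+\tfrac{1}{1-\lambda}[\mu_0(V_0)+\tfrac{b}{h}]$ (no prefactor $1/h$ on the left, and $\bar V_{kh}$ rather than $V_{kh}$), which is also what your Step~2 time-term $\bar V_{kh}$ actually requires and what produces the leading $1/h$ in the stated bound.
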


\begin{proof}
The proof is quite similar to \cite[Proof of Lemma 2]{dalalyan2016theoretical},
except that here we need to account for the dependence of $U_{t}$
on $t$. Consider
\begin{eqnarray*}
\Xi_{t} & \coloneqq & \frac{1}{\sqrt{2\epsilon}}\left\{ \widetilde{\nabla U}_{t}(\widetilde{X}_{t}^{\epsilon,h})-\nabla U_{t}(\widetilde{X}_{t}^{\epsilon,h})\right\} \\
Z_{t} & \coloneqq & \exp\left(\sum_{i=1}^{d}\int_{0}^{t}\Xi_{s}^{i}\mathrm{d}B_{s}^{i}-\frac{1}{2}\int_{0}^{t}\|\Xi_{s}\|^{2}\mathrm{d}s\right).
\end{eqnarray*}
By Girsanov's theorem,  under the probability measure $\widetilde{\mathbb{P}}_{\mathcal{F}_{1}}[A]\coloneqq\mathbb{E}[\mathbb{I}_{A}Z_{1}]$,
$A\in\mathcal{F}_{1}$, the process $\int_{0}^{t}\mathrm{d}B_{s}-\Xi_{s}\mathrm{d}s$
is a $d$-dimensional $(\mathcal{F}_{t})_{t\in[0,1]}$-Brownian motion
and the law of $(\widetilde{X}_{t}^{\epsilon,h})_{t\in[0,1]}$ is
$\mu$. Denoting by $\mathbb{P}_{\mathcal{F}_{1}}$ the restriction
of $\mathbb{P}$ to $\mathcal{F}_{1}$, we therefore have by Pinsker's
inequality
\begin{equation}
\|\mu^{\epsilon}-\widetilde{\mu}^{\epsilon,h}\|_{\mathrm{tv}}\leq\|\widetilde{\mathbb{P}}_{\mathcal{F}_{1}}-\mathbb{P}_{\mathcal{F}_{1}}\|_{\mathrm{tv}}\leq\sqrt{-\frac{1}{2}\mathbb{E}[\log Z_{1}]}=\frac{1}{2}\sqrt{\mathbb{E}\left[\int_{0}^{t}\|\Xi_{s}\|^{2}\mathrm{d}s\right]}.\label{eq:pinsker}
\end{equation}
For $s\in[kh,(k+1)h)$, we have from (\ref{eq:sde_disc-1}) and (A\ref{hyp:U_grad_lipschitz_x}),
\begin{eqnarray}
\mathbb{E}[\|\widetilde{X}_{kh}^{\epsilon,h}-\widetilde{X}_{s}^{\epsilon,h}\|^{2}] & = & \frac{1}{\epsilon^{2}}(s-kh)^{2}\mathbb{E}[\|\nabla U_{kh}(\widetilde{X}_{kh}^{\epsilon,h})\|^{2}]+\frac{2d}{\epsilon}(s-kh)\nonumber \\
 & \leq & \frac{1}{\epsilon^{2}}(s-kh)^{2}L^{2}\mathbb{E}[1+\|\widetilde{X}_{kh}^{\epsilon,h}-x_{kh}^{\star}\|^{2}]+\frac{2d}{\epsilon}(s-kh).\label{eq:disc_mean_square}
\end{eqnarray}
The considering the expectation in (\ref{eq:pinsker}), we find from
(\ref{eq:tilde_grad_U}), (A\ref{hyp:U_time_cont}) , (A\ref{hyp:U_grad_lipschitz_x}),
(\ref{eq:disc_mean_square}), and Lemma \ref{lem:disc_drift_p=00003D2},
 
\begin{eqnarray*}
 &  & \mathbb{E}\left[\int_{0}^{t}\|\Xi_{s}\|^{2}\mathrm{d}s\right]\\
 &  & =\frac{1}{2\epsilon}\sum_{k=0}^{\left\lfloor 1/h\right\rfloor -1}\int_{kh}^{(k+1)h}\mathbb{E}[\|\nabla U_{kh}(\widetilde{X}_{kh}^{\epsilon,h})-\nabla U_{s}(\widetilde{X}_{s}^{\epsilon,h})\|^{2}]\mathrm{d}s\\
 &  & \leq\frac{1}{\epsilon}\sum_{k=0}^{\left\lfloor 1/h\right\rfloor -1}\int_{kh}^{(k+1)h}\mathbb{E}[\|\nabla U_{kh}(\widetilde{X}_{kh}^{\epsilon,h})-\nabla U_{s}(\widetilde{X}_{kh}^{\epsilon,h})\|^{2}]+\mathbb{E}[\|\nabla U_{s}(\widetilde{X}_{kh}^{\epsilon,h})-\nabla U_{s}(\widetilde{X}_{s}^{\epsilon,h})\|^{2}]\mathrm{d}s\\
 &  & \leq\frac{1}{\epsilon}\sum_{k=0}^{\left\lfloor 1/h\right\rfloor -1}\int_{kh}^{(k+1)h}M^{2}(s-kh)^{2}\mathbb{E}[1+\|\widetilde{X}_{kh}^{\epsilon,h}-x_{kh}^{\star}\|^{2}]+L^{2}\mathbb{E}[\|\widetilde{X}_{kh}^{\epsilon,h}-\widetilde{X}_{s}^{\epsilon,h}\|^{2}]\mathrm{d}s\\
 &  & \leq\frac{1}{\epsilon}\sum_{k=0}^{\left\lfloor 1/h\right\rfloor -1}\int_{kh}^{(k+1)h}M^{2}(s-kh)^{2}\mathbb{E}[1+\|\widetilde{X}_{kh}^{\epsilon,h}-x_{kh}^{\star}\|^{2}]+L^{2}\left(\frac{1}{\epsilon^{2}}(s-kh)^{2}L^{2}\mathbb{E}[1+\|\widetilde{X}_{kh}^{\epsilon,h}-x_{kh}^{\star}\|^{2}]+\frac{2d}{\epsilon}(s-kh)\right)\mathrm{d}s\\
 &  & =\frac{1}{\epsilon}\left(M^{2}+\frac{L^{4}}{\epsilon^{2}}\right)\sum_{k=0}^{\left\lfloor 1/h\right\rfloor -1}\mathbb{E}[1+\|\widetilde{X}_{kh}^{\epsilon,h}-x_{kh}^{\star}\|^{2}]\int_{kh}^{(k+1)h}(s-kh)^{2}\mathrm{d}s\\
 &  & \quad+\frac{1}{\epsilon}L^{2}\frac{2d}{\epsilon}\sum_{k=0}^{\left\lfloor 1/h\right\rfloor -1}\int_{kh}^{(k+1)h}(s-kh)\mathrm{d}s\\
 &  & =L^{2}d\frac{h}{\epsilon^{2}}+\frac{h^{3}}{3\epsilon}\left(M^{2}+\frac{L^{4}}{\epsilon^{2}}\right)\sum_{k=0}^{\left\lfloor 1/h\right\rfloor -1}\mathbb{E}[1+\|\widetilde{X}_{kh}^{\epsilon,h}-x_{kh}^{\star}\|^{2}]\\
 &  & \leq L^{2}d\frac{h}{\epsilon^{2}}+\frac{h^{3}}{3\epsilon}\left(M^{2}+\frac{L^{4}}{\epsilon^{2}}\right)\left(\frac{1}{h}+\frac{1}{1-\lambda}\left[\mu_{0}(V)+\frac{b}{h}\right]\right).
\end{eqnarray*}
Substituting in to (\ref{eq:pinsker}) completes the proof.
\end{proof}

\subsection{Drift condition for the discretized process}

Define
\[
\widetilde{P}_{k}(x,A)\coloneqq\int_{A}\frac{1}{\sqrt{4\pi h/\epsilon}}\exp\left(-\frac{1}{4h/\epsilon}\|x-h/\epsilon\nabla U_{kh}(x)-y\|^{2}\right)\mathrm{d}y,
\]
where the dependence of $\widetilde{P}_{k}$ on $\epsilon$ and $h$
is not shown in the notation. 

\begin{lem}
\label{lem:disc_drift_p=00003D2}If $h/\epsilon\in(0,2K/L^{2})$,
then for any $\delta\in(0,1)$, 
\begin{eqnarray}
\widetilde{P}_{k}V_{kh}(x) & \leq & \lambda V_{(k-1)h}(x)+b,\label{eq:disc_drift_p=00003D2}\\
\sum_{k=0}^{\left\lfloor 1/h\right\rfloor -1}\mathbb{E}[1+\|\widetilde{X}_{kh}^{\epsilon,h}-x_{kh}^{\star}\|^{2}] & \leq & \frac{1}{h}+\frac{1}{1-\lambda}\left[\mu_{0}(V_{0})+\frac{b}{h}\right],\label{eq:disc_mom_p=00003D2}
\end{eqnarray}
where 
\begin{eqnarray*}
\lambda & \coloneqq & 1-\left(\frac{2hK}{\epsilon}-\left(\frac{h}{\epsilon}\right)^{2}L^{2}\right)(1-\delta),\\
b & \coloneqq & \sup_{t}\|\partial_{t}x_{t}^{\star}\|^{2}\left[\frac{4h^{2}}{\delta\left(\frac{2hK}{\epsilon}-\left(\frac{h}{\epsilon}\right)^{2}L^{2}\right)}+h^{2}\right]+2d\frac{h}{\epsilon}.
\end{eqnarray*}
\end{lem}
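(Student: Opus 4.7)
\noindent The proof has two ingredients: the one-step drift bound (\ref{eq:disc_drift_p=00003D2}), derived directly from the Gaussian form of the Euler--Maruyama update, and an iteration of this bound yielding the summation bound (\ref{eq:disc_mom_p=00003D2}). For the first, I unpack the kernel: if $Y \sim \widetilde{P}_k(x,\cdot)$ then $Y = x - (h/\epsilon)\,\nabla U_{kh}(x) + \sqrt{2h/\epsilon}\,Z$ with $Z$ a standard $d$-dimensional Gaussian. Since $x_{kh}^\star$ is the unique minimizer of $U_{kh}$ by (A\ref{hyp:U_strong_convex}), $\nabla U_{kh}(x_{kh}^\star) = 0$, and hence
\[
Y - x_{kh}^\star = (x - x_{kh}^\star) - (h/\epsilon)\bigl(\nabla U_{kh}(x) - \nabla U_{kh}(x_{kh}^\star)\bigr) + \sqrt{2h/\epsilon}\,Z.
\]
Expanding $\mathbb{E}\|Y - x_{kh}^\star\|^2$ as the squared norm of the deterministic part (the cross term with $Z$ vanishes in expectation) plus the Brownian contribution $2(h/\epsilon)\,d$, then bounding the resulting inner-product term below via strong convexity (A\ref{hyp:U_strong_convex}) and the Hessian-squared term above via the Lipschitz property (A\ref{hyp:U_grad_lipschitz_x}), yields
\[
\widetilde{P}_k V_{kh}(x) \leq (1 - c)\,V_{kh}(x) + 2(h/\epsilon)\,d,\qquad c \coloneqq 2(h/\epsilon)K - (h/\epsilon)^2 L^2,
\]
with $c > 0$ by the hypothesis $h/\epsilon < 2K/L^2$.

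\noindent To recast the right-hand side in terms of $V_{(k-1)h}$ rather than $V_{kh}$, I apply Young's inequality with parameter $\eta > 0$,
\[
\|x - x_{kh}^\star\|^2 \leq (1+\eta)\,\|x - x_{(k-1)h}^\star\|^2 + (1 + 1/\eta)\,\|x_{kh}^\star - x_{(k-1)h}^\star\|^2,
\]
together with the bound $\|x_{kh}^\star - x_{(k-1)h}^\star\| \leq h\sup_{t\in(0,1)}\|\partial_t x_t^\star\|$, which follows from the $C^1$-regularity of $t\mapsto x_t^\star$ established in Lemma \ref{lem:minimizer}. Choosing $\eta$ so that $(1-c)(1+\eta) \leq \lambda = 1 - c(1-\delta)$ (e.g.\ $\eta = c\delta/2$, for which this is immediate and for which $(1+1/\eta) = 1 + 2/(c\delta)$) and collecting terms gives (\ref{eq:disc_drift_p=00003D2}); the exact value of $b$ stated in the lemma absorbs the (suboptimal but sufficient) bound $(1-c)(1+1/\eta) \leq 4/(c\delta) + 1$.

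\noindent For the second part I substitute $x = \widetilde{X}_{kh}^{\epsilon,h}$ in (\ref{eq:disc_drift_p=00003D2}), take expectations, and iterate. Using the convention $x_0^\star = 0$ from (A\ref{hyp:U_strong_convex}) to start the recursion, this produces a scalar recursion $a_{k+1}\leq \lambda a_k + b$ with $a_0 = \mu_0(V_0)$ and hence $a_k \leq \lambda^k \mu_0(V_0) + b/(1-\lambda)$. Summing over $k = 0,\dots,\lfloor 1/h\rfloor - 1$ and adding the $\lfloor 1/h\rfloor \leq 1/h$ constants coming from the ``$1+\cdot$'' in the summand delivers (\ref{eq:disc_mom_p=00003D2}). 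The main point of friction is the careful bookkeeping of the time-indexed centres $x_t^\star$ and matching exactly the constants in $\lambda$ and $b$; the probabilistic content itself is standard one-step contraction for Euler discretizations of strongly log-concave Langevin dynamics, with the only novelty being the additional term $h^2\sup_t\|\partial_t x_t^\star\|^2$ in $b$ that accounts for the motion of the target equilibrium.
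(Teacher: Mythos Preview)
Your argument is correct and follows the same strategy as the paper: a one-step contraction from strong convexity and the Lipschitz bound, an additive correction for the motion of $x_t^\star$, and then iteration. The only difference is the device used to handle the moving centre. You first contract toward $x_{kh}^\star$ to obtain the clean inequality $\widetilde P_k V_{kh}(x)\le(1-\beta)V_{kh}(x)+2dh/\epsilon$ (with $\beta=2hK/\epsilon-(h/\epsilon)^2L^2$) and then apply Young's inequality to pass from $V_{kh}$ to $V_{(k-1)h}$ on the right-hand side. The paper instead writes the deterministic part of the update as $\bigl(x-x_{(k-1)h}^\star-(h/\epsilon)\nabla U_{(k-1)h}(x)\bigr)+(x_{(k-1)h}^\star-x_{kh}^\star)$, bounds its norm via the triangle inequality, and absorbs the resulting cross term $2h\sup_t\|\partial_t x_t^\star\|\cdot\|x-x_{(k-1)h}^\star\|$ by a case split on whether or not $2h\sup_t\|\partial_t x_t^\star\|\le\delta\beta\|x-x_{(k-1)h}^\star\|$. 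Both routes give the stated $\lambda$, and your additive constant is bounded by (indeed slightly smaller than) the stated $b$. The iteration step is identical in both proofs.
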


\begin{proof}
To simplify presentation in the proof we write $\widetilde{X}_{k}\coloneqq\widetilde{X}_{kh}^{\epsilon}$,
$x_{k-1}\coloneqq x_{(k-1)h}$, $x_{k}^{\star}\coloneqq x_{kh}^{\star}$
, $\nabla U_{k-1}(x)\coloneqq\nabla U_{(k-1)h}(x)$ etc. With $\xi\sim\mathcal{N}(0_{d},2h/\epsilon I_{d})$,
we have
\begin{eqnarray*}
\widetilde{P}_{k}V_{kh}(x)=\mathbb{E}\left[\left.\|\widetilde{X}_{k}-x_{k}^{\star}\|^{2}\right|\widetilde{X}_{k-1}=x\right] & = & \mathbb{E}\left[\left\Vert x-\frac{h}{\epsilon}\nabla U_{k-1}(x)+\xi-x_{k}^{\star}\right\Vert ^{2}\right]\\
 & \leq & \left(\left\Vert x-x_{k-1}^{\star}-\frac{h}{\epsilon}\nabla U_{k-1}(x)\right\Vert +\|x_{k}^{\star}-x_{k-1}^{\star}\|\right)^{2}+\mathbb{E}[\|\xi\|^{2}],
\end{eqnarray*}
where in view of Lemma \ref{lem:minimizer},
\[
\|x_{k}^{\star}-x_{k-1}^{\star}\|\leq ch,\quad\quad c\coloneqq\sup_{t\in(0,1)}\|\partial_{t}x_{t}^{\star}\|<+\infty,
\]
and
\[
\mathbb{E}[\|\xi\|^{2}]=2d\frac{h}{\epsilon}.
\]
Now writing $\beta\coloneqq\frac{2hK}{\epsilon}-\left(\frac{h}{\epsilon}\right)^{2}L^{2}$,
noting the assumption $h/\epsilon\in(0,2K/L^{2})$, using (A\ref{hyp:U_strong_convex})
and (A\ref{hyp:U_grad_lipschitz_x}) we have for any $\delta\in(0,1)$
\begin{eqnarray*}
 &  & \left\Vert x-\frac{h}{\epsilon}\nabla U_{k-1}(x)-x_{k-1}^{\star}\right\Vert ^{2}\\
 &  & \leq\|x-x_{k-1}^{\star}\|^{2}-\frac{2h}{\epsilon}\left\langle x-x_{k-1}^{\star},\nabla U_{k-1}(x)\right\rangle +\left(\frac{h}{\epsilon}\right)^{2}\|\nabla U_{k-1}(x)\|^{2}\\
 &  & \leq(1-\beta)\|x-x_{k-1}^{\star}\|^{2}\\
 &  & =\lambda\|x-x_{k-1}^{\star}\|^{2}-\delta\beta\|x-x_{k-1}^{\star}\|^{2},
\end{eqnarray*}
where $\lambda\coloneqq1-\beta(1-\delta)<1$. Combining the above
gives: 
\begin{eqnarray*}
 & \widetilde{P}_{k}V_{kh}(x) & \leq\lambda\|x-x_{k-1}^{\star}\|^{2}-\delta\beta\|x-x_{k-1}^{\star}\|^{2}+2ch\|x-x_{k-1}^{\star}\|+c^{2}h^{2}+2d\frac{h}{\epsilon}\\
 &  & \leq\lambda\|x-x_{k-1}^{\star}\|^{2}+\frac{4c^{2}h^{2}}{\delta\beta}+c^{2}h^{2}+2d\frac{h}{\epsilon},
\end{eqnarray*}
where the final inequality follows by considering whether or not $2ch\leq\delta\beta\|x-x_{k-1}^{\star}\|$.
Thus (\ref{eq:disc_drift_p=00003D2}) holds and iterating gives

\[
\mathbb{E}\left[\|\widetilde{X}_{k}-x_{k}^{\star}\|^{2}|X_{0}=x\right]\leq\lambda^{k}V_{0}(x)+b\sum_{j=0}^{k-1}\lambda^{j},
\]
from which (\ref{eq:disc_mom_p=00003D2}) follows.
\end{proof}

\section{Auxiliary results and proofs\label{sec:Auxiliary-results-and}}

\subsection{Preliminaries}
\begin{lem}
\label{lem:the_TI_identity}
\begin{align*}
\partial_{t}\log Z_{t} & =-\int_{\mathbb{R}^{d}}\partial_{t}U_{t}(x)\pi_{t}(\mathrm{d}x).
\end{align*}
\end{lem}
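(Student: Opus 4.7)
The plan is to differentiate under the integral sign in $Z_t = \int_{\mathbb{R}^d} e^{-U_t(x)}\,\mathrm{d}x$ and then divide by $Z_t$. Formally, one writes
\[
\partial_t Z_t = \int_{\mathbb{R}^d} \partial_t e^{-U_t(x)}\,\mathrm{d}x = -\int_{\mathbb{R}^d} \partial_t U_t(x)\, e^{-U_t(x)}\,\mathrm{d}x,
\]
so that $\partial_t \log Z_t = Z_t^{-1}\partial_t Z_t = -\pi_t(\partial_t U_t)$. The only real content is justifying the interchange of $\partial_t$ and $\int$, which I would do via a standard dominated-convergence argument of the form: it suffices to exhibit an integrable function bounding $\sup_{s\in[0,1]}|\partial_s U_s(x)|\,e^{-U_s(x)}$ pointwise.

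For this dominating function I would use precisely the bounds that reappear in the proof of Lemma~\ref{lem:deriv_int_interchange}. Under (A\ref{hyp:basic_hyp_on_U}), $U \in C_{1,2}^{p_0}([0,1]\times\mathbb{R}^d)$, so there is a finite constant $c_0$ with
\[
\sup_{s\in[0,1]}|\partial_s U_s(x)| \le c_0(1+\|x\|^{2p_0}),\quad \forall x\in\mathbb{R}^d.
\]
For the exponential factor, the first step is to note that $t\mapsto x_t^\star$ is continuous on $[0,1]$ by Lemma~\ref{lem:minimizer}, so $\sup_t\|x_t^\star\|<\infty$ and $\sup_t U_t(x_t^\star)<\infty$ by the continuity of $U$. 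Then combining strong convexity (A\ref{hyp:U_strong_convex}) (which yields $U_s(x) \ge U_s(x_s^\star) + (K/2)\|x-x_s^\star\|^2$) with the triangle inequality gives the uniform lower bound
\[
\inf_{s\in[0,1]} U_s(x) \;\ge\; c_1\|x\|^2 - c_2
\]
for finite constants $c_1>0$, $c_2$; this is the lower half of inequality~(\ref{eq:U_upper_and_lower_uniform_bounds}) used later in the paper. Consequently
\[
\sup_{s\in[0,1]}|\partial_s U_s(x)|\,e^{-U_s(x)} \;\le\; c_0 e^{c_2}(1+\|x\|^{2p_0})\,e^{-c_1\|x\|^2},
\]
which is integrable on $\mathbb{R}^d$. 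The same lower bound on $U_s$ also shows $\inf_{t\in[0,1]} Z_t > 0$, so dividing $\partial_t Z_t$ by $Z_t$ is harmless and yields the claimed identity.

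The only mildly delicate step is thus the interchange; everything else is bookkeeping. I would structure the proof as a single short paragraph: (i) cite (A\ref{hyp:basic_hyp_on_U}) for the pointwise derivative $\partial_s e^{-U_s(x)} = -\partial_s U_s(x)\,e^{-U_s(x)}$ and its continuity in $s$, (ii) invoke the dominated convergence/differentiation lemma with the explicit majorant above, justified by (A\ref{hyp:basic_hyp_on_U}), (A\ref{hyp:U_strong_convex}) and Lemma~\ref{lem:minimizer}, and (iii) divide by $Z_t > 0$ to conclude. No new estimates beyond those already implicit in the proof of Lemma~\ref{lem:deriv_int_interchange} are required.
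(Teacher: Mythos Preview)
Your proposal is correct and follows essentially the same route as the paper: both justify differentiating under the integral sign by combining the polynomial bound on $\partial_t U_t$ from (A\ref{hyp:basic_hyp_on_U}) with a Gaussian-type upper bound on $\sup_t e^{-U_t(x)}$ obtained from strong convexity (A\ref{hyp:U_strong_convex}) and the continuity of $t\mapsto x_t^\star$ from Lemma~\ref{lem:minimizer}. The paper makes the constants slightly more explicit (getting $K/4$ in the exponent via the reverse triangle inequality), but the argument is the same.
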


\begin{proof}
Using (A\ref{hyp:U_strong_convex}), Lemma \ref{lem:strong_convex_equiv},
the reverse triangle inequality and the convexity of $a\mapsto a^{2}$,
\begin{align*}
\sup_{t}\exp\left[-U_{t}(x)\right] & \leq\sup_{t}\exp\left[-U_{t}(x_{t}^{\star})-\frac{K}{2}\|x-x_{t}^{\star}\|^{2}\right]\\
 & \leq\exp\left[-\inf_{t}U_{t}(x_{t}^{\star})-\frac{K}{4}\|x\|^{2}+\frac{K}{2}\sup_{t}\|x_{t}^{\star}\|^{2}\right],
\end{align*}
where $\sup_{t\in[0,1]}\|x_{t}^{\star}\|$ and $-\inf_{t}U_{t}(x_{t}^{\star})$
are finite, since by Lemma \ref{lem:minimizer}, $t\mapsto\|x_{t}^{\star}\|$
is continuous on $[0,1]$, and $U_{t}(x)$ is continous in $(t,x)$
by (A\ref{hyp:basic_hyp_on_U}). Also by (A\ref{hyp:basic_hyp_on_U}),
there exists some $p\geq1$ and $c<+\infty$ such that 
\[
\sup_{t}\left|\partial_{t}U_{t}(x)\right|\leq c(1+\|x\|^{2p}),\quad\forall x.
\]
 Hence the following interchange of differentiation and integration
is permitted:
\begin{align*}
\partial_{t}\log Z_{t} & =\frac{1}{Z_{t}}\partial_{t}\int_{\mathbb{R}^{d}}\exp\left[-U_{t}(x)\right]\mathrm{d}x\\
 & =-\frac{1}{Z_{t}}\int_{\mathbb{R}^{d}}\exp\left[-U_{t}(x)\right]\partial_{t}U_{t}(x)\mathrm{d}x\\
 & =-\int_{\mathbb{R}^{d}}\partial_{t}U_{t}(x)\pi_{t}(\mathrm{d}x).
\end{align*}
\end{proof}
\begin{lem}
\label{lem:strong_convex_equiv}For any given $f\in C_{2}(\mathbb{R}^{d})$
and $c>0$, the following conditions are equivalent:
\begin{eqnarray*}
f(y)-f(x) & \geq & \left\langle \nabla f(x),y-x\right\rangle +\frac{1}{2}c\|y-x\|^{2},\quad\forall x,y\in\mathbb{R}^{d},\\
\left\langle \nabla f(x)-\nabla f(y),x-y\right\rangle  & \geq & c\|x-y\|^{2},\quad\forall x,y\in\mathbb{R}^{d},\\
\inf_{x\in\mathbb{R}^{d}}\,\sum_{i,j}v_{i}\frac{\partial^{2}f(x)}{\partial x_{i}\partial x_{j}}v_{j} & \geq & c\|v\|^{2},\quad\forall v\in\mathbb{R}^{d}.
\end{eqnarray*}
\end{lem}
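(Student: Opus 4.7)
The plan is to prove the three equivalences cyclically: $(1)\Rightarrow(2)\Rightarrow(3)\Rightarrow(1)$, where I number the three displayed conditions in the order they appear in the statement. All three steps are classical, so I will be brief; there is no real obstacle, only the need to invoke the correct Taylor expansion for the final implication.

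For $(1)\Rightarrow(2)$, I would simply apply the inequality in $(1)$ twice, once as stated and once with $x$ and $y$ swapped:
\begin{align*}
f(y)-f(x) &\geq \langle \nabla f(x),y-x\rangle + \tfrac{c}{2}\|y-x\|^2,\\
f(x)-f(y) &\geq \langle \nabla f(y),x-y\rangle + \tfrac{c}{2}\|x-y\|^2.
\end{align*}
Adding the two inequalities collapses the left-hand side to zero and gives $0 \geq \langle \nabla f(y)-\nabla f(x), x-y\rangle + c\|x-y\|^2$, which is exactly $(2)$ after rearranging signs.

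For $(2)\Rightarrow(3)$, I would fix $x,v\in\mathbb{R}^d$ and apply $(2)$ along the ray $y=x+tv$ for $t>0$, obtaining $\langle \nabla f(x+tv)-\nabla f(x), tv\rangle \geq c t^2\|v\|^2$. Dividing by $t^2$ and sending $t\downarrow 0$, the assumption $f\in C_2(\mathbb{R}^d)$ allows me to identify the limit of the left-hand side as $\sum_{i,j} v_i \frac{\partial^2 f(x)}{\partial x_i \partial x_j} v_j$, and since $x$ was arbitrary the infimum over $x$ still bounds $c\|v\|^2$ from above.

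For $(3)\Rightarrow(1)$, I would use the second-order Taylor expansion with integral remainder for $f\in C_2(\mathbb{R}^d)$:
\[
f(y)-f(x) = \langle \nabla f(x), y-x\rangle + \int_0^1 (1-t)\,\big\langle \nabla^{(2)}f(x+t(y-x))(y-x)\,,\,y-x\big\rangle\,\mathrm{d}t.
\]
Applying the pointwise quadratic-form bound from $(3)$ to the integrand gives
\[
\int_0^1 (1-t)\,\big\langle \nabla^{(2)}f(x+t(y-x))(y-x)\,,\,y-x\big\rangle\,\mathrm{d}t \;\geq\; c\|y-x\|^2\int_0^1(1-t)\,\mathrm{d}t \;=\; \tfrac{c}{2}\|y-x\|^2,
\]
which yields $(1)$ and closes the cycle. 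The only mildly delicate point is making sure $f\in C_2(\mathbb{R}^d)$ suffices both for differentiating under the limit in $(2)\Rightarrow(3)$ and for the validity of the integral-remainder Taylor formula in $(3)\Rightarrow(1)$; both are standard.
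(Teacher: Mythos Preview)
Your cyclic argument is correct and each implication is carried out cleanly. The paper does not actually prove this lemma: its proof consists solely of a reference to Nesterov's textbook, so your write-up is in fact more explicit than what appears in the paper, and there is nothing substantive to compare.
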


\begin{proof}
See \cite{nesterov2013introductory}.
\end{proof}

\begin{lem}
\label{lem:minimizer}Let $x_{t}^{\star}$ be the unique minimizer
of $U_{t}$. Then the map $t\mapsto x_{t}^{\star}$ is continuous
on $[0,1]$, continously differentiable on $(0,1)$ and 
\[
\sup_{t\in(0,1)}\|\partial_{t}x_{t}^{\star}\|\vee\sup_{t\in[0,1]}\|x_{t}^{\star}\|\leq\frac{M}{K}.
\]
\end{lem}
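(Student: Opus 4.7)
The plan is to exploit three ingredients: the first-order condition $\nabla U_t(x_t^\star)=0$, strong convexity (A\ref{hyp:U_strong_convex}), and the time-regularity (A\ref{hyp:U_time_cont}). Existence and uniqueness of $x_t^\star$ are immediate from strict convexity and coercivity of $U_t$ under (A\ref{hyp:U_strong_convex}). To get the bound $\sup_t\|x_t^\star\|\leq M/K$, I would first use Lemma~\ref{lem:strong_convex_equiv} to write $K\|x_t^\star\|^2\leq\langle\nabla U_t(x_t^\star)-\nabla U_t(0),x_t^\star\rangle=-\langle\nabla U_t(0),x_t^\star\rangle$, giving $K\|x_t^\star\|\leq\|\nabla U_t(0)\|$. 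Since $x_0^\star=0$ by convention, $\nabla U_0(0)=0$, and (A\ref{hyp:U_time_cont}) applied at $x=0=x_{0\wedge t}^\star$ yields $\|\nabla U_t(0)\|=\|\nabla U_t(0)-\nabla U_0(0)\|\leq Mt$. Hence $\|x_t^\star\|\leq Mt/K\leq M/K$.

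For Lipschitz continuity of $t\mapsto x_t^\star$ on $[0,1]$, I would apply strong convexity a second time. For $s<t$, using $\nabla U_t(x_t^\star)=0=\nabla U_s(x_s^\star)$,
\[
K\|x_t^\star-x_s^\star\|^2\leq\langle\nabla U_t(x_t^\star)-\nabla U_t(x_s^\star),x_t^\star-x_s^\star\rangle=\langle\nabla U_s(x_s^\star)-\nabla U_t(x_s^\star),x_t^\star-x_s^\star\rangle.
\]
Cauchy--Schwarz and (A\ref{hyp:U_time_cont}) evaluated at $x=x_s^\star=x_{s\wedge t}^\star$ (so the square root is $1$) then give $\|x_t^\star-x_s^\star\|\leq M|t-s|/K$. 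This establishes continuity on $[0,1]$ and shows that any derivative, wherever it exists, is bounded by $M/K$.

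For continuous differentiability on $(0,1)$, the plan is to invoke the implicit function theorem applied to $F(t,x):=\nabla U_t(x)$. Under (A\ref{hyp:basic_hyp_on_U}), $F$ has a continuous partial $\partial_t F$ and a continuous spatial Jacobian $\nabla_x F=\nabla^{(2)}U_t$; under (A\ref{hyp:U_strong_convex}), $\nabla_x F(t,x_t^\star)$ is invertible with operator-norm inverse bounded by $1/K$. The implicit function theorem then gives $t\mapsto x_t^\star\in C^1((0,1))$ with
\[
\partial_t x_t^\star=-[\nabla^{(2)}U_t(x_t^\star)]^{-1}\,\partial_t\nabla U_t(x_t^\star).
\]
Passing to the limit $s\to t$ in (A\ref{hyp:U_time_cont}) at $x=x_t^\star$, using continuity of $s\mapsto x_s^\star$ already established above, yields $\|\partial_t\nabla U_t(x_t^\star)\|\leq M$, whence $\|\partial_t x_t^\star\|\leq M/K$.

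The main obstacle is ensuring that (A\ref{hyp:basic_hyp_on_U}) really furnishes the joint continuity of $\partial_t\nabla_x U$ needed for the implicit function theorem step; the convention $U\in C^{p_0}_{1,2}([0,1]\times\mathbb{R}^d)$ should be read as supplying exactly this mixed regularity. If it does not, the $C^1$ conclusion can be downgraded to Lipschitz (and hence a.e.\ differentiable) without cost downstream, since every use of the lemma in the paper only calls on the uniform bound $\sup_{t\in(0,1)}\|\partial_t x_t^\star\|\leq M/K$, which already falls out of the Lipschitz estimate in the second paragraph.
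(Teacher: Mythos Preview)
Your proof is correct and takes a genuinely different, more elementary route to the quantitative bounds than the paper does. The paper first applies the implicit function theorem to get $C^1$ on $(0,1)$, then proves endpoint continuity by a separate contradiction argument, then bounds $\|\partial_t x_t^\star\|$ via the implicit function formula $\partial_t x_t^\star = -[\nabla^{(2)}U_t(x_t^\star)]^{-1}\partial_t\nabla U_t(x_t^\star)$ and (A\ref{hyp:U_time_cont}), and finally obtains $\sup_t\|x_t^\star\|\leq M/K$ by integrating that derivative bound from $x_0^\star=0$. You instead extract the global Lipschitz estimate $\|x_t^\star-x_s^\star\|\leq M|t-s|/K$ directly from strong convexity and (A\ref{hyp:U_time_cont}) evaluated at $x=x_s^\star$ (where the square root collapses to $1$); this single line simultaneously gives continuity on all of $[0,1]$ (no endpoint argument needed), the bound $\sup_t\|x_t^\star\|\leq M/K$ (take $s=0$), and the derivative bound $M/K$ wherever the derivative exists. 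Both approaches still invoke the implicit function theorem for the $C^1$ conclusion, and both depend on the same mixed-regularity reading of (A\ref{hyp:basic_hyp_on_U}) that you flag. Your organization has the advantage that all three numerical bounds are decoupled from the implicit function theorem step, so even if one downgrades $C^1$ to Lipschitz, nothing quantitative is lost; the paper's argument, by contrast, routes the bound on $\|x_t^\star\|$ through the derivative and so leans more heavily on the differentiability.
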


\begin{proof}
Fix any $t\in(0,1)$. The strong convexity assumption (A\ref{hyp:U_strong_convex})
implies $\nabla^{(2)}U_{t}(x)$ is invertible for all $x$. Therefore
by the implicit function theorem there exist open neighborhoods $\mathcal{T}$
of $t$ and $\mathcal{X}$ of $x_{t}^{\star}$ and a unique continuously
differentiable function $\zeta:\mathcal{T}\to\mathcal{X}$ such that
$\{(s,\zeta(s))\,;\,s\in\mathcal{T}\}=\{(s,x)\,;\,\nabla U_{s}(x)=0,(s,x)\in\mathcal{T}\times\mathcal{X}\}$.
Since $t\in(0,1)$ was arbitrary, the interval $(0,1)$ can be covered
with such neighborhoods $\mathcal{T}$, and the uniqueness under (A\ref{hyp:U_strong_convex})
of the minimizer $U_{t}(\cdot)$ for each $t$ implies that the continuously
differentiable functions must agree on the non-empty intersections
between the $\mathcal{T}$'s, yielding a continuously differentiable
function $\zeta:(0,1)\to\mathbb{R}^{d}$ such that $\zeta(t)=x_{t}^{\star}$.
Let us now prove that $\lim_{t\searrow0}\zeta(t)=x_{0}^{\star}$.
First note that $\nabla U_{t}$ is continuous in $t$ on $[0,1]$
by assumption, so $\lim_{n\to+\infty}\|\nabla U_{n^{-1}}(x_{0}^{\star})\|=\|\nabla U_{0}(x_{0}^{\star})\|=0$.
By way of a contradiction, suppose that there exists $\delta>0$ such
that for all $n_{0}>0$ there exists $n\geq n_{0}$ such that
\[
\|x_{0}^{\star}-\zeta(n^{-1})\|\geq\delta,
\]
which together with (A\ref{hyp:U_strong_convex}), Lemma \ref{lem:strong_convex_equiv}
and Cauchy-Schwartz implies 
\begin{eqnarray*}
\|\nabla U_{n^{-1}}(x_{0}^{\star})\| & = & \|\nabla U_{n^{-1}}(x_{0}^{\star})-\nabla U_{n^{-1}}(\zeta(n^{-1}))\|\\
 & \geq & K\|x_{0}^{\star}-\zeta(n^{-1})\|\geq K\delta,
\end{eqnarray*}
giving a contradiction as required. By a similar argument $\lim_{t\nearrow1}\zeta(t)=x_{1}^{\star}$,
and therefore $t\mapsto x_{t}^{\star}$ is continuous on $[0,1]$. 

We also have:
\begin{equation}
\left\Vert \partial_{t}x_{t}^{\star}\right\Vert =\left\Vert [\nabla^{(2)}U_{t}]^{-1}(x_{t}^{\star})\cdot\left.\partial_{t}\nabla U_{t}(x)\right|_{x=x_{t}^{\star}}\right\Vert \leq\frac{1}{K}\left\Vert \left.\partial_{t}\nabla U_{t}(x)\right|_{x=x_{t}^{\star}}\right\Vert ,\label{eq:velocity_bounded_derive}
\end{equation}
where the equality is due to the implicit function theorem and the
inequality uses the facts that: for a symmetric matrix $H$, the operator
norm $\|H\|_{\mathrm{op}}$ induced by the Euclidean distance on $\mathbb{R}^{d}$
is equal to the largest eigenvalue of $H$; $\|H^{-1}x\|\leq\|H^{-1}\|_{\mathrm{op}}\|x\|$;
and (A\ref{hyp:U_strong_convex}) implies all the eigenvalues of $\nabla^{(2)}U_{t}(x)$
are lower bounded by $K$. The term on the right of (\ref{eq:velocity_bounded_derive})
is uniformly bounded over $t\in(0,1)$ by $M/K$ because (A\ref{hyp:U_time_cont})
implies
\[
\|\nabla U_{t}(x_{t}^{\star})-\nabla U_{t+\delta}(x_{t}^{\star})\|\leq M\delta.
\]
 Integrating this bound and noting that $x_{0}^{\star}=0$ by (A\ref{hyp:U_strong_convex}),
\[
\sup_{t\in[0,1]}\|x_{t}^{\star}\|\leq\|x_{0}^{\star}\|+\sup_{t\in[0,1]}\int_{0}^{t}\|\partial_{s}x_{s}^{\star}\|\mathrm{d}s\leq\frac{M}{K}.
\]
 
\end{proof}
\begin{lem}
\label{lem:variance_lower_bound}For any $p\geq1$, $t\in[0,1]$ and
$f\in C_{0}^{p}(\mathbb{R}^{d})$,
\[
\mathrm{var}_{\pi_{t}}[f]\geq L^{-1}\sum_{i=1}^{d}\pi_{t}\left(f\frac{\partial U_{t}}{\partial x_{i}}\right)^{2}.
\]
\end{lem}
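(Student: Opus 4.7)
The plan is to apply the covariance form of the Cauchy--Schwarz inequality with respect to $\pi_t$ to the pair $f$ and $\langle a,\nabla U_t\rangle$ for an optimally chosen vector $a\in\mathbb{R}^d$, reducing the problem to the two bookkeeping items $\pi_t(\partial_i U_t)=0$ and $\mathrm{var}_{\pi_t}[\langle a,\nabla U_t\rangle]\leq L\|a\|^2$. The point is that no differentiation of $f$ is ever needed: all integration by parts is carried out on the smooth factor $e^{-U_t}$.

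First I would observe that, via integration by parts in the $i$-th coordinate and the super-exponential decay $e^{-U_t(x)}\lesssim \exp(-K\|x\|^2/4+c)$ obtained from (A\ref{hyp:U_strong_convex}) and Lemma \ref{lem:minimizer}, one has $\pi_t(\partial_i U_t)=0$. Consequently for any $a\in\mathbb{R}^d$,
\[
\sum_{i=1}^d a_i\,\pi_t(f\partial_i U_t)=\pi_t\bigl(f\langle a,\nabla U_t\rangle\bigr)=\mathrm{cov}_{\pi_t}\bigl(f,\langle a,\nabla U_t\rangle\bigr),
\]
and Cauchy--Schwarz yields
\[
\Bigl(\sum_{i=1}^d a_i\,\pi_t(f\partial_i U_t)\Bigr)^{2}\leq \mathrm{var}_{\pi_t}[f]\cdot\mathrm{var}_{\pi_t}\bigl[\langle a,\nabla U_t\rangle\bigr].
\]
A second application of integration by parts, using $\partial_j e^{-U_t}=-\partial_j U_t\cdot e^{-U_t}$, gives
\[
\mathrm{var}_{\pi_t}\bigl[\langle a,\nabla U_t\rangle\bigr]=\pi_t\bigl(\langle a,\nabla U_t\rangle^{2}\bigr)=\sum_{i,j}a_i a_j\,\pi_t(\partial_i U_t\,\partial_j U_t)=\sum_{i,j}a_i a_j\,\pi_t(\partial_i\partial_j U_t)=\pi_t\bigl(\langle a,\nabla^{(2)}U_t\,a\rangle\bigr),
\]
and since (A\ref{hyp:U_grad_lipschitz_x}) implies $\nabla^{(2)}U_t\preceq LI$ pointwise (the Lipschitz constant of $\nabla U_t$ is the operator-norm bound on its Jacobian), this is at most $L\|a\|^2$.

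Finally, setting $a_i:=\pi_t(f\partial_i U_t)$ in the resulting inequality produces
\[
\Bigl(\sum_{i=1}^{d}\pi_t(f\partial_i U_t)^{2}\Bigr)^{2}\leq L\,\mathrm{var}_{\pi_t}[f]\cdot\sum_{i=1}^{d}\pi_t(f\partial_i U_t)^{2},
\]
and dividing through by $\sum_i\pi_t(f\partial_i U_t)^2$ (the case of equality to zero being trivial) gives the stated lemma. The only real technical issue is to justify the two integration-by-parts identities rigorously for $f\in C_0^p(\mathbb{R}^d)$; under (A\ref{hyp:basic_hyp_on_U})--(A\ref{hyp:U_strong_convex}) the function $f\cdot\partial_i U_t\cdot e^{-U_t}$ grows at most polynomially against a super-exponentially decaying weight, and the finite polynomial moments of $\pi_t$ established in Lemma \ref{lem:deriv_int_interchange} together with a standard truncation and dominated convergence argument suffice to make the boundary terms vanish.
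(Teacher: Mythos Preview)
Your proof is correct and, in mathematical content, very close to the paper's: both reduce to Cauchy--Schwarz against $\langle a,\nabla U_t\rangle$, the integration-by-parts identity $\pi_t(\partial_i U_t\,\partial_j U_t)=\pi_t(\partial_{ij}U_t)$, and the bound $\nabla^{(2)}U_t\preceq LI$ from (A\ref{hyp:U_grad_lipschitz_x}). The paper, however, packages the argument as a Cram\'er--Rao inequality: it introduces a location family $\pi^\theta(x)\propto e^{-U_t(x-\theta)}$, identifies $\nabla_\theta\log\pi^\theta|_{\theta=0}=\nabla U_t$, and obtains the stronger intermediate bound
\[
\mathrm{var}_{\pi_t}[f]\;\geq\;\pi_t(f\nabla U_t)^{\mathsf T}\,\mathbb{E}_{\pi_t}[\nabla^{(2)}U_t]^{-1}\,\pi_t(f\nabla U_t),
\]
before weakening $\mathbb{E}_{\pi_t}[\nabla^{(2)}U_t]^{-1}\succeq L^{-1}I$. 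Your route is more elementary---you bound $\mathrm{var}_{\pi_t}[\langle a,\nabla U_t\rangle]\leq L\|a\|^2$ first and then optimize with $a_i=\pi_t(f\,\partial_i U_t)$, bypassing the Fisher information matrix entirely---which has the minor advantage of not needing $\mathbb{E}_{\pi_t}[\nabla^{(2)}U_t]$ to be invertible, while the paper's route records the sharper matrix inequality along the way. The integration-by-parts justifications you flag are exactly the ones the paper also handles (there phrased as differentiation under the integral sign for the score identities), and your appeal to the Gaussian-tail decay of $e^{-U_t}$ and Lemma~\ref{lem:deriv_int_interchange} is the right way to close them.
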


\begin{proof}
Fix any $t\in[0,1]$ and $f\in C_{0}^{p}(\mathbb{R}^{d})$. The first
part of the proof follows arguments used to derive Cramer-Rao inequalties,
see \cite{cacoullos1982} for perspective on this kind of technique.
Let $\Theta$ be any compact subset of $\mathbb{R}^{d}$ containing
$0$, and then introduce an artificial location parameter $\theta\in\Theta$.
Suppressing $t$ to simplify notation, consider the probability measure
$\pi^{\theta}$ defined by

\[
\pi^{\theta}(\mathrm{d}x)\coloneqq\pi^{\theta}(x)\mathrm{d}x,\quad\pi^{\theta}(x)\coloneqq Z_{t}^{-1}\exp\{-U^{\theta}(x)\}\mathrm{d}x,\quad U^{\theta}(x)\coloneqq U_{t}(x-\theta).
\]
 Then with expectation and variance with respect to $\pi^{\theta}$
denoted respectively by $\mathbb{E}^{\theta}[\cdot]$ and $\mathrm{var}^{\theta}[\cdot]$,
and gradient with respect to $\theta$ denoted by $\nabla_{\theta}$,
define the vector $g_{\theta}\coloneqq\nabla_{\theta}\mathbb{E}^{\theta}[f(X)]$
and the matrix $J_{\theta}\coloneqq-\mathbb{E}^{\theta}[\nabla_{\theta}^{(2)}\log\pi^{\theta}(X)]$,
where in the latter and similar expressions below, the expectation
is element-wise. Using (A\ref{hyp:U_strong_convex}), (A\ref{hyp:U_grad_lipschitz_x}),
(A\ref{hyp:U_time_reg}) and Lemma \ref{lem:minimizer}, it can be
checked using manipulations similar to those in the proof of Lemma
\ref{lem:the_TI_identity} that the following identities hold by differentiation
under the integral sign:
\begin{align*}
g_{\theta} & =\mathbb{E}^{\theta}[f(X)\nabla_{\theta}\log\pi^{\theta}(X)],\\
0 & =\mathbb{E}^{\theta}[\nabla_{\theta}\log\pi^{\theta}(X)],\\
J_{\theta} & =\mathbb{E}^{\theta}[\nabla_{\theta}\log\pi^{\theta}(X)\cdot\{\nabla_{\theta}\log\pi^{\theta}(X)\}^{T}],
\end{align*}
and $J_{\theta}$ is invertible. Using these identities and Cauchy-Schwartz,

\begin{align*}
g_{\theta}^{T}J_{\theta}^{-1}g_{\theta} & =g_{\theta}^{T}J_{\theta}^{-1}\mathbb{E}^{\theta}[f(X)\nabla_{\theta}\log\pi^{\theta}(X)]\\
 & =g_{\theta}^{T}J_{\theta}^{-1}\mathbb{E}^{\theta}[\{f(X)-\mathbb{E}^{\theta}[f(X)]\}\nabla_{\theta}\log\pi^{\theta}(X)]\\
 & =\mathbb{E}^{\theta}[\{f(X)-\mathbb{E}^{\theta}[f(X)]\}g_{\theta}^{T}J_{\theta}^{-1}\nabla_{\theta}\log\pi^{\theta}(X)]\\
 & \leq\mathrm{var}^{\theta}[f(X)]^{1/2}\mathbb{E}^{\theta}[(g_{\theta}^{T}J_{\theta}^{-1}\nabla_{\theta}\log\pi^{\theta}(X))^{2}]^{1/2}\\
 & =\mathrm{var}^{\theta}[f(X)]^{1/2}(g_{\theta}^{T}J_{\theta}^{-1}g_{\theta})^{1/2},
\end{align*}
hence
\begin{equation}
\mathrm{var}^{\theta}[f(X)]\geq g_{\theta}^{T}J_{\theta}^{-1}g_{\theta}.\label{eq:cramer_rao}
\end{equation}

Noting that $\nabla_{\theta}\log\pi^{\theta}(x)=\nabla U(x-\theta)$
and $\nabla_{\theta}^{(2)}\log\pi^{\theta}(x)=-\nabla^{(2)}U(x-\theta)$,
the lower bound (\ref{eq:cramer_rao}) with $\theta=0$ reads:

\begin{equation}
\mathrm{var}_{\pi}[f]\geq\mathbb{E}_{\pi}[f\nabla U]^{T}\mathbb{E}_{\pi}[\nabla^{(2)}U]^{-1}\mathbb{E}_{\pi}[f\nabla U].\label{eq:var_lower_bound_inter}
\end{equation}
Using Cauchy-Schwartz and the Lipschitz assumption (A\ref{hyp:U_grad_lipschitz_x}),
we have for any $\tau>0$ and $v\in\mathbb{R}^{d}$ 
\begin{align*}
\frac{1}{\tau}\int_{0}^{\tau}\left\langle \nabla^{(2)}U(x+\lambda v)\cdot v,v\right\rangle \mathrm{d}\lambda & =\frac{1}{\tau}\left\langle \nabla U(x+\tau v)-\nabla U(x),v\right\rangle \\
 & \leq\frac{1}{\tau}\|\nabla U(x+\tau v)-\nabla U(x)\|\|v\|\\
 & \leq L\|v\|^{2}.
\end{align*}
Taking $\tau\to0$ we find $v^{T}\mathbb{E}_{\pi}[\nabla_{x}^{(2)}U]v\leq L\|v\|^{2}$,
so $v^{T}\mathbb{E}_{\pi}[\nabla_{x}^{(2)}U]^{-1}v\geq L^{-1}\|v\|^{2}$,
which applied to (\ref{eq:var_lower_bound_inter}) completes the proof.
\end{proof}

\subsection{Intermediate results concerning dimension dependence\label{subsec:Intermediate-results-concerning}}
\begin{lem}
\label{lem:alpha_dimension_dependence}Fix $p\geq1$ and consider
the quantities $\alpha_{p}$ and $\tilde{\alpha}_{p}$ defined in
Lemmas \ref{lem:drift} and \ref{lem:drift_Y_process}, choosing there
$\kappa=Kp/2$.

\noindent 1) $\tilde{\alpha}_{p}$ does not depend on $\epsilon$.
For any $q\geq0$, if $K^{-1}\vee\sup_{t}\|x_{t}^{\star}\|^{2}=O(d^{q})$
as $d\to\infty$, then $\tilde{\alpha}_{p}=O(d^{p(q+1)})$.

\noindent 2) For any $q\geq0$, if $K^{-1}\vee\sup_{t}\|x_{t}^{\star}\|^{2}=O(d^{q})$
and $\frac{\epsilon}{K}\sup_{t}\|\partial_{t}x_{t}^{\star}\|=O(1)$
as $d\to\infty$, then $\alpha_{p}=O(d^{p(q+1)})$.
\end{lem}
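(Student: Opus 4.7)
The plan is to substitute $\kappa = Kp/2$ into the expressions given in Lemmas \ref{lem:drift} and \ref{lem:drift_Y_process}, then simply track how each factor scales with $d$ under the stated hypotheses. Everything is elementary algebra; the only point that requires a moment of thought is that for $\alpha_p$ the term $\sup_t\|\partial_t x_t^\star\|$ appears raw (not divided by $K$), so one must exploit the coupling $\frac{\epsilon}{K}\sup_t\|\partial_t x_t^\star\| = O(1)$ at just the right places to keep $\epsilon$-dependence under control.

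For part 1, I would first observe that with $\kappa = Kp/2$, the quantity $\tilde{b}/\delta$ appearing in the bracket defining $\tilde{\alpha}_p$ reads
\[
\frac{2p\tilde r^{2(p-1)}[2(p-1)+d]}{Kp-\kappa} = \frac{4\tilde r^{2(p-1)}[2(p-1)+d]}{K},
\]
which no longer contains $\epsilon$; hence $\tilde{\alpha}_p$ is $\epsilon$-free. Next, since $\tilde r^{2} = \bigl(4p(p-1)+2pd\bigr)/(Kp/2) = O(d/K) = O(d^{q+1})$, one obtains $\tilde r^{2(p-1)} = O(d^{(q+1)(p-1)})$, and combining with $[2(p-1)+d]/K = O(d^{q+1})$ yields $\tilde b/\delta = O(d^{(q+1)p})$. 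The term $\sup_t\|x_t^\star\|^{2p} = O(d^{qp})$ is dominated, so $\tilde{\alpha}_p = O(d^{p(q+1)})$.

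For part 2, the extra complication is the presence of $\sup_t\|\partial_t x_t^\star\|$ inside $r$ and $b$. Under the hypothesis $\frac{\epsilon}{K}\sup_t\|\partial_t x_t^\star\| = O(1)$, the term $\frac{p}{\kappa}\epsilon\sup_t\|\partial_t x_t^\star\| = \frac{2\epsilon}{K}\sup_t\|\partial_t x_t^\star\|$ under the square root defining $r$ is $O(1)$, while $\tfrac{2p}{\kappa}[2(p-1)+d] = O(d^{q+1})$, so $r = O(d^{(q+1)/2})$ and $r^{2p-1} = O(d^{(q+1)(2p-1)/2})$. Plugging into
\[
\frac{b}{\delta} = \frac{4\epsilon r^{2p-1}\sup_t\|\partial_t x_t^\star\|}{K} + \frac{4r^{2p-2}[2(p-1)+d]}{K},
\]
the first summand is $(\epsilon/K)\sup_t\|\partial_t x_t^\star\| \cdot O(d^{(q+1)(p-1/2)}) = O(d^{(q+1)p - (q+1)/2})$, and the second is $O(d^{(q+1)(p-1)})\cdot O(d^{q+1}) = O(d^{(q+1)p})$, which dominates. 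Adding the $\sup_t\|x_t^\star\|^{2p} = O(d^{qp})$ contribution gives $\alpha_p = O(d^{p(q+1)})$.

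The main obstacle, such as it is, is bookkeeping: one must be careful to use the hypothesis on $\frac{\epsilon}{K}\sup_t\|\partial_t x_t^\star\|$ rather than the weaker bound $\sup_t\|\partial_t x_t^\star\|\le M/K$ from Lemma \ref{lem:minimizer}, since the latter would introduce an unwanted $\epsilon^{-1}$. No further ideas are needed.
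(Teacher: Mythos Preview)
Your proposal is correct and follows essentially the same approach as the paper: substitute $\kappa=Kp/2$ into the explicit formulae for $\tilde\alpha_p$ and $\alpha_p$, observe that $\tilde b/\delta$ and $b/\delta$ are $\epsilon$-free after this substitution, estimate $r$ and $\tilde r$ as $O((d/K)^{1/2})$, and track the resulting powers of $d$. The paper factors $b/\delta$ as $\tfrac{4}{K}r^{2p-2}\bigl[r\epsilon v + (2(p-1)+d)\bigr]$ rather than splitting into two summands, but this is purely cosmetic; the bookkeeping and the use of the hypothesis $\tfrac{\epsilon}{K}\sup_t\|\partial_t x_t^\star\|=O(1)$ are identical.
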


\begin{proof}
For part 1) the expression for $\tilde{\alpha}_{p}$ in Lemma \ref{lem:drift_Y_process}
with $\kappa$ chosen to be $Kp/2$ is: 
\begin{align*}
\tilde{\alpha}_{p} & =2^{4p-2}\vee\left[1+2^{2p-1}\left(\frac{4}{K^{p}}\left(8(p-1)+4d\right)^{p-1}\left[2(p-1)+d\right]+(1+2^{2p-1})\sup_{t}\|x_{t}^{\star}\|^{2p}\right)\right]\\
 & =O\left(1+\frac{d^{p}}{K^{p}}+\sup_{t}\|x_{t}^{\star}\|^{2p}\right),
\end{align*}
from which the second claim of part 1) follows. 

For part 2), writing out the expression for $\alpha_{p}$ from Lemma
\ref{lem:drift} with $\kappa=Kp/2$ and the shorthand $v\coloneqq\sup_{t}\|\partial_{t}x_{t}^{\star}\|$,
\begin{align*}
\alpha_{p} & =2^{4p-2}\vee\left[1+2^{2p-1}\left(\frac{4}{K}r^{2p-2}\left[r\epsilon v+[2(p-1)+d]\right]+(1+2^{2p-1})\sup_{t}\|x_{t}^{\star}\|^{2p}\right)\right]
\end{align*}
where
\[
r=\frac{\epsilon v}{K}+2\sqrt{\frac{\epsilon^{2}v^{2}}{K^{2}}+\frac{1}{K}[2(p-1)+d]}.
\]
Using the hypotheses of part 2), we find $r=O(1+\sqrt{1+d/K})=O(\sqrt{d/K})$,
and so
\begin{align*}
\alpha_{p} & =O\left(r^{2p-2}\left(r\frac{\epsilon v}{K}+\frac{d}{K}\right)+\sup_{t}\|x_{t}^{\star}\|^{2p}\right)\\
 & =O\left(\left(\frac{d}{K}\right)^{(p-1)}\left(r+\frac{d}{K}\right)+\sup_{t}\|x_{t}^{\star}\|^{2p}\right)\\
 & =O\left(\frac{d^{p}}{K^{p}}+\sup_{t}\|x_{t}^{\star}\|^{2p}\right)\\
 & =O(d^{p(q+1)}).
\end{align*}
\end{proof}
\begin{lem}
\label{lem:pi_V_dim_dependence}Fix $p\geq1$. For any $q\geq0$,
if $K^{-1}\vee\sup_{t\in[0,1]}\|x_{t}^{\star}\|^{2}=O(d^{q})$ as
$d\to\infty$, then $\sup_{t\in[0,1]}\pi_{t}(\bar{V}^{(p)})=O(d^{p(q+1)})$.
\end{lem}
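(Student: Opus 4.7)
The plan is to exploit stationarity of $\pi_s$ under the time-homogeneous diffusion $(Y_{t}^{s,\epsilon})_{t\in\mathbb{R}_+}$ introduced in Section~\ref{sec:Solution-of-Poisson's}, and then feed the drift bound of Lemma~\ref{lem:drift_Y_process} into this stationarity.

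First, I will fix $s\in[0,1]$ arbitrary and $\kappa=Kp/2$ in Lemma~\ref{lem:drift_Y_process}, which gives the drift inequality
\[
Q_{t}^{s,\epsilon}(V_{s}^{p})(x)\;\leq\;e^{-\delta t}V_{s}^{p}(x)+\frac{\tilde b}{\delta}\bigl(1-e^{-\delta t}\bigr),\qquad t\geq 0,
\]
with $\delta=Kp/(2\epsilon)$, $\tilde r^{2}=(8(p-1)+4d)/K$ and $\tilde b=2p\tilde r^{2(p-1)}(2(p-1)+d)/\epsilon$. Since $Y_0^{s,\epsilon}\sim\pi_s$ and $\pi_s$ is invariant for the generator $\mathcal{L}_s$ (standard fact for the overdamped Langevin diffusion associated with $U_s$), $\pi_s Q_t^{s,\epsilon}=\pi_s$ for all $t\geq 0$. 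By (\ref{eq:pi_t_finite_polynomial_moments}) in Lemma~\ref{lem:deriv_int_interchange}, $\pi_s(V_s^p)<+\infty$, so integrating the drift inequality against $\pi_s$ and using invariance yields
\[
\pi_s(V_s^p)\;\leq\;e^{-\delta t}\pi_s(V_s^p)+\tfrac{\tilde b}{\delta}(1-e^{-\delta t}),
\]
and after rearranging and letting $t$ be anything positive we deduce $\pi_s(V_s^p)\leq \tilde b/\delta$.

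Second, I will compute $\tilde b/\delta$ explicitly: with the choices above,
\[
\frac{\tilde b}{\delta}\;=\;4\left(\frac{8(p-1)+4d}{K}\right)^{p-1}\frac{2(p-1)+d}{K},
\]
which, using the hypothesis $K^{-1}=O(d^q)$, is of order $(d/K)^{p}=O(d^{p(q+1)})$. Thus $\sup_{s\in[0,1]}\pi_s(V_s^p)=O(d^{p(q+1)})$.

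Third, I use the elementary convexity bound $\|x\|^{2p}\leq 2^{2p-1}\bigl(\|x-x_s^{\star}\|^{2p}+\|x_s^{\star}\|^{2p}\bigr)=2^{2p-1}(V_s^p(x)+\|x_s^{\star}\|^{2p})$ to transfer the moment bound to the uncentered one:
\[
\pi_s\bigl(\bar V^{(p)}\bigr)\;=\;1+\pi_s\bigl(\|x\|^{2p}\bigr)\;\leq\;1+2^{2p-1}\bigl(\pi_s(V_s^p)+\sup_{t\in[0,1]}\|x_t^{\star}\|^{2p}\bigr).
\]
By the hypothesis $\sup_t\|x_t^{\star}\|^{2}=O(d^q)$ we have $\sup_t\|x_t^{\star}\|^{2p}=O(d^{pq})$, which is dominated by the $O(d^{p(q+1)})$ contribution from $\pi_s(V_s^p)$. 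Taking the supremum over $s\in[0,1]$ gives $\sup_{t\in[0,1]}\pi_t(\bar V^{(p)})=O(d^{p(q+1)})$, as claimed. There is no real obstacle here — the only subtle point is that the stationarity argument requires a priori finiteness of $\pi_s(V_s^p)$, which is precisely what (\ref{eq:pi_t_finite_polynomial_moments}) supplies without quantitative dimension dependence; the quantitative dimension dependence is then produced by the explicit form of $\tilde b/\delta$.
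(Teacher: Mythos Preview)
Your proof is correct and follows essentially the same approach as the paper: integrate the drift inequality of Lemma~\ref{lem:drift_Y_process} (with $\kappa=Kp/2$) against the invariant measure $\pi_s$, use stationarity to get $\pi_s(V_s^p)\leq\tilde b/\delta$, compute $\tilde b/\delta=O((d/K)^p)$, and then uncenter via the convexity bound. The only cosmetic difference is that the paper fixes $\epsilon=1$ and sends the time variable to infinity, whereas you keep $\epsilon$ general (it cancels in $\tilde b/\delta$ anyway) and rearrange directly for any $t>0$; both give the same conclusion.
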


\begin{proof}
We have
\begin{align}
\pi_{t}(\bar{V}^{(p)}) & \leq1+2^{2p-1}\pi_{t}(V_{t}^{p})+2^{2p-1}\|x_{t}^{\star}\|^{2p}.\label{eq:piV_t_bound}
\end{align}
By an application of (\ref{eq:drift_homogeneous}) with there $\epsilon=1$
and $\kappa=Kp/2$, we have for any $s>0$,
\[
\pi_{t}(V_{t}^{p})=\pi_{t}Q_{s}^{t,1}V_{t}^{p}\leq e^{-\delta s}\pi_{t}(V_{t}^{p})+\frac{\tilde{b}}{\delta},
\]
where
\[
\tilde{r}=2\sqrt{\frac{2(p-1)+d}{K}},\quad\tilde{b}=2p\tilde{r}^{2(p-1)}(2(p-1)+d),\quad\delta=Kp/2.
\]
hence taking $s\to\infty$, we obtain under the hypothesis $K^{-1}=O(d^{q})$,
\begin{align*}
\sup_{t}\pi_{t}(V_{t}^{p}) & \leq\frac{\tilde{b}}{\delta}=\frac{4}{K}2^{2(p-1)}\left(\frac{2(p-1)+d}{K}\right)^{(p-1)}(2(p-1)+d)\\
 & =O\left(\frac{1}{K}\left(\frac{d}{K}\right)^{p-1}d\right)=O\left(\frac{d^{p}}{K^{p}}\right)=O(d^{p+pq}),
\end{align*}
and combining this with (\ref{eq:piV_t_bound}) and the hypothesis
$\sup_{t}\|x_{t}^{\star}\|^{2}=O(d^{q})$ completes the proof.
\end{proof}
\begin{lem}
\label{lem:disc_drift_dim_depend}For any $q\geq0$, if
\[
K^{-1}\vee\sup_{t}\|\partial_{t}x_{t}^{\star}\|^{2}\vee\sup_{t}\|x_{t}^{\star}\|^{2}=O(d^{q}),\quad\mu_{0}(V)=O(d^{q+1}),
\]
\[
h\vee\epsilon\vee\frac{h}{\epsilon}\frac{L^{2}}{K}=o(1),\quad\frac{h}{\epsilon^{2}}d^{3q}=O(1),
\]
as $d\to\infty$, then
\[
h\sum_{k=0}^{\left\lfloor 1/h\right\rfloor -1}1+\mathbb{E}[\|\widetilde{X}_{kh}^{\epsilon,h}\|^{2}]=O(\epsilon d^{2q+1}+hd^{q+1}+d^{q}).
\]
\end{lem}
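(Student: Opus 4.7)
The plan is to reduce to the drift estimate of Lemma \ref{lem:disc_drift_p=00003D2} and then read off the $d$-dependence from assumption (A\ref{hyp:dimension_dependence_mse-1}). To get started, I would use the elementary inequality $\|x\|^{2}\leq2\|x-x_{kh}^{\star}\|^{2}+2\|x_{kh}^{\star}\|^{2}=2V_{kh}(x)+2\|x_{kh}^{\star}\|^{2}$ and the fact that $h\lfloor1/h\rfloor\leq1$ to split
\[
h\sum_{k=0}^{\lfloor 1/h\rfloor-1}\bigl(1+\mathbb{E}\|\widetilde{X}_{kh}^{\epsilon,h}\|^{2}\bigr)\;\leq\;1+2\sup_{t}\|x_{t}^{\star}\|^{2}+2h\sum_{k=0}^{\lfloor 1/h\rfloor-1}\mathbb{E}\bigl[V_{kh}(\widetilde{X}_{kh}^{\epsilon,h})\bigr].
\]
The first two terms are $O(d^{q})$ by hypothesis, so it remains to control the last sum, to which Lemma \ref{lem:disc_drift_p=00003D2} applies once I check that $h/\epsilon\in(0,2K/L^{2})$ for large $d$; this follows from the assumption $\tfrac{h}{\epsilon}\tfrac{L^{2}}{K}=o(1)$.

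Next, I would specialise Lemma \ref{lem:disc_drift_p=00003D2} by taking $\delta=1/2$. Then
\[
1-\lambda\;=\;\tfrac{1}{2}\!\left(\tfrac{2hK}{\epsilon}-\tfrac{h^{2}L^{2}}{\epsilon^{2}}\right)\;=\;\tfrac{hK}{\epsilon}\!\left(1-\tfrac{hL^{2}}{2\epsilon K}\right),
\]
so the same asymptotic $\tfrac{h}{\epsilon}\tfrac{L^{2}}{K}=o(1)$ gives $1-\lambda\geq hK/(2\epsilon)$ for $d$ large, hence $h/(1-\lambda)\leq2\epsilon/K=O(\epsilon d^{q})$. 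The cited lemma then yields
\[
h\sum_{k}\mathbb{E}[V_{kh}(\widetilde{X}_{kh}^{\epsilon,h})]\;\leq\;\frac{h\mu_{0}(V_{0})}{1-\lambda}+\frac{b}{1-\lambda},
\]
because the leading $h\cdot 1/h=1$ in the statement of Lemma \ref{lem:disc_drift_p=00003D2} is absorbed into the $O(d^{q})$ contribution identified above. Since $x_{0}^{\star}=0$ we have $\mu_{0}(V_{0})=\mu_{0}(V)=O(d^{q+1})$, and the first term is $O(\epsilon d^{q}\cdot d^{q+1})=O(\epsilon d^{2q+1})$.

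The final step is to evaluate $b/(1-\lambda)$. Writing out $b$ with $\delta=1/2$ gives $b=\sup_{t}\|\partial_{t}x_{t}^{\star}\|^{2}\bigl[\tfrac{4h^{2}}{1-\lambda}+h^{2}\bigr]+2dh/\epsilon$; using $\sup_{t}\|\partial_{t}x_{t}^{\star}\|^{2}=O(d^{q})$ and $h^{2}/(1-\lambda)\leq 2h\epsilon/K=O(h\epsilon d^{q})$, I obtain $b=O(h\epsilon d^{2q})+O(h^{2}d^{q})+O(dh/\epsilon)$, and multiplying by $1/(1-\lambda)=O(\epsilon d^{q}/h)$ yields
\[
b/(1-\lambda)\;=\;O(\epsilon^{2}d^{3q})+O(h\epsilon d^{2q})+O(d\cdot d^{q}).
\]
Collecting everything produces contributions of the three announced orders $\epsilon d^{2q+1}$, $hd^{q+1}$ and $d^{q}$ (with the extra $d$ from the Brownian‑noise piece $2dh/\epsilon$ absorbed into the constant term).

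The main obstacle is thus not conceptual but arithmetic: one must verify carefully that, under the side conditions $h\vee\epsilon=o(1)$ and $\tfrac{h}{\epsilon^{2}}d^{3q}=O(1)$, the residual terms $O(\epsilon^{2}d^{3q})$ and $O(h\epsilon d^{2q})$ are dominated by the three terms listed in the statement, and that the bound $1-\lambda\geq hK/(2\epsilon)$ really holds on the tail of $d$ under the prescribed polynomial scaling. These are essentially bookkeeping checks once $\delta=1/2$ has been fixed and Lemma \ref{lem:disc_drift_p=00003D2} has been invoked.
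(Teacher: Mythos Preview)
Your strategy is exactly the paper's: recentre via $\|x\|^{2}\leq 2V_{kh}(x)+2\|x_{kh}^{\star}\|^{2}$, invoke Lemma~\ref{lem:disc_drift_p=00003D2} with $\delta=1/2$, use $\tfrac{h}{\epsilon}\tfrac{L^{2}}{K}=o(1)$ to guarantee $h/\epsilon\in(0,2K/L^{2})$ and $1-\lambda\geq hK/(2\epsilon)$, and then track powers of $d$. Your evaluation $b/(1-\lambda)=O(\epsilon^{2}d^{3q})+O(h\epsilon d^{2q})+O(d^{q+1})$ is arithmetically correct.

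The gap is the absorption claim. The Brownian-noise contribution $O(d\cdot d^{q})=O(d^{q+1})$ is \emph{not} ``absorbed into the constant term'' $O(d^{q})$, and nothing in the hypotheses yields $d^{q+1}=O(\epsilon d^{2q+1}+hd^{q+1}+d^{q})$: that would require $\epsilon d^{q}\gtrsim 1$ or $h\gtrsim 1$, both excluded by $h\vee\epsilon=o(1)$. The residual $O(\epsilon^{2}d^{3q})$ that you flag has the same defect under the stated assumptions. If you compare line by line with the paper, you will see that when it multiplies \eqref{eq:disc_mom_p=00003D2} through by $h$ it writes the bracket as $[\mu_{0}(V)+b]$ rather than $[\mu_{0}(V_{0})+b/h]$; that extra factor of $h$ converts your $O(d^{q+1})$ and $O(\epsilon^{2}d^{3q})$ into $O(hd^{q+1})$ and $O(h\epsilon^{2}d^{3q})$, both of which then \emph{do} fit (the latter via $hd^{3q}=O(\epsilon^{2})$ from $\tfrac{h}{\epsilon^{2}}d^{3q}=O(1)$). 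So your more careful bookkeeping in fact exposes what appears to be a transcription slip in the paper's own proof; the lemma as stated seems to rest on that slip rather than on the route you and the paper both take.
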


\begin{proof}
We have
\begin{equation}
h\sum_{k=0}^{\left\lfloor 1/h\right\rfloor -1}1+\mathbb{E}[\|\widetilde{X}_{kh}^{\epsilon,h}\|^{2}]\leq2h\sum_{k=0}^{\left\lfloor 1/h\right\rfloor -1}1+\mathbb{E}[\|\widetilde{X}_{kh}^{\epsilon,h}-x_{kh}^{\star}\|^{2}]+2h\sum_{k=0}^{\left\lfloor 1/h\right\rfloor -1}\|x_{kh}^{\star}\|^{2}.\label{eq:drift_disc_dim_bound}
\end{equation}

To estimate the first term on the r.h.s. of (\ref{eq:drift_disc_dim_bound}),
consider Lemma \ref{lem:disc_drift_p=00003D2} with $\delta$ there
chosen to be $1/2$ and note that under the hypothesis $\frac{h}{\epsilon}\frac{L^{2}}{K}=o(1)$,
we have $h/\epsilon\in(0,2K/L^{2})$ for all $d$ large enough. For
any such $d$, the bound of (\ref{eq:disc_mom_p=00003D2}) written
out explicitly together with the hypotheses $K^{-1}\vee\sup_{t}\|\partial_{t}x_{t}^{\star}\|^{2}=O(d^{q})$,
$\mu_{0}(V)=O(d^{q+1})$ and $\frac{h}{\epsilon^{2}}d^{3q}=O(1)$,
$h\vee\epsilon=o(1)$ then gives
\begin{align*}
 & h\sum_{k=0}^{\left\lfloor 1/h\right\rfloor -1}1+\mathbb{E}[\|\widetilde{X}_{kh}^{\epsilon,h}-x_{kh}^{\star}\|^{2}]\\
 & \leq1+\frac{h}{\frac{hK}{\epsilon}\left(1-\frac{1}{2}\frac{h}{\epsilon}\frac{L^{2}}{K}\right)}\left[\mu_{0}(V)+\sup_{t}\|\partial_{t}x_{t}^{\star}\|^{2}h^{2}\left\{ \frac{4}{\frac{hK}{\epsilon}\left(1-\frac{1}{2}\frac{h}{\epsilon}\frac{L^{2}}{K}\right)}+1\right\} +2d\frac{h}{\epsilon}\right]\\
 & =O\left(1+\frac{\epsilon}{K}\left[d^{q+1}+d^{q}h^{2}\left\{ \frac{\epsilon}{hK}+1\right\} +d\frac{h}{\epsilon}\right]\right)\\
 & =O\left(1+\epsilon d^{q}\left[d^{q+1}+d^{2q}h\epsilon+d^{q}h^{2}+d\frac{h}{\epsilon}\right]\right)\\
 & =O\left(1+\epsilon d^{2q+1}+d^{3q}h\epsilon^{2}+d^{2q}h^{2}\epsilon+d^{q+1}h\right)\\
 & =O\left(1+\epsilon d^{2q+1}+d^{q+1}h\right).
\end{align*}
The proof is completed by combining this estimate with the fact that
the second term on the r.h.s. of (\ref{eq:drift_disc_dim_bound})
is in $O(d^{q})$ due to the hypothesis $\sup_{t}\|x_{t}^{\star}\|^{2}=O(d^{q})$. 
\end{proof}
\begin{proof}
[Proof of Proposition \ref{prop:tv_dim_dependence}]First note that
the hypothesis $\frac{h}{\epsilon}\frac{L^{2}}{K}\in o(1)$ implies
that for $d$ large enough, $h/\epsilon\in(0,2K/L^{2})$. Then for
such $d$ and choosing $\delta=1/2$ in Proposition \ref{prop:tv_bound},
we have
\begin{align*}
 & \|\mu^{\epsilon}-\widetilde{\mu}^{\epsilon,h}\|_{\mathrm{tv}}^{2}\\
 & \leq L^{2}d\frac{h}{\epsilon^{2}}+\frac{h^{3}}{3\epsilon}\left(M^{2}+\frac{L^{4}}{\epsilon^{2}}\right)\\
 & \quad\cdot\left(\frac{1}{h}+\frac{1}{1-\lambda}\left[\mu_{0}(V_{0})+\frac{1}{h}\left(\sup_{t\in(0,1)}\|\partial_{t}x_{t}^{\star}\|^{2}\left[\frac{4h^{2}}{\delta\left(\frac{2hK}{\epsilon}-\left(\frac{h}{\epsilon}\right)^{2}L^{2}\right)}+h^{2}\right]+2d\frac{h}{\epsilon}\right)\right]\right)\\
 & =L^{2}d\frac{h}{\epsilon^{2}}+\frac{1}{3}\left(hM^{2}+\frac{h}{\epsilon^{2}}L^{4}\right)\\
 & \quad\cdot\left(\frac{h}{\epsilon}+\frac{\frac{h}{\epsilon}}{\frac{hK}{\epsilon}-\left(\frac{h}{\epsilon}\right)^{2}\frac{L^{2}}{2}}\left[h\mu_{0}(V_{0})+\epsilon h\sup_{t\in(0,1)}\|\partial_{t}x_{t}^{\star}\|^{2}\left[\frac{4\frac{h}{\epsilon}}{\left(\frac{2hK}{\epsilon}-\left(\frac{h}{\epsilon}\right)^{2}\frac{L^{2}}{2}\right)}+\frac{h}{\epsilon}\right]+2d\frac{h}{\epsilon}\right]\right)\\
 & =L^{2}d\frac{h}{\epsilon^{2}}+\frac{1}{3}\left(hM^{2}+\frac{h}{\epsilon^{2}}L^{4}\right)\\
 & \quad\cdot\left(\frac{h}{\epsilon}+\frac{1}{K-\frac{h}{\epsilon}\frac{L^{2}}{2}}\left[h\mu_{0}(V_{0})+\epsilon h\sup_{t\in(0,1)}\|\partial_{t}x_{t}^{\star}\|^{2}\left[\frac{4}{K-\frac{h}{\epsilon}\frac{L^{2}}{2}}+\frac{h}{\epsilon}\right]+2d\frac{h}{\epsilon}\right]\right).
\end{align*}
Using the hypotheses (\ref{eq:tv_bound_constants_hyp}), $\frac{h}{\epsilon}L^{2}/K=o(1)$,
, $dh/\epsilon=O(1)$, $h=o(1)$, and $\epsilon=o(1)$, we obtain
\begin{align*}
\|\mu^{\epsilon}-\widetilde{\mu}^{\epsilon,h}\|_{\mathrm{tv}}^{2} & =O\left(d^{q/2+1}\frac{h}{\epsilon^{2}}+\left(hd^{q}+\frac{h}{\epsilon^{2}}d^{q}\right)\left(\frac{h}{\epsilon}+d^{q}\left[hd^{q+1}+\epsilon hd^{q}[d^{q}+\frac{h}{\epsilon}]+d\frac{h}{\epsilon}\right]\right)\right)\\
 & =O\left(d^{q/2+1}\frac{h}{\epsilon^{2}}+\left(hd^{q}+\frac{h}{\epsilon^{2}}d^{q}\right)\left(\frac{h}{\epsilon}+hd^{2q+1}+\epsilon hd^{3q}+d^{q+1}\frac{h}{\epsilon}\right)\right)\\
 & =O\left(d^{q/2+1}\frac{h}{\epsilon^{2}}+\left(\frac{h^{2}}{\epsilon}d^{q}+h^{2}d^{3q+1}+\epsilon h^{2}d^{4q}+\frac{h^{2}}{\epsilon}d^{2q+1}\right)+\left(\frac{h^{2}}{\epsilon^{3}}d^{q}+\frac{h^{2}}{\epsilon^{2}}d^{3q+1}+\frac{h^{2}}{\epsilon}d^{4q}+\frac{h^{2}}{\epsilon^{3}}d^{2q+1}\right)\right)\\
 & =O\left(\left[\epsilon h^{2}+\frac{h^{2}}{\epsilon}\right]d^{4q}+\left[h^{2}+\frac{h^{2}}{\epsilon^{2}}\right]d^{3q+1}+\left[\frac{h^{2}}{\epsilon}+\frac{h^{2}}{\epsilon^{3}}\right]d^{2q+1}+\left[\frac{h^{2}}{\epsilon}+\frac{h^{2}}{\epsilon^{3}}\right]d^{q}+\frac{h}{\epsilon^{2}}d^{q/2+1}\right)\\
 & =O\left(\frac{h^{2}}{\epsilon}d^{4q}+\frac{h^{2}}{\epsilon^{2}}d^{3q+1}+\frac{h^{2}}{\epsilon^{3}}d^{2q+1}+\frac{h}{\epsilon^{2}}d^{q/2+1}\right)\\
 & =O\left(\frac{h}{\epsilon^{2}}d^{4q}\left[\epsilon h+hd^{1-q}+\frac{h}{\epsilon}d^{1-2q}+d^{1-7q/8}\right]\right)\\
 & =O\left(\frac{h}{\epsilon^{2}}d^{4q+1}\right).
\end{align*}
Taking the square root completes the proof.
\end{proof}
\begin{lem}
\label{lem:asymp_var_dim_depend}Fix $p\geq1$ and for each $d\in\mathbb{N}$,
$f\in C_{1,2}^{p}([0,1]\times\mathbb{R}^{d})$. Assume that (A\ref{hyp:dimension_dependence_mse-1})
holds and that $\sup_{s}\|\tilde{\mathcal{L}}_{s}f_{s}\|_{p+1/2}$,
grows at most polynomially fast as $d\to\infty$, where $\mathcal{\tilde{L}}_{s}f_{s}=-\left\langle \nabla U_{s},\nabla f_{s}\right\rangle +\Delta f_{s}$.
If $\sup_{t\in[0,1]}1/\mathrm{var}_{\pi_{t}}[f_{t}]$ grows at most
polynomially fast as $d\to\infty$, then for any $\ell\geq0$ so does
$\sup_{t\in[0,1]}1/\varsigma_{\ell}(t)$. 
\end{lem}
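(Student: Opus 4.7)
The plan is to reduce to the continuous-time case $\ell=0$ by monotonicity, derive a spectral lower bound relating $\varsigma_0(s)$ to $\mathrm{var}_{\pi_s}[f_s]$ and $\|\tilde{\mathcal{L}}_s f_s\|_{L^2(\pi_s)}$ via Hölder's inequality, and finally bound both sides polynomially in $d$ using the hypotheses together with Lemma \ref{lem:pi_V_dim_dependence}.

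First I would invoke Theorem \ref{thm:variance-spectral-decomposition+Geyer}, which asserts that $\ell\mapsto\varsigma_\ell(s)$ is non-decreasing on $[0,\infty)$; consequently $\varsigma_\ell(s)\geq\varsigma_0(s)$ for every $\ell\geq 0$ and $s\in[0,1]$, so it suffices to prove that $\sup_{s\in[0,1]}1/\varsigma_0(s)$ grows at most polynomially. To this end I would exploit the spectral representations from the same theorem: with $\nu_s$ the spectral measure of $f_s$ with respect to the non-negative symmetric operator $-\tilde{\mathcal{L}}_s$ on $L^2(\pi_s)$,
\[
\varsigma_0(s)=2\int_0^\infty \lambda^{-1}\,\nu_s(\mathrm{d}\lambda),\qquad \mathrm{var}_{\pi_s}[f_s]=\int_0^\infty \nu_s(\mathrm{d}\lambda),\qquad \|\tilde{\mathcal{L}}_s f_s\|_{L^2(\pi_s)}^2=\int_0^\infty \lambda^2\,\nu_s(\mathrm{d}\lambda).
\]
Factoring $1=\lambda^{-2/3}\cdot\lambda^{2/3}$ and applying Hölder on $\nu_s$ with conjugate exponents $3/2$ and $3$ then gives
\[
\mathrm{var}_{\pi_s}[f_s]\leq(\varsigma_0(s)/2)^{2/3}\,\|\tilde{\mathcal{L}}_s f_s\|_{L^2(\pi_s)}^{2/3},\qquad\text{whence}\qquad \varsigma_0(s)\geq\frac{2\,\mathrm{var}_{\pi_s}[f_s]^{3/2}}{\|\tilde{\mathcal{L}}_s f_s\|_{L^2(\pi_s)}}.
\]

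The remaining step is to bound the denominator uniformly in $s\in[0,1]$ in terms of dimension. From the definition of $\|\cdot\|_{p+1/2}$ and the elementary inequality $\big(\bar{V}^{(p+1/2)}\big)^2\leq 2\,\bar{V}^{(2p+1)}$ furnished by Lemma \ref{lem:intermediateresultsonV}, one has
\[
\|\tilde{\mathcal{L}}_s f_s\|_{L^2(\pi_s)}^2\leq 2\,\|\tilde{\mathcal{L}}_s f_s\|_{p+1/2}^2\,\pi_s\big(\bar{V}^{(2p+1)}\big).
\]
The factor $\sup_s\pi_s(\bar V^{(2p+1)})$ grows polynomially in $d$ by Lemma \ref{lem:pi_V_dim_dependence}, whose hypotheses ($K^{-1}$ and $\sup_t\|x_t^\star\|^2$ being $O(d^q)$) are supplied by (A\ref{hyp:dimension_dependence_mse-1}); and $\sup_s\|\tilde{\mathcal{L}}_s f_s\|_{p+1/2}$ is polynomial by assumption. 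Hence $\sup_s\|\tilde{\mathcal{L}}_s f_s\|_{L^2(\pi_s)}$ is polynomial, and combining with the assumed polynomial upper bound on $\sup_s 1/\mathrm{var}_{\pi_s}[f_s]$ in the displayed lower bound on $\varsigma_0(s)$ yields the claim.

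The only mildly delicate point is the justification of the spectral calculus on $L^2(\pi_s)$: one needs $-\tilde{\mathcal{L}}_s$ to be essentially self-adjoint on a suitable core, and one needs the centring condition $\pi_s f_s=0$ to guarantee $\nu_s(\{0\})=0$ and $\int\lambda^{-1}\,\nu_s(\mathrm{d}\lambda)<\infty$. Both hold under (A\ref{hyp:basic_hyp_on_U}) and (A\ref{hyp:U_strong_convex}) (the latter supplying a uniform spectral gap, which controls integrability of $\lambda^{-1}$ against $\nu_s$), and in fact the existence and properties of $\nu_s$ are inherited directly from the proof of Theorem \ref{thm:variance-spectral-decomposition+Geyer}, so no new analytic work is required.
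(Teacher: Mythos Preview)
Your proof is correct and takes a genuinely different route from the paper's. The paper works in the time domain: it writes $\varsigma_0(s)=\int_0^\infty\rho_s(t)\,\mathrm{d}t$ with $\rho_s(t)=\pi_s(f_sQ_t^sf_s)\geq 0$, establishes the Lipschitz-type bound $|\rho_s(0)-\rho_s(t)|\leq tC(d)$ by writing $(Q_t^s-\mathrm{Id})f_s=\int_0^tQ_u^s\tilde{\mathcal L}_sf_s\,\mathrm{d}u$, and then lower-bounds $\int_0^r\rho_s(t)\,\mathrm{d}t$ for a carefully chosen $r=d^{-a}$. For $\ell>0$ the paper does not use monotonicity but invokes the direct bound $\varsigma_\ell(s)\geq\ell\,\mathrm{var}_{\pi_s}[f_s]$ from~\eqref{eq:homogeneousAICwithPoisson}. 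Your argument instead passes through the spectral representation: the H\"older step $\int 1\,\mathrm{d}\nu_s\leq(\int\lambda^{-1}\,\mathrm{d}\nu_s)^{2/3}(\int\lambda^2\,\mathrm{d}\nu_s)^{1/3}$ immediately yields the clean inequality $\varsigma_0(s)\geq 2\,\mathrm{var}_{\pi_s}[f_s]^{3/2}/\|\tilde{\mathcal L}_sf_s\|_{L^2(\pi_s)}$, and the reduction of all $\ell\geq 0$ to $\ell=0$ via Theorem~\ref{thm:variance-spectral-decomposition+Geyer} is tidier than treating the two cases separately. Both approaches use the same raw ingredient, the $L^2(\pi_s)$ size of $\tilde{\mathcal L}_sf_s$, but yours avoids the auxiliary truncation parameter $r$ and the associated balancing, at the modest price of leaning on the spectral calculus (which, as you note, is already set up in the proof of Theorem~\ref{thm:variance-spectral-decomposition+Geyer}).
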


\begin{proof}
We first address the case $\ell=0.$ Using the formula (\ref{eq:homogeneousAICwithPoisson}),
we have
\[
\varsigma_{0}(s)=\int_{0}^{\infty}\rho_{s}(t)\mathrm{dt},
\]
where assuming w.l.o.g. that $f_{t}$ is centrered with respect to
$\pi_{t}$, $\rho_{s}(t)\coloneqq\pi_{s}(f_{s}Q_{t}^{s}f_{s})$. Due
to the reversibility of $Q_{t}^{s}$ with respect to $\pi_{s}$, $\rho_{s}(t)$
is a nonnegative, therefore for any $r\geq0$
\begin{equation}
\varsigma_{0}(s)\geq\int_{0}^{r}\rho_{s}(t)\mathrm{d}t.\label{eq:var_Sig_lowerbound}
\end{equation}
 We shall now show that 
\begin{equation}
\sup_{s}|\rho_{s}(0)-\rho_{s}(t)|\leq tC(d),\label{eq:rho0-rhot}
\end{equation}
where $C(d)$, to be identified below, grows at most polynomially
fast with $d$. To this end, note that
\[
|\rho_{s}(0)-\rho_{s}(t)|\leq\pi_{s}(|f_{s}||(Id-Q_{t}^{s})(f_{s})|)
\]
 and by the time-homogeneous counterpart of Proposition \ref{prop:fwd_and_bck_eqs},
\[
|(Q_{t}^{s}-Id)(f_{s})|(x)=\left|\int_{0}^{t}\partial_{u}Q_{u}^{s}f_{s}(x)\mathrm{du}\right|=\left|\int_{0}^{t}Q_{u}\mathcal{\tilde{L}}_{s}f_{s}(x)\mathrm{du}\right|\leq t\|\mathcal{\tilde{L}}f_{s}\|_{p+1/2}\tilde{\alpha}_{p+1/2}\bar{V}^{(p+1/2)}(x),
\]
where $\tilde{\alpha}_{p+1/2}$ is as in Proposition \ref{lem:drift_Y_process}
with $\kappa$ there chosen to be $Kp/2$, and we note that $\|\mathcal{L}_{s}f_{s}\|_{p+1/2}$
is finite by Proposition \ref{prop:C_2^p_closed}. We therefore have
\[
|\rho_{s}(0)-\rho_{s}(t)|\leq t\|\mathcal{L}_{s}f_{s}\|_{p+1/2}\tilde{\alpha}_{p+1/2}\pi_{s}(\bar{V}^{(p)}\bar{V}^{(p+1/2)}),
\]
and (\ref{eq:rho0-rhot}) holds as claimed with $C(d)\coloneqq\tilde{\alpha}_{p+1/2}\sup_{s}\|\mathcal{L}_{s}f_{s}\|_{p+1/2}\sup_{s}\pi_{s}(\bar{V}^{(p)}\bar{V}^{(p+1/2)})$,
which indeed grows at most polynomially with $d$ by the hypotheses
of the lemma, Lemma \ref{lem:alpha_dimension_dependence} and Lemma
\ref{lem:pi_V_dim_dependence}.

Returning then to (\ref{eq:var_Sig_lowerbound}) and applying (\ref{eq:rho0-rhot}),
we otbain
\[
\frac{1}{\varsigma_{0}(t)}\leq\frac{1}{r\rho_{t}(0)}\frac{1}{\left(1-\frac{rC(d)}{2\rho_{t}(0)}\right)}.
\]
Noting the hypothesis of the lemma on $\sup_{t}1/\mathrm{var}_{\pi_{t}}[f_{t}]$,
and that $\rho_{t}(0)=\mathrm{var}_{\pi_{t}}[f_{t}]$, the proof is
completed by choosing $r=d^{-a}$ for $a>0$ large enough.

The case $\ell>0$ is more straightforward, since in that situation
by (\ref{eq:homogeneousAICwithPoisson}) and the reversibility of
$Q_{t}^{s}$, $\varsigma_{\ell}(s)\geq\ell\mathrm{var}_{\pi_{s}}[f_{s}]$. 
\end{proof}

\bibliographystyle{plain}
\bibliography{thermo_integration}

\begin{thebibliography}{10}

\bibitem{bakry2013analysis}
D.~Bakry, I.~Gentil, and M.~Ledoux.
\newblock {\em Analysis and geometry of Markov diffusion operators}, volume
  348.
\newblock Springer Science \& Business Media, 2013.

\bibitem{beskos2014stability}
A.~Beskos, D.~Crisan, and A.~Jasra.
\newblock On the stability of sequential {Monte Carlo} methods in high
  dimensions.
\newblock {\em The Annals of Applied Probability}, 24(4):1396--1445, 2014.

\bibitem{brosse2018normalizing}
N.~Brosse, A.~Durmus, and {\'E}.~Moulines.
\newblock Normalizing constants of log-concave densities.
\newblock {\em Electron. J. Stat.}, 12(1):851--889, 2018.

\bibitem{cacoullos1982}
T.~Cacoullos.
\newblock On upper and lower bounds for the variance of a function of a random
  variable.
\newblock {\em Ann. Probab.}, 10(3):799--809, 08 1982.

\bibitem{cattiaux2014semi}
P.~Cattiaux and A.~Guillin.
\newblock Semi log-concave {Markov} diffusions.
\newblock In {\em S{\'e}minaire de Probabilit{\'e}s XLVI}, pages 231--292.
  Springer, 2014.

\bibitem{cerrai2001second}
S.~Cerrai.
\newblock {\em Second order {PDE's} in finite and infinite dimension: a
  probabilistic approach}, volume 1762 of {\em Lecure notes in mathematics}.
\newblock Springer, 2001.

\bibitem{collet2008logarithmic}
J.~Collet and F.~Malrieu.
\newblock Logarithmic {Sobolev} inequalities for inhomogeneous {Markov}
  semigroups.
\newblock {\em ESAIM: Probability and Statistics}, 12:492--504, 2008.

\bibitem{dalalyan2016theoretical}
A.S. Dalalyan.
\newblock Theoretical guarantees for approximate sampling from smooth and
  log-concave densities.
\newblock {\em Journal of the Royal Statistical Society: Series B (Statistical
  Methodology)}, 2016.

\bibitem{durmus2016non}
A.~Durmus and {\'E}.~Moulines.
\newblock High-dimensional {Bayesian} inference via the {Unadjusted Langevin
  Algorithm}.
\newblock {\em arXiv preprint arXiv:1605.01559}, 2016.

\bibitem{durmus2015non}
A.~Durmus and {\'E}.~Moulines.
\newblock Non-asymptotic convergence analysis for the {Unadjusted Langevin
  Algorithm}.
\newblock {\em The Annals of Applied Probability}, 2017.
\newblock To appear.

\bibitem{durrett1996stochastic}
R.~Durrett.
\newblock {\em Stochastic calculus: a practical introduction}, volume~6.
\newblock CRC press, 1996.

\bibitem{eberle2015reflection}
A.~Eberle.
\newblock Reflection couplings and contraction rates for diffusions.
\newblock {\em Probability Theory and Related Fields}, pages 1--36, 2015.

\bibitem{frieze1999log}
A.~Frieze and R.~Kannan.
\newblock {Log-Sobolev} inequalities and sampling from log-concave
  distributions.
\newblock {\em The Annals of Applied Probability}, 9(1):14--26, 1999.

\bibitem{frieze1994sampling}
A.~Frieze, R.~Kannan, and N.~Polson.
\newblock Sampling from log-concave distributions.
\newblock {\em The Annals of Applied Probability}, pages 812--837, 1994.

\bibitem{gelman1998simulating}
A.~Gelman and X.-L. Meng.
\newblock Simulating normalizing constants: From importance sampling to bridge
  sampling to path sampling.
\newblock {\em Statistical science}, pages 163--185, 1998.

\bibitem{geyer1992practical}
C.J. Geyer.
\newblock Practical {Markov} chain {Monte Carlo}.
\newblock {\em Statistical Science}, pages 473--483, 1992.

\bibitem{gikhman1969introduction}
I.~Gikhman and A.V. Skorokhod.
\newblock {\em Introduction to the theory of random processes}.
\newblock Dover, 1969.

\bibitem{haeusler1988rate}
E.~Haeusler.
\newblock On the rate of convergence in the central limit theorem for
  martingales with discrete and continuous time.
\newblock {\em The Annals of Probability}, pages 275--299, 1988.

\bibitem{jarzynski1997equilibrium}
C.~Jarzynski.
\newblock Equilibrium free-energy differences from nonequilibrium measurements:
  A master-equation approach.
\newblock {\em Physical Review E}, 56(5):5018, 1997.

\bibitem{jarzynski1997nonequilibrium}
C.~Jarzynski.
\newblock Nonequilibrium equality for free energy differences.
\newblock {\em Physical Review Letters}, 78(14):2690, 1997.

\bibitem{joulin2010curvature}
A.~Joulin and Y.~Ollivier.
\newblock Curvature, concentration and error estimates for {Markov} chain
  {Monte Carlo}.
\newblock {\em The Annals of Probability}, 38(6):2418--2442, 2010.

\bibitem{karatzas1991brownian}
I.~Karatzas and S.~Shreve.
\newblock {\em Brownian motion and stochastic calculus}.
\newblock Springer-Verlag, 2nd edition, 1991.

\bibitem{khasminskii2011stochastic}
R.~Khasminskii.
\newblock {\em Stochastic stability of differential equations}, volume~66.
\newblock Springer Science \& Business Media, 2011.

\bibitem{kunita1984stochastic}
H.~Kunita.
\newblock Stochastic differential equations and stochastic flows of
  diffeomorphisms.
\newblock In {\em {\'E}cole d'{\'E}t{\'e} de Probabilit{\'e}s de Saint-Flour
  XII-1982}, pages 143--303. Springer, 1984.

\bibitem{ledoux2000geometry}
M.~Ledoux.
\newblock The geometry of {Markov} diffusion generators.
\newblock In {\em Annales del la {Facult\'{e}} des sciences de {Toulouse}:
  {Math\'{e}magiques}}, volume 9.2, pages 305--366, 2000.

\bibitem{lindvall2002lectures}
T~Lindvall.
\newblock {\em Lectures on the Coupling Method}.
\newblock Dover, 2002.

\bibitem{narayanan2013efficient}
H.~Narayanan and A.~Rakhlin.
\newblock Efficient sampling from time-varying log-concave distributions.
\newblock {\em arXiv preprint arXiv:1309.5977}, 2013.

\bibitem{neal2001annealed}
R.~M. Neal.
\newblock Annealed importance sampling.
\newblock {\em Statistics and Computing}, 11(2):125--139, 2001.

\bibitem{nesterov2013introductory}
Y.~Nesterov.
\newblock {\em Introductory lectures on convex optimization: A basic course},
  volume~87.
\newblock Springer Science \& Business Media, 2013.

\bibitem{pardoux2005}
E.~Pardoux and A.~Yu. Veretennikov.
\newblock On the {P}oisson equation and diffusion approximation {III}.
\newblock {\em Ann. Probab.}, 33(3):1111--1133, 05 2005.

\bibitem{pardoux2001}
E.~Pardoux and Yu. Veretennikov.
\newblock On the {P}oisson equation and diffusion approximation. {I}.
\newblock {\em Ann. Probab.}, 29(3):1061--1085, 07 2001.

\bibitem{RevuzYor}
D.~Revuz and M.~Yor.
\newblock {\em Continuous Martingales and {Brownian} Motion}, volume 293 of
  {\em Grundlehren der mathematischen {Wissenschaften}}.
\newblock Springer, 3 edition, 1999.

\bibitem{rousset_thesis}
M.~Rousset.
\newblock {\em M\'{e}thodes Population {Monte -Carlo} en temps continu pour la
  physique num\'{e}rique}.
\newblock PhD thesis, Universit\'{e} Toulouse III Paul Sabatier, 2006.

\bibitem{sethuraman2005martingale}
S.~Sethuraman and S.R.S. Varadhan.
\newblock A martingale proof of {Dobrushin's} theorem for non-homogeneous
  markov chains.
\newblock {\em Electron. J. Probab}, 10(36):1221--1235, 2005.

\bibitem{Veretennikov2011}
A.~Yu. Veretennikov.
\newblock On sobolev solutions of {P}oisson equations in $\mathbb{R}^d$ with a
  parameter.
\newblock {\em Journal of Mathematical Sciences}, 179(1):48, 2011.

\end{thebibliography}

\end{document}